\documentclass[12pt,twoside]{amsart}
\usepackage{geometry}
\geometry{a4paper,top=3cm,bottom=3cm,left=2.5cm,right=2.5cm}
\usepackage{amssymb,amsmath,amsthm, amscd, enumerate, mathrsfs}
\usepackage{graphicx, hhline}
\usepackage[all]{xy}
\usepackage[dvipdfmx]{hyperref}
\usepackage{color}

%%%%%%%%%%%%%
\title[MMP for projective morphisms]
{Minimal model program for projective morphisms 
between complex analytic spaces}
\author{Osamu Fujino}
\date{2022/1/27, version 0.01}
\subjclass[2010]{Primary 14E30; Secondary 32C15}
\keywords{minimal model program, flips, 
complex analytic spaces, 
Stein spaces, Stein compact subsets, canonical bundles}
\address{Department of 
Mathematics, Graduate School of Science, 
Kyoto University, Kyoto 606-8502, Japan}
\email{fujino@math.kyoto-u.ac.jp}

%%%%%%%%%%%%%%%%%%%%%%%%%%%
\DeclareMathOperator{\mult}{mult}
\DeclareMathOperator{\rank}{rank}
\DeclareMathOperator{\Nklt}{Nklt}
\DeclareMathOperator{\Supp}{Supp}
\DeclareMathOperator{\Mov}{\overline{Mov}}
\DeclareMathOperator{\NE}{\overline{NE}}
\DeclareMathOperator{\Amp}{Amp}
\DeclareMathOperator{\an}{an}
\DeclareMathOperator{\codim}{codim}
\DeclareMathOperator{\WDiv}{WDiv}
\DeclareMathOperator{\Exc}{Exc}
\DeclareMathOperator{\Fix}{Fix}
\DeclareMathOperator{\Pic}{Pic}
\DeclareMathOperator{\Hom}{Hom}
\DeclareMathOperator{\Sym}{Sym}
\DeclareMathOperator{\Projan}{Projan}
\DeclareMathOperator{\Coker}{Coker}
\DeclareMathOperator{\ddiv}{div}
\DeclareMathOperator{\NLC}{NLC}
\DeclareMathOperator{\Ker}{Ker}
\DeclareMathOperator{\NS}{NS}
%%%%%%%%%%%%%%%%%%%%%%%%%%%%%%
% Definitions for new environments
% theorem style plain --- default
\newtheorem{thm}{Theorem}[section]
\newtheorem{lem}[thm]{Lemma}
\newtheorem{cor}[thm]{Corollary}

\newtheorem{conj}[thm]{Conjecture}

\newtheorem{theorema}{Theorem}

\theoremstyle{definition}
\newtheorem{step}{Step}
\newtheorem{defn}[thm]{Definition}
\newtheorem{rem}[thm]{Remark}
\newtheorem{ex}[thm]{Example}
\newtheorem*{ack}{Acknowledgments}  
\newtheorem{say}[thm]{}
%%%%%%%%%%%%%%%

\makeatletter
    
    \@addtoreset{equation}{section}
\makeatother

%%%%%%%%%%%%%%%%%%%%%%%%%%%
\begin{document}

\maketitle 

\begin{abstract} 
We discuss the minimal model program for 
projective morphisms of complex analytic spaces. 
Roughly speaking, we show 
that the results obtained by 
Birkar--Cascini--Hacon--M\textsuperscript{c}Kernan 
hold true for projective 
morphisms between complex analytic spaces. 
We also treat some related topics. 
\end{abstract}

\tableofcontents

\section{Introduction}\label{a-sec1}
This paper is the first step of the minimal model program 
for projective morphisms of complex analytic spaces. 

In \cite{bchm} and \cite{hacon-mckernan}, 
Birkar, Cascini, Hacon, and M\textsuperscript{c}Kernan 
established many important results on the minimal model 
program for quasi-projective kawamata log terminal pairs 
defined over the complex number field 
(see \cite[Theorems A, B, C, D, E, and F]{bchm}). 
Thus we can run the minimal model program 
with scaling. 

\begin{thm}[{Minimal model program with scaling, see 
\cite[Corollary 1.4.2]{bchm}}]\label{a-thm1.1}
Let $\pi\colon X\to Y$ be a projective morphism 
of normal quasi-projective varieties. 
Let $(X, \Delta)$ be a $\mathbb Q$-factorial 
kawamata log terminal pair, where $K_X+\Delta$ 
is $\mathbb R$-Cartier and $\Delta$ is $\pi$-big. 
Let $C$ be an effective $\mathbb R$-divisor on $X$. 
If $K_X+\Delta+C$ is kawamata log terminal 
and $\pi$-nef, then we can run the $(K_X+\Delta)$-minimal 
model program over $Y$ with scaling of $C$. 
The output of this minimal model program is a log 
terminal model {\em{(}}resp.~a Mori fiber space{\em{)}} 
over $Y$ when $K_X+\Delta$ is 
$\pi$-pseudo-effective {\em{(}}resp.~not 
$\pi$-pseudo-effective{\em{)}}. 
\end{thm}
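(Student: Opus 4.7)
The plan is to construct the $(K_X+\Delta)$-minimal model program over $Y$ with scaling of $C$ step by step, invoking the existence of flips and the termination with scaling established by Birkar--Cascini--Hacon--M\textsuperscript{c}Kernan.

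First I would introduce the scaling threshold
\[
\lambda_0 := \inf\{t \geq 0 : K_X + \Delta + tC \text{ is } \pi\text{-nef}\}.
\]
By hypothesis $K_X + \Delta + C$ is $\pi$-nef, so $\lambda_0 \leq 1$, and the infimum is attained because $\pi$-nefness is a closed condition. If $\lambda_0 = 0$, then $K_X + \Delta$ is already $\pi$-nef and $X$ itself is a log terminal model. Otherwise I would apply the cone and contraction theorems to the kawamata log terminal pair $(X, \Delta + \lambda_0 C)$ to extract a $(K_X+\Delta)$-negative extremal ray $R$ of $\NE(X/Y)$ on which $K_X + \Delta + \lambda_0 C$ is trivial, together with its contraction $\varphi_R\colon X \to Z$ over $Y$.

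The MMP step attached to $R$ splits into three cases. If $\varphi_R$ is of fiber type, the program terminates with a Mori fiber space; this happens precisely when $K_X+\Delta$ is not $\pi$-pseudo-effective. If $\varphi_R$ is divisorial, I would replace $X$ by $Z$, which remains $\mathbb Q$-factorial kawamata log terminal with $\pi$-big boundary. If $\varphi_R$ is small, I would invoke the existence of flips for kawamata log terminal pairs with big boundary from \cite{bchm} to produce the flip $X \dashrightarrow X^+$. In each non-terminal case the new pair $(X',\Delta')$ is again $\mathbb Q$-factorial kawamata log terminal with $\Delta'$ still $\pi$-big, $K_{X'}+\Delta'+\lambda_0 C'$ remains $\pi$-nef, and one updates the scaling threshold to some $\lambda_1 \leq \lambda_0$ and iterates.

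The main obstacle, and the genuine content of the theorem, is termination: one must rule out an infinite sequence of flips. The strategy of \cite{bchm} exploits the bigness of $\Delta$, which forces the boundaries $\Delta + tC$ to vary in a rational polytope whose interior consists of $\pi$-big divisors, together with the finiteness of log terminal models on such a polytope. Since the scaling factors $\lambda_i$ are monotone nonincreasing, an infinite sequence of flips would produce infinitely many distinct log terminal models over $Y$, contradicting finiteness. Once termination is granted, either a Mori fiber space has been produced along the way (precisely when $K_X+\Delta$ is not $\pi$-pseudo-effective) or we arrive at a $\pi$-nef model, i.e.\ a log terminal model (precisely when $K_X+\Delta$ is $\pi$-pseudo-effective). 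The entire argument is a faithful transcription of \cite[Corollary 1.4.2]{bchm}, and the termination step is the deepest and most delicate ingredient.
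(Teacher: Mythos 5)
Your proposal is a faithful reconstruction of the BCHM argument, and that is exactly what the paper intends: Theorem~\ref{a-thm1.1} is quoted verbatim from \cite[Corollary 1.4.2]{bchm} without a proof in the present paper, and the analytic analogue (Theorem~\ref{a-thm1.7}) is proved in Section~\ref{a-sec20} by the same route you describe, namely the scaling setup of Section~\ref{a-sec13} plus termination via finiteness of models (Theorem~\ref{thm-e}, cf.\ Theorem~\ref{a-thm13.6}). One small imprecision worth flagging: a Mori fiber space need not appear at the very first step when $K_X+\Delta$ is not $\pi$-pseudo-effective; rather, after finitely many divisorial contractions and flips the program must eventually hit a fiber-type contraction (since a nef limit would contradict non-pseudo-effectivity), and non-pseudo-effectivity is preserved along the run. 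Also, in the termination argument you should be careful to note that each $(X_i,\Delta_i+\lambda_i C_i)$ is a weak log canonical model of $(X,\Delta+\lambda_i C)$ over $Y$, and that the negativity lemma rules out two distinct MMP steps yielding isomorphic models for the same boundary, so finiteness of models really does force termination; you gesture at this but do not spell it out. These are minor and do not affect the correctness of the overall strategy.
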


Hence we can easily check: 

\begin{thm}[{\cite[Theorem 1.2]{bchm}}]\label{a-thm1.2} 
Let $(X, \Delta)$ be a kawamata log terminal 
pair, where $K_X+\Delta$ is $\mathbb R$-Cartier. 
Let $\pi\colon X\to Y$ be a projective morphism 
of quasi-projective varieties. 
If either $\Delta$ is $\pi$-big and 
$K_X+\Delta$ is $\pi$-pseudo-effective or 
$K_X+\Delta$ is $\pi$-big, 
then 
\begin{itemize}
\item[(1)] $K_X+\Delta$ has a log terminal model over $Y$, 
\item[(2)] if $K_X+\Delta$ is $\pi$-big then 
$K_X+\Delta$ has a log canonical model over $Y$, and 
\item[(3)] if $K_X+\Delta$ is $\mathbb Q$-Cartier, 
then 
\begin{equation*}
R(X/Y, K_X+\Delta):=\bigoplus _{m\in \mathbb N} 
\pi_*\mathcal O\left(\lfloor m(K_X+\Delta)\rfloor\right)
\end{equation*}
is finitely generated as an $\mathcal O_Y$-algebra. 
\end{itemize}
\end{thm}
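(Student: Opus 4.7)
The plan is to deduce all three items from Theorem~\ref{a-thm1.1} by putting ourselves in a position where we can run the MMP with scaling. The reduction has three standard ingredients: a small $\mathbb Q$-factorial modification, a Kodaira-type perturbation of $\Delta$ that absorbs bigness into an ample summand, and the construction of an auxiliary effective divisor $C$ making $K_X+\Delta+C$ klt and $\pi$-nef.

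First I would take a small $\mathbb Q$-factorial modification $f\colon X'\to X$; the pair $(X',f^{-1}_*\Delta)$ is then klt and $\mathbb Q$-factorial, has the same log terminal (resp.\ log canonical) model over $Y$ as $(X,\Delta)$, and has the same relative section ring. Thus I may assume $X$ itself is $\mathbb Q$-factorial. Next I would replace $\Delta$ by a $\pi$-$\mathbb R$-linearly equivalent boundary with an ample summand. In case (a), $\Delta$ is $\pi$-big, so Kodaira's lemma gives $\Delta\sim_{\mathbb R,\pi}A+E$ with $A$ $\pi$-ample and $E$ effective; replacing $\Delta$ by $(1-\varepsilon)\Delta+\varepsilon(A+E)$ for small $\varepsilon>0$ preserves kltness while producing an ample component. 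In case (b), $K_X+\Delta$ is $\pi$-big, so by the same trick I may also assume $\Delta$ is $\pi$-big without changing the hypothesis.

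Now pick a general $\pi$-very ample divisor $H$ and $t\gg 0$ so that $K_X+\Delta+tH$ is klt and $\pi$-nef, and set $C=tH$. Applying Theorem~\ref{a-thm1.1} yields a run of the $(K_X+\Delta)$-MMP over $Y$ with scaling of $C$. Under either hypothesis $K_X+\Delta$ is $\pi$-pseudo-effective, so the output is a log terminal model $X\dashrightarrow X^m$ over $Y$; this is (1). For (2), on this model $K_{X^m}+\Delta^m$ is nef \emph{and} $\pi^m$-big, hence $\pi^m$-semiample by the relative base point free theorem, and the resulting contraction $X^m\to X^c$ over $Y$ is the log canonical model. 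For (3), when $K_X+\Delta$ is $\mathbb Q$-Cartier, the canonical ring $R(X/Y,K_X+\Delta)$ coincides with the relative section ring of a $\pi^c$-ample $\mathbb Q$-Cartier divisor on $X^c$, and the latter is finitely generated.

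The main obstacle is the perturbation step in the second paragraph: one has to simultaneously preserve kltness of the pair, retain the pseudo-effectivity/bigness hypothesis, and ensure that the MMP produced for the perturbed pair actually outputs a log terminal model for the \emph{original} pair (and a section ring isomorphic to the original one). Once this bookkeeping is in place, Theorem~\ref{a-thm1.1} together with the relative base point free theorem, both available in the quasi-projective setting, deliver (1)--(3) formally.
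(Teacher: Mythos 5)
The paper does not give an explicit proof of Theorem~\ref{a-thm1.2}---it is quoted verbatim from BCHM, with the parenthetical remark ``Hence we can easily check'' following Theorem~\ref{a-thm1.1}, which is exactly the derivation you carry out. Your route (small $\mathbb Q$-factorial modification, Kodaira perturbation to absorb bigness into an ample summand, scaling divisor $C=tH$, then MMP with scaling, then relative basepoint-freeness on the output) is a correct unpacking of that remark.

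It is worth noting, though, that when the paper proves the analytic analogue (Theorem~\ref{a-thm1.8}) it does \emph{not} go through the MMP with scaling. Instead it invokes the nonvanishing theorem (Theorem~\ref{thm-d}) to produce $K_X+\Delta\sim_{\mathbb R}D\geq 0$ and then applies Theorem~\ref{thm-c} directly to obtain the log terminal model, Lemma~\ref{a-lem11.16} for the semiample/log canonical model, and Lemma~\ref{a-lem2.36} together with Lemma~\ref{a-lem2.26} for the finite generation in (3). That route is cleaner: Theorem~\ref{thm-c} is already formulated for klt pairs with $\Delta$ $\pi$-big and avoids both the explicit construction of the scaling divisor $C$ and the ``preserve kltness, bigness, and identify the section ring after perturbation'' bookkeeping you flag at the end. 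Both approaches are correct; yours trades a higher-level black box (Theorem~\ref{thm-c}) for the more concrete mechanism of Theorem~\ref{a-thm1.1} plus perturbation.

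Two small points of precision in your write-up. First, for part (3) the contraction target you call $X^c$ need not be the log canonical model in the non-big case (when $\Delta$ is $\pi$-big but $K_X+\Delta$ is only $\pi$-pseudo-effective); it is simply the image of the semiample contraction of $K_{X^m}+\Delta^m$, and the finite generation still follows because the relative section ring of a semiample $\mathbb Q$-Cartier divisor is finitely generated. Second, when comparing section rings across the perturbation $\Delta\rightsquigarrow\Delta'$ with $K_X+\Delta'\sim_{\mathbb Q,\pi}(1+\varepsilon)(K_X+\Delta)$, you should pass to a common Veronese subring (choose $a,b$ so that $ab(K_X+\Delta)\sim ab(K_X+\Delta')$ over $Y$) before invoking the isomorphism of graded algebras; finite generation is preserved under taking and un-taking Veronese subrings, which is the content of Lemma~\ref{a-lem2.26}.
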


The main purpose of this paper is to generalize 
the results obtained in \cite{bchm} and 
\cite{hacon-mckernan} 
for projective morphisms of complex analytic spaces 
under some suitable assumptions. 
One of the main difficulties to translate \cite{bchm} 
and \cite{hacon-mckernan} 
into the complex analytic setting is to 
find a reasonable formulation. 
For the general understanding of the theory of minimal models 
in the complex analytic setting, see \cite[Example 2.17]{kollar-mori}. 
To the best knowledge of the author, the minimal model 
program for projective morphisms 
between complex analytic spaces is not discussed in 
standard literature. 

\begin{rem}[Minimal model program 
for compact K\"ahler threefolds]\label{a-rem1.3}
In a series of 
papers (see \cite{horing-peternell1}, 
\cite{horing-peternell2}, and \cite{chp}), 
the theory of minimal models was generalized 
for compact K\"ahler threefolds (see 
also \cite{horing-peternell3}). It is different from our direction 
and is another 
complex analytic generalization of the 
minimal model program. 
The minimal model theory for log surfaces in 
Fujiki's class $\mathcal C$ was described in 
\cite{fujino-classC}. 
Based on the idea that the essence of the theory of 
minimal models is projectivity, we think that 
our formulation in this paper is more natural than 
the minimal model program for compact K\"ahler varieties. 
\end{rem}

Let us see an easy example. 

\begin{ex}\label{a-ex1.4} 
Let $\{P_k\}_{k\in \mathbb N}$ be a set of 
mutually distinct discrete points of $Y:=\mathbb C^2$ and 
let $\pi\colon X\to Y$ be the blow-up whose 
center is $\{P_k\}_{k\in \mathbb N}$. 
We put $E_k:=\pi^{-1}(P_k)$ for every $k$. 
Since the line bundle 
$\mathcal O_X\left(-\sum _{k\in \mathbb N}E_k\right)$ is $\pi$-ample, 
$\pi$ is a projective bimeromorphic 
morphism of smooth complex surfaces. 
In this case, there are infinitely many mutually 
disjoint $\pi$-exceptional 
curves on $X$. 
Hence, there exists no naive generalization 
of the minimal model program working for this 
projective bimeromorphic morphism $\pi\colon X\to Y$. 
\end{ex}

By Example \ref{a-ex1.4}, 
it seems to be reasonable and indispensable to fix a compact 
subset $W$ of $Y$ with some good properties 
and only treat $\pi\colon X\to Y$ over some open 
neighborhood of $W$. We need some finiteness condition 
in order to formulate the minimal model program for 
projective morphisms of complex analytic spaces. 
In this paper, we will mainly 
consider a projective morphism 
$\pi\colon X\to Y$ of complex analytic spaces and 
a compact subset $W$ of $Y$ with the 
following properties: 
\begin{itemize}
\item[(P1)] $X$ is a normal complex variety, 
\item[(P2)] $Y$ is a Stein space, 
\item[(P3)] $W$ is a Stein compact subset of $Y$, and 
\item[(P4)] $W\cap Z$ has only finitely many connected 
components for any analytic subset 
$Z$ which is defined over an open neighborhood 
of $W$. 
\end{itemize}

Since we are trying to discuss the minimal 
model program, (P1) is indispensable. 
So we almost always assume that $X$ is a normal complex 
variety. 
We note that the Steinness of $Y$, that is, (P2),  is a 
substitute of the quasi-projectivity of $Y$ in \cite{bchm} 
and that the quasi-projectivity of $Y$ is indispensable 
in \cite{bchm}. 
Let $\mathcal F$ be a coherent sheaf on 
$X$ such that $\pi_*\mathcal F\ne 0$. 
Then we see that $\Gamma (X, \mathcal F)
=\Gamma (Y, \pi_*\mathcal F)\ne 0$ holds by Cartan's 
Theorem A. Here we need the Steinness of $Y$. 
We also note that the analytic space naturally associated to 
an affine scheme is Stein. 
Let us explain (P3) and (P4). 
A compact subset on an analytic space is said to be 
{\em{Stein compact}} if it admits a fundamental system of Stein open 
neighborhoods. Note that a holomorphically convex hull 
$\widehat K$ of a compact subset $K$ on a 
Stein space is a Stein compact subset. 
Therefore, we can find many Stein compact subsets on a 
given Stein space. If $W$ satisfies (P3) and we are only 
interested in 
objects defined over some open neighborhood of $W$, 
then we can freely replace $Y$ with a small Stein 
open neighborhood of $W$. 
The condition in (P4) is not so easy to understand and 
looks somewhat artificial. However, it is a very natural condition. 
It is known that a Stein compact subset $W$ 
satisfies (P4) if and only if $\Gamma (W, \mathcal O_Y)$ is 
noetherian by Siu's theorem 
(see \cite[Theorem 1]{siu}). 
If $W$ 
is a Stein compact semianalytic subset, 
then it satisfies (P3) and (P4). 
Thus we see that 
there are many Stein compact subsets satisfying (P4) 
on a given Stein space $Y$. 
We consider the free abelian group $Z_1(X/Y; W)$ generated 
by the projective integral curves $C$ on $X$ such that 
$\pi(C)$ is a point of $W$. 
We take $C_1, C_2\in Z_1(X/Y; W)\otimes _{\mathbb Z}\mathbb R$. 
If $C_1\cdot \mathcal L=C_2\cdot \mathcal L$ holds 
for every $\mathcal L\in \Pic\left(\pi^{-1}(U)\right)$ and 
every open neighborhood $U$ of $W$, 
then we write $C_1\equiv _W C_2$. 
We set $N_1(X/Y; W):=Z_1(X/Y; W)\otimes _{\mathbb Z} \mathbb R
/\equiv_{W}$. 
Then, by (P4), we can check that 
$N_1(X/Y; W)$ is a finite-dimensional 
$\mathbb R$-vector space 
(see \cite[Chapter II.~5.19.~Lemma]{nakayama3}). 
When $N_1(X/Y; W)$ is finite-dimensional, 
we can define 
the {\em{Kleiman--Mori cone}} 
$\NE(X/Y; W)$ in $N_1(X/Y; W)$ for $\pi\colon X\to Y$ and 
$W$, 
that is, 
$\NE(X/Y; W)$ is the closure of the convex cone 
in $N_1(X/Y; W)$ generated by projective integral curves $C$ 
on $X$ such that $\pi(C)$ is a point in $W$. 
Without any difficulties, we can establish Kleiman's 
ampleness criterion in the complex analytic setting. 
We further assume that there exists an $\mathbb R$-divisor 
$\Delta$ on $X$ such that 
$(X, \Delta)$ is kawamata log terminal. 
Then we can formulate the cone theorem as usual  
\begin{equation*}
\NE(X/Y; W)=\NE(X/Y; W)_{K_X+\Delta\geq 0}
+\sum _j R_j
\end{equation*} 
and prove the contraction theorem for 
each $(K_X+\Delta)$-negative extremal ray $R_j$ 
over some open 
neighborhood of $W$. 
This is essentially due to Nakayama (see \cite{nakayama2}). 
By the above observations, we recognize that 
(P1), (P2), (P3), and (P4) are reasonable. 
From now on, we usually simply say that 
$\pi\colon X\to Y$ and $W$ satisfies (P) if it 
satisfies (P1), (P2), (P3), and (P4). 

\begin{rem}\label{a-rem1.5}
Let $D$ be an $\mathbb R$-Cartier $\mathbb R$-divisor on $X$. 
Then $D\geq 0$ on $\NE(X/Y; W)$ means 
that $D\cdot C\geq 0$ for every projective integral curve $C$ on 
$X$ such that $\pi(C)$ is a point in $W$. 
Unfortunately, however, $D$ is not necessarily nef over some open 
neighborhood of $W$ even when 
$D\geq 0$ on $\NE(X/Y; W)$. 
This fact often causes troublesome problems 
when we discuss the minimal model program 
for projective morphisms between 
complex analytic spaces. 
\end{rem}

In this paper, we will prove: 

\begin{thm}[Main theorem, see 
Theorem \ref{a-thm1.13} below]\label{a-thm1.6}
Let $\pi\colon X\to Y$ be a projective morphism 
of complex analytic spaces and let $W$ be 
a compact subset of $Y$ such that 
$\pi\colon X\to Y$ and $W$ satisfies {\em{(P)}}. 
Then Theorems A, B, C, D, E, and F in \cite{bchm} 
hold true with some suitable modifications. 
\end{thm}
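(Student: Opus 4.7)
The plan is to carry out the entire BCHM induction in the analytic setting, with the Stein compact subset $W$ and property (P) playing the role that quasi-projectivity of $Y$ plays in \cite{bchm}. The Steinness (P2) combined with Cartan's Theorem~A lets us test non-triviality of sections of coherent sheaves on $X$ via $\pi_*$ on $Y$, while (P3) permits us to freely shrink $Y$ to smaller Stein open neighborhoods of $W$ at every step. Condition (P4), together with Siu's theorem, ensures that $\Gamma(W,\mathcal O_Y)$ is noetherian, which yields the finite-dimensionality of $N_1(X/Y;W)$ and hence a well-defined Kleiman--Mori cone $\NE(X/Y;W)$; Nakayama's relative cone and contraction theorems \cite{nakayama2} then provide the extremal-ray machinery.

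First I would set up the full complex-analytic toolkit used throughout \cite{bchm}: Kleiman's ampleness criterion, the basepoint-free and rationality theorems over $W$, a Bertini-type theorem for general members of $\pi$-free linear subsystems whose sections are pulled back from $Y$, the Kawamata--Viehweg--Nadel vanishing theorem in the form $R^i\pi_*=0$ on a neighborhood of $W$, and the theory of asymptotic and Nadel multiplier ideals adapted to coherent sheaves on a Stein base. With these in place, the BCHM architecture can be run by simultaneous induction on dimension. The analogue of Theorem~A (existence of pl-flips) reduces, after shrinking $Y$, to finite generation of a restricted plt-adjoint algebra along a plt center $S$; this algebra is a graded sheaf of $\mathcal O_Y$-algebras whose finite generation on a neighborhood of $W$ follows from the noetherian property of $\Gamma(W,\mathcal O_Y)$ combined with the inductive hypothesis (existence of log terminal models for the restricted pair on $S$ in lower dimension). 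Theorems~B--F are then deduced in the same order as in \cite{bchm}: non-vanishing, MMP with scaling and termination, existence of log terminal and log canonical models, existence of flips, and finite generation of the log canonical ring as an $\mathcal O_Y$-algebra realized as $\Projan$ of the output.

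The main obstacle will be the mismatch flagged in Remark~\ref{a-rem1.5}: intersecting non-negatively with every curve in $\NE(X/Y;W)$ does not imply $\pi$-nefness on any open neighborhood of $W$. Every argument in \cite{bchm} that silently interchanges the two notions must be rephrased in terms of \emph{nefness over $W$}, and the shrinking of $Y$ has to be interleaved carefully with the construction of each successive birational model so that the model, its exceptional locus, and the ample cone it produces all remain coherently defined over a single Stein open neighborhood of $W$. A secondary difficulty is the finiteness-of-models statement underlying termination with scaling; its analytic analogue must be formulated $W$-locally and established by combining noetherianity of $\Gamma(W,\mathcal O_Y)$, the finite-dimensionality of $N_1(X/Y;W)$, and a complex-analytic adaptation of Shokurov's polytope argument applied after replacing $Y$ by a sufficiently small Stein open neighborhood of $W$.
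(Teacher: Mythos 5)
Your plan follows the same architecture as the paper: translate the BCHM induction into the complex analytic setting, with property (P) supplying the noetherian local ring $\Gamma(W,\mathcal O_Y)$ and Nakayama's finiteness of $N_1(X/Y;W)$, and with all statements formulated $W$-locally and re-proved after repeatedly shrinking $Y$ to Stein neighborhoods of $W$. Three devices the paper actually uses are not anticipated in your proposal and are worth flagging, because without them the step ``Theorem~A$_n$ and Theorem~B$_n$ imply Theorem~C$_n$'' is where a direct transcription of \cite{bchm} breaks down. First, the paper disposes of Theorem~\ref{thm-d} in all dimensions at once, simply by restricting to an analytically sufficiently general fiber $F$, invoking the projective nonvanishing theorem on $F$, and spreading out with Lemma~\ref{a-lem2.53}; this takes $D$ out of the induction entirely and is simpler than re-running the $D_{n-1},B_n,C_n\Rightarrow D_n$ step of \cite{bchm} which your ``same order'' plan implies. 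Second and more substantively, the paper replaces Theorem~\ref{thm-c} in the inductive loop by the stronger Theorem~\ref{thm-g}, which asks for a good log terminal model over a factorization $\pi\colon X\to Y^\flat\to Y$ with $Y^\flat$ projective over the Stein base $Y$. This is forced: in the $A_n+B_n\Rightarrow C_n$ step one must run MMPs over intermediate spaces that are \emph{not} Stein and hence cannot be shrunk around a compact set; the paper instead runs an MMP over $Y$ for $K_X+\Delta+g^*H$ with $H$ a sufficiently positive general divisor on $Y^\flat$, and then uses the bound on lengths of extremal rational curves (Theorem~\ref{a-thm9.2}, Lemma~\ref{a-lem9.4}) to see that this \emph{is} a $(K_X+\Delta)$-MMP over $Y^\flat$. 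Third, this length-of-extremal-rays result is thereby promoted from a peripheral ingredient of \cite{bchm} (where it is used only for discreteness of rays) to a load-bearing part of the induction here; your toolkit list omits it. Your remark about ``interleaving the shrinking of $Y$ with the construction of each successive birational model'' correctly identifies the locus of difficulty but should be replaced by this concrete mechanism.
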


For the precise statement, 
see Theorems \ref{thm-a}, \ref{thm-b}, 
\ref{thm-c}, \ref{thm-d}, \ref{thm-e}, and 
\ref{thm-f} in Subsection \ref{a-subsec1.1} and 
Theorem \ref{a-thm1.13} below. 
To the best knowledge of the author, 
our results are new even in dimension three. 
For projective morphisms of complex analytic spaces, 
the minimal model program with scaling (see 
Theorem \ref{a-thm1.1}) becomes as follows. 

\begin{thm}[{Minimal model program 
with scaling for projective morphisms of complex 
analytic spaces, see \cite[Corollary 1.4.2]{bchm}}]\label{a-thm1.7}
Let $\pi\colon X\to Y$ be a projective surjective 
morphism of complex analytic spaces and let $W$ be a compact 
subset of $Y$. 
Assume that $Y$ is Stein and that $W$ is a Stein 
compact subset of $Y$ such that $\Gamma(W, \mathcal O_Y)$ is 
noetherian. 
Let $(X, \Delta)$ be a kawamata log terminal 
pair such that $\Delta$ is $\pi$-big and 
that 
$X$ is $\mathbb Q$-factorial over $W$. 
If $C$ is an effective $\mathbb R$-divisor 
on $X$ such that 
$K_X+\Delta+C$ is kawamata log terminal and it is nef 
over $W$. 
Then we can run the $(K_X+\Delta)$-minimal 
model program with scaling of $C$ over $Y$ 
around $W$. 
More precisely, we have a finite sequence of flips and divisorial 
contractions over $Y$ 
starting from $(X, \Delta)$: 
\begin{equation*}
(X, \Delta)=:(X_0, \Delta_0)\overset{\varphi_0}{\dashrightarrow} 
(X_1, \Delta_1)\overset{\varphi_1}{\dashrightarrow} 
\cdots \overset{\varphi_{m-1}}{\dashrightarrow} 
(X_m ,\Delta_m),  
\end{equation*}
where $\Delta_{i+1}:=(\varphi_i)_*\Delta_i$ for every $i\geq 0$, 
such that $(X_m, \Delta_m)$ is a log terminal model 
{\em{(}}resp.~has a Mori fiber space structure{\em{)}} over some 
open neighborhood of $W$ when $K_X+\Delta$ is 
$\pi$-pseudo-effective {\em{(}}resp.~not 
$\pi$-pseudo-effective{\em{)}}. 
We note that each step $\varphi_i$ exists only after shrinking 
$Y$ around $W$ suitably. 
Hence we have to replace $Y$ with a small Stein open 
neighborhood 
of $W$ repeatedly in the above process. 
\end{thm}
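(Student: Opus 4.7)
The plan is to mimic the inductive scaling algorithm of \cite[Corollary 1.4.2]{bchm}, using the analytic versions of the cone, contraction, and flip-existence theorems already outlined in the introduction. First I would shrink $Y$ to a small Stein open neighborhood of $W$ on which (P) holds; by hypothesis $\Gamma(W,\mathcal{O}_Y)$ is noetherian, so $N_1(X/Y;W)$ is finite dimensional, Kleiman's criterion is available, and the cone and contraction theorems for the klt pair $(X,\Delta)$ apply over $W$.

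Set $(X_0,\Delta_0,C_0):=(X,\Delta,C)$ and assume inductively that $(X_i,\Delta_i)$ is klt with $K_{X_i}+\Delta_i+C_i$ nef over $W$, where $C_i$ denotes the strict transform of $C$. Define
\[
\lambda_i:=\inf\{\,t\geq 0:K_{X_i}+\Delta_i+tC_i\ \text{is nef over}\ W\,\}.
\]
If $\lambda_i=0$ the process stops and $(X_i,\Delta_i)$ is the desired log terminal model. Otherwise, an extremal-ray argument based on the analytic cone theorem and the $\pi$-bigness of $\Delta_i$, as in \cite[Lemma 3.10.4]{bchm}, produces a $(K_{X_i}+\Delta_i)$-negative extremal ray $R_i\subset\NE(X_i/Y;W)$ on which $K_{X_i}+\Delta_i+\lambda_i C_i$ vanishes. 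After shrinking $Y$ around $W$, the contraction theorem supplies a projective contraction $f_i\colon X_i\to Z_i$ over $Y$ of $R_i$: if $f_i$ is of fiber type we stop with a Mori fiber space; if $f_i$ is divisorial we set $X_{i+1}:=Z_i$; if $f_i$ is small we take $X_{i+1}$ to be its flip, whose existence is the analytic counterpart of \cite[Corollary 1.4.1]{bchm} and reduces, via the pl-flip machinery applied Stein-locally around the flipping locus, to finite generation of the relative log canonical ring. A short check gives $\lambda_{i+1}\leq\lambda_i$, preserves nefness of $K_{X_{i+1}}+\Delta_{i+1}+\lambda_i C_{i+1}$ over $W$, and preserves the klt condition and $\mathbb{Q}$-factoriality over $W$.

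The main step is termination of this sequence. Following \cite{bchm} I would proceed in two layers. First, since $\Delta$ is $\pi$-big and $W$ is Stein compact, a Stein-local Kodaira-type lemma yields, after further shrinking of $Y$, a decomposition $\Delta\sim_{\mathbb{R},\pi}A+B$ with $A$ a general $\pi$-ample $\mathbb{R}$-divisor and $B\geq 0$; this lets us replace $\Delta$ by a log smooth boundary and run the scaling MMP along the two-parameter polytope spanned by $\Delta$ and $\Delta+\lambda_0 C$. Second, combining the analytic analog of Theorem E (finiteness of log terminal models over $W$ for a compact rational polytope of klt boundaries) with special termination along strata shows, as in \cite[\S5]{bchm}, that the values $\lambda_i$ take only finitely many values and that flips along any fixed stratum occur only finitely often. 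Finiteness of models is the most delicate ingredient: its proof requires repeated shrinkings of $Y$, and the crucial substitute for the quasi-projectivity of the base is the noetherianity of $\Gamma(W,\mathcal{O}_Y)$, together with the finite dimensionality of $N_1(X/Y;W)$ noted in the introduction.

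The chief obstacle is precisely this termination together with the bookkeeping of successive shrinkings: at each step the extremal ray and its contraction, the flip, and the finiteness-of-models statement are only guaranteed after passing to a smaller Stein open neighborhood of $W$, and one has to verify that all properties in (P) persist and that numerical nefness on $\NE(X_i/Y;W)$ actually upgrades to genuine nefness over an open neighborhood of $W$ (compare Remark \ref{a-rem1.5}). Once the analytic versions of Theorems A through F are in place and termination is established, the output $(X_m,\Delta_m)$ is by construction a log terminal model or a Mori fiber space over some Stein open neighborhood of $W$, proving Theorem \ref{a-thm1.7}.
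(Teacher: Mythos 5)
Your proposal matches the paper's approach: the paper's proof of Theorem \ref{a-thm1.7} simply notes that this is a special case of the minimal model program with scaling under condition (i) of Section \ref{a-sec13}, with termination supplied by Theorem \ref{thm-e} via Theorem \ref{a-thm13.6}, and you reconstruct exactly this machinery (decompose the $\pi$-big $\Delta$ using the Stein-local Kodaira-type lemma, run the scaling MMP over $Y$ around $W$ with repeated shrinking, and terminate via finiteness of models). The only minor discrepancy is in the termination step: the paper's Theorem \ref{a-thm13.6} terminates the sequence directly by observing that each $(X_i,\Delta_i+\lambda_i C_i)$ is a weak log canonical model of $(X,\Delta+\lambda_i C)$ over $W$, that these $\Delta+\lambda_i C$ lie in a fixed rational polytope, and that Theorem \ref{thm-e} together with the negativity lemma forces the sequence to be finite; your ``special termination along strata'' phrasing more accurately describes the \emph{proof} of Theorem B and hence of Theorem G feeding into Theorem E, rather than the final termination argument itself, but since all of that is part of the same inductive package this is a difference of bookkeeping emphasis, not of substance.
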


By Theorem \ref{a-thm1.7}, we can prove:  

\begin{thm}[{\cite[Theorem 1.2]{bchm}}]\label{a-thm1.8}
Let $(X, \Delta)$ be a kawamata log terminal pair, where 
$K_X+\Delta$ is $\mathbb R$-Cartier. 
Let $\pi\colon X\to Y$ be a projective morphism 
of complex analytic spaces and let $W$ be 
a compact subset of $Y$ such that 
$\pi\colon X\to Y$ and $W$ satisfies {\em{(P)}}. 
If either $\Delta$ is $\pi$-big and 
$K_X+\Delta$ is $\pi$-pseudo-effective or $K_X+\Delta$ 
is $\pi$-big, then 
\begin{itemize}
\item[(1)] $K_X+\Delta$ has a log terminal model 
over some open neighborhood of $W$, 
\item[(2)] if $K_X+\Delta$ is $\pi$-big then 
$K_X+\Delta$ has a log canonical model 
over some open neighborhood of $W$, and 
\item[(3)] if $K_X+\Delta$ is $\mathbb Q$-Cartier, 
then 
\begin{equation*}
R(X/Y, K_X+\Delta)=\bigoplus _{m\in \mathbb N}
\pi_*\mathcal O_X(\lfloor m(K_X+\Delta)\rfloor)
\end{equation*}
is a locally finitely generated graded $\mathcal O_Y$-algebra. 
\end{itemize}
\end{thm}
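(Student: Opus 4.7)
The plan is to derive Theorem \ref{a-thm1.8} from Theorem \ref{a-thm1.7} by transcribing the reductions of \cite[Section 1]{bchm} into the analytic setting. First I would reduce to the case where $X$ is $\mathbb{Q}$-factorial over $W$ via a small $\mathbb{Q}$-factorial modification $f\colon\widetilde{X}\to X$. Setting $\widetilde{\Delta}:=f^{-1}_{*}\Delta$, the pair $(\widetilde{X},\widetilde{\Delta})$ is still kawamata log terminal with $K_{\widetilde{X}}+\widetilde{\Delta}=f^{*}(K_X+\Delta)$, the hypotheses on $\pi$-bigness and $\pi$-pseudo-effectivity transfer to $\pi\circ f$, and any log terminal (resp.\ log canonical) model over an open neighborhood of $W$ upstairs descends to one on $X$. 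The existence of such a modification is itself part of the package incorporated in Theorem \ref{a-thm1.6}.

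For (1), assume first that $\Delta$ is $\pi$-big and $K_X+\Delta$ is $\pi$-pseudo-effective. After shrinking $Y$ around $W$, $\pi$-bigness of $\Delta$ allows a decomposition $\Delta\sim_{\mathbb{R},\pi}A+B$ with $A$ a $\pi$-ample $\mathbb{R}$-divisor and $B\geq 0$, from which one produces an effective $\mathbb{R}$-divisor $C$ with $K_X+\Delta+C$ kawamata log terminal and $\pi$-ample on a neighborhood of $W$. Running the $(K_X+\Delta)$-minimal model program with scaling of $C$ via Theorem \ref{a-thm1.7}, the $\pi$-pseudo-effectivity of $K_X+\Delta$ excludes the Mori fiber case, so the output is a log terminal model over an open neighborhood of $W$. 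When instead $K_X+\Delta$ is $\pi$-big but $\Delta$ is not assumed to be so, I apply the standard perturbation trick: writing $K_X+\Delta\sim_{\mathbb{R},\pi}A+E$ with $A$ a $\pi$-ample $\mathbb{R}$-divisor and $E\geq 0$, set $\Delta':=\Delta+\epsilon(A'+E)$ for a sufficiently general $\pi$-ample $A'\sim_{\mathbb{R},\pi}A$ and small $\epsilon>0$. Then $(X,\Delta')$ remains kawamata log terminal, $\Delta'$ is $\pi$-big, and $K_X+\Delta'\sim_{\mathbb{R},\pi}(1+\epsilon)(K_X+\Delta)$ is $\pi$-big, so the first case produces a log terminal model for $(X,\Delta')$ which is simultaneously a log terminal model for $(X,\Delta)$.

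For (2), I start from the log terminal model $(X',\Delta')$ constructed in (1), observe that $K_{X'}+\Delta'$ is nef and big over $W$, and invoke the analytic base point free theorem (established elsewhere in the paper along the lines of \cite{nakayama2}) to conclude that $K_{X'}+\Delta'$ is $\pi'$-semi-ample over an open neighborhood of $W$; the resulting contraction is the log canonical model of $(X,\Delta)$. For (3), when $K_X+\Delta$ is $\pi$-big and $\mathbb{Q}$-Cartier, finite generation of $R(X/Y,K_X+\Delta)$ on a neighborhood of $W$ follows because the ring coincides locally with the analogous ring on the log canonical model $X^{c}$, where $K_{X^{c}}+\Delta^{c}$ is $\pi^{c}$-ample. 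When $K_X+\Delta$ is only $\pi$-pseudo-effective with $\Delta$ $\pi$-big, the same conclusion follows from finite generation on the log terminal model produced in (1), combined with the $\pi'$-semi-ampleness of $K_{X'}+\Delta'$.

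The logical skeleton is a faithful copy of the BCHM argument, so the main obstacle is not the combinatorics of the reduction but the analytic bookkeeping: each MMP step, each perturbation, and each invocation of the base point free theorem is valid only after replacing $Y$ by a smaller Stein open neighborhood of $W$, and one must verify that only finitely many such shrinkings are needed and that property (P) is preserved throughout. A subtle point worth singling out is Remark \ref{a-rem1.5}: the failure of ``$D\geq 0$ on $\NE(X/Y;W)$'' to imply ``$D$ is $\pi$-nef over a neighborhood of $W$'' forces the nefness-based arguments of \cite{bchm} to be rephrased directly in terms of $\pi$-nefness over an honest open neighborhood of $W$, rather than purely numerically on $W$.
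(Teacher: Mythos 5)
Your proposal is correct but takes a genuinely different route for part (1) than the paper does. The paper's proof is shorter: it applies Theorem \ref{thm-d} (nonvanishing) to produce $D\geq 0$ with $K_X+\Delta\sim_{\mathbb R}D$ and then invokes Theorem \ref{thm-c} directly, which already contains the full MMP machinery and, crucially, does \emph{not} require $X$ to be $\mathbb Q$-factorial over $W$ — so no preliminary small $\mathbb Q$-factorialization is needed, nor any direct appeal to the MMP with scaling of Theorem \ref{a-thm1.7}. Your route (pass to a $\mathbb Q$-factorialization via Theorem \ref{a-thm1.24}, run the MMP with scaling, descend the model back to $X$) is valid, but it is a detour: the descent step and the construction of the auxiliary ample divisor $C$ are exactly the bookkeeping that Theorem \ref{thm-c} was formulated to absorb. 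On the other hand, your explicit perturbation $\Delta\mapsto\Delta'=\Delta+\epsilon(A'+E)$ for the case where $K_X+\Delta$ is $\pi$-big but $\Delta$ is not is a point the paper's proof of Theorem \ref{a-thm1.8} silently elides, since Theorems \ref{thm-d} and \ref{thm-c} both formally demand $\Delta$ big over $Y$; making this reduction explicit is an improvement. For (2) the paper packages the basepoint-free step as Lemma \ref{a-lem11.16} (2)--(4), which in turn rests on Theorem \ref{a-thm8.3}; this matches your invocation of the analytic basepoint-free theorem. For (3) your argument agrees in substance with the paper's (transport the ring to the log terminal model and use Lemma \ref{a-lem2.36} together with Lemma \ref{a-lem2.26}), but note that the conclusion asserts local finite generation over all of $Y$, not merely over a neighborhood of $W$; the paper handles this by fixing an arbitrary $y\in Y$ and choosing a fresh Stein compact subset containing $y$, and you should do likewise rather than restricting to the given $W$.
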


Of course, Theorem \ref{a-thm1.8} is an analytic version of 
Theorem \ref{a-thm1.2}. 
We note that this paper is not self-contained. 
If the proof is essentially the same as the original one 
for quasi-projective varieties, then 
we will only explain how to modify 
arguments in \cite{bchm} and \cite{hacon-mckernan} 
in order to make them work 
for projective morphisms between analytic spaces 
satisfying (P). 
In this paper, we always assume that 
complex analytic spaces are {\em{Hausdorff}} and 
{\em{second-countable}}. 

\begin{say}[Motivation]\label{a-say1.9}
Let us explain the motivation of this paper. 
We sometimes have to consider the following 
setting when we study degenerations of 
algebraic varieties. 
Let $\pi\colon X\to \Delta$ be a projective morphism 
from a complex manifold $X$ onto a disc $\Delta=\{z
\in \mathbb C\, |\, |z|<1\}$ with connected 
fibers. Suppose that $\pi$ is smooth over $\Delta\setminus 
\{0\}$ and $K_{X_z}\sim _{\mathbb Q}0$ for every $z\in 
\Delta\setminus \{0\}$, where $X_z=\pi^{-1}(z)$, and 
that $\pi^*0$ is a reduced simple normal crossing divisor on $X$. 
Roughly speaking, $\pi\colon X\setminus \pi^{-1}(0)\to \Delta
\setminus \{0\}$ is a smooth projective family of 
Calabi--Yau manifolds and $\pi\colon X\to \Delta$ 
is a semistable degeneration. 
Since $\Delta$ is not a quasi-projective 
algebraic variety, we can not directly use \cite{bchm} for 
the study of $\pi\colon X\to \Delta$. By using 
the results established in this paper, 
after slightly shrinking $\Delta$ around $0$ repeatedly, 
we can construct a finite sequence of 
flips and divisorial contractions starting from $X$: 
\begin{equation*}
X=:X_0\dashrightarrow X_1\dashrightarrow X_2\dashrightarrow 
\cdots \dashrightarrow X_m 
\end{equation*}
over $\Delta$ such that $X_m$ is a minimal 
model of $X$ over $\Delta$. 
More precisely, $X_m$ has only terminal singularities 
and $K_{X_m}$ is $\mathbb Q$-linearly trivial. 
The above result is a typical application of our result in this paper, 
which is not covered by \cite{bchm}. 
It is a complex analytic generalization of 
the semistable minimal model program established 
in \cite{fujino-semistable}. 
\end{say}

\begin{say}[{Background, see \cite[3.5, 3.6]{fujino-fundamental}}]
\label{a-say1.10}
In the traditional framework of the minimal model program, 
the most important and natural object is a 
quasi-projective kawamata log terminal pair 
(see \cite{kmm}, \cite{kollar-mori}, \cite{matsuki}, \cite{bchm}, 
\cite{hacon-mckernan}, and so on). 
From the Hodge theoretic viewpoint, 
we think that there exists the following correspondence. 
\begin{equation*}
 \fbox{
\begin{tabular}{c}
\begin{minipage}{5.2cm}
Kawamata log terminal pairs
\end{minipage}
\end{tabular}
}
\Longleftrightarrow 
\fbox{
 \begin{tabular}{c}
\begin{minipage}{4.0cm}
Pure Hodge structures
\end{minipage}
\end{tabular}
}
\end{equation*}
We have already used mixed 
Hodge structures on cohomology with compact 
support systematically for the study of minimal models 
(see \cite{fujino-fundamental}, \cite{fujino-foundations}, and so on). 
Then we succeeded in greatly expanding the framework 
of the minimal model program. Roughly speaking, 
we established the following correspondence. 
\begin{equation*}
 \fbox{
\begin{tabular}{c}
\begin{minipage}{3.3cm}
Quasi-log schemes
\end{minipage}
\end{tabular}
}
\Longleftrightarrow 
\fbox{
 \begin{tabular}{c}
\begin{minipage}{4.3cm}
Mixed Hodge structures
\end{minipage}
\end{tabular}
}
\end{equation*}
For the details of this direction, see also 
\cite{fujino-slc-trivial}, \cite{fujino-fujisawa-liu}, 
\cite{fujino-cone}, and so on. 

On the other hand, from the complex analytic 
viewpoint, we know the following correspondence. 
\begin{equation*}
 \fbox{
\begin{tabular}{c}
\begin{minipage}{5.2cm}
Kawamata log terminal pairs
\end{minipage}
\end{tabular}
}
\Longleftrightarrow 
\fbox{
 \begin{tabular}{c}
\begin{minipage}{2.3cm}
$L^2$ condition
\end{minipage}
\end{tabular}
}
\end{equation*}
Hence, it is natural to think that we can generalize 
the minimal model program for quasi-projective kawamata log 
terminal pairs established in \cite{bchm} 
and \cite{hacon-mckernan} to the one in 
the complex analytic setting. 
We note that the projectivity plays a crucial role in the theory 
of minimal models. 
Therefore, it is reasonable to discuss the minimal model program 
for projective morphisms between complex analytic spaces. 
This naive idea is now realized in this paper. 

Unfortunately, we have not established 
complex analytic methods to treat varieties 
whose singularities are worse than kawamata log terminal singularities yet. 
Thus, it is a challenging problem 
to consider some analytic generalization 
of the theory of quasi-log schemes (see \cite[Chapter 6]{fujino-foundations} 
and \cite{fujino-on-quasi}). 
\end{say}

\begin{say}[How to use (P)]\label{a-say1.11}
Before we see the main results in 
Subsection \ref{a-subsec1.1}, let us explain how to use 
(P) for the reader's convenience. 
Let $\pi\colon X\to Y$ be a projective morphism 
of complex analytic spaces and let $W$ be a 
compact subset of $Y$  such that $\pi\colon X\to Y$ and $W$ 
satisfies (P). 
We usually consider an $\mathbb R$-divisor $\Delta$ on $X$ such 
that $K_X+\Delta$ is $\mathbb R$-Cartier. 
It sometimes happens that some properties hold true only 
over an open neighborhood $U$ of $W$. 
Since $W$ is a Stein compact subset of $Y$, we can always 
take a relatively compact Stein open neighborhood $Y'$ 
of $W$ in $Y$ satisfying 
\begin{equation*}
W \subset Y' \Subset U \subset Y. 
\end{equation*} 
Of course, $\pi'\colon X'\to Y'$ and $W$ satisfies (P), 
where $X':=\pi^{-1}(Y')$ and 
$\pi':=\pi|_{X'}$. 
We frequently replace $\pi\colon X\to Y$ with 
$\pi'\colon X'\to Y'$ without mentioning it explicitly. 
We note that the support of $\Delta$ 
is only locally finite by definition. 
In general, the support of $\Delta$ may have 
infinitely many irreducible components. 
By construction, $X'$ is a relatively compact 
open subset of $X$. 
Hence the support of $\Delta|_{X'}$ is finite. 
On the other hand, let $V$ be a relatively compact 
open neighborhood of $W$ in $Y$. 
Since $Y$ is Stein, we can take an Oka--Weil domain $V'$ 
satisfying 
\begin{equation*}
W\subset V\subset \overline {V} \subset 
V'\subset \overline{V'}\subset Y. 
\end{equation*} 
By construction, $V'$ can be seen as a 
complex analytic subspace 
of a polydisc. Hence we can take a 
semianalytic Stein compact subset $W'$ such that 
\begin{equation*}
W\subset V\subset \overline{V}\subset 
W' \subset V' \subset 
\overline {V'}\subset Y. 
\end{equation*} 
Note that $W'$ satisfies (P4) since 
it is semianalytic. 
Therefore, $\pi\colon X\to Y$ and $W'$ satisfies 
(P) and $W'$ contains a given 
relatively compact open neighborhood 
$V$ of $W$. 
This argument is useful and indispensable 
when we try to check that some properties 
hold true over an open neighborhood 
of $W$. 
For example, when we prove that $K_X+\Delta$ 
is nef over some open neighborhood 
of $W$, we sometimes have to consider $\pi\colon X\to Y$ and $W'$. 
\end{say}

\subsection{Main results}\label{a-subsec1.1}
Here, we state the main results of this paper. 
The following theorems look very similar to 
those in \cite{bchm} although they treat complex analytic spaces. 

\begin{theorema}[{Existence of pl-flips, \cite[Theorem A]{bchm}}]
\label{thm-a}
Let $\varphi\colon X\to Z$ be an analytic pl-flipping 
contraction for a 
purely log terminal pair $(X, \Delta)$. 
Then the flip $\varphi^+\colon X^+\to Z$ of $\varphi$ always 
exists. 
\end{theorema}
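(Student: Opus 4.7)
The plan is to adapt the strategy of \cite{bchm, hacon-mckernan} to the complex analytic relative setting, reducing the existence of the flip to local finite generation of the relative log canonical algebra, and then, via a plt center, to finite generation of a restricted algebra to which the lower-dimensional case of Theorem \ref{a-thm1.8} applies by induction on $\dim X$.

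First, using the machinery described in \ref{a-say1.11}, I would shrink $Z$ to a small Stein open neighborhood of a Stein compact subset $W$ satisfying (P), so that $\Gamma(W, \mathcal O_Z)$ is noetherian and $N_1(X/Z;W)$ is finite-dimensional. The existence of the flip $\varphi^+\colon X^+\to Z$ over a neighborhood of $W$ is then equivalent to
\[
R(X/Z, K_X+\Delta) = \bigoplus_{m\in\mathbb N}\varphi_*\mathcal O_X(\lfloor m(K_X+\Delta)\rfloor)
\]
being a locally finitely generated graded $\mathcal O_Z$-algebra, in which case $X^+ = \Projan_Z R(X/Z, K_X+\Delta)$. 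This is the standard reduction, and it goes through verbatim once we have noetherianity and Cartan's Theorem A on $Z$.

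By the definition of a pl-flipping contraction, $(X,\Delta)$ is purely log terminal and there is a component $S$ of $\lfloor\Delta\rfloor$ with $-S$ relatively ample over $Z$. Writing $\Delta = S+B$ with $\lfloor B\rfloor = 0$, adjunction gives a kawamata log terminal pair $(S, \Delta_S)$ with $(K_X+\Delta)|_S = K_S+\Delta_S$, and, after shrinking $Z$ around $W$, $\Delta_S$ is big over $\varphi(S)$. Following Hacon--McKernan it suffices to prove finite generation of the restricted algebra
\[
R_S = \bigoplus_{m\in\mathbb N}\mathrm{image}\bigl(\varphi_*\mathcal O_X(m(K_X+\Delta))\to (\varphi|_S)_*\mathcal O_S(m(K_S+\Delta_S))\bigr)
\]
for $m$ sufficiently divisible, since finite generation of $R_S$ can be lifted to $R(X/Z,K_X+\Delta)$ via the short exact sequence obtained from multiplication by a section of $\mathcal O_X(-S)$, the relative ampleness of $-S$, and noetherianity of $\Gamma(W,\mathcal O_Z)$.

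The heart of the argument is to identify $R_S$ with the log canonical algebra of some kawamata log terminal pair $(S, \Delta'_S)$ with $\Delta'_S$ big over $\varphi(S)$; once this is done, Theorem \ref{a-thm1.8}(3) applied over $\varphi(S)$ with compact subset $W\cap \varphi(S)$---available by the inductive hypothesis since $\dim S<\dim X$---yields finite generation of $R_S$. This identification rests on a relative Kawamata--Viehweg vanishing theorem and on the extension of pluricanonical sections through asymptotic multiplier ideals; the former is available in the analytic setting via \cite{nakayama2} and Takegoshi-type $L^2$ methods, while the $\sigma$-decomposition of pseudo-effective divisors adapts once $N_1(X/Z;W)$ is finite-dimensional. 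The main obstacle is reproducing the extension theorem of \cite{hacon-mckernan} in this relative analytic setting: multiplier ideals, diminished base loci, and the convex-geometric finiteness arguments of \cite{bchm} must all be checked to function relatively over a Stein compact subset, and property (P) is precisely what supplies the noetherianity and finite-dimensionality needed to make each step go through.
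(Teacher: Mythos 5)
Your proposal follows the same strategy as the paper: reduce to local finite generation of the relative log canonical $\mathcal O_Z$-algebra, then (via adjunction to $S = \lfloor\Delta\rfloor$ and the Hacon--McKernan lifting and extension machinery) reduce to finite generation of the restricted algebra $R_S$, which is handled one dimension down. The paper carries this out exactly in the form ``Theorem \ref{thm-f}$_{n-1}$ $\Rightarrow$ Theorem \ref{thm-a}$_n$'' by verifying, step by step, that \cite[Sections 3--8]{hacon-mckernan} function over a Stein compact subset $W$ with $\Gamma(W,\mathcal O_Z)$ noetherian, using Lemma~\ref{a-lem2.26}, Lemma~\ref{a-lem2.17}, the analytic Kawamata--Viehweg vanishing, and analytic resolution.

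One substantive correction: the inductive input you invoke should be Theorem~\ref{thm-f}$_{n-1}$, not merely Theorem~\ref{a-thm1.8}(3). The Hacon--McKernan argument (in particular its Section~7, the analytic analogue of which is the crucial step here) does not just need finite generation of a single $(n-1)$-dimensional log canonical ring; it needs the full quantitative content of Theorem~F$_{n-1}$---parts~(2) and~(3) controlling stable base loci and the fixed part $\Fix$ as the boundary varies in a rational polytope---to show that the restricted algebra is what you call an adjoint/Shokurov-type algebra to which the finiteness applies. Relatedly, the phrasing ``identify $R_S$ with the log canonical algebra of some klt pair $(S,\Delta'_S)$'' compresses what is actually a delicate sequence of reductions via asymptotic multiplier ideals and the extension theorem; there is no single klt pair whose ring equals $R_S$ on the nose, but rather a limiting argument. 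Also, the $\sigma$-decomposition you mention is not used in \cite{hacon-mckernan}; the relevant notions are the mobile/fixed decomposition and the stable base loci of Section~\ref{a-sec10}, which is what the paper adapts. None of this changes the skeleton of your argument, which is the right one.
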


\begin{theorema}[{Special finiteness, \cite[Theorem B]{bchm}}]
\label{thm-b}
Let $\pi\colon X\to Y$ be a projective morphism 
of complex analytic spaces and let $W$ be a compact 
subset of $Y$ such that $\pi\colon X\to Y$ and $W$ satisfies 
{\em{(P)}}. Suppose that 
$X$ is $\mathbb Q$-factorial over $W$. 
Let $V$ be a finite-dimensional affine subspace of 
$\WDiv_{\mathbb R}(X)$, which is defined over the rationals, 
let $S$ be the sum of finitely many prime divisors and let $A$ be 
a general $\pi$-ample $\mathbb Q$-divisor 
on $X$ such that the number of the irreducible components 
of $\Supp A$ is finite. 
Let $(X, \Delta_0)$ be a divisorial log terminal 
pair such that $S\leq \Delta_0$. 
We fix a finite set $\mathfrak C$ of prime divisors 
on $X$. 
Then, after shrinking $Y$ around $W$ suitably, 
there are finitely many bimeromorphic maps 
$\phi_i\colon X\dashrightarrow Z_i$ over $Y$ for 
$1\leq i\leq k$ with 
the following property. 
If $U$ is an open neighborhood of $W$ and 
$\phi\colon \pi^{-1}(U)\dashrightarrow Z$ is any 
weak log canonical model over $W$ 
of $(K_X+\Delta)|_{\pi^{-1}(U)}$ 
such that $Z$ is $\mathbb Q$-factorial over $W$, 
where $\Delta\in \mathcal L_{S+A}(V; \pi^{-1}(W))$, 
which only contracts elements of $\mathfrak C$ and 
which does not contract every component of $S$, 
then there exists an index $1\leq i\leq k$ such that, 
after shrinking $Y$ and $U$ around $W$ suitably, 
the induced bimeromorphic map 
$\xi\colon Z_i\dashrightarrow 
Z$ is an isomorphism 
in a neighborhood of the strict transform of $S$. 
\end{theorema}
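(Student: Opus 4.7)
The plan is to adapt the strategy of the BCHM proof of Theorem B by induction on $\dim X$, using as inductive hypothesis a finiteness-of-models statement one dimension lower (which in the analytic framework would be Theorem E of this paper applied to the boundary). The key reduction is that a weak log canonical model $\phi$ as in the statement, which does not contract every component of $S$, restricts via divisorial adjunction to a weak log canonical model of $(S, \Delta_S)$ where $\Delta_S$ is the different. Finiteness of such restrictions then has to be bootstrapped to finiteness of $\phi$ itself in a neighborhood of the strict transform of $S$.

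First, I would use property (P) as explained in \ref{a-say1.11} to pass to a relatively compact Stein open neighborhood of $W$ on which the polytope $\mathcal{L}_{S+A}(V;\pi^{-1}(W))$ involves only finitely many prime divisors and on which $N_1(X/Y;W)$ is finite-dimensional. Enlarging $W$ slightly to a semianalytic Stein compact subset lets me freely shrink $Y$ in later steps. Since $(X, \Delta_0)$ is divisorial log terminal with $S\leq \Delta_0$, divisorial adjunction provides a pair $(S, \Delta_S)$ with $K_S+\Delta_S$ being $\mathbb{R}$-Cartier, and this adjunction varies continuously as $\Delta$ moves in $\mathcal{L}_{S+A}(V;\pi^{-1}(W))$.

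The core step is to apply the inductive finiteness for weak log canonical models of $(S, \Delta_S)$ over an open neighborhood of $W\cap \pi(S)$, which yields only finitely many possibilities $\phi_{S,i}\colon S\dashrightarrow T_i$ up to isomorphism near $W$. For each one I would construct a lift $\phi_i\colon X\dashrightarrow Z_i$ by running a suitable $(K_X+\Delta)$-minimal model program with scaling over $Y$ (Theorem~\ref{a-thm1.7}), whose steps are realized using the pl-flip theorem (Theorem~\ref{thm-a}). Given a weak log canonical model $\phi$ as in the statement, I would then show that after matching $\phi|_S$ with the appropriate $\phi_{S,i}$, the induced bimeromorphic map $\xi\colon Z_i\dashrightarrow Z$ is automatically an isomorphism in a neighborhood of the strict transform of $S$: this follows by the negativity lemma applied to a common resolution, together with the $\mathbb{Q}$-factoriality of $Z$ over $W$ and the hypothesis that $\phi$ contracts only divisors from the finite set $\mathfrak{C}$, so that the discrepancy calculation has only finitely many participating divisors.

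The main obstacle I expect is the uniformity of shrinking: each of the finitely many cases produces its own open neighborhood of $W$ over which the conclusion holds, and these neighborhoods must be intersected while preserving (P1)--(P4). I would address this with the semianalytic enlargement trick from \ref{a-say1.11}, observing that the noetherianity of $\Gamma(W, \mathcal{O}_Y)$, equivalently condition (P4), furnishes the combinatorial finiteness needed to bound both the number of inductive iterations and the set of divisors that can be extracted by the various $\phi_i$. A secondary subtlety is that $\mathbb{Q}$-factoriality is only assumed over $W$ rather than globally on $X$ or $Z$, so every appeal to the negativity lemma, to Kleiman's criterion, and to the cone theorem must be localized to $\pi^{-1}$ of an open neighborhood of $W$; this is precisely the setting in which the formulation of (P) was designed to operate, and no new difficulty beyond careful bookkeeping of the shrinking should arise.
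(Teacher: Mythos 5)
Your high-level reduction is the right one: Theorem B$_n$ in this paper is deduced from Theorem E$_{n-1}$ applied, via divisorial adjunction, to the pairs $(S_j, \Delta_{S_j})$ living on the components of $S$, and the passage between "over $W$" and "over an open neighborhood of $W$" is managed exactly by the semianalytic enlargement trick from \ref{a-say1.11}. That much matches the paper, which in Section~\ref{a-sec16} follows the proof of Lemma 4.4 of \cite{bchm} essentially verbatim, substituting Lemma~\ref{a-lem11.14} for \cite[Lemma~3.7.4]{bchm} and Lemma~\ref{a-lem12.3} (under hypothesis~(i), since the target is $\mathbb{Q}$-factorial over $W$) for \cite[Lemma~3.6.12]{bchm}.

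The gap is in how you propose to produce the finitely many $\phi_i\colon X\dashrightarrow Z_i$. You say you would construct a lift of each $\phi_{S,i}\colon S\dashrightarrow T_i$ by running a $(K_X+\Delta)$-minimal model program with scaling via Theorem~\ref{a-thm1.7}, with steps supplied by the pl-flip theorem (Theorem~\ref{thm-a}). This is circular in the induction of Subsection~\ref{a-subsec1.2}: Theorem~\ref{a-thm1.7} is only established in Section~\ref{a-sec20} after all of Theorems~\ref{thm-a}--\ref{thm-g} are in place, and the existence of general ($\mathbb{Q}$-factorial dlt) flips needed to run such an MMP (Theorem~\ref{a-thm17.9}) requires Theorem~\ref{thm-g}$_n$, which itself requires Theorem~\ref{thm-b}$_n$ as an input. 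Restricting to pl-flips alone does not repair this, because the termination statement for the pl-MMP near $S$ (Lemma~\ref{a-lem17.2}) is likewise derived from Theorem~\ref{thm-b}$_n$. The actual argument of \cite[Lemma~4.4]{bchm} does not construct the models at all: it is a pure comparison argument showing that two weak log canonical models which contract the same subset of $\mathfrak C$ and induce, via adjunction, the same model on each component of $S$ are forced (by the negativity lemma and $\mathbb{Q}$-factoriality over $W$) to be isomorphic near the strict transform of $S$; one then takes one representative $Z_i$ for each combination of a face of the polytope, a subset of $\mathfrak C$, and a model $T_j$ from the list furnished by Theorem~\ref{thm-e}$_{n-1}$. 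Relatedly, you index the $Z_i$ only by the finitely many models of $(S,\Delta_S)$, but you also need to fiber over the finitely many subsets of $\mathfrak C$ that a given $\phi$ can contract; without that, the matching step you describe is not determined by $\phi|_S$ alone.
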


\begin{theorema}[{Existence of log terminal 
models, \cite[Theorem C]{bchm}}]\label{thm-c}
Let $\pi\colon X\to Y$ be a projective 
surjective morphism of complex analytic spaces 
and let $W$ be a compact subset of $Y$ 
such that $\pi\colon X\to Y$ and $W$ satisfies 
{\em{(P)}}. 
Suppose that $(X, \Delta)$ is kawamata log terminal 
and that $\Delta$ is big over $Y$. 
If there exists an $\mathbb R$-divisor 
$D$ on $X$ such that $K_X+\Delta\sim _{\mathbb R} D\geq 0$, 
then $(X, \Delta)$ has a log terminal model over 
some open neighborhood of $W$. 
\end{theorema}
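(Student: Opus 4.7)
The plan is to mirror the argument of \cite[Theorem C]{bchm}: run a $(K_X+\Delta)$-MMP over $Y$ with scaling, construct any flip that appears via Theorem A, and force termination using Theorem B together with the finite-dimensionality of $N_1(X/Y;W)$. The main analytic twist is that each step may only exist after shrinking $Y$ around $W$, so we work throughout on a relatively compact Stein open neighborhood of $W$ in the spirit of \ref{a-say1.11}, replacing $Y$ repeatedly as needed.

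First I would carry out the standard reduction to a convenient shape. After replacing $Y$ by a relatively compact Stein open neighborhood of $W$, only finitely many irreducible components of $\Delta$ and of $D$ meet $\pi^{-1}(W)$, so we may treat both as finite sums of prime divisors. Since $\Delta$ is $\pi$-big, an analytic Kodaira lemma produces $\Delta\sim_{\mathbb R}A'+B'$ with $A'$ a general $\pi$-ample $\mathbb Q$-divisor and $B'\geq 0$. A small perturbation of coefficients, followed by a $\mathbb Q$-factorial dlt modification constructed from analytic log resolutions for projective morphisms, reduces us to the case where $(X,\Delta)$ is $\mathbb Q$-factorial divisorial log terminal over $W$, the boundary decomposes as $\Delta=A+B$ with $A$ a general $\pi$-ample $\mathbb Q$-divisor and $B\geq 0$, and $K_X+\Delta\sim_{\mathbb R}D\geq 0$. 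Standard arguments show that a log terminal model of the modified pair descends to a log terminal model of the original pair.

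Next I would run the $(K_X+\Delta)$-MMP over $Y$ around $W$ with scaling of a sufficiently $\pi$-ample divisor $H$. Each step is produced by the analytic cone and contraction theorems recalled in the introduction (after Nakayama). Divisorial contractions pose no issue. A flipping contraction is handled by reducing to a pl-flip, as in \cite[Section 4]{bchm}: one passes to a suitable dlt modification on which the flipping contraction becomes a pl-flipping contraction, applies Theorem A to produce the flip there, and descends it to the original. A feature of the scaling MMP is that the pushed-forward image of $D$ remains effective and a general $\pi$-ample summand comparable to $A$ survives at every stage, so each intermediate pair remains kawamata log terminal with big boundary and with effective log canonical divisor.

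The principal obstacle is termination. Since $N_1(X/Y;W)$ is finite-dimensional by (P4), only finitely many divisorial contractions can occur, and it remains to rule out an infinite sequence of flips. This is where Theorem B enters: applied to the fixed finite collection of prime divisors consisting of the strict transforms of the components of $\lfloor B\rfloor$ and of $\Supp D$, it forces that after finitely many flips the induced bimeromorphic map becomes an isomorphism in a neighborhood of the strict transform of every such divisor, reducing the analysis to the complementary kawamata log terminal region where an inductive argument on dimension (using Theorem C in dimension $<\dim X$) yields termination of the remaining flips. Since $K_X+\Delta$ is $\pi$-pseudo-effective by the hypothesis $K_X+\Delta\sim_{\mathbb R}D\geq 0$, the MMP cannot terminate in a Mori fiber space, so the terminal output is a log terminal model of $(X,\Delta)$ over some open neighborhood of $W$, as required.
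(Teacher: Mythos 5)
Your plan is in the right general spirit—reduce to a convenient shape, run an MMP that produces only pl-flips so Theorem \ref{thm-a} applies, and control termination via Theorem \ref{thm-b}—but the description of the two key mechanisms is off, and as written the argument does not go through.

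First, the flip construction. You write that a flipping contraction appearing in the $(K_X+\Delta)$-MMP is handled by passing to a dlt modification on which it ``becomes a pl-flipping contraction.'' That is not how the reduction works, and for an arbitrary klt flipping contraction no such modification exists. The actual mechanism in \cite[Section 5]{bchm} (translated in Section \ref{a-sec17} of the paper) is structural: one first decomposes the effective divisor $D\sim_{\mathbb R}K_X+\Delta$ into mobile and fixed parts (Lemma \ref{a-lem10.9}), passes to a log resolution so that the fixed part sits inside a reduced simple normal crossing divisor $S$, and then sets up an MMP in which the controlling hypothesis is $K_X+\Delta+g^*H\sim_{\mathbb R} D+\alpha C$ with $\Supp D\subset S$ (this is exactly the shape of Lemma \ref{a-lem17.3}). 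With this hypothesis, whenever the scaling parameter $\lambda$ is positive, the negative extremal ray $R$ satisfies $C\cdot R>0$, hence $D\cdot R<0$, hence $R$ is negative on a component of $S$. It is this numerical configuration—not a modification applied after the fact—that forces every flipping contraction in the MMP to be an analytic pl-flipping contraction in the sense of Definition \ref{a-def15.1}, so that Theorem \ref{thm-a}$_n$ applies. Without arranging this, nothing in your argument supplies the needed flips, since klt flips in full generality are a consequence of Theorem \ref{thm-g} rather than an input to it.

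Second, termination. You invoke Theorem \ref{thm-b} on a finite collection of divisors and then appeal to ``an inductive argument on dimension (using Theorem C in dimension $<\dim X$)'' to finish termination. The paper does not (and need not) use Theorem \ref{thm-c}$_{n-1}$ here. Lemma \ref{a-lem17.2}, which is the consequence of Theorem \ref{thm-b}$_n$, shows that the $(K_X+\Delta+g^*H)$-MMP with scaling that does not contract $S$ is eventually disjoint from $S$. That alone suffices for termination precisely because of the hypothesis $\Supp D\subset S$ above: if some step had $\lambda>0$, the contracted ray would have to meet $S$, so once the MMP is disjoint from $S$ we must have $\lambda=0$, i.e.\ the MMP has stopped. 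There is no further recursive application of Theorem C in lower dimension in this step. (Also, minor but worth flagging: after your reduction to a klt pair with $\mathbb Q$-factorial dlt shape, $\lfloor B\rfloor=0$, so ``the components of $\lfloor B\rfloor$'' do not contribute; and the citation ``\cite[Section 4]{bchm}'' is for special finiteness (Theorem B), not for the pl-flip reduction, which is Section 5.)

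Finally, the paper proves the slightly more general Theorem \ref{thm-g} (with an intermediate $Y^\flat$) and obtains Theorem \ref{thm-c} as the special case $Y^\flat=Y$; in the analytic setting one then needs Lemma \ref{a-lem9.4} and Theorem \ref{a-thm9.3} to convert an MMP over $Y$ around $W$ (where (P4) lives) into an MMP over $Y^\flat$. This extra layer is required for the induction to close (it feeds Theorems \ref{thm-e}$_n$ and \ref{thm-f}$_n$), and your plan does not account for it. Your plan also implicitly uses neutral models (Definition \ref{a-def17.4}, Lemmas \ref{a-lem17.5} and \ref{a-lem17.6}) without naming them: the intermediate object you are producing is a neutral model, and one must separately check it is a log terminal model.
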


\begin{theorema}[{Nonvanishing theorem, \cite[Theorem D]{bchm}}]
\label{thm-d}
Let $(X, \Delta)$ be a kawamata log terminal pair and let 
$\pi\colon X\to Y$ be a projective morphism 
of complex analytic spaces such that $Y$ is Stein. 
Assume that $\Delta$ is big over $Y$ and that 
$K_X+\Delta$ 
is pseudo-effective over $Y$. 
Let $U$ be any relatively compact Stein open subset of $Y$. 
Then there exists a globally $\mathbb R$-Cartier 
$\mathbb R$-divisor $D$ on 
$\pi^{-1}(U)$ such that $(K_X+\Delta)|_{\pi^{-1}(U)}
\sim _{\mathbb R}D\geq 0$. 
\end{theorema}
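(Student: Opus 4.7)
The plan is to reduce the problem to a BCHM-style induction carried out in the complex analytic setting, replacing quasi-projectivity of the base by Steinness of $Y$ together with a suitable Stein compact subset satisfying (P).

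First, I would localize the problem. Following the construction described in \ref{a-say1.11}, pick a semianalytic Stein compact subset $W\subset Y$ with $\overline{U}$ contained in the interior of $W$, so that $\pi\colon X\to Y$ and $W$ satisfies (P). After shrinking $Y$ to a small Stein open neighborhood of $W$, the support of $\Delta$ has only finitely many irreducible components and $N_1(X/Y;W)$ is finite-dimensional. In this situation, the analytic cone and contraction theorems apply, so the usual BCHM toolkit is available with minor modifications. The goal is then to produce a globally $\mathbb{R}$-Cartier effective $D$ on $\pi^{-1}(U)$ with $D\sim_{\mathbb{R}}(K_X+\Delta)|_{\pi^{-1}(U)}$, where the Steinness of $U$ is what allows us to upgrade sheaf-theoretic information into actual global sections.

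Next, I would reduce to the case of a $\mathbb{Q}$-boundary. Because $\Delta$ is $\pi$-big, we can write $\Delta\sim_{\mathbb{R},\pi}A+B$ with $A$ a general $\pi$-ample $\mathbb{Q}$-divisor and $B\ge 0$, and then perturb the coefficients of $\Delta$ inside the klt locus to obtain a nearby $\mathbb{Q}$-boundary whose log canonical divisor is still pseudo-effective. Using the standard Diophantine-approximation argument of BCHM, the desired $\mathbb{R}$-divisor $D$ for the original $\Delta$ is recovered from finitely many rational perturbations. It therefore suffices to produce a non-zero element of $\Gamma(U,\pi_*\mathcal{O}_X(m(K_X+\Delta)))$ for some $m$ in the $\mathbb{Q}$-boundary case, which by Cartan's Theorem A on the Stein $U$ gives a non-zero section on $\pi^{-1}(U)$.

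The core of the proof is an induction on $\dim X$ in the BCHM spirit. After passing to a log resolution and invoking Kodaira's lemma (valid because $\Delta$ is $\pi$-big), we may assume $(X,\Delta)$ is dlt and $\lfloor\Delta\rfloor$ contains a prime divisor $S$. Adjunction produces a klt pair $(S,\Delta_S)$ of smaller dimension with $\Delta_S$ big over the image of $S$ and $K_S+\Delta_S$ pseudo-effective over that image, and the inductive hypothesis supplies an effective $D_S\sim_{\mathbb{R}}(K_S+\Delta_S)|_{\pi^{-1}(U)\cap S}$. To lift $D_S$ back to $\pi^{-1}(U)$, I would use the short exact sequence $0\to\mathcal{O}_X(m(K_X+\Delta)-S)\to\mathcal{O}_X(m(K_X+\Delta))\to\mathcal{O}_S(m(K_S+\Delta_S))\to 0$ combined with an analytic Kawamata--Viehweg vanishing theorem for projective morphisms to a Stein base, after twisting by a sufficiently $\pi$-ample divisor pulled back from $Y$ and using Cartan's Theorem B to vanish the relevant $H^1$ on $U$.

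The main obstacle is that, just as in \cite{bchm}, this nonvanishing is not really proved in isolation: its inductive step relies on the existence of log terminal models (Theorem C) in lower dimension, which in turn uses pl-flips (Theorem A) and special finiteness (Theorem B). Each of these steps may demand a further shrinking of $Y$ around $W$, and one must check that the composed shrinking still contains $\overline{U}$ so that the final statement holds on $\pi^{-1}(U)$; this is precisely what the flexibility in \ref{a-say1.11} for enlarging $W$ by a semianalytic Stein compact set is designed to provide. A secondary difficulty is the lifting step: $S$ does not come with its own Stein compact structure automatically, so one must transport the compact subset and the Stein open neighborhoods from $Y$ through $\pi|_S$, and verify that the analytic vanishing combined with Cartan's theorems really produces a global section on $\pi^{-1}(U)$ rather than merely stalkwise data.
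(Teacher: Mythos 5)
Your proposal takes a genuinely different and substantially harder route than the paper, and in fact misses the key simplification the author makes for precisely this theorem.

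The paper proves Theorem \ref{thm-d} by a direct reduction to the already-established projective case, with no induction on dimension and no use of Theorems A, B, C in lower dimension. After noting (Lemma \ref{a-lem2.37}) that $(K_X+\Delta)|_{\pi^{-1}(U)}$ is globally $\mathbb R$-Cartier, one restricts to an analytically sufficiently general fiber $F$ of $\pi$. Then $(F,\Delta|_F)$ is a projective klt pair with $\Delta|_F$ big and $K_F+\Delta|_F$ pseudo-effective, so the \emph{algebraic} BCHM nonvanishing theorem gives $K_F+\Delta|_F\sim_{\mathbb R}D'\geq 0$. The crucial step is then Lemma \ref{a-lem2.53}: a globalization lemma, proved via the semicontinuity theorem, which upgrades the fiber-wise nonvanishing on $F$ to a globally $\mathbb R$-Cartier effective $D$ on $\pi^{-1}(U)$ with $(K_X+\Delta)|_{\pi^{-1}(U)}\sim_{\mathbb R}D$. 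This is the reason why, in Subsection \ref{a-subsec1.2} and Remark \ref{a-rem1.33}, Theorem \ref{thm-d} is deliberately pulled out of the inductive chain: it holds in every dimension unconditionally, and the induction for A, B, C, E, F, G can then proceed using D as a known fact rather than as a target.

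Your proposal instead tries to rerun the internal BCHM induction (adjunction to a component $S$ of $\lfloor\Delta\rfloor$, lower-dimensional nonvanishing, lifting via a relative Kawamata--Viehweg vanishing), which is exactly the structure BCHM use to prove Theorem D for the first time, but which the paper avoids because it is entitled to cite their result as a black box. This is a genuine gap in your plan: you never identify the single lemma (Lemma \ref{a-lem2.53}) that carries the whole analytic content, and you conjure a much heavier dependency graph than the paper actually uses. There is also a more local error: in your lifting step you propose to make the $H^1$ vanish by ``twisting by a sufficiently $\pi$-ample divisor pulled back from $Y$,'' but any divisor pulled back from $Y$ is trivial along the fibers of $\pi$ and therefore does nothing for relative positivity; to control $R^1\pi_*$ you would have to twist by something genuinely $\pi$-ample, and then you would need to explain how to remove that twist afterwards (which is the Diophantine/limiting argument at the heart of BCHM's proof and not something you can absorb by Cartan's theorems alone). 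If you wanted to pursue your route honestly you would essentially be re-proving the algebraic Theorem D from scratch in the analytic category; the paper's observation is that this is unnecessary because restriction to a general fiber reduces the statement to a projective variety over a point, where BCHM already applies.
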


\begin{theorema}[{Finiteness of models, \cite[Theorem E]{bchm}}]
\label{thm-e}
Let $\pi\colon X\to Y$ be a projective 
morphism of complex analytic spaces 
and let $W$ be a compact 
subset of $Y$ such that 
$\pi\colon X\to Y$ and $W$ satisfies {\em{(P)}}. 
We fix a general $\pi$-ample $\mathbb Q$-divisor $A\geq 0$ on $X$ 
such that the number of the irreducible components 
of $\Supp A$ is finite. 
Let $V$ be a finite-dimensional 
affine subspace of $\WDiv_{\mathbb R} (X)$ which 
is defined over the rationals. 
Suppose that there is a kawamata log terminal 
pair $(X, \Delta_0)$. 
Then, after shrinking $Y$ around 
$W$ suitably, there are finitely many bimeromorphic 
maps $\psi_j\colon X\dashrightarrow Z_j$ over $Y$ for $1\leq j\leq 
l$ with the following property. 
If $U$ is an open neighborhood of $W$ and $\psi\colon 
\pi^{-1}(U)\dashrightarrow Z$ is 
a weak log canonical model of $(K_X+\Delta)|_{\pi^{-1}(U)}$ 
over $W$ for some 
$\Delta\in \mathcal L_A(V; \pi^{-1}(W))$, then 
there exists an index $1\leq j\leq l$ and an isomorphism 
$\xi\colon Z_j\to Z$ such that 
$\psi=\xi\circ \psi_j$ after shrinking $Y$ and $U$ around 
$W$ suitably.  
\end{theorema}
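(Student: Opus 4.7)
The plan is to combine Theorem \ref{thm-b} (Special finiteness) with the compactness of the rational polytope $\mathcal L_A(V;\pi^{-1}(W))$ and to run an induction on $\dim V$. The argument parallels \cite[\S 7]{bchm}, which deduces the analogous quasi-projective statement from Theorems A through D, and has to be carried out while repeatedly shrinking $Y$ to relatively compact Stein open neighborhoods of $W$ as explained in \ref{a-say1.11}.

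The first step is a reduction to the pseudo-effective locus. If $K_X+\Delta$ is not pseudo-effective over $W$, then $(X,\Delta)$ admits no weak log canonical model, so only the closed subset $\mathcal L_A^{\rm psef}\subset \mathcal L_A(V;\pi^{-1}(W))$ of boundaries with $K_X+\Delta$ pseudo-effective over $W$ is relevant. After shrinking $Y$ around $W$ we may assume $\Supp(\Delta_0+A)$ consists of finitely many prime divisors, so that $\mathcal L_A(V;\pi^{-1}(W))$ is a genuine compact rational polytope. Hence it suffices to produce, for each $\Delta\in \mathcal L_A^{\rm psef}$, an open neighborhood $U_\Delta\subset \mathcal L_A$ over which only finitely many weak log canonical models arise; a finite subcover then yields the global statement.

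For the local statement at a fixed $\Delta$, I would first perturb along $A$ to obtain $\Delta^\ast\in\mathcal L_A$ close to $\Delta$ with $(X,\Delta^\ast)$ divisorial log terminal and $S:=\lfloor\Delta^\ast\rfloor$ nonzero and arranged to contain any prescribed finite set of prime divisors. Applying Theorem \ref{thm-b} to $(X,\Delta^\ast)$, $S$, $A$, and $V$ produces, after shrinking $Y$, finitely many bimeromorphic maps $\phi_i\colon X\dashrightarrow Z_i$ over $Y$ such that every weak log canonical model $\psi\colon \pi^{-1}(U)\dashrightarrow Z$ for $\Delta'\in \mathcal L_A(V;\pi^{-1}(W))$ near $\Delta^\ast$ is isomorphic to some $Z_i$ in a neighborhood of the strict transform of $S$. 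The induction on $\dim V$ is what makes the perturbation admissible: on the boundary of $\mathcal L_A$ one descends to a lower-dimensional rational affine subspace and applies the inductive hypothesis, while the relative interior is handled by the perturbation above.

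The main obstacle is upgrading the isomorphism along $S$ provided by Theorem \ref{thm-b} to a genuine bimeromorphic isomorphism $\xi\colon Z_i\to Z$ over $Y$, which is what the statement demands. Following \cite{bchm}, one chooses $S$ so that its components cut out enough of the bimeromorphic structure of any weak log canonical model: once $Z$ and $Z_i$ are identified near the strict transform of $S$, the negativity lemma together with the uniqueness of the ample model of $K_X+\Delta'+\varepsilon A$ (which exists by Theorem \ref{thm-c}, applied after slight perturbation into the klt big regime) forces $Z\cong Z_i$ globally over a small open neighborhood of $W$. Any intermediate MMPs that appear in this comparison argument are performed using the existence of pl-flips guaranteed by Theorem \ref{thm-a}, and every shrinking of $Y$ is done through a semianalytic Stein compact subset so that (P) is preserved throughout.
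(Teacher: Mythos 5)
Your approach deviates from the paper's, and the deviation contains a genuine gap. The paper proves Theorem \ref{thm-e}$_n$ from Theorem \ref{thm-g}$_n$ (not Theorem \ref{thm-b}$_n$), following \cite[Section 7]{bchm}: Lemma \ref{a-lem18.2} establishes finiteness of \emph{log terminal models} for klt boundaries in a rational polytope using Theorem \ref{thm-d}, Theorem \ref{thm-g}$_n$, and Lemma \ref{a-lem11.14}; Lemma \ref{a-lem18.3} then passes to finiteness of \emph{weak log canonical models} using Corollary \ref{a-cor11.19} (the polytope structure of $\mathcal W_{\phi,A,\pi}$ and the finiteness of the contraction morphisms $Z\to Z_i$) together with Lemma \ref{a-lem12.3} (that $\mathcal W^\sharp_{\phi,A,\pi}$ equals the closure of $\mathcal A_{\phi,A,\pi}$); and Lemma \ref{a-lem18.4} specializes to $\mathcal C=\mathcal L_A(V;\pi^{-1}(W))$. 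Theorem \ref{thm-b} does not appear in that chain, except indirectly because it feeds into Theorem \ref{thm-g}$_n$ through Section \ref{a-sec17}. (Your citation of \cite[\S 7]{bchm} is also inconsistent with your strategy: the Theorem B-style ``isomorphism near the strict transform of $S$'' is the content of \cite[Section 4]{bchm}, not Section 7.)

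The substantive gap is the final step. Theorem \ref{thm-b} only produces models $Z_i$ that agree with $Z$ \emph{in a neighborhood of the strict transform of $S$}, and it does so only for weak log canonical models $Z$ that are $\mathbb Q$-factorial over $W$, that contract only elements of a prescribed finite set $\mathfrak C$, and that do not contract every component of $S$. Theorem \ref{thm-e} demands a genuine global isomorphism $\xi\colon Z_j\to Z$ with no such auxiliary constraints. You flag this and gesture at ``choosing $S$ so that its components cut out enough of the bimeromorphic structure'' plus the negativity lemma and uniqueness of the ample model, but this is precisely what needs a proof, and no argument is offered: two weak log canonical models that agree near $S$ need not be globally isomorphic (they need not even have the same exceptional divisors away from $S$), and the ample model is unique only after one has already analyzed which faces of the nef cone are realized over $W$ --- which is the content of Theorem \ref{a-thm11.17}, Corollary \ref{a-cor11.19}, and Lemma \ref{a-lem12.3}, none of which appear in your write-up. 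The reduction to boundaries with $\lfloor\Delta^*\rfloor=S\neq 0$ is also going the wrong way: the paper perturbs to reduce to the klt case (Lemma \ref{a-lem11.14}), not to manufacture a boundary with nontrivial round-down. Without the ample-model / polytope machinery of Section \ref{a-sec11} and Lemma \ref{a-lem12.3}, the argument as written does not close.
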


\begin{theorema}[{Finite generation, \cite[Theorem F]{bchm}}]
\label{thm-f}
Let $\pi\colon X\to Y$ be a projective morphism of complex 
analytic spaces and let $W$ be a compact subset of $Y$ 
such that $\pi\colon X\to Y$ and $W$ satisfies 
{\em{(P)}}. 
Let $(X, \Delta =A+B)$ be a kawamata 
log terminal pair, 
where $A\geq 0$ is 
a $\pi$-ample $\mathbb Q$-divisor and $B\geq 0$. 
We assume that the number of the irreducible components of 
$\Supp \Delta$ is finite. 
If $K_X+\Delta$ is pseudo-effective over $Y$, then 
\begin{itemize}
\item[(1)] After shrinking $Y$ around $W$ suitably, 
the pair $(X, \Delta)$ has a log terminal model 
$\mu\colon X\dashrightarrow Z$ over $Y$. 
In particular if $K_X+\Delta$ is $\mathbb Q$-Cartier, then the 
log canonical ring 
\begin{equation*}
R(X/Y, K_X+\Delta)=\bigoplus_{m\in \mathbb N} 
\pi_*\mathcal O_X(\lfloor m(K_X+\Delta)\rfloor)
\end{equation*} 
is a locally finitely generated graded $\mathcal O_Y$-algebra. 
\item[(2)] Let $V\subset \WDiv_{\mathbb R} (X)$ be the 
vector space spanned by the components of $\Delta$. 
Then, after shrinking $Y$ around $W$ suitably, 
there is a constant $\delta>0$ such that 
if $G$ is a prime divisor contained in 
the stable base locus of $K_X+\Delta$ 
over $Y$ and $\Xi\in 
\mathcal L_A(V; \pi^{-1}(W))$ such that 
$|\!| \Xi-\Delta|\!|<\delta$, 
then $G$ is contained in the stable base locus of 
$K_X+\Xi$ over $Y$. 
\item[(3)] 
Let $V'\subset V$ be the smallest affine subspace of 
$\WDiv_{\mathbb R}(X)$ 
containing $\Delta$, which is defined over the 
rationals. 
Then, after shrinking $Y$ around $W$ suitably, 
there exists a constant $\eta>0$ and a 
positive integer $r>0$ such that 
if $\Xi\in V'$ is any divisor 
and $k$ is any positive integer such that 
$|\!| \Xi-\Delta|\!|<\eta$ and $k(K_X+\Xi)/r$ is 
Cartier, then every component of $\Fix(k(K_X+\Xi))$ 
is a component of the stable base locus 
of $K_X+\Delta$ over $Y$. 
\end{itemize}
\end{theorema}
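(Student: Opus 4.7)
The plan is to derive Theorem \ref{thm-f} from Theorems \ref{thm-a}--\ref{thm-e} following \cite[Theorem F]{bchm}, inserting the analytic shrinking procedure of \ref{a-say1.11} whenever a statement must hold over an open neighborhood of $W$. Throughout, I first shrink $Y$ to a relatively compact Stein open neighborhood of $W$ in $Y$ so that $\Supp\Delta$ has only finitely many components on $X$.

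For part (1), $\Delta=A+B$ is $\pi$-big and $K_X+\Delta$ is $\pi$-pseudo-effective, so Theorem \ref{thm-d} produces on $\pi^{-1}(U)$ for a suitable $U\supset W$ an effective $\mathbb{R}$-Cartier divisor $D$ with $K_X+\Delta\sim_{\mathbb{R}}D$, after which Theorem \ref{thm-c} delivers a log terminal model $\mu\colon X\dashrightarrow Z$ over a neighborhood of $W$. If $K_X+\Delta$ is $\mathbb{Q}$-Cartier, $K_Z+\mu_*\Delta$ is nef and big, hence semiample over an open neighborhood of $W$ by the analytic base point free theorem, so $R(Z/Y,K_Z+\mu_*\Delta)$ is a locally finitely generated $\mathcal{O}_Y$-algebra; since $\mu$ extracts no divisors and $a(E;X,\Delta)<a(E;Z,\mu_*\Delta)$ for every $\mu$-exceptional prime $E$, the canonical isomorphism $\pi_*\mathcal{O}_X(\lfloor m(K_X+\Delta)\rfloor)\cong(\pi_Z)_*\mathcal{O}_Z(\lfloor m(K_Z+\mu_*\Delta)\rfloor)$ descends finite generation to $X$. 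For part (2), Theorem \ref{thm-e} gives, after shrinking $Y$, finitely many fixed bimeromorphic maps $\psi_j\colon X\dashrightarrow Z_j$ exhausting the weak log canonical models of $K_X+\Xi$ as $\Xi$ varies over $\mathcal L_A(V;\pi^{-1}(W))$. Partitioning the polytope into finitely many locally closed cells on which the index $j$ and the set of $\psi_j$-contracted primes are constant, and using that nef and big divisors on $Z_j$ have empty stable base locus after shrinking (again by part (1) applied on $Z_j$), the stable base locus of $K_X+\Xi$ over $Y$ on each cell equals the union of the contracted primes; choosing $\delta>0$ so the open $\delta$-ball around $\Delta$ lies in the closure of its cell yields the assertion.

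For part (3), I restrict to $V'\cap\mathcal L_A(V;\pi^{-1}(W))$ and refine the cell decomposition using its rational structure. The semiample divisors $K_{Z_j}+(\psi_j)_*\Xi$ on the finitely many closed cells through $\Delta$ have uniformly bounded Cartier indices after a further shrinking of $Y$, giving a positive integer $r$; for any $\Xi$ and $k$ as in the statement, every component of $\Fix(k(K_X+\Xi))$ lies in the stable base locus of $K_X+\Xi$ over $Y$, and choosing $\eta\leq\delta$ so that the $\eta$-neighborhood of $\Delta$ in $V'$ stays inside the closure of its cell, part (2) places this in the stable base locus of $K_X+\Delta$. The main obstacle is precisely this uniform control in part (3): to obtain a single $r$ and a single open neighborhood of $W$ valid over the whole polytope and all admissible $k$, one must simultaneously handle the Cartier indices and the semiampleness statements on the finitely many models $Z_j$, which in the analytic setting requires iterating the shrinking of \ref{a-say1.11} across the finite collection. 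A secondary technical point, the possible infinitude of components of $\Supp\Xi$ globally on $X$, is handled from the outset by fixing a relatively compact Stein open neighborhood of $W$ in $Y$ and working only there.
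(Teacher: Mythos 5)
Your part (1) follows the paper's route closely (the paper invokes Theorem~\ref{thm-g} directly, which already gives a \emph{good} log terminal model, rather than Theorem~\ref{thm-c} plus a separate semiampleness step, but the substance is the same) and is fine. Parts (2) and (3), however, take a genuinely different route from the paper --- a chamber decomposition of $\mathcal L_A(V;\pi^{-1}(W))$ via Theorem~\ref{thm-e} --- and both have real gaps.

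For (2), the paper does not use the finiteness of models at all. It fixes one good log terminal model $\mu\colon X\dashrightarrow Z$ of $(X,\Delta)$ (by Theorem~\ref{thm-g}) and observes that $G$ being in $\mathbf B((K_X+\Delta)/Y)$ forces $G$ to be $\mu$-exceptional, so $a(G,X,\Delta)<a(G,Z,\mu_*\Delta)$. This strict inequality is an open condition in $\Delta$ and persists for $\Xi$ with $|\!|\Xi-\Delta|\!|<\delta$; passing from $(Z,\mu_*\Xi)$ to a good log terminal model of $\Xi$ over each small Stein $U_y\subset Y$ (Theorem~\ref{thm-g}$_n$ again) only increases the discrepancy of $G$, so $G$ remains in the stable base locus of $K_X+\Xi$. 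Your cell argument, by contrast, asserts one can ``choose $\delta>0$ so the open $\delta$-ball around $\Delta$ lies in the closure of its cell''; this fails whenever $\Delta$ sits on a wall --- no $\delta$-ball around a boundary point of a cell is contained in that cell's closure. To repair it you would need a continuity statement to the effect that the set of contracted primes can only grow as one moves off $\Delta$ into adjacent chambers, which you do not supply and which is precisely what the paper's discrepancy argument does for you.

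For (3), there are two issues. First, the logical direction at the end is reversed: part (2) says that a prime divisor of $\mathbf B((K_X+\Delta)/Y)$ remains in $\mathbf B((K_X+\Xi)/Y)$ for $\Xi$ near $\Delta$; it does not place components of $\mathbf B((K_X+\Xi)/Y)$, let alone of $\Fix(k(K_X+\Xi))$, inside $\mathbf B((K_X+\Delta)/Y)$, which is what (3) requires. Second, the intermediate claim ``every component of $\Fix(k(K_X+\Xi))$ lies in the stable base locus of $K_X+\Xi$'' is false in general: for a single $k$, $\Fix(k(K_X+\Xi))$ may well contain primes that disappear in the stable base locus; controlling this excess is exactly the content of (3). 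The paper's argument proceeds through the single model $Z$, uses that $Z$ has rational singularities (Theorem~\ref{a-thm3.12}) and is $\mathbb Q$-factorial over $W$ to get a uniform Cartier index $l$ near $W$ (Lemma~\ref{a-lem2.42}), and then invokes Koll\'ar's effective basepoint-freeness (Theorem~\ref{a-thm6.4}) to produce the integer $r$ so that $rm(K_Z+\mu_*\Xi)$ is $\pi$-free whenever $m(K_Z+\mu_*\Xi)$ is integral. It follows that $\Fix(k(K_X+\Xi))$ is $\mu$-exceptional and hence lies in $\mathbf B((K_X+\Delta)/Y)$. The effective basepoint-freeness is indispensable here and is absent from your argument; the appeal to ``uniformly bounded Cartier indices on the finitely many closed cells'' is not justified and does not by itself yield the assertion about $\Fix$.
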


\subsection{How to prove the main results}\label{a-subsec1.2} 
The formulation of Theorem \ref{thm-c} 
is not appropriate for our inductive treatment of 
the main results in this paper. 
Therefore, we prepare a somewhat artificial statement, 
which is a slight generalization 
of Theorem \ref{thm-c}. 
We will use it instead of Theorem \ref{thm-c} in the inductive proof of 
the main results.

\begin{theorema}[Existence of good log terminal models]\label{thm-g} 
Let $\pi\colon X\to Y$ be a projective 
surjective morphism of complex analytic spaces 
and let $W$ be a compact subset of $Y$ 
such that $\pi\colon X\to Y$ and $W$ satisfies 
{\em{(P)}}. Assume that $\pi\colon X\to Y^\flat \to Y$ such that 
$Y^\flat$ is projective over $Y$. 
Suppose that $(X, \Delta)$ is kawamata log terminal 
and that $\Delta$ is big over $Y$. 
If there exists an $\mathbb R$-divisor 
$D$ on $X$ such that $K_X+\Delta\sim _{\mathbb R} D\geq 0$, 
then, after shrinking $Y$ around $W$ suitably,  
there exists a bimeromorphic contraction 
$\phi\colon X\dashrightarrow Z$ over $Y^\flat$ 
such that $\phi$ is $(K_X+\Delta)$-negative, $Z$ is $\mathbb Q$-factorial 
over $W$, $K_Z+\Gamma$ is semiample 
over $Y^\flat$, where $\Gamma =\phi_*\Delta$. 
This means that $(Z, \Gamma)$ is a good log terminal model 
of $(X, \Delta)$ over $Y^\flat$. 
\end{theorema}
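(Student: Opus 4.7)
The plan is to deduce Theorem G by combining the MMP with scaling (Theorem \ref{a-thm1.7}) with the base-point-free theorem in the complex analytic setting. Write $q\colon Y^\flat\to Y$ for the projective morphism and set $W^\flat:=q^{-1}(W)$; since $q$ is proper, $W^\flat$ is compact in $Y^\flat$. Because every fiber of $X\to Y^\flat$ sits inside a fiber of $\pi$, bigness of $\Delta$ over $Y$ propagates to bigness of $\Delta$ over $Y^\flat$, so after a standard perturbation we may write $\Delta\sim_{\mathbb R}A+B$ with $A$ an ample $\mathbb Q$-divisor over $Y^\flat$ and $B\ge 0$, and choose an effective $C$ for which $K_X+\Delta+C$ is kawamata log terminal and nef over $Y^\flat$ around $W^\flat$.

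First I would replace $X$ by a small $\mathbb Q$-factorialization over $W$ (available since $(X,\Delta)$ is klt), and then apply Theorem \ref{a-thm1.7} relative to $Y^\flat$ to run the $(K_X+\Delta)$-MMP with scaling of $C$. Because $K_X+\Delta\sim_{\mathbb R}D\ge 0$ is pseudo-effective over $Y^\flat$, the MMP terminates at a log terminal model $\phi\colon X\dashrightarrow Z$ over $Y^\flat$: the map $\phi$ is a $(K_X+\Delta)$-negative bimeromorphic contraction, $Z$ is $\mathbb Q$-factorial over $W$, and $K_Z+\Gamma$ is nef over $Y^\flat$, where $\Gamma=\phi_*\Delta$. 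The pair $(Z,\Gamma)$ is then klt, $\Gamma$ is big over $Y^\flat$, and $K_Z+\Gamma$ is nef over $Y^\flat$, so the complex analytic base-point-free theorem (developed earlier in the paper along Nakayama's lines, cf.\ the discussion after Remark \ref{a-rem1.5}) implies that $K_Z+\Gamma$ is semiample over $Y^\flat$ after shrinking $Y$ around $W$, yielding the desired good log terminal model.

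The main obstacle is that $Y^\flat$ need not be Stein, so (P) fails verbatim for $(X/Y^\flat,W^\flat)$ and Theorem \ref{a-thm1.7} cannot be applied directly over $Y^\flat$. I would handle this by exploiting the projectivity of $q$: pick a $q$-ample line bundle $\mathcal H$ on $Y^\flat$, and note that a projective curve $C\subset X$ with $\pi(C)\in W$ is contracted to a point in $Y^\flat$ exactly when $\pi^*\mathcal H\cdot C=0$. Thus every $(K_X+\Delta)$-negative extremal ray over $Y^\flat$ appears as a $(K_X+\Delta)$-negative extremal ray over $Y$ after a small tilt by $t\pi^*\mathcal H$, which lets each step of the MMP with scaling, and each application of base-point-freeness, be carried out in the $(X\to Y,W)$ setup where (P) does hold. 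Semiampleness of $K_Z+\Gamma$ over $Y^\flat$ is then derived from semiampleness of $K_Z+\Gamma+tf^*\mathcal H$ over $Y$ for sufficiently small rational $t>0$, where $f\colon Z\to Y^\flat$ is the induced morphism, combined with the $q$-ampleness of $\mathcal H$. Granted this translation, termination follows from pseudo-effectivity and Theorem \ref{a-thm1.7}, and semiampleness follows from the analytic base-point-free theorem, which together produce the good log terminal model asserted by Theorem G.
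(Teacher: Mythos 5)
Your plan is circular. Theorem~\ref{a-thm1.7} (the minimal model program with scaling), the termination statement it packages, the existence of klt flips that it presupposes, and the small $\mathbb Q$-factorialization you invoke at the outset are all \emph{consequences} of the main theorems in the inductive chain, and every one of them depends on Theorem~\ref{thm-g}. Specifically: Lemma~\ref{a-lem12.1} (small $\mathbb Q$-factorialization) is stated and proved under the hypothesis ``Assume that Theorem~\ref{thm-g}$_n$ holds''; the flips needed in Theorem~\ref{a-thm1.7} come from Theorem~\ref{a-thm17.9}, again under that hypothesis; and the termination used in Theorem~\ref{a-thm13.6} relies on Theorem~\ref{thm-e}$_n$, which the inductive scheme derives \emph{from} Theorem~\ref{thm-g}$_n$. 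At the moment one proves Theorem~\ref{thm-g}$_n$, the only available inputs are Theorem~\ref{thm-a}$_n$ (existence of pl-flips), Theorem~\ref{thm-b}$_n$ (special finiteness), Theorem~\ref{thm-d} in all dimensions, and the lower-dimensional statements. So ``run the $(K_X+\Delta)$-MMP with scaling and conclude it terminates at a minimal model'' is exactly what must be established here, not something you can cite.

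The paper's Section~\ref{a-sec17} handles this by following \cite[Section~5]{bchm}: pass to a resolution of $X$, use Lemma~\ref{a-lem10.9} to write $K_X+\Delta+g^*H\sim_{\mathbb R}M+F$ with $F$ supported on stable base divisors, then set up a dlt pair whose boundary $S=\lfloor\Delta\rfloor$ contains $\Supp F$, and run an MMP in which each flipping contraction is negative against a divisor supported on $S$. Then every flip is an analytic pl-flip (Theorem~\ref{thm-a}$_n$ applies), and termination follows from the ``special termination'' consequence of Theorem~\ref{thm-b}$_n$ encoded in Lemma~\ref{a-lem17.2}: the MMP is eventually disjoint from $S$, yet each step interacts with $S$, so only finitely many steps occur. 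That is why neutral models (Definition~\ref{a-def17.4}, Lemmas~\ref{a-lem17.5}, \ref{a-lem17.6}) appear: they are what the limited hypotheses (A$_n$, B$_n$) can actually deliver.

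Two smaller points. First, the passage from $Y^\flat$ to $Y$ by adding a multiple of a $q$-ample divisor is indeed the right idea, but the paper uses a \emph{large} tilt $g^*H$ with $H\cdot C>2\dim X$ (Theorem~\ref{a-thm9.3}, Lemma~\ref{a-lem9.4}), exploiting that $(K_X+\Delta)$-negative extremal rays are spanned by rational curves of bounded degree (Theorem~\ref{a-thm9.2}); a small tilt $t\,g^*\mathcal H$ would allow $(K_X+\Delta+t\,g^*\mathcal H)$-negative rays over $Y$ that are not contracted over $Y^\flat$, so the resulting MMP would leave the category of $Y^\flat$-MMPs. Second, your final appeal to the analytic base-point-free theorem to upgrade nef to semiample over $Y^\flat$ is sound and is what the paper does (Theorem~\ref{a-thm9.5}); but that step presupposes you have already arrived at a nef model, which is precisely the content still to be proved.
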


As an obvious remark, we have: 

\begin{rem}\label{a-rem1.12}
If we put $Y^\flat=Y$ in Theorem \ref{thm-g}, then we 
obtain Theorem \ref{thm-c} as a special case of Theorem \ref{thm-g}. 
Therefore, it is sufficient to prove Theorem \ref{thm-g}. 
\end{rem}

Note that 
Theorem \ref{thm-a}$_n$ refers to Theorem \ref{thm-a} in the case when the 
dimension of $X$ is $n$. 
In \cite{bchm} and \cite{hacon-mckernan}, 
Theorem A, Theorem B, Theorem C, Theorem D, 
Theorem E, and Theorem F were proved by induction on $n$ as follows. 

\begin{itemize}
\item Theorem F$_{n-1}$ implies Theorem A$_n$. 
\item Theorem E$_{n-1}$ implies Theorem B$_n$. 
\item Theorem A$_n$ and Theorem B$_n$ imply Theorem C$_n$. 
\item Theorem D$_{n-1}$, Theorem B$_n$ and Theorem C$_n$  imply Theorem D$_n$. 
\item Theorem C$_n$ and Theorem D$_n$ imply Theorem E$_n$. 
\item Theorem C$_n$ and Theorem D$_n$ imply Theorem F$_n$. 
\end{itemize}
Our strategy in this paper is essentially the same as that of \cite{bchm}. 
However, it is slightly simpler. 
We first note that we can easily check: 
\begin{itemize}
\item Theorem \ref{thm-d}$_n$ holds true for arbitrary $n$. 
\item Theorem \ref{thm-g}$_n$ implies 
Theorem \ref{thm-c}$_n$ for arbitrary $n$. 
\end{itemize}
Hence it is sufficient to prove Theorem \ref{thm-a}, 
Theorem \ref{thm-b}, Theorem \ref{thm-e}, Theorem 
\ref{thm-f}, and Theorem \ref{thm-g} by induction on $n$ 
as follows. 
\begin{itemize}
\item 
Theorem \ref{thm-f}$_{n-1}$ implies Theorem 
\ref{thm-a}$_n$. 
\item 
Theorem \ref{thm-e}$_{n-1}$ implies Theorem \ref{thm-b}$_{n}$. 
\item 
Theorem \ref{thm-a}$_n$ and 
Theorem \ref{thm-b}$_n$ imply Theorem \ref{thm-g}$_{n}$. 
\item 
Theorem \ref{thm-g}$_n$ 
implies Theorem \ref{thm-e}$_n$.
\item Theorem \ref{thm-g}$_n$ 
implies Theorem \ref{thm-f}$_n$.
\end{itemize}
Although there are some new difficulties, the proof of 
each step is similar to the original one in \cite{bchm} 
and \cite{hacon-mckernan}. Precisely speaking, 
we make great efforts to find a suitable formulation in 
order to make the original proof work 
with only some minor modifications.  

The correct statement of 
Theorem \ref{a-thm1.6} should be: 

\begin{thm}[Main theorem]\label{a-thm1.13}
Theorems \ref{thm-a}, \ref{thm-b}, \ref{thm-c}, 
\ref{thm-d}, \ref{thm-e}, \ref{thm-f}, and \ref{thm-g} 
hold true in any dimension. 
\end{thm}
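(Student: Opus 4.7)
The plan is to prove Theorem \ref{a-thm1.13} by induction on $n = \dim X$, following the skeleton of implications already laid out in Subsection \ref{a-subsec1.2}. The base case $n=1$ is essentially vacuous, and the case $n=2$ reduces to the classical theory of surfaces after shrinking $Y$ around $W$ so that $\Supp \Delta$ contains only finitely many components, as explained in \ref{a-say1.11}. The whole content therefore lies in the inductive step, for which I would verify the five implications Theorem \ref{thm-f}$_{n-1}\Rightarrow$ Theorem \ref{thm-a}$_n$, Theorem \ref{thm-e}$_{n-1}\Rightarrow$ Theorem \ref{thm-b}$_n$, Theorems \ref{thm-a}$_n$ + \ref{thm-b}$_n\Rightarrow$ Theorem \ref{thm-g}$_n$, Theorem \ref{thm-g}$_n\Rightarrow$ Theorem \ref{thm-e}$_n$, and Theorem \ref{thm-g}$_n\Rightarrow$ Theorem \ref{thm-f}$_n$, treating Theorem \ref{thm-d}$_n$ separately (it is non-inductive, via a direct analytic Nadel-type vanishing argument on a relatively compact Stein open set).

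First I would prove Theorem \ref{thm-a}$_n$ from Theorem \ref{thm-f}$_{n-1}$. As in \cite{hacon-mckernan}, a pl-flip for a plt pair $(X,\Delta)$ with distinguished component $S$ is built by restricting to $S$ via adjunction $(K_X+\Delta)|_S = K_S + \Delta_S$; the pair $(S,\Delta_S)$ is klt of dimension $n-1$, so the inductive Theorem \ref{thm-f}$_{n-1}$ supplies local finite generation of the relevant log canonical algebra on $S$. Combined with an analytic version of the Hacon--McKernan extension theorem, proved by transporting multiplier ideal and $L^{2}$ arguments to an Oka--Weil domain as in \ref{a-say1.11}, this yields local finite generation of the extension algebra on $X$; taking $\Projan$ over $Y$ produces the flip. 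For Theorem \ref{thm-b}$_n$, the BCHM polytope argument transfers: restriction to components of $S$ lands in dimension $n-1$, and Theorem \ref{thm-e}$_{n-1}$ applied inside $\mathcal{L}_{S+A}(V;\pi^{-1}(W))$ gives only finitely many bimeromorphic types of weak log canonical model in a neighborhood of the strict transform of $S$, provided we first shrink $Y$ around $W$ so the support of the relevant divisors becomes finite.

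Theorem \ref{thm-g}$_n$ then follows by running the $(K_X+\Delta)$-MMP with scaling over $Y^\flat$: each step exists by Theorem \ref{thm-a}$_n$ (for pl-flips) together with the analytic cone and contraction theorems already available from Nakayama's work, while Theorem \ref{thm-b}$_n$ prevents an infinite sequence of flips near $S$ after each shrinking of $Y$. Bigness of $\Delta$ over $Y$ plus Theorem \ref{thm-d}$_n$ produces an effective model for $K_Z+\Gamma$, which base-point-free over $Y^\flat$ yields semiampleness. Finally, both Theorem \ref{thm-e}$_n$ and Theorem \ref{thm-f}$_n$ follow from Theorem \ref{thm-g}$_n$ by the standard rational-polytope arguments of BCHM: after shrinking $Y$ so that $V$-components are globally defined, $\mathcal{L}_A(V;\pi^{-1}(W))$ is a rational polytope on which good log terminal models vary locally constantly, yielding finiteness of models (Theorem \ref{thm-e}$_n$), the stable base locus / finite generation statements (Theorem \ref{thm-f}$_n$), and the $\delta, \eta, r$ constants via a compactness argument on the polytope.

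The hard part will not be any single implication but rather the systematic bookkeeping forced by the analytic setting, in two forms. First, as flagged in Remark \ref{a-rem1.5}, positivity on $\NE(X/Y;W)$ does not give nefness on a neighborhood, so every step of every argument must be phrased in terms of ``after shrinking $Y$ around $W$'', and I must repeatedly invoke the trick of \ref{a-say1.11} to enlarge $W$ to a semianalytic Stein compact $W'$ satisfying (P4) containing a prescribed relatively compact open neighborhood before applying any conclusion of the inductive hypothesis. Second, divisors on $X$ are only locally finite: before invoking any argument that manipulates finitely many components of $\Supp \Delta$ simultaneously (which is essentially everywhere in BCHM), I must replace $X$ by $\pi^{-1}(Y')$ for a relatively compact Stein $Y'\supset W$. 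Once these two devices are used uniformly, the proofs of \cite{bchm} and \cite{hacon-mckernan} transfer with only cosmetic modifications, and the inductive machinery closes as described.
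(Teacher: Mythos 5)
Your proposal follows the same induction skeleton the paper actually uses, and the five implications you identify (Theorem~\ref{thm-f}$_{n-1}\Rightarrow$~\ref{thm-a}$_n$, Theorem~\ref{thm-e}$_{n-1}\Rightarrow$~\ref{thm-b}$_n$, Theorems~\ref{thm-a}$_n$~+~\ref{thm-b}$_n\Rightarrow$~\ref{thm-g}$_n$, Theorem~\ref{thm-g}$_n\Rightarrow$~\ref{thm-e}$_n$, Theorem~\ref{thm-g}$_n\Rightarrow$~\ref{thm-f}$_n$) are exactly the ones carried out in Sections~\ref{a-sec15}--\ref{a-sec19}. Your two bookkeeping warnings --- that nefness on $\NE(X/Y;W)$ does not persist to a neighborhood, and that $\Supp\Delta$ is only locally finite --- are precisely the technical points the paper handles by the shrinking/enlarging devices of \ref{a-say1.11}, and you invoke them in the right places.

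The one place your route differs nontrivially is Theorem~\ref{thm-d}. You propose proving it by ``a direct analytic Nadel-type vanishing argument on a relatively compact Stein open set.'' The paper's proof of Theorem~\ref{a-thm14.1} instead restricts $(X,\Delta)$ to an analytically sufficiently general fiber $F$ of $\pi$, where the problem becomes the known projective nonvanishing of \cite{bchm}, and then lifts the resulting effective divisor from $F$ to $\pi^{-1}(U)$ using Lemma~\ref{a-lem2.53} (Hashizume--Hu's argument, relying on upper semicontinuity and base change for coherent direct images). This reduction is what makes Theorem~\ref{thm-d} available in \emph{all} dimensions before the induction even begins, which simplifies the inductive loop compared to \cite{bchm}'s original D$_{n-1}$~+~B$_n$~+~C$_n\Rightarrow$~D$_n$. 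A genuinely analytic proof along the lines you sketch is possible in spirit (the paper notes in Remark~\ref{a-rem1.33} that P\u aun's method combined with \cite{ckp} would make Theorem~\ref{thm-d} independent of the MMP), but ``Nadel-type vanishing'' alone does not produce a section, and you would still need an analogue of Lemma~\ref{a-lem2.53} to pass from a fiberwise statement to a divisor on $\pi^{-1}(U)$; as written, that step is missing from your sketch. Minor remark: the paper starts the induction at $n=0$; your separate treatment of $n=1$ and $n=2$ is harmless but unnecessary, since the inductive step as you set it up already covers those cases once $n=0$ is noted to be vacuous.
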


\subsection{Some other results}\label{a-subsec1.3}

We have already known that many results follow from 
\cite{bchm}. 
We can prove that some of them hold true even 
in the complex analytic setting if we take some 
suitable modifications. 

Once we have Theorem \ref{thm-g}, it is 
not difficult to prove the existence of kawamata 
log terminal flips in the complex analytic setting. 

\begin{thm}[Existence of kawamata 
log terminal flips]\label{a-thm1.14}
Let $(X, \Delta)$ be a kawamata log terminal pair. 
Let $\varphi\colon X\to Z$ be a small projective surjective 
morphism of normal complex varieties. 
Then the flip $\varphi^+\colon X^+\to Z$ 
of $\varphi$ always exists. This means that 
there exists the following commutative diagram:  
$$
\xymatrix{
X \ar[dr]_-\varphi\ar@{-->}[rr]^-\phi& & \ar[dl]^-{\varphi^+}X^+\\ 
& Z &
}
$$
where 
\begin{itemize}
\item[(1)] $\varphi^+\colon X^+\to Z$ is a small 
projective morphism of normal complex varieties, and 
\item[(2)] $K_{X^+}+\Delta^+$ is $\varphi^+$-ample, 
where $\Delta^+:=\phi_*\Delta$. 
\end{itemize}
Note that $(X^+, \Delta^+)$ is automatically 
a kawamata log terminal pair. 
\end{thm}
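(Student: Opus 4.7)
The question is local on $Z$: flips are unique once they exist (negativity lemma), so it suffices to construct $X^+$ after shrinking $Z$ to a relatively compact Stein open neighborhood of an arbitrary point of $Z$ and fixing a Stein compact subset $W \subset Z$ satisfying (P), as explained in \ref{a-say1.11}. Applying Theorem \ref{thm-g} to a suitable small dlt modification, I may also assume $X$ is $\mathbb Q$-factorial over $W$. As is standard, the implicit hypothesis is that $-(K_X + \Delta)$ is $\varphi$-ample, so that $\varphi$ is an honest $(K_X+\Delta)$-flipping contraction; otherwise one may simply take $X^+ = X$.

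The plan is to realize the flip as the output of a minimal model program with scaling coming from Theorem \ref{a-thm1.7}. Using Cartan's theorem on the Stein base $Z$ to produce enough global sections, I pick a general $\varphi$-ample $\mathbb Q$-divisor $H$ on $X$ with $(X, \Delta + H)$ kawamata log terminal and $K_X + \Delta + H$ is $\varphi$-ample. Fix a sufficiently small $\epsilon > 0$ and set $C := (1-\epsilon) H$. Then $(X, \Delta + \epsilon H)$ is klt, $\Delta + \epsilon H$ is $\varphi$-big, $C$ is effective, and $K_X + (\Delta + \epsilon H) + C = K_X + \Delta + H$ is klt and $\varphi$-nef. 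By Theorem \ref{a-thm1.7}, I run the $(K_X + \Delta + \epsilon H)$-MMP over $Z$ with scaling of $C$ around $W$, and this MMP terminates after finitely many steps.

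Since $\varphi$ is small, $X$ has no $\varphi$-exceptional divisors, so every step of an MMP over $Z$ must be small, hence a flip. The MMP cannot terminate in a Mori fiber space because $\dim X = \dim Z$. The output is therefore a log terminal model $(X^+, \Delta^+ + \epsilon H^+)$ of $(X, \Delta + \epsilon H)$ over $Z$ with $\varphi^+ \colon X^+ \to Z$ a small projective morphism, $X^+$ normal and $\mathbb Q$-factorial over $W$, $(X^+, \Delta^+)$ klt (klt is preserved under flips), and $K_{X^+} + \Delta^+ + \epsilon H^+$ is $\varphi^+$-nef.

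It remains to upgrade $\varphi^+$-nefness to $\varphi^+$-ampleness for $K_{X^+} + \Delta^+$ itself. For $\epsilon$ small the $(K_X + \Delta + \epsilon H)$-negative rays agree with the $(K_X + \Delta)$-negative rays, which in turn are precisely the $\varphi$-contracted rays (since $-(K_X + \Delta)$ is $\varphi$-ample); so the MMP flips every $\varphi$-contracted ray, and every $\varphi^+$-contracted curve on $X^+$ is a flipped curve. Each flip reverses the sign of $H$ along the flipped curve, giving $H^+ \cdot C^+ < 0$ for every $\varphi^+$-contracted curve $C^+$; by the analytic Kleiman ampleness criterion discussed in the introduction, $-H^+$ is $\varphi^+$-ample. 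Therefore
\[
K_{X^+} + \Delta^+ \;=\; \bigl(K_{X^+} + \Delta^+ + \epsilon H^+\bigr) \;+\; \epsilon\bigl(-H^+\bigr)
\]
is a sum of a $\varphi^+$-nef class and a $\varphi^+$-ample class, hence $\varphi^+$-ample. The resulting $\varphi^+ \colon X^+ \to Z$ is the desired flip. The main obstacle is this final ampleness step: one has to verify that the MMP with scaling has flipped precisely all the $\varphi$-contracted rays before invoking Kleiman's criterion, which itself relies on the analytic Kleiman--Mori cone theory set up earlier in the paper.
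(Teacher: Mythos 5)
Your proposal takes a genuinely different route from the paper, but it has a gap in the final step, so let me address both points.

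The paper's proof is much shorter: since $\varphi$ is small and bimeromorphic, $K_X+\Delta$ is automatically $\varphi$\nobreakdash-big, so one can apply Theorem~\ref{a-thm1.8}~(2) directly over a small Stein open subset of $Z$ to get the log canonical model $X^+$ of $(X,\Delta)$ over $Z$; it is small over $Z$ (any $\varphi^+$-exceptional divisor on $X^+$ would come from a $\varphi$-exceptional divisor on $X$, contradicting that $\varphi$ is small), it has $K_{X^+}+\Delta^+$ ample over $Z$ by definition, and it glues because the log canonical model is unique (Lemma~\ref{a-lem11.3}~(1)). No MMP is run and no $\mathbb Q$-factorialization is needed. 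Your reformulation of the flip as the output of an MMP with scaling is a legitimate alternative strategy, and the portion of your argument through the running and termination of the MMP is essentially sound (the observation that every step is a flip because $X$ has no $\varphi$-exceptional divisors is the right one).

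The gap is in the final ampleness step. You assert that ``every $\varphi^+$-contracted curve on $X^+$ is a flipped curve'' and use this to conclude $-H^+$ is $\varphi^+$-ample. Neither is justified: after the first flip the cone $\NE(X_i/Z;W)$ changes, and there is no reason for every ray of $\NE(X^+/Z;W)$ to be spanned by a flipped curve of some intermediate step. More concretely, the MMP output $X^+$ is the log terminal model of $(X,\Delta+\epsilon H)$ over $Z$; for $\epsilon$ small this is a $\mathbb Q$-factorial small modification of the genuine flip $Y:=\Projan_Z\bigoplus_m\varphi_*\mathcal O_X(\lfloor m(K_X+\Delta)\rfloor)$, and $Y$ need not be $\mathbb Q$-factorial when $\rho(X/Z;W)>1$. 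In that case $g\colon X^+\to Y$ is a nontrivial small crepant contraction, $K_{X^+}+\Delta^+=g^*(K_Y+\Delta_Y)$ is only nef over $Z$, and for any $g$-exceptional curve $C$ one has $(K_{X^+}+\Delta^+)\cdot C=0$ while $H^+\cdot C>0$; hence $-H^+$ is not $\varphi^+$-ample, and $X^+$ is not the flip. The correct repair is to take the contraction given by the semiample $K_{X^+}+\Delta^+$ (Theorem~\ref{a-thm8.3}) to pass from $X^+$ to $Y$, but at that point you are essentially reproving Theorem~\ref{a-thm1.8}~(2), so it is cleaner to invoke it directly. A secondary, smaller issue: you assume $-(K_X+\Delta)$ is $\varphi$-ample, but the theorem makes no such hypothesis, and ``otherwise take $X^+=X$'' fails whenever $K_X+\Delta$ is $\varphi$-nef but not $\varphi$-ample (or indefinite); the log canonical model approach handles the general case uniformly.
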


\begin{rem}\label{a-rem1.15}
Theorem \ref{a-thm1.14} generalizes 
Mori's flip theorem (see \cite[(0.4.1) Flip Theorem]{mori-flip}). 
Roughly speaking, Mori coarsely classified three-dimensional 
flipping contractions analytically and checked the existence of three-dimensional terminal flips. 
\end{rem}

The next one is a result on partial resolutions of 
singularities for complex varieties. 

\begin{thm}[Existence of canonicalizations]\label{a-thm1.16}
Let $X$ be a complex variety. 
Then there exists a projective 
bimeromorphic morphism $f\colon Z\to X$, which is 
the identity map over a nonempty Zariski open 
subset where $X$ has only canonical singularities, 
from a normal complex variety $Z$ with 
only canonical singularities such that 
$K_Z$ is $f$-ample. 
\end{thm}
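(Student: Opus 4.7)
The plan is to adapt the BCHM-style construction of the canonical model (cf.\ \cite[Corollary 1.4.3]{bchm}) to the complex analytic setting, using Theorem~\ref{thm-g} as the main ingredient.

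First, I would reduce to a local construction using uniqueness. Any canonicalization $f\colon Z\to X$ is forced to be canonically isomorphic to $\Projan_X \bigoplus_{m\geq 0}(g_*\omega_Y^{\otimes m})^{**}$ for any log resolution $g\colon Y\to X$, a description that is manifestly local on $X$. Hence it suffices to construct $f_U\colon Z_U\to U$ for each $U$ in an open cover of $X$, and glue. For a fixed point $x\in X$, I would shrink to a relatively compact Stein open neighborhood $U$ of $x$ and choose a semianalytic Stein compact $W\subset U$ with $x\in W$, so that the identity $U\to U$ together with $W$ satisfies (P).

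Next, I would take a projective log resolution $g\colon Y\to U$ by Hironaka's theorem, with $Y$ smooth and the reduced $g$-exceptional divisor $E=\sum E_i$ simple normal crossing, and pick a small general $g$-ample $\mathbb Q$-divisor $A$ on $Y$. I would then choose rational coefficients $c_i\in[0,1)$ such that, setting $\Delta:=\sum c_iE_i+A$, the pair $(Y,\Delta)$ is klt, $\Delta$ is $g$-big, and $K_Y+\Delta$ is $\mathbb R$-linearly equivalent over $U$ to an effective divisor. The $c_i$ are to be chosen close enough to $1$ to absorb the negative part of $K_Y$ along the $g$-exceptional locus, using the negativity lemma together with the fact that, possibly after further blow-up, there is a $g$-antiample combination of the $E_i$. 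Applying Theorem~\ref{thm-g} to $(Y,\Delta)$ over $U$ with $Y^\flat=U$, I obtain, after possibly shrinking $U$ around $W$, a bimeromorphic contraction $\phi\colon Y\dashrightarrow Z$ over $U$ with $(Z,\phi_*\Delta)$ a good log terminal model, so $K_Z+\phi_*\Delta$ is semiample over $U$. Letting $\pi\colon Z\to Z_{\mathrm{can}}$ be the contraction defined by this semiample class, the induced morphism $f\colon Z_{\mathrm{can}}\to U$ satisfies that $K_{Z_{\mathrm{can}}}+\pi_*\phi_*\Delta$ is $f$-ample.

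Finally, I would let $A\to 0$ and adjust the $c_i$, invoking the finiteness of log canonical models (Theorem~\ref{thm-e}) to deduce that for all sufficiently small choices the model $Z_{\mathrm{can}}$ stabilizes, and that $K_{Z_{\mathrm{can}}}$ itself becomes $f$-ample in the limit; a discrepancy analysis of the MMP steps would then show that $Z_{\mathrm{can}}$ has only canonical singularities. The hard part will be precisely this last verification: ensuring that the $(K_Y+\Delta)$-MMP contracts exactly the $g$-exceptional divisors with nonnegative discrepancy over $X$, extracts the non-canonical ones as $f$-exceptional divisors on $Z_{\mathrm{can}}$, and that the limiting $K_{Z_{\mathrm{can}}}$ is genuinely ample rather than only semiample. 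This requires a careful perturbation argument in the spirit of the proof of Theorem~\ref{thm-f}.
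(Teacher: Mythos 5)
Your skeleton --- localize near a point, resolve, run the MMP, glue by uniqueness of the ample model --- matches the paper, and your $\Projan$ characterization correctly handles the gluing. But the middle step is overengineered in a way that opens a genuine gap, and the extra machinery is avoidable. The paper simply notices that for a projective bimeromorphic morphism $\pi\colon V\to U$ with $V$ smooth, $K_V$ is automatically $\pi$-big: $\pi_*\omega_V^{\otimes m}$ has generic rank one, so after shrinking $U$ one has $mK_V\sim B\geq 0$, and since Hironaka's construction supplies an effective $\pi$-exceptional Cartier divisor $E$ with $-E$ $\pi$-ample, one writes $K_V\sim_{\mathbb Q}(-\varepsilon E)+(B/m+\varepsilon E)$. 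Hence Theorem~\ref{a-thm1.8}~(2) applies directly to the pair $(V,0)$ over $U$, and the resulting log canonical model $\pi'\colon V'\to U$ already has canonical singularities (the map $V\dashrightarrow V'$ is $K_V$-nonpositive and $V$ is terminal, so every discrepancy over $V'$ is nonnegative) and $\pi'$-ample $K_{V'}$.

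By contrast, you feed Theorem~\ref{thm-g} the auxiliary pair $(Y,\Delta)$ with $\Delta=\sum c_iE_i+A$. Theorem~\ref{thm-g} returns the ample model of $K_Y+\Delta$, not of $K_Y$, and the two can genuinely differ because the boundary shifts every discrepancy; the set of exceptional divisors contracted versus extracted depends on the $c_i$ and $A$. You propose to repair this by letting $A\to0$, adjusting the $c_i$, and invoking Theorem~\ref{thm-e} for stabilization, but that is precisely where the argument stops: it is not shown that the limiting model has canonical singularities, that the contracted exceptional divisors converge to exactly those with nonnegative discrepancy over $X$, nor that $K_{Z_{\mathrm{can}}}$ by itself --- without the pushed-forward boundary --- remains ample rather than degenerating to merely semiample. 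You flag this as ``the hard part,'' and it is; but none of it is needed once the $\pi$-bigness of $K_V$ is noticed, so Theorem~\ref{a-thm1.8}~(2) can be applied with $\Delta=0$ from the start.
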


If $K_X+\Delta$ is not pseudo-effective over $Y$, 
then we can run the minimal model program with scaling to get a Mori 
fiber space. 

\begin{thm}[{\cite[Corollary 1.3.3]{bchm}}]\label{a-thm1.17} 
Let $\pi\colon X\to Y$ be a projective morphism 
of complex analytic spaces and let $W$ be a compact 
subset of $Y$ such that 
$\pi \colon X\to Y$ and $W$ satisfies {\em{(P)}}. 
Let $(X, \Delta)$ be a divisorial log terminal pair such that 
$X$ is $\mathbb Q$-factorial over $W$. 
Suppose that $K_X+\Delta$ is not pseudo-effective over 
$Y$. Then we can run a $(K_X+\Delta)$-minimal model 
program and finally obtain a Mori fiber space over some 
open neighborhood of $W$. 
\end{thm}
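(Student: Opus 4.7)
The plan is to reduce the problem to an application of Theorem~\ref{a-thm1.7} via a standard perturbation to the kawamata log terminal case with big boundary. After shrinking $Y$ around $W$ as in \ref{a-say1.11}, we choose a general $\pi$-ample $\mathbb Q$-divisor $A\geq 0$ on $X$ whose support has only finitely many irreducible components, and for a small $\epsilon>0$ we set $\Delta_\epsilon:=(1-\epsilon)\Delta+\epsilon A$. Since $(X,\Delta)$ is divisorial log terminal, every coefficient of $(1-\epsilon)\Delta$ lies in $[0,1-\epsilon]\subset[0,1)$, and since $A$ is general the pair $(X,\Delta_\epsilon)$ is kawamata log terminal with $\Delta_\epsilon$ being $\pi$-big thanks to its ample summand $\epsilon A$; moreover $X$ remains $\mathbb Q$-factorial over $W$. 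Because $K_X+\Delta_\epsilon=(K_X+\Delta)+\epsilon(A-\Delta)$ and the $\pi$-pseudo-effective cone in the dual of the finite-dimensional space $N_1(X/Y;W)$ (available thanks to (P4)) is closed, by shrinking $\epsilon$ we may assume that $K_X+\Delta_\epsilon$ is still not $\pi$-pseudo-effective over $W$.

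Next, pick a general $\pi$-ample $\mathbb Q$-divisor $H\geq 0$ and a rational number $\lambda\gg 0$ such that $K_X+\Delta_\epsilon+\lambda H$ is kawamata log terminal and nef over $W$. Apply Theorem~\ref{a-thm1.7} to the klt pair $(X,\Delta_\epsilon)$ with scaling of $\lambda H$: since $K_X+\Delta_\epsilon$ is not $\pi$-pseudo-effective, the scaling MMP terminates in a Mori fiber space
\begin{equation*}
(X,\Delta_\epsilon)=:(X_0,\Delta_{\epsilon,0})\overset{\varphi_0}{\dashrightarrow}\cdots\overset{\varphi_{m-1}}{\dashrightarrow}(X_m,\Delta_{\epsilon,m})\longrightarrow T
\end{equation*}
over some open neighborhood of $W$, shrinking $Y$ around $W$ at each step as in \ref{a-say1.11}.

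It remains to argue that this sequence is also a $(K_X+\Delta)$-MMP and that $X_m\to T$ is a Mori fiber space for $(X,\Delta)$ itself. At the $i$-th step, $\varphi_i$ contracts a $(K_{X_i}+\Delta_{\epsilon,i})$-negative extremal ray $R_i$, and writing $\Delta_i$ and $A_i$ for the strict transforms of $\Delta$ and $A$ we have
\begin{equation*}
K_{X_i}+\Delta_i=(K_{X_i}+\Delta_{\epsilon,i})-\epsilon(A_i-\Delta_i).
\end{equation*}
By the analytic cone theorem alluded to in the introduction, each $R_i$ is spanned by a rational curve whose $(K_{X_i}+\Delta_{\epsilon,i})$-degree is bounded below by $-2\dim X_i$ via Kawamata's length of extremal rays. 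The main obstacle is to turn this uniform length bound, together with the finiteness of the MMP, into a single sufficiently small choice of $\epsilon$ made at the outset that simultaneously guarantees $(K_{X_i}+\Delta_i)\cdot R_i<0$ at every step and also ensures that the final contraction $X_m\to T$ is negative for $K_{X_m}+\Delta_m$ on its fibers. This is handled by the standard perturbation argument of \cite[Corollary 1.3.3]{bchm}, which transfers to the analytic setting since all the relevant numerical invariants are well-defined on the finite-dimensional spaces $N_1(X_i/Y;W)$ provided by~(P4).
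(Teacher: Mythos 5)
Your reduction is not the one the paper uses, and the step you flag as ``the main obstacle'' is a genuine gap that your sketch does not close. You run a $(K_X+\Delta_\epsilon)$-MMP with $\Delta_\epsilon=(1-\epsilon)\Delta+\epsilon A$ and then need every contracted ray $R_i$ to satisfy $(K_{X_i}+\Delta_i)\cdot R_i<0$, where $\Delta_i, A_i$ are strict transforms on $X_i$. Writing $K_{X_i}+\Delta_i=(K_{X_i}+\Delta_{\epsilon,i})-\epsilon(A_i-\Delta_i)$, you would need $\epsilon(A_i-\Delta_i)\cdot R_i$ to be dominated by the $(K_{X_i}+\Delta_{\epsilon,i})$-negativity; but Kawamata's length bound only controls $(K_{X_i}+\Delta_{\epsilon,i})\cdot R_i$ from below, and gives no control whatsoever on $A_i\cdot R_i$ or $\Delta_i\cdot R_i$. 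After the first flip or divisorial contraction $A_i$ is no longer ample and $A_i\cdot R_i$ can be negative and large in absolute value, so no single $\epsilon$ chosen on $X$ can be shown to work on every $X_i$ by the argument you indicate. Citing ``the standard perturbation argument of [BCHM, Corollary 1.3.3]'' does not rescue this: that corollary is proved by running the $(K_X+\Delta)$-MMP with scaling directly and reinterpreting it, not by scaling $\Delta$ down and transporting negativity across the perturbation.

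The paper's proof sidesteps the transfer problem entirely. It chooses a $\pi$-ample $\mathbb Q$-divisor $C$ with $K_X+\Delta+C$ nef over $Y$ and $(X,\Delta+(1+a)C)$ dlt, and runs the $(K_X+\Delta)$-MMP with scaling of $C$ (Section~\ref{a-sec13}); every step is $(K_{X_i}+\Delta_i)$-negative by construction, so there is nothing to transfer. Termination is obtained by noting that, since $K_X+\Delta$ is not $\pi$-pseudo-effective and $N^1(X/Y;W)$ is finite-dimensional by (P4), $K_X+\Delta+\varepsilon C$ is still not $\pi$-pseudo-effective for $0<\varepsilon\ll 1$; this forces the scaling coefficients $\lambda_i$ to stay $\geq\varepsilon$, so the run is simultaneously a $(K_X+\Delta+\varepsilon C)$-MMP with scaling, which falls under Theorem~\ref{a-thm13.6} (condition~(i) with $A=\varepsilon C$ $\pi$-ample) and therefore terminates. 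Since the output cannot be a log terminal model, it is a Mori fiber space. If you want to salvage your argument, you should replace your additive-in-$A$ perturbation by this multiplicative one: run the $(K_X+\Delta)$-MMP with scaling of an ample $C$, use non-pseudo-effectivity to bound $\lambda_i$ below, and reinterpret; this gives termination without ever having to compare $(K_{X_i}+\Delta_{\epsilon,i})$-negativity with $(K_{X_i}+\Delta_i)$-negativity on later models.
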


We can prove the finite generation theorem 
for kawamata log 
terminal pairs in full generality in the complex analytic 
setting. 

\begin{thm}[{\cite[Corollary 1.1.2]{bchm}}]\label{a-thm1.18}
Let $(X, \Delta)$ be a kawamata log terminal pair, where 
$K_X+\Delta$ is $\mathbb Q$-Cartier, and 
let $\pi\colon X\to Y$ be a projective morphism 
of complex analytic spaces. 
Then 
\begin{equation*}
R(X/Y, K_X+\Delta)=\bigoplus _{m\in 
\mathbb N}\pi_*\mathcal O_X(\lfloor 
m(K_X+\Delta)\rfloor)
\end{equation*}
is a locally finitely generated graded $\mathcal O_Y$-algebra. 
\end{thm}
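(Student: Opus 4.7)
The plan is to follow the strategy of \cite[Corollary 1.1.2]{bchm}, adapted to the complex analytic setting via the reduction principle of \ref{a-say1.11}. Since local finite generation is a local property on $Y$, it suffices to work in a Stein open neighborhood of each point $y_0\in Y$; so I would replace $Y$ with a relatively compact Stein open neighborhood of a semianalytic Stein compact subset $W\ni y_0$, so that $\pi\colon X\to Y$ and $W$ satisfy (P) and $\Supp\Delta$ has only finitely many irreducible components on $X$. By a small $\mathbb{Q}$-factorialization (a standard byproduct of the MMP machinery established in this paper, cf.~the analytic analog of \cite[Corollary 1.4.3]{bchm}), which is a small bimeromorphic modification and thus preserves the relative log canonical ring, I may further assume $X$ is $\mathbb{Q}$-factorial over $W$.

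If $K_X+\Delta$ is not $\pi$-pseudo-effective, Theorem \ref{a-thm1.17} lets me run a $(K_X+\Delta)$-MMP to a Mori fiber space $g\colon X'\to V$ over an open neighborhood of $W$. The log canonical ring is invariant under $(K_X+\Delta)$-negative MMP steps, and on the fiber space $-(K_{X'}+\Delta')$ is $g$-ample, forcing $g_*\mathcal{O}_{X'}(m(K_{X'}+\Delta'))=0$ for every $m\ge 1$ by fibrewise vanishing and cohomology-and-base-change. Hence near $W$, $R(X/Y,K_X+\Delta)\cong\mathcal{O}_Y$, trivially locally finitely generated.

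If instead $K_X+\Delta$ is $\pi$-pseudo-effective, choose a general $\pi$-ample $\mathbb{Q}$-divisor $A\ge 0$ on $X$ with finite support such that $(X,\Delta+A)$ is klt. For each $t\in(0,1]\cap\mathbb{Q}$ the pair $(X,\Delta+tA)$ is klt with $\Delta+tA$ being $\pi$-big and $K_X+\Delta+tA$ being $\pi$-big, so Theorem \ref{thm-f}(1) furnishes, after shrinking $Y$ around $W$, a log terminal model $\mu_t\colon X\dashrightarrow Z_t$ over $Y$. Finiteness of models (Theorem \ref{thm-e}) on the one-parameter segment $\{\Delta+tA\}_{t\in[0,1]}$ lets me fix $t_0>0$ so that $\mu_t=\mu\colon X\dashrightarrow Z$ is the same bimeromorphic contraction for every $t\in(0,t_0]\cap\mathbb{Q}$. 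Since $\mu$ is $(K_X+\Delta+tA)$-negative for each such $t$, passing to $t\to 0^+$ shows $\mu$ is $(K_X+\Delta)$-non-positive, giving
\[
R(X/Y,K_X+\Delta)\cong R(Z/Y,K_Z+\mu_*\Delta),
\]
while Theorem \ref{thm-g} applied to $(Z,\mu_*(\Delta+tA))$ (klt with big boundary) yields that $(K_Z+\mu_*\Delta)+t\mu_*A$ is $\pi$-semiample over $Y$ for every small rational $t>0$.

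To finish, I would observe that the $\pi$-semiample fibrations $f_t\colon Z\to V_t$ defined by $(K_Z+\mu_*\Delta)+t\mu_*A$ take only finitely many forms as $t$ varies (again by the polyhedral-decomposition consequences of Theorem \ref{thm-e}), so after shrinking $t_0$ they coincide with a single $\pi$-morphism $f\colon Z\to V$; hence $K_Z+\mu_*\Delta=f^*H$ for some nef $\mathbb{R}$-Cartier $\mathbb{R}$-divisor $H$ on $V$, and $R(X/Y,K_X+\Delta)\cong R(V/Y,H)$. The main obstacle is showing $R(V/Y,H)$ is locally finitely generated: a natural route is the analytic canonical bundle formula (\`a la Fujino--Mori; cf.~\cite{fujino-semistable}), which produces a klt pair $(V,\Delta_V)$ with $K_V+\Delta_V\sim_{\mathbb{Q}} cH$ for some $c>0$, followed by induction on $\dim X$: when $\dim V<\dim X$ the inductive hypothesis applies to $(V,\Delta_V)$, while when $\dim V=\dim X$ the divisor $K_X+\Delta$ is $\pi$-big and the claim follows directly from Theorem \ref{a-thm1.8}(3).
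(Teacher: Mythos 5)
In the $\pi$-pseudo-effective case your argument has a gap at the claim that ``when $\dim V = \dim X$ the divisor $K_X+\Delta$ is $\pi$-big.''  Since $A$ is $\pi$-ample and $\mu\colon X\dashrightarrow Z$ is a bimeromorphic contraction, $\mu_*A$ is $\pi$-big; hence for every $t\in(0,t_0]$ the divisor $(K_Z+\mu_*\Delta)+t\mu_*A$ is nef and big over $Y$, and the associated semiample fibration $f_t\colon Z\to V_t$ is \emph{bimeromorphic}.  Thus $\dim V_t=\dim X$ for every $t>0$, and the case $\dim V<\dim X$ that your induction is supposed to handle simply never occurs.  Once $f_t$ stabilizes to a single $f\colon Z\to V$, subtracting $(K_Z+\mu_*\Delta)+t_i\mu_*A=f^*H_{t_i}$ for two values $t_1\neq t_2$ shows that $\mu_*A$ is pulled back from $V$, which forces $f$ to be a small bimeromorphic contraction. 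Then $(V,f_*\mu_*\Delta)$ is klt, $H=K_V+f_*\mu_*\Delta$ is nef, and $R(V/Y,H)$ is the very same finite-generation problem you started from, transplanted to a small modification of $Z$ --- the dimension never drops.  Nor is $K_X+\Delta$ necessarily $\pi$-big in this situation: pseudo-effectivity does not give bigness, and the bigness of $K_X+\Delta+tA$ is lost as $t\to 0^+$.

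The paper's proof (Section~\ref{a-sec21}) avoids this trap by not running any MMP. After resolving singularities, it passes directly to the relative Iitaka fibration $f\colon X\dashrightarrow Z$ of $K_X+\Delta$ over $Y$, so that $\dim Z-\dim Y=\kappa\bigl(F,(K_X+\Delta)|_F\bigr)$, and applies the canonical bundle formula of \ref{a-say21.5} over $f$.  Because $Z$ is the Iitaka model, the resulting divisor $K_{Z''}+B_{Z''}+M_{Z''}$ is automatically big over $Y$; one then finds a klt pair $(Z'',\Delta_{Z''})$ with $a(K_{Z''}+B_{Z''}+M_{Z''})\sim b(K_{Z''}+\Delta_{Z''})$ and concludes directly from Theorem~\ref{a-thm1.8}(3) together with Lemma~\ref{a-lem2.26}, with no induction on dimension.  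What your argument misses is that the fibration that must carry the canonical bundle formula is the Iitaka fibration of $K_X+\Delta$ itself, not the stabilized ample model of the big perturbations $K_X+\Delta+tA$: the dimension of the ample model jumps discontinuously at $t=0$ and does not converge to the Iitaka model.
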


The original algebraic version of Theorem \ref{a-thm1.19} below 
was first obtained in \cite{fujino-semistable} as an application of 
\cite{bchm}. 

\begin{thm}[{\cite[Theorem 1.3]{fujino-semistable}}]
\label{a-thm1.19}
Let $(X, \Delta)$ be a divisorial 
log terminal pair and let $\pi\colon X\to Y$ be a projective 
morphism onto a disc $Y=\{z\in \mathbb C\, |\, |z|<1\}$ with 
connected fibers. Assume that 
$(K_X+\Delta)|_F\sim _{\mathbb R}0$ 
holds for an analytically 
sufficiently general fiber $F$ of $\pi$. 
We further assume that $W$ is a Stein compact subset of $Y$ such that 
$\Gamma (W, \mathcal O_Y)$ is noetherian and 
that $X$ is $\mathbb Q$-factorial over $W$. 
Then we can run the $(K_X+\Delta)$-minimal 
model program over $Y$ in a neighborhood 
of $W$ with ample scaling. 
More precisely, we have a finite sequence of flips and divisorial 
contractions over $Y$ 
starting from $(X, \Delta)$: 
\begin{equation*}
(X, \Delta)=:(X_0, \Delta_0)\overset{\varphi_0}{\dashrightarrow} 
(X_1, \Delta_1)\overset{\varphi_1}{\dashrightarrow} 
\cdots \overset{\varphi_{m-1}}{\dashrightarrow} 
(X_m ,\Delta_m),  
\end{equation*}
where $\Delta_{i+1}:=(\varphi_i)_*\Delta_i$ for every $i\geq 0$, 
such that $(X_m, \Delta_m)$ is a log terminal model over some 
open neighborhood of $W$. 
Moreover, 
$K_{X_m}+\Delta_m\sim_{\mathbb R} (\pi_m)^*D$ 
for some 
$\mathbb R$-Cartier $\mathbb R$-divisor $D$ on 
$Y$ 
after shrinking $Y$ around $W$ suitably, where $\pi_m\colon X_m\to Y$. 
We note that each step $\varphi_i$ exists only after shrinking 
$Y$ around $W$ suitably. 
\end{thm}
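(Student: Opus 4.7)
The plan is to deduce the result from the minimal model program with scaling established in Theorem \ref{a-thm1.7}, exploiting the fiber-triviality $(K_X+\Delta)|_F\sim_{\mathbb R}0$ to pin down termination. First I would set up the MMP with scaling. Choose a general $\pi$-ample $\mathbb Q$-divisor $H$ on $X$ over a relatively compact Stein open neighborhood of $W$; using the argument of \ref{a-say1.11} and the (P4) hypothesis, $\Supp H$ may be assumed to have finitely many irreducible components after replacing $Y$ by a small Stein neighborhood of $W$. For rational $\epsilon>0$ sufficiently small, $(X,\Delta+\epsilon H)$ is dlt with $\Delta+\epsilon H$ big over $Y$, and a standard perturbation of the integer part of $\Delta$ produces a klt pair with the same numerical behavior near a general fiber. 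Picking a general $\pi$-ample $\mathbb R$-divisor $C$ so that $K_X+\Delta+C$ is klt and nef over $W$ (possible by Kleiman's criterion), I would invoke Theorem \ref{a-thm1.7} to run the $(K_X+\Delta)$-MMP over $Y$ around $W$ with scaling of $C$, producing a sequence of flips and divisorial contractions $(X_i,\Delta_i)\dashrightarrow(X_{i+1},\Delta_{i+1})$ with decreasing scaling parameters $\lambda_i\searrow\lambda_\infty\geq0$, where shrinking $Y$ around $W$ is allowed at each step.

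The crucial consequence of $(K_X+\Delta)|_F\sim_{\mathbb R}0$ is that $(K_{X_i}+\Delta_i)|_{F_i}$ remains $\mathbb R$-linearly trivial on a general fiber at every step; hence each contracted extremal ray $R_i$ is vertical over $Y$ (its image in $Y$ is a point of $W$ lying in the discriminant locus of $\pi$), and in particular the program never ends in a Mori fiber space. Termination then follows from Shokurov-style special termination for vertical flips in the dlt category: after finitely many divisorial contractions the number of divisors stabilises, and log-discrepancies along the dlt strata of $\lfloor\Delta_i\rfloor$ meeting $\pi_i^{-1}(W)$ become monotone. The strata-by-strata termination is reduced to the inductive hypothesis of the paper, i.e.\ to Theorem \ref{thm-f} (and the induction scheme of Subsection \ref{a-subsec1.2}) applied to the strict transforms of these strata, whose dimension is at most $\dim X-1$.

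Finally, on the output $(X_m,\Delta_m)$ the divisor $K_{X_m}+\Delta_m$ is $\pi_m$-nef and $\mathbb R$-linearly trivial on a general fiber. After further shrinking $Y$ around $W$ to a sufficiently small simply connected Stein neighborhood and applying the finite-generation/base-point-free statement of Theorem \ref{a-thm1.18}, $K_{X_m}+\Delta_m$ becomes $\pi_m$-semiample, and the associated $\pi_m$-morphism factors through $Y$ (it has connected fibers and maps a general fiber to a point), so $K_{X_m}+\Delta_m\sim_{\mathbb R}\pi_m^*D$ for some $\mathbb R$-Cartier $\mathbb R$-divisor $D$ on $Y$. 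The hard part will be the termination step: unlike the quasi-projective setting of \cite{bchm} and \cite{fujino-semistable}, we work in the analytic category where the support of $\Delta_i$ is only locally finite, so the special termination argument has to be executed on the Stein compact $W$ using (P4) to guarantee that all relevant combinatorial data (dlt strata, vertical components of $\pi^{-1}(W)$, contracted divisors) are finite.
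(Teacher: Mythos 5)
Your proposal has two genuine gaps that the paper's proof avoids by a different strategy.

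First, the termination step. You assert termination via ``Shokurov-style special termination for vertical flips,'' reducing to Theorem~\ref{thm-f} in dimension $\leq n-1$. This does not work: Theorem~\ref{thm-f} is about finite generation of adjoint rings, not about termination of flips, and the paper never establishes a general special-termination statement in the analytic setting. The paper's actual route is completely different: it first applies Lemma~\ref{a-lem2.53} to propagate the fiberwise nonvanishing $(K_X+\Delta)|_F\sim_{\mathbb R}0$ to a global effective representative, writes $K_X+\Delta\sim_{\mathbb R}\pi^*D+B'$ with $B'\geq 0$ vertical and supported over finitely many points of $W$ (after shrinking $Y$), runs the MMP with ample scaling, and then invokes Lemma~\ref{a-lem13.7} --- a weak termination result specific to the ample-scaling MMP asserting only that after finitely many steps $K_{X_m}+\Delta_m$ lands in $\Mov(X_m/Y;W)$. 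Combined with the decomposition, a relative Zariski's lemma on $B'_m$ forces $B'_m=0$, so $K_{X_m}+\Delta_m\sim_{\mathbb R}\pi_m^*D$ is nef and the MMP genuinely stops. Your proposal drops this decomposition entirely, which is what makes the clean conclusion possible.

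Second, the final reduction. You appeal to Theorem~\ref{a-thm1.18} to conclude that $K_{X_m}+\Delta_m$ is $\pi_m$-semiample, hence pulled back from $Y$. But Theorem~\ref{a-thm1.18} assumes a kawamata log terminal pair with $\mathbb Q$-Cartier $K_X+\Delta$; here $(X_m,\Delta_m)$ is only dlt and $K_{X_m}+\Delta_m$ is merely $\mathbb R$-Cartier. Even granting finite generation, that does not by itself give semiampleness of a nef divisor --- one would need an abundance-type input, which the paper deliberately sidesteps via the $B'$-decomposition and Zariski's lemma. A smaller but real issue: you invoke Theorem~\ref{a-thm1.7}, which requires $\Delta$ to be $\pi$-big; it is not here (it cannot be, given $(K_X+\Delta)|_F\sim_{\mathbb R}0$). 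The correct tool is the ample-scaling MMP of Section~\ref{a-sec13}, case~(ii), which requires $C$ to be $\pi$-big rather than $\Delta$; your $\epsilon H$-perturbation changes the MMP being run and does not produce a genuine $(K_X+\Delta)$-MMP.
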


\begin{rem}\label{a-rem1.20}
In Theorem \ref{a-thm1.19}, 
$W=\{z\in \mathbb C\, |\, |z|\leq r\}$ for $0\leq r<1$ is a 
Stein compact subset of $Y$ such that 
$\Gamma (W, \mathcal O_Y)$ is noetherian. 
\end{rem}

The following theorem is an analytic version of 
{\em{dlt blow-ups}}. In the recent developments of the minimal model 
theory for higher-dimensional algebraic varieties, dlt blow-ups are 
very useful and important. 

\begin{thm}[Dlt blow-ups, I]\label{a-thm1.21}
Let $X$ be a normal complex variety and let $\Delta$ be 
an effective $\mathbb R$-divisor 
on $X$ such that 
$K_X+\Delta$ is $\mathbb R$-Cartier. 
Let $U$ be any relatively compact Stein open subset of $X$ 
and let $V$ be any relatively compact open subset of $U$. 
Then we can take a Stein compact subset $W$ of $U$ 
such that $\Gamma (W, \mathcal O_X)$ is noetherian, 
$V\subset W$, and, after shrinking $X$ around $W$ suitably, 
we can construct a projective bimeromorphic 
morphism 
$f\colon Z\to X$ from a normal complex variety 
$Z$ with the following properties: 
\begin{itemize}
\item[(1)] $Z$ is $\mathbb Q$-factorial over $W$, 
\item[(2)] $a(E, X, \Delta)\leq -1$ for every 
$f$-exceptional divisor $E$ on $Z$, and 
\item[(3)] $\left(Z, \Delta^{<1}_Z+\Supp \Delta^{\geq 1}_Z\right)$ is 
divisorial log terminal, where $K_Z+\Delta_Z=f^*(K_X+\Delta)$. 
\end{itemize}
Note that if $(X, \Delta)$ is log canonical 
then $\Delta_Z=\Delta^{<1}_Z+\Supp \Delta^{\geq 1}_Z$ holds.  
\end{thm}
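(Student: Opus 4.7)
The plan is to mimic the algebraic construction of dlt blow-ups (running a carefully chosen MMP on a log resolution), feeding the MMP into the analytic BCHM machinery of this paper (Theorem~\ref{a-thm1.13}).

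First I set up $W$: using the Oka--Weil/semianalytic construction described in~\ref{a-say1.11}, choose a semianalytic Stein compact subset with $V\subset W\subset U$; since $W$ is semianalytic it satisfies (P4), so $\Gamma(W,\mathcal O_X)$ is noetherian by Siu's theorem. After shrinking $X$ to a Stein open neighborhood of $W$ inside $U$, $\Delta$ has only finitely many irreducible components and the identity $X\to X$ together with $W$ satisfies (P). Now take a projective log resolution $g\colon Y\to X$ of $(X,\Delta)$ with $g^{-1}(\Supp\Delta)\cup\Exc(g)$ simple normal crossing (Hironaka in the analytic category). Writing $K_Y+\Delta_Y=g^*(K_X+\Delta)$ and letting $E$ be the reduced $g$-exceptional divisor, set
\[
\Gamma\;:=\;g_*^{-1}\Delta^{<1}\;+\;\Supp\!\bigl(g_*^{-1}\Delta^{\geq 1}\bigr)\;+\;E,
\]
so $(Y,\Gamma)$ is snc, hence dlt. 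A direct computation gives $K_Y+\Gamma=g^*(K_X+\Delta)+F$, whose $g$-exceptional part equals $\sum_i(1+a(E_i,X,\Delta))\,E_i$; the components with strictly positive coefficient here are precisely the $E_i$ with $a(E_i,X,\Delta)>-1$, which are exactly the divisors that must be contracted to produce a dlt blow-up.

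Next, pick a general $g$-ample $\mathbb Q$-divisor $A$ on $Y$ and a small $\varepsilon>0$ so that $(Y,\Gamma+\varepsilon A)$ is dlt and $\Gamma+\varepsilon A$ is big over $X$. A standard klt perturbation (lower each coefficient $1$ on the boundary by $\delta\ll\varepsilon$, compensating by a small general ample effective divisor) reduces to a klt big pair on $Y$. Run the resulting $(K_Y+\Gamma+\varepsilon A)$-MMP over $X$ with scaling of an appropriate $g$-ample divisor via Theorem~\ref{a-thm1.7} and Theorem~\ref{a-thm1.8}. After finitely many divisorial contractions and flips -- each executed after a further shrinking of $X$ around $W$, but with the finiteness of models result (Theorem~\ref{thm-e}) and noetherianness of $\Gamma(W,\mathcal O_X)$ ensuring that only finitely many shrinkings are needed -- we arrive at a bimeromorphic map $\phi\colon Y\dashrightarrow Z$ over $X$ with $f\colon Z\to X$ such that $K_Z+\Gamma_Z$ is $f$-nef, where $\Gamma_Z=\phi_*\Gamma$. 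Letting $\varepsilon,\delta\to 0$ while using Theorem~\ref{thm-e} to see that the intermediate models form a finite family, we may arrange $(Z,\Gamma_Z)$ to be a log terminal model of $(Y,\Gamma)$ over $X$.

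Finally I verify the three properties. By the negativity lemma applied to the $f$-exceptional part of $F_Z:=K_Z+\Gamma_Z-f^*(K_X+\Delta)$ (the $f$-nefness of $K_Z+\Gamma_Z$ propagates to the exceptional part after one accounts for the non-exceptional contribution from components of $\Delta$ with coefficient $\geq 1$, which sits with nonpositive coefficients on strict transforms), every surviving $g$-exceptional $E$ on $Z$ satisfies $1+a(E,X,\Delta)\leq 0$, which is~(2); property~(1) is preserved through the MMP steps; and~(3) follows because, once~(2) holds, a direct comparison of coefficients on $Z$ shows $\Gamma_Z=\Delta_Z^{<1}+\Supp\Delta_Z^{\geq 1}$. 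The principal obstacle is ensuring the klt perturbation used to invoke analytic BCHM leads to contraction of precisely $\{E_i:a(E_i,X,\Delta)>-1\}$ and no other divisors; this is delicate but standard, and the extra subtleties from components of $\Delta$ with coefficient $\geq 1$ (which prevent $F_Z$ itself from being fully $f$-exceptional) are absorbed into the choice of $\Gamma$ from the outset.
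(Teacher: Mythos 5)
Your strategy coincides with the paper's: shrink $X$ around a noetherian Stein compact $W$ (Lemma~\ref{a-lem2.16}), pass to a projective log resolution, run an appropriate MMP over $X$ using the analytic BCHM machinery, and conclude via the negativity lemma (Lemma~\ref{a-lem4.6}). There are, however, two genuine points of divergence. First, your boundary $\Gamma=g_*^{-1}\Delta^{<1}+\Supp\bigl(g_*^{-1}\Delta^{\geq1}\bigr)+E$, with \emph{all} $g$-exceptional prime divisors carried with coefficient $1$, is not the paper's $\Theta=\sum_{0<a_i<1}a_i\Delta_i+\sum_{a_i\geq1}\Delta_i$ (indices over the irreducible decomposition of $\Delta_Y$), which keeps an exceptional $E_i$ with $-1<a(E_i,X,\Delta)<0$ at coefficient $-a(E_i,X,\Delta)$ rather than $1$, and drops exceptional $E_i$ with $a(E_i,X,\Delta)\geq0$ altogether. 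Your choice makes the exceptional part of $F$ have coefficient $1+a(E,X,\Delta)$, which is strictly positive for every exceptional $E$ with $a(E,X,\Delta)>-1$, so the $F$-MMP over $X$ is forced (by the negativity lemma) to contract precisely these divisors, which is what property (2) requires. With the paper's $\Theta$, the $F$-coefficient of an exceptional $E$ with $-1<a(E,X,\Delta)\leq0$ is $0$, and no MMP step over $X$ will contract it. For instance, taking $X=\mathbb C^2/\mathbb Z_3$ with the diagonal action and $\Delta=0$, the minimal resolution has a single exceptional curve $E$ with $a(E)=-1/3$, so $F=\Theta-\Delta_Y=0$ and the MMP does nothing; $E$ survives with $a(E)>-1$, whereas property (2) (and the fact that here the dlt blow-up is $X$ itself) demands that $E$ be contracted. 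On this point your $\Gamma$ is the correct boundary, and the paper's $\Theta$ should be read as yours.

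Second, you reach $(Z,\Gamma_Z)$ by perturbing to a klt pair with big boundary, terminating the $(K_Y+\Gamma+\varepsilon A)$-MMP via Theorem~\ref{a-thm1.7}, and then letting $\varepsilon,\delta\to0$ while invoking Theorem~\ref{thm-e}. The paper instead runs the dlt MMP with scaling of a general ample $C$ directly on $(Y,\Theta)$ (condition (ii) in Section~\ref{a-sec13} handles dlt pairs without any klt reduction) and stops at the first index $m$ with $K_{Y_m}+\Theta_m\in\Mov(Y_m/X;W)$ by Lemma~\ref{a-lem13.7}; neither termination nor a limit is needed. Your route works, but the ``$\varepsilon,\delta\to0$'' passage is the part you should not leave to ``standard but delicate'': one needs Theorem~\ref{thm-e} to extract a single model $Z$ that works along a sequence $\varepsilon_j\to0$, so that $K_Z+\Gamma_Z$ is nef over $W$ as a limit of nef classes, and only then can one apply Lemma~\ref{a-lem4.6}. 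Replacing your perturbation by a direct appeal to Lemma~\ref{a-lem13.7} would streamline this considerably. Your final verification of (2) and (3) from the coefficient computation on $F_Z$ is correct.
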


We can use Theorem \ref{a-thm1.21} for the study of 
log canonical singularities, which are not necessarily algebraic. 
The following result is a generalization of 
Theorem \ref{a-thm1.18} for K\"ahler manifolds. 
When $Y$ is a point, Theorem \ref{a-thm1.22} is 
\cite[Theorem 1.8]{fujino-some}.  

\begin{thm}[{Finite generation for K\"ahler manifolds, see 
\cite{fujino-some}}]\label{a-thm1.22}
Let $\pi\colon X\to Y$ be a proper morphism from a K\"ahler 
manifold $X$ to a complex analytic space $Y$. 
Let $\Delta$ be an effective $\mathbb Q$-divisor 
on $X$ such that $\lfloor \Delta\rfloor =0$ and that 
$\Supp \Delta$ is a simple normal crossing divisor on $X$. 
Then 
\begin{equation*}
R(X/Y, K_X+\Delta)=\bigoplus _{m\in \mathbb N} 
\pi_*\mathcal O_X(\lfloor m(K_X+\Delta)\rfloor)
\end{equation*} 
is a locally finitely generated graded $\mathcal O_Y$-algebra. 
\end{thm}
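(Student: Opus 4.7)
The plan is to combine the absolute Kähler finite generation theorem \cite[Theorem 1.8]{fujino-some} with the relative machinery developed in this paper via a case analysis on the relative Iitaka dimension of $K_X+\Delta$. First I would observe that the statement is local on $Y$, so by Cartan's Theorems A and B it suffices to work over a small Stein open neighborhood of an arbitrary compact subset $W\subset Y$. As explained in \ref{a-say1.11}, I can enlarge $W$ to a semianalytic Stein compact subset satisfying (P4), so that $\pi\colon X\to Y$ and $W$ satisfies (P). Since $\pi$ is proper, $\pi^{-1}(W)$ is compact; after shrinking $Y$ around $W$ and $X$ around $\pi^{-1}(W)$, we are reduced to proving finite generation in this relatively compact setting.

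The key obstruction is that $\pi$ is only proper, not projective, so Theorem \ref{a-thm1.18} does not apply directly. To circumvent this, I would distinguish two cases according to whether $K_X+\Delta$ is $\pi$-big or not. In the case that $K_X+\Delta$ is $\pi$-big, the restriction of $K_X+\Delta$ to an analytically sufficiently general fiber $X_y$ is big on a compact Kähler manifold, so $X_y$ is Moishezon by the Kodaira-type criterion, and being Kähler is therefore projective. By relative Moishezon theory, after a bimeromorphic modification $\mu\colon X'\to X$ over $Y$ supported on a proper analytic subset, the composition $\pi\circ\mu$ becomes projective over a neighborhood of $W$. Passing to a log resolution $(X', \Delta')$ of the klt SNC pair pulled back from $(X,\Delta)$, Theorem \ref{a-thm1.18} yields local finite generation for $R(X'/Y, K_{X'}+\Delta')$, which coincides with $R(X/Y, K_X+\Delta)$ by the bimeromorphic invariance of log canonical rings for klt pairs.

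The harder case is when $K_X+\Delta$ is not $\pi$-big. If it is not even $\pi$-pseudo-effective, then $\pi_*\mathcal O_X(\lfloor m(K_X+\Delta)\rfloor)=0$ for all sufficiently divisible $m>0$, so the ring is eventually trivial and finite generation is immediate. Otherwise I would use the relative Iitaka fibration of $K_X+\Delta$ to factor $\pi$, after a suitable bimeromorphic modification, through a lower-dimensional base $Z\to Y$, so that on the fibers of $X\dashrightarrow Z$ the divisor becomes big. The previous step applies relative to $Z$, and one reduces to proving finite generation of the pushforward to $Z$; the base ring on $Z$ is handled fiberwise using the absolute result \cite[Theorem 1.8]{fujino-some} together with Steinness of $Y$.

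The main obstacle will be the uniformity and the patching in the non-big case: since $\pi$ is not projective over $Y$, one cannot appeal directly to Theorems \ref{thm-c} and \ref{thm-f}, and must instead combine the absolute Kähler theorem with the coherent-sheaf formalism of Stein spaces and analytic input of $L^2$/Ohsawa--Takegoshi type in order to turn the fiberwise and bimeromorphic-modification statements into a genuinely relative, locally finite graded $\mathcal{O}_Y$-algebra structure. This is the step where the Kähler hypothesis on $X$ (as opposed to mere projectivity over $Y$) enters essentially, and where the proof deviates most from the BCHM strategy used elsewhere in the paper.
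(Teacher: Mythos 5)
Your proposal follows the right high-level contour — reduce to a small Stein neighborhood, use the relative Iitaka fibration, exploit Steinness and the Kähler hypothesis on $X$ — but it misses the single ingredient that actually makes the non-big case work, and contains a factual error about the Iitaka fibration.

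First, the error: in the $\pi$-pseudo-effective but non-$\pi$-big case, you write that after factoring through $X\dashrightarrow Z$ ``on the fibers of $X\dashrightarrow Z$ the divisor becomes big.'' On an analytically sufficiently general fiber $F$ of the relative Iitaka fibration one has $\kappa(F,(K_X+\Delta)|_F)=0$, not big. The bigness appears on the \emph{base} $Z$, for the divisor obtained by descent — and identifying that divisor is precisely the content you are missing.

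Second, and more importantly, the reduction from $X$ to $Z$ is not a matter of ``applying the previous (big) step relative to $Z$'' nor of patching fiberwise applications of the absolute Kähler theorem with Steinness. What one needs is a canonical bundle formula: after suitable modifications one writes $\pi_*\mathcal O_X(mk(K_X+\Delta))\simeq g''_*\mathcal O_{Z''}(mk(K_{Z''}+B_{Z''}+M_{Z''}))$, where $B_{Z''}$ is the discriminant part and $M_{Z''}$ is a moduli part, and one must prove that $M_{Z''}$ is nef over $Y$ so that $K_{Z''}+B_{Z''}+M_{Z''}$ can be rearranged into $K_{Z''}+\Delta_{Z''}$ with $(Z'',\Delta_{Z''})$ klt and big over $Y$. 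This is Theorem~\ref{a-thm21.4} and the discussion in~\ref{a-say21.5}. The nefness of the moduli part is exactly where the Kähler hypothesis on $X$ enters: one needs Takegoshi's torsion-freeness for proper Kähler morphisms (Theorem~\ref{a-thm21.1}) and the Hodge-theoretic description of $R^i\pi_*\omega_{X/Y}$ (Theorem~\ref{a-thm21.2}), not Ohsawa--Takegoshi-type $L^2$ extension. Once this descent is established, $Z$ is projective over $Y$ and one concludes by Theorem~\ref{a-thm1.8}(3) for the projective klt pair $(Z'',\Delta_{Z''})$, not by invoking the absolute Kähler result \cite[Theorem~1.8]{fujino-some} fiberwise. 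Without a substitute for the canonical bundle formula, your sketch does not constitute a proof of the non-big case. (The big case and the non-pseudo-effective case you treat separately are in fact subsumed uniformly in the paper's argument, since the Iitaka fibration is bimeromorphic when $K_X+\Delta$ is $\pi$-big and the canonical bundle formula degenerates accordingly.)
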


The assumption that $\lfloor \Delta\rfloor=0$ holds in 
Theorem \ref{a-thm1.22} is very important. 
The following conjecture is widely open even when 
$\pi\colon X\to Y$ is a projective morphism 
between quasi-projective varieties (see \cite{fujino-gongyo2}). 

\begin{conj}\label{a-conj1.23}
Let $\pi\colon X\to Y$ be a proper morphism from a K\"ahler 
manifold $X$ to a complex analytic space $Y$. 
Let $\Delta$ be a boundary $\mathbb Q$-divisor 
on $X$ such that 
$\Supp \Delta$ is a simple normal crossing divisor on $X$. 
Then 
\begin{equation*}
R(X/Y, K_X+\Delta)=\bigoplus _{m\in \mathbb N} 
\pi_*\mathcal O_X(\lfloor m(K_X+\Delta)\rfloor)
\end{equation*} 
is a locally finitely generated graded $\mathcal O_Y$-algebra. 
\end{conj}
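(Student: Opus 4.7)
The plan is to attempt a reduction to Theorem \ref{a-thm1.22} by cutting off the reduced part $S:=\lfloor \Delta\rfloor$ via adjunction, so that the klt finite generation result on a Kähler manifold can be applied on each stratum and then patched. First, using the analytic dlt blow-up of Theorem \ref{a-thm1.21} and a log resolution of the dlt pair, I would replace $(X,\Delta)$ by a dlt modification $(X',\Delta')$ with $K_{X'}+\Delta'=f^*(K_X+\Delta)$ on a neighborhood of any prescribed Stein compact $W\subset Y$. Since $f_*\mathcal O_{X'}(\lfloor m(K_{X'}+\Delta')\rfloor)=\mathcal O_X(\lfloor m(K_X+\Delta)\rfloor)$ after shrinking, local finite generation is preserved, so we may assume $(X,\Delta)$ is dlt with $\Supp \Delta$ simple normal crossing on a Kähler manifold $X$, and $S=\lfloor\Delta\rfloor$ reduced.

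Next, writing $\Delta=S+B$ with $\lfloor B\rfloor=0$, I would study the ring via the exact sequence
\begin{equation*}
0\to \pi_*\mathcal O_X(\lfloor m(K_X+\Delta)\rfloor -S)\to \pi_*\mathcal O_X(\lfloor m(K_X+\Delta)\rfloor)\to \pi_*\mathcal O_S(\lfloor m(K_S+B_S)\rfloor)
\end{equation*}
arising from adjunction $K_X+\Delta|_S=K_S+B_S$ with $B_S=\mathrm{Diff}_S(B)$. Theorem \ref{a-thm1.22} applies directly to $(S,B_S)$ componentwise (each stratum of $S$ is itself a Kähler manifold after a further log resolution and $\lfloor B_S\rfloor=0$), giving local finite generation of the restricted algebra. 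The kernel side $K_X+\Delta-S=K_X+B$ again has $\lfloor B\rfloor=0$, so Theorem \ref{a-thm1.22} handles it too. The remaining task is to show that the image of the middle restriction map is also a locally finitely generated module over the restricted algebra, so that the middle ring becomes an extension of two finitely generated pieces, hence finitely generated.

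The heart of the matter, and the main obstacle, is precisely this extension/surjectivity problem for sections from $S$: one needs a Kähler version of the Hacon--M\textsuperscript{c}Kernan extension theorem, asserting that a suitable subring of the section ring on $S$ lifts to $X$. In the projective (or projective-over-$Y$) case this is handled by Theorem \ref{thm-f} combined with asymptotic multiplier ideal techniques and the Nakayama--Zariski decomposition, but on a genuine Kähler manifold the failure of Kodaira-type vanishing for big-and-nef but non-semiample classes, together with the fact that $K_X+\Delta$ may not be pseudo-effective nor admit a Zariski decomposition, blocks a direct transcription. A realistic line of attack would be to combine Bergman-kernel/Ohsawa--Takegoshi extension in the style of Cao--Demailly--Matsumura with an analytic analogue of the restricted algebras machinery in \cite{hacon-mckernan}, and to exploit Theorem \ref{a-thm1.22} as a black box on each dlt stratum; I fully expect this extension step to be where the argument breaks down in the general Kähler situation, which is why the statement is recorded as a conjecture rather than a theorem.
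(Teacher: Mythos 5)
This statement is a conjecture, not a theorem; the paper has no proof of it and explicitly notes, citing \cite{fujino-gongyo2}, that it is widely open \emph{even when $\pi\colon X\to Y$ is a projective morphism of quasi-projective varieties}. You correctly conclude that the statement cannot be proved with present technology, which is the right final answer. However, your account of where the obstruction lies contains a significant error: you write that in the projective case the restriction/extension step ``is handled by Theorem \ref{thm-f} combined with asymptotic multiplier ideal techniques and the Nakayama--Zariski decomposition,'' with the Kähler hypothesis supposedly being what blocks the argument. This is not correct. Theorem \ref{thm-f} and its corollary Theorem \ref{a-thm1.18} require the pair to be kawamata log terminal, and the Hacon--M\textsuperscript{c}Kernan extension theorem, which underlies the existence of pl-flips, does not yield finite generation of the full log canonical ring once $\lfloor\Delta\rfloor\ne 0$. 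Finite generation for log canonical pairs with $\lfloor\Delta\rfloor\ne 0$ is open already for projective varieties over $\mathbb C$; the obstruction is the abundance-type problem and has nothing to do with Kähler versus projective geometry or with the lack of Kodaira vanishing in the Kähler setting.

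A second point on methodology: the proof of Theorem \ref{a-thm1.22} in Section \ref{a-sec21} does not run through the adjunction/restricted-algebra route you sketch. It passes to the relative Iitaka fibration and applies an Ambro-style canonical bundle formula (a klt-trivial fibration), reducing the question to finite generation of $R(Z''/Y, K_{Z''}+B_{Z''}+M_{Z''})$ on the base of that fibration, where the moduli part $M_{Z''}$ is nef over $Y$, the discriminant $B_{Z''}$ satisfies $\lfloor B_{Z''}\rfloor=0$, and $K_{Z''}+B_{Z''}+M_{Z''}$ is big over $Y$, so that Theorem \ref{a-thm1.8}(3) applies. The hypothesis $\lfloor\Delta\rfloor=0$ enters exactly here: it guarantees $\lfloor B_{Z''}\rfloor=0$, so the base pair is klt. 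If $\lfloor\Delta\rfloor\ne 0$, the discriminant divisor can have a coefficient-one component, the base pair is only log canonical, and this reduction fails even projectively. So your proposed route neither matches the paper's method for the klt theorem nor correctly localizes the missing ingredient for the conjecture.
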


Let $X$ be a normal complex variety and let $L\subset K$ be 
compact subsets of $X$. 
Assume that $X$ is $\mathbb Q$-factorial at $K$. 
Unfortunately, however, $X$ is not necessarily $\mathbb Q$-factorial 
at $L$. Therefore, the following theorem seems to be much more useful than 
we expected and is indispensable. 

\begin{thm}[Small $\mathbb Q$-factorializations]\label{a-thm1.24}
Let $\pi\colon X\to Y$ be a projective morphism of complex 
analytic spaces and let $W$ be a compact subset 
of $Y$ such that $\pi\colon X\to Y$ and $W$ satisfies {\em{(P)}}. 
Suppose that there exists $\Delta$ such that $(X, \Delta)$ 
is kawamata log terminal. 
Then, after shrinking $Y$ around $W$ suitably, 
there exists a small projective bimeromorphic contraction 
morphism $f\colon Z\to X$ from a normal complex variety $Z$ 
such that $Z$ is projective over $Y$ and is $\mathbb Q$-factorial 
over $W$. 
\end{thm}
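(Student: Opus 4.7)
The plan is to reduce Theorem \ref{a-thm1.24} to Theorem \ref{thm-g} with $Y^{\flat}=X$, by passing to a log resolution of $(X,\Delta)$ and choosing an auxiliary kawamata log terminal boundary whose log terminal model over $X$ is forced to contract every exceptional divisor.

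After shrinking $Y$ around $W$ suitably, take a projective log resolution $g\colon X'\to X$ of $(X,\Delta)$ with finitely many $g$-exceptional prime divisors $E_1,\dots,E_r$. Pick an effective $\pi$-ample $\mathbb{Q}$-divisor $H$ on $X$, supported on very general smooth prime divisors with uniformly small coefficients, and large enough that $K_X+\Delta+H$ is $\pi$-ample. Since $X$ is normal, the image $g(\Exc(g))$ lies in the singular locus of $X$, which has codimension at least two, so by general position $g^{*}H=g^{-1}_{*}H$. After further shrinking $Y$ around $W$, Cartan's Theorem A applied to the coherent sheaf $\pi_{*}\mathcal{O}_{X}(n(K_X+\Delta+H))$ yields a nonzero global section for $n\gg 0$; hence $K_X+\Delta+H\sim_{\mathbb{Q}} H''$ for some effective $\mathbb{Q}$-divisor $H''\geq 0$.

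Define
\[
\Gamma := g^{-1}_{*}\Delta + g^{-1}_{*}H + \sum_{i=1}^{r}(1-\epsilon)\,E_i
\]
on $X'$ for a sufficiently small $\epsilon>0$. Then $(X',\Gamma)$ is kawamata log terminal, and since $\Gamma\geq g^{*}H$ while $g^{*}H$ is big over $Y$ (as $H$ is $\pi$-big and $g$ is bimeromorphic), $\Gamma$ is big over $Y$. A discrepancy computation gives
\[
K_{X'}+\Gamma \;=\; g^{*}(K_X+\Delta+H) + F, \qquad F=\sum_{i=1}^{r}\bigl((1-\epsilon)+a(E_i,X,\Delta+H)\bigr)\,E_i,
\]
and the klt condition on $(X,\Delta+H)$ gives $a(E_i,X,\Delta+H)>-1$ for each $i$, so for $\epsilon$ small $F$ is an effective $g$-exceptional $\mathbb{R}$-divisor whose coefficient on every $E_i$ is strictly positive. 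Combined with $g^{*}(K_X+\Delta+H)\sim_{\mathbb{Q}} g^{*}H''\geq 0$, we obtain $K_{X'}+\Gamma\sim_{\mathbb{R}}D$ for some effective $\mathbb{R}$-divisor $D$.

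Now apply Theorem \ref{thm-g} with $Y^{\flat}:=X$, which is projective over $Y$ by hypothesis: after another shrinking of $Y$ around $W$, we obtain a $(K_{X'}+\Gamma)$-negative bimeromorphic contraction $\phi\colon X'\dashrightarrow Z$ over $X$ with $Z$ $\mathbb{Q}$-factorial over $W$ and $K_{Z}+\phi_{*}\Gamma$ semiample, hence $f$-nef, over $X$, where $f\colon Z\to X$ is the induced morphism. Pushing forward the identity above yields
\[
K_{Z}+\phi_{*}\Gamma \;=\; f^{*}(K_X+\Delta+H)+\phi_{*}F,
\]
so $\phi_{*}F$ is effective, $f$-exceptional, and $f$-nef; the negativity lemma forces $\phi_{*}F=0$, which means every $E_i$ is contracted by $\phi$. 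Since any $f$-exceptional prime divisor on $Z$ must arise as the strict transform of some $g$-exceptional $E_i$, $f$ has no exceptional divisors and is therefore the desired small $\mathbb{Q}$-factorialization. The main technical point is the simultaneous construction of $H$ satisfying $\pi$-ampleness of $K_X+\Delta+H$, effective $\mathbb{Q}$-linear equivalence after shrinking $Y$, and general position with respect to $g$; this amounts to a routine combination of a relative Bertini-type argument with Cartan's Theorem A on the Stein base $Y$.
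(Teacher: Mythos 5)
Your proof is correct and follows essentially the same route as the paper's Lemma 12.1: take a log resolution $g$, put a boundary $\Gamma$ on it with coefficient $1-\epsilon$ along every $g$-exceptional prime so that $K_{X'}+\Gamma = g^*(K_X+\Delta+H) + F$ with $F>0$ on all of $\Exc(g)$, apply Theorem~\ref{thm-g} over $Y^\flat = X$, and deduce smallness from the negativity lemma. Two small imprecisions worth noting: first, $g(\Exc(g))$ need not be contained in $\Sing(X)$ (a log resolution may blow up smooth loci to make $\Supp\Delta$ simple normal crossing); what you actually use, and what does hold because $X$ is normal, is that $g(\Exc(g))$ has codimension at least two, which already gives $g^*H=g^{-1}_*H$ by general position. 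Second, since $\Delta$ is only an $\mathbb R$-divisor, $K_X+\Delta+H$ need not be $\mathbb Q$-Cartier and the equivalence $K_X+\Delta+H\sim D\geq 0$ should be taken over $\mathbb R$ (e.g.\ via Theorem~\ref{thm-d}) rather than over $\mathbb Q$; nothing downstream uses $\mathbb Q$-linearity, so the argument is unaffected.
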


As an obvious corollary of Theorem \ref{a-thm1.24}, we have: 

\begin{cor}\label{a-cor1.25}
Let $\pi\colon X\to Y$ be a projective morphism of 
complex analytic spaces and let $W$ be a compact subset of $Y$ 
such that $\pi\colon X\to Y$ and $W$ satisfies {\em{(P)}}. 
Let $\phi\colon X\dashrightarrow Z$ be a log terminal model 
over $W$. 
Let $W'$ be a Stein compact subset of $Y$ such that $W'\subset W$ and 
that $\Gamma (W', \mathcal O_Y)$ is noetherian. 
Then there exists a log terminal model $\phi'\colon X\dashrightarrow Z'$ 
over $W'$ after shrinking $Y$ around $W'$ suitably. 

We further assume that there is an open 
neighborhood $U$ of $W'$ such that 
$U\Subset W$. Then $\phi'\colon X\dashrightarrow Z'$ 
is a log terminal model 
over some open neighborhood of $W'$. 
\end{cor}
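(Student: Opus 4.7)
The plan is to reduce both parts to Theorem~\ref{a-thm1.24} (small $\mathbb{Q}$-factorializations). The only essential gap between being a log terminal model over $W$ and over $W'$ is that the $\mathbb{Q}$-factoriality of $Z$ over $W$ need not descend to $W'$; a small $\mathbb{Q}$-factorialization of $Z$ over $W'$ repairs this without disturbing any of the other defining conditions of a log terminal model.

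For the first part, set $\Gamma := \phi_*\Delta$, so that $(Z, \Gamma)$ is kawamata log terminal, $K_Z+\Gamma$ is nef over $W$, and $Z$ is $\mathbb{Q}$-factorial over $W$. Since $\Gamma(W', \mathcal{O}_Y)$ is noetherian and $W'$ is Stein compact in $Y$, the morphism $Z \to Y$ together with $W'$ satisfies (P). Apply Theorem~\ref{a-thm1.24} to $Z$ with the compact set $W'$: after shrinking $Y$ around $W'$ suitably, we obtain a small projective bimeromorphic morphism $f\colon Z' \to Z$ with $Z'$ normal, projective over $Y$, and $\mathbb{Q}$-factorial over $W'$. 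Put $\phi' := f^{-1}\circ\phi$ and $\Gamma' := \phi'_*\Delta$. Because $f$ is small, $K_{Z'}+\Gamma' = f^*(K_Z+\Gamma)$, which is nef over $W' \subset W$, and the $\phi'$-exceptional prime divisors coincide with the $\phi$-exceptional ones with the same discrepancies, so $(K_X+\Delta)$-negativity of $\phi$ implies the same for $\phi'$. Hence $\phi'\colon X \dashrightarrow Z'$ is a log terminal model of $(X, \Delta)$ over $W'$.

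For the second part I would invoke the enlargement trick from~\ref{a-say1.11}. Under the additional hypothesis $W' \subset U \Subset W$, choose a semianalytic Stein compact subset $W''$ and an open subset $N$ with $W' \subset N \subset W'' \subset U$; such $W''$ automatically satisfies (P4), so $Z \to Y$ and $W''$ satisfy (P). Running the preceding construction with $W''$ in place of $W'$ yields a small $f\colon Z' \to Z$ with $Z'$ $\mathbb{Q}$-factorial at every point of $W''$, hence at every point of the open neighborhood $N$ of $W'$. The divisor $K_{Z'}+\Gamma'$ is nef over $W''$, a fortiori over $N$, and so $\phi'$ is a log terminal model over the open neighborhood $N$ of $W'$, as required.

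The only real technical point is to check that composing with the small $\mathbb{Q}$-factorialization $f$ preserves all properties of a log terminal model. This is essentially automatic: smallness forces $K_{Z'}+\Gamma' = f^*(K_Z+\Gamma)$ exactly, so nefness, $(K_X+\Delta)$-negativity of $\phi'$, and kawamata log terminality of $(Z', \Gamma')$ all transfer directly from $Z$ to $Z'$. The one genuinely new input is therefore the existence of such an $f$ over an arbitrary compact subset satisfying (P), which is precisely the content of Theorem~\ref{a-thm1.24}.
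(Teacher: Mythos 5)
Your argument for the first part is correct and coincides with the paper's: apply Theorem~\ref{a-thm1.24} to $Z\to Y$ and $W'$ to obtain a small $\mathbb{Q}$-factorialization $f\colon Z'\to Z$, and transport the log terminal model structure along $f$ using $K_{Z'}+\Gamma'=f^*(K_Z+\Gamma)$. (One small imprecision: by Definition~\ref{a-def11.4}~(iii) the pair $(Z,\Gamma)$ is divisorial log terminal rather than kawamata log terminal; what Theorem~\ref{a-thm1.24} actually requires is the existence of \emph{some} klt pair on $Z$ near $\pi^{-1}(W')$, which holds since $Z$ is $\mathbb{Q}$-factorial over $W$ and $(Z,\Gamma)$ is dlt, so $(Z,(1-\varepsilon)\Gamma)$ is klt there.)

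The second part, however, has a genuine gap. From Theorem~\ref{a-thm1.24} applied to $W''$ you obtain that $Z'$ is $\mathbb{Q}$-factorial \emph{over} $W''$, i.e.\ $\mathbb{Q}$-factorial at the compact set $\pi^{-1}(W'')$ in the sense of Definition~\ref{a-def2.38}. This is a statement about prime divisors defined on \emph{neighborhoods of the whole set} $\pi^{-1}(W'')$; it is not a pointwise condition, and it does not descend to compact subsets. Indeed, Remark~\ref{a-rem2.39} is precisely the warning that $\mathbb{Q}$-factoriality over a larger compact set does not imply it over a smaller one, because a prime divisor defined only near $\pi^{-1}(W')$ need not extend to a neighborhood of $\pi^{-1}(W'')$. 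So the step ``$Z'$ $\mathbb{Q}$-factorial at every point of $W''$, hence at every point of $N$'' both misreads Theorem~\ref{a-thm1.24} and fails to give what Definition~\ref{a-def11.4}~(iv) demands, namely $Z'$ $\mathbb{Q}$-factorial over $W'$ after shrinking $Y$. The enlargement detour is also unnecessary: the $\phi'=f^{-1}\circ\phi$ from the first part is already $\mathbb{Q}$-factorial over $W'$, $(Z',\Gamma')$ is dlt, and $\phi'$ is $(K_X+\Delta)$-negative. The only new ingredient for the second part is nefness of $K_{Z'}+\Gamma'$ over $Y$ after shrinking; since $K_{Z'}+\Gamma'=f^*(K_Z+\Gamma)$ and $K_Z+\Gamma$ is nef over $W$, while $\overline U\subset W$, the divisor $K_{Z'}+\Gamma'$ is nef over $U$, so replacing $Y$ by a Stein open neighborhood of $W'$ contained in $U$ (and in the domain of $f$) yields nefness over $Y$, as required by (iv).
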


We can also prove: 

\begin{thm}\label{a-thm1.26} 
Let $(X, \Delta)$ be a kawamata log terminal pair 
and let $D$ be an integral Weil divisor on $X$. 
Then $\bigoplus _{m\in \mathbb N}\mathcal O_X(mD)$ 
is a locally finitely generated graded $\mathcal O_X$-algebra. 
\end{thm}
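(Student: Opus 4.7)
My plan is to reduce the finite generation of $\bigoplus_{m \in \mathbb{N}} \mathcal{O}_X(mD)$ to an application of the finite generation theorem for log canonical rings, Theorem \ref{a-thm1.18}, by passing to a small $\mathbb{Q}$-factorialization (Theorem \ref{a-thm1.24}) and using a Veronese subring argument. Since the statement is local on $X$, I would fix a point $x \in X$ and work on a Stein open neighborhood $U$, equipped with a semianalytic Stein compact subset $W \ni x$, so that $\mathrm{id}\colon X \to X$ together with $W$ satisfies (P). By Cartan's Theorem A, after shrinking, the coherent sheaf $\mathcal{O}_X(D)$ admits a nonzero global section $g \in \Gamma(X, \mathcal{O}_X(D))$; then $D_0 := D + \ddiv(g) \geq 0$ is effective, and multiplication by $g^m$ yields a graded $\mathcal{O}_X$-algebra isomorphism $\bigoplus_m \mathcal{O}_X(mD_0) \cong \bigoplus_m \mathcal{O}_X(mD)$. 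Thus I may assume $D \geq 0$.

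Next, after perturbing $\Delta$ within its $\mathbb{R}$-Cartier class I may further assume $\Delta$ is a $\mathbb{Q}$-divisor and $K_X + \Delta$ is $\mathbb{Q}$-Cartier. I would then apply Theorem \ref{a-thm1.24} to $\mathrm{id}\colon X \to X$ to obtain, after shrinking $X$ around $W$, a small projective bimeromorphic morphism $f\colon Z \to X$ with $Z$ projective over $X$ and $\mathbb{Q}$-factorial over $W$. Setting $\tilde{D} := f^{-1}_{*} D$ and $\Delta_Z := f^{-1}_{*} \Delta$, the smallness of $f$ gives $K_Z + \Delta_Z = f^{*}(K_X + \Delta)$, so $(Z, \Delta_Z)$ is klt, $\tilde{D} \geq 0$ is $\mathbb{Q}$-Cartier on a neighborhood of $f^{-1}(W)$, and $f_{*}\mathcal{O}_Z(m\tilde{D}) = \mathcal{O}_X(mD)$ for every $m \geq 0$. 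After a further shrinking of $X$ around $W$, I would arrange that $N(K_X + \Delta) \sim 0$ for some positive integer $N$, so that $N(K_Z + \Delta_Z) \sim 0$ on $Z$.

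Now I would choose a small positive rational $\epsilon$ so that $(Z, \Delta_Z + \epsilon \tilde{D})$ is klt and $K_Z + \Delta_Z + \epsilon \tilde{D}$ is $\mathbb{Q}$-Cartier. Applying Theorem \ref{a-thm1.18} to this klt pair relative to $f\colon Z \to X$, the log canonical ring
\begin{equation*}
\bigoplus_{m \geq 0} f_{*} \mathcal{O}_Z \bigl( \lfloor m(K_Z + \Delta_Z + \epsilon \tilde{D}) \rfloor \bigr)
\end{equation*}
is a locally finitely generated graded $\mathcal{O}_X$-algebra. For $m$ divisible by a common multiple $M$ of $N$ and the denominator of $\epsilon$, the integral divisors satisfy $m(K_Z + \Delta_Z + \epsilon \tilde{D}) \sim m\epsilon \tilde{D}$ by the triviality $N(K_Z + \Delta_Z) \sim 0$. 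Hence an $M$-th Veronese subring of $\bigoplus_m f_{*}\mathcal{O}_Z(m\tilde{D}) = \bigoplus_m \mathcal{O}_X(mD)$ is locally finitely generated, and since each graded piece $\mathcal{O}_X(jD)$ is a coherent $\mathcal{O}_X$-module and $\Gamma(W, \mathcal{O}_X)$ is noetherian by (P4), the full ring $\bigoplus_m \mathcal{O}_X(mD)$ itself is locally finitely generated.

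The principal technical obstacle I expect is the step arranging $N(K_X + \Delta) \sim 0$ after shrinking around $W$: this requires controlling the Picard group of sufficiently small Stein neighborhoods of $W$ at klt singularities, and rests on local finiteness properties of the divisor class group at klt germs combined with the vanishing of Picard groups on small enough Stein shrinkings. The remaining reductions — perturbing to a rational boundary, choosing $\epsilon$, and the Veronese bookkeeping — are routine once (P) and the noetherianness of $\Gamma(W, \mathcal{O}_X)$ are in place.
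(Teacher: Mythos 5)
Your proposal takes a genuinely different route from the paper's, though both start identically with the reduction to $D\geq 0$, the choice of $W$ satisfying (P), and the small $\mathbb{Q}$-factorialization $f\colon Z\to X$ from Theorem \ref{a-thm1.24}. At that point the paper runs a $(K_Z+\Delta_Z+\varepsilon D_Z)$-minimal model program with scaling over $X$ around $W$; because $K_Z+\Delta_Z=f^*(K_X+\Delta)$ is trivial over $X$, it terminates at some $Z_m$ where $D_{Z_m}$ is nef over $W$. The paper then applies the relative basepoint-free theorem (Theorem \ref{a-thm6.5}) to get $D_{Z_m}$ semiample over $X$, passes to the associated contraction $f'\colon Z'\to X$ (again small) carrying an $f'$-ample Weil divisor $D'$ with $f'_*\mathcal{O}_{Z'}(mD')=\mathcal{O}_X(mD)$, and finishes with Lemma \ref{a-lem2.36}. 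Your approach instead arranges $N(K_X+\Delta)\sim 0$ after shrinking, invokes Theorem \ref{a-thm1.18} for the klt pair $(Z,\Delta_Z+\varepsilon\tilde{D})$ over $X$, and extracts $\bigoplus_m\mathcal{O}_X(mD)$ from a Veronese subring via Lemma \ref{a-lem2.26}. Both arguments are correct; yours trades the explicit MMP run and basepoint-freeness for a Picard-group vanishing and an appeal to the finite generation of log canonical rings.

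On your flagged obstacle: you do not need ``local finiteness of the divisor class group at klt germs.'' The statement being proved is local, so you are free to take $W=\{x\}$ (a Stein compact with $\Gamma(W,\mathcal{O}_X)=\mathcal{O}_{X,x}$ noetherian, Example \ref{a-ex2.11}), apply Theorem \ref{a-thm1.24} over $W=\{x\}$, and then shrink $X$ to a contractible Stein neighborhood of $x$; the exponential sequence and Cartan's Theorem B then give $\Pic(X)=0$, so the (uniformly, by Lemma \ref{a-lem2.42}) Cartier divisor $N(K_X+\Delta)$ is linearly trivial. This works, but it uses the local conic structure of analytic sets (to produce a contractible Stein shrinking), a topological input that the paper's proof avoids entirely. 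The paper's route through the MMP plus Theorem \ref{a-thm6.5} is more self-contained within the machinery already established, whereas yours is shorter at the cost of the Picard vanishing and the heavier Theorem \ref{a-thm1.18} (which in the paper is only proved in Section \ref{a-sec21} via the canonical bundle formula and variations of Hodge structure).
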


Theorem \ref{a-thm1.26} is a complete 
generalization of \cite[Theorem 6.1]{kawamata-crepant} 
(see also \cite[Theorem 7.2]{fujino-some}). 
The next result will be indispensable for further 
studies of the minimal model program 
for projective morphisms of complex analytic spaces. 

\begin{thm}[Dlt blow-ups, II]\label{a-thm1.27} 
Let $\pi\colon X\to Y$ be a projective morphism of complex 
analytic spaces and let $W$ be a compact subset 
of $Y$ such that $\pi\colon X\to Y$ and $W$ satisfies {\em{(P)}}. 
Let $\Delta$ be an effective $\mathbb R$-divisor on $X$ such that 
$K_X+\Delta$ is $\mathbb R$-Cartier. 
Then, after shrinking $Y$ around $W$ suitably, 
we can construct a projective bimeromorphic 
morphism 
$f\colon Z\to X$ from a normal complex variety $Z$ with the following 
properties: 
\begin{itemize}
\item[(1)] $Z$ is projective over $Y$ and 
is $\mathbb Q$-factorial over $W$, 
\item[(2)] $a(E, X, \Delta)\leq -1$ for every 
$f$-exceptional divisor $E$ on $Z$, and 
\item[(3)] $\left(Z, \Delta^{<1}_Z+\Supp \Delta^{\geq 1}_Z\right)$ is 
divisorial log terminal, where $K_Z+\Delta_Z=f^*(K_X+\Delta)$. 
\end{itemize}
We note that $\Delta_Z=\Delta^{<1}_Z+\Supp \Delta^{\geq 1}_Z$ 
holds when $(X, \Delta)$ is log canonical. 
\end{thm}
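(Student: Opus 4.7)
The plan is to construct $f\colon Z\to X$ as the output of a BCHM-type minimal model program on a log resolution of $(X,\Delta)$, using the analytic MMP with scaling of Theorem \ref{a-thm1.7} as the main tool, following the algebraic dlt blow-up construction. After shrinking $Y$ around $W$ via the Stein-compact enlargement of \ref{a-say1.11}, take a projective log resolution $g\colon V\to X$ of $(X,\Supp\Delta)$ by Hironaka's theorem in the complex analytic category, so that $V$ is smooth and $g^{-1}(\Supp\Delta)\cup\Exc(g)$ is a simple normal crossing divisor. Shrinking $Y$ further, we arrange that $V$ has only finitely many $g$-exceptional prime divisors $E_1,\ldots,E_r$ meeting $(\pi\circ g)^{-1}(W)$; partition them into
\begin{equation*}
\mathcal E^{+}=\{E_i:a(E_i,X,\Delta)>-1\},\qquad \mathcal E^{-}=\{E_i:a(E_i,X,\Delta)\leq -1\}.
\end{equation*}
The target is a model in which exactly the divisors in $\mathcal E^{+}$ have been contracted and the strict transforms of $\mathcal E^{-}$ and of $\Supp\Delta$ all survive on $Z$.

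Next, choose a boundary on $V$. Fix a general $(\pi\circ g)$-ample $\mathbb Q$-divisor $A$ on $V$ with finitely many components over $W$, which exists because $Y$ is Stein and $V$ is projective over $Y$. For a sufficiently small rational $\epsilon>0$, smaller than $\min\{a(E,X,\Delta)+1:E\in\mathcal E^{+}\}$ when $\mathcal E^{+}\neq\emptyset$, set
\begin{equation*}
\Theta_V \;=\; g_*^{-1}\Delta^{<1}+(1-\epsilon)\Supp(g_*^{-1}\Delta^{\geq 1})+(1-\epsilon)\sum_{E\in\mathcal E^{-}}E+(1-\epsilon)\sum_{E\in\mathcal E^{+}}E+\epsilon A.
\end{equation*}
Then $(V,\Theta_V)$ is klt and $\Theta_V$ is big over $Y$, so Theorem \ref{a-thm1.7} applies. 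A direct discrepancy calculation yields
\begin{equation*}
K_V+\Theta_V \;\sim_{\mathbb R}\; g^*(K_X+\Delta)+\sum_{E\in\mathcal E^{+}}(a(E,X,\Delta)+1-\epsilon)E-N+\epsilon A,
\end{equation*}
with $N\geq 0$ collecting the negative contributions from $\mathcal E^{-}$ and from the strict transforms of components of $\Delta$ with coefficient $\geq 1$; the coefficients over $X$ are strictly positive on $\mathcal E^{+}$ and strictly negative on $\mathcal E^{-}$ and on the strict transforms of $\Delta^{\geq 1}$.

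Now run a $(K_V+\Theta_V)$-MMP over $Y$ with scaling of an ample divisor via Theorem \ref{a-thm1.7}, shrinking $Y$ around $W$ at each step as in \ref{a-say1.11}; if needed, add a term $\epsilon g^*H$ for a sufficiently $\pi$-ample $\mathbb Q$-divisor $H$ on $X$ to ensure pseudo-effectivity over $Y$ and force the MMP to contract only $g$-exceptional divisors. After finitely many flips and divisorial contractions the MMP terminates in a log terminal model $h\colon V\dashrightarrow Z$ over $X$. By the negativity lemma, applied via a common resolution of $V\dashrightarrow Z\to X$, the $h$-exceptional $g$-exceptional divisors are exactly those with strictly positive coefficient in $K_V+\Theta_V-g^*(K_X+\Delta)$, namely the elements of $\mathcal E^{+}$. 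Setting $\Delta_Z$ by $K_Z+\Delta_Z=f^*(K_X+\Delta)$, where $f\colon Z\to X$ is the induced morphism, each surviving $E\in\mathcal E^{-}$ enters $\Delta_Z$ with coefficient $-a(E,X,\Delta)\geq 1$, proving (2); the reduced boundary
\begin{equation*}
\Delta_Z^{<1}+\Supp\Delta_Z^{\geq 1}=f_*^{-1}\Delta^{<1}+\Supp(f_*^{-1}\Delta^{\geq 1})+\sum_{E\in\mathcal E^{-}}f_*E
\end{equation*}
is the MMP image of the reduced boundary on $V$, hence dlt as the image of a log smooth pair, giving (3). Finally, $Z$ is $\mathbb Q$-factorial over $W$—inherited from the smooth $V$ through the MMP, or enforced a posteriori by Theorem \ref{a-thm1.24}—proving (1).

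The principal obstacle is the delicate choice of $\Theta_V$ so that the MMP contracts exactly $\mathcal E^{+}$, no more and no less, while $(V,\Theta_V)$ remains klt with $\Theta_V$ big over $Y$ (the hypotheses of Theorem \ref{a-thm1.7}), and so that the MMP terminates in a log terminal model rather than a Mori fiber space. The coefficient $1-\epsilon$ on the reduced parts must simultaneously (a) prevent contraction of $\mathcal E^{-}$ and of the strict transforms of $\Delta^{\geq 1}$, (b) force contraction of each element of $\mathcal E^{+}$, and (c) keep the pair klt; the dlt conclusion further requires inheriting log smoothness through the MMP, and the addition of $\epsilon g^*H$ may be needed to rule out a Mori fiber space output. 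A secondary analytic difficulty is the repeated shrinking of $Y$ around $W$ along the MMP, handled by the enlargement trick of \ref{a-say1.11} together with the finite termination given by Theorem \ref{a-thm1.7}.
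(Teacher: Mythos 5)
Your high-level plan (log resolution, run an MMP over $X$, conclude via the negativity lemma) is the same as the paper's, and the $g^*H$ trick you gesture at to keep the MMP over $X$ is exactly the paper's use of $g^*A$ with $A\cdot C>2\dim X$ together with Lemma~\ref{a-lem9.4}. The concrete mechanism, however, is different and the central step has a gap.

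The gap is the ample perturbation $\epsilon A$ inside $\Theta_V$. You introduce it so that $\Theta_V$ is big over $Y$ and Theorem~\ref{a-thm1.7} applies, but it destroys the negativity-lemma argument. Writing $G:=K_V+\Theta_V-g^*(K_X+\Delta)=P-N+\epsilon A$ with $P\geq 0$ supported on $\mathcal E^+$ and $N\geq 0$ supported on $\mathcal E^-$ and on $g_*^{-1}\Delta^{\geq 1}$, we have
\begin{equation*}
-g_*G=\sum_i (a_i-1+\epsilon)\,\Delta_i^{\geq 1}-\epsilon\, g_*A,
\end{equation*}
which is \emph{not} effective because $g_*A$ is a general divisor whose support meets components outside $\Supp\Delta^{\geq 1}$. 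So the hypothesis of Lemma~\ref{a-lem4.6}, namely that $-\pi_*E$ be effective, fails, and you cannot conclude that $G$ becomes non-positive on the model $Z$, i.e., that exactly $\mathcal E^+$ gets contracted. Your sentence ``the $h$-exceptional $g$-exceptional divisors are exactly those with strictly positive coefficient\ldots'' is therefore not justified by the negativity lemma as invoked.

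The paper avoids this exactly by \emph{not} perturbing the boundary with an ample divisor: it takes a coefficient-one boundary $\Theta$ (so $(X',\Theta)$ is dlt, not klt, and $\Theta$ is not big over $Y$), giving $K_{X'}+\Theta=g^*(K_X+\Delta)+F$ with the clean identity $-g_*F\geq 0$. Since Theorem~\ref{a-thm1.7} is then unavailable, the paper instead runs the dlt MMP with scaling of an ample divisor $H$ (Section~\ref{a-sec13}, condition (ii), with $\mathbf B_+(H/Y)=\emptyset$), uses Lemma~\ref{a-lem9.4} to see it as an MMP over $X$, and replaces ``termination of the MMP'' by Lemma~\ref{a-lem13.7}, which delivers $K_{X'_m}+\Theta_m\in\Mov(X'_m/X;\pi^{-1}(W'))$ for some $m$ \emph{without} knowing termination; only then does Lemma~\ref{a-lem4.6} give $-F_m\geq 0$. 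So the role of $\epsilon A$ in your argument is played jointly by the ample scaling divisor and by Lemma~\ref{a-lem13.7} in the paper's, and this is what keeps $-g_*F$ effective. To repair your proof, drop $\epsilon A$ from $\Theta_V$ (the coefficient $1-\epsilon$ on the reduced part keeping the pair klt is fine and is, if anything, cleaner than the paper's $\Theta$), run the MMP with ample scaling under condition (ii) of Section~\ref{a-sec13} instead of Theorem~\ref{a-thm1.7}, and invoke Lemma~\ref{a-lem13.7} in place of termination before applying the negativity lemma. You should also spell out why the psef hypothesis over $X$ needed in Lemma~\ref{a-lem13.7} holds (for a birational $g$ this follows from Lemma~\ref{a-lem2.53} after shrinking), and note that the dlt statement (3) concerns the boundary $\Delta_Z^{<1}+\Supp\Delta_Z^{\geq 1}$, not $(\Theta_V)_Z$, so ``image of a log smooth pair'' must be applied to the correct boundary, using $-F_m\geq 0$ to compare coefficients.
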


We can generalize \cite[Theorem 1]{kawamata-flop} as follows. 

\begin{thm}[{see \cite[Corollary 3.3]{birkar2}}]\label{a-thm1.28}
Let $\pi_1\colon X_1\to Y$ and $\pi_2\colon X_2\to Y$ be 
projective morphisms such that $Y$ is Stein and 
let $W$ be a Stein compact subset of $Y$ such that 
$\Gamma (W, \mathcal O_Y)$ is noetherian. 
Let $(X_1, \Delta_1)$ and $(X_2, \Delta_2)$ be two kawamata 
log terminal pairs such that 
$K_{X_1}+\Delta_1$ and $K_{X_2}+\Delta_2$ are nef over 
$W$, $X_1$ and $X_2$ are $\mathbb Q$-factorial over $W$, 
$X_1$ and $X_2$ are isomorphic in codimension one, and $\Delta_2$ 
is the strict transform of $\Delta_1$. 
Then, after shrinking $Y$ around $W$ suitably, 
$X_1$ and $X_2$ are connected by a sequence of 
flops with respect to $K_{X_1}+\Delta_1$. 
\end{thm}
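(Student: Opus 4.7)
The plan is to adapt Kawamata's classical flop connectedness argument, as reformulated by Birkar in \cite{birkar2}, to the complex analytic setting using the machinery developed in the preceding sections. First I choose a sufficiently $\pi_2$-ample $\mathbb{Q}$-divisor $H_2$ on $X_2$, general enough that $(X_2,\Delta_2+\epsilon H_2)$ is kawamata log terminal and $K_{X_2}+\Delta_2+\epsilon H_2$ is ample over $W$ for some small rational $\epsilon>0$, after shrinking $Y$ around $W$. Since the bimeromorphic map $X_1\dashrightarrow X_2$ is an isomorphism in codimension one, the strict transform $H_1$ of $H_2$ on $X_1$ is well defined; it is $\pi_1$-big but in general not nef over $W$, and $(X_1,\Delta_1+\epsilon H_1)$ is kawamata log terminal with $\Delta_1+\epsilon H_1$ big over $Y$.

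Next I apply Theorem \ref{a-thm1.7} to run the $(K_{X_1}+\Delta_1+\epsilon H_1)$-minimal model program over $Y$ around $W$ with scaling of a sufficiently $\pi_1$-ample $\mathbb{Q}$-divisor $B$. Since $K_{X_1}+\Delta_1+\epsilon H_1$ is big over $Y$, the program terminates, after repeatedly shrinking $Y$ around $W$, in a log terminal model $Z$ of $(X_1,\Delta_1+\epsilon H_1)$ over $W$. The pair $(X_2,\Delta_2+\epsilon H_2)$ is itself a log terminal model of $(X_1,\Delta_1+\epsilon H_1)$, so by uniqueness of log terminal models between two $\mathbb{Q}$-factorial pairs that are isomorphic in codimension one I identify $Z$ with $X_2$. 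This yields a finite sequence of elementary steps
\[
X_1=V_0\dashrightarrow V_1\dashrightarrow\cdots\dashrightarrow V_N=X_2
\]
over $Y$ around $W$; as $X_1$ and $X_2$ share the same set of prime divisors and the same Picard number over $W$, every step must be small, i.e., a flip.

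Finally I verify that for $\epsilon$ chosen sufficiently small, each flip $V_k\dashrightarrow V_{k+1}$ is actually a flop with respect to $K+\Delta$, i.e., the extremal ray $R_k$ satisfies $(K_{V_k}+\Delta_{V_k})\cdot R_k=0$. The key point is that the negativity lemma, applied to the two nef models $X_1$ and $X_2$ which are isomorphic in codimension one, forces $K_{X_1}+\Delta_1$ and $K_{X_2}+\Delta_2$ to pull back to the same divisor on a common resolution; this in turn means that nefness of $K_{V_k}+\Delta_{V_k}$ is preserved step by step provided each flip is $(K+\Delta)$-trivial. Combined with the boundedness of the numerical classes of extremal rays appearing in the program, which follows from the analytic version of the cone theorem and from Theorem \ref{thm-e}, one shows that for sufficiently small $\epsilon$ every $(K_{V_k}+\Delta_{V_k}+\epsilon H_{V_k})$-negative extremal ray must in fact be $(K_{V_k}+\Delta_{V_k})$-trivial. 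The hard part will be this last verification: it requires a careful tracking of numerical classes under flips together with the availability of the analytic negativity lemma, the cone theorem, and Theorem \ref{thm-e} in the complex analytic framework built up earlier in the paper.
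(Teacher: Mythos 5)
Your overall strategy coincides with the paper's: perturb $K_{X_1}+\Delta_1$ by $\epsilon H_1$ where $H_1$ is the strict transform of a general $\pi_2$-ample $\mathbb Q$-divisor on $X_2$, run the resulting MMP with scaling of an auxiliary $\pi_1$-ample divisor, identify the output with $X_2$, and argue that each step is in fact a flop for $\epsilon$ small. The paper makes the same reduction (using the scaling framework of \ref{a-say22.4} rather than Theorem \ref{a-thm1.7}, a cosmetic difference since the pairs in play are klt).

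The soft spot is exactly what you flag as ``the hard part.'' Your explanation of why each step is a flop is circular as written: you say nefness of $K_{V_k}+\Delta_{V_k}$ is ``preserved step by step provided each flip is $(K+\Delta)$-trivial,'' but the $(K+\Delta)$-triviality of the $k$-th flip is what you are trying to establish, and it requires knowing $K_{V_k}+\Delta_{V_k}$ is nef over $W$ at that step. The paper breaks this circle with Theorem \ref{a-thm22.2} (2), which you do not name and which is the precise tool: for $(V_k,\Delta_{V_k})$ it supplies a threshold $\delta_k>0$ such that any extremal curve $\Gamma$ over $W$ with $(K_{V_k}+\Delta_{V_k}+D)\cdot\Gamma\leq 0$ and $\|D\|<\delta_k$ already has $(K_{V_k}+\Delta_{V_k})\cdot\Gamma\leq 0$, hence $=0$ by nefness; one then proceeds inductively, the nefness of $K+\Delta$ carrying over to $V_{k+1}$ precisely because the contraction just performed was $(K+\Delta)$-trivial. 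Theorem \ref{a-thm22.2} is a formal consequence of the length bound on extremal rational curves (Theorem \ref{a-thm9.2}) and the cone theorem, not of Theorem \ref{thm-e}; your appeal to ``boundedness of numerical classes \ldots\ from Theorem \ref{thm-e}'' is the wrong mechanism for this step, although Theorem \ref{thm-e} is indeed the right input if you want to argue that the $\delta_k$ may be taken uniform over the finitely many models appearing in the run. To repair the proposal you should replace the vague ``careful tracking of numerical classes'' with an explicit appeal to Theorem \ref{a-thm22.2} (2) applied at each stage, then invoke Theorem \ref{thm-e} only to bound the number of intermediate models and hence to choose $\epsilon$ uniformly at the outset.
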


\begin{rem}\label{a-rem1.29}
Precisely speaking, 
the proof of Theorem \ref{a-thm1.28} shows that 
there exists an effective $\mathbb Q$-Cartier 
$\mathbb Q$-divisor 
$D_1$ on $X_1$ such that 
$(X_1, \Delta_1+D_1)$ is kawamata log terminal, 
$X_1$ and $X_2$ are connected by a finite 
sequence of 
flips with respect to $K_{X_1}+\Delta_1+D_1$, and 
$K_{X_1}+\Delta_1$ is numerically trivial over $W$ in each 
flip. 
\end{rem}

On the abundance conjecture, 
we have: 

\begin{thm}[Abundance theorem, see 
Theorem \ref{a-thm23.2}]\label{a-thm1.30}
Assume that the abundance conjecture holds for projective 
kawamata log terminal pairs in dimension $n$. 

Let $\pi\colon X\to Y$ be a projective surjective morphism 
of complex analytic spaces and let $W$ be a compact 
subset of $Y$ such that 
$\pi\colon X\to Y$ and $W$ satisfies {\em{(P)}}. 
Assume that $K_X+\Delta$ is nef over $Y$ and $\dim X-\dim Y=n$. 
Then $K_X+\Delta$ is $\pi$-semiample over some open 
neighborhood of $W$. 
\end{thm}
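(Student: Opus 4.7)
The plan is a standard reduction to the absolute abundance assumption via restriction to an analytically general fiber, combined with the relative techniques developed earlier in this paper. After shrinking $Y$ to a relatively compact Stein open neighborhood of $W$ following \ref{a-say1.11} and passing to a small $\mathbb{Q}$-factorialization via Theorem \ref{a-thm1.24}, we may assume $X$ is $\mathbb{Q}$-factorial over $W$, which does not affect the nefness or eventual semiampleness of $K_X+\Delta$ over $Y$.

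First I would reduce to the $\mathbb{Q}$-Cartier case. Applying the finiteness of weak log canonical models (Theorem \ref{thm-e}) to the affine subspace of $\WDiv_{\mathbb R}(X)$ spanned by the components of $\Delta$, together with the cone and contraction theorems in a rational polytope around $\Delta$, one expresses $K_X+\Delta$ as a positive $\mathbb R$-linear combination $\sum_i t_i(K_X+\Delta_i)$, where each $(X,\Delta_i)$ is kawamata log terminal with $\Delta_i$ a rational boundary close to $\Delta$ and each $K_X+\Delta_i$ is $\mathbb{Q}$-Cartier and $\pi$-nef. Since a positive $\mathbb{R}$-linear combination of $\pi$-semiample divisors is $\pi$-semiample, it suffices to treat the $\mathbb{Q}$-Cartier case.

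Now assume $K_X+\Delta$ is $\mathbb{Q}$-Cartier, kawamata log terminal, and $\pi$-nef with $\dim X-\dim Y=n$. By Theorem \ref{a-thm1.18}, the relative log canonical ring $R(X/Y,K_X+\Delta)$ is a locally finitely generated graded $\mathcal{O}_Y$-algebra, so after shrinking $Y$ around $W$ it defines a normal complex variety $Z:=\Projan_Y R(X/Y,K_X+\Delta)$ projective over $Y$ together with a rational map $\phi\colon X\dashrightarrow Z$ over $Y$. On an analytically general fiber $F$ of $\pi$, which is projective kawamata log terminal of dimension $n$, $(K_X+\Delta)|_F=K_F+\Delta|_F$ is nef; the assumed abundance in dimension $n$ then yields that $(K_X+\Delta)|_F$ is semiample, so that $\phi|_F$ is a morphism and $(K_X+\Delta)|_F$ is the pullback of a relatively ample class on the general fiber of $Z\to Y$.

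The remaining step, and the main obstacle, is to globalize this fiberwise semiampleness to $\pi$-semiampleness over an open neighborhood of $W$. Concretely, one shows that for sufficiently divisible $m$ the evaluation map $\pi^{*}\pi_{*}\mathcal O_X(m(K_X+\Delta))\to \mathcal O_X(m(K_X+\Delta))$ is surjective at every point of $\pi^{-1}(W)$. Surjectivity on the analytically general fiber is immediate from absolute abundance; extending it to all fibers over a neighborhood of $W$ requires a Koll\'ar-type lifting argument based on relative Kodaira-type vanishing and semipositivity of direct images of relative dualizing sheaves. In the complex analytic setting, one must combine this with the Stein compact bookkeeping of \ref{a-say1.11}, enlarging $W$ to a slightly larger semianalytic Stein compact subset so that the relevant coherent cohomology and direct image computations behave uniformly over a Stein base, and so that $\Gamma(W,\mathcal O_Y)$-noetherianity allows one to pass from generic fiberwise surjectivity to surjectivity over a whole neighborhood. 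Once these analytic replacements are in place, the argument parallels the algebraic one and yields $\pi$-semiampleness of $K_X+\Delta$ over some open neighborhood of $W$.
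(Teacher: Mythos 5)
The first three steps of your reduction — shrinking to a Stein neighborhood, splitting $K_X+\Delta$ into a convex combination of nef $\mathbb Q$-Cartier $K_X+\Delta_i$ with each $(X,\Delta_i)$ klt, invoking the finite generation of $R(X/Y,K_X+\Delta)$ from Theorem \ref{a-thm1.18}, and applying the dimension-$n$ abundance hypothesis on an analytically sufficiently general fiber — all agree in substance with the paper's proof of Theorem \ref{a-thm23.2}. (The passage through a small $\mathbb{Q}$-factorialization via Theorem \ref{a-thm1.24} is harmless but unnecessary.)

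The gap is in your last paragraph. You reduce the problem to showing that $\pi^*\pi_*\mathcal O_X(mL)\to\mathcal O_X(mL)$ is surjective along $\pi^{-1}(W)$, and you propose to establish this by a ``Koll\'ar-type lifting argument based on relative Kodaira-type vanishing and semipositivity of direct images of relative dualizing sheaves.'' This is not an argument but an appeal to a toolbox that, as stated, does not do the job: in the non--big case one cannot pass from semiampleness of $(K_X+\Delta)|_F$ on a general fiber to freeness of $|mL|$ along special fibers by extension of sections and semipositivity alone — this is precisely where abundance-type statements are hard, and the paper does not attempt it. Instead, the paper's proof of Theorem \ref{a-thm23.2} uses the semiampleness of $(K_X+\Delta)|_F$ only to control the relative Iitaka fibration of $L$: following Kawamata's construction (\cite[Proposition 2.1]{kawamata-pluricanonical} together with Hironaka's flattening theorem \cite{hironaka}) one produces smooth models $\mu\colon X'\to X$ and $\phi\colon X'\to Z$ over $Y$ and a Cartier divisor $D$ on $Z$ that is \emph{nef and big over $Y$} with $a\mu^*L\sim b\phi^*D$. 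Finite generation of $R(X/Y,K_X+\Delta)$ then yields finite generation of $R(Z,D)$, and the decisive step is Lemma \ref{a-lem23.4}, an analytic form of Wilson's theorem: a nef and big Cartier divisor with locally finitely generated section ring is relatively semiample. This is how the evaluation-map surjectivity is actually obtained. Your proposal is missing both the reduction to the big case via the relative Iitaka fibration and the Wilson-type base-point-free statement that makes the big case tractable, so as written it does not constitute a proof.
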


\begin{rem}\label{a-rem1.31}The abundance conjecture 
for projective kawamata log terminal pairs was 
completely solved affirmatively in dimension $\leq 3$. 
Therefore, in Theorem \ref{a-thm1.30}, 
$K_X+\Delta$ is $\pi$-semiample 
over some open neighborhood of $W$ when 
$\dim X-\dim Y\leq 3$. 
This result seems to be new even when 
$\dim X=3$. 
\end{rem}

We make some remarks on the proof of our results in this paper. 

\begin{rem}\label{a-rem1.32}
We will use the fact that every extremal ray is spanned by a 
rational curve of low degree (see \cite[Theorem 3.8.1]{bchm} and 
Theorem \ref{a-thm9.2}) for the proof of 
Theorems \ref{thm-a}, \ref{thm-b}, \ref{thm-c}, \ref{thm-e}, 
\ref{thm-f}, and \ref{thm-g}. 
Note that in \cite{bchm} it was only 
used for the proof of the finiteness of negative extremal 
rays (see \cite[Corollary 3.8.2]{bchm}). 
However, it is well known as a part of the cone theorem 
(see also Theorems \ref{a-thm7.2} and \ref{a-thm7.3}). 
Therefore, \cite{bchm} is independent of Mori's bend and break technique, 
which relies on methods in 
positive characteristic. 
\end{rem}

\begin{rem}\label{a-rem1.33}
We can easily reduce Theorem \ref{thm-d} to the case where $Y$ 
is a point. In this case, $X$ is projective and then Theorem \ref{thm-d} 
becomes a special case of \cite[Theorem D]{bchm}. 

In \cite[Section 3]{birkar-paun}, P\u aun proved a slightly 
weaker version of the nonvanishing theorem for projective varieties 
(see \cite[Theorem 1.5]{birkar-paun}). The proof is complex analytic and is 
independent of the theory of minimal models. 
By combining it with \cite[Theorem 0.1 
and Corollary 3.3]{ckp}, we can recover the nonvanishing theorem 
for projective varieties in full generality (see \cite[Theorem D]{bchm}). 
By adopting this approach, Theorem \ref{thm-d} in this paper becomes 
completely independent of the framework of the minimal model program. 

Anyway, we can prove Theorem \ref{thm-d} without 
any difficulties by using some known results.  
\end{rem}

\begin{rem}\label{a-rem1.34}
In \cite{cascini-lazic}, Cascini and Lazi\'c directly 
proved the finite generation of adjoint 
rings by using Hironaka's resolution and the Kawamata--Viehweg 
vanishing theorem. 
Their proof does not use the minimal model program. Then, 
in \cite{corti-lazic}, Corti and Lazi\'c recovered many results on the minimal 
model program from \cite[Theorem A]{cascini-lazic}. The author does not 
know whether this approach works or not in our complex analytic setting. 
\end{rem}

\begin{rem}\label{a-rem1.35}
In dimension three, some results were formulated 
and established in the complex analytic 
setting (see, for example, \cite{kawamata-crepant}, \cite{mori-flip}, 
and \cite[Chapter 6]{kollar-mori}). It is not surprising 
because the theory of minimal models for $3$-folds 
originally owes to the study of various 
singularities (see, for example, \cite{mori-terminal}, 
\cite{kawamata-crepant}, and \cite{mori-flip}). 
Although the classification of surface singularities 
is indispensable for adjunction, we do not need 
any classifications of higher-dimensional singularities 
in \cite{bchm}. Hence, we think that the minimal model 
program in dimension $\geq 4$ has been formulated 
and studied only for algebraic varieties. 
\end{rem}

We look at the organization of this paper. 
Section \ref{a-sec2} is a long preliminary 
section, where we will collect and explain 
some basic definitions and results on 
complex analytic spaces. We think that the 
reader can understand that (P4) is reasonable. 
To the best knowledge of the author, 
some of the results in this section seem to be new. 
In Section \ref{a-sec3}, we will explain singularities of 
pairs in the complex analytic setting. 
The definitions of singularities 
become slightly complicated in the complex analytic setting. 
In Section \ref{a-sec4}, we will define 
Kleiman--Mori cones and establish 
Kleiman's ampleness criterion in the complex 
analytic setting. Here, the property (P4) plays a crucial role. 
Section \ref{a-sec5} is a very short section, where 
we explain only two vanishing theorems. 
From Section \ref{a-sec6} to Section \ref{a-sec8}, we will 
establish several basepoint-free theorems, the cone and contraction 
theorem, and so on, in the complex analytic setting. This part is 
essentially due to Nakayama (see \cite{nakayama2}). 
We note that we have to treat $\mathbb R$-divisors. 
Therefore, some parts are harder than the classical setting 
discussed in \cite{nakayama2}. 
In Section \ref{a-sec9}, 
we will prove that every negative extremal ray is spanned by 
a rational curve of low degree. 
Note that we need 
some result obtained by Mori's bend and break technique, 
which relies on methods in 
positive characteristic. The result in this section will 
play an important role in the subsequent sections. 
In Sections \ref{a-sec11} and \ref{a-sec12}, 
we will prepare various basic definitions to 
establish the main results of this paper. 
We closely follow the treatment of \cite{bchm}. 
However, we have to reformulate some of them 
in order to make them suitable for our complex 
analytic setting. Hence we strongly recommend 
the reader to read these sections carefully. 
In Section \ref{a-sec13}, we will explain the minimal model 
program with scaling in the complex analytic setting in detail. 
It is very useful for various geometric applications. 
From Section \ref{a-sec14} to Section \ref{a-sec19}, 
we will prove Theorems \ref{thm-a}, 
\ref{thm-b}, \ref{thm-c}, 
\ref{thm-d}, \ref{thm-e}, \ref{thm-f}, and \ref{thm-g}. 
Although there are many technical differences between 
the original proof for quasi-projective varieties (see 
\cite{bchm} and 
\cite{hacon-mckernan}) and the one given here in the 
complex analytic setting, the strategy of the whole proof is the same. 
In some parts, we will only explain how to modify the original 
proof in order to make it work in our complex analytic setting. 
In Section \ref{a-sec20}, we will prove almost all 
the theorems given in Section \ref{a-sec1}. 
We think that the reader who is familiar with the minimal 
model program for quasi-projective varieties 
can understand this section without any difficulties. 
In the last three sections, we will treat some advanced 
topics. 
In Section \ref{a-sec21}, we will briefly 
discuss a canonical bundle formula in the complex analytic 
setting and prove Theorems \ref{a-thm1.18} and 
\ref{a-thm1.22}. This section needs some deep results 
on the theory of variations of Hodge structure. Hence 
the topic in Section \ref{a-sec21} is slightly different from 
the other sections. In Section \ref{a-sec22}, 
we will discuss the minimal model program with scaling again. 
Then we will prove Theorem \ref{a-thm1.28} as an easy 
application. In Section \ref{a-sec23}, we will explain how to 
reduce the abundance conjecture for projective morphisms 
between complex analytic spaces to the original 
abundance conjecture 
for projective varieties (see Theorem \ref{a-thm1.30}). 
Note that we will only treat kawamata log terminal pairs 
in Section \ref{a-sec23}. 
The abundance conjecture for log canonical pairs 
looks much harder than the one for kawamata log 
terminal pairs. 

As is well known, the recent developments 
of the theory of minimal models heavily owe 
to many ideas and results obtained by Shokurov. 
They are scattered in his 
papers (see, for example, \cite{shokurov1}, 
\cite{shokurov2}, and \cite{shokurov3}). 
In this paper, we will freely use them without referring to 
Shokurov's original papers.  

\begin{ack}\label{a-ack}
The author was partially 
supported by JSPS KAKENHI Grant Numbers 
JP19H01787, JP20H00111, JP21H00974. 
He would like to thank Hiromichi Takagi for reading the 
first version of \cite{bchm} with him at Nagoya in 2006. 
He thanks Yoshinori Gongyo, Kenta Hashizume, Yuji Odaka, 
Keisuke Miyamoto, Shigeharu Takayama, and 
Yuga Tsubouchi very much. 
\end{ack}

The set of integers (resp.~rational numbers, real numbers, 
complex numbers) is denoted by $\mathbb Z$ 
(resp.~$\mathbb Q$, $\mathbb R$, $\mathbb C$). 
The set of nonnegative integers (resp.~positive integers, 
positive rational numbers, positive real numbers, 
nonnegative real numbers) 
is denoted by $\mathbb N$ (resp.~$\mathbb Z_{>0}$, 
$\mathbb Q_{>0}$, 
$\mathbb R_{>0}$, $\mathbb R_{\geq 0}$). 

\section{Preliminaries}\label{a-sec2} 
In this section, we will collect some basic definitions and 
explain various standard results. 
We note that 
every complex analytic space in this paper 
is assumed to be {\em{Hausdorff}} and 
{\em{second-countable}}. 
The books \cite{banica}, 
\cite{fischer}, and \cite{gunning-rossi} 
are standard references of complex analytic geometry 
for algebraic geometers. 
A relatively new book by Noguchi (see \cite{noguchi}) is 
a very accessible textbook on several complex variables and 
complex analytic spaces. 
Demailly's book (see \cite{demailly}) is also helpful 
and contains a proof of Grauert's theorem 
on direct images of coherent sheaves. 
We will freely use Serre's GAGA principle 
(see, for example, \cite[Chapter 13]{taylor} and 
\cite[Expos\'e XII]{sga1}) throughout this 
paper. 

Let us start with the definition of {\em{holomorphically 
convex hulls}}. 

\begin{defn}[Holomorphically convex hulls]\label{a-def2.1}
Let $X$ be an analytic space and let $K$ be a compact 
subset of $X$. 
The {\em{holomorphically convex hull}} $\widehat K$ of $K$ 
in $X$ is the set 
\begin{equation*}
\widehat K=\left\{x\in X\, 
\left|\, \text{$|f(x)|\leq \underset{z\in K}{\sup}
|f(z)|$ for every $f\in \Gamma(X, \mathcal O_X)$}\right.\right\}.  
\end{equation*}
We note that $K\subset \widehat K$ always holds by definition. 
A compact subset $K$ of $X$ is said to 
be {\em{holomorphically convex}} in $X$ if 
$\widehat K=K$ holds. 
\end{defn}

Let us recall the definition of {\em{Stein spaces}} for the 
reader's convenience. We note that 
the analytic space naturally associated to 
an affine scheme is Stein

\begin{defn}[Stein spaces]\label{a-def2.2} 
A complex analytic space $X$ is said to 
be {\em{Stein}} if 
\begin{itemize}
\item[(i)] the global sections of $\mathcal O_X$ separate 
points in $X$, 
\item[(ii)] for each $x\in X$, the maximal ideal 
of $\mathcal O_{X, x}$ is generated by a set 
of global sections of $\mathcal O_X$, and 
\item[(iii)] $X$ is holomorphically convex, that is, 
$\widehat K$ is compact for every compact subset $K$ of $X$, 
where $\widehat K$ is the holomorphically convex hull of 
$K$ in $X$. 
\end{itemize}
\end{defn}

The notion of {\em{Stein compact subsets}} 
plays a crucial role in this paper. 

\begin{defn}[Stein compact subsets]\label{a-def2.3}
A compact subset $K$ of a complex 
analytic space is called {\em{Stein compact}} if 
it admits a fundamental system of Stein open neighborhoods. 
\end{defn}

The notion of {\em{Oka--Weil domains}} is very useful when 
we construct desired Stein open neighborhoods. 

\begin{defn}[{Oka--Weil domains, see 
\cite[Chapter VII, Section A, 
$2'$.~Definition]{gunning-rossi}}]\label{a-def2.4}
Let $X$ be a complex analytic space. 
An {\em{Oka--Weil domain}} on $X$ is a relatively 
compact open subset $W$ such that 
there exists a holomorphic map $\varphi$ defined in a neighborhood 
of $\overline W$, and with values in $\mathbb C^n$, such that 
$\varphi|_W$ is a biholomorphic mapping onto a 
closed complex analytic subspace of a polydisc in $\mathbb C^n$. 
We note that $W$ itself is Stein. 
\end{defn}

By the following lemma, we know that we can find many 
Stein compact subsets on a given Stein space. 

\begin{lem}\label{a-lem2.5} 
Let $K$ be a compact subset of a Stein space 
$X$. Let $\widehat K$ be a 
holomorphically convex hull of $K$ in $X$
Then $\widehat K$ is a Stein compact subset of $X$. 
\end{lem}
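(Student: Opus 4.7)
The aim is to exhibit a fundamental system of Stein open neighborhoods of $\widehat K$. So fix an arbitrary open neighborhood $U$ of $\widehat K$ in $X$; I will construct a Stein open $V$ with $\widehat K \subset V \subset U$. Since $X$ is Stein, it is holomorphically convex, so $\widehat K$ is already compact.

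The first structural step is to reduce to a relatively compact setting. Using Grauert's characterization of Stein spaces, $X$ admits a continuous strictly plurisubharmonic exhaustion $\varphi \colon X \to \mathbb{R}$, and the sublevel sets $X_c = \{\varphi < c\}$ are relatively compact Stein open subsets of $X$. Choose $c$ large enough that $\widehat K \subset X_c$. Then $L := \overline{X_c} \setminus U$ is compact and disjoint from $\widehat K$.

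Next I separate $\widehat K$ from $L$ by holomorphic functions. For each $y \in L$, the definition of the holomorphic convex hull provides $f \in \Gamma(X, \mathcal O_X)$ with $\sup_K |f| < |f(y)|$; after rescaling we may assume $\sup_K |f| < 1 < |f(y)|$. The sets $\{|f| > 1\}$ then form an open cover of $L$, and by compactness I extract finitely many $f_1, \ldots, f_n \in \Gamma(X, \mathcal O_X)$ with $\sup_K |f_i| < 1$ for every $i$ and $L \subset \bigcup_i \{|f_i| > 1\}$. Now set
\[
V := X_c \cap \bigl\{ x \in X \,:\, |f_i(x)| < 1 \text{ for all } i = 1, \ldots, n \bigr\}.
\]
By definition of $\widehat K$, for any $x \in \widehat K$ we have $|f_i(x)| \le \sup_K |f_i| < 1$, so $\widehat K \subset V$; conversely, any point of $V \setminus U$ would lie in $X_c \setminus U \subset L$ and hence satisfy $|f_i| > 1$ for some $i$, contradicting the definition of $V$, so $V \subset U$.

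The remaining, and main, point is to verify that $V$ is Stein. This is where I expect the only genuine work: $V$ is a Weil (analytic) polyhedron in the Stein open subset $X_c$, cut out by strict inequalities on finitely many global holomorphic functions. This is a classical feature of Stein theory: such polyhedra in Stein spaces are themselves Stein, for instance by exhibiting the continuous plurisubharmonic exhaustion $\psi := \max_i \bigl(-\log(1 - |f_i|^2)\bigr) + (\varphi - c \text{ pushed to} + \infty)$ on $V$ and applying the Narasimhan–Grauert criterion, or equivalently by embedding $X_c$ as a closed analytic subspace of $\mathbb{C}^N$ and realizing $V$ as a closed analytic subspace of a polydisc (an Oka--Weil domain in the sense of Definition~\ref{a-def2.4}). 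Since $U$ was arbitrary, this produces the required fundamental system of Stein open neighborhoods, so $\widehat K$ is Stein compact.
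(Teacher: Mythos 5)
Your proposal is correct and rests on the same idea as the paper's proof: producing an Oka--Weil (Stein) open neighborhood of $\widehat K$ inside any given $U$. The paper reaches this immediately by citing the existence of such Oka--Weil domains around holomorphically convex compacts (Gunning--Rossi, Chapter~VII, Section~A, Proposition~3); your argument simply reproves that cited proposition from scratch via the separation-by-finitely-many-global-functions construction of the analytic polyhedron $V$, and the residual work you leave implicit (the Steinness of $V$, via an exhaustion function or an Oka--Weil embedding) is exactly the content the paper delegates to the reference.
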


\begin{proof}
Since $X$ is holomorphically convex, 
$\widehat K$ is a compact subset of $X$. 
Let $U$ be any open subset of $X$ with 
$\widehat K\subset U$. 
Then we can take an Oka--Weil domain $W$ of $X$ such that 
$\widehat K\subset W\subset \overline W\subset U$ 
(see \cite[Chapter VII, Section A, 3.~Proposition]
{gunning-rossi}). 
This means that $\widehat K$ 
admits a fundamental system of Stein open 
neighborhoods since $W$ is a Stein space. 
Hence $\widehat K$ is a Stein compact subset of $X$. 
\end{proof}

Throughout this paper, we freely use 
Cartan's Theorems A and B without mentioning it explicitly. 
We include them here for the reader's convenience. 

\begin{thm}[Cartan's Theorems]\label{a-thm2.6}
Let $X$ be a Stein space and let 
$\mathcal F$ be a coherent sheaf on $X$. 
Then 
\begin{itemize}
\item[(1)] {\em{(Cartan's Theorem A).}} $\Gamma (X, \mathcal F)$ 
generates $\mathcal F_x$ at every point $x\in X$, and 
\item[(2)] {\em{(Cartan's Theorem B).}} $H^i(X, \mathcal F)=0$ 
holds for every $i>0$. 
\end{itemize}
\end{thm}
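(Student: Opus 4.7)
The plan is to follow the classical route, which proceeds local-to-global: first establish the theorems for polydiscs (and their closed analytic subspaces, i.e.\ Oka--Weil domains), then bootstrap to arbitrary Stein spaces via an exhaustion by such domains. Throughout, Theorem B is the main object and Theorem A is then an essentially formal consequence: if $H^1(X,\mathcal I_x\mathcal F) = 0$ for the coherent sheaf $\mathcal I_x\mathcal F$ obtained by tensoring with the maximal ideal sheaf at $x$, then the surjection $\Gamma(X,\mathcal F)\twoheadrightarrow \mathcal F_x/\mathfrak m_x\mathcal F_x$ together with Nakayama's lemma on the noetherian local ring $\mathcal O_{X,x}$ yields the generation statement.

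For the polydisc case, I would proceed as follows. On a closed polydisc $\overline P\subset\mathbb C^n$, the Dolbeault lemma gives solvability of the $\bar\partial$-equation, hence $H^i(P,\mathcal O_P)=0$ for $i>0$. For a coherent $\mathcal F$ on a neighborhood of $\overline P$, one uses the existence of finite free resolutions on small polydiscs (Oka's coherence and syzygy theorem), combined with a d\'evissage / long exact sequence argument, to extend the vanishing from $\mathcal O$ to arbitrary coherent sheaves. For closed analytic subspaces of $P$, one pushes forward the coherent sheaf and applies the polydisc case. This takes care of Oka--Weil domains, and in particular of relatively compact Stein opens inside a polydisc.

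To globalize, exhaust the Stein space $X$ by an increasing sequence of relatively compact Oka--Weil domains $W_1\Subset W_2\Subset\cdots$ with $X=\bigcup W_k$; such an exhaustion exists by holomorphic convexity together with Lemma 2.5 and the definition of Oka--Weil domains. For each $k$, Theorem B on $W_k$ is already known. The passage to the union proceeds via a \v{C}ech-theoretic Mittag--Leffler argument: the vanishing of $H^i(X,\mathcal F)$ for $i\geq 2$ follows from the vanishing on each $W_k$ together with the fact that the inverse system $\{H^{i-1}(W_k,\mathcal F)\}$ is Mittag--Leffler in the relevant range. The case $i=1$ requires, in addition, a Runge-type approximation statement guaranteeing that sections of $\mathcal F$ over $W_k$ can be approximated uniformly on $W_{k-1}$ by sections over $W_{k+1}$; this is what ensures the Mittag--Leffler condition on $\{H^0(W_k,\mathcal F)\}$.

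The main obstacle is precisely this Runge approximation step for coherent sheaves: proving it reduces, via free resolutions, to the analogous approximation statement for $\mathcal O_X$, which in turn rests on the Oka--Weil approximation theorem for holomorphic functions on polynomially convex compacta in $\mathbb C^n$. In practice this theorem is so standard and so extensively documented in the references already cited in the preliminary discussion (for instance \cite{banica}, \cite{gunning-rossi}, \cite{noguchi}, \cite{demailly}) that the cleanest presentation is simply to quote it, and that is what I would do in the final write-up rather than reproduce the several pages of classical several-complex-variables arguments sketched above.
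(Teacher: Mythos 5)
The paper does not prove Theorem~\ref{a-thm2.6}: Cartan's Theorems A and B are stated there as classical background to be used freely, with no argument supplied, and the burden is carried by the standard references \cite{banica}, \cite{gunning-rossi}, \cite{noguchi}, \cite{demailly}. Your closing remark --- that the cleanest course is simply to quote those sources --- is therefore exactly what the paper does, so there is no conflict of approach, only a difference in how much is written down.

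That said, the sketch you give is a faithful and correct outline of the classical argument. The reduction of Theorem~A to Theorem~B via $0\to\mathcal I_x\mathcal F\to\mathcal F\to\mathcal F/\mathcal I_x\mathcal F\to 0$ and Nakayama's lemma on $\mathcal O_{X,x}$ is right; the polydisc case via $\bar\partial$, Oka coherence, and d\'evissage through finite free resolutions is the standard route; and the globalization by an exhaustion $W_1\Subset W_2\Subset\cdots$ of Oka--Weil domains with a Mittag--Leffler argument is the correct mechanism. The one place where the wording deserves a caveat is the phrase ``Mittag--Leffler condition on $\{H^0(W_k,\mathcal F)\}$'': the images of the restriction maps do not stabilize, so the literal Mittag--Leffler condition fails; what one uses is the \emph{topological} Mittag--Leffler property, namely density of the restriction images in the canonical Fr\'echet topology on sections of a coherent sheaf, and that density is precisely what the Runge-type approximation you invoke supplies. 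You have correctly identified this approximation step as the crux, so the sketch is sound; just make the topological nature of the Mittag--Leffler argument explicit if you expand it.
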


The following cohomological 
characterization of Stein spaces is useful 
and may help algebraic geometers 
understand the definition of Stein spaces. 

\begin{thm}\label{a-thm2.7} 
Let $X$ be a complex analytic space. 
Then $X$ is Stein if and only if $H^1(X, \mathcal F)=0$ for 
every coherent sheaf $\mathcal F$ on $X$. 
\end{thm}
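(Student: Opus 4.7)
The plan is to prove both directions separately. The forward implication is immediate: if $X$ is Stein, then Cartan's Theorem B (Theorem \ref{a-thm2.6}(2)) gives $H^1(X,\mathcal{F})=0$ for every coherent sheaf. So the substantive work is the converse, and I would proceed by verifying the three defining conditions (i), (ii), (iii) of Definition \ref{a-def2.2} in turn, after first deriving Cartan's Theorem A from the hypothesis.

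First I would establish Cartan's Theorem A as a consequence of the $H^1$-vanishing hypothesis. For a coherent sheaf $\mathcal{F}$ and a point $x\in X$, let $\mathfrak{m}_x\subset \mathcal{O}_X$ denote the ideal sheaf of the reduced point $\{x\}$; the short exact sequence
\begin{equation*}
0\to \mathfrak{m}_x\mathcal{F}\to \mathcal{F}\to \mathcal{F}/\mathfrak{m}_x\mathcal{F}\to 0
\end{equation*}
together with the vanishing of $H^1(X,\mathfrak{m}_x\mathcal{F})$ yields a surjection $\Gamma(X,\mathcal{F})\to \mathcal{F}_x/\mathfrak{m}_x\mathcal{F}_x$, whence Nakayama's lemma shows that global sections generate $\mathcal{F}_x$. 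Applied with $\mathcal{F}=\mathfrak{m}_x$, this immediately gives condition (ii). Applied to the ideal sheaf $\mathcal{I}_{\{x,y\}}$ of the reduced subspace consisting of two distinct points $x,y$, the sequence
\begin{equation*}
0\to \mathcal{I}_{\{x,y\}}\to \mathcal{O}_X\to \mathbb{C}_x\oplus \mathbb{C}_y\to 0
\end{equation*}
gives surjectivity $\Gamma(X,\mathcal{O}_X)\to \mathbb{C}\oplus\mathbb{C}$, which produces a holomorphic function separating $x$ from $y$; this gives condition (i).

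The main step, and the expected obstacle, is holomorphic convexity (condition (iii)). Let $K\subset X$ be compact; I must show the holomorphically convex hull $\widehat{K}$ is compact. Suppose instead that $\widehat{K}$ is not compact; then, since $X$ is second-countable, one can choose a sequence $\{x_n\}\subset \widehat{K}$ with no accumulation point in $X$. The underlying set $A=\{x_n:n\in\mathbb{N}\}$ is then closed and discrete in $X$, so it carries a canonical structure of a reduced closed complex analytic subspace with coherent ideal sheaf $\mathcal{I}_A$. The quotient $\mathcal{O}_X/\mathcal{I}_A$ is identified with the skyscraper sheaf $\bigoplus_{n}\mathbb{C}_{x_n}$ (coherence of this quotient is a local statement and holds since $A$ is locally finite). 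The hypothesis $H^1(X,\mathcal{I}_A)=0$ applied to
\begin{equation*}
0\to \mathcal{I}_A\to \mathcal{O}_X\to \mathcal{O}_X/\mathcal{I}_A\to 0
\end{equation*}
produces a surjection from $\Gamma(X,\mathcal{O}_X)$ onto $\prod_n \mathbb{C}$ (at the level of sections over $A$). Choose $f\in \Gamma(X,\mathcal{O}_X)$ with $f(x_n)=n$; then $|f(x_n)|\to\infty$, contradicting $x_n\in\widehat{K}$, which would force $|f(x_n)|\le \sup_{z\in K}|f(z)|<\infty$.

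The delicate point in this last step is the claim that a closed discrete subset $A\subset X$ determines a coherent analytic structure whose structure sheaf ``sees'' arbitrary complex values at each point, so that extension via $H^1$-vanishing truly produces a function with prescribed unbounded values. This requires a careful check that $\Gamma(X,\mathcal{O}_X/\mathcal{I}_A)=\prod_n\mathbb{C}$ and that Serre's long exact sequence remains valid for this countable sum; once one is convinced of these sheaf-theoretic points (which reduce to local models in polydiscs) the contradiction closes the proof.
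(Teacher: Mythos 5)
Your proposal is correct and follows essentially the same route as the paper: Cartan's Theorem B for the forward direction, and for the converse, verifying (i) and (ii) via ideal sheaves of finite subsets and then verifying (iii) by taking a discrete sequence $\{x_n\}$ in a purportedly noncompact $\widehat{K}$, applying $H^1(X,\mathcal{I}_A)=0$ to produce $f$ with $f(x_n)=n$, and contradicting boundedness on $\widehat{K}$. The only difference is that you spell out (i) and (ii) in detail where the paper calls them an easy exercise; the concern you raise at the end about the long exact sequence for a ``countable sum'' is not a real issue, since the cohomology long exact sequence holds for any short exact sequence of sheaves of abelian groups and the only thing needed is coherence of $\mathcal{I}_A$, which you already have.
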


\begin{proof}
If $X$ is Stein, then $H^1(X, \mathcal F)=0$ for every 
coherent sheaf $\mathcal F$ on $X$ by 
Cartan's Theorem B. 
On the other hand, if $H^1(X, \mathcal I)=0$ for every coherent ideal sheaf 
$\mathcal I$ on $X$, then it is an easy exercise to check that 
(i) and (ii) 
in Definition \ref{a-def2.2} hold true. 
Let $K$ be a compact subset of $X$. Suppose 
that the holomorphically convex hull $\widehat K$ of $K$ 
is not compact. 
Then we can take a discrete sequence $\{x_k\}_{k\in \mathbb N}
\subset \widehat K$. 
Note that $V:=\{x_k \, |\, 0\leq k< \infty\}$ is a closed 
analytic subspace of $X$. 
Hence the defining ideal sheaf $\mathcal I_V$ of $V$ is a coherent 
sheaf on $X$. 
Thus $H^1(X, \mathcal I_V)=0$ holds 
by assumption. 
This implies that the natural map $H^0(X, \mathcal O_X)\to 
H^0(X, \mathcal O_X/\mathcal I_V)$ is surjective. 
Therefore, we can take $f\in H^0(X, \mathcal O_X)$ such that 
$f(x_n)=n$. 
On the other hand, 
\begin{equation*}
n=|f(x_n)|\leq \sup _{x\in K} |f(x)|<\infty
\end{equation*} 
for 
every $n$ since $x_n\in \widehat K$. 
This is a contradiction. 
Thus, $\widehat K$ is always compact, that is, 
(iii) holds true. We finish the proof. 
\end{proof}

As an easy consequence of Theorem \ref{a-thm2.7}, we have: 

\begin{thm}\label{a-thm2.8} 
Let $f\colon Z\to X$ be a finite morphism 
between complex analytic spaces. 
If $X$ is Stein, then so is $Z$. 
\end{thm}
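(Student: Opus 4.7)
The plan is to apply the cohomological characterization of Stein spaces given in Theorem \ref{a-thm2.7}: it suffices to show that $H^1(Z, \mathcal{G}) = 0$ for every coherent sheaf $\mathcal{G}$ on $Z$. To reduce cohomology on $Z$ to cohomology on the Stein space $X$, I would push $\mathcal{G}$ forward along $f$ and use the Leray spectral sequence.

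First, I would invoke the finite direct image theorem for complex analytic spaces (a special case of Grauert's theorem, since a finite morphism is proper with finite fibers): for every coherent $\mathcal{O}_Z$-module $\mathcal{G}$, the sheaf $f_*\mathcal{G}$ is a coherent $\mathcal{O}_X$-module. Next I would verify that the higher direct images vanish, i.e. $R^i f_*\mathcal{G} = 0$ for every $i \geq 1$. The key point is that the stalk $(R^i f_*\mathcal{G})_x$ is computed as a colimit of $H^i(f^{-1}(U), \mathcal{G})$ over open neighborhoods $U$ of $x$; since $f^{-1}(x)$ consists of finitely many points, any sufficiently small neighborhood of each of those points can be chosen to be Stein (every point of a complex analytic space admits a fundamental system of Stein open neighborhoods, because locally one is a closed analytic subspace of a polydisc), and hence $f^{-1}(U)$ becomes a finite disjoint union of Stein open sets for small Stein $U$. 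Cartan's Theorem B then gives the vanishing.

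With these two ingredients in hand, the Leray spectral sequence for $f$ and $\mathcal{G}$ degenerates into isomorphisms
\begin{equation*}
H^i(Z, \mathcal{G}) \;\cong\; H^i(X, f_*\mathcal{G}) \quad \text{for all } i \geq 0.
\end{equation*}
In particular $H^1(Z, \mathcal{G}) \cong H^1(X, f_*\mathcal{G})$, and the right-hand side vanishes by Cartan's Theorem B (Theorem \ref{a-thm2.6}) applied to the coherent sheaf $f_*\mathcal{G}$ on the Stein space $X$. Theorem \ref{a-thm2.7} then yields that $Z$ is Stein.

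The main (minor) obstacle is the vanishing of higher direct images $R^i f_*\mathcal{G}$ for $i \geq 1$; everything else is an immediate application of results already quoted in the excerpt. Strictly speaking, this vanishing is a standard and essentially elementary consequence of the local structure of a finite holomorphic map combined with Cartan's Theorem B, but it is the only step that is not purely formal.
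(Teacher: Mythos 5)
Your proposal is correct and follows essentially the same route as the paper: reduce to the cohomological characterization of Stein spaces (Theorem \ref{a-thm2.7}), push forward along the finite map, and invoke Cartan's Theorem B on $X$. The paper simply asserts $H^1(Z,\mathcal F)=H^1(X,f_*\mathcal F)$ without spelling out the vanishing of $R^if_*\mathcal F$ or the Leray degeneration, whereas you supply those (standard) justifications explicitly.
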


\begin{proof}
Let $\mathcal F$ be any coherent sheaf on $Z$. 
Since $f\colon Z\to X$ is finite, 
$f_*\mathcal F$ is coherent and $H^1(Z, \mathcal F)
=H^1(X, f_*\mathcal F)=0$ holds by 
the Steinness of $X$. 
Hence, by Theorem \ref{a-thm2.7}, $Z$ is Stein. 
\end{proof}

We have already explained that every Stein space 
$X$ has many good properties. 
Unfortunately, however, 
$\Gamma(X, \mathcal O_X)$ is not noetherian if 
$X$ does not consist of only finitely many points. 

\begin{ex}\label{a-ex2.9}
Let $X$ be a Stein space and let $\{P_k\}_{k\in \mathbb 
N}$ be a set of 
mutually distinct discrete points of $X$. 
Then $Z_n:=\{P_n, P_{n+1}, \ldots\}$ can be seen 
as a closed analytic subspace of $X$ for every $n\in \mathbb N$. 
Let $\mathcal I_n$ be the defining ideal sheaf of $Z_n$ on $X$. 
It is well known that $\mathcal I_n$ is a coherent 
sheaf on $X$. 
We put $\mathfrak a_n:=\Gamma (X, \mathcal I_n)$ for 
every $n$. 
Then 
\begin{equation*}
\mathfrak a_0\subsetneq \mathfrak a_1\subsetneq \cdots \subsetneq 
\mathfrak a_n \subsetneq \cdots
\end{equation*}
is a strictly increasing sequence of ideals of 
$\Gamma(X, \mathcal O_X)$. This means that 
$\Gamma(X, \mathcal O_X)$ is not noetherian. 
\end{ex}

Siu's theorem clarifies the meaning of the condition in (P4). 

\begin{thm}[{\cite[Theorem 1]{siu}}]\label{a-thm2.10} 
Let $K$ be a Stein compact subset of a complex analytic 
space $X$. Then 
$K\cap Z$ has only finitely many connected 
components for any analytic subset 
$Z$ which is defined over an open neighborhood 
of $K$ if and only if 
\begin{equation*}
\mathcal O_X(K)=
\Gamma (K, \mathcal O_Y)=
\underset{K\subset U}\varinjlim
\Gamma (U, \mathcal O_X), 
\end{equation*} 
where $U$ runs through all the open neighborhoods of 
$K$, is noetherian.
\end{thm}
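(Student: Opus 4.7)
The plan is to prove the two implications separately, exploiting the description $\mathcal O_X(K) = \varinjlim_{K \subset U} \Gamma(U, \mathcal O_X)$ together with Cartan's Theorems A and B applied to Stein neighborhoods of $K$, which are abundant by the Stein compactness hypothesis.

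For the implication ``noetherian $\Rightarrow$ finitely many components,'' I would argue contrapositively. Suppose $Z$ is a closed analytic subset of some Stein open neighborhood $U$ of $K$ and $K \cap Z$ has infinitely many connected components $\{C_j\}_{j \in \mathbb N}$. After passing to a subsequence using compactness, one arranges the $C_j$ to be mutually separated by open subsets of a Stein neighborhood. The goal is then to construct a decreasing chain of closed analytic subsets $Z_0 \supset Z_1 \supset \cdots$ of (possibly shrinking) Stein neighborhoods of $K$ realizing the tails $\bigcup_{j \geq n} C_j$ in a neighborhood of $K$, and to set $\mathfrak a_n := \Gamma(K, \mathcal I_{Z_n}) \subset \mathcal O_X(K)$. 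This gives a strictly ascending chain, contradicting noetherianness. Strictness of the inclusions requires, for each $n$, producing a germ in $\mathcal O_X(K)$ vanishing along $\bigcup_{j \geq n+1} C_j$ but not identically on $C_n$; this is obtained from Cartan's Theorem A applied to the coherent ideal sheaf of the tail on a Stein open neighborhood, mimicking Example \ref{a-ex2.9}.

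For the reverse implication, I would take an arbitrary ascending chain of ideals $I_0 \subset I_1 \subset \cdots$ in $\mathcal O_X(K)$ and show it stabilizes. Each $I_n$ is generated by finitely many germs (one may assume this after replacing the chain by successive finitely generated subideals), hence extends, after shrinking to a sufficiently small Stein neighborhood $U$ of $K$, to a coherent ideal sheaf $\mathcal I_n$ on $U$ with zero locus an analytic subset $Z_n \subset U$. The descending chain $Z_0 \supset Z_1 \supset \cdots$ has intersection an analytic set (by coherence) whose trace on $K$ has finitely many connected components by hypothesis. The plan is then to combine (i) the noetherianness of each local ring $\mathcal O_{X,x}$ at the finitely many ``generic'' points dictated by the finite components, (ii) a compactness argument on $K$ which reduces infinitely many local stabilization statements to finitely many, and (iii) Cartan's Theorem B to patch germs into global sections over a Stein neighborhood, so that the chain of ideals $\Gamma(K, \mathcal I_n)$ must stabilize.

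The main obstacle is the ``$\Rightarrow$'' direction, where one must convert a purely geometric finiteness condition on connected components into an algebraic chain condition on ideals. The delicate point is that chains of ideals in $\mathcal O_X(K)$ do not a priori correspond to chains of analytic subsets (because of embedded components and nonreducedness), so one must argue through primary decompositions at the noetherian local rings $\mathcal O_{X,x}$ and carefully track how the finite-component hypothesis on varieties gets transported to a finiteness statement on associated primes. Patching these pointwise facts into a global statement on the nonlocal ring $\mathcal O_X(K)$, while only using Stein theory at the level of open neighborhoods of $K$, is the technical heart of Siu's proof.
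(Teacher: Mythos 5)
The paper does not prove Theorem \ref{a-thm2.10}; it merely cites \cite{siu} and \cite[Chapter~V, \S 3]{banica}, so there is no in-text argument to compare against. Taken on its own, your sketch has a concrete gap in the direction ``noetherian $\Rightarrow$ finitely many components''. You propose to separate the $C_j$ pairwise by opens and to realize the tails $\bigcup_{j\ge n}C_j$ as zero sets of coherent ideal sheaves $\mathcal I_{Z_n}$, then feed Cartan's Theorem~A. Neither step is available in general: in a compact space one cannot have infinitely many pairwise disjoint nonempty clopen pieces, so accumulating components (Example~\ref{a-ex2.12} with $Z=X$) cannot all be separated at once, and the tails are compact but not analytic, so there is no coherent sheaf $\mathcal I_{Z_n}$ cutting them out. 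The repair is to abandon analyticity of the tails. The component space of $K\cap Z$ is an infinite, totally disconnected, compact Hausdorff space and hence carries a strictly decreasing chain of nonempty clopen subsets; pull back to clopen $A_0\supsetneq A_1\supsetneq\cdots$ in $K\cap Z$ and set $\mathfrak a_n:=\{h\in\mathcal O_X(K)\mid h|_{A_n}=0\}$, an ideal of $\mathcal O_X(K)$ regardless of whether $A_n$ is analytic. Strictness $\mathfrak a_n\subsetneq\mathfrak a_{n+1}$ then follows by separating the disjoint compacts $A_{n+1}$ and $(K\cap Z)\setminus A_{n+1}$ by opens in $U$, shrinking to a Stein $V\supset K$ so that $V\cap Z$ splits into two clopen pieces accordingly, and lifting the resulting $\{0,1\}$-valued idempotent on $V\cap Z$ through the surjection $\Gamma(V,\mathcal O_X)\twoheadrightarrow\Gamma(V\cap Z,\mathcal O_X/\mathcal I_Z)$ provided by Cartan's Theorem~B (not Theorem~A: one is lifting a section of a quotient sheaf, not generating stalks).

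In the direction ``finitely many components $\Rightarrow$ noetherian'', the step ``the descending chain $Z_0\supset Z_1\supset\cdots$ has intersection an analytic set (by coherence)'' is circular: local stabilization at each $x\in K$ does follow from noetherianness of $\mathcal O_{X,x}$, but the radius on which it holds has no a priori lower bound as $x$ ranges over $K$, and establishing such a bound from the finite-component hypothesis is precisely the theorem. The hypothesis also concerns analytic sets defined on a \emph{fixed} neighborhood of $K$, not intersections of chains on shrinking domains, so invoking it for $\bigcap_n Z_n$ needs justification. You correctly flag the issues with embedded primes and nonreducedness, but the sketch stops before the compactness argument that globalizes the pointwise stabilization, which is the technical core of Siu's proof and of \cite[Chapter~V, \S 3]{banica}.
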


\begin{proof}For the details, see, for example, \cite[Chapter V, \S 3]{banica}. 
\end{proof}

One point is a Stein compact subset satisfying (P4). 

\begin{ex}\label{a-ex2.11}
Let $X$ be a complex analytic space and let $P$ be any point 
of $X$. 
Then $P$ is a Stein compact subset of $X$ 
and $\mathcal O_{X, P}=\Gamma(P, \mathcal O_X)$ 
is noetherian. 
\end{ex}

The Cantor set is a Stein compact subset which does not 
satisfy (P4). 

\begin{ex}\label{a-ex2.12}
We note that $\mathbb C$ is Stein and that 
any open subset of $\mathbb C$ is also Stein since 
it is holomorphically convex. 
We put $X=\{z\in \mathbb C\, |\, |z|<2\}$ and 
consider the Cantor set $\mathcal C$. 
It is easy to see that $\mathcal C$ ($\subset 
[0, 1]\subset X$) is a Stein compact subset of $X$ and 
that $X$ is Stein. 
We can easily check that for any given $x_1, x_2\in \mathcal C$ 
there exists $x_3\not \in \mathcal C$ with 
$x_3\in [x_1, x_2]$. 
Hence $\mathcal C$ does not satisfy (P4). 
Thus, $\Gamma (\mathcal C, \mathcal O_X)$ is not noetherian 
by Theorem \ref{a-thm2.10} . 
More explicitly, 
we put 
\begin{equation*}
\mathfrak a_n:=\left\{f\in \Gamma (\mathcal C, \mathcal O_X) \, 
\left|
\, {\text{$f(z)=0$ for any $z\in \mathcal C\cap 
\left[0, \frac{1}{3^n}\right]$}}\right.\right\}
\end{equation*}
for every $n\in \mathbb N$. 
Then we can check that $\mathfrak a_n\subsetneq 
\mathfrak a_{n+1}$ holds for every $n\in \mathbb N$. 
Therefore, we get a strictly increasing sequence of ideals of 
$\Gamma (\mathcal C, \mathcal O_X)$: 
\begin{equation*}
\mathfrak a_0\subsetneq \mathfrak a_1\subsetneq 
\cdots \subsetneq \mathfrak a_n\subsetneq \cdots. 
\end{equation*} 
This implies that $\Gamma (\mathcal C, \mathcal O_X)$ is not 
noetherian. 
\end{ex}

We supplement Theorem \ref{a-thm2.8}. 

\begin{thm}\label{a-thm2.13}
Let $f\colon Z\to X$ be a finite morphism 
of complex analytic spaces such that 
$X$ is Stein. 
Let $K$ be a Stein compact subset of $X$ such that 
$\Gamma(K, \mathcal O_X)$ is noetherian. 
Then $f^{-1}(K)$ is a Stein compact 
subset of $Z$ such that $\Gamma (f^{-1}(K), \mathcal O_Z)$ 
is noetherian. 
\end{thm}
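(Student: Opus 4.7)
The plan is to prove the two assertions separately, with the Stein compactness being essentially formal and the noetherianity reducing to a module-finiteness argument.

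First I would verify that $f^{-1}(K)$ is Stein compact in $Z$. Since $f$ is finite, it is proper, so $f^{-1}(K)$ is compact. To produce a fundamental system of Stein open neighborhoods, let $V$ be any open neighborhood of $f^{-1}(K)$ in $Z$. Properness of $f$ implies the existence of an open neighborhood $U$ of $K$ in $X$ with $f^{-1}(U)\subset V$. Since $K$ is Stein compact in $X$, we may shrink $U$ so that it is Stein. Then $f|_{f^{-1}(U)}\colon f^{-1}(U)\to U$ is a finite morphism onto a Stein space, so by Theorem \ref{a-thm2.8} the open set $f^{-1}(U)$ is Stein. This gives a Stein open neighborhood of $f^{-1}(K)$ sitting inside $V$, and varying $V$ yields the required fundamental system.

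Next I would prove noetherianity. The key identification is
\begin{equation*}
\Gamma(f^{-1}(K),\mathcal O_Z)=\Gamma(K,f_*\mathcal O_Z),
\end{equation*}
which follows because the sets $f^{-1}(U)$, as $U$ runs over a neighborhood basis of $K$, are cofinal in the neighborhood filter of $f^{-1}(K)$ (again by properness), so the two direct limits agree. Since $f$ is finite, Grauert's direct image theorem gives that $f_*\mathcal O_Z$ is a coherent $\mathcal O_X$-module on the Stein space $X$. The standard module-finiteness statement for Stein compact subsets whose section ring is noetherian (a consequence of Siu's theorem, i.e.\ Theorem \ref{a-thm2.10}, together with the Frisch-type result on sections of coherent sheaves over a Stein compact) then implies that $\Gamma(K,f_*\mathcal O_Z)$ is a finitely generated module over the noetherian ring $\Gamma(K,\mathcal O_X)$.

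To finish, I would observe that since $\Gamma(f^{-1}(K),\mathcal O_Z)$ is module-finite over the noetherian ring $\Gamma(K,\mathcal O_X)$ via the natural map $f^*$, every ideal of $\Gamma(f^{-1}(K),\mathcal O_Z)$ is in particular a $\Gamma(K,\mathcal O_X)$-submodule of a finitely generated module over a noetherian ring, hence finitely generated over $\Gamma(K,\mathcal O_X)$, and \emph{a fortiori} finitely generated over $\Gamma(f^{-1}(K),\mathcal O_Z)$. This gives noetherianity of the ring $\Gamma(f^{-1}(K),\mathcal O_Z)$.

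The only nontrivial ingredient is the module-finiteness of $\Gamma(K,\mathcal F)$ over $\Gamma(K,\mathcal O_X)$ for a coherent $\mathcal F$ near $K$ when $\Gamma(K,\mathcal O_X)$ is noetherian; this is the one place where the hypothesis on $K$ is genuinely used and is the main technical obstacle, but it is a standard statement in the complex analytic literature (e.g.\ \cite{siu}, \cite[Chapter V]{banica}) and can be invoked directly. Everything else is formal manipulation with properness, direct images, and the transition from module-finite algebras to ring-theoretic noetherianity.
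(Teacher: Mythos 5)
Your proof is correct and follows essentially the same route as the paper: finiteness plus Theorem \ref{a-thm2.8} gives the fundamental system of Stein neighborhoods, coherence of $f_*\mathcal O_Z$ gives that $\Gamma(f^{-1}(K),\mathcal O_Z)=\Gamma(K,f_*\mathcal O_Z)$ is module-finite over $\Gamma(K,\mathcal O_X)$, and noetherianity of the ring then follows. One minor remark: the module-finiteness of $\Gamma(K,\mathcal F)$ over $\Gamma(K,\mathcal O_X)$ for coherent $\mathcal F$ needs only a local presentation $\mathcal O_U^{\oplus N}\twoheadrightarrow\mathcal F|_U$ near $K$ together with Cartan's Theorem B on the Stein neighborhoods of $K$ --- not the noetherianity hypothesis on $\Gamma(K,\mathcal O_X)$, which is used only in the final passage from module-finite algebra to noetherian ring.
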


\begin{proof}
Since $f$ is finite, $f^{-1}(K)$ is a compact subset of $Z$. 
Let $\{U_\lambda\}_{\lambda\in \Lambda}$ be 
a fundamental system of Stein open neighborhoods 
of $K$. 
By Theorem \ref{a-thm2.8}, 
$Z$ is Stein and $f^{-1}(U_\lambda)$ is also 
Stein for every $\lambda\in \Lambda$. 
Hence $\{f^{-1}(U_\lambda)\}_{\lambda\in \Lambda}$ 
is a fundamental system of Stein open neighborhoods 
of $f^{-1}(K)$. 
On the other hand, since $f$ is finite, 
$f_*\mathcal O_Z$ is a coherent sheaf on $X$. 
By the Stein compactness of $K$, 
there exist a Stein open neighborhood 
$U$ of $K$ and 
a surjection 
\begin{equation*}
\mathcal O_U^{\oplus N} \to f_*\mathcal O_Z|_U\to 0
\end{equation*} 
for some positive integer $N$. 
This implies the surjection 
\begin{equation*}
\Gamma (K, \mathcal O_X)^{\oplus N}\to 
\Gamma (K, f_*\mathcal O_Z)\to 0.  
\end{equation*}
Hnece $\Gamma (K, f_*\mathcal O_Z)$ is a finitely generated 
$\Gamma (K, \mathcal O_X)$-module. 
We note that $\Gamma (K, f_*\mathcal O_Z)=\Gamma 
(f^{-1}(K), \mathcal O_Z)$ and that 
$\Gamma (K, \mathcal O_X)$ is noetherian. 
Thus, $\Gamma (f^{-1}(K), \mathcal O_Z)$ is noetherian. 
This means that $f^{-1}(K)$ is a Stein 
compact subset of $Z$ such that 
$\Gamma (f^{-1}(K), \mathcal O_Z)$ is noetherian. 
\end{proof}

\begin{rem}\label{a-rem2.14}
Let $\pi\colon X\to Y$ be a projective morphism between 
complex analytic spaces and let $W$ be a compact subset 
of $Y$ such that 
$\pi\colon X\to Y$ and $W$ satisfies (P). 
As an easy consequence of Theorems \ref{a-thm2.8} 
and \ref{a-thm2.13}, 
we usually may assume that 
$\pi$ is surjective and that $Y$ is a Stein variety 
by replacing $Y$ with $\pi(X)$. 
For some purposes, we sometimes 
replace $Y$ with its normalization and 
further assume that $Y$ is a normal 
Stein variety. 
By taking the Stein factorization of $\pi\colon X\to Y$, 
we sometimes further assume that 
$\pi$ has connected fibers and that 
$Y$ is a normal Stein variety, 
that is, $\pi\colon X\to Y$ is a contraction 
of normal complex varieties.    
\end{rem}

We note: 

\begin{defn}\label{a-def2.15}
A proper morphism $\pi\colon X\to Y$ of 
normal complex varieties is called 
a {\em{contraction}} if $\pi_*\mathcal O_X\simeq 
\mathcal O_Y$ holds. 
\end{defn}

When we enlarge a given Stein compact subset satisfying 
(P4) slightly, we need the following lemma. 

\begin{lem}\label{a-lem2.16}
Let $X$ be a Stein space and let 
$K$ be a holomorphically convex compact subset of $X$. 
If $U$ is any open neighborhood 
of $K$, then there exists an Oka--Weil domain 
$V$, defined by global holomorphic functions on $X$, such that 
$K\subset V \subset \overline {V} \subset U$. 
Note that $V$ can be seen as a closed complex analytic subspace 
of a polydisc $\Delta(0, r)=\{(z_1, \ldots, z_n) \, |\, 
{\text{$|z_i|<r$ for $1\leq i\leq n$}}\}$ for some $r>0$ and 
$n\in \mathbb Z_{>0}$. 
We put $L:=V\cap \overline {\Delta} (0, r-\varepsilon)$ 
with $0<\varepsilon <r$. 
Then $L$ is compact, semianalytic, and holomorphically 
convex in $V$. In particular, $L$ is a Stein compact 
subset such that $\Gamma (L, \mathcal O_X)$ is 
noetherian. 
Furthermore, if 
$U'$ is a relatively compact open neighborhood of 
$K$ in $X$, then we can choose $U$, $V$, and $L$ 
such that 
\begin{equation*}
K\subset U'\subset L\subset V\subset \overline {V}\subset 
U\subset X
\end{equation*} 
holds. 
\end{lem}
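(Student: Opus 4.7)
The plan is to produce $V$ by a direct application of the Oka--Weil theorem and then obtain $L$ by intersecting $V$ with the preimage of a slightly smaller closed polydisc. Since $K$ is a holomorphically convex compact subset of the Stein space $X$, by \cite[Chapter VII, Section A, 3.~Proposition]{gunning-rossi} we obtain global holomorphic functions $f_1,\ldots,f_n$ on $X$ and a radius $r>0$ such that the map $\varphi:=(f_1,\ldots,f_n)\colon X\to\mathbb{C}^n$ restricts to a biholomorphism from an open neighborhood $V$ of $K$, defined by the $f_i$, onto a closed complex analytic subspace of the polydisc $\Delta(0,r)$, and moreover $K\subset V\subset\overline{V}\subset U$. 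This yields the first assertion, with $V$ an Oka--Weil domain in the sense of Definition~\ref{a-def2.4}.

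Next I would analyze $L:=V\cap\varphi^{-1}\bigl(\overline{\Delta}(0,r-\varepsilon)\bigr)=\{x\in V:|f_i(x)|\leq r-\varepsilon\text{ for all }i\}$. Compactness is immediate: $\varphi|_V$ is a biholomorphism onto a closed analytic subspace of $\Delta(0,r)$, and $\overline{\Delta}(0,r-\varepsilon)$ is compact inside $\Delta(0,r)$. Semianalyticity of $L$ in $X$ is clear since $L$ is cut out from the analytic set $V$ by the finitely many real-analytic inequalities $|f_i|\leq r-\varepsilon$. For holomorphic convexity of $L$ inside $V$: given any $p\in V\setminus L$, some index $i$ satisfies $|f_i(p)|>r-\varepsilon\geq \sup_{x\in L}|f_i(x)|$, so $f_i|_V\in\Gamma(V,\mathcal{O}_V)$ witnesses $p\notin\widehat{L}_V$. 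Hence $\widehat{L}_V=L$, and Lemma~\ref{a-lem2.5} applied in the Stein space $V$ shows that $L$ is a Stein compact subset of $V$, and therefore of $X$. Since $L$ is simultaneously Stein compact and semianalytic, Siu's theorem (Theorem~\ref{a-thm2.10}) gives that $\Gamma(L,\mathcal{O}_X)$ is noetherian.

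For the final "furthermore" claim, the issue is that the Oka--Weil construction as stated guarantees only $K\subset V$, whereas we need the stronger chain $U'\subset L$. My fix is to enlarge the compact set to which Oka--Weil is applied: set $K^\ast:=\widehat{\,\overline{U'}\,}$, which is compact by the Steinness of $X$, holomorphically convex by construction, and contains $\overline{U'}\supset K$. Fix an open set $U$ with $\overline{U'}\subset U$ (and $U$ contained in whatever original neighborhood of $K$ one started with), and run Steps~1 and 2 with $K$ replaced by $K^\ast$ and this $U$; this yields $V$ with $\overline{U'}\subset K^\ast\subset V\subset\overline{V}\subset U$. Because $\overline{U'}\subset V$ is compact, $M:=\max_i\sup_{\overline{U'}}|f_i|$ is finite and less than $r$, so choosing $\varepsilon$ with $0<\varepsilon<r-M$ forces $\overline{U'}\subset L$. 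This produces the required chain $K\subset U'\subset L\subset V\subset\overline{V}\subset U\subset X$.

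I expect the least mechanical step to be the holomorphic convexity of $L$ in $V$, which is where the polynomial convexity of closed polydiscs in $\mathbb{C}^n$ is transported back to $V$ via $\varphi$; everything else is a direct assembly of the Oka--Weil theorem, Lemma~\ref{a-lem2.5}, and Siu's theorem.
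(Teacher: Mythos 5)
Your proposal is correct and follows essentially the same route as the paper's proof: Gunning--Rossi for the Oka--Weil domain, the polynomial-convexity-of-closed-polydiscs argument to show $\widehat{L}_V=L$, Lemma~\ref{a-lem2.5} for Stein compactness, and then noetherianity. Your treatment of the ``furthermore'' clause, enlarging $K$ to $K^\ast=\widehat{\,\overline{U'}\,}$ and shrinking $\varepsilon$ below $r-\max_i\sup_{\overline{U'}}|f_i|$, is a clean way to spell out what the paper dismisses as ``obvious by construction.''

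The one place where you compress too much is the step ``since $L$ is Stein compact and semianalytic, Siu's theorem gives noetherianity.'' Siu's theorem, as recorded in Theorem~\ref{a-thm2.10}, characterizes noetherianity of $\Gamma(L,\mathcal O_X)$ for a Stein compact $L$ by the finiteness condition (P4): $L\cap Z$ has finitely many connected components for every analytic set $Z$ defined near $L$. To invoke it you need the separate, nontrivial fact that a compact \emph{semianalytic} set satisfies (P4); this is a result of semianalytic geometry for which the paper cites Bierstone--Milman (alternatively, one can bypass (P4) entirely and quote Frisch's theorem directly, as the paper also notes). Your argument is sound once this bridge is inserted, but as written it reads as though semianalyticity plugs directly into Siu's criterion.
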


\begin{proof}
For the existence of a desired Oka--Weil domain $V$, 
see, for example, \cite[Chapter VII, Section A, 3.~Proposition]
{gunning-rossi}. 
By definition, $L$ is obviously compact and semianalytic. 
Since $L$ is defined by 
$|z_1|\leq r-\varepsilon, \ldots, 
|z_n|\leq r-\varepsilon$ such that $z_i \in \Gamma (V, \mathcal O_V)$ 
for every $i$, it is easy to see that $L$ is holomorphically 
convex in $V$. 
By Lemma \ref{a-lem2.5}, $L$ is Stein compact. 
Since $L$ is compact and semianalytic, 
it is well known that $L$ satisfies (P4) 
(see, for example, \cite[Corollary 2.7 (2)]{bierstone-milman1}). 
Thus, $\Gamma (L, \mathcal O_X)$ is noetherian by 
Theorem \ref{a-thm2.10}. 
Or, by applying \cite[Th\'eor\`eme (I,9)]{frisch} to $L$, 
we obtain that $\Gamma (L, \mathcal O_X)$ is noetherian.  
By the above construction, the last statement is obvious. 
\end{proof}

We frequently use the following property of coherent 
sheaves on complex analytic spaces. 

\begin{lem}\label{a-lem2.17}
Let $\mathcal F$ be a coherent sheaf on a complex 
analytic space $X$ and 
let 
\begin{equation*}
\mathcal F_0\subset \mathcal F_1\subset \mathcal F_2
\subset \cdots \subset \mathcal F
\end{equation*} 
be an increasing chain of coherent subsheaves. 
Then this chain is stationary over any relatively 
compact subset of $X$. 
\end{lem}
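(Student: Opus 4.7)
The plan is to reduce, via a compactness argument, to verifying stabilization on finitely many Stein compact subsets with noetherian section rings. For each $x\in\overline U$, working in a Stein coordinate neighborhood of $x$, apply Lemma~\ref{a-lem2.16} with the holomorphically convex set $\{x\}$ to produce a semianalytic Stein compact subset $K_x\subset X$ contained in a Stein open $V_x$, with $x\in\mathrm{int}(K_x)$ and $\Gamma(K_x,\mathcal O_X)$ noetherian. By compactness of $\overline U$, finitely many interiors $\mathrm{int}(K_1),\ldots,\mathrm{int}(K_m)$ (with $K_j=K_{x_j}\subset V_j$) cover $\overline U$, so it suffices to prove stabilization on each $\mathrm{int}(K_j)$.

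Fix $j$ and invoke Frisch's theorem (cf.\ Lemma~\ref{a-lem2.16} and \cite{frisch}): $\Gamma(K_j,\mathcal F)$ is a finitely generated module over the noetherian ring $\Gamma(K_j,\mathcal O_X)$. Hence the ascending chain of submodules $\Gamma(K_j,\mathcal F_i)\subset\Gamma(K_j,\mathcal F)$ stabilizes at some index $N_j$: $\Gamma(K_j,\mathcal F_i)=\Gamma(K_j,\mathcal F_{N_j})$ for all $i\geq N_j$. For each such $i$, consider the coherent quotient $\mathcal Q_i:=\mathcal F_i/\mathcal F_{N_j}$ on the Stein open $V_j$. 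By Cartan's Theorem~B, the sequence
\begin{equation*}
0\to\Gamma(V_j,\mathcal F_{N_j})\to\Gamma(V_j,\mathcal F_i)\to\Gamma(V_j,\mathcal Q_i)\to 0
\end{equation*}
is exact, and passing to the direct limit over Stein open neighborhoods of $K_j$ yields
\begin{equation*}
\Gamma(K_j,\mathcal Q_i)=\Gamma(K_j,\mathcal F_i)/\Gamma(K_j,\mathcal F_{N_j})=0.
\end{equation*}

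The remaining task is to promote the vanishing of sections $\Gamma(K_j,\mathcal Q_i)=0$ to the vanishing of the sheaf $\mathcal Q_i$ on an open neighborhood of $K_j$. For any $y\in K_j$ the stalk $\mathcal Q_{i,y}$ is finitely generated, so by Cartan's Theorem~A it is generated by finitely many global sections $\xi_1,\ldots,\xi_k\in\Gamma(V_j,\mathcal Q_i)$. Since $\Gamma(K_j,\mathcal Q_i)=0$, each $\xi_\ell$ vanishes on some open neighborhood of $K_j$, and a finite intersection $W$ of these is still such a neighborhood on which $\xi_1,\ldots,\xi_k$ all vanish. The coherent subsheaf of $\mathcal Q_i$ generated by $\xi_1,\ldots,\xi_k$ agrees with $\mathcal Q_i$ at $y$, hence on some neighborhood $W'$ of $y$ by coherence, so $\mathcal Q_i=0$ on $W\cap W'$. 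Covering the compact $K_j$ by finitely many such neighborhoods shows $\mathcal F_i=\mathcal F_{N_j}$ on an open set containing $K_j$, and in particular on $\mathrm{int}(K_j)$. Setting $N:=\max_j N_j$, we obtain $\mathcal F_i=\mathcal F_N$ on $U\subset\bigcup_j\mathrm{int}(K_j)$ for all $i\geq N$, as desired. The main obstacle is precisely this last promotion: stalkwise stabilization at a single point does not in general force stabilization on any neighborhood, and the trick is that noetherianness of $\Gamma(K_j,\mathcal O_X)$ furnishes a single index $N_j$ that works simultaneously for every $i\geq N_j$, allowing the Cartan~A plus coherence argument to transfer the sectionwise stabilization to the sheaf level.
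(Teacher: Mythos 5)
Your proof is correct. The paper does not give an argument of its own for this lemma---it only cites \cite[0.40.~Proposition and Corollary]{fischer}---so there is no in-paper proof to line up against; your write-up supplies an actual argument. The strategy (cover $\overline U$ by interiors of finitely many Stein compacts $K_j$ with $\Gamma(K_j,\mathcal O_X)$ noetherian, stop the ascending chain of section modules $\Gamma(K_j,\mathcal F_i)\subset\Gamma(K_j,\mathcal F)$ by finite generation over the noetherian section ring, then convert sectionwise stabilization into sheaf-level stabilization near $K_j$ via Cartan~A and coherence) is the standard route to this local noetherianity statement, and it sits naturally alongside the Frisch--Siu material that the surrounding lemmas (\ref{a-lem2.16}, \ref{a-lem2.19}, \ref{a-lem2.24}) rest on.

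One point deserves to be made explicit rather than folded into the phrase ``invoke Frisch's theorem'': the assertion that $\Gamma(K_j,\mathcal F)$ is a finitely generated $\Gamma(K_j,\mathcal O_X)$-module is not contained in Lemma~\ref{a-lem2.16}, which only supplies noetherianity of the section ring. It does follow quickly: choose, by Cartan~A and compactness, a surjection $\mathcal O^{\oplus N}\twoheadrightarrow\mathcal F$ on a Stein open neighborhood $V$ of $K_j$; Cartan~B applied to the coherent kernel on every smaller Stein neighborhood $V'$ with $K_j\subset V'\subset V$ gives a surjection $\Gamma(V',\mathcal O_X)^{\oplus N}\to\Gamma(V',\mathcal F)$, and passing to the direct limit yields $\Gamma(K_j,\mathcal O_X)^{\oplus N}\twoheadrightarrow\Gamma(K_j,\mathcal F)$ (this is in effect Lemma~\ref{a-lem2.19}). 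With that step spelled out, your argument is complete, and you correctly identify the crux: noetherianity of $\Gamma(K_j,\mathcal O_X)$ furnishes a single index $N_j$ working for all $i\geq N_j$ simultaneously, which is exactly what a purely stalkwise argument cannot provide.
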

\begin{proof}
See, for example, \cite[0.40.~Proposition and 
Corollary]{fischer}. 
\end{proof}

Note that the arguments in \cite[\S 2 Basic theorems 
on coherent $\mathcal O$-modules]{kaup} work for 
Stein compact subsets $K$ satisfying (P4) with 
obvious modifications. 

\begin{defn}[{$\mathcal O_X$-exhaustions, see 
\cite[2.9 Definition]{kaup}}]\label{a-def2.18}
Let $\mathcal M$ be an $\mathcal O_X$-module on a complex analytic 
space $X$. 
An {\em{$\mathcal O_X$-exhaustion}} of 
$\mathcal M$ is an increasing 
sequence 
\begin{equation*}
\mathcal M_0\subset \mathcal M_1
\subset \cdots \subset \mathcal M_k\subset 
\cdots \subset \mathcal M
\end{equation*} 
of coherent sub-$\mathcal O_X$-modules such that 
$\mathcal M=\bigcup _k \mathcal M_k$. 
\end{defn}

\begin{lem}[{see \cite[2.10 Proposition]{kaup}}]\label{a-lem2.19}
Let $K$ be a Stein compact subset of a complex analytic 
space $X$ such that $\Gamma (K, \mathcal O_X)$ is noetherian. 
Let $\mathcal M$ and $\mathcal M'$ be $\mathcal O_X$-modules 
on $X$ which admit $\mathcal O_X$-exhaustions. 
If $\phi\colon \mathcal M\to \mathcal M'$ is a surjective 
$\mathcal O_X$-homomorphism, 
then the induced map $\Gamma (K, \mathcal M)\to 
\Gamma (K, \mathcal M')$ is surjective. 
\end{lem}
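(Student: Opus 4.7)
The plan is to reduce to a surjection of coherent sheaves on an open neighborhood of $K$ and then invoke Cartan's Theorem B, combined with the fact that $\Gamma(K,-)$ is the filtering direct limit of sections over Stein open neighborhoods of $K$. Let $\mathcal M=\bigcup_k\mathcal M_k$ and $\mathcal M'=\bigcup_j \mathcal M'_j$ be the given exhaustions, and fix $s'\in \Gamma(K,\mathcal M')$. First, represent $s'$ by a section on some open neighborhood of $K$. Since the stalks of $\mathcal M'$ are filtered unions of the $\mathcal M'_{j,x}$, the coherence of each $\mathcal M'_j$ together with the compactness of $K$ allows us to extract, after shrinking around $K$, a single index $j$ and an open neighborhood $U$ of $K$ with $s'\in \Gamma(U,\mathcal M'_j)$.

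Next I would carry out a local lifting. For each $x\in K$, the surjectivity of $\phi$ gives $s'_x=\phi(m_x)$ with $m_x\in \mathcal M_{k(x),x}$. Using the coherence of the $\mathcal M_k$'s and compactness of $K$, I select a single index $k$ such that $s'$ is locally the image of a section of $\mathcal M_k$ at every point of $K$. In addition, by taking local generators of $\mathcal M_k$ near points of $K$ and invoking compactness once more, after shrinking $U$ I may assume that $\phi(\mathcal M_k)|_U\subset \mathcal M'_{j'}|_U$ for some $j'\geq j$; in other words, on $U$ the restriction of $\phi$ is a genuine morphism $\phi_k\colon \mathcal M_k\to \mathcal M'_{j'}$ of coherent sheaves.

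Now set $\mathcal N_k:=\phi_k^{-1}(\mathcal M'_j)\subset \mathcal M_k$ on $U$. Since $\mathcal M'_j$ is a coherent subsheaf of the coherent sheaf $\mathcal M'_{j'}$, $\mathcal N_k$ is the kernel of the morphism of coherent sheaves $\mathcal M_k\to \mathcal M'_{j'}/\mathcal M'_j$ and is therefore coherent. Performing the same construction for each sufficiently large index yields, after possibly shrinking $U$, an increasing chain of coherent subsheaves $\phi(\mathcal N_k)\subset \mathcal M'_j$ whose union is $\mathcal M'_j$: indeed, any local section of $\mathcal M'_j$ lifts, by the surjectivity of $\phi$, to a local section of some $\mathcal M_k$, which by definition lies in $\mathcal N_k$. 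Applying Lemma~\ref{a-lem2.17} on a relatively compact open subset $V$ of $U$ containing $K$, the chain $\{\phi(\mathcal N_k)\}$ stabilizes, so there exists $k_1$ with $\phi(\mathcal N_{k_1})=\mathcal M'_j$ on $V$.

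Finally, on $V$ we have a surjection of coherent sheaves $\mathcal N_{k_1}\twoheadrightarrow \mathcal M'_j$. Since $K$ is Stein compact, I can take a Stein open neighborhood of $K$ contained in $V$; on such a neighborhood, Cartan's Theorem B applied to the kernel gives surjectivity on sections, and passing to the filtering direct limit defining $\Gamma(K,-)$ yields the surjection $\Gamma(K,\mathcal N_{k_1})\to \Gamma(K,\mathcal M'_j)$. A lift of $s'$ through this map is the desired element of $\Gamma(K,\mathcal M_{k_1})\subset \Gamma(K,\mathcal M)$. The main technical obstacle is verifying the coherence of $\mathcal N_k$: this is what forces us to first exhibit $\phi|_{\mathcal M_k}$ as landing in a fixed coherent $\mathcal M'_{j'}$ on a neighborhood of $K$, which relies in an essential way on both the coherent exhaustions and the compactness of $K$ (as well as, implicitly, the noetherian property of $\Gamma(K,\mathcal O_X)$ when one later wants to identify sections over $K$ with finitely generated modules).
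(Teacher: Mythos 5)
Your proof is correct, and since the paper itself only cites \cite[2.10 Proposition]{kaup} without reproducing the argument, what you have given is a complete self-contained version of the intended proof: reduce, via the exhaustions and compactness of $K$, to a surjection of coherent sheaves on a neighborhood of $K$, and then invoke Cartan's Theorem B on Stein neighborhoods. Three small remarks. First, the passage through the increasing chain $\{\phi(\mathcal N_{k''})\}$ and Lemma~\ref{a-lem2.17} is unnecessary: once you know $s'\in\Gamma(U,\mathcal M'_j)$ and that at each $x\in K$ the germ $s'_x$ has a preimage germ in $\mathcal M_{k,x}$, that preimage automatically lies in $\mathcal N_{k,x}$, so $s'_x\in\phi(\mathcal N_k)_x$; since $\phi(\mathcal N_k)$ is a subsheaf of $\mathcal M'_j$, this already forces $s'$ to be a section of $\phi(\mathcal N_k)$ on a neighborhood of $K$, and you can apply Cartan B directly to $\mathcal N_k\twoheadrightarrow\phi(\mathcal N_k)$ without any stabilization. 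Second, the phrase ``after possibly shrinking $U$'' in the construction of the chain is a red flag that you should address if you keep the chain: if the shrinking depended on $k''$ you could not apply Lemma~\ref{a-lem2.17} on a fixed relatively compact $V$; in fact no shrinking is needed, because coherence is a local property and both $\mathcal N_{k''}=\ker\bigl(\mathcal M_{k''}\to\mathcal M'_{j''}/\mathcal M'_j\bigr)$ and $\phi(\mathcal N_{k''})$ are coherent on all of $U$ (the index $j''$ can vary point by point). Third, your closing parenthetical about the noetherian hypothesis being used ``implicitly'' is not borne out by your own argument: as written, the proof never appeals to $\Gamma(K,\mathcal O_X)$ being noetherian, only to Stein compactness and to Lemma~\ref{a-lem2.17}.
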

\begin{proof}
For the details, see the proof of \cite[2.10 Proposition]{kaup}. 
Although $K$ is a polydisc in \cite{kaup}, 
the proof of \cite[2.10 Proposition]{kaup} works in our setting.  
\end{proof}

Since we are working on complex analytic spaces, 
we note: 

\begin{rem}\label{a-rem2.20}
Let $\mathcal F_m$ be a coherent sheaf on a complex 
analytic space $X$ for every $m\in \mathbb N$. 
Then the natural map 
\begin{equation*}
\bigoplus _{m\in \mathbb N} \Gamma (X, \mathcal F_m)\to 
\Gamma \left(X, \bigoplus _{m \in \mathbb N} \mathcal F_m\right)
\end{equation*} 
is not necessarily an isomorphism. 
Fortunately, 
\begin{equation*}
\bigoplus _{m\in \mathbb N} \Gamma (K, \mathcal F_m)\simeq 
\Gamma \left(K, \bigoplus _{m \in \mathbb N} \mathcal F_m\right)
\end{equation*} 
holds for any compact subset $K$ of $X$. 
\end{rem}

\begin{ex}\label{a-ex2.21} 
Let $X$ be a noncompact complex analytic 
space and let $\{x_m\}_{m\in \mathbb N}$ be 
a discrete sequence of $X$. 
Let $\mathbb C(x_m):=(i_{x_m})_*\mathbb C$ be 
a skyscraper sheaf, where $i_{x_m}\colon x_m\hookrightarrow 
X$ is the inclusion map. 
Then $\bigoplus _{m \in \mathbb N} \mathbb C(x_m)$ 
is a coherent sheaf on $X$. 
In this case, 
\begin{equation*}
\Gamma\left(X, \bigoplus _{m\in \mathbb N}\mathbb C(x_m)\right)=
\prod_{m\in \mathbb N} \mathbb C=\prod_{m\in \mathbb N} 
\Gamma(X, \mathbb C(x_m)). 
\end{equation*} 
Hence, the natural map 
\begin{equation*}
\bigoplus _{m\in \mathbb N} \Gamma(X, \mathbb C(x_m))
\to \Gamma\left(X, \bigoplus _{m\in \mathbb N}\mathbb C(x_m)\right)
\end{equation*} 
is not an isomorphism. 
\end{ex}

In this paper, we will have to treat graded $\mathcal O_X$-algebras 
on a complex analytic spaces. 
So we prepare some definitions and basic properties. 

\begin{defn}[{see \cite[Chapter II.~\S 1.~b.~Spec and Proj]{nakayama3}}]
\label{a-def2.22}
Let $X$ be a complex analytic space and let $\mathbb C[x]
=\mathbb C[x_1, \cdots, x_l]$ be the polynomial 
ring of $l$-variables $x=(x_1, \cdots, x_l)$. 
An $\mathcal O_X$-algebra $\mathcal A$ is 
called {\em{of finite presentation}} 
if there exists a surjective $\mathcal O_X$-algebra 
homomorphism 
\begin{equation*}
\mathcal O_X[x]=\mathcal O_X[x_1, \cdots, x_l]=
\mathcal O_X\otimes _{\mathbb C} \mathbb C[x]\twoheadrightarrow 
\mathcal A
\end{equation*}
for some $l$ whose kernel is generated by a finite 
number of polynomials belonging to $H^0(X, \mathcal O_X)[x]$. 
If $\mathcal A|_{U_\lambda}$ is of finite 
presentation for an open covering $X=\bigcup _{\lambda\in 
\Lambda}U_\lambda$, 
then $\mathcal A$ is called {\em{locally of finite 
presentation}}. 
\end{defn}

The notion of locally finitely generated 
graded $\mathcal O_X$-algebras 
is indispensable. 

\begin{defn}[Locally finitely generated 
graded $\mathcal O_X$-algebras]
\label{a-def2.23}
Let $X$ be a complex analytic space. 
An $\mathcal O_X$-algebra $\mathcal A=\bigoplus _{m\in 
\mathbb N} \mathcal A_m$ is called a {\em{finitely 
generated graded $\mathcal O_X$-algebra}} 
if there exists a surjective $\mathcal O_X$-algebra homomorphism 
\begin{equation*}
\mathcal O_X[x]=\mathcal O_X[x_1, \cdots, x_l]=
\mathcal O_X\otimes _{\mathbb C} \mathbb C[x]\twoheadrightarrow 
\mathcal A
\end{equation*}
for some $l$ such that $x_i$ is mapped to a 
homogeneous element of $H^0(X, \mathcal A)$ for every $i$. 
If $\mathcal A|_{U_\lambda}$ is a finitely generated graded 
$\mathcal O_{U_\lambda}$-algebra for some open covering 
$X=\bigcup _{\lambda\in \Lambda} U_\lambda$, then 
$\mathcal A$ is called a {\em{locally finitely generated 
graded $\mathcal O_X$-algebra}}. 
\end{defn}

We note the following basic property of locally finitely generated 
graded $\mathcal O_X$-algebras. 

\begin{lem}[{see \cite[Chapter II.~1.6.~Corollary]{nakayama3}}]\label{a-lem2.24} 
Let $X$ be a complex analytic space and 
let $\mathcal A=\bigoplus_{m\in \mathbb N}\mathcal A_m$ 
be a locally finitely generated graded $\mathcal O_X$-algebra 
such that $\mathcal A_m$ are all coherent $\mathcal O_X$-modules. 
Then $\mathcal A$ is locally of finite presentation. 
\end{lem}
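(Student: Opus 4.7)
The problem is local on $X$, so I fix $x_0 \in X$ and a small Stein open neighborhood $V$ over which $\mathcal{A}$ is finitely generated as $\mathcal{O}_V$-algebra. By hypothesis there is a graded surjection $\pi \colon \mathcal{O}_V[x_1, \ldots, x_l] \twoheadrightarrow \mathcal{A}|_V$; let $\mathcal{I} = \ker \pi$. Since each $\mathcal{O}_V[x]_d$ is free of finite rank and $\mathcal{A}_d$ is coherent by hypothesis, each $\mathcal{I}_d$ is coherent. The plan is to exhibit, after shrinking $V$, finitely many homogeneous polynomials $f_1, \ldots, f_r \in \Gamma(V, \mathcal{O}_V)[x]$ generating $\mathcal{I}$ as an ideal sheaf; this is exactly what Definition~\ref{a-def2.22} requires.

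First I would invoke the Hilbert basis theorem at the stalk. The ring $\mathcal{O}_{V, x_0}$ is noetherian, so $\mathcal{O}_{V, x_0}[x_1, \ldots, x_l]$ is too, and hence the graded ideal $\mathcal{I}_{x_0}$ admits finitely many homogeneous generators $f_1, \ldots, f_r$ of degrees $d_1, \ldots, d_r$. These lift to sections of $\mathcal{I}$ on a common open neighborhood $U$ of $x_0$ and generate an ideal subsheaf $\mathcal{J} \subseteq \mathcal{I}|_U$ with $\mathcal{J}_{x_0} = \mathcal{I}_{x_0}$. Next, to pass from stalk-level to sheaf-level equality, I would choose a Stein compact neighborhood $K \subseteq U$ of $x_0$ for which $A := \Gamma(K, \mathcal{O}_X)$ is noetherian (possible by Lemma~\ref{a-lem2.16}). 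Using Remark~\ref{a-rem2.20} together with the vanishing of $H^1$ of coherent sheaves on Stein compacts, I would identify $\Gamma(K, \mathcal{O}_V[x]) = A[x]$ and $\Gamma(K, \mathcal{I}) = \bigoplus_d \Gamma(K, \mathcal{I}_d)$. Since $A[x]$ is noetherian, the latter is a finitely generated ideal; after enlarging the $f_i$ to include a set of its generators, applying $\Gamma(K, -)$ to the surjection $\bigoplus_i \mathcal{O}_V[x]_{d - d_i} \twoheadrightarrow \mathcal{J}_d$ yields $\Gamma(K, \mathcal{J}_d) = \Gamma(K, \mathcal{I}_d)$ for every $d$. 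Thus each coherent sheaf $\mathcal{I}_d / \mathcal{J}_d$ has no sections over $K$, and Cartan's Theorem~A on a Stein neighborhood of $K$ forces $(\mathcal{I}_d / \mathcal{J}_d)_y = 0$ for every $y \in K$; so $\mathcal{I}_d = \mathcal{J}_d$ on an open neighborhood of $K$, depending a priori on $d$.

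The hard part will be eliminating the dependence of the neighborhood on $d$, that is, producing a single open neighborhood of $K$ on which $\mathcal{I}_d = \mathcal{J}_d$ holds for all $d$ simultaneously. For this I would consider the ascending chain of ideals $\mathcal{J}^{(N)} \subseteq \mathcal{I}$ generated by $\bigoplus_{d \leq N} \mathcal{I}_d$: in each fixed degree $d$, the chain of coherent subsheaves $\mathcal{J}^{(N)}_d \subseteq \mathcal{I}_d$ stabilises on any relatively compact open by Lemma~\ref{a-lem2.17}, while the $A[x]$-ideal chain $\Gamma(K, \mathcal{J}^{(N)})$ is ascending in the noetherian ring $A[x]$, hence stabilises at some $N_0$ with $\Gamma(K, \mathcal{J}^{(N_0)}) = \Gamma(K, \mathcal{I})$. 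Combined with the compatibility of sections and generators already established, this forces $\mathcal{J}^{(N_0)} = \mathcal{I}$ on some open neighborhood $U' \supseteq K$. Finally $\bigoplus_{d \leq N_0} \mathcal{I}_d$ is a coherent sheaf, hence locally finitely generated, and a choice of generators written as polynomials in $\Gamma(U', \mathcal{O}_V)[x]$ gives the desired finite ideal presentation of $\mathcal{I}$, proving $\mathcal{A}$ locally of finite presentation around $x_0$.
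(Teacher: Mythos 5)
Your argument follows the paper's route for most of its length: both proofs pass to sections over a Stein compact $K$ whose ring $\mathcal O_X(K)$ is noetherian (Lemma~\ref{a-lem2.16}), use noetherianity of $\mathcal O_X(K)[x]$ to produce finitely many homogeneous $f_i$ generating $\Gamma(K,\mathcal I)$, and then deduce from $\Gamma(K, \mathcal I_d/\mathcal J_d)=0$, via Cartan's Theorem~A, that $(\mathcal I_d/\mathcal J_d)_y=0$ for every $y\in K$ and every $d$. The opening detour through the Hilbert basis theorem at the stalk $\mathcal O_{V,x_0}[x]$ is harmless but not needed once you pass to $\Gamma(K,-)$, which is where the paper begins.

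The final paragraph, however, is chasing a phantom. The ``a priori $d$-dependent'' open neighborhood of $K$ you worry about is not an issue: you already know the stalk of $\mathcal I_d/\mathcal J_d$ vanishes at \emph{every} point of $K$, and $K$ is a neighborhood of $x_0$, so you may fix once and for all any open set $U\subset\operatorname{int}(K)$ containing $x_0$; then $\mathcal I_d|_U=\mathcal J_d|_U$ holds for all $d$ simultaneously, because each point of $U$ lies in $K$. This is exactly the role of the fixed $U\subset K$ in the paper's proof (``the kernel of $\alpha_m$ is zero on $U$''), and the generators $f_i$, which live in $\Gamma(K,\mathcal O_X)[x]$ and hence extend to some open neighborhood of $K$, restrict to $H^0(U,\mathcal O_X)[x]$, giving finite presentation on $U$.

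Worse, the ascending-chain patch does not close the gap it purports to close. From $\Gamma(K,\mathcal J^{(N_0)})=\Gamma(K,\mathcal I)$ you can again only conclude $\mathcal J^{(N_0)}_d=\mathcal I_d$ at every point of $K$ \emph{degree by degree}; the inference ``this forces $\mathcal J^{(N_0)}=\mathcal I$ on some open neighborhood $U'\supseteq K$'' is just the uniform-in-$d$ statement you flagged as the hard part, asserted without new input. So that paragraph is circular. The fix is simply to delete it and replace ``on an open neighborhood of $K$, depending a priori on $d$'' by ``at every point of $K$, hence on any fixed open $U\subset\operatorname{int}(K)$.''
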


Before we prove Lemma \ref{a-lem2.24}, we note: 

\begin{rem}\label{a-rem2.25}
Any locally finitely generated graded $\mathcal O_X$-algebra 
$\mathcal A=\bigoplus _{m\in \mathbb N}\mathcal A_m$ in this 
paper satisfies the condition that 
$\mathcal A_m$ is a coherent $\mathcal O_X$-module for 
every $m\in \mathbb N$. 
We do not treat the case where $\mathcal A_m$ is not 
a coherent $\mathcal O_X$-module.  
\end{rem}

Let us prove Lemma \ref{a-lem2.24}. 

\begin{proof}[Proof of Lemma \ref{a-lem2.24}]
We take an arbitrary point $P\in X$ and replace $X$ with a small 
Stein open neighborhood of $P$. 
Then we have an exact sequence 
\begin{equation*}
\phi\colon \mathcal O_X[x]\to \mathcal A\to 0
\end{equation*}
for $x=(x_1, \cdots, x_l)$ such that 
$x_i$ is mapped to a homogeneous 
element of $H^0(X, \mathcal A)$ for every $i$. 
We take a relatively compact Stein open neighborhood 
$U$ of $P$ and a Stein compact subset $K$ of $Y$ such that 
$U\subset K$ and $\Gamma (K, \mathcal O_Y)$ is noetherian. 
Then 
\begin{equation*}
\phi(K)\colon \mathcal O_X(K)[x]\to \mathcal A(K)\to 0
\end{equation*} 
is exact by Lemma \ref{a-lem2.19}. 
Since $\mathcal O_X(K)$ is noetherian, the kernel of 
$\phi(K)$ is generated by weighted homogeneous 
polynomials $f_1,\ldots, f_N$ in $\mathcal O_X(K)[x]$. 
Hence we obtain a Stein open neighborhood 
$U'$ of $K$ and a homomorphism 
\begin{equation*}
\psi:=(f_1, \cdots, f_N)\colon 
\bigoplus _{i=1}^N \mathcal O_{U'}[x](-\deg f_i)\to \mathcal O_{U'}[x]
\end{equation*}
of graded $\mathcal O_{U'}[x]$-modules such 
that the image of $\psi(K)$ is $\left(\Ker \phi\right)(K)$. 
By construction, there exists the following 
natural surjection 
\begin{equation*}
\alpha_m\colon \left (\Coker \psi\right) _m \twoheadrightarrow 
\mathcal A_m|_{U'}
\end{equation*} 
for every $m\in \mathbb N$, where 
$\left (\Coker \psi\right) _m$ is the degree $m$ part of 
$\Coker \psi$. 
By construction again, we can check that 
$\left (\Coker \psi\right) _m(K)\simeq \mathcal A_m(K)$ holds for 
every $m\in \mathbb N$. 
This implies that the kernel of $\alpha_m$ is zero on $U$. 
Hence $\mathcal A|_U$ is of finite presentation. 
Therefore, $\mathcal A$ is locally of finite presentation.  
\end{proof}

The lemma below is important and 
will be used repeatedly without mentioning it 
explicitly. The proof is much harder than 
that of the corresponding statement for 
algebraic varieties (see \cite[Corollary 1.1.2.6]{cox-rings}). 

\begin{lem}\label{a-lem2.26} 
Let $X$ be a complex analytic space and let 
$\mathcal A=\bigoplus _{m\in \mathbb N}
\mathcal A_m$ be a graded 
$\mathcal O_X$-algebra such that 
$\mathcal A_m$ is a coherent $\mathcal O_X$-module 
for every $m$ 
and $\mathcal A(U)$ is an integral domain 
for every nonempty connected open subset $U$ of $X$. 
We put $\mathcal A^{(d)}:=\bigoplus _{m\in \mathbb N}
\mathcal A_{dm}$. 
Then $\mathcal A$ is a locally finitely generated 
graded $\mathcal O_X$-algebra if and only if 
so is $\mathcal A^{(d)}$. 
\end{lem}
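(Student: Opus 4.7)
The plan is to handle the two directions separately, with the reverse direction being substantially harder.

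For the implication $\mathcal{A}$ locally finitely generated $\Rightarrow \mathcal{A}^{(d)}$ locally finitely generated, I would work locally on a Stein open and use Gordan's lemma. Namely, if homogeneous $a_1,\dots,a_n$ of degrees $d_1,\dots,d_n$ generate $\mathcal{A}|_U$ over $\mathcal{O}_U$, then $\mathcal{A}^{(d)}|_U$ is generated by the monomials $a_1^{e_1}\cdots a_n^{e_n}$ with $d\mid \sum_i e_i d_i$; the sub-monoid $\{e\in\mathbb{N}^n:d\mid \sum_i e_i d_i\}$ of $\mathbb{N}^n$ is finitely generated (as the integral points of a rational polyhedral cone), so only finitely many such monomials are needed.

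For the converse, fix $x_0\in X$. The plan is first to prove the statement at the level of stalks, then upgrade it to a sheaf-theoretic statement on a neighborhood. Set $R:=\mathcal{O}_{X,x_0}$, $A_m:=\mathcal{A}_{m,x_0}$, and $A:=\mathcal{A}_{x_0}=\bigoplus_m A_m$. Then $R$ is a noetherian local ring, each $A_m$ is a finitely generated $R$-module by coherence, and $A^{(d)}=\bigoplus_m A_{dm}$ is a finitely generated, hence noetherian, $R$-algebra by hypothesis. The domain hypothesis on connected opens passes to the stalk: a direct limit of integral domains along arbitrary ring maps is again an integral domain (if $\tilde a,\tilde b\in A_i$ map to $a,b\in A$ with $ab=0$, then $\tilde a\tilde b$ is killed in some $A_k$, forcing $\tilde a$ or $\tilde b$ to be killed in $A_k$ by the domain property there), so $A$ is a graded integral domain. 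Decompose $A=\bigoplus_{i=0}^{d-1} A_{(i)}$ where $A_{(i)}:=\bigoplus_{m\in\mathbb{N}} A_{md+i}$ is a graded $A^{(d)}$-submodule of $A$. The key trick: if $A_{(i)}\neq 0$, pick any nonzero homogeneous $y\in A_{m_0 d+i}$ and set $s:=y^{d-1}$, which is nonzero because $A$ is a domain. Since
\begin{equation*}
\deg s=(d-1)(m_0 d+i)\equiv -i\pmod d,
\end{equation*}
multiplication by $s$ gives an injective $A^{(d)}$-linear map $A_{(i)}\hookrightarrow A^{(d)}$ (shifting degrees), exhibiting $A_{(i)}$ as an $A^{(d)}$-submodule of $A^{(d)}$ itself. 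Noetherianity of $A^{(d)}$ then forces $A_{(i)}$ to be finitely generated over $A^{(d)}$, and summing over $i$ shows that $A$ is a finitely generated $A^{(d)}$-module and hence a finitely generated $R$-algebra.

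The main obstacle is the final step of globalizing from the stalk $\mathcal{A}_{x_0}$ to a Stein open neighborhood of $x_0$. Lifting the algebra generators of $\mathcal{A}_{x_0}$ to sections $\tilde\alpha_1,\dots,\tilde\alpha_N$ of $\mathcal{A}$ on a Stein open $V\ni x_0$ gives a graded $\mathcal{O}_V$-algebra map $\Phi\colon \mathcal{O}_V[x_1,\dots,x_N]\to \mathcal{A}|_V$ that is surjective at $x_0$ in every degree. To get surjectivity on a common open neighborhood for all degrees simultaneously, I would choose a Stein compact subset $K\subset V$ with $x_0\in\mathrm{int}(K)$ and $\Gamma(K,\mathcal{O}_X)$ noetherian (available by Lemma \ref{a-lem2.16}), and use that $\mathcal{A}^{(d)}|_V$ is locally of finite presentation by Lemma \ref{a-lem2.24}, so that $\Gamma(K,\mathcal{A}^{(d)})$ is a noetherian $\Gamma(K,\mathcal{O}_X)$-algebra. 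Combining this with the module-finiteness of $\mathcal{A}$ over $\mathcal{A}^{(d)}$ established at the stalk, plus Lemma \ref{a-lem2.19} and the vanishing principle for coherent sheaves with zero $K$-sections, one deduces that the cokernels of $\Phi_m$ vanish on a common neighborhood of $K$, which gives the desired local finite generation of $\mathcal{A}$.
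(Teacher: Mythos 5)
Your easy direction (Gordan's lemma, monomials in generators of total degree divisible by $d$) is fine and matches the algebraic references the paper cites. Your stalk-level algebraic argument for the hard direction is also correct: that a direct limit of integral domains is a domain, the decomposition $A=\bigoplus_{i=0}^{d-1} A_{(i)}$, and the injection $A_{(i)}\hookrightarrow A^{(d)}$ via multiplication by $s=y^{d-1}$ is exactly the argument the paper delegates to Cascini--Lazi\'c and to \cite[Proposition 1.1.2.5]{cox-rings}.

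The gap is in the globalization. You establish finite generation of the single stalk $\mathcal A_{x_0}$, lift generators to a Stein open $V$, and then need $\Phi\colon\mathcal O_V[x]\to\mathcal A|_V$ to be surjective on a neighborhood of $x_0$. But you only know $(\Coker\Phi_m)_{x_0}=0$ for each $m$; each cokernel is coherent and vanishes on some neighborhood $U_m$ of $x_0$, yet nothing prevents the $U_m$ from shrinking to $\{x_0\}$ as $m\to\infty$, and $\mathcal A$ is not coherent so you cannot treat $\Coker\Phi$ as a single coherent sheaf. The ``vanishing principle for coherent sheaves with zero $K$-sections'' you invoke would only help if you knew $\Gamma(K,\Coker\Phi_m)=0$ for all $m$ -- but you only control the stalk at $x_0$, not the $K$-sections. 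The clean fix, and what the paper actually does, is to run your $A_{(i)}$ argument not at the stalk $\mathcal O_{X,x_0}$ but at $R:=\Gamma(K,\mathcal O_X)$, $A:=\Gamma(K,\mathcal A)$, $A^{(d)}:=\Gamma(K,\mathcal A^{(d)})$, for a connected Stein compact $K$ satisfying (P4) with $x_0\in\mathrm{int}(K)$: then $R$ is noetherian, $A^{(d)}$ is a finitely generated $R$-algebra by Lemma~\ref{a-lem2.19}, and $A$ is a domain (again a direct limit of domains, now over connected open neighborhoods of $K$), so your argument gives $A$ finitely generated over $R$. Once $\Gamma(K,\mathcal A)$ is finitely generated, Cartan's Theorem~A on the Stein compact $K$ shows the lifted generators generate $\mathcal A_{m,x}$ for every $x\in K$ and every $m$, i.e.\ $\Phi$ is surjective at every stalk in $K$, hence surjective on the open set $\mathrm{int}(K)\ni x_0$. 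In short: your stalk computation is a correct but detached ingredient; the same computation run one level up, at $K$-sections, closes the proof.
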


\begin{proof}
Since $\mathcal O_X(U)$ is not 
necessarily noetherian, the proof is not so obvious. 
\setcounter{step}{0}
\begin{step}\label{a-lem2.26-step1}
We assume that $\mathcal A^{(d)}$ is a locally finitely generated 
graded $\mathcal O_X$-algebra. 
We take an arbitrary point $P\in X$. 
By shrinking $X$ around $P$, 
there exists a surjective $\mathcal O_X$-algebra homomorphism 
\begin{equation}\label{a-lem2.26-eq1}
\mathcal O_X[x]=\mathcal O_X[x_1, \cdots, x_l]
\twoheadrightarrow 
\mathcal A^{(d)}
\end{equation}
for some $l$ such that $x_i$ is mapped to a 
homogeneous element of $H^0(X, \mathcal A^{(d)})$ for every $i$. 
We take an open neighborhood $U$ of $P\in X$ and a Stein 
compact subset $K$ of $X$ such that $P\in U \subset 
K$ and 
that $K$ satisfies (P4). 
Without loss of generality, we may assume 
that $K$ is 
connected. 
Since $\mathcal O_X[x_1, \cdots, x_l]$ and 
$\mathcal A^{(d)}$ admit 
$\mathcal O_X$-exhaustions, 
we obtain the surjection 
\begin{equation}\label{a-lem2.26-eq2} 
\mathcal O_X(K)[x_1, \cdots, x_l]\to \mathcal 
A^{(d)}(K)\to 0
\end{equation}  
induced by \eqref{a-lem2.26-eq1}. 
This means that $\mathcal A^{(d)}(K)$ is a finitely 
generated graded $\mathcal O_X(K)$-algebra. 
Note that $\mathcal O_X(K)$ is noetherian 
and that $\mathcal A(K)$ is an integral domain. 
Therefore, we see that $\mathcal A(K)$ is 
a finitely generated graded 
$\mathcal O(K)$-algebra (see, for example, 
\cite[Lemma 2.25 (ii)]{cascini-lazic} and 
\cite[Proposition 1.1.2.5]{cox-rings}). 
Since $K$ is a Stein compact subset, for any nonnegative 
integer 
$m$, $\mathcal A_m(K)$ generates $\mathcal A_{m, x}$ for 
every $x\in K$. 
Hence 
we can find a surjective $\mathcal O_U$-algebra homomorphism 
\begin{equation*}
\mathcal O_U[y]=\mathcal O_U[y_1, \cdots, y_k]\twoheadrightarrow 
\mathcal A|_U
\end{equation*} 
for some $k$ such that $y_j$ is mapped to 
a homogeneous element of $H^0(U, \mathcal A)$ for 
every $j$. 
This means that $\mathcal A$ is a locally finitely generated 
graded $\mathcal O_X$-algebra. 
\end{step}
\begin{step}\label{a-lem2.26-step2} 
As in Step \ref{a-lem2.26-step1}, we take an arbitrary point $P\in X$ 
and shrink $X$ around $P$. 
Then there exists a surjective $\mathcal O_X$-algebra homomorphism 
\begin{equation}\label{a-lem2.26-eq3}
\mathcal O_X[x]=\mathcal O_X[x_1, \cdots, x_l]
\twoheadrightarrow 
\mathcal A
\end{equation}
for some $l$ such that $x_i$ is mapped to a 
homogeneous element of $H^0(X, \mathcal A)$ for every $i$. 
In this case, it is easy to see that 
there exists a surjective $\mathcal O_X$-algebra homomorphism 
\begin{equation*}
\mathcal O_X[y]=\mathcal O_X[y_1, \cdots, y_k]\twoheadrightarrow 
\mathcal A^{(d)}
\end{equation*} 
for some $k$ such that $y_j$ is mapped to 
a homogeneous element of $H^0(X, \mathcal A^{(d)})$ for 
every $j$ (see, for example, 
\cite[Lemma 2.25 (i)]{cascini-lazic} and \cite[Proposition 
1.1.2.4]{cox-rings}). 
Hence $\mathcal A^{(d)}$ is a locally finitely generated 
graded $\mathcal O_X$-algebra. 
\end{step}
We finish the proof. 
\end{proof}

In order to construct flips and log canonical models in the category 
of complex analytic spaces, 
we need the notion of $\Projan$. 
For the details of $\Projan$, 
see \cite[\S 1.b.~Spec and Proj]{nakayama3}. 

\begin{rem}[$\Projan$]\label{a-rem2.27} 
Let $X$ be a complex analytic space and 
let $\mathcal A=\bigoplus _{m\in \mathbb N}\mathcal A_m$ 
be a locally finitely generated graded $\mathcal O_X$-algebra 
such that $\mathcal A_m$ is a coherent $\mathcal O_X$-modules 
for every $m$. 
Then we can define an analytic space 
$\Projan_X\mathcal A$ which is proper over $X$. 
More generally, we can define $\Projan_X\mathcal A$ under 
a weaker assumption that 
$\mathcal A$ is locally of finite presentation. 
Let $\mathcal F$ be a coherent $\mathcal O_X$-module. 
We note that $\Projan_X \Sym \mathcal F$ 
is usually denoted by $\mathbb P_X(\mathcal F)$, 
where $\Sym \mathcal F=
\bigoplus _{m\in \mathbb N}\Sym ^m \mathcal F$. 
\end{rem}

Now we can define projective morphisms of complex analytic spaces. 

\begin{defn}\label{a-def2.28}
Let $\pi\colon X\to Y$ be a proper 
morphism of complex analytic spaces and 
let $\mathcal L$ be a line bundle on $X$. 
Then $\mathcal L$ is said to be {\em{$\pi$-very 
ample}} or {\em{relatively very ample over 
$Y$}} if $\mathcal L$ is {\em{$\pi$-free}}, 
that is, $\pi^*\pi_*\mathcal L\to \mathcal L$ is surjective, 
and the induced morphism 
$X\to \mathbb P_Y(f_*\mathcal L)$ over 
$Y$ is a closed embedding. 
A line bundle $\mathcal L$ on $X$ is called 
{\em{$\pi$-ample}} or {\em{ample over 
$Y$}} if for any point $y\in Y$ there are an 
open neighborhood $U$ of $y$ and a positive 
integer $m$ such that $\mathcal L^{\otimes m}|_{\pi^{-1}(U)}$ 
is relatively very ample over $U$. 
Let $D$ be a Cartier divisor on $X$. Then 
we say that $D$ is {\em{$\pi$-very ample}}, {\em{$\pi$-free}}, 
and {\em{$\pi$-ample}} if the line bundle $\mathcal O_X(D)$ is 
so, respectively. 
We note that $\pi$ is said to be {\em{projective}} 
when there exists a $\pi$-ample line bundle 
on $X$. 
\end{defn}

For the basic properties of $\pi$-ample line bundles, 
see \cite[Chapter IV]{banica} and \cite[Chapter II.~\S1.~c.~Ample 
line bundles]{nakayama3}. 

\begin{defn}[Semiampleness]\label{a-def2.29}
Let $\pi\colon X\to Y$ be a proper morphism 
of complex analytic spaces and let $\mathcal L$ be a 
line bundle on $X$. 
If there exist an open covering $Y=\bigcup _{\lambda\in \Lambda} 
U_\lambda$ and positive integers $m_\lambda$ such that 
$\mathcal L^{\otimes m_\lambda}|_{\pi^{-1}(U_\lambda)}$ is 
$\pi|_{\pi^{-1}(U_\lambda)}$-free for every $\lambda\in \Lambda$, then 
$\mathcal L$ is called {\em{$\pi$-semiample}} 
or {\em{relatively semiample over $Y$}}. 
Let $D$ be a Cartier divisor on $X$. 
If $\mathcal O_X(D)$ is $\pi$-semiample, 
then $D$ is called {\em{$\pi$-semiample}} 
or {\em{relatively semiample over $Y$}}. 
\end{defn}

Here, we recall the precise definition of {\em{bimeromorphic 
maps}} for the sake of completeness. 

\begin{defn}[Meromorphic 
maps]\label{a-def2.30}
A {\em{meromorphic map}} $f\colon X\dashrightarrow Y$ 
of complex analytic varieties is defined by 
the graph $\Gamma_f\subset X\times Y$ such that 
$\Gamma_f$ is a subvariety of $X\times Y$ and that 
the first projection is an isomorphism 
over a Zariski open dense subset of $X$. 
Note that a {\em{Zariski open subset}} is 
the complement of an analytic subset. 
If further the second projection $\Gamma _f\to Y$ is 
proper and is an isomorphism over a Zariski open dense 
subset of $Y$, then $f\colon X\dashrightarrow 
Y$ is called a {\em{bimeromorphic map}}. 
We say that a bimeromorphic map $f\colon X\dashrightarrow Y$ 
of normal complex varieties is a 
{\em{bimeromorphic contraction}} if $f^{-1}$ does not 
contract any divisors. 
If in addition $f^{-1}$ is also a bimeromorphic 
contraction, then we say that $f$ is a {\em{small}} 
bimeromorphic map. 
Let $f\colon X\to Y$ be a bimeromorphic morphism 
of complex normal varieties, equivalently, 
$f\colon X\dashrightarrow Y$ is a bimeromorphic map 
of normal complex varieties and the first projection $\Gamma_f\to X$ 
is an isomorphism. 
Then, we put $\Exc(f):=\{x\in X\, |\, {\text{$f$ is not 
an isomorphism at $x$}}\}$ and call it the 
{\em{exceptional locus}} of $f$. 
\end{defn}

\begin{rem}\label{a-rem2.31}
Let $X$ be a complex analytic space and let 
$U$ be a Zariski open subset 
of $X$. 
Let $V$ be a Zariski open subset of $U$. 
Unfortunately, $V$ is not necessarily a Zariski open subset of 
$X$. 
This is because the analytic subset $\Gamma :=U\setminus 
V$ of $U$ can not always be extended to 
an analytic subset of $X$. 
\end{rem}

In this paper, we discuss the minimal model program. 
Therefore, we need {\em{$\mathbb Q$-divisors}} and 
{\em{$\mathbb R$-divisors}}. 

\begin{defn}[Divisors, $\mathbb Q$-divisors, 
and $\mathbb R$-divisors]\label{a-def2.32}
Let $X$ be a normal complex variety. 
A {\em{prime divisor}} on $X$ is an irreducible 
and reduced closed subvariety of codimension one. 
An {\em{$\mathbb R$-divisor}} $D$ on $X$ is a formal 
sum 
\begin{equation*}
D=\sum _i a_i D_i, 
\end{equation*} 
where $D_i$ is a prime divisor on $X$ with 
$D_i\ne D_j$ for $i\ne j$, 
$a_i\in \mathbb R$ for every $i$, and the {\em{support}} 
\begin{equation*}
\Supp D:=\bigcup _{a_i\ne 0}D_i
\end{equation*} 
is a closed analytic subset of $X$. 
In other words, the formal sum $\sum _i a_i 
D_i$ is locally finite. 
If $a_i\in \mathbb Z$ (resp.~$a_i\in 
\mathbb Q$) for 
every $i$, then $D$ is called 
a {\em{divisor}} (resp.~{\em{$\mathbb Q$-divisor}}) on $X$. 
Note that a divisor is sometimes called 
an {\em{integral Weil divisor}} in 
order to emphasize the condition that $a_i\in \mathbb Z$ for every $i$. 
If $0\leq a_i\leq 1$ (resp.~$a_i\leq 1$) 
holds for every $i$, then 
an $\mathbb R$-divisor $D$ is called a {\em{boundary}} 
(resp.~{\em{subboundary}}) $\mathbb R$-divisor. 

Let $D=\sum _i a_i D_i$ be an $\mathbb R$-divisor 
on $X$ such that $D_i$ is a prime divisor 
for every $i$ with $D_i\ne D_j$ for $i\ne j$. 
The {\em{round-down}} $\lfloor D\rfloor$ of $D$ is 
defined to be the divisor 
\begin{equation*}
\lfloor D\rfloor =\sum _i \lfloor a_i\rfloor D_i. 
\end{equation*} 
The {\em{round-up}} and the 
{\em{fractional part}} of $D$ are defined to be 
\begin{equation*}
\lceil D \rceil :=-\lfloor -D\rfloor, \quad 
\text{and} \quad \{D\}:=D-\lfloor D\rfloor, 
\end{equation*} 
respectively. We put 
\begin{equation*}
D^{=1}=\sum _{a_i=1}D_i, \quad 
D^{<1}:=\sum _{a_i<1} a_i D_i, \quad \text{and} \quad 
D^{\geq 1}:=\sum _{a_i\geq 1}a_i D_i. 
\end{equation*}
We sometimes use 
\begin{equation*}
D_+:=\sum _{a_i>0} a_i D_i, \quad 
\text{and} \quad D_-:=-\sum _{a_i<0} a_i D_i \geq 0. 
\end{equation*}
By definition, $D=D_+-D_-$ holds. 

Let $D$ be an $\mathbb R$-divisor on $X$ 
and let $x$ be a point of $X$. 
If $D$ is written as a finite $\mathbb R$-linear 
(resp.~$\mathbb Q$-linear) combination of Cartier 
divisors on some open 
neighborhood of $x$, 
then $D$ is said to be {\em{$\mathbb R$-Cartier at $x$}} 
(resp.~{\em{$\mathbb Q$-Cartier at $x$}}). 
If $D$ is $\mathbb R$-Cartier 
(resp.~$\mathbb Q$-Cartier) at $x$ for every $x\in X$, 
then $D$ is said to be {\em{$\mathbb R$-Cartier}} 
(resp.~{\em{$\mathbb Q$-Cartier}}). 
Note that a $\mathbb Q$-Cartier $\mathbb R$-divisor 
$D$ is automatically a $\mathbb Q$-Cartier 
$\mathbb Q$-divisor by definition. 
If $D$ is a finite $\mathbb R$-linear (resp.~$\mathbb Q$-linear) 
combination of Cartier divisors on $X$, 
then we sometimes say that $D$ 
is a {\em{globally $\mathbb R$-Cartier $\mathbb R$-divisor}} 
(resp.~{\em{globally $\mathbb Q$-Cartier $\mathbb Q$-divisor}}).  
\end{defn}

Example \ref{a-ex2.33} below shows a big difference between 
divisors on algebraic varieties and those on complex analytic 
spaces. 

\begin{ex}[Weierstrass]\label{a-ex2.33}
Let $D$ be a divisor on $\mathbb C$. We note that 
$\Supp D$ may be any discrete subset of $\mathbb C$. 
By the classical Weierstrass theorem, 
we can construct a meromorphic function $f$ 
on $\mathbb C$ such that $\ddiv (f)=D$. 
\end{ex}

\begin{defn}\label{a-def2.34}
Let $X$ be a normal variety. 
A real vector space 
spanned by the prime divisors on $X$ is 
denoted by $\WDiv_{\mathbb R}(X)$. 
It has a canonical basis given 
by the prime divisors. 
Let $D$ be an element of $\WDiv_{\mathbb R}(X)$. 
Then the sup norm of $D$ with respect to 
this basis is denoted by $|\!|D|\!|$. 
Note that an $\mathbb R$-divisor $D$ is an 
element of $\WDiv_{\mathbb R}(X)$ if and only if 
$\Supp D$ has only finitely many irreducible components. 
\end{defn}

We need the notion of semiample $\mathbb Q$-divisors. 

\begin{defn}[Relatively semiample $\mathbb Q$-divisors]
\label{a-def2.35} 
Let $\pi\colon X\to Y$ be a projective 
morphism of complex analytic spaces such that $X$ is a normal 
variety. A $\mathbb Q$-Cartier 
$\mathbb Q$-divisor $D$ on $X$ is 
called a {\em{$\pi$-semiample $\mathbb Q$-divisor}} 
on $X$ 
if it is a finite $\mathbb Q_{>0}$-linear combination of 
$\pi$-semiample Cartier divisors on $X$. 
\end{defn}

The following lemma is very important. 

\begin{lem}\label{a-lem2.36} 
Let $\pi\colon X\to Y$ be a projective morphism of 
complex analytic spaces such that $X$ is a normal complex 
variety and let $D$ be a $\pi$-semiample 
$\mathbb Q$-divisor on $X$. 
Then $\bigoplus _{m\in \mathbb N}
\pi_*\mathcal O_X(\lfloor mD\rfloor)$ is a locally 
finitely generated graded $\mathcal O_Y$-algebra. 
In particular, if $\mathcal L$ is a $\pi$-ample 
line bundle on $X$, then 
$\bigoplus _{m\in \mathbb N} 
\pi_*\mathcal L^{\otimes m}$ is a locally 
finitely generated graded $\mathcal O_Y$-algebra. 
\end{lem}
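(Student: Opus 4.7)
The plan is to reduce the lemma, in several steps, to the standard fact that a closed subspace of a relative projective space over $Y$ has locally finitely generated graded algebra of sections of the hyperplane bundle. The question is local on $Y$, so we freely shrink $Y$ to any Stein open neighborhood of a chosen point throughout.

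As a preliminary step, form the Stein factorization $\pi = p \circ \pi_1$ with $\pi_1\colon X \to Y'$ a contraction and $p\colon Y' \to Y$ finite. Since $p_*$ preserves locally finite generation (push-forward of a locally finitely generated graded algebra by a finite morphism remains locally finitely generated, because $p_*\mathcal O_{Y'}$ is a coherent $\mathcal O_Y$-algebra), it suffices to prove the statement for $\pi_1$ over $Y'$. This reduction is crucial: $\pi_1$ has connected fibers, so for every connected open $V\subset Y'$ the preimage $\pi_1^{-1}(V)$ is a connected open subset of the normal irreducible variety $X$, and hence $\mathcal A := \bigoplus_m (\pi_1)_*\mathcal O_X(\lfloor mD\rfloor)$ verifies the integral-domain hypothesis of Lemma~\ref{a-lem2.26}, since its sections over $V$ embed as a graded subring of $K(\pi_1^{-1}(V))[t]$ via a nonzero rational section of $\mathcal O_X(D)$.

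Next, writing $D = \sum c_i D_i$ with $c_i \in \mathbb Q_{>0}$ and $D_i$ $\pi_1$-semiample Cartier, choose $N_0 \in \mathbb Z_{>0}$, after shrinking, such that $E := N_0 D$ is a $\pi_1$-free Cartier divisor. Then $\mathcal A^{(N_0)}_m = (\pi_1)_*\mathcal O_X(mE)$, and by Lemma~\ref{a-lem2.26} it suffices to show $\mathcal A^{(N_0)}$ is locally finitely generated. The $\pi_1$-freeness of $E$ induces a $Y'$-morphism $f\colon X \to \mathbb P := \mathbb P_{Y'}((\pi_1)_*\mathcal O_X(E))$ with $f^*\mathcal O_\mathbb P(1) = \mathcal O_X(E)$. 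Let $X \xrightarrow{g} W \xrightarrow{h} \mathbb P$ be the Stein factorization of $f$; then $\mathcal M := h^*\mathcal O_\mathbb P(1)$ is ample over $Y'$ (the finite pullback of a $Y'$-ample line bundle), and the projection formula combined with $g_*\mathcal O_X = \mathcal O_W$ gives $(\pi_1)_*\mathcal O_X(mE) \simeq (\pi_W)_*\mathcal M^{\otimes m}$ with $\pi_W\colon W\to Y'$. Since $(\pi_W)_*\mathcal O_W = (\pi_1)_*\mathcal O_X = \mathcal O_{Y'}$, the morphism $\pi_W$ is itself a contraction and the integral-domain hypothesis continues to hold. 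We have thereby reduced to the case of a $\pi_W$-ample line bundle $\mathcal M$ on a normal complex variety $W$ projective over $Y'$.

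For this final case, a further shrinking makes some $\mathcal M^{\otimes \nu}$ $\pi_W$-very ample, and one more application of Lemma~\ref{a-lem2.26} lets us assume $\nu = 1$. Shrinking so that $(\pi_W)_*\mathcal M$ admits a surjection $\mathcal O_{Y'}^{\oplus(N+1)} \twoheadrightarrow (\pi_W)_*\mathcal M$ yields a closed embedding $\iota\colon W \hookrightarrow \mathbb P^N_{Y'}$ with $\iota^*\mathcal O_{\mathbb P^N_{Y'}}(1) = \mathcal M$. Writing $\mathcal I$ for the ideal sheaf of $W$ and $q$ for the projection, the pushforward of $0 \to \mathcal I(m) \to \mathcal O_{\mathbb P^N_{Y'}}(m) \to \iota_*\mathcal M^{\otimes m} \to 0$, together with analytic Serre vanishing on a relatively compact open of $Y'$, produces an integer $m_0$ such that $q_*\mathcal O_{\mathbb P^N_{Y'}}(m) \twoheadrightarrow (\pi_W)_*\mathcal M^{\otimes m}$ is surjective for all $m \geq m_0$. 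Because $\bigoplus_m q_*\mathcal O_{\mathbb P^N_{Y'}}(m) = \mathcal O_{Y'}[x_0,\ldots,x_N]$, and the coherent sheaves $(\pi_W)_*\mathcal M^{\otimes m}$ for $0 \leq m < m_0$ admit finite generating sets after one last shrinking, the algebra $\bigoplus_m(\pi_W)_*\mathcal M^{\otimes m}$ is finitely generated. The main difficulty is the careful bookkeeping ensuring that the integral-domain hypothesis of Lemma~\ref{a-lem2.26} is preserved through each Stein-factorization reduction; once that is arranged, the geometric core of the argument parallels the algebraic case.
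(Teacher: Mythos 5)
Correct, and essentially the same approach as the paper: shrink so that a multiple of $D$ is $\pi$-free, pass to the ample case via the Stein factorization of the associated morphism to $\mathbb P_{Y}(\pi_*\mathcal O_X(\cdot))$, reduce to the very ample case, and finish with a closed embedding into a relative projective space together with Serre vanishing. Your preliminary Stein factorization of $\pi$ itself, ensuring connected fibers so the integral-domain hypothesis of Lemma~\ref{a-lem2.26} actually holds each time you invoke it, is a welcome tightening: the paper applies Lemma~\ref{a-lem2.26} without explicitly checking that hypothesis, implicitly relying on the reduction to contractions described in Remark~\ref{a-rem2.14}.
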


\begin{proof}
Throughout this proof, we fix a point $y\in Y$ and repeatedly 
shrink $Y$ around 
$y$. 
In Step \ref{a-lem2.36-step1}, 
we will reduce the problem to the case 
where $\mathcal O_X(D)$ is $\pi$-ample. 
In Step \ref{a-lem2.36-step2}, 
we will prove the desired finite generation. 
\setcounter{step}{0}
\begin{step}\label{a-lem2.36-step1}
By shrinking $Y$ around $y$, we may assume that 
there exists a positive integer $d$ such that 
$\mathcal O_X(dD)$ is $\pi$-free. 
By Lemma \ref{a-lem2.26}, 
we may further assume that 
$\mathcal O_X(D)$ is $\pi$-free 
by replacing $D$ with $dD$. 
We consider a contraction morphism 
over $Y$ associated to $\mathcal O_X(D)$ and 
take the Stein factorization.  
Then there exist a contraction morphism 
$\varphi\colon X\to Z$ over $Y$ with $\varphi_*\mathcal 
O_X\simeq \mathcal O_Z$ and 
some $\pi_Z$-ample line bundle $\mathcal L$ on $Z$, where 
$\pi_Z\colon Z\to Y$ is the structure morphism, such that 
$\mathcal O_X(D)\simeq \varphi^*\mathcal L$. 
Since $\pi_*\mathcal O_X(mD)\simeq (\pi_Z)_*\mathcal L^{\otimes m}$ 
holds for every $m$, 
we may further assume that 
$\mathcal O_X(D)$ is $\pi$-ample by 
replacing $X$ and $\mathcal O_X(D)$ with $Z$ and $\mathcal L$, 
respectively. 
\end{step}
\begin{step}\label{a-lem2.36-step2} 
From now on, we assume that 
$\mathcal O_X(D)$ is $\pi$-ample. 
By Lemma \ref{a-lem2.26}, we may further assume that 
$\mathcal O_X(D)$ is $\pi$-very ample. 
Therefore, after shrinking $Y$ around $y$ suitably, 
there exists the following commutative diagram 
\begin{equation*}
\xymatrix{
X \ar[dr]_-\pi\ar@{^{(}->}[r]^-\iota&
\ar[d]^-{p_1}Y\times \mathbb P^N \ar[r]_-{p_2}& \mathbb P^N\\ 
& Y& 
}
\end{equation*}
such that $\mathcal O_X(D)\simeq \iota^*p^*_2\mathcal O_{\mathbb P^N}(1)$. 
We put $\mathcal N:=p^*_2\mathcal O_{\mathbb P^N}(1)$. 
Then there exists a positive integer $m_0$ 
such that $(p_1)_*\mathcal N^{\otimes m} 
\to \pi_*\mathcal O_X(mD)$ is surjective for every $m\geq m_0$. 
We note that 
\begin{equation*}
(p_1)_*\mathcal N^{\otimes m} 
\simeq (p_1)_*p^*_2\mathcal O_{\mathbb P^N}(1)\simeq 
\mathcal O_Y[X_0, \cdots, X_N]_m, 
\end{equation*}
where $\mathcal O_Y[X_0, \cdots, X_N]_m$ is the degree 
$m$ part of $\mathcal O_Y[X_0, \cdots, X_N]$. 
Since $\pi_*\mathcal O_X(mD)$ is a coherent $\mathcal O_Y$-module 
for every $0\leq m<m_0$, 
after replacing $Y$ with a small Stein open neighborhood 
of $y$ if necessary, 
we see that there is a surjective $\mathcal O_Y$-algebra homomorphism 
\begin{equation*}
\mathcal O_Y[X_0, \cdots, X_N, X_{N+1}, \cdots, 
X_{N+M}]\twoheadrightarrow \bigoplus _{m\in 
\mathbb N} \pi_*\mathcal O_X(mD)
\end{equation*} 
for some $M$ such that 
each $X_i$ is mapped to an element of 
$H^0(Y, \pi_*\mathcal O_X(m_iD))$ for some $m_i \in 
\mathbb N$. 
This means that 
$\bigoplus_{m\in \mathbb N}\pi_*\mathcal O_X(mD)$ is a locally 
finitely generated graded $\mathcal O_Y$-algebra. 
\end{step}
By Step \ref{a-lem2.36-step1} and Step \ref{a-lem2.36-step2}, 
$\bigoplus _{m\in \mathbb N}\pi_*\mathcal O_X(\lfloor 
mD\rfloor)$ and $\bigoplus _{m\in \mathbb N} 
\pi_*\mathcal L^{\otimes m}$ are both locally finitely generated 
graded $\mathcal O_Y$-algebras.  
\end{proof}

For almost all applications, we may assume that 
any $\mathbb R$-divisor has finitely many components 
by the following lemma. 

\begin{lem}\label{a-lem2.37} 
Let $D$ be an $\mathbb R$-Cartier $\mathbb R$-divisor 
{\em{(}}resp.~$\mathbb Q$-divisor{\em{)}} 
on a normal complex variety $X$. 
Let $U$ be any relatively compact open subset of $X$. 
Then $D$ is a finite $\mathbb R$-linear 
{\em{(}}$\mathbb Q$-linear{\em{)}} combination 
of Cartier divisors in a neighborhood of $\overline U$, that is, 
$D$ is a globally $\mathbb R$-Cartier $\mathbb R$-divisor 
{\em{(}}globally $\mathbb Q$-Cartier $\mathbb Q$-divisor{\em{)}} 
in a neighborhood of $\overline {U}$. 
\end{lem}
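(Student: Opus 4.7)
The plan is to exploit the compactness of $\overline{U}$ to reduce to finitely many local expressions, and to decouple the $\mathbb{Q}$-divisor case (handled by clearing denominators and gluing) from the $\mathbb{R}$-divisor case (handled by a $\mathbb{Q}$-linear independence trick). I would first prove the $\mathbb{Q}$-case, then deduce the $\mathbb{R}$-case from it.

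For the $\mathbb{Q}$-case, the idea is as follows. At each $x\in\overline{U}$ the hypothesis that $D$ is $\mathbb{Q}$-Cartier provides, after clearing denominators, a positive integer $m_x$ such that $m_xD$ is Cartier on some open neighborhood $V_x$ of $x$. Covering $\overline{U}$ by finitely many $V_{x_i}$ and setting $m:=\mathrm{lcm}(m_{x_i})$, the Cartier divisors $mD|_{V_{x_i}}$ automatically agree on overlaps because, $X$ being normal, Cartier divisors inject into Weil divisors. Hence they glue to a single Cartier divisor on $W=\bigcup_iV_{x_i}$, and $D=\tfrac{1}{m}(mD)$ is the desired $\mathbb{Q}$-linear combination on an open neighborhood of $\overline{U}$.

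For the $\mathbb{R}$-case, the plan is to cover $\overline{U}$ by finitely many opens $V_1,\ldots,V_N$ with $D|_{V_i}=\sum_ja_{i,j}C_{i,j}$ (with $a_{i,j}\in\mathbb{R}$ and $C_{i,j}$ Cartier on $V_i$), and then to split $D$ into $\mathbb{Q}$-divisor pieces. Since $\{a_{i,j}\}$ is a finite set, its $\mathbb{Q}$-span in $\mathbb{R}$ admits a finite $\mathbb{Q}$-basis $r_1,\ldots,r_p$; writing $a_{i,j}=\sum_sq_{i,j,s}r_s$ with $q_{i,j,s}\in\mathbb{Q}$ and setting $G_{i,s}:=\sum_jq_{i,j,s}C_{i,j}$ yields $\mathbb{Q}$-Cartier $\mathbb{Q}$-divisors on $V_i$ with $D|_{V_i}=\sum_sr_sG_{i,s}$. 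On each overlap $V_i\cap V_k$ the relation $\sum_sr_s(G_{i,s}-G_{k,s})=0$, read in $\WDiv_{\mathbb{R}}(V_i\cap V_k)$, combined with the $\mathbb{Q}$-linear independence of the $r_s$ against the rational coefficients of $G_{i,s}-G_{k,s}$, forces $G_{i,s}=G_{k,s}$ on $V_i\cap V_k$ separately for each $s$. Hence the $G_{i,s}$ glue to $\mathbb{Q}$-Cartier $\mathbb{Q}$-divisors $G_s$ on $\bigcup_iV_i$. Applying the $\mathbb{Q}$-case to each of the finitely many $G_s$ and intersecting the resulting neighborhoods will produce an open neighborhood $W$ of $\overline{U}$ and positive integers $m^{(s)}$ with each $m^{(s)}G_s$ Cartier on $W$, so that $D|_W=\sum_s(r_s/m^{(s)})(m^{(s)}G_s)$ is a finite $\mathbb{R}$-linear combination of Cartier divisors on $W$.

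The main obstacle will be the gluing in the $\mathbb{R}$-case: the individual Cartier divisors $C_{i,j}$ defined only on $V_i$ need not admit any extension to a common neighborhood of $\overline{U}$, so direct patching of the local expressions is hopeless. The $\mathbb{Q}$-basis decomposition neatly bypasses this, because a $\mathbb{Q}$-divisor is rigid enough that equalities on overlaps can be read off coefficient-by-coefficient, thereby reducing the gluing of $\mathbb{R}$-Cartier data to the already-handled $\mathbb{Q}$-case.
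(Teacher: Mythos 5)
Your argument is correct, and it rests on the same underlying observation as the paper's — that the $\mathbb{R}$-Cartier condition is cut out by $\mathbb{Q}$-linear data, so $D$ can be rationalized — but the mechanics are organized rather differently. The paper first shrinks $X$ so that $D=\sum_{i=1}^k a_iD_i$ has finite support and then works inside the finite-dimensional vector space $V\simeq\mathbb{R}^k$ of $\mathbb{R}$-divisors supported on $D_1,\ldots,D_k$: for each $x\in\overline{U}$ it produces a $\mathbb{Q}$-rational affine subspace $V^x\subset V$ containing $D$ all of whose members are $\mathbb{R}$-Cartier near $x$, intersects finitely many (compactness of $\overline{U}$) to get $\Sigma\ni D$ defined over $\mathbb{Q}$, and writes $D$ as a finite $\mathbb{R}$-linear combination of rational points of $\Sigma$; each such point is a $\mathbb{Q}$-Cartier $\mathbb{Q}$-divisor near $\overline{U}$, so one clears denominators, and the $\mathbb{Q}$-case is a byproduct. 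You instead prove the $\mathbb{Q}$-case first, do not fix a global finite support, and work directly with the local Cartier decompositions $D|_{V_i}=\sum_j a_{i,j}C_{i,j}$: a $\mathbb{Q}$-basis $\{r_s\}$ of the coefficient field plays the role of the rational structure, and you glue the $\mathbb{Q}$-divisor pieces $G_{i,s}$ across overlaps by exploiting the $\mathbb{Q}$-linear independence of the $r_s$ against the rational coefficients of $G_{i,s}-G_{k,s}$. The paper sidesteps that gluing step by fixing the ambient space $V$ up front so that the rational building blocks are automatically globally defined; your version makes the patching explicit, which is a clean alternative way to see why the rationalization is globally coherent. One small point worth spelling out in your $\mathbb{Q}$-case: the hypothesis gives only that the $\mathbb{Q}$-divisor $D$ is $\mathbb{R}$-Cartier, so you should first observe that an $\mathbb{R}$-Cartier $\mathbb{Q}$-divisor is automatically $\mathbb{Q}$-Cartier at each point (a finite linear system with $\mathbb{Q}$-coefficients admitting a real solution admits a rational one) before you clear denominators.
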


\begin{proof}
Without loss of generality, we may assume that 
$D$ is a finite $\mathbb R$-linear combination 
of prime divisors by shrinking $X$. 
We write $D=\sum _{i=1}^ka_i D_i$, where $a_i\in \mathbb R$ and 
$D_i$ is a prime divisor for every $i$ with $D_i\ne 
D_j$ for $i\ne j$. 
We consider the following $\mathbb R$-vector space 
\begin{equation*}
V=\{ x_1D_1+\cdots +x_k D_k \, |\, 
{\text{$x_i \in \mathbb R$ for every $i$}}\}\simeq 
\mathbb R^k. 
\end{equation*} 
We take an arbitrary point $x\in X$. 
Then there exists an open neighborhood 
$U_x$ of $x$ such that $D|_{U_x}$ is a finite $\mathbb R$-linear 
combination of Cartier divisors. Hence 
we can find an affine subspace $V^x$ of $V$ defined 
over the rationals such that $D\in V^x$ and 
that any member of $V^x$ is $\mathbb R$-Cartier 
in a neighborhood of $x$. 
Since $U$ is relatively compact, 
there exists an affine subspace $\Sigma$ of $V$ defined 
over the rationals such that 
$D\in \Sigma$ and that every element of $\Sigma$ is 
$\mathbb R$-Cartier in a neighborhood of $\overline U$. 
Hence, by the standard argument, we can write 
$D$ as a finite $\mathbb R$-linear combination of 
Cartier divisors in a neighborhood of $\overline U$. 
When $D$ is a $\mathbb Q$-divisor, it can be written as 
a finite $\mathbb Q$-linear combination of Cartier divisors 
in a neighborhood of $\overline U$. 
Thus, we get the desired statement. 
\end{proof}

The definition of {\em{$\mathbb Q$-factoriality}} 
is very subtle. 

\begin{defn}[{$\mathbb Q$-factoriality, 
see \cite[Definition 4.13]{nakayama2}}]\label{a-def2.38}
Let $X$ be a normal complex variety 
and let $K$ be a compact subset of $X$. 
Then $X$ is said to be {\em{$\mathbb Q$-factorial at $K$}} 
if every prime divisor defined on an open neighborhood 
$U$ of $K$ is $\mathbb Q$-Cartier at any point $x\in K$. 

Let $\pi\colon X\to Y$ be a projective 
morphism and let $W$ be a compact subset of $Y$. 
If $X$ is $\mathbb Q$-factorial at $\pi^{-1}(W)$, 
then we usually say that 
$X$ is {\em{$\mathbb Q$-factorial over $W$}}. 
\end{defn}

\begin{rem}\label{a-rem2.39} 
Let $\pi\colon X\to Y$ be a projective morphism 
and let $W$ be a compact subset of $Y$. 
We take a compact subset $W'$ of $Y$ with $W'\subset W$. 
It is very important to note that 
$X$ is not necessarily $\mathbb Q$-factorial 
over $W'$ even if $X$ is $\mathbb Q$-factorial over $W$. 
This is because there may exist a divisor defined over 
an open neighborhood of $\pi^{-1}(W')$ which can not 
be extended to a divisor defined over 
an open neighborhood of $\pi^{-1}(W)$. 
\end{rem}

We adopt the following definition of 
{\em{linear, $\mathbb Q$-linear, and 
$\mathbb R$-linear equivalences}} in this paper. 
Although it may be somewhat artificial, it is sufficient for 
our minimal model program for projective morphisms 
of complex analytic spaces. 

\begin{defn}[Linear, $\mathbb Q$-linear, and 
$\mathbb R$-linear equivalences]\label{a-def2.40} 
Two $\mathbb R$-divisors $D_1$ and $D_2$ are said to 
be {\em{linearly equivalent}} if 
$D_1-D_2$ is a principal Cartier divisor. 
The linear equivalence is denoted by $D_1\sim D_2$. 
Two $\mathbb R$-divisors $D_1$ and $D_2$ are 
said to be {\em{$\mathbb R$-linearly equivalent}} 
(resp.~{\em{$\mathbb Q$-linearly equivalent}}) 
if $D_1-D_2$ is a {\em{finite}} $\mathbb R$-linear 
(resp.~$\mathbb Q$-linear) combination 
of principal Cartier divisors. 
When $D_1$ is $\mathbb R$-linearly (resp.~$\mathbb Q$-linearly) 
equivalent to 
$D_2$, we write $D_1\sim _{\mathbb R}D_2$ 
(resp.~$D_1\sim _{\mathbb Q}D_2$). 
\end{defn}

\begin{ex}\label{a-ex2.41}
Let $X$ be a noncompact complex manifold with $\dim X=1$. 
Then it is known that $X$ is always 
Stein. 
We assume that $H^2(X, \mathbb Z)=0$ holds. 
Let $D$ be an $\mathbb R$-divisor 
on $X$ such that $\Supp D$ is finite. 
Then $D\sim _{\mathbb R}0$, that is, 
$D$ is $\mathbb R$-linearly trivial. 
On the other hand, if $\Supp D$ is not finite, 
then $D$ is not necessarily 
$\mathbb R$-linearly trivial in the sense of 
Definition \ref{a-def2.40}. 
As in Example \ref{a-ex2.33}, if 
$D$ is an integral Weil divisor on $X$, 
then $D\sim 0$ always holds. 
\end{ex}

We will use the following lemma in the proof of 
Theorem \ref{thm-f} (3). 

\begin{lem}[{see \cite[Lemma 1.12]{kawamata-crepant} 
and \cite[Chapter II.~2.12.~Lemma]{nakayama3}}]\label{a-lem2.42} 
Let $X$ be a normal complex variety with 
only rational singularities and let $K$ be 
a compact subset of $X$. 
Let $D_i$ be an integral Weil divisor on $X$ such that 
$D_i$ is $\mathbb Q$-Cartier at $K$ for $1\leq i\leq k$. 
Then there exists a positive integer $m$ such that 
$mD_i$ is Cartier on some open neighborhood of $K$ for 
every $1\leq i\leq k$. 
\end{lem}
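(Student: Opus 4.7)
The plan is to argue directly from the local definition of $\mathbb{Q}$-Cartier (Definition \ref{a-def2.32}) and the compactness of $K$; the rational-singularity hypothesis will not actually be needed in my approach, but would presumably enter a refined version of the statement where $m$ depends only on $X$ and $K$. As a first step, I would reduce to a single divisor: if for each $i$ one produces a positive integer $m_i$ and an open neighborhood $V_i\supset K$ on which $m_iD_i$ is Cartier, then $m:=\operatorname{lcm}(m_1,\dots,m_k)$ and $V:=V_1\cap\dots\cap V_k$ do the job for all indices at once. So fix a single divisor $D:=D_i$.

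For each positive integer $n$ define
$$U_n := \{\,y\in X \mid nD \text{ is Cartier on some open neighborhood of } y\,\}.$$
Then $U_n$ is open (since ``Cartier on an open set'' is hereditary on that open set), and $U_n\subseteq U_{n\ell}$ for every positive integer $\ell$ (multiples of Cartier divisors are Cartier). For any $x\in K$, the hypothesis that $D$ is $\mathbb{Q}$-Cartier at $x$ supplies an open neighborhood $W_x$ of $x$ and an equality $D|_{W_x}=\sum_j a_j C_j$ of $\mathbb{Q}$-divisors with $a_j\in\mathbb{Q}$ and $C_j$ Cartier on $W_x$. Clearing denominators by a positive common multiple $n(x)$ of the $a_j$ gives $n(x)D|_{W_x}=\sum_j(n(x)a_j)C_j$, an integral $\mathbb{Z}$-linear combination of Cartier divisors, hence Cartier on $W_x$. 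Thus $x\in U_{n(x)}$, so $\{U_{n(x)}\}_{x\in K}$ is an open cover of $K$.

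By compactness of $K$, extract a finite subcover $U_{n_1},\dots,U_{n_r}$ and set $N:=\operatorname{lcm}(n_1,\dots,n_r)$. The nesting $U_{n_j}\subseteq U_N$ forces $K\subset U_N$, which means $ND$ is Cartier on the open neighborhood $U_N$ of $K$. Combined with the reduction in the first paragraph, this completes the proof. The one place to tread carefully — and the only genuine obstacle — is the passage from ``$\mathbb{Q}$-Cartier at $x$'' to ``integer multiple Cartier on an open neighborhood of $x$'': one must observe that the equality of $\mathbb{Q}$-divisors $nD=\sum_j(na_j)C_j$ on $W_x$ really is an equality of integral Weil divisors (via the injection $\mathbb{Z}\hookrightarrow\mathbb{Q}$ applied coefficient-by-coefficient), so that $nD$ is genuinely Cartier on the open set $W_x$ rather than only at the single point $x$.
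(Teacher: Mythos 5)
Your proof is correct, and it reveals something worth noting: the argument does not use the rational-singularity hypothesis at all, as you observe. Structurally it has the same local-plus-compactness shape as the paper's proof — produce for each $x\in K$ a single integer $m_x$ and a neighborhood $U_x$ on which $m_x D_i$ is Cartier for all $i$, then extract a finite subcover and take the lcm — but the two proofs differ in how the local step is obtained. The paper invokes \cite[Chapter II.~2.12.~Lemma]{nakayama3} (equivalently \cite[Lemma 1.12]{kawamata-crepant}), and that is precisely where rational singularities enter: that lemma gives a \emph{uniform} bound on the local index at $x$, i.e.\ a single $m_x$ that works for every Weil divisor that is $\mathbb Q$-Cartier at $x$, not just for the finitely many $D_i$ in hand. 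You instead obtain a per-divisor integer directly from Definition \ref{a-def2.32} by clearing denominators in the expression $D_i|_{W_x}=\sum_j a_j C_j$, and then take the lcm over $1\le i\le k$; since $k$ is finite this is harmless, and the rational-singularity hypothesis (and the external reference) become unnecessary for the lemma as stated. Your closing caveat — that $nD=\sum_j (na_j)C_j$ is an equality of integral Weil divisors on the whole neighborhood $W_x$, not merely at the point $x$ — is exactly the right thing to check, and it is fine because a $\mathbb Z$-linear combination of Cartier divisors on $W_x$ is Cartier on $W_x$. The one thing your more elementary route gives up is the stronger conclusion, available under the rational-singularity hypothesis, that $m$ can be chosen depending only on $X$ and $K$ and not on the particular divisors $D_1,\dots,D_k$; that uniformity is not needed here, so nothing is lost for this lemma.
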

\begin{proof}
We take an arbitrary point $x\in K$. 
By \cite[Chapter II.~2.12.~Lemma]{nakayama3} 
(see \cite[Lemma 1.12]{kawamata-crepant}), 
there exists a positive integer $m_x$ such that 
$m_xD_i$ is Cartier at $x$ for $1\leq i\leq k$. 
This means that there exists an open neighborhood 
$U_x$ of $x$ such that 
$m_xD_i$ is Cartier on $U_x$ for every $1\leq i\leq k$. 
Since $K$ is compact, 
we can take a positive integer $m$ and an open 
neighborhood 
$U$ of $K$ such that 
$mD_i$ is Cartier for every $1\leq i\leq k$. 
This is what we wanted. 
\end{proof}

In this paper, we usually consider the case where 
the base space $Y$ is Stein and the morphism $\pi\colon X\to Y$ is 
projective. In this setting, 
we have many good properties. 

\begin{rem}\label{a-rem2.43}
Let $\pi\colon X\to Y$ be a projective morphism 
from a normal complex variety $X$ to a 
Stein space $Y$. 
We take a $\pi$-ample 
line bundle $\mathcal A$ on $X$. 
Let $\omega_X$ be the canonical sheaf of $X$ 
(see Definition \ref{a-def3.1} below). 
Since there exists a sufficiently large positive 
integer $m$ such that 
\begin{equation*} 
H^0(X, \omega_X\otimes \mathcal A^{\otimes m})
\simeq H^0(Y, \pi_*(\omega_X\otimes \mathcal A^{\otimes m}))
\ne 0
\end{equation*} 
and 
\begin{equation*}
H^0(X, \mathcal A^{\otimes m})\simeq H^0(Y, \pi_*\mathcal 
A^{\otimes m})\ne 0, 
\end{equation*} 
we can always take a Weil divisor $K_X$ on $X$ satisfying 
$\omega_X\simeq \mathcal O_X(K_X)$. 
As usual, we call it the {\em{canonical divisor}} of $X$. 
More generally, let $\mathcal L$ be a line bundle 
(resp.~reflexive sheaf of rank one) on $X$. 
By the same argument as above, 
we can take a Cartier (resp.~Weil) divisor 
$D$ on $X$ such that $\mathcal L\simeq 
\mathcal O_X(D)$. 
\end{rem}

\begin{say}[Ample, semiample, big, pseudo-effective, and nef 
$\mathbb R$-divisors]\label{a-say2.44} 
In our framework of the minimal model 
program for projective morphisms of complex analytic spaces, 
we have to use $\mathbb R$-divisors. Hence 
we need the following definitions:~Definitions 
\ref{a-def2.45}, \ref{a-def2.46}, \ref{a-def2.47}, 
and \ref{a-def2.48}. 
We state them explicitly here for the sake of completeness. 

\begin{defn}[Ample and semiample 
$\mathbb Q$-divisors and 
$\mathbb R$-divisors, see Definition \ref{a-def2.35}]\label{a-def2.45} 
Let $\pi\colon X\to Y$ be a projective morphism 
of complex analytic spaces. 
A finite $\mathbb R_{>0}$-linear (resp.~$\mathbb Q_{>0}$-linear) 
combination of $\pi$-ample 
Cartier divisors is called a {\em{$\pi$-ample 
$\mathbb R$-divisor}} (resp.~{\em{$\pi$-ample 
$\mathbb Q$-divisor}}). 
A finite $\mathbb R_{>0}$-linear 
(resp.~$\mathbb Q_{>0}$-linear) combination of $\pi$-semiample 
Cartier divisors is called a {\em{$\pi$-semiample 
$\mathbb R$-divisor}} 
(resp.~{\em{$\pi$-semiample 
$\mathbb Q$-divisor}}). 
\end{defn}

In this paper, we adopt the following 
definition of big $\mathbb R$-divisors. 

\begin{defn}[Bigness]\label{a-def2.46}
Let $\pi\colon X\to Y$ be a projective morphism 
of complex analytic spaces such that 
$X$ is a normal complex variety. 
When $Y$ is Stein, an $\mathbb R$-divisor $D$ is 
said to be {\em{big 
over $Y$}} or {\em{$\pi$-big}} if $D\sim _{\mathbb R} A+B$, 
where $A$ is a $\pi$-ample 
$\mathbb R$-divisor and $B$ is an effective 
$\mathbb R$-divisor. 
In general, if $D|_{\pi^{-1}(U)}$ is big over $U$ for 
any Stein open subset of $Y$, then $D$ is said to be {\em{big over $Y$}} 
or {\em{$\pi$-big}}. 
We note that $D$ is not necessarily $\mathbb R$-Cartier. 
\end{defn}

\begin{defn}[Pseudo-effective $\mathbb R$-divisors]\label{a-def2.47}
Let $\pi\colon X\to Y$ be a projective morphism 
of complex analytic spaces such that $X$ is a normal complex variety. 
An $\mathbb R$-Cartier $\mathbb R$-divisor $D$ on $X$ is said 
to be {\em{pseudo-effective over $Y$}} or 
{\em{$\pi$-pseudo-effective}} 
if $D+A$ is big over $Y$ for every 
$\pi$-ample 
$\mathbb R$-divisor $A$ on $X$. 
\end{defn}

\begin{defn}[Nefness]\label{a-def2.48}
Let $\pi\colon X\to Y$ be a projective 
morphism of complex analytic spaces such that 
$X$ is a normal complex 
variety and let $W$ be a compact subset of $Y$. 
Let $D$ be an $\mathbb R$-Cartier 
$\mathbb R$-divisor on $X$. 
If $D\cdot C\geq 0$ for every projective 
integral curve $C$ on $X$ such that 
$\pi(C)$ is a point, 
then $D$ is said to be {\em{$\pi$-nef}} 
or {\em{nef over $Y$}}. 
If $D\cdot C\geq 0$ for every projective integral 
curve $C$ on $X$ such that 
$\pi(C)$ is a point of $W$, 
then $D$ is said to be {\em{nef over $W$}} 
or {\em{$\pi$-nef over $W$}}. 
\end{defn}
\end{say}

Let us recall the definition of {\em{analytically meagre subsets}}. 
As we saw in Remark \ref{a-rem2.31}, the notion of 
Zariski open subsets does not work well in the category of 
complex analytic spaces. So we frequently have to use 
analytically meagre subsets. 

\begin{defn}[Analytically meagre subsets]\label{a-def2.49} 
A subset $\mathcal S$ of a complex analytic space $X$ 
is said to be {\em{analytically
meagre}} if
\begin{equation*}
\mathcal S\subset \bigcup _{n\in \mathbb N} Y_n, 
\end{equation*} 
where each $Y_n$ is a locally closed analytic 
subset of $X$ of codimension $\geq 1$.
\end{defn}

\begin{defn}[Analytically sufficiently general points and 
fibers]\label{a-def2.50}
Let $X$ be a complex analytic space. We say that 
a property $P$ holds for an analytically sufficiently 
general point $x\in X$ when $P$ holds 
for every point $x$ contained in $X\setminus \mathcal S$ for some 
analytically meagre subset $\mathcal S$ of $X$. 

Let $f\colon X\to Y$ be a morphism of analytic spaces. 
Similarly, we say that a property $P$ holds for an 
{\em{analytically sufficiently general fiber}} of $f\colon X\to Y$ when 
$P$ holds for $f^{-1}(y)$ for 
every $y\in Y\setminus \mathcal S$, 
where $\mathcal S$ is some analytically meagre subset of $Y$. 
\end{defn}

We sometimes use the notion of general $\pi$-ample $\mathbb Q$-divisors. 

\begin{defn}\label{a-def2.51}
Let $\pi\colon X\to Y$ be a 
projective morphism from a normal 
variety $X$ to a Stein space $Y$. 
Let $A$ be a $\pi$-ample $\mathbb Q$-divisor on $X$. 
We say that $A$ is a {\em{general}} $\pi$-ample 
$\mathbb Q$-divisor on $X$ if there exist 
\begin{itemize}
\item[(i)] a large and divisible positive integer $k$ such that 
$kA$ is $\pi$-very ample, 
\item[(ii)] a finite-dimensional linear subspace 
$V$ of $H^0(X, \mathcal O_X(kA))$ 
which generates $\mathcal O_X(kA)$, and 
\item[(iii)] some analytically meagre subset $\mathcal S$ 
of $\Lambda:=
(V\setminus \{0\})/\mathbb C^\times$, 
\end{itemize} 
such that $A=\frac{1}{k} A'$ for some $A'\in \Lambda\setminus 
\mathcal S$. 
Note that $\Lambda\simeq \mathbb P^N$ for some $N\in \mathbb N$. 
\end{defn}

\begin{rem}\label{a-rem2.52}
Let $\pi\colon X\to Y$ be a projective 
morphism from a normal complex variety $X$ to a Stein space 
$Y$. Let $\sigma\colon Z\to X$ be a projective 
bimeromorphic morphism from a smooth variety $Z$ and let 
$\Sigma$ be a simple normal crossing divisor on $Z$. 
Let $\{p_i\}_{i\in \mathbb N}$ be a set of 
points of $X$. 
Let $A$ be a general $\pi$-ample $\mathbb Q$-divisor 
in the sense of Definition \ref{a-def2.51}. 
Then, by Bertini's theorem (see \cite[(II.5) Theorem]{manaresi} 
and \cite[Theorem 3.2]{fujino-bertini}), we may assume that  
$\sigma^{-1}_*A=\sigma^*A$ holds, $\Sigma+A'$ is 
a simple normal crossing divisor on $Z$, where 
$A=\frac{1}{k}A'$ as in Definition \ref{a-def2.51}, 
and $p_i\not\in \Supp A$ for every $i$, and so on. 
\end{rem}

The final result in this section is a very useful lemma. 
We will repeatedly use this lemma in the 
subsequent sections. 

\begin{lem}[{\cite[Lemma 3.2.1]{bchm} and \cite
[Lemma 2.10]{hashizume-hu}}]\label{a-lem2.53}
Let $\pi\colon X\to Y$ be a projective 
morphism of complex varieties such that 
$X$ is normal and that $Y$ is Stein. 
Let $D$ be a globally $\mathbb R$-Cartier 
$\mathbb R$-divisor, that is, a finite 
$\mathbb R$-linear combination of Cartier divisors on $X$. 
Let $F$ be an analytically sufficiently general fiber of $\pi\colon 
X\to Y$. 
Assume that $D':=D|_F\sim _{\mathbb R}B'\geq 0$ holds 
for some $\mathbb R$-divisor $B'$ on $F$. 
Then there exists a globally $\mathbb R$-Cartier 
$\mathbb R$-divisor $B$ on $X$  such that 
$D\sim _{\mathbb R}B\geq 0$. 
\end{lem}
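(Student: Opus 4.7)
The plan is to lift the given $\mathbb R$-linear equivalence from the general fiber $F$ to a global one on $X$, and then to make the resulting divisor effective by correcting its vertical part using meromorphic functions on $Y$. First I would write $D = \sum_{i=1}^{k} r_i D_i$ with $D_i$ Cartier on $X$; by the hypothesis $D|_F \sim_{\mathbb R} B'$ there exist meromorphic functions $g_1, \dots, g_\ell$ on $F$ and $a_1, \dots, a_\ell \in \mathbb R$ with
\begin{equation*}
D|_F - B' = \sum_{j=1}^{\ell} a_j\, \ddiv(g_j).
\end{equation*}

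Next, for an analytically sufficiently general fiber $F$, I would extend each $g_j$ to a meromorphic function $G_j$ on $X$, after possibly shrinking $Y$ to a relatively compact Stein open neighborhood of $\pi(F)$. This extension is obtained via coherence of the direct images $\pi_* \mathcal O_X(nA)$ for a $\pi$-ample Cartier divisor $A$ together with Cartan's Theorems: writing each $g_j$ as the ratio of two sections of some $\mathcal O_F(n_j A|_F)$, the surjectivity of the restriction map $H^0(\pi^{-1}(U), \mathcal O_X(n_j A)) \to H^0(F, \mathcal O_F(n_j A|_F))$ on a sufficiently small Stein $U$, which holds off an analytically meagre subset of $Y$, lets one lift the sections and hence the $g_j$. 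Setting $\tilde D := D - \sum_j a_j\, \ddiv(G_j)$, we obtain a globally $\mathbb R$-Cartier divisor with $\tilde D \sim_{\mathbb R} D$ and $\tilde D|_F = B' \geq 0$.

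Now decompose $\tilde D = \tilde D_h + \tilde D_v$ into horizontal and vertical parts over $Y$. Vertical prime components avoid a general $F$, while horizontal prime components meet $F$ in nonzero effective divisors; the condition $\tilde D|_F \geq 0$ therefore forces $\tilde D_h \geq 0$ on $X$. For the vertical part, the finitely many prime components of $\tilde D_v$ map into finitely many analytic hypersurfaces $H_1, \dots, H_m$ of $Y$, and after further shrinking $Y$ so that the exponential sequence gives principality (using $H^1(Y, \mathcal O_Y) = 0$ on a Stein open subset whose $H^2(-,\mathbb Z)$ kills the relevant Chern class), each $H_l$ can be written as $\ddiv(\varphi_l)$ for a meromorphic $\varphi_l$ on $Y$. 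Adding a suitable $\mathbb R$-linear combination of $\pi^*\ddiv(\varphi_l)$ to $\tilde D_v$ then produces an effective divisor, yielding the desired $B \geq 0$ with $D \sim_{\mathbb R} B$.

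The main obstacle is the lifting step for the $g_j$: in the algebraic analogue \cite[Lemma 3.2.1]{bchm} this is immediate because $K(F)$ arises naturally as a residue of $K(X)$, whereas no such common field is available analytically. One must instead combine the projectivity of $\pi$, the Steinness of $Y$, coherence of relative direct images, and the precise meaning of ``analytically sufficiently general'' (ruling out an analytically meagre set of bad fibers where the cohomology jumps) to produce honest analytic lifts. A secondary, more routine issue is to arrange by shrinking $Y$ that the finitely many analytic hypersurfaces carrying $\tilde D_v$ become principal divisors on $Y$.
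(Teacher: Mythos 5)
Your proposal takes a genuinely different route from the paper's, modelled on BCHM's algebraic Lemma 3.2.1: lift the fiberwise $\mathbb R$-linear equivalence to $X$ by extending meromorphic functions, then decompose into horizontal and vertical parts and correct the vertical part by pulling back principal divisors from $Y$. In the Stein analytic setting this route has a real gap, concentrated precisely in the step you label ``a secondary, more routine issue.''

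On the lifting step, a minor remark first: shrinking $Y$ is neither needed nor allowed. Since $Y$ is Stein, Cartan's Theorem A gives a surjection $H^0(X,\mathcal O_X(nA))=H^0(Y,\pi_*\mathcal O_X(nA))\twoheadrightarrow \pi_*\mathcal O_X(nA)\otimes k(y_0)$, and for $y_0$ outside the countable union of Grauert base-change jump loci (one for each $n$, and one must take the union over \emph{all} $n$, not just the finitely many $n_j$, because the meagre set defining ``analytically sufficiently general'' must be fixed \emph{before} $F$ and hence before the $g_j$ are known), this in turn surjects onto $H^0(F,\mathcal O_F(nA|_F))$. So the $G_j$ exist on all of $X$ without any shrinking.

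The genuine gap is the vertical correction. First, $\ddiv(G_j)$ on the noncompact $X$ may have infinitely many irreducible components (compare Example~\ref{a-ex2.33}), so $\tilde D_v$ need not be supported over finitely many hypersurfaces $H_l\subset Y$, contrary to your claim. Even granting local finiteness, Definition~\ref{a-def2.40} requires $D\sim_{\mathbb R}B$ to be witnessed by a \emph{finite} $\mathbb R$-linear combination of principal divisors, so an infinite sum $\sum_l c_l\,\pi^*\ddiv(\varphi_l)$ with incommensurable $c_l$ is not an allowable equivalence. Second, and more fundamentally, you cannot shrink $Y$: the conclusion is an $\mathbb R$-linear equivalence on the given $X$, and the lemma is used (e.g.\ in the proof of Theorem~\ref{thm-d}) applied to $\pi^{-1}(U)\to U$ for a \emph{prescribed} relatively compact Stein open $U$, where no further shrinking is available. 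Since $\Pic(Y)\cong H^2(Y,\mathbb Z)$ for Stein $Y$ and $H^2(Y,\mathbb Z)$ need not vanish, the $H_l$ need not be principal on the given $Y$, and the vertical correction simply cannot be performed. This is exactly the obstruction the paper flags in Remark~\ref{a-rem2.55} when explaining why BCHM's Lemma 3.2.1 needs quasi-projectivity of the base.

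The paper sidesteps the whole issue by never lifting the $g_j$. It treats the identity $D|_F=\sum_j a_j E_j+\sum_k b_k\ddiv(\phi_k)$ on $F$ as a linear condition on the real coefficients $(r_i,a_j,b_k)$, cutting out a rational affine subspace, and writes $D=\sum_l\alpha_l D^{(l)}$ as a rational convex combination of $\mathbb Q$-Cartier divisors $D^{(l)}$, each a $\mathbb Q$-linear combination of the same Cartier divisors $D_i$, with $\kappa(F,D^{(l)}|_F)\ge 0$. For each such $D^{(l)}$, the carefully constructed meagre set together with the upper semicontinuity theorem force $h^0(X_y,\mathcal O_{X_y}(mk_lD^{(l)}|_{X_y}))>0$ for \emph{all} $y$, hence $\pi_*\mathcal O_X(mk_lD^{(l)})\neq 0$, and Cartan's Theorem A produces a nonzero global section of $\mathcal O_X(mk_lD^{(l)})$, whose divisor $E^{(l)}$ is effective for free. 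Taking $B=\sum_l\alpha_l E^{(l)}$ finishes with no horizontal/vertical decomposition, no vertical correction, and no shrinking of $Y$.
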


We closely follow the proof of \cite[Lemma 2.10]{hashizume-hu}. 
In the proof of Lemma \ref{a-lem2.53}, 
we will freely use the semicontinuity theorem and 
the base change theorem described in 
\cite[Chapter III]{banica}, whose proof 
is much harder than the proof of the corresponding statements 
for algebraic varieties 
(see \cite[Chapter III, Section 12]{hartshorne}). 

\begin{defn}[Iitaka--Kodaira dimensions]\label{a-def2.54}
Let $X$ be a normal projective variety and let $D$ be a $\mathbb Q$-Cartier 
$\mathbb Q$-divisor on $X$. 
Then $\kappa (X, D)$ denotes the {\em{Iitaka--Kodaira dimension}} 
of $D$. 
\end{defn}

\begin{proof}[Proof of Lemma \ref{a-lem2.53}]
We can take a nonempty Zariski open subset $U$ of $Y$ such that 
$\pi\colon X\to Y$ is flat over $U$. 
\setcounter{step}{0}
\begin{step}\label{a-lem2.53-step1}
We fix a representation $D=\sum_{i=1}^{n}r_{i}D_{i}$ of $D$ 
as a finite $\mathbb R$-linear combination of Cartier divisors. 
Since $F$ is an analytically sufficiently general fiber of 
$\pi\colon X\to Y$, 
we may assume that $D_{i}|_F$ are well defined as 
integral Weil divisors for all $i$. 
For any closed point $y\in U$, 
the fiber of $\pi$ over $y$ is denoted by $X_y$. 
For any $\boldsymbol{p}=(p_1,
\ldots, p_n)\in \mathbb Q^{n}$, 
we set $D_{\boldsymbol{p}}=\sum_{i=1}^{n}p_{i}D_{i}$. 
We fix a positive integer 
$k_{\boldsymbol{p}}$ such 
that $k_{\boldsymbol{p}}D_{\boldsymbol{p}}$ is Cartier. 
For every $\boldsymbol{p}\in \mathbb Q^n$ and every 
$m\in \mathbb{Z}_{>0}$, we put 
\begin{equation*}
S_{\boldsymbol{p},m}
=\left\{ z\in U\,\left|\, \dim H^{0}(X_y,\mathcal{O}_{X_y}
(mk_{\boldsymbol{p}}D_{\boldsymbol{p}}|_{X_y}))=0 \right.\right\}. 
\end{equation*} 
Then $S_{\boldsymbol{p},m}=\emptyset$ holds or 
$U\setminus S_{\boldsymbol{p},m}=\emptyset$ is 
analytically meagre by the upper semicontinuity theorem. 
We set 
\begin{equation*} 
J:=\left\{ (\boldsymbol{p},m)\, 
|\, \boldsymbol{p}\in\mathbb{Q}^{n},m\in 
\mathbb{Z}_{>0}, S_{\boldsymbol{p},m}\neq \emptyset \right\},  
\end{equation*} 
and put 
\begin{equation*}
W=\bigcap_{(\boldsymbol{p},m)\in J}S_{\boldsymbol{p},m}. 
\end{equation*}
Then $U\setminus W$ is analytically meagre. 
Hence $Y\setminus W$ is also analytically meagre. 
We may assume that $F=X_{y_0}$ for some $y_{0}\in W$ 
since $F$ is an analytically sufficiently general fiber of 
$\pi\colon X\to Y$. 
Then, for any $\mathbb Q$-Cartier $\mathbb Q$-divisor 
$D_{\boldsymbol{p}'}$ associated to 
$\boldsymbol{p}'=(p'_{1},\ldots,p'_{n})\in \mathbb Q^{n}$, 
an inequality $\kappa(F ,D_{\boldsymbol{p}'}|_F)\geq 0$ holds 
if and only if 
$D_{\boldsymbol{p}'}
\sim_{\mathbb Q}E_{\mathbb{\boldsymbol{p}'}}$ for 
some $E_{\mathbb{\boldsymbol{p}'}}\geq 0$. 
Indeed, $\kappa(F ,D_{\boldsymbol{p}'}|_{F})\geq 0$ if 
and only if $y_{0}\not\in S_{\boldsymbol{p}',m}$ for some $m$. 
By 
the above definitions of $W$ and $J$, the 
condition $y_{0}\not\in S_{\boldsymbol{p}',m}$ 
is equivalent to $S_{\boldsymbol{p}',m}=\emptyset$ 
since $y_{0}\in W$. 
Since $k_{\boldsymbol{p}'}
D_{\boldsymbol{p}'}$ is a Cartier divisor and 
$Y$ is Stein, by the construction 
of $S_{\boldsymbol{p}',m}$, 
it is easy to check that $S_{\boldsymbol{p}',m}=\emptyset$ 
for some $m$ if and only if 
$D_{\boldsymbol{p}'}
\sim_{\mathbb Q}E_{\mathbb{\boldsymbol{p}'}}$ 
for some $E_{\mathbb{\boldsymbol{p}'}}\geq 0$. 
\end{step}
\begin{step}\label{a-lem2.53-step2}
From our assumption that $D'\sim _{\mathbb R} B'\geq 0$, 
there are positive real numbers 
$a_{1},\ldots, a_{s}$, effective integral Weil divisors 
$E_{1},\ldots, E_{s}$ on $F$, real numbers 
$b_{1},\ldots, b_{t}$, and meromorphic functions 
$\phi_{1},\ldots, \phi_{t}$ on $F$ 
such that 
\begin{equation*} 
D|_{F}=
\sum_{i=1}^{n}r_{i}D_{i}|_{F}
=\sum_{j=1}^{s}a_{j}E_{j}+
\sum_{k=1}^{t}b_{k}\cdot \ddiv(\phi_{k})
\end{equation*} 
holds as $\mathbb R$-divisors on $F$. 
We consider the following set 
\begin{equation*}
\Biggl\{\boldsymbol{v}'
=\bigl((r'_{i})_{i},(a'_{j})_{j},(b'_{k})_{k}\bigr)
\in \mathbb{R}^{n}\times (\mathbb{R}_{\geq0})^{s}
\times \mathbb{R}^{t}\Bigg{|}\sum_{i=1}^{n}r'_{i}D_{i}|_{F}
=\sum_{j=1}^{s}a'_{j}E_{j}+\sum_{k=1}^{t}b'_{k}
\cdot \ddiv(\phi_{k})\Biggr\},
\end{equation*}
which contains 
$\boldsymbol{v}:=\bigl((r_{i})_{i},(a_{j})_{j},(b_{k})_{k}\bigr)$. 
Since all $D_{i}|_{F}$ are well defined as 
integral Weil divisors, we can find positive real numbers 
$\alpha_{1},\ldots, \alpha_{l_{0}}$ and 
rational points 
$\boldsymbol{v}_{1},\ldots, \boldsymbol{v}_{l_{0}}$ 
in the above set such that 
$\sum_{l=1}^{l_{0}}\alpha_{l}
=1$ and $\sum_{l=1}^{l_{0}}
\alpha_{l}\boldsymbol{v}_{l}=\boldsymbol{v}$. 
This shows that 
there are $\mathbb Q$-Cartier 
$\mathbb Q$-divisors 
$D^{(1)},\ldots, D^{(l_{0})}$ on $X$ such that $\sum_{l=1}^{l_{0}}
\alpha_{l}D^{(l)}=D$ and $\kappa(F ,D^{(l)}|_{F})
\geq0$ for every $1\leq l\leq l_{0}$. 
We note that each $D^{(l)}$ is a finite 
$\mathbb Q$-linear combination of Cartier divisors 
by construction. 
By the argument in 
Step \ref{a-lem2.53-step1}, for every $1\leq l\leq l_{0}$, 
there exists a $\mathbb{Q}$-divisor $E^{(l)}\geq0$ on $X$ 
with  $D^{(l)}\sim_{\mathbb Q}E^{(l)}$. 
We put $B=\sum_{l=1}^{l_{0}}\alpha_{l}E^{(l)}$. 
Then we have $D\sim_{\mathbb R}B \geq0$. 
\end{step}
We finish the proof. 
\end{proof}

We close this section with an important remark on 
\cite[Lemma 3.2.1]{bchm}. 

\begin{rem}\label{a-rem2.55}
In \cite{bchm}, 
\cite[Lemma 3.2.1]{bchm} plays a crucial role. 
We think that the quasi-projectivity is 
indispensable in the framework of \cite{bchm} since we have 
to assume that $U$ is quasi-projective in \cite[Lemma 3.2.1]{bchm}. 
Let $\pi\colon X\to U$ be a projective morphism of normal complete 
algebraic varieties with connected fibers such that 
$X$ is a smooth projective variety and $\Pic(U)=\{0\}$. 
Let $D$ be a Cartier divisor on $X$ such that 
$-D$ is effective, $D\ne 0$, and $\pi(D)\subsetneq U$. 
Then there exists no effective $\mathbb R$-divisor 
$B$ on $X$ satisfying $D\sim _{\mathbb R, U} B\geq 0$. 
This means that \cite[Lemma 3.2.1]{bchm} does not 
always hold true without assuming the quasi-projectivity of $U$. 
\end{rem}

\section{Singularities of pairs}\label{a-sec3}
In this section, we will define singularities of pairs 
in the complex analytic setting. The definition 
is essentially the same as the one for algebraic varieties.

\begin{defn}[Singularities of pairs]\label{a-def3.1}
Let $X$ be a normal complex variety. The {\em{canonical 
sheaf}} $\omega_X$ of $X$ is the unique reflexive sheaf 
whose restriction to $X_{\mathrm{sm}}$ is isomorphic 
to the sheaf $\Omega^n_{X_{\mathrm{sm}}}$, 
where $X_{\mathrm{sm}}$ is the smooth locus 
of $X$ and $n=\dim X$. 
Let $\Delta$ be an $\mathbb R$-divisor 
on $X$. 
We say that $K_X+\Delta$ is $\mathbb R$-Cartier 
at $x\in X$ if there exist an open neighborhood 
$U_x$ of $x$ and 
a Weil divisor $K_{U_x}$ on $U_x$ with $\mathcal O_{U_x}
(K_{U_x})\simeq \omega_X|_{U_x}$ such that 
$K_{U_x}+\Delta|_{U_x}$ is $\mathbb R$-Cartier at 
$x$. We simply say that $K_X+\Delta$ is $\mathbb R$-Cartier 
when $K_X+\Delta$ is $\mathbb R$-Cartier at any point 
$x\in X$. Unfortunately, we can not define 
$K_X$ globally with $\mathcal O_X(K_X)\simeq 
\omega_X$. 
It only exists locally on $X$. However, we use the 
symbol $K_X$ as a formal divisor class 
with an isomorphism 
$\mathcal O_X(K_X)\simeq \omega_X$ and call it 
the {\em{canonical divisor}} of $X$ if there is no danger of 
confusion. 

Let $f\colon Y\to X$ be a proper bimeromorphic morphism 
between normal complex varieties. 
Suppose that $K_X+\Delta$ is $\mathbb R$-Cartier 
in the above sense. 
We take a small Stein open subset $U$ of $X$ where 
$K_U+\Delta|_U$ is a well-defined 
$\mathbb R$-Cartier $\mathbb R$-divisor on $U$. 
In this situation, we can define $K_{f^{-1}(U)}$ and $K_U$ such that 
$f_*K_{f^{-1}(U)}=K_U$. 
Then we can write 
\begin{equation*}
K_{f^{-1}(U)}=f^*(K_U+ 
\Delta|_U)+E_U
\end{equation*}
as usual. Note that $E_U$ is a well-defined 
$\mathbb R$-divisor on $f^{-1}(U)$ such that 
$f_*E_U=\Delta|_U$. 
Then we have the following formula 
\begin{equation*}
K_Y=f^*(K_X+\Delta)+
\sum _Ea(E, X, \Delta)E
\end{equation*} 
as in the algebraic case. 
We note that $\sum _Ea(E, X, \Delta)E$ is a globally well-defined 
$\mathbb R$-divisor on $Y$ such that 
$\left(\sum_E a(E, X, \Delta)E\right)|_{f^{-1}(U)}=E_U$ 
although $K_X$ and $K_Y$ are well defined 
only locally. 

If $\Delta$ is a boundary $\mathbb R$-divisor and 
$a(E, X, \Delta)\geq -1$ holds for any 
$f\colon Y\to X$ and every $f$-exceptional divisor $E$, 
then $(X, \Delta)$ is called a {\em{log canonical}} pair. 
If $(X, \Delta)$ is log canonical and 
$a(E, X, \Delta)>-1$ for any $f\colon Y\to X$ and 
every $f$-exceptional divisor $E$, 
then $(X, \Delta)$ is called a {\em{purely log terminal}} pair. 
If $(X, \Delta)$ is purely log terminal and $\lfloor 
\Delta\rfloor =0$, then $(X, \Delta)$ is called a 
{\em{kawamata log terminal}} pair. 
When $\Delta=0$ and $a(E, X, 0)\geq 0$ (resp.~$>0$) 
for 
any $f\colon Y\to X$ and 
every $f$-exceptional divisor $E$, 
we simply say that $X$ has only {\em{canonical 
singularities}} (resp.~{\em{terminal singularities}}). 

Let $X$ be a normal variety and let $\Delta$ be an 
effective $\mathbb R$-divisor on $X$ such that 
$K_X+\Delta$ is $\mathbb R$-Cartier. 
The image of $E$ with $a(E, X, \Delta)\leq -1$ for some $f\colon Y\to X$ 
is called 
a {\em{non-kawamata log terminal center}} of $(X, \Delta)$. 
The image of $E$ with $a(E, X, \Delta)=-1$ for some 
$f\colon Y\to X$ such that 
$(X, \Delta)$ is log canonical around general points of $f(E)$ is 
called a {\em{log canonical center}} of $(X, \Delta)$. 
When $(X, \Delta)$ is log canonical, a closed subset 
of $X$ is a log canonical 
center of $(X, \Delta)$ if and only if it is a non-kawamata log 
terminal center of $(X, \Delta)$ by definition. 
In the above setting, $(X, \Delta)$ is kawamata log terminal 
if and only if there are no non-kawamata log terminal 
centers of $(X, \Delta)$. 
\end{defn}

\begin{rem}\label{a-rem3.2}
If we only assume that $\Delta$ is a subboundary 
$\mathbb R$-divisor on $X$ in the above 
definition of 
log canonical pairs and kawamata log terminal pairs, 
then $(X, \Delta)$ is said to be a {\em{sub log canonical 
pair}} and {\em{sub kawamata log terminal pair}}, 
respectively. 
We will use sub log canonical pairs and sub kawamata 
log terminal pairs in Section \ref{a-sec21}. 
\end{rem}

\begin{rem}\label{a-rem3.3}
Let $X$ be a normal algebraic variety and let $\Delta$ be 
an $\mathbb R$-divisor on $X$ such that 
$K_X+\Delta$ is $\mathbb R$-Cartier. 
Let $X^{\an}$ be the complex analytic space 
naturally associated to $X$ and let $\Delta^{\an}$ 
be the $\mathbb R$-divisor on $X$ associated to $\Delta$. 
Then $(X^{\an}, \Delta^{\an})$ 
is terminal, canonical, kawamata log terminal, 
purely log terminal, and log canonical in 
the sense of Definition \ref{a-def3.1} if and only if 
$(X, \Delta)$ is terminal, canonical, kawamata log terminal, 
purely log terminal, and log canonical in 
the usual sense, respectively. 
For the details, see, for example, \cite[Proposition 4-4-4]{matsuki}. 
\end{rem}

In this paper, we need the following local definition of {\em{log canonical 
singularities}} and {\em{kawamata log terminal singularities}}. 

\begin{defn}\label{a-def3.4}
Let $X$ be a normal complex variety and let $\Delta$ be an 
effective $\mathbb R$-divisor 
on $X$. 
We say that $(X, \Delta)$ is {\em{log canonical}} 
(resp.~{\em{kawamata log terminal}}) {\em{at $x\in X$}} 
if there exits an open neighborhood $U_x$ of $x$ such that 
$(U_x, \Delta|_{U_x})$ is a log canonical 
pair (resp.~kawamata log terminal pair). 
We note that $(X, \Delta)$ is log 
canonical (resp.~kawamata log terminal) in the 
sense of Definition \ref{a-def3.1} if and only if 
$(X, \Delta)$ is log canonical 
(resp.~kawamata log terminal) at any point $x$ of $X$. 

Let $K$ be a compact subset of $X$. Then we say 
that $(X, \Delta)$ is {\em{log canonical}} 
(resp.~{\em{kawamata log terminal}}) {\em{at $K$}} if 
$(X, \Delta)$ is log canonical 
(resp.~kawamata log terminal) at any point $x$ of $K$. We note that 
$(X, \Delta)$ is log canonical (resp.~kawamata log 
terminal) at $K$ if and only if there exits an open neighborhood $U$ 
of $K$ such that $(U, \Delta|_U)$ is log canonical 
(resp.~kawamata log terminal). 
\end{defn}

The following lemma is very fundamental. 

\begin{lem}[{\cite[Lemma 3.7.2]{bchm}}]\label{a-lem3.5}
Let $X$ be a normal complex variety and let $V$ be 
a finite-dimensional affine subspace of 
$\WDiv_{\mathbb R} (X)$, which is defined over the 
rationals. 
Let $K$ be a compact subset of $X$. 
Then 
\begin{equation*}
\mathcal L(V; K):=\{\Delta\in V\, |\, 
\text{$K_X+\Delta$ is log 
canonical at $K$}\}
\end{equation*} 
is a rational polytope. 
Moreover, there 
exists an open neighborhood $U$ of $K$ such that 
$(U, \Delta|_U)$ is log canonical 
for every $\Delta\in \mathcal L(V; K)$. 
\end{lem}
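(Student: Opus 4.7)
The plan is to reduce log canonicity at $K$ to finitely many rational linear inequalities on $V$ via a common log resolution defined over an open neighborhood of $K$, and then to exploit properness of this resolution to produce a uniform open neighborhood on which log canonicity holds globally.

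First, since $V$ is finite-dimensional, every element of $V$ is supported on a fixed finite set of prime divisors $D_1,\ldots,D_n$ on $X$. Arguing as in Lemma~\ref{a-lem2.37}, the subset
\[
V' := \{\Delta\in V \,|\, K_X+\Delta \text{ is } \mathbb{R}\text{-Cartier on some open neighborhood of } K\}
\]
is a rational affine subspace of $V$: at each point the $\mathbb{R}$-Cartier condition cuts out an affine subspace defined over $\mathbb{Q}$, and by compactness of $K$ only finitely many such local conditions are effectively imposed. Since $\mathcal L(V;K)\subset V'$, after replacing $V$ by $V'$ and shrinking $X$ around $K$, I may assume $K_X+\Delta$ is globally $\mathbb{R}$-Cartier on $X$ for every $\Delta\in V$.

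Next, by Hironaka's theorem in the analytic category, I would take a projective bimeromorphic morphism $f\colon Y\to U$ from a complex manifold $Y$, where $U$ is an open neighborhood of $K$ in $X$, such that $\Exc(f)\cup f^{-1}_*\Supp(D_1+\cdots+D_n)$ is a simple normal crossing divisor on $Y$. For each $\Delta\in V$, writing
\[
K_Y+\Delta_Y = f^*(K_U+\Delta|_U),
\]
the coefficients of $\Delta_Y$ along the prime divisors of $Y$ are $\mathbb{Q}$-affine functions of $\Delta\in V$. Because $f$ is proper and $K$ is compact, $f^{-1}(K)$ is compact, so only finitely many prime divisors $E_1,\ldots,E_m$ of $Y$ meet $f^{-1}(K)$. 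By the definition of log canonicity at $K$, $(X,\Delta)$ is log canonical at $K$ precisely when the coefficient of each $E_j$ in $\Delta_Y$ is at most $1$. These are finitely many rational affine inequalities on $V$, exhibiting $\mathcal L(V;K)$ as a rational polytope.

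For the second assertion, the remaining prime divisors $\{E_j\}_{j>m}$ of $Y$ form a locally finite family disjoint from the compact set $f^{-1}(K)$; hence $\bigcup_{j>m}E_j$ is closed in $Y$, and since $f$ is proper, $T:=f\bigl(\bigcup_{j>m}E_j\bigr)$ is closed in $U$ and disjoint from $K$. Setting $U^*:=U\setminus T$, the only prime divisors of $Y$ that meet $f^{-1}(U^*)$ lie among $E_1,\ldots,E_m$, so for every $\Delta\in\mathcal L(V;K)$ the pair $(U^*,\Delta|_{U^*})$ is log canonical. The most delicate point will be the preliminary reduction to $V'$: verifying that the locus where $K_X+\Delta$ is $\mathbb{R}$-Cartier on a neighborhood of $K$ is a rational affine subspace requires combining the local $\mathbb{R}$-Cartier analysis with the compactness of $K$, while carefully preserving the $\mathbb{Q}$-structure of $V$.
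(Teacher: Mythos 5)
Your argument for the first assertion is essentially the same as the paper's: reduce to the case where $K_X+\Delta$ is $\mathbb{R}$-Cartier (noting the $\mathbb{R}$-Cartier locus is a rational affine subspace), take a log resolution $f\colon Y\to U$ over a neighborhood of $K$, and observe that only finitely many components of the SNC boundary meet the compact set $f^{-1}(K)$, so the log canonicity constraints become finitely many rational affine inequalities. However, your characterization ``$(X,\Delta)$ is log canonical at $K$ precisely when the coefficient of each $E_j$ in $\Delta_Y$ is at most $1$'' is incomplete: by Definition \ref{a-def3.1} log canonicity also requires $\Delta$ to be a \emph{boundary} divisor, i.e.\ effective, so you must separately impose the finitely many constraints $a_i\geq 0$ on the components of $\Delta$ meeting $K$ (the condition $\Delta_Y\leq 1$ only gives $a_i\leq 1$ on strict transforms). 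This is a small gap that doesn't change the conclusion — it simply adds finitely more rational inequalities — but as written the claim is false.

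For the second assertion you take a genuinely different route from the paper. The paper observes that $\mathcal L(V;K)$ is a polytope, picks its finitely many vertices $\Delta_1,\ldots,\Delta_k$, uses Definition \ref{a-def3.4} to get a neighborhood $U_i$ of $K$ on which $(X,\Delta_i)$ is log canonical, sets $U=\bigcap_i U_i$, and then invokes convexity of the log canonicity condition in $\Delta$. You instead show directly that the locally finite family of boundary divisors not meeting $f^{-1}(K)$ has closed union, push it forward by the proper map $f$ to get a closed set $T\subset U$ disjoint from $K$, and take $U^*=U\setminus T$; on $U^*$ only the finitely many ``active'' divisors $E_1,\ldots,E_m$ matter. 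Both arguments are valid; yours has the merit of being self-contained and making the role of properness explicit, while the paper's is shorter by outsourcing the work to convexity. Your argument also automatically avoids the subtlety of components $D_i$ with $D_i\cap K=\emptyset$ (whose coefficients are unconstrained), because such $D_i$ have strict transform contained in $\bigcup_{j>m}E_j$ and hence do not meet $U^*$ — a point worth spelling out when writing the proof in full.
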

\begin{proof}
We note that the set of divisors $\Delta$ such 
that $K_X+\Delta$ is $\mathbb R$-Cartier 
at $K$ forms an affine subspace $V'$ of $V$. 
Since $V$ is defined over the rationals, 
we can easily see that $V'$ is also defined 
over the rationals (see the proof of Lemma \ref{a-lem2.37}). 
Hence, by replacing $V$ with $V'$, 
we may assume that $K_X+\Delta$ is $\mathbb R$-Cartier 
at $K$ for every $\Delta\in V$. 
Since $V$ is finite-dimensional, 
there is an open neighborhood $U'$ of $K$ such that 
$K_X+\Delta$ is $\mathbb R$-Cartier on $U'$ for every 
$\Delta\in V$. 
By replacing $X$ with $U'$, we may assume 
that $K_X+\Delta$ is $\mathbb R$-Cartier for 
every $\Delta\in V$. 
Let $\Theta$ be the union of the support of any element of $V$. 
By shrinking $X$ around $K$, we can take a 
projective birational morphism 
$f\colon Y\to X$ from a smooth 
complex variety $Y$ such that 
$\Exc(f)$ and $\Exc(f)\cup \Supp f^{-1}_*\Theta$ are simple 
normal crossing divisors on $Y$. 
Thus, we can easily check that 
$\mathcal L(V; K)$ is a rational polytope. 
Let $\Delta_1, \ldots, \Delta_k$ be the vertices of $\mathcal L(V; K)$. 
Then $(X, \Delta_i)$ is log canonical on some open 
neighborhood $U_i$ of $K$ for every $i$. 
We put $U:=\bigcap_{i=1}^k U_i$. 
Then $(X, \Delta)$ is log canonical on $U$ for every $\Delta
\in \mathcal L(V; K)$. 
Hence $U$ is a desired open neighborhood of $K$. 
\end{proof}

We note the following elementary property. 
We explicitly state it for the sake of completeness. 

\begin{lem}\label{a-lem3.6}
Let $(X, \Delta)$ be a log canonical pair and let $C$ be 
an effective $\mathbb R$-Cartier $\mathbb R$-divisor 
on $X$ such that $(X, \Delta+C)$ is log canonical. 
Let $\varepsilon$ be any positive real number such that 
$0<\varepsilon \leq 1$. 
Then $V$ is a log canonical center of $(X, \Delta)$ if and only 
if $V$ is a log canonical center of 
$(X, \Delta+(1-\varepsilon)C)$.  
\end{lem}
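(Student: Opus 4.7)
The plan is to reduce the statement to a single discrepancy identity and then trace through the definitions. Since $\Delta+(1-\varepsilon)C = \varepsilon\Delta+(1-\varepsilon)(\Delta+C)$ is a convex combination and both $(X,\Delta)$ and $(X,\Delta+C)$ are log canonical, $(X,\Delta+(1-\varepsilon)C)$ is automatically log canonical, so the statement makes sense. After shrinking $X$ we may take a common log resolution $f\colon Y\to X$ for $(X,\Delta)$, $(X,\Delta+C)$, and $(X,\Delta+(1-\varepsilon)C)$. Writing $\mathrm{ord}_E(C):=\mathrm{mult}_E(f^*C)\geq 0$ for each prime divisor $E$ on $Y$ (and similarly for any prime divisor over $X$ after a further blow-up), we have the elementary identity
\begin{equation*}
a(E,X,\Delta+(1-\varepsilon)C)=a(E,X,\Delta)-(1-\varepsilon)\,\mathrm{ord}_E(C).
\end{equation*}

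The key step is to show that the set of log canonical places coincides for the two pairs, which I will do in both directions using only this identity together with the log canonicity of $(X,\Delta)$ and $(X,\Delta+C)$. Suppose first that $a(E,X,\Delta)=-1$. Log canonicity of $(X,\Delta+C)$ gives $a(E,X,\Delta)-\mathrm{ord}_E(C)\geq -1$, so $\mathrm{ord}_E(C)\leq 0$; since $C\geq 0$, this forces $\mathrm{ord}_E(C)=0$, and the identity yields $a(E,X,\Delta+(1-\varepsilon)C)=-1$. Conversely, assume $a(E,X,\Delta+(1-\varepsilon)C)=-1$. Log canonicity of $(X,\Delta)$ and the identity give $(1-\varepsilon)\mathrm{ord}_E(C)=a(E,X,\Delta)+1\geq 0$, so $\mathrm{ord}_E(C)\geq 0$ (this uses $\varepsilon\leq 1$; note $\varepsilon=1$ trivializes the lemma). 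Substituting into the log canonicity inequality $a(E,X,\Delta)-\mathrm{ord}_E(C)\geq -1$ coming from $(X,\Delta+C)$, one obtains $-\varepsilon\,\mathrm{ord}_E(C)\geq 0$, hence $\mathrm{ord}_E(C)=0$ (using $\varepsilon>0$) and consequently $a(E,X,\Delta)=-1$.

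Finally, I conclude by recalling that, because $(X,\Delta)$ and $(X,\Delta+(1-\varepsilon)C)$ are both log canonical everywhere, the log canonical centers of each pair are exactly the images of its log canonical places, i.e.\ the images of those prime divisors $E$ over $X$ with discrepancy equal to $-1$. Since the two sets of log canonical places coincide by the computation above, so do the two collections of log canonical centers, which is the claimed equivalence. The argument has no real obstacle; the only mild subtlety is keeping track of the sign, and making sure to use both $\varepsilon>0$ and $\varepsilon\leq 1$ at the appropriate places in the converse direction, which is why the statement is false at the endpoints $\varepsilon=0$ (where new log canonical centers coming from $\Supp C$ may appear) while being trivial at $\varepsilon=1$.
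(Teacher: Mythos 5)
Your argument is correct; the paper's own proof is simply ``This is obvious by definition,'' and your computation spells out exactly what is being taken for granted: the discrepancy identity $a(E,X,\Delta+tC)=a(E,X,\Delta)-t\cdot\mult_E f^*C$, the inequality $\mult_E f^*C\geq 0$ coming from effectivity of the $\mathbb R$-Cartier divisor $C$, and the log canonicity of both $(X,\Delta)$ and $(X,\Delta+C)$. One small misattribution: in the converse direction, $\mult_E f^*C\geq 0$ is already a consequence of $C\geq 0$ and does not require $\varepsilon\leq 1$; the hypothesis $\varepsilon\leq 1$ is needed only to ensure that $\Delta+(1-\varepsilon)C$ is a boundary, so that $(X,\Delta+(1-\varepsilon)C)$ is a log canonical pair and the notion of log canonical center is even defined for it.
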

\begin{proof}
This is obvious by definition. 
\end{proof}

The definition of {\em{divisorial log terminal pairs}} is 
very subtle. We adopt the following definition, 
which is suitable for our purposes. 

\begin{defn}[Divisorial log terminal pairs]\label{a-def3.7} 
Let $X$ be a normal complex variety and let $\Delta$ be 
a boundary $\mathbb R$-divisor 
on $X$ such that 
$K_X+\Delta$ is $\mathbb R$-Cartier. 
If there exists a proper bimeromorphic morphism 
$f\colon Y\to X$ from a smooth complex variety $Y$ such that 
$\Exc(f)$ and $\Exc(f)\cup \Supp f^{-1}_*\Delta$ are 
simple normal crossing divisors on $Y$ and that the 
discrepancy coefficient $a(E, X, \Delta)>-1$ holds 
for every $f$-exceptional divisor $E$, 
then $(X, \Delta)$ is called a 
{\em{divisorial log terminal pair}}. 
\end{defn}

\begin{rem}\label{a-rem3.8}
By definition, we can easily check that 
a divisorial log terminal pair is 
a log canonical pair. 
Let $(X, \Delta)$ be a kawamata log terminal pair and 
let $U$ be any relatively compact open subset of $X$. 
Then we can easily check that 
$(U, \Delta|_U)$ is a divisorial log terminal pair. 
\end{rem}

The morphism $f$ in Definition \ref{a-def3.7} 
can be taken as a composite of blow-ups over 
any relatively compact open subset.  

\begin{lem}\label{a-lem3.9}
Let $(X, \Delta)$ be a divisorial log terminal 
pair. 
Then there exists a morphism $\sigma\colon Z\to X$ such that, 
for any relatively compact open subset $X'$ of $X$,  
\begin{equation*}
g:=\sigma|_{\sigma^{-1}(X')}\colon 
Z':=\sigma^{-1}(X')\to X' 
\end{equation*} 
is a composite of a finite sequence of blow-ups, 
$\Exc(g)$ and $\Exc(g)\cup \Supp g^{-1}_*\Delta$ are simple 
normal crossing divisors on $Z'$, 
$a(E, X', \Delta|_{X'})>-1$ holds for every $g$-exceptional 
divisor $E$. 
In particular, we can take an effective divisor $F$ on $Z'$ such 
that $\Exc(g)=F$ and that $-F$ is $g$-very ample. 
\end{lem}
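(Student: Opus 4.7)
The plan is to imitate Szab\'o's theorem on log resolutions of divisorial log terminal pairs, adapted to the complex analytic setting. First, let $W \subset X$ denote the closed analytic subset where $(X,\Delta)$ fails to be log smooth (i.e.\ $X$ is not smooth, or $\lfloor\Delta\rfloor$ together with $\Supp\Delta$ is not simple normal crossing). By the definition of divisorial log terminal, such a $W$ exists as a proper closed analytic subset. The morphism $\sigma\colon Z\to X$ will be constructed as an analytic principalization / embedded resolution of the ideal sheaf of $W$ together with the strict transform of $\Supp\Delta$.

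Concretely, I would invoke Hironaka-type analytic resolution (in the form of Bierstone--Milman / Aroca--Hironaka--Vicente) applied to the coherent ideal $\mathcal I_W\cdot\mathcal O_X$ relative to the pair $(X,\Supp\Delta)$. This produces a proper bimeromorphic morphism $\sigma\colon Z\to X$ which is an isomorphism over $X\setminus W$, has $\sigma^{-1}(W)$ a simple normal crossing divisor meeting the strict transform of $\Supp\Delta$ transversally, and (by functoriality of the algorithm) is, on any relatively compact open $X'\Subset X$, a finite composition of blow-ups along smooth centers contained in the successive preimages of $W$. This is the content of the analytic resolution theorems; it gives the data $g\colon Z'\to X'$ with $\Exc(g)$ and $\Exc(g)\cup\Supp g^{-1}_*\Delta$ simple normal crossing.

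The next step, which I expect to be the main technical point, is to verify $a(E,X',\Delta|_{X'})>-1$ for every $g$-exceptional divisor $E$. By construction, every such $E$ has center contained in $W$. Now apply a Szab\'o-type comparison: by the dlt hypothesis there exists, on $X'$, a log resolution $f\colon Y\to X'$ with all exceptional discrepancies strictly greater than $-1$. Take a common log resolution $h\colon T\to X'$ dominating both $Y$ and $Z'$, and pull back the log canonical divisors on both sides. Applying the negativity lemma (see Lemma~\ref{a-lem2.53}-style arguments, or the standard form valid in the analytic category since the statement is local over $X'$) to the $f$-exceptional discrepancy divisor, one shows that the pullback divisors agree on the common model; thus the discrepancy of $E$ on $Z'$ equals its discrepancy computed from $Y$, which is $>-1$ because $(X',\Delta|_{X'})$ is dlt at the generic point of the image of $E$ and the standard Szab\'o argument applies (any divisor lying over a locus where the pair is dlt has discrepancy strictly greater than $-1$). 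One must check the Szab\'o dichotomy in our setting, i.e.\ that the non-snc locus is the correct locus; this is the delicate part because dlt is defined via the existence of one log resolution, not all.

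Finally, for the ``in particular'' statement, I would proceed by induction on the number of blow-ups in the factorization of $g$. Write $g$ as a composite $Z'=Z'_m\to Z'_{m-1}\to\cdots\to Z'_0=X'$ of blow-ups along smooth centers, with exceptional divisor $E_i$ on $Z'_i$. Each $-E_i$ is relatively very ample for $Z'_i\to Z'_{i-1}$. Pulling back and taking a suitable $\mathbb Z_{>0}$-linear combination $F=\sum a_i\widetilde E_i$ with $a_i$ increasing rapidly enough (e.g.\ $a_i=N^{m-i}$ for $N\gg 0$), one obtains an effective divisor $F$ on $Z'$ with $\Supp F=\Exc(g)$ and $-F$ relatively very ample over $X'$; this is the standard iterated blow-up ampleness lemma and goes through identically in the analytic category. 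This completes the proof.
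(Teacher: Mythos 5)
Your overall plan---invoke Bierstone--Milman functorial analytic resolution and then verify the discrepancy condition---matches the paper, whose proof is essentially a pointer to \cite[Theorems 13.3, 12.4]{bierstone-milman}, \cite[3.44]{kollar-resolution}, and \cite[Theorem 10.45, Proposition 10.49]{kollar-singularities}. Your treatment of the ``in particular'' clause via an iterated-blow-up positivity argument is also fine. However, the central technical step, verifying $a(E,X',\Delta|_{X'})>-1$ for $g$-exceptional $E$, is not correctly argued.

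Two specific problems. First, the ``common resolution $+$ negativity lemma'' step does nothing: once you fix a common log resolution $h\colon T\to X'$, the pullback $h^*(K_{X'}+\Delta|_{X'})$ is an unambiguous $\mathbb{R}$-divisor, and the discrepancy of $E$ is defined independently of which intermediate model you pass through, so there is nothing for the negativity lemma to establish here. (Your parenthetical pointer to ``Lemma~\ref{a-lem2.53}-style arguments'' is also off-target; that lemma is about descending nonvanishing from a general fiber, not a negativity statement.) Second, the assertion that ``any divisor lying over a locus where the pair is dlt has discrepancy strictly greater than $-1$'' is false: blowing up the intersection of two coefficient-one components of $\Delta$ on a log smooth dlt pair produces a divisor with discrepancy exactly $-1$. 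The correct statement is the Szab\'o/\cite[Proposition~2.40]{kollar-mori} dichotomy: for a dlt pair, $a(E,X,\Delta)>-1$ for every $E$ whose \emph{center lies in the non-log-smooth locus}. What makes the lemma work is that the BM resolution is, by functoriality, an isomorphism over the locus where $(X,\Supp\Delta)$ is already log smooth, so every $g$-exceptional divisor has center inside $f(\Exc(f))$, where $f$ is the resolution furnished by Definition~\ref{a-def3.7}; then $a(E,X',\Delta|_{X'})>-1$ follows from \cite[Proposition~2.40]{kollar-mori} exactly as in the paper's proof of Lemma~\ref{a-lem3.10}. Note also that since $f^{-1}(f(\Exc(f)))=\Exc(f)$ for a proper bimeromorphic morphism of normal varieties, one does not need the non-log-smooth locus $W$ itself to be a closed analytic subset (which is not obvious in the analytic category); it suffices that $W\subset f(\Exc(f))$ and that BM avoids the log-smooth locus, so your opening reduction should be phrased in terms of $f(\Exc(f))$ rather than $W$.
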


\begin{proof}
It is sufficient to apply the resolution of singularities 
explained in \cite[Sections 12 and 13]{bierstone-milman}. 
For the details, see 
\cite[Theorems 13.3 and 12.4]{bierstone-milman}. 
See also \cite[3.44 (Analytic spaces)]{kollar-resolution}, 
\cite{wlo}, and \cite[Theorem 10.45 and Proposition 
10.49]{kollar-singularities}. 
\end{proof}

Our definition of divisorial log terminal pairs is compatible with 
the usual definition of divisorial log terminal pairs for 
algebraic varieties. 

\begin{lem}\label{a-lem3.10}
Let $X$ be a normal algebraic variety and let $\Delta$ be 
an $\mathbb R$-divisor 
on $X$ such that 
$K_X+\Delta$ is $\mathbb R$-Cartier. 
Let $X^{\an}$ be the complex analytic space naturally 
associated to $X$ and let $\Delta^{\an}$ 
be the $\mathbb R$-divisor on $X$ associated to $\Delta$. 
Then $(X, \Delta)$ is divisorial log terminal in the usual sense 
if and only if $(X^{\an}, \Delta^{\an})$ is 
divisorial log terminal in the sense of Definition \ref{a-def3.7}. 
\end{lem}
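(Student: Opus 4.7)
The plan is to prove the two implications separately using log resolutions, together with the facts that discrepancies are intrinsic invariants of divisorial valuations and that the conditions in Definition \ref{a-def3.7} are local analytic properties preserved under analytification.

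For the implication from algebraic to analytic, I would take an algebraic log resolution $f \colon Y \to X$ witnessing the algebraic dlt property and analytify it. Then $f^{\an} \colon Y^{\an} \to X^{\an}$ is proper bimeromorphic, $Y^{\an}$ is a smooth complex variety, $\Exc(f^{\an})$ and $\Exc(f^{\an}) \cup \Supp (f^{\an})^{-1}_{*}\Delta^{\an}$ are simple normal crossing since snc-ness is a local analytic property, and the discrepancies of all prime divisors coincide with their algebraic counterparts because the divisor class identity $K_Y - f^{*}(K_X+\Delta)$ analytifies to $K_{Y^{\an}} - (f^{\an})^{*}(K_{X^{\an}}+\Delta^{\an})$. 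Hence $f^{\an}$ witnesses analytic dlt-ness.

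For the converse, suppose $(X^{\an}, \Delta^{\an})$ is analytically dlt via an analytic log resolution $g \colon Z \to X^{\an}$. By Hironaka in the algebraic category I would pick an algebraic log resolution $f \colon Y \to X$ and claim it witnesses algebraic dlt-ness. Given an $f$-exceptional prime divisor $E$, I would use analytic Hironaka (Lemma \ref{a-lem3.9}) to construct a smooth complex variety $W$ fitting into proper bimeromorphic morphisms $\pi \colon W \to Y^{\an}$ and $h \colon W \to Z$ that resolve $Y^{\an} \dashrightarrow Z$, arranged so that $g \circ h$ is itself an analytic log resolution of $(X^{\an}, \Delta^{\an})$. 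The strict transform $E_W$ of $E^{\an}$ on $W$ is then a prime divisor whose image on $X^{\an}$ has codimension at least two, so $E_W$ is exceptional over $X^{\an}$ with respect to $g \circ h$, and $a(E, X, \Delta) = a(E_W, X^{\an}, \Delta^{\an})$ since $E$ and $E_W$ define the same divisorial valuation.

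The crux of the argument is the analytic analogue of Szab\'o's independence theorem: if one analytic log resolution witnesses the dlt property then every analytic log resolution does. The standard proof applies the negativity lemma on a common resolution of the two log resolutions in question, here to $g$ and $g \circ h$. Since both the negativity lemma for proper bimeromorphic morphisms between smooth complex varieties and Hironaka's theorem hold in our analytic setting, the classical argument transfers essentially verbatim. The main obstacle I anticipate is carrying out this negativity argument carefully in the complex analytic category over possibly non-algebraic bases, and arranging $W$ so that $E_W$ actually appears as an exceptional divisor of some analytic log resolution dominating $Z$; once this is done one obtains $a(E_W, X^{\an}, \Delta^{\an}) > -1$, hence $a(E, X, \Delta) > -1$, completing the proof that $f$ witnesses algebraic dlt.
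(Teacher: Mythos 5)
Your proposal and the paper's proof rest on the same two pillars — the equality of algebraic and analytic discrepancies for a given divisorial valuation, and the analytic version of the independence-of-log-resolution characterization of divisorial log terminal pairs (i.e.\ the content of Koll\'ar--Mori, Proposition 2.40, equivalently Szab\'o's theorem) — so the underlying argument is the same, but the paper packages it more directly. For the converse direction the paper does not fix an algebraic log resolution and then dominate it: it simply sets $Z':=f(\Exc(f))$ for the given analytic log resolution $f\colon Y\to X^{\an}$, notes that $X^{\an}\setminus Z'$ is smooth with snc boundary, invokes the proof of Koll\'ar--Mori Proposition 2.40 to conclude that every divisorial valuation with center inside $Z'$ has discrepancy $>-1$, and then observes that the smallest algebraic bad locus $Z$ of $(X,\Delta)$ satisfies $Z^{\an}\subset Z'$, after which algebraic dlt-ness follows from Matsuki's Proposition 4-4-4 comparing the algebraic and analytic discrepancies. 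This avoids ever constructing a common resolution $W$.

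One genuine inaccuracy in your write-up: the statement ``if one analytic log resolution witnesses the dlt property then every analytic log resolution does'' is false whenever $\lfloor\Delta\rfloor\neq 0$. Blowing up a stratum of the snc boundary along which all components have coefficient $1$ produces an exceptional divisor with discrepancy exactly $-1$, so not every further blow-up of a good log resolution is again a witnessing log resolution. What you actually need, and what Koll\'ar--Mori Proposition 2.40 gives, is that the discrepancy is $>-1$ for those divisors whose center on $X^{\an}$ lies inside $f(\Exc(f))$. Since you also choose the algebraic log resolution $f\colon Y\to X$ to be an isomorphism over the smooth snc locus, the strict transform $E_W$ does have its center in that bad locus, so the corrected statement suffices for your argument; but as phrased the appeal to ``analytic Szab\'o'' overclaims, and tightening it essentially reproduces the paper's bad-locus formulation.
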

\begin{proof}[Sketch of Proof of Lemma \ref{a-lem3.10}]
If $(X, \Delta)$ is divisorial log terminal in the usual sense, 
then it is obvious that 
$(X^{\an}, \Delta^{\an})$ is 
divisorial log terminal in the sense of Definition \ref{a-def3.7}. 
From now on, we assume that 
$(X^{\an}, \Delta^{\an})$ is 
divisorial log terminal in the sense of Definition \ref{a-def3.7}. 
Let $f\colon Y\to X^{\an}$ be a projective morphism 
from a smooth complex variety as in Definition \ref{a-def3.7}. 
We put $Z':=f(\Exc(f))$, which is a closed analytic subset of $X^{\an}$. 
Then $X^{\an}\setminus Z'$ is smooth and the support of $\Delta^{\an} 
|_{X^{\an}\setminus Z'}$ is a simple normal crossing divisor 
on $X^{\an} \setminus Z'$. 
We can check that 
if $g\colon V\to X$ is a projective bimeromorphic 
morphism from a smooth complex variety $V$ and $E$ is a prime divisor 
on $V$ such that $g(E)\subset Z'$ then $a(E, X, \Delta)>-1$ holds 
by the proof of \cite[Proposition 2.40]{kollar-mori}. 
Let $Z$ be the smallest closed algebraic subset of $X$ such that 
$X\setminus Z$ is smooth and the support of $\Delta|_{X\setminus Z}$ 
is a simple normal crossing divisor on $X\setminus Z$. Then 
$Z\subset Z'$ holds by definition. 
Hence, as in the proof of \cite[Proposition 4-4-4]{matsuki}, 
we see that $(X, \Delta)$ is divisorial log terminal in the usual 
sense. 
\end{proof}

Of course, Definition \ref{a-def3.7} is not analytically local. 

\begin{ex}\label{a-ex3.11}
Let $X$ be a smooth algebraic surface and let $C$ be an irreducible 
curve on $X$ with only one singular point $P$. 
Assume that $P$ is a node. 
It is obvious that $(X\setminus P, C|_{X\setminus P})$ is 
divisorial log terminal, but 
$(X, C)$ is not divisorial log terminal. 
However, there exists a small open neighborhood 
$U$ of $P$ such that $(U, C|_U)$ is divisorial 
log terminal in the sense of 
Definition \ref{a-def3.7}. Note that $C|_U$ is a simple normal crossing 
divisor on $U$ if $U$ is a small open neighborhood 
of $P$ in $X$. 
\end{ex}

The final theorem in this section is more or less 
well known to the experts. 
We will use it in the proof of Theorem \ref{thm-f} (3). 

\begin{thm}[{see \cite[Theorem 1-3-6]{kmm} and 
\cite[Chapter VII.~\S 1]{nakayama3}}]\label{a-thm3.12}
If $(X, \Delta)$ is divisorial log terminal, 
then $X$ has only rational singularities. 
\end{thm}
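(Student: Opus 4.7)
The statement is local on $X$, so I would fix a point of $X$ and pass to a relatively compact Stein open neighborhood of it. By Lemma~\ref{a-lem3.9} applied over such a neighborhood, I would take a log resolution $f\colon Y\to X$ realized as a composite of blow-ups, with $\Exc(f)$ and $\Exc(f)\cup\Supp f^{-1}_*\Delta$ simple normal crossing on the smooth manifold $Y$ and $a(E,X,\Delta)>-1$ for every $f$-exceptional prime divisor $E$. The goal is to prove $R^if_*\mathcal{O}_Y=0$ for every $i>0$.

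The plan is to proceed by induction on $\dim X$ and, for fixed dimension, on the number of prime components of $\lfloor\Delta\rfloor$. In the base case $\lfloor\Delta\rfloor=0$ the pair $(X,\Delta)$ is kawamata log terminal, so writing $K_Y+\Delta_Y=f^*(K_X+\Delta)$, every coefficient of $\Delta_Y$ is strictly less than $1$ and the components with negative coefficient are automatically $f$-exceptional. Setting $N:=-\lfloor\Delta_Y\rfloor$, an effective integral $f$-exceptional Cartier divisor on $Y$, the fractional part $\{\Delta_Y\}$ is SNC with coefficients in $[0,1)$ and the identity
\begin{equation*}
N-K_Y-\{\Delta_Y\}=-f^*(K_X+\Delta)
\end{equation*}
exhibits $N-K_Y-\{\Delta_Y\}$ as $f$-numerically trivial, hence $f$-nef. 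The relative Kawamata--Viehweg vanishing theorem of Section~\ref{a-sec5} then yields $R^if_*\mathcal{O}_Y(N)=0$ for every $i>0$. Combining this with $f_*\mathcal{O}_Y=f_*\mathcal{O}_Y(N)=\mathcal{O}_X$ (from the normality of $X$ and the $f$-exceptionality of $N$), a standard devissage of $\mathcal{O}_N(N)$ via the short exact sequences $0\to\mathcal{O}_Y(N'-E)\to\mathcal{O}_Y(N')\to\mathcal{O}_E(N')\to 0$ for prime components $E\leq N'\leq N$, together with Kawamata--Viehweg vanishing applied on each smooth SNC stratum via adjunction $K_E=(K_Y+E)|_E$, yields $R^if_*\mathcal{O}_Y=0$, settling the klt case.

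For the inductive step in the number of components of $\lfloor\Delta\rfloor$, I would pick a prime component $S$ of $\lfloor\Delta\rfloor$; by adjunction, $(S,\mathrm{Diff}_S(\Delta-S))$ is dlt of strictly smaller dimension, so by the outer induction on dimension $S$ has rational singularities, whence $R^if_*\mathcal{O}_{S_Y}=0$ for $i>0$ with $S_Y:=f^{-1}_*S$. The short exact sequence $0\to\mathcal{O}_Y(-S_Y)\to\mathcal{O}_Y\to\mathcal{O}_{S_Y}\to 0$ then reduces the target vanishing to $R^if_*\mathcal{O}_Y(-S_Y)=0$, which I would obtain by applying the same Kawamata--Viehweg strategy to the pair $(X,\Delta-S)$, which has one fewer component of $\lfloor\cdot\rfloor$ after a mild perturbation restoring the relevant klt structure, invoking the inner inductive hypothesis. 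The main obstacle I anticipate is the careful bookkeeping of this double induction in the complex analytic category: verifying the adjunction data for $\mathrm{Diff}_S$, ensuring that the perturbed pairs remain dlt with $\mathbb{R}$-Cartier boundary after shrinking the Stein neighborhood, and confirming that the relative Kawamata--Viehweg vanishing theorems of Section~\ref{a-sec5} apply on all required SNC subvarieties and strata. Once these are granted, the argument parallels \cite[Theorem 1-3-6]{kmm} and \cite[Chapter VII.~\S 1]{nakayama3}, with analytic adaptations confined to the use of Lemma~\ref{a-lem3.9} and Section~\ref{a-sec5}.
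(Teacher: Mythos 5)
Your proposal diverges from the paper's own argument: the paper simply observes that the Elkik--Fujita type criterion from \cite[3.14]{fujino-foundations} applies verbatim once one has Grothendieck--Verdier duality for proper morphisms of complex analytic spaces (citing \cite{rrv}) together with the vanishing theorems of Section~\ref{a-sec5}. That criterion says, roughly, that if $f\colon Y\to X$ is a proper bimeromorphic morphism from a manifold with $X$ normal and there is an effective $f$-exceptional divisor $N$ with $R^if_*\mathcal{O}_Y(N)=0$ for $i>0$, then $X$ has rational singularities; the passage from $R^if_*\mathcal{O}_Y(N)=0$ to $R^if_*\mathcal{O}_Y=0$ is carried by duality, not by a component-by-component reduction.

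This is precisely where your proposal has a genuine gap. In the klt base case you correctly obtain $R^if_*\mathcal{O}_Y(N)=0$ from relative Kawamata--Viehweg vanishing, but the subsequent devissage does not deliver $R^if_*\mathcal{O}_Y=0$. Peeling off a component $E$ of $N'$ via $0\to\mathcal{O}_Y(N'-E)\to\mathcal{O}_Y(N')\to\mathcal{O}_E(N')\to 0$ produces a long exact sequence in which, to conclude $R^if_*\mathcal{O}_Y(N'-E)=0$ from $R^if_*\mathcal{O}_Y(N')=0$, you additionally need $R^{i-1}f_*\mathcal{O}_E(N')=0$ for $i\geq 2$ and surjectivity of $f_*\mathcal{O}_Y(N')\to f_*\mathcal{O}_E(N')$ for $i=1$. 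Neither is furnished by the adjunction formula $K_E=(K_Y+E)|_E$ alone: on $E$ the divisor $N'|_E-K_E$ is in general neither relatively nef nor relatively big for $f|_E$ (and $f|_E$ is typically not bimeromorphic onto its image), so relative Kawamata--Viehweg vanishing does not apply to the strata in the way you assert. The classical proofs you point to (\cite[Theorem 1-3-6]{kmm}, \cite[Chapter VII]{nakayama3}) also invoke duality at exactly this point; an induction purely on exact sequences does not close.

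The inductive step in the number of components of $\lfloor\Delta\rfloor$ has a second problem: $X$ is not assumed $\mathbb{Q}$-factorial, so $S$ need not be $\mathbb{Q}$-Cartier and $K_X+\Delta-S$ need not be $\mathbb{R}$-Cartier. A ``mild perturbation restoring klt'' does not resolve this, since perturbing the coefficient of $S$ below $1$ does not make $K_X+\Delta-S$ itself $\mathbb{R}$-Cartier, and you need that to set up the next round of Kawamata--Viehweg vanishing. The cleaner route, which is what the paper uses, is to avoid the double induction altogether: fix one log resolution $f\colon Y\to X$ as in Lemma~\ref{a-lem3.9}, exhibit a suitable effective $f$-exceptional divisor, apply Theorem~\ref{a-thm5.1} once, and then invoke the Elkik--Fujita criterion (which internalizes the duality argument) to conclude. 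Your approach could be salvaged by replacing the devissage with the duality argument and handling the dlt case through the Elkik--Fujita criterion directly rather than through induction on components of the boundary.
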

\begin{proof}
The arguments in \cite[3.14 Elkik--Fujita vanishing 
theorem]{fujino-foundations} work with some minor modifications 
if we use Grothendieck duality for proper 
morphisms of complex analytic spaces (see \cite{rrv}). 
We note that we have necessary vanishing theorems 
in the complex analytic setting 
(see Section \ref{a-sec5} below). 
\end{proof}

\section{Cones}\label{a-sec4}

In this section, we will define various cones and explain 
Kleiman's ampleness criterion for projective morphisms 
between complex analytic spaces. 

Throughout this section, 
let $\pi\colon X\to Y$ be a projective 
morphism of complex analytic spaces and let 
$W$ be a compact subset of $Y$. 
Let $Z_1(X/Y; W)$ be the free abelian group 
generated by the projective integral curves $C$ on $X$ such that 
$\pi(C)$ is a point of $W$. 
Let $U$ be any open neighborhood of $W$. 
Then we can consider the following intersection pairing 
\begin{equation*} 
\cdot :
\Pic\!\left(\pi^{-1}(U)\right)\times Z_1(X/Y; W)\to \mathbb Z
\end{equation*}  
given by $\mathcal L\cdot C\in \mathbb Z$ for 
$\mathcal L\in \Pic\!\left(\pi^{-1}(U)\right)$ and 
$C\in Z_1(X/Y; W)$. 
We say that $\mathcal L$ is {\em{$\pi$-numerically 
trivial over $W$}} when $\mathcal L\cdot C=0$ for 
every $C\in Z_1(X/Y; W)$. 
We take $\mathcal L_1, \mathcal L_2\in 
\Pic\!\left(\pi^{-1}(U)\right)$. 
If $\mathcal L_1\otimes \mathcal L_2^{-1}$ 
is $\pi$-numerically trivial over $W$, 
then we write $\mathcal L_1\equiv_W\mathcal L_2$ 
and say that $\mathcal L_1$ is numerically equivalent to 
$\mathcal L_2$ over $W$. 
We put 
\begin{equation*}
\widetilde A(U, W):=\Pic\!\left(\pi^{-1}(U)\right)/{\equiv_W}
\end{equation*}  
and define 
\begin{equation*}
A^1(X/Y; W):=\underset{W\subset U}\varinjlim
\widetilde A(U, W), 
\end{equation*}  
where $U$ runs through all the open neighborhoods of 
$W$. The following lemma due to Nakayama is a key result 
of the minimal model program for projective 
morphisms between complex analytic spaces.  

\begin{lem}
[{Nakayama's finiteness, see \cite[Chapter II.~5.19.~Lemma]{nakayama3}}]
\label{a-lem4.1}
Assume that $W\cap Z$ has only finitely 
many connected components 
for every analytic subset $Z$ defined over an 
open neighborhood of $W$. 
Then $A^1(X/Y; W)$ is a finitely generated 
abelian group. 
\end{lem}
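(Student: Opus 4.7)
The plan is to embed $A^1(X/Y;W)$, up to a finitely generated (torsion) kernel, into the singular cohomology group $H^2(\pi^{-1}(W),\mathbb Z)$ via the Chern class map, and then to prove that this cohomology group is itself finitely generated by exploiting condition (P4) and a triangulation theorem for subanalytic sets.

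First I would make preliminary reductions. Since $A^1(X/Y;W)$ is defined as a direct limit over open neighborhoods of $W$, I may shrink $Y$ freely around $W$, and so replace $Y$ by a relatively compact Stein open neighborhood of $W$. Using the discussion in \ref{a-say1.11} and Lemma \ref{a-lem2.16}, I may also enlarge $W$ slightly to a semianalytic Stein compact subset $W'\supset W$: more curves in $Z_1(X/Y;W')$ refine the numerical equivalence, so there is a natural surjection $A^1(X/Y;W')\twoheadrightarrow A^1(X/Y;W)$, and it suffices to prove finite generation for $W'$. Thus from now on I may assume $W$ itself is semianalytic, and consequently $\pi^{-1}(W)$ is a compact subanalytic subset of $X$.

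Next I would exploit the exponential sequence $0\to \mathbb Z \to \mathcal O \to \mathcal O^*\to 0$ on each Stein open neighborhood $U$ of $W$. It yields the Chern class map $c_1\colon \Pic(\pi^{-1}(U))\to H^2(\pi^{-1}(U),\mathbb Z)$. Passing to the direct limit and using the continuity of sheaf cohomology on a fundamental system of neighborhoods of a compact set, I obtain
\begin{equation*}
c_1\colon \underset{W\subset U}{\varinjlim}\,\Pic(\pi^{-1}(U)) \longrightarrow H^2(\pi^{-1}(W),\mathbb Z).
\end{equation*}
The compatibility of the topological cup-product with the algebraic intersection pairing $\mathcal L\cdot C$ shows that this map factors through $A^1(X/Y;W)$, and that modulo torsion its kernel is exactly zero: a class whose pairing with every embedded projective curve over a point of $W$ vanishes must, by the standard Chern class--intersection formalism, be topologically torsion. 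Consequently $A^1(X/Y;W)$ sits inside $H^2(\pi^{-1}(W),\mathbb Z)$ with at worst finite kernel, so it suffices to prove that the target is finitely generated.

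Finally I would argue that $H^2(\pi^{-1}(W),\mathbb Z)$ is finitely generated. Because $\pi$ is projective and $W$ is semianalytic, $\pi^{-1}(W)$ is a compact subanalytic subset of $X$; condition (P4) together with Siu's theorem (Theorem \ref{a-thm2.10}) is what guarantees we can arrange this by the enlargement above. The Łojasiewicz--Hironaka triangulation theorem for compact subanalytic sets then endows $\pi^{-1}(W)$ with a finite CW decomposition, so each of its singular cohomology groups is a finitely generated abelian group. The main obstacle I expect is the control of the kernel of $c_1$ in the previous paragraph: one must carefully match the algebraic definition of $\equiv_W$ with the topological one after passing to the direct limit, and rule out pathological line bundles arising from $H^1(\pi^{-1}(U),\mathcal O)$ terms on non-Stein $\pi^{-1}(U)$. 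Here again (P4) and the noetherianity of $\Gamma(W,\mathcal O_Y)$ should be the essential input, controlling the Leray contributions from $Y$ and ensuring that only $\pi$-numerically trivial classes over $W$ are lost in the comparison.
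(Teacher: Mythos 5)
Your approach is genuinely different from the paper's: the paper stratifies $Y$ into smooth strata over which $\pi$ is smooth, proves (via the diffeomorphism $X\cong Y\times F$ and K\"unneth) that numerical triviality over one point of a stratum propagates to the whole stratum, and then injects $A^1(X/Y;W)$ into a finite direct sum $\bigoplus \NS\left(\pi_i^{-1}(w_{i,j})\right)/(\mathrm{torsion})$ of Neron--Severi groups of fibers, where (P4) guarantees there are only finitely many strata meeting $W$. Your plan instead tries to inject $A^1(X/Y;W)$ into $H^2(\pi^{-1}(W),\mathbb Z)$ and invoke subanalytic triangulation. This is an interesting route, but as written it has a real gap.

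The gap is the claimed factorization of $c_1$ through $A^1(X/Y;W)$. You assert that numerical triviality over $W$ forces $c_1(\mathcal L)$ to be torsion in $H^2(\pi^{-1}(W),\mathbb Z)$, citing ``the standard Chern class--intersection formalism.'' That formalism applies to smooth projective (or compact K\"ahler) varieties, not to the compact subanalytic set $\pi^{-1}(W)$, and the statement is in fact false. Take $Y=\mathbb C$, $W=S^1=\{|z|=1\}$ (a Stein compact semianalytic subset), and $X=Y\times E$ for an elliptic curve $E$; let $U$ be a small annulus around $S^1$ and let $\phi\colon U\to E^\vee$ be a nonconstant holomorphic map inducing a nontrivial map on $H^1$. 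The line bundle $\mathcal L:=(\phi\times\mathrm{id})^*\mathcal P$, $\mathcal P$ the Poincar\'e bundle, restricts to a degree-zero bundle on every fiber $\{y\}\times E$, so $\mathcal L\equiv_W 0$; yet $c_1(\mathcal L)$ restricted to $\pi^{-1}(W)=S^1\times E$ has a nontrivial component in $H^1(S^1)\otimes H^1(E)\subset H^2(S^1\times E,\mathbb Z)$. So there is no injection of $A^1(X/Y;W)$ into $H^2(\pi^{-1}(W),\mathbb Z)$ modulo finite kernel, and the argument as stated collapses at exactly the step you flagged as ``the main obstacle.''

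The good news is that you only need the \emph{easy} direction of the comparison, and then the containment should be reversed. If $c_1(\mathcal L)$ restricts to zero in $H^2(\pi^{-1}(W),\mathbb Z)$, then $\mathcal L\cdot C=\langle c_1(\mathcal L)|_{\pi^{-1}(W)},[C]\rangle=0$ for every projective curve $C$ over a point of $W$; hence the kernel of $c_1\colon\varinjlim\Pic(\pi^{-1}(U))\to H^2(\pi^{-1}(W),\mathbb Z)$ is \emph{contained in} the subgroup of classes numerically trivial over $W$. Consequently $A^1(X/Y;W)=\varinjlim\Pic/\!\equiv_W$ is a quotient of $\mathrm{im}(c_1)\subset H^2(\pi^{-1}(W),\mathbb Z)$, and a subquotient of a finitely generated abelian group is finitely generated. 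This reorganization removes the need for a finite kernel and makes the topological input genuinely suffice. Two further points still need care: (i) your claimed surjection $A^1(X/Y;W')\twoheadrightarrow A^1(X/Y;W)$ for an enlarged semianalytic $W'\supset W$ is not established, since the index category of neighborhoods of $W'$ is not cofinal among neighborhoods of $W$; and (ii) one must justify either the continuity $H^2(\pi^{-1}(W),\mathbb Z)=\varinjlim_U H^2(\pi^{-1}(U),\mathbb Z)$ (which needs $\pi^{-1}(W)$ to be a compact ANR, hence the semianalyticity of $W$) or replace $\pi^{-1}(W)$ by $\pi^{-1}(\overline V)$ for a suitable semianalytic compact $\overline V$ with $W\subset V\Subset Y$ and then argue over the cofinal family of $U\subset V$. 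Both are repairable, but note that the paper's stratification argument sidesteps the triangulation machinery entirely and in exchange yields the finer structural statement that $A^1(X/Y;W)$ injects into a finite sum of Neron--Severi groups of closed fibers.
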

\begin{proof}
For the details, 
see the proof of \cite[Chapter II.~5.19.~Lemma]{nakayama3} 
and Theorem \ref{a-thm4.7} in Subsection \ref{a-subsec4.1} below. 
\end{proof}

\begin{rem}\label{a-rem4.2}
Note that \cite[Chapter II.~5.19.~Lemma]{nakayama3}, 
that is, Lemma \ref{a-lem4.1} above, 
is a correction of \cite[Proposition 4.3 and 
Lemma 4.4]{nakayama2}. 
In Lemma \ref{a-lem4.1}, $W$ is 
not assumed to be Stein compact. 
Here, we only assume that 
$W$ is a compact subset of $Y$ satisfying (P4). 
We will discuss Lemma \ref{a-lem4.1} in detail in Subsection \ref{a-subsec4.1} 
below. 
\end{rem}

Under the assumption of Lemma \ref{a-lem4.1}, 
we can define the {\em{relative Picard number}} 
$\rho(X/Y; W)$ to be the rank of 
$A^1(X/Y; W)$. 
We put 
\begin{equation*}
N^1(X/Y; W):=A^1(X/Y; W)\otimes _{\mathbb Z} \mathbb R. 
\end{equation*} 
Let $A_1(X/Y; W)$ be the image of 
\begin{equation*} 
Z_1(X/Y; W)\to \Hom_{\mathbb Z} \left(A^1(X/Y; W), 
\mathbb Z\right)
\end{equation*} 
given by the above intersection pairing. 
Then we set 
\begin{equation*} 
N_1(X/Y; W):=A_1(X/Y; W)\otimes _{\mathbb Z}\mathbb R. 
\end{equation*}  
As usual, we can define the {\em{Kleiman--Mori cone}} 
\begin{equation*} 
\NE(X/Y; W)
\end{equation*} 
of $\pi\colon X\to Y$ over $W$, that is, 
$\NE(X/Y; W)$ is the closure of the convex cone in 
$N_1(X/Y; W)$ spanned by the projective 
integral curves $C$ such that 
$\pi(C)$ is a point of $W$. 
We also define $\Amp(X/Y; W)$ to be 
the cone in $N^1(X/Y; W)$ generated by 
line bundles $L$ such that 
$L|_{\pi^{-1}(U)}$ is $\pi$-ample 
for some open neighborhood $U$ of $W$. 
An element $\zeta\in N^1(X/Y; W)$ is called 
{\em{$\pi$-nef over $W$}} or {\em{nef over $W$}} 
if $\zeta\geq 0$ on $\NE(X/Y; W)$. 
Even when $\zeta$ is nef over $W$, 
it is not clear whether $\zeta$ is nef over some open neighborhood 
of $W$ or not. 

\begin{rem}[{see \cite[Theorem 1.2]{le}}]\label{a-rem4.3}
There exist a projective surjective morphism 
of algebraic varieties $\pi\colon X\to Y$ and an 
$\mathbb R$-Cartier $\mathbb R$-divisor $D$ 
on $X$ such that 
$\{y\in Y\, |\, {\text{$D|_{X_y}$ is nef}}\}$ is not Zariski 
open. This means that the nefness is not an open condition. 
For a criterion of openness of a family of nef line bundles, 
see \cite{moriwaki}.  
\end{rem}

On the other hand, for $\zeta\in N^1(X/Y; W)$, 
$\zeta|_{X_w}$ is ample for every $w\in W$ 
if and only if $\zeta$ is ample over 
some open neighborhood of $W$. 
This is because the ampleness is an open 
condition (see, for example, \cite[Proposition 1.41]{kollar-mori} and 
\cite[Proposition 1.4]{nakayama2}). 
Note that Kleiman's ampleness criterion holds 
true in our complex analytic setting. 

\begin{thm}[{Kleiman's 
criterion, see \cite[Proposition 4.7]{nakayama2}}]
\label{a-thm4.4}
Let $\pi\colon X\to Y$ be a projective morphism 
between complex analytic spaces and 
let $W$ be a compact subset of $Y$ 
such that $W$ satisfies {\em{(P4)}}. 
Then we have 
\begin{equation*}
\Amp(X/Y; W)=\left\{\zeta\in N^1(X/Y; W)\, |\, 
\text{$\zeta >0$ on $\NE(X/Y; W)\setminus \{0\}$}
\right\}. 
\end{equation*}
\end{thm}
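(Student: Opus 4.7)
The plan is to adapt the proof of the classical Kleiman criterion (\cite[Theorem 1.18]{kollar-mori}) to the relative analytic setting, following \cite[Proposition 4.7]{nakayama2}; the whole argument rests on the finite-dimensionality of $N_1(X/Y;W)$ provided by Lemma \ref{a-lem4.1}. For the inclusion $\Amp(X/Y;W)\subseteq \mathcal{P}$, where $\mathcal{P}:=\{\zeta\in N^1(X/Y;W):\zeta>0\ \text{on}\ \NE(X/Y;W)\setminus\{0\}\}$, if $\zeta$ is represented by a line bundle $L$ which is $\pi$-ample on $\pi^{-1}(U)\supseteq\pi^{-1}(W)$, then $L\cdot C=L|_{X_{\pi(C)}}\cdot C>0$ on every generating curve, since $L|_{X_{\pi(C)}}$ is ample. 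Strict positivity on the whole closure $\NE(X/Y;W)\setminus\{0\}$ then follows by the standard Kleiman compactness argument carried out inside the finite-dimensional cone $\NE(X/Y;W)$.

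For the reverse inclusion, I first reduce to the case of rational classes. The set $\mathcal{P}$ is an open convex cone in $N^1(X/Y;W)$, so any $\zeta\in\mathcal{P}$ lies in the interior of the convex hull of finitely many nearby rational points of $\mathcal{P}$; by convexity of $\Amp(X/Y;W)$ it therefore suffices to show $\mathcal{P}\cap N^1(X/Y;W)_{\mathbb{Q}}\subseteq \Amp(X/Y;W)$. Assuming $\zeta$ is rational, I clear denominators to write $\zeta=k^{-1}[L]$ for a line bundle $L$ on $\pi^{-1}(U)$. For each $w\in W$ and each projective integral curve $C\subset X_w:=\pi^{-1}(w)$, the class $[C]$ is nonzero in $N_1(X/Y;W)$, because for any $\pi$-ample reference $H$ one has $H\cdot C=H|_{X_w}\cdot C>0$ by ampleness of the restriction; hence $\zeta\cdot[C]>0$ and $L|_{X_w}\cdot C>0$ for every such $C$. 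The classical Kleiman criterion applied to the projective fiber $X_w$ then yields that $L|_{X_w}$ is ample.

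Finally, I would invoke the openness of ampleness in projective families (see \cite[Proposition 1.41]{kollar-mori} and \cite[Proposition 1.4]{nakayama2}) to pass from fiberwise ampleness on all of $W$ to $\pi$-ampleness of $L$ on $\pi^{-1}(U')$ for some open neighborhood $U'\subseteq U$ of $W$, using compactness of $W$. This yields $\zeta\in \Amp(X/Y;W)$ and completes the proof. The main obstacle is precisely this last step: projectivity of $\pi$ is essential here since, as Remark \ref{a-rem4.3} warns, the analogous openness genuinely fails for nefness in the relative analytic setting, so one cannot hope for a similarly clean version of Kleiman's criterion if projectivity is dropped.
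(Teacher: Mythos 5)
Your proposal has a genuine gap at the pivotal step of the reverse inclusion. You establish that $L|_{X_w}\cdot C>0$ for every projective integral curve $C\subset X_w$ and then invoke ``the classical Kleiman criterion applied to the projective fiber $X_w$'' to conclude that $L|_{X_w}$ is ample. But strict positivity against all \emph{curve classes} is strictly weaker than strict positivity on $\NE(X_w)\setminus\{0\}$, and the latter is what Kleiman's criterion requires; Mumford's well-known example of a divisor on a surface that meets every curve positively but is not ample shows that the inference you make is false in general. The issue is that the natural map $N_1(X_w)\to N_1(X/Y;W)$ need not a priori be injective, so knowing $\zeta>0$ on $\NE(X/Y;W)\setminus\{0\}$ does not by itself tell you that $\zeta|_{X_w}>0$ on the possibly-larger closed cone $\NE(X_w)\setminus\{0\}$. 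Your observation that curve classes $[C]$ survive because $H\cdot C>0$ does not suffice, since $\NE(X_w)$ contains more than just curve classes.

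The paper avoids this entirely by a small but essential change of strategy. Fix a $\pi$-ample Cartier divisor $A$. By the usual compactness argument in the finite-dimensional cone, $\zeta-\varepsilon A>0$ on $\NE(X/Y;W)\setminus\{0\}$ for small rational $\varepsilon>0$, hence $(\zeta-\varepsilon A)\cdot C\geq 0$ for every curve $C$ in any fiber $X_w$ with $w\in W$. Nefness of $(\zeta-\varepsilon A)|_{X_w}$ requires only this nonnegativity on curves (no Kleiman needed), and then $\zeta|_{X_w}=(\zeta-\varepsilon A)|_{X_w}+\varepsilon A|_{X_w}$ is nef plus ample, hence ample. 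To rescue your version you would have to additionally prove that every nonzero $\gamma\in\NE(X_w)$ has nonzero image in $N_1(X/Y;W)$ (which can be done by pairing with $H|_{X_w}$, applying Kleiman to the ample $H|_{X_w}$ rather than to $L|_{X_w}$), but since you never address the discrepancy between the fiberwise cone and the set of curve classes, the argument as written does not go through. Your preliminary reduction to rational classes and the final passage via openness of ampleness are both fine and match the paper's argument.
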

\begin{proof}[Sketch of Proof of Theorem \ref{a-thm4.4}]
We note that the ample cone is an open convex cone in $N^1(X/Y; W)$. 
Hence we can easily check that it is contained in the right hand side. 
Therefore, it is sufficient to prove the opposite inclusion. 
We take a $\pi$-ample Cartier divisor $A$ on $X$. 
Let $\zeta$ be an element of $N^1(X/Y; W)$ such that 
$\zeta>0$ on $\NE(X/Y; W)\setminus \{0\}$. 
Then $\zeta-\varepsilon A>0$ on $\NE(X/Y; W)\setminus 
\{0\}$ for some small positive rational number $\varepsilon$. 
This implies that $(\zeta-\varepsilon A)|_{\pi^{-1}(w)}$ is 
nef for every $w\in W$. 
Since $A|_{\pi^{-1}(w)}$ is ample, $\zeta|_{\pi^{-1}(w)}
=(\zeta-\varepsilon A)|_{\pi^{-1}(w)}+A|_{\pi^{-1}(w)}$ 
is ample for every $w\in W$. 
We note that we can use Kleiman's ampleness criterion on 
$\pi^{-1}(w)$ since $\pi^{-1}(w)$ is projective. 
Hence, by the standard argument (see, 
for example, \cite[Section 6]{fujino-miyamoto}), we can write $\zeta$ as a 
finite $\mathbb R_{>0}$-linear combination of $\pi$-ample 
Cartier divisors over some open neighborhood of $W$. 
This is what we wanted. 
\end{proof}

We can define {\em{movable cones}} 
in our complex analytic setting (see \cite[Section 2]{kawamata-crepant}). 

\begin{defn}[{see \cite[Definition 2.1]{fujino-semistable}}]\label{a-def4.5}
Let $\pi\colon X\to Y$ be a projective 
morphism of complex analytic spaces and let $W$ be a compact 
subset of $Y$ such that $X$ is a normal complex variety. 
A Cartier divisor $D$ on $\pi^{-1}(U)$, where 
$U$ is some open neighborhood 
of $W$, 
is called {\em{$\pi$-movable over $W$}} 
if $\pi_*\mathcal O_{\pi^{-1}(U)}(D)\ne 0$ and 
if the cokernel of the natural homomorphism 
$\pi^*\pi_*\mathcal O_{\pi^{-1}(U)}(D)\to 
\mathcal O_{\pi^{-1}(U)}(D)$ has a support of 
codimension $\geq 2$. 
We define 
$\Mov(X/Y; W)$ as 
the closure of the convex cone in $N^1(X/Y; W)$ 
generated by 
the classes of $\pi$-movable Cartier divisors over $W$. 
Note that $\Mov(X/Y; W)$ is usually 
called the {\em{movable cone}} of $\pi\colon X\to Y$ and $W$. 
\end{defn}

We can easily see that a kind of negativity lemma holds. 

\begin{lem}[{Negativity lemma, see \cite[Lemma 3.8]{fujino-cone}}]
\label{a-lem4.6}
Let $\pi\colon X\to Y$ be a projective bimeromorphic 
contraction morphism of normal complex varieties and let $W$ be 
a compact subset of $Y$. 
Let $E$ be an $\mathbb R$-Cartier 
$\mathbb R$-divisor on $X$ such that $E\in \Mov(X/Y; W)$. 
Let $U$ be any open subset of $Y$ with 
$U \subset W$. 
If $-\pi_*E|_U$ is effective, then $-E|_{\pi^{-1}(U)}$ is effective. 
\end{lem}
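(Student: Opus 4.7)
The plan is to argue by contradiction. Suppose $-E|_{\pi^{-1}(U)}$ is not effective. Writing the restriction as a locally finite sum of prime divisors, I can choose a prime divisor $P$ on $\pi^{-1}(U)$ whose coefficient $a_P$ in $E$ is strictly positive. I will derive a contradiction by splitting into two cases according to whether $P$ is $\pi$-exceptional.

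Case 1: $P$ is not $\pi$-exceptional. Since $\pi$ is a bimeromorphic contraction of normal complex varieties, $\pi|_P$ is generically an isomorphism onto a prime divisor $Q := \pi(P)$ in $U$, and no other prime divisor on $\pi^{-1}(U)$ dominates $Q$. Hence the coefficient of $Q$ in $\pi_{*}E|_U$ equals exactly $a_P > 0$, contradicting $-\pi_{*}E|_U \geq 0$.

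Case 2: $P$ is $\pi$-exceptional. Decompose $E|_{\pi^{-1}(U)} = E_+ - E_-$ with $E_+, E_- \geq 0$ effective and having disjoint supports; by Case 1 applied to every component of $E_+$, the divisor $E_+$ must be $\pi$-exceptional over $U$ and is nonzero by assumption. Pick an analytically sufficiently general point $y \in \pi(\operatorname{Supp} E_+)$, so that $\pi^{-1}(y)$ is a projective variety and $\operatorname{Supp}(E_+)\cap \pi^{-1}(y)$ is a nonempty divisor on $\pi^{-1}(y)$. Using projectivity of $\pi^{-1}(y)$, the classical negativity-of-contraction argument (intersecting with general complete intersection curves on a component of $E_+\cap \pi^{-1}(y)$ sitting inside the exceptional locus) produces a projective integral curve $C$ contained in $\pi^{-1}(y) \cap \operatorname{Supp}E_+$ with $E_+\cdot C < 0$. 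Since $\operatorname{Supp}E_-$ is disjoint from $\operatorname{Supp}E_+$ and $C \subset \operatorname{Supp}E_+$, we have $E_-\cdot C \geq 0$, whence $E\cdot C < 0$. On the other hand, $[E]\in \Mov(X/Y;W)$ is a limit of nonnegative $\mathbb R$-linear combinations of classes of $\pi$-movable Cartier divisors $D$ over $W$. For any such $D$, since the cokernel of $\pi^*\pi_*\mathcal O(D) \to \mathcal O(D)$ has codimension $\geq 2$, the general fiber-curve $C$ above avoids this cokernel's support and a general global section of $\pi_*\mathcal O(D)$ cuts out an effective divisor not containing $C$, giving $D\cdot C \geq 0$. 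Passing to the limit, $E\cdot C \geq 0$, which is the desired contradiction.

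The main obstacle will be the construction of the test curve $C$ with $E_+\cdot C < 0$ in the analytic setting. Since $\pi$ is only assumed projective and $Y$ is merely a complex variety, one cannot blindly invoke the algebraic negativity lemma; however, the construction is entirely fiberwise: $\pi^{-1}(y)$ is projective, the restriction $E_+|_{\pi^{-1}(y)}$ is a nonzero effective $\mathbb R$-Cartier divisor, and the standard intersection-theoretic argument on the projective fiber—cutting $\operatorname{Supp}E_+\cap \pi^{-1}(y)$ by dim-many general ample divisors on $\pi^{-1}(y)$ to obtain a curve $C$ satisfying $E_+\cdot C<0$ because $-E_+$ restricted to its own support is anti-ample relative to $\pi$ (equivalently, $E_+$ is exceptional and its self-intersection on a general fiber curve is negative)—goes through without change. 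The remaining care is to ensure $y$ is chosen outside the analytically meagre locus where the generic argument fails, which is possible by the generality of the choice, and to apply the movable-cone numerical pairing only after choosing representatives defined on a single open neighborhood of $W$ containing $\pi(C) = \{y\}$, which is harmless since we are free to shrink.
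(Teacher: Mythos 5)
Your overall strategy is the right one: decompose $E|_{\pi^{-1}(U)}=E_+-E_-$ into effective parts with disjoint supports, use $\pi_*E\leq 0$ to conclude that every component of $E_+$ is $\pi$-exceptional (your Case 1 is correct), produce a projective curve $C\subset\Supp E_+$ contracted by $\pi$ over a point of $U\subset W$ with $E_+\cdot C<0$, deduce $E\cdot C<0$ because $C$ is disjoint from $\Supp E_-$, and contradict $E\in\Mov(X/Y;W)$ by showing $E\cdot C\geq 0$. This is the intended argument.

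The gap is in your construction of $C$. The parenthetical justification that ``$-E_+$ restricted to its own support is anti-ample relative to $\pi$'' (and the claimed equivalent about self-intersection on a general fiber curve) is false: if $E_+=E_1+E_2$ with $E_1,E_2$ distinct exceptional primes and $C\subset E_1$ is a contracted curve disjoint from $E_2$, then $E_+\cdot C=E_1\cdot C$ carries no a priori sign, so nothing forces cuts of $\Supp E_+\cap\pi^{-1}(y)$ by ample divisors \emph{on the fiber} to meet $E_+$ negatively. What one actually does is cut $X$ itself, over a Stein neighborhood, by $\dim X-2$ very general $\pi$-ample hypersurfaces, producing a surface $T$ bimeromorphic onto its two-dimensional image in $Y$ on which $E_+|_T$ is a nonzero effective combination of $\pi|_T$-contracted curves; Mumford's negative definiteness of the intersection form of contractible curves on $T$ then supplies a component $C$ of $E_+|_T$ with $E_+\cdot C<0$. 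In particular $y=\pi(C)$ is produced by the cut rather than chosen beforehand, and the intermediate claim that $\Supp E_+\cap\pi^{-1}(y)$ is a ``nonempty divisor on $\pi^{-1}(y)$'' is neither needed nor generally true. Finally, you should make the countability explicit in the movable-cone step: a class in $\Mov(X/Y;W)$ is a limit of finite nonnegative combinations $\sum_j a_{n,j}[D_{n,j}]$, so only countably many movable Cartier divisors $D_{n,j}$ enter, each with base locus of codimension $\geq 2$ whose trace on $\Supp E_+$ is a proper analytic subset; their union is analytically meagre, and this countability is exactly what lets a very general cut (hence $C$) miss all of them and give $D_{n,j}\cdot C\geq 0$ for every $(n,j)$.
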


\begin{proof}
For the details, see the proof of \cite[Lemma 3.8]{fujino-cone}. 
\end{proof}

We will use Lemma \ref{a-lem4.6} in order to terminate minimal model 
programs with scaling. 

\subsection{Nakayama's finiteness}\label{a-subsec4.1} 
In this subsection, we give a detailed proof of Nakayama's finiteness 
(see Lemma \ref{a-lem4.1}), which is the starting point of the 
minimal model program for projective morphisms 
between complex analytic spaces, for the sake of completeness. 
We will closely follow Nakayama's original proof in \cite{nakayama3}. 
The reader who is not interested in the proof can skip this subsection. 

Let us recall the statement of Nakayama's finiteness 
for the reader's convenience.  

\begin{thm}[{Nakayama's finiteness, 
see Lemma \ref{a-lem4.1} and \cite[Chapter II.~5.19.~Lemma]{nakayama3}}]
\label{a-thm4.7}
Let $\pi\colon X\to Y$ be a projective morphism of complex analytic 
spaces and let $W$ be a compact subset of $Y$. Assume that 
$W\cap Z$ has only finitely many connected components for 
every analytic subset $Z$ defined over 
an open neighborhood of $W$. 
Then $A^1(X/Y; W)$ is a finitely generated abelian group. 
\end{thm}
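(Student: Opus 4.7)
The plan is to exhibit $A^1(X/Y;W)$ as a subgroup of a finitely generated abelian group by mapping it, via the first Chern class, into the singular cohomology of the compact set $K:=\pi^{-1}(W)$. First I would replace $Y$ with a sufficiently small Stein open neighborhood of $W$, and observe that since $\pi$ is proper, $K$ is compact. Using Remmert's proper mapping theorem together with (P4), one checks that the analogue of (P4) holds for $K$ in $X$: namely, $K\cap Z'$ has only finitely many connected components for every analytic set $Z'$ defined on an open neighborhood of $K$ in $X$ (apply (P4) in $Y$ to the proper image $\pi(Z')$, and use that $\pi$ has finitely many irreducible component-fibers of any dimension over the relevant strata).

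Next I would construct the natural homomorphism $A^1(X/Y;W)\to H^2(K,\mathbb{Z})$. For each open neighborhood $U$ of $W$, restriction along $K\hookrightarrow \pi^{-1}(U)$ and the first Chern class from the exponential sequence give $\Pic(\pi^{-1}(U))\to H^2(K,\mathbb{Z})$. The crucial point is that this map factors through $\widetilde{A}(U,W)$: any line bundle in its kernel is topologically trivial in a neighborhood of $K$, hence has degree zero on every projective curve $C\subset K$, so is numerically trivial over $W$. Passing to the colimit in $U$ gives a well-defined homomorphism $A^1(X/Y;W)\to H^2(K,\mathbb{Z})$, and by construction this map is \emph{injective}, since a class whose image in $H^2(K,\mathbb{Z})$ vanishes is represented by a topologically trivial line bundle, which as above is numerically trivial over $W$.

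The main obstacle, and the heart of the argument, is to show that $H^2(K,\mathbb{Z})$ is finitely generated. This is a purely topological finiteness statement for the compact subset $K$ of an analytic space, and condition (P4) for $K$ is precisely tailored for this. The strategy is to choose a small open neighborhood $\Omega$ of $K$ in $X$ admitting a semianalytic triangulation that restricts to a finite triangulation of a semianalytic set $K'$ sandwiched between $K$ and $\Omega$; the finiteness property of $K$ guarantees that the semianalytic subsets defined near $K$ which appear in such a stratification have only finitely many connected components, hence a triangulation with finitely many cells. Then $H^*(K,\mathbb{Z})$ is computed as an inverse/direct limit over a cofinal system of such finite CW-approximations and is therefore finitely generated. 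This step is essentially Nakayama's argument in \cite[Chapter II.~5.19.~Lemma]{nakayama3} and may be carried out with some care via Čech cohomology on a cofinal system of good Stein neighborhoods of $K$.

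Combining the two ingredients, $A^1(X/Y;W)$ embeds into a finitely generated abelian group and so is itself finitely generated, as desired. I expect the topological finiteness step to be by far the most delicate, while the Chern class injectivity argument is formal once (P4) for $K$ is in place.
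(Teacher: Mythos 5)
Your approach is genuinely different from the one in the text. The proof of Theorem~\ref{a-thm4.7} constructs a finite stratification $Y=Y_0\supset Y_1\supset\cdots\supset Y_k$ with auxiliary smooth models $\pi_i\colon X_i\to Y_i$, chooses finitely many points $w_{i,j}$ (one for each connected component of $W\cap Y_i$, finitely many by (P4)), and proves that the map $A^1(X/Y;W)\to\bigoplus\NS\bigl(\pi_i^{-1}(w_{i,j})\bigr)/(\mathrm{torsion})$ is injective. The key input is Lemma~\ref{a-lem4.12}, which propagates numerical triviality across a smooth proper family from one fiber to all fibers over a connected base; together with the stratification and (P4), this reduces the statement to the classical finite generation of N\'eron--Severi groups of finitely many projective varieties. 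No global topological finiteness for $K=\pi^{-1}(W)$ is ever invoked.

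Your proposal has two problems. First, a logical error: to factor $c_1\colon\Pic(\pi^{-1}(U))\to\check H^2(K,\mathbb Z)$ through $\widetilde A(U,W)=\Pic(\pi^{-1}(U))/{\equiv_W}$ you need the implication ``numerically trivial over $W$ implies $c_1|_K=0$''; what you argue is the converse. The factorization you assert is in fact false: on an Enriques surface (take $Y$ a point, $K=X$) the canonical bundle is a nonzero $2$-torsion class in $\Pic(X)=\NS(X)$, hence numerically trivial, yet its image in $H^2(X,\mathbb Z)$ is a nonzero torsion element, so it is not in the kernel of $c_1$. The construction is salvageable --- the containment of $\ker(c_1|_K)$ in the subgroup of classes numerically trivial over $W$ exhibits $A^1(X/Y;W)$ as a quotient of a subgroup of $\check H^2(K,\mathbb Z)$, which would suffice --- but the ``injective homomorphism $A^1(X/Y;W)\hookrightarrow H^2(K,\mathbb Z)$'' you claim does not exist. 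Second, and more seriously, the step you yourself call the heart of the argument, the finite generation of $\check H^2(K,\mathbb Z)$, is left unsubstantiated. Condition (P4) constrains intersections of $W$ with analytic subsets, which is strictly weaker than semianalyticity of $W$, and you give no real reason why it should control the singular cohomology of the generally non-semianalytic compact $K$. Sandwiching a semianalytic $K'$ between $K$ and a neighborhood and ``passing to a limit'' does not obviously yield finiteness, since limits of finitely generated abelian groups need not be finitely generated and there is no claim that the neighborhoods $K'$ are cofinal or homotopy equivalent. The attribution of this step to Nakayama's II.5.19.~Lemma is also misplaced: as reproduced in Subsection~\ref{a-subsec4.1}, Nakayama's argument is precisely the stratification/N\'eron--Severi argument, not a topological finiteness statement for compacts. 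Finally, the transfer of (P4) from $W\subset Y$ to $K\subset X$ is asserted without proof and requires more than Remmert's theorem alone. As written, the proposal has a genuine gap at its central step.
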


In this subsection, 
an {\em{$\mathbb R$-line bundle}} on a complex analytic 
space $X$ means an element of 
$\Pic(X)\otimes _{\mathbb Z} \mathbb R$. 
For simplicity of notation, we write the group law of 
$\Pic(X)\otimes _{\mathbb Z} \mathbb R$ additively. 

\begin{defn}\label{a-def4.8}Let $\pi\colon X\to Y$ be a projective 
surjective morphism of complex analytic spaces. 
An $\mathbb R$-line bundle $\mathcal L$ on $X$ is 
called {\em{$\pi$-ample}} 
if it is a finite $\mathbb R_{>0}$-linear combination 
of $\pi$-ample line bundles on $X$. 
Let $Z$ be any subset of $Y$. 
An $\mathbb R$-line bundle $\mathcal L$ on $X$ is called 
{\em{$\pi$-nef over $Z$}} and {\em{$\pi$-numerically trivial 
over $Z$}} if $\mathcal L|_{X_y}$ is nef and numerically 
trivial for every $y\in Z$, respectively, where 
$X_y:=\pi^{-1}(y)$. 
\end{defn}

Let us see numerically trivial $\mathbb R$-line bundles on 
smooth projective varieties. 

\begin{say}[Characterization of numerically trivial $\mathbb R$-line bundles 
on smooth projective varieties]\label{a-say4.9}
Let $X$ be a smooth projective variety. 
We consider the following long exact sequence: 
\begin{equation*}
\cdots \longrightarrow H^1(X, \mathcal O_X)\longrightarrow 
H^1(X, \mathcal O^*_X)\overset{c_1}{\longrightarrow}H^2(X, \mathbb Z)
\longrightarrow H^2(X, \mathcal O_X)\longrightarrow \cdots  
\end{equation*}
given by $0\to \mathbb Z \to \mathcal O_X\to \mathcal O^*_X\to 0$. 
The image of 
\begin{equation*}
c_1\colon \Pic(X)\simeq H^1(X, \mathcal O^*_X)\to 
H^2(X, \mathbb Z) 
\end{equation*} 
is denoted by $\NS(X)$. 
It is usually called the {\em{Neron--Severi group}} of 
$X$. We note that 
a line bundle $\mathcal L$ on $X$ is numerically trivial if and only 
if $c_1(\mathcal L)$ is a torsion element in $\NS(X)$ 
(see, for example, \cite[Corollary 1.4.38]{lazarsfeld1}). 
Hence $c_1$ induces the following isomorphism: 
\begin{equation*}
\Pic(X)/\!\!\equiv \overset{\sim}{\longrightarrow} \NS(X)/(\mathrm{torsion}), 
\end{equation*}
where $\equiv$ denotes the {\em{numerical equivalence}}. 
As usual, 
we put 
\begin{equation*}
N^1(X)=\left\{\Pic(X)/\!\!\equiv\right\}\otimes _{\mathbb Z} \mathbb R. 
\end{equation*} 
Let us consider an $\mathbb R$-line bundle $\mathcal L$ on $X$. 
We can define the first Chern class $c_1(\mathcal L)$ in 
$H^2(X, \mathbb R)$ since $\mathcal L$ is a finite 
$\mathbb R$-linear combination of line bundles. 
If $\mathcal L$ is numerically trivial, then 
it is easy to see that $\mathcal L$ is a finite 
$\mathbb R$-linear combination of numerically trivial line bundles on $X$. 
Therefore, $\mathcal L$ is numerically equivalent to zero, 
that is, $\mathcal L\equiv 0$, if and only 
if $c_1(\mathcal L)=0$ in $H^2(X, \mathbb R)$. 
\end{say}

Let us start with the following basic properties of ample and 
nef $\mathbb R$-line bundles. We can check them without 
any difficulties. 

\begin{lem}[{see \cite[Chapter II.~5.14.~Lemma]{nakayama3}}]\label{a-lem4.10}
Let $\pi\colon X\to Y$ be a projective surjective 
morphism of complex analytic spaces such that $Y$ is irreducible and let 
$\mathcal L$ be an $\mathbb R$-line bundle on $X$. 
\begin{itemize}
\item[(1)] Assume that $\mathcal L|_{X_{y_0}}$ is 
ample for some $y_0\in Y$. 
Then there exists an open neighborhood $U$ of $y_0$ such that 
$\mathcal L|_{\pi^{-1}(U)}$ is ample 
over $U$ and that $Y\setminus U$ is an analytically meagre subset of $Y$. 
\item[(2)] Assume that $\mathcal L|_{X_{y_0}}$ is 
nef for some $y_0\in Y$. 
Then there exists an analytically meagre subset $\mathcal S$ 
such that $\mathcal L$ is $\pi$-nef over $Y\setminus \mathcal S$ with $y_0\not 
\in \mathcal S$. 
\item[(3)] Assume that $\mathcal L|_{X_{y_0}}$ is 
numerically trivial for some $y_0\in Y$. 
Then there exists an analytically meagre subset $\mathcal S$ 
such that $\mathcal L$ is $\pi$-numerically trivial 
over $Y\setminus \mathcal S$ with $y_0\not 
\in \mathcal S$. 
\end{itemize}
\end{lem}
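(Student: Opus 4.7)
The plan is to prove (1) first by reduction to integral line bundles, and then to deduce (2) by a small perturbation by an ample class and (3) by applying (2) to both $\mathcal L$ and $-\mathcal L$. For (1), I would write $\mathcal L = \sum_{i=1}^{n} r_i \mathcal L_i$ as a finite $\mathbb R$-linear combination of line bundles on $X$. Since the ample cone in $N^1(X_{y_0})_{\mathbb R}$ is open and contains $\mathcal L|_{X_{y_0}}$, one can perturb the coefficients to rationals $q_i$ so that $\mathcal M := \sum q_i \mathcal L_i$ restricts to an ample $\mathbb Q$-line bundle on $X_{y_0}$ while $\mathcal L - \mathcal M$ restricts to an ample $\mathbb R$-line bundle on $X_{y_0}$. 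Iterating this perturbation finitely many times reduces the problem to the corresponding assertion for a single line bundle $\mathcal N$ with $\mathcal N|_{X_{y_0}}$ ample. For this, the coherence of higher direct images together with the semicontinuity and base-change theorems in \cite[Chapter III]{banica}, applied to sufficiently high tensor powers $\mathcal N^{\otimes m}$, imply that fiberwise ampleness of $\mathcal N|_{X_{y_0}}$ extends to relative $\pi$-ampleness on $\pi^{-1}(U)$ for some open $U \ni y_0$ whose complement is a proper analytic closed subset of $Y$. Irreducibility of $Y$ then makes $Y \setminus U$ analytically meagre in the sense of Definition \ref{a-def2.49}.

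For (2), I would fix a $\pi$-ample line bundle $A$ on $X$. For each $n \in \mathbb Z_{>0}$, the restriction $(\mathcal L + \tfrac{1}{n} A)|_{X_{y_0}}$ is ample, being the sum of a nef and an ample class on the projective variety $X_{y_0}$. Applying (1) to $\mathcal L + \tfrac{1}{n} A$ produces an open neighborhood $U_n \ni y_0$ with $Y \setminus U_n$ analytically meagre such that $\mathcal L + \tfrac{1}{n} A$ is $\pi$-ample over $U_n$. Setting $\mathcal S := \bigcup_{n \geq 1}(Y \setminus U_n)$, which is again analytically meagre by Definition \ref{a-def2.49} (countable unions of meagre subsets being meagre) and avoids $y_0$, one concludes: for any $y \in Y \setminus \mathcal S$ the class $\mathcal L|_{X_y}$ is the limit in $N^1(X_y)_{\mathbb R}$ of the ample classes $(\mathcal L + \tfrac{1}{n} A)|_{X_y}$, so it lies in the closed nef cone.

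For (3), numerical triviality of $\mathcal L|_{X_{y_0}}$ means that both $\mathcal L|_{X_{y_0}}$ and $-\mathcal L|_{X_{y_0}}$ are nef; applying (2) to each yields analytically meagre subsets $\mathcal S_{\pm} \subset Y$ avoiding $y_0$, and over the complement of $\mathcal S := \mathcal S_+ \cup \mathcal S_-$ both $\mathcal L|_{X_y}$ and $-\mathcal L|_{X_y}$ are nef, forcing numerical triviality on every such fiber. The main obstacle is the openness-with-meagre-complement assertion for relative ampleness underlying (1); once this is in hand through the analytic semicontinuity and base-change machinery combined with the irreducibility of $Y$, the deductions of (2) and (3) are formal. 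The perturbation step from $\mathbb R$-coefficients to rational ones is mildly delicate because one must simultaneously maintain fiber ampleness of both summands in the decomposition, but the openness of the ample cone in $N^1(X_{y_0})_{\mathbb R}$ makes this possible.
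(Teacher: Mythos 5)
Your overall strategy matches the paper's: reduce (1) to the case of a single line bundle via the openness of the ample cone on the fiber $X_{y_0}$, then use coherence, semicontinuity, and base change to produce a Zariski-open relative ampleness locus; deduce (2) by applying (1) to $\mathcal L$ shifted by $\frac{1}{n}$ of an ample class and taking the countable intersection of the resulting neighborhoods; deduce (3) by applying (2) to $\pm\mathcal L$. The paper does exactly this, and your deductions of (2) and (3) are formally identical to the paper's (the paper uses $m\mathcal L + \mathcal A$ for $m\in\mathbb Z_{>0}$ rather than $\mathcal L + \tfrac{1}{n}A$, which is the same up to scaling).

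There is, however, a flaw in the reduction step of (1) as you have written it. You propose to perturb the coefficients $r_i$ to rationals $q_i$ so that both $\mathcal M=\sum q_i\mathcal L_i$ and $\mathcal L-\mathcal M=\sum(r_i-q_i)\mathcal L_i$ are fiber-ample, and then to "iterate finitely many times" until a single line bundle remains. This iteration never terminates: whenever some $r_i$ is irrational, the remainder $\mathcal L-\mathcal M$ again has an irrational coefficient, so you never land on a $\mathbb Q$- or $\mathbb Z$-line bundle by repeated subtraction. The correct reduction, which is what the paper tacitly invokes when it writes $\mathcal L=\sum_{i=1}^k a_i\mathcal L_i$ with $a_i>0$ and each $\mathcal L_i|_{X_{y_0}}$ an ample \emph{line bundle}, is the standard convex-combination argument: the set of coefficient tuples $(a_1,\dots,a_n)\in\mathbb R^n$ for which $\sum a_i\mathcal L_i|_{X_{y_0}}$ is ample is open, so one can write the original real tuple as a positive convex combination of finitely many nearby rational tuples lying in this open set; clearing denominators exhibits $\mathcal L$ in one step as a finite $\mathbb R_{>0}$-linear combination of honest line bundles that are ample on $X_{y_0}$. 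With this correction, the rest of your argument goes through and coincides with the paper's.
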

\begin{proof} 
(1) We can write $\mathcal L=\sum _{i=1}^k a_i \mathcal L_i$ such that 
$a_i\in \mathbb R_{>0}$ and $\mathcal L_i|_{X_{y_0}}$ is 
an ample line bundle for every $i$. 
If $U_i$ is a desired open neighborhood of $y_0$ for $\mathcal L_i$, 
then $U:=\bigcap _{i=1}^k U_i$ has the desired 
property for $\mathcal L$. 
Hence it is sufficient to prove this statement under the 
assumption that $\mathcal L$ is a line bundle. 
In this case, it is well known that there exists an open neighborhood 
$V$ of $y_0$ such that $\mathcal L|_{\pi^{-1}(V)}$ is $\pi$-ample 
over $V$ (see, for example, \cite[Proposition 1.41]{kollar-mori} and 
\cite[Proposition 1.4]{nakayama2}). 
Therefore, if $m$ is a sufficiently large positive integer, then 
$\pi^*\pi_*\mathcal L^{\otimes m}\to \mathcal L^{\otimes m}$ is surjective 
on a neighborhood of $X_{y_0}$. 
We put $Y^\dag:=Y\setminus \pi\left(\Supp \left(\Coker\pi^*\pi_*\mathcal 
L^{\otimes m} \to \mathcal L^{\otimes m}\right)\right)$. 
Then $Y^\dag$ is a Zariski open subset of $Y$ and 
$\pi^*\pi_*\mathcal L^{\otimes m}\to \mathcal L^{\otimes m}$ is surjective 
over $Y^\dag$. 
We consider the induced map 
$\varphi\colon 
X^\dag:=\pi^{-1}(Y^\dag)\to \mathbb P_{Y^\dag}\left((\pi_*\mathcal L^{\otimes m})|_{Y^\dag}\right)$ over 
$Y^\dag$. 
Since $m$ is sufficiently large, we may assume that 
$\varphi$ is a closed embedding over some open neighborhood 
of $y_0$. 
In particular, $\varphi$ is flat in a neighborhood 
of $X_{y_0}$. 
Hence, there exists a Zariski open subset $U$ of $Y^\dag$ such that 
$\varphi$ is flat and finite over $U$ (see \cite[Chapter V.~Theorem 4.5]{banica}). 
This implies that $\mathcal L|_{\pi^{-1}(U)}$ is ample 
over $U$. 
We can check that $U$ is an open neighborhood of 
$y_0$ with the desired properties. 

(2) We take an ample line bundle $\mathcal A$. 
Then $\left(m\mathcal L+\mathcal A\right)|_{X_{y_0}}$ is 
ample for every positive integer $m$. 
By (1), we can take an open neighborhood $U_m$ of $y_0$ such that 
$\left(m\mathcal L+\mathcal A\right)|_{\pi^{-1}(U_m)}$ is ample 
over $U_m$ and that 
$Y\setminus U_m$ is an analytically meagre subset for 
every $m$. 
We put $\mathcal S:=\bigcup _{m\in \mathbb Z_{>0}}\left(Y\setminus U_m\right)$. 
Then $\mathcal S$ is an analytically meagre subset and 
$\mathcal L$ is $\pi$-nef over $Y\setminus \mathcal S$. 
By construction, $y_0\not \in \mathcal S$.  
This is what we wanted. 

(3) By assumption, $\mathcal L|_{X_{y_0}}$ and 
$-\mathcal L|_{X_{y_0}}$ are both nef. By (2), there exists analytically 
meagre subsets $\mathcal S_1$ and $\mathcal S_2$ such that 
$\mathcal L$ is $\pi$-nef over $Y\setminus \mathcal S_1$ and 
$-\mathcal L$ is $\pi$-nef over $Y\setminus \mathcal S_2$. 
We put $\mathcal S:=\mathcal S_1\cup \mathcal S_2$. 
Then $\mathcal S$ is also an analytically meagre subset and 
$\mathcal L$ is $\pi$-numerically trivial over $Y\setminus \mathcal S$ 
with $y_0\not\in \mathcal S$. 
\end{proof}

\begin{rem}\label{a-rem4.11}
In Lemma \ref{a-lem4.10} (2) and (3), 
we can write $Y\setminus \mathcal S=\bigcap _{i\in \mathbb Z_{>0}} 
U_i$ such that $U_i$ is an open neighborhood 
of $y_0$ and $Y\setminus U_i$ is an analytically meagre 
subset for every $i\in \mathbb Z_{>0}$. 
\end{rem}

The following lemma is a key lemma for the proof of 
Nakayama's finiteness. 

\begin{lem}[{see \cite[Chapter II.~5.14.~Lemma]{nakayama3}}]
\label{a-lem4.12}
Let $\pi\colon X\to Y$ be a projective surjective morphism 
between complex manifolds such that 
$Y$ is connected and let $\mathcal L$ be an 
$\mathbb R$-line bundle on $X$. 
Assume that $\mathcal L|_{X_{y_0}}$ is numerically 
trivial for some $y_0\in Y$. 
Then $\mathcal L$ is $\pi$-numerically trivial over $Y$. 
\end{lem}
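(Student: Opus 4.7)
The plan is to combine Lemma~\ref{a-lem4.10}~(3) with a Leray--Ehresmann monodromy argument that exploits the manifold hypothesis on $X$ and $Y$. First I would apply Lemma~\ref{a-lem4.10}~(3) to obtain an analytically meagre subset $\mathcal S \subset Y$ with $y_0 \notin \mathcal S$ such that $\mathcal L$ is $\pi$-numerically trivial over $Y \setminus \mathcal S$; this reduces the problem to verifying numerical triviality of $\mathcal L|_{X_{y_1}}$ for each $y_1 \in \mathcal S$.

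Next I would exploit that, since $\mathcal L$ is a finite $\mathbb R$-linear combination of line bundles on the complex manifold $X$, the first Chern class $c_1(\mathcal L) \in H^2(X,\mathbb R)$ is well defined. Via proper base change and the Leray edge morphism it induces a section $\sigma$ with $\sigma_y = c_1(\mathcal L|_{X_y}) \in H^2(X_y,\mathbb R)$ for every $y \in Y$. Over the Zariski-open dense smooth locus $Y^* \subset Y$ of $\pi$ (nonempty by generic smoothness since $X$ is smooth), Ehresmann's fibration theorem makes $R^2\pi_*\mathbb R_X|_{Y^*}$ a local system, so $\sigma|_{Y^*}$ is monodromy-invariant; connectedness of $Y^*$ together with $\sigma_{y_0}=0$, which follows from the hypothesis and the identification of numerical triviality on a smooth projective variety with vanishing of $c_1$ in $H^2(-,\mathbb R)$, forces $\sigma \equiv 0$ throughout $Y^*$, recovering numerical triviality on all smooth fibres.

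To push the conclusion across $\mathcal S$ I would fix $y_1 \in \mathcal S$ and an arbitrary irreducible curve $C \subset X_{y_1}$, and consider the irreducible component $D$ of the relative Douady space of $1$-cycles with the Hilbert polynomial of $C$ (with respect to some $\pi$-ample line bundle) containing $[C]$. Flatness of the universal family $\mathcal U \to D$ and Grauert's theorem make the intersection number $\mathcal L \cdot \mathcal U_t$ locally constant in $t$; whenever the proper map $D \to Y$ meets $Y \setminus \mathcal S$, the value of this constant at a good fibre is zero and therefore $\mathcal L \cdot C = 0$.

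The hard part, and the main obstacle, is the rigid case in which the image of $D$ in $Y$ is entirely contained in $\mathcal S$. My plan is to address this by induction on $\dim Y$: restricting $\pi$ to (a desingularisation of) the image of $D$ in $Y$ yields a projective morphism of complex manifolds over a strictly smaller-dimensional connected base, and by choosing $\mathcal S$ carefully from the outset, using that it decomposes as a countable union of proper analytic subsets of $Y$, one can arrange a base point on this restricted family at which the inductive hypothesis applies. The most delicate technical point will be synchronising the meagre exceptional sets produced by repeated applications of Lemma~\ref{a-lem4.10} with the analytic images of these Douady components so that the induction on $\dim Y$ closes and delivers $\mathcal L \cdot C = 0$ uniformly in $y_1$ and $C$.
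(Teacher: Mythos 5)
Your first two steps align with the paper (the meagre-set reduction via Lemma~\ref{a-lem4.10}~(3), the passage to $c_1(\mathcal L)\in H^2(X,\mathbb R)$, and the monodromy argument on the local system $R^2\pi_*\mathbb R$ over the smooth locus $Y^*$), and those steps are correct, but the proposal then stalls exactly where you flag it: the rigid Douady case. Nothing forces the component $D$ of the Douady space to map onto a subvariety of $Y$ that escapes $\mathcal S$; a curve inside a singular fiber may admit no deformation out of that fiber at all, since the natural specialization goes from nearby fibers to the special one, not the other way around. The induction on $\dim Y$ you sketch is not an argument but the missing argument, and ``synchronising the meagre exceptional sets'' with the images of the Douady components is precisely the content that is absent, with no guarantee the induction closes.

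The paper's proof avoids the whole issue. After the meagre-set reduction one works over a polydisc $U$ around an arbitrary $y_1\in Y$, taking the base point to lie in $U\setminus\mathcal S$. Since $\pi$ is a smooth proper morphism over $U$ --- an implicit hypothesis of the lemma, consistent with every application of it in the paper --- Ehresmann's fibration theorem gives a diffeomorphism $\pi^{-1}(U)\cong U\times F$ with $\pi$ the first projection. As $U$ is contractible, K\"unneth gives $H^2(\pi^{-1}(U),\mathbb R)\cong H^2(F,\mathbb R)$, under which $c_1(\mathcal L)=p_2^*c_1(\mathcal L|_F)=0$ because $\mathcal L|_F$ is numerically trivial on the smooth projective variety $F$. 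The diffeomorphic product structure identifies $H^2(X_y,\mathbb R)$ with $H^2(\pi^{-1}(U),\mathbb R)$ for \emph{every} $y\in U$ simultaneously, so the vanishing of $c_1$ propagates to all fibers over $U$ at once, with no cycle deformation required and no exceptional locus left over. The key point you missed is that Ehresmann over a contractible base is strictly stronger than the local-system statement on $Y^*$: it gives a global product, and once you have that, the Douady machinery is unnecessary.
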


\begin{proof} 
By Lemma \ref{a-lem4.10} (3), 
there exists an analytically meagre subset $\mathcal S$ 
such that $\mathcal L$ is $\pi$-numerically 
trivial over $Y\setminus \mathcal S$. 
Therefore, it is sufficient to prove 
the statement under the extra assumption that $Y$ is a 
polydisc and $y_0$ is its origin. 
We can define the first Chern class 
$c_1(\mathcal L)$ in $H^2(X, \mathbb R)$ since 
$\mathcal L$ is a finite $\mathbb R$-linear 
combination of line bundles. 
It is sufficient to prove that $c_1(\mathcal L)=0$ 
in $H^2(X, \mathbb R)$. 
Since $\pi$ is smooth and proper, 
$X$ is diffeomorphic to $Y\times F$, where $F:=X_{y_0}$, 
and $\pi\colon X\to Y$ is diffeomorphic to 
the first projection $p_1\colon Y\times F\to Y$ 
(see, for example, \cite[Theorem 2.5]{kodaira}). 
On the other hand, 
\begin{equation}\label{a-eq4.1}
H^2(Y\times F, \mathbb R)\simeq H^2(F, \mathbb R) 
\end{equation}
holds as a very special case of K\"unneth formula 
(see, for example, \cite[(5.9)]{bott-tu}), 
which can be checked by Poincar\'e's lemma. 
By the isomorphism 
$H^2(X, \mathbb R)\simeq 
H^2(Y\times F, \mathbb R)$ and 
\eqref{a-eq4.1}, 
$c_1(\mathcal L)=p^*_2c_1(\mathcal L|_{X_{y_0}})$, 
where $p_2\colon Y\times F\to F$ is the second projection. Since 
$c_1(\mathcal L|_{X_{y_0}})=0$, 
we obtain $c_1(\mathcal L)=0$. 
This implies that $\mathcal L$ is $\pi$-numerically 
trivial over $Y$. 
This is what we wanted. 
\end{proof}

By Lemmas \ref{a-lem4.10} and \ref{a-lem4.12}, 
we can prove: 

\begin{thm}[{see \cite[Chapter II.~5.14.~Lemma]{nakayama3}}]\label{a-thm4.13}
Let $\pi\colon X\to Y$ be a projective surjective morphism 
between normal complex varieties and let $W$ be a compact 
subset of $W$. 
For a point $y_0\in W$, after shrinking $Y$ around $W$ suitably, 
there exists a Zariski open subset $U$ of $Y$ 
containing $y_0$ having the following 
property: If an $\mathbb R$-line bundle on $X$ is 
$\pi$-numerically trivial over the 
point $y_0\in Y$, 
then it is $\pi$-numerically trivial over $U$. 
\end{thm}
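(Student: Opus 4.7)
The plan is to reduce to the smooth case of Lemma~\ref{a-lem4.12} via a simultaneous resolution. The key feature is that the conclusion of Lemma~\ref{a-lem4.12} is uniform in the line bundle: for a smooth projective morphism between complex manifolds with connected base, fiberwise numerical triviality at one point forces it at every point, with no dependence on $\mathcal L$. This uniformity is what permits a single Zariski open $U$ to work for all $\mathbb R$-line bundles simultaneously. After shrinking $Y$ around $W$, take a resolution $\tau\colon\tilde Y\to Y$ with $\tilde Y$ smooth and a further resolution $\tilde X\to X\times_Y\tilde Y$ with $\tilde X$ smooth, producing a projective morphism $\tilde\pi\colon\tilde X\to\tilde Y$ between complex manifolds together with a proper bimeromorphic morphism $\mu\colon\tilde X\to X$ satisfying $\pi\circ\mu=\tau\circ\tilde\pi$. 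Using resolution of singularities of morphisms in the analytic category, applied with centers lying over $\tau^{-1}(y_0)$, and shrinking $Y$ around $W$ further if necessary, we arrange that $\tilde\pi$ is smooth along $\tilde X_{\tilde y}$ for every $\tilde y\in\tau^{-1}(y_0)$. Let $\tilde V\subset\tilde Y$ denote the Zariski open smooth locus of $\tilde\pi$, and let $\tilde V'$ be the union of those connected components of $\tilde V$ meeting $\tau^{-1}(y_0)$, so that $\tau^{-1}(y_0)\subset\tilde V'$.

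Set $U:=Y\setminus\tau(\tilde Y\setminus\tilde V')$. Since $\tau$ is proper and $\tilde Y\setminus\tilde V'$ is closed analytic, $U$ is Zariski open in $Y$; by construction $\tau^{-1}(U)\subset\tilde V'$ and $y_0\in U$. Let $\mathcal L$ be any $\mathbb R$-line bundle on $X$ with $\mathcal L|_{X_{y_0}}$ numerically trivial. For each $\tilde y\in\tau^{-1}(y_0)$ and each curve $\tilde C\subset\tilde X_{\tilde y}$, the projection formula gives $\mu^*\mathcal L\cdot\tilde C=\mathcal L\cdot\mu_*\tilde C=0$, since $\mu_*\tilde C$ is supported in $X_{y_0}$; hence $\mu^*\mathcal L|_{\tilde X_{\tilde y}}$ is numerically trivial. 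Applying Lemma~\ref{a-lem4.12} separately to each connected component of $\tilde V'$ (each containing some point of $\tau^{-1}(y_0)$), we conclude $\mu^*\mathcal L$ is $\tilde\pi$-numerically trivial over every point of $\tilde V'$.

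For the descent back to $X$, take $y\in U$ and any curve $C\subset X_y$. Since $\mu^{-1}(X_y)=\bigcup_{\tilde y\in\tau^{-1}(y)}\tilde X_{\tilde y}$ surjects onto $X_y$ and $\tau^{-1}(y)$ has only finitely many irreducible components (as we may assume $\tilde Y$ is relatively compact after shrinking), the irreducible curve $C$ lies in $\mu(\tilde X_{\tilde y})$ for some $\tilde y\in\tau^{-1}(y)\subset\tilde V'$; pick an irreducible curve $\tilde C\subset\tilde X_{\tilde y}$ with $\mu(\tilde C)=C$. Then $0=\mu^*\mathcal L\cdot\tilde C=\deg(\mu|_{\tilde C})\cdot(\mathcal L\cdot C)$ with $\deg(\mu|_{\tilde C})\geq 1$, whence $\mathcal L\cdot C=0$. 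Thus $\mathcal L$ is $\pi$-numerically trivial over $U$, and $U$ was chosen independently of $\mathcal L$. The main obstacle is the geometric input in the first paragraph: arranging the smooth locus of $\tilde\pi$ to contain all of $\tau^{-1}(y_0)$. This is the technical heart of the proof and requires a careful application of resolution of singularities of morphisms with centers chosen over the fiber $\pi^{-1}(y_0)$; once this has been secured, the rest of the argument is essentially a combination of the projection formula with Lemma~\ref{a-lem4.12}.
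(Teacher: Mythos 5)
There is a genuine gap in your proof, and it is exactly at the step you flag as the ``technical heart'': the claim that one can arrange, by resolution of singularities of morphisms, for $\tilde\pi\colon\tilde X\to\tilde Y$ to be smooth along $\tilde X_{\tilde y}$ for every $\tilde y\in\tau^{-1}(y_0)$. This is false in general. Birational modifications of the source (and base) cannot convert a degenerate fiber into a smooth one: for instance, for an elliptic fibration with a singular fiber over $y_0$, nontrivial local monodromy around $y_0$ obstructs any modification from producing a morphism that is smooth (hence topologically locally trivial) over a neighborhood of $y_0$; and if $\tilde\pi$ were smooth along the whole fiber over a point of $\tau^{-1}(y_0)$, it would be smooth over a neighborhood of that point. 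Flattening and generic smoothness give you smoothness over a dense Zariski open subset of the base, but say nothing about a prescribed closed point. Since $y_0\in U$ requires $\tau^{-1}(y_0)\subset\tilde V'$, your $U$ need not contain $y_0$, and the argument collapses.

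The paper's proof avoids this by never trying to make the morphism smooth over $y_0$. Instead it builds a finite stratification $Y=Y_0\supset Y_1\supset\cdots\supset Y_l\supset Y_{l+1}$ with resolutions $\pi_i\colon X_i\to Y_i$ such that each $\pi_i$ is smooth over the locally closed stratum $Y_i\setminus Y_{i+1}$ (this is just generic smoothness applied inductively, restricting and resolving at each step), with $y_0$ landing in the deepest stratum $Y_l\setminus Y_{l+1}$. One then throws away the closures $\overline S$ of those strata $S$ with $y_0\notin\overline S$ (closed analytic by Remmert's extension theorem) together with $Y_{l+1}$, and calls what remains $U$. The crucial extra input, which your proof also lacks, is Lemma~\ref{a-lem4.10}~(3): it produces an analytically meagre set $\mathcal S$ off which $\mathcal L$ is already numerically trivial, and the point is that $\mathcal S$ cannot swallow an entire stratum $T$ with $y_0\in\overline T$, so $T$ meets the good locus. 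Applying Lemma~\ref{a-lem4.12} stratum by stratum then propagates numerical triviality from one point of each such $T$ to all of $T$, and hence to all of $U$. Your proposal applies Lemma~\ref{a-lem4.12} only once over a single open set and never invokes Lemma~\ref{a-lem4.10}; even granting the (false) smoothness arrangement, you would still need a substitute mechanism to feed a base point into Lemma~\ref{a-lem4.12} on each component of $\tilde V'$, which your projection-formula argument only supplies for the components over $y_0$ itself, not over other points of $U$.
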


\begin{proof}Throughout this proof, 
we will repeatedly shrink $Y$ around $W$ suitably without 
mentioning it explicitly. 
By taking a resolution of singularities, we have a projective 
surjective morphism $X_0\to X$ from a smooth complex variety $X_0$. 
Let $\pi_0$ be the composition of $X_0\to X$ and $\pi\colon X\to Y$. 
Let $Y_1\subset Y$ be an analytic subset such that 
$\dim Y_1<\dim Y$, $Y\setminus Y_1$ is smooth, 
and $\pi_0$ is smooth over $Y\setminus Y_1$. 
Let $X_1\to \pi^{-1}_0(Y_1)$ be a projective bimeromorphic 
morphism from a smooth complex analytic space $X_1$ obtained by 
taking resolutions of singularities of irreducible components 
of $\pi^{-1}_0(Y_1)$. 
Then we obtain a sequence of 
analytic subsets 
\begin{equation*}
Y=:Y_0\supset Y_1\supset \cdots \supset Y_l\supset Y_{l+1}
\end{equation*} 
and projective surjective morphisms $\pi_i\colon X_i\to Y_i$, and 
projective surjective morphisms $X_i\to \pi^{-1}_{i-1}(Y_i)$ for 
$1\leq i\leq l$ such that 
\begin{itemize}
\item[(i)] $\dim _y Y_i<\dim _y Y_{i+1}$ holds at any point $y\in Y_i$, 
\item[(ii)] $Y_i\setminus Y_{i+1}$ is smooth, 
\item[(iii)] $\pi_i$ is smooth over $Y_i\setminus Y_{i+1}$, 
\item[(iv)] $\pi_i$ is nothing but the composition $X_i\to \pi^{-1}_{i-1}(Y_i)\to Y_i$, 
\item[(v)] $\pi_i$ is projective, and 
\item[(vi)] $y_0\in Y_l\setminus Y_{l+1}$. 
\end{itemize}
Let $S$ be any connected component of $Y_i\setminus Y_{i+1}$ for 
some $i\leq l$ such that 
$y_0\not\in \overline S$, where $\overline S$ is the topological 
closure of $S$ in $Y$. We note that $\overline S$ 
is an analytic 
subset of $Y$ by Remmert's extension theorem (see, 
for example, \cite[Chapter 9, \S 4, 2.~Extension Theorem for 
Analytic Sets]{grauert-remmert} and  
\cite[Theorem 6.8.1]{noguchi}) since 
$\dim _y Y_{i+1}<\dim _y Y_i$ holds at any point $y\in Y_{i+1}$. 
Let $U\subset Y$ be the Zariski open subset whose 
complement is the union of all such $\overline S$ for all $i$ and 
of $Y_{l+1}$. By Lemma \ref{a-lem4.10} (3), 
there exists an analytically meagre subset $\mathcal S$ of $Y$ such that 
$\mathcal L$ is $\pi$-numerically trivial over $Y\setminus \mathcal S$ with 
$y_0\not \in \mathcal S$. 
Let $T$ be any connected component of $Y_i\setminus Y_{i+1}$ for 
some $i\leq l$ with $y_0\in \overline T$. 
Then $T\cap (Y\setminus \mathcal S)\ne \emptyset$ by 
Remark \ref{a-rem4.11}. 
Thus, by Lemma \ref{a-lem4.12}, 
$\mathcal L$ is $\pi$-numerically trivial over $U$.  
This is what we wanted. 
\end{proof}

Let us prove Theorem \ref{a-thm4.7}. 
In the proof of Theorem \ref{a-thm4.7}, we will 
use the argument in the proof Theorem \ref{a-thm4.13}.  

\begin{proof}[Proof of Theorem \ref{a-thm4.7}]
As in the proof of Theorem \ref{a-thm4.13}, 
we construct a finite sequence of analytic subsets 
\begin{equation*}
Y=:Y_0\supset Y_1\supset \cdots \supset Y_k
\end{equation*} 
and projective surjective morphisms 
$\pi_i\colon X_i\to Y_i$ satisfying (i)--(v) in 
the proof of Theorem \ref{a-thm4.13}. 
Let $W_{i, j}$ be the connected components of $W\cap Y_i$ 
for $1\leq j\leq k_i$. 
We take a point $w_{i, j}\in W_{i, j} \setminus Y_{i+1}$ for 
any $(i, j)$ with $W_{i, j}\not\subset Y_{i+1}$. 
Then it is sufficient to show that 
\begin{equation}\label{a-eq4.2}
A^1(X/Y; W)\to \bigoplus \NS \left(\pi^{-1}_i (w_{i, j})\right)
/(\mathrm{torsion})
\end{equation}
is injective, where $\NS\left(\pi^{-1}_i (w_{i, j})\right)$ is the 
Neron--Severi group of $\pi^{-1}_i(w_{i, j})$. 
Let $\mathcal L$ be a line bundle on $\pi^{-1}(U)$, 
where $U$ is an open neighborhood 
of $W$. 
We will prove that $\mathcal L$ is $\pi$-numerically 
trivial over $W$ under the assumption that $\mathcal L$ is $\pi$-numerically 
trivial over all $w_{i,j}$. 
Since $\mathcal L$ is $\pi$-numerically 
trivial over $w_{i,j}$, 
$\mathcal L$ is $\pi$-numerically trivial over $U_{i, j}\setminus Y_{i+1}$ 
by Lemma \ref{a-lem4.12}, 
where $U_{i, j}$ is the connected component of $Y_i\cap U$ containing 
$w_{i, j}$. We note that $W\cap Y_i\subset \bigcup _j U_{i, j}$. 
Hence, $\mathcal L$ is $\pi$-numerically trivial over $W=\bigcup _i W\cap 
Y_i$. 
This is what we wanted, that is, \eqref{a-eq4.2} is injective. Therefore, 
we obtain that $A^1(X/Y; W)$ is a finitely generated abelian group since 
it is a subgroup of $\bigoplus \NS \left(\pi^{-1}_i (w_{i, j})\right)
/(\mathrm{torsion})$. 
\end{proof}

\section{Vanishing theorems}\label{a-sec5}

In this section, we will treat some vanishing theorems. 
Fortunately, the necessary vanishing theorems 
have already been established. 
We explain only two vanishing theorems 
here for the reader's convenience. 
The first one is the Kawamata--Viehweg vanishing theorem 
for projective morphisms between complex varieties. 

\begin{thm}[Kawamata--Viehweg vanishing theorem 
for projective morphisms of complex varieties]\label{a-thm5.1}
Let $X$ be a smooth complex variety and 
let $\pi\colon X\to Y$ be 
a projective morphism of complex varieties. 
Assume that $D$ is an 
$\mathbb R$-divisor on $X$ such that 
$D$ is $\pi$-nef and $\pi$-big and 
that $\Supp \{D\}$ is a simple normal 
crossing divisor on $X$. 
Then $R^i\pi_*\mathcal O_X(K_X+\lceil D\rceil)=0$ for every 
$i>0$. 
\end{thm}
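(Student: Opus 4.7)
Since the statement is local on $Y$, after shrinking $Y$ around an arbitrary point I may assume $Y$ is Stein. On a Stein base, by Cartan's Theorems A and B the vanishing of $R^i\pi_*\mathcal O_X(K_X+\lceil D\rceil)$ reduces to showing $H^i(\pi^{-1}(Y'),\mathcal O_X(K_X+\lceil D\rceil))=0$ for a cofinal family of Stein opens $Y'\subset Y$. After shrinking further, Lemma \ref{a-lem2.37} lets me assume that $D$ has finitely many irreducible components in its support and is a finite $\mathbb R$-linear combination of Cartier divisors on $X$.

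The next step is to reduce from an $\mathbb R$-divisor to a $\mathbb Q$-divisor. Using $\pi$-bigness, I would write $D\sim_{\mathbb R}\varepsilon A+\Theta$ where $A$ is a $\pi$-ample $\mathbb Q$-divisor and $\Theta$ is still $\pi$-nef, then perturb the coefficients of $D$ slightly within the $\mathbb R$-affine span of its components. By a standard rational approximation argument (openness of ampleness, together with the finite-dimensionality of the relevant affine subspace), one can achieve a $\mathbb Q$-divisor $D'$ with $\lceil D'\rceil=\lceil D\rceil$, simple normal crossing fractional part, and still $\pi$-nef and $\pi$-big.

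Then I would invoke Kawamata's covering construction: choose a positive integer $m$ such that $mD'$ is integral, and build a finite cover $f\colon Z\to X$ from a smooth complex variety $Z$, ramified along the components of $\Supp\{D'\}$, so that $f^*D'$ is an integral Cartier divisor and $\omega_X\otimes\mathcal O_X(\lceil D'\rceil)$ appears as a direct summand of $f_*\bigl(\omega_Z\otimes\mathcal O_Z(f^*D')\bigr)$ via the usual isotypic decomposition. Since $\pi$ is projective and $Y$ is Stein, the required cyclic cover exists in the complex analytic category after possibly shrinking $Y$, by choosing appropriate roots of sections of $\mathcal O_X(mD')$ obtained via $\pi$-ampleness. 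On $Z$ the divisor $f^*D'$ is integral, $(\pi\circ f)$-nef, and $(\pi\circ f)$-big, so the relative Kodaira vanishing theorem for projective morphisms of smooth complex varieties to Stein spaces gives
\begin{equation*}
R^i(\pi\circ f)_*\bigl(\omega_Z\otimes\mathcal O_Z(f^*D')\bigr)=0 \quad \text{for } i>0.
\end{equation*}
Because $f$ is finite and hence has no higher direct images, pushing down and extracting the direct summand yields the desired conclusion.

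The main obstacle is the relative Kodaira vanishing for integral nef-and-big Cartier divisors on smooth varieties projective over a Stein base. In the algebraic category this is classical, and in the analytic setting it follows from the fiberwise Kodaira vanishing (whose proof uses Hodge theory on the compact smooth fibers) combined with Serre-type vanishing after twisting by a sufficiently $\pi$-ample line bundle and a spectral sequence argument over the Stein base; this is where the proof genuinely uses analytic, as opposed to purely algebraic, inputs.
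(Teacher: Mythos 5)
Your overall strategy differs from the paper's. The paper treats the $\mathbb Q$-divisor case as already known (Nakayama, Fujino, etc.), and for the $\mathbb R$-divisor case it first performs a projective bimeromorphic modification $f\colon Z\to X$ so that the divisors entering the Kodaira-lemma decomposition become simple normal crossing, and then perturbs to a $\mathbb Q$-divisor that is $\pi$-\emph{ample} with the same round-up; no cyclic covering is used. You instead try to perturb directly on $X$ to a $\mathbb Q$-divisor and then re-prove the $\mathbb Q$-divisor case from scratch via a Kawamata cover and relative Kodaira vanishing. The latter is a valid route in principle, but your write-up has two genuine gaps.

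First, the reduction to a $\mathbb Q$-divisor is not correct as stated. You write $D\sim_{\mathbb R}\varepsilon A+\Theta$ with $A$ $\pi$-ample and claim $\Theta$ is still $\pi$-nef; this is false in general. If $D$ is $\pi$-nef and $\pi$-big but lies on the boundary of the nef cone (say $D\cdot C=0$ for some $\pi$-contracted curve $C$), then $(D-\varepsilon A)\cdot C=-\varepsilon A\cdot C<0$, so $D-\varepsilon A$ is never nef for any $\pi$-ample $A$. Likewise, ``perturbing within the $\mathbb R$-affine span of the components of $D$'' cannot be expected to preserve $\pi$-nefness: nefness is a closed, not open, condition, and $D$ may lie on the boundary of the nef cone with no rational nef point nearby inside that affine slice. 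The correct reduction uses the Kodaira lemma $D\sim_{\mathbb R}A+E$ with $A$ $\pi$-ample and $E\geq 0$, and then $D-\varepsilon E\sim_{\mathbb R}(1-\varepsilon)D+\varepsilon A$ is $\pi$-\emph{ample}; but $\Supp E$ is in general not compatible with $\Supp\{D\}$, so one must first blow up $X$ to restore the simple normal crossing hypothesis before rounding. This is exactly why the paper's proof passes to a bimeromorphic model $Z\to X$; your proposal omits that step, and without it the argument breaks.

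Second, the key analytic input --- relative Kodaira vanishing for a nef and big integral line bundle on a smooth variety projective over a Stein base --- is not established by ``fiberwise Kodaira vanishing combined with Serre-type vanishing after twisting by a $\pi$-ample line bundle and a spectral sequence argument.'' Fiberwise vanishing plus base change controls $R^i\pi_*$ only over the locus where cohomology and base change behave well, and twisting by a $\pi$-ample bundle kills the direct image sheaves but destroys the statement you are trying to prove; there is no spectral sequence that recovers the untwisted vanishing from these inputs. The actual analytic proof (Nakayama, Takegoshi, Fujino) uses Hodge-theoretic or $L^2$ methods and is substantially deeper; the paper deliberately cites it rather than rederiving it. As written, your base case is an assertion, not a proof, and the cited mechanism would not produce it.
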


\begin{proof}[Sketch of Proof of Theorem \ref{a-thm5.1}]
When $D$ is a $\mathbb Q$-divisor, this statement 
is well known. It follows from \cite[Theorem 3.4]{nakayama2}, 
\cite[Corollary 1.4]{fujino-transcendental}, and so on. 
Let $y\in Y$ be any point. 
It is sufficient to prove that $R^i\pi_*\mathcal O_X(K_X+\lceil D
\rceil)=0$ holds 
for $i>0$ on some open neighborhood $U_y$ of $y$. 
Therefore, we will freely shrink $Y$ around $y$ without mentioning 
it explicitly. 
We take a projective bimermorphic morphism 
$f\colon Z\to X$ and can reduce the problem to 
the case where $D$ is a $\mathbb Q$-divisor 
which is ample over $Y$. 
This reduction step is well known 
(see, for example, Step 2 in the proof of 
\cite[Theorem 3.4]{nakayama2}, 
the proof of \cite[Theorem 1-2-3]{kmm}, and so on). 
Hence we obtain the desired vanishing theorem. 
\end{proof}

The second one is essentially the same as the first one. 
However, we think that this formulation is 
useful for some applications. 

\begin{thm}\label{a-thm5.2}
Let $(X, \Delta)$ be a divisorial log terminal 
pair and let $\pi\colon X\to Y$ be a projective morphism 
of complex varieties. 
Let $D$ be a $\mathbb Q$-Cartier integral Weil divisor 
on $X$ such that 
$D-(K_X+\Delta)$ is $\pi$-ample. 
Then $R^i\pi_*\mathcal O_X(D)=0$ holds for every $i>0$. 
\end{thm}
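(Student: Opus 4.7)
The plan is to deduce Theorem \ref{a-thm5.2} from Theorem \ref{a-thm5.1} by a standard log-resolution argument, adapted to the complex analytic setting and to the dlt hypothesis. First I would localize on $Y$: fix a point $y\in Y$ and shrink $Y$ to a small Stein neighborhood of $y$ so that $\Delta$ has only finitely many components (Lemma \ref{a-lem2.37}) and $D-(K_X+\Delta)$ is expressed globally as a $\pi$-ample $\mathbb R$-Cartier $\mathbb R$-divisor. By Lemma \ref{a-lem3.9} applied on a relatively compact open set, take a projective bimeromorphic $f\colon Z\to X$, realized as a finite sequence of blow-ups, so that $\Exc(f)\cup\Supp f^{-1}_*\Delta$ is simple normal crossing, $a(E,X,\Delta)>-1$ for every $f$-exceptional prime divisor $E$ (this is exactly the dlt condition), and $-F$ is $f$-ample for an effective $f$-exceptional divisor $F$ with $\Supp F=\Exc(f)$. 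After further blow-ups I would arrange that $f^*D$ is Cartier.

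Write $K_Z+\Delta_Z=f^*(K_X+\Delta)$ and decompose $\Delta_Z=\Delta_Z^{=1}+\Delta_Z^{<1}$, where $\Delta_Z^{=1}=f^{-1}_*\lfloor\Delta\rfloor$ (since the strict inequality $a(E,X,\Delta)>-1$ forces every exceptional component to sit inside $\Delta_Z^{<1}$). Let $G:=-\lfloor\Delta_Z^{<1}\rfloor$; because $\Delta_Z^{<1}$ has coefficients in $[0,1)$ on non-exceptional components and coefficients $-a(E,X,\Delta)<1$ on exceptional components, $G$ is an effective $f$-exceptional divisor. Set $L:=f^*D+G$, an integral Weil divisor on $Z$. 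Since $G$ is effective and $f$-exceptional and $X$ is normal, the standard projection-formula computation identifies $f_*\mathcal O_Z(L)=\mathcal O_X(D)$.

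For the higher direct images I would apply Theorem \ref{a-thm5.1} to both $f$ and $\pi\circ f$. The key identity is
\[
L-K_Z=f^*(D-K_X-\Delta)+\Delta_Z^{=1}+\{\Delta_Z^{<1}\},
\]
which I rewrite, for a sufficiently small $\varepsilon>0$, as
\[
L-K_Z=\bigl(f^*(D-K_X-\Delta)-\varepsilon F\bigr)+\bigl(\{\Delta_Z^{<1}\}+\varepsilon F\bigr)+\Delta_Z^{=1}.
\]
Here $A:=f^*(D-K_X-\Delta)-\varepsilon F$ is $(\pi\circ f)$-ample (pullback of a $\pi$-ample divisor plus a small $f$-ample term, hence ample over $Y$ by the standard relative-ampleness criterion), so in particular is $f$-ample and $(\pi\circ f)$-nef and big; the effective $\mathbb R$-divisor $B:=\{\Delta_Z^{<1}\}+\varepsilon F$ has simple normal crossing support with all coefficients in $[0,1)$ for $\varepsilon$ small (this uses precisely the strict bound $a(E,X,\Delta)>-1$ from dlt); and $\Delta_Z^{=1}$ is integral and is absorbed into the round-up. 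Applying Theorem \ref{a-thm5.1} yields $R^jf_*\mathcal O_Z(L)=0$ for $j>0$ and $R^i(\pi\circ f)_*\mathcal O_Z(L)=0$ for $i>0$, and the Leray spectral sequence then degenerates to give
\[
R^i\pi_*\mathcal O_X(D)\simeq R^i\pi_*\bigl(f_*\mathcal O_Z(L)\bigr)\simeq R^i(\pi\circ f)_*\mathcal O_Z(L)=0\qquad(i>0).
\]

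The main obstacle is the coefficient-$1$ component $\Delta_Z^{=1}$ arising from $\lfloor\Delta\rfloor$: one must choose $L=f^*D+G$ rather than $f^*D-\lfloor\Delta_Z\rfloor$ in order to keep $\Delta_Z^{=1}$ separate from the SNC fractional part that feeds into Theorem \ref{a-thm5.1}, and one must then check carefully both that $f_*\mathcal O_Z(L)=\mathcal O_X(D)$ (using the dlt discrepancy bound together with normality of $X$) and that the perturbation by $\varepsilon F$ keeps the coefficients of $B$ strictly below $1$ on every exceptional component.
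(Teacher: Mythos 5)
Your overall plan, to log-resolve via Lemma \ref{a-lem3.9}, reduce to the smooth case, apply Theorem \ref{a-thm5.1} to both $f$ and $\pi\circ f$, and conclude via the Leray spectral sequence, is the right framework and is exactly the route through \cite[Theorem 1-2-5]{kmm} that the paper indicates. Two of the intermediate claims, however, are incorrect, and both gaps are essential.

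First, the claim that after further blow-ups one can arrange $f^*D$ to be Cartier is false. The hypothesis allows $D$ to be $\mathbb Q$-Cartier of Cartier index greater than one, in which case $f^*D$ has nonintegral coefficients along $\Exc(f)$, and no sequence of blow-ups removes them. What one can (and must) arrange is that $\Supp\{f^*D\}\cup\Exc(f)\cup\Supp f^{-1}_*\Delta$ is simple normal crossing; the relevant integral divisor is then a round-up, of the form $L:=\lceil f^*D-\Delta_Z^{<1}-\varepsilon F\rceil$, rather than $f^*D+G$. As written, your $L$ is not even an integral divisor, so the claimed identity $f_*\mathcal O_Z(L)=\mathcal O_X(D)$ and the subsequent application of Theorem \ref{a-thm5.1} do not make sense.

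Second, and independently of the first point, the step ``$\Delta_Z^{=1}$ is integral and is absorbed into the round-up, applying Theorem \ref{a-thm5.1} yields\dots'' is not a legitimate use of Theorem \ref{a-thm5.1}. That theorem requires the $\mathbb R$-divisor being rounded up, here $A+B+\Delta_Z^{=1}$, to be nef and big over the base; having a simple normal crossing fractional part is not enough. Adding the reduced integral divisor $\Delta_Z^{=1}$ leaves the fractional part unchanged but can destroy nefness, since $\Delta_Z^{=1}$ carries no positivity and can meet curves in the relevant fibers negatively. For a minimal example, take $X$ a smooth projective surface, $\Delta=S$ a smooth curve with $S^2$ very negative, $Y$ a point, and $D=K_X+S+A$ with $A$ a small ample divisor: then $D-(K_X+\Delta)=A$ is ample, but $D-K_X=S+A$ is not nef, so Theorem \ref{a-thm5.1} does not apply to it, although the conclusion of Theorem \ref{a-thm5.2} is of course true. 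Dealing with the reduced part $\Delta_Z^{=1}$ is precisely the content of the argument in \cite[Theorem 1-2-5]{kmm}: one inducts on the number of components $S_i$ of $\Delta_Z^{=1}$, using
\begin{equation*}
0\longrightarrow \mathcal O_Z(L-S_i)\longrightarrow \mathcal O_Z(L)\longrightarrow \mathcal O_{S_i}(L|_{S_i})\longrightarrow 0
\end{equation*}
together with adjunction on $S_i$, so that at each stage Theorem \ref{a-thm5.1} applies with no reduced boundary left (or, alternatively, one uses a Kawamata covering trick). This Norimatsu-type inductive step is what the phrase ``absorbed into the round-up'' is silently skipping, and it cannot be omitted.
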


\begin{proof}[Sketch of Proof of Theorem \ref{a-thm5.2}]
By Lemma \ref{a-lem3.9} and 
Theorem \ref{a-thm5.1}, 
the proof of \cite[Theorem 1-2-5]{kmm} 
works. For the details, see \cite{kmm}. 
\end{proof}

The reader can find various useful vanishing theorems 
for projective morphisms between complex varieties 
in \cite{fujino-transcendental}, \cite{fujino-matsumura}, 
\cite{matsumura}, \cite{fujino-bertini}, and so on. 

\section{Basepoint-free theorems, I}\label{a-sec6}

In this section, we will collect some necessary basepoint-free 
theorems for Cartier divisors. 

Let us start with Shokurov's nonvanishing theorem 
for smooth projective varieties, 
which can be proved by using Hironaka's 
resolution of singularities and the Kawamata--Viehweg vanishing 
theorem. 

\begin{thm}[Shokurov's nonvanishing theorem]
\label{a-thm6.1} 
Let $X$ be a smooth projective variety and let 
$D$ be a nef Cartier divisor on $X$. 
Let $A$ be an $\mathbb R$-divisor on $X$ such that 
$pD+A-K_X$ is ample for some positive integer $p$, 
$\lceil A\rceil \geq 0$, 
and $\Supp \{A\}$ is a simple normal crossing divisor. 
Then there exists some positive integer $m_0$ 
such that $H^0(X, \mathcal O_X(mD+\lceil A\rceil))\ne 0$ 
holds for every integer $m\geq m_0$. 
\end{thm}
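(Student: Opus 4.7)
The plan is to prove this by induction on $n=\dim X$, following Shokurov's classical argument as presented in \cite[Theorem 3.4]{kollar-mori} or \cite[Theorem 2-1-1]{kmm}. The base case $n=0$ is immediate: on a point $D$ is a non-negative integer and $\lceil A\rceil\ge 0$ is a non-negative integer, so $mD+\lceil A\rceil\ge 0$ and the sections are nonzero for every $m\ge 0$.

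For the inductive step, the idea is to build a single lower-dimensional log canonical center $F$ (independent of $m$), apply the inductive hypothesis on $F$ to obtain nonvanishing for every sufficiently large $m$, and then lift these sections to $X$ via the Kawamata--Viehweg vanishing theorem (Theorem~\ref{a-thm5.1}). Concretely, I would fix an analytically very general smooth point $x_0\in X\setminus\Supp A$. Since $pD+A-K_X$ is ample, a standard asymptotic Riemann--Roch estimate produces, for every sufficiently large $q$, an effective divisor $E_q\in|q(pD+A-K_X)|$ with $\mathrm{mult}_{x_0}E_q\ge qn$. Choose $c\in(0,1)$ maximal so that $(X,A+\tfrac{c}{q}E_q)$ is log canonical at $x_0$; perturbing by a small ample divisor to break ties, the pair acquires a unique non-klt place centered over $x_0$.

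Next, pass to a log resolution $f\colon Y\to X$ and write
\begin{equation*}
K_Y+F+\Gamma=f^{*}\bigl(K_X+A+\tfrac{c}{q}E_q\bigr)+N,
\end{equation*}
where $F$ is the unique prime divisor of discrepancy $-1$, $\lfloor\Gamma\rfloor=0$, and $N\ge 0$ is $f$-exceptional. Adjunction gives a klt pair $(F,\Gamma|_F)$ with $f^{*}D|_F$ nef; moreover, by absorbing the ample perturbation into the boundary, one sees that some positive multiple $p'(f^{*}D)|_F+A'_F-K_F$ is ample on $F$ for an appropriate effective $\mathbb R$-divisor $A'_F$ with $\lceil A'_F\rceil\ge 0$ and $\Supp\{A'_F\}$ simple normal crossing. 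Applying the inductive hypothesis to $F$, which has dimension $<n$, we obtain $H^0(F,\mathcal O_F(m f^{*}D|_F+\lceil A'_F\rceil))\neq 0$ for every $m\ge m_1$. For each such $m$, the Kawamata--Viehweg vanishing theorem applied to the short exact sequence
\begin{equation*}
0\to\mathcal O_Y(D_m-F)\to\mathcal O_Y(D_m)\to\mathcal O_F(D_m|_F)\to 0,
\end{equation*}
with $D_m$ an appropriate integer rounding of $mf^{*}D+\lceil f^{*}A-\Gamma\rceil$ plus the effective exceptional correction from $N$, yields surjectivity on global sections and lifts the nonzero section from $F$ to $Y$; pushing forward to $X$ then produces a nonzero element of $H^0(X,\mathcal O_X(mD+\lceil A\rceil))$.

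The main obstacle will be the bookkeeping at the restriction step: the log resolution and the tie-breaking perturbation must be chosen carefully enough that the integer round-up appearing in the vanishing statement simultaneously matches the hypothesis of the inductive statement on $F$ and recovers the target divisor $mD+\lceil A\rceil$ after push-forward to $X$, while the effective exceptional term $N$ is absorbed without spoiling the simple normal crossing condition on $\Supp\{A\}$.
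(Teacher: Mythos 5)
Your sketch reconstructs the classical Shokurov induction directly for $\mathbb R$-coefficients, but the paper takes a much shorter route: a one-step reduction to the $\mathbb Q$-divisor case followed by a citation. Since $\lceil A\rceil\geq 0$ forces every coefficient of $A$ to lie in $(-1,\infty)$, and since $\lceil\,\cdot\,\rceil$ is locally constant away from the integers, one can perturb the finitely many irrational coefficients of $A$ by arbitrarily small rational amounts (leaving the integer coefficients, which are already rational, untouched) to obtain a $\mathbb Q$-divisor $A'$ with $\lceil A'\rceil=\lceil A\rceil$, with $\Supp\{A'\}=\Supp\{A\}$ still simple normal crossing, and --- by openness of ampleness --- with $pD+A'-K_X$ still ample. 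The statement is then literally \cite[Theorem 2-1-1]{kmm} applied to $A'$ (noting that ample implies nef and big), and no fresh induction is needed. Your approach buys nothing over this, and the bookkeeping you flag at the end is exactly what the perturbation avoids.

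If you do insist on running the argument from scratch, your sketch has two substantive omissions beyond the acknowledged bookkeeping. First, the case $D\equiv 0$ is not addressed: the concentration-of-multiplicity induction you describe treats the generic case, while the numerically trivial case is handled separately in \cite{kmm} and \cite[Section 3.4]{kollar-mori} by a Riemann--Roch plus Kawamata--Viehweg argument. Second, as written, $q(pD+A-K_X)$ is an $\mathbb R$-divisor, so $|q(pD+A-K_X)|$ and an integral $E_q$ in it are not well-defined; making the multiplicity estimate at $x_0$ precise requires either passing to a $\mathbb Q$-approximation first (which is again the reduction above) or working with $\mathbb R$-linear systems, with the attendant care. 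Two minor points: since $\lceil A\rceil\geq 0$ permits negative coefficients in $A$, the pair $(X,A+\tfrac{c}{q}E_q)$ is only a sub-pair and log canonicity must be read accordingly; and on a projective $X$ "analytically very general" should simply read "very general."
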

\begin{proof}[Sketch of Proof of Theorem \ref{a-thm6.1}]
By perturbing the coefficients of $A$ slightly, 
we may assume that 
$A$ is a $\mathbb Q$-divisor. 
Then this statement is a special case 
of \cite[Theorem 2-1-1]{kmm} 
because $pD+A-K_X$ is automatically nef and big. 
For the details, see the proof of \cite[Theorem 2-1-1]{kmm}. 
\end{proof}

The following formulation of the 
basepoint-free theorem 
is suitable for our purposes in this paper. 
It is the well-known Kawamata--Shokurov basepoint-free 
theorem when $\pi\colon X\to Y$ is algebraic. 

\begin{thm}[Basepoint-free theorem]\label{a-thm6.2}
Let $\pi\colon X\to Y$ be a projective 
morphism from a normal complex variety to 
a complex analytic space $Y$ and let $\Delta$ be 
an $\mathbb R$-divisor 
on $X$ such that $(X, \Delta)$ is divisorial 
log terminal. 
Let $D$ be a $\pi$-nef Cartier divisor 
on $X$ such that $aD-(K_X+\Delta)$ is 
$\pi$-ample 
for some positive integer $a$. 
Then, for any relatively compact open subset $U$ of $Y$, 
there exists a positive integer $m_0$, which 
depends on $U$, such that 
\begin{equation*}
\pi^*\pi_*\mathcal O_X(mD)\to \mathcal O_X(mD)
\end{equation*} 
is surjective over $U$ for every integer $m\geq m_0$. 
\end{thm}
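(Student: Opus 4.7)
The plan is to adapt the classical Kawamata--Shokurov proof of the basepoint-free theorem to the complex analytic setting, using the relative Kawamata--Viehweg vanishing theorem (Theorem \ref{a-thm5.2}), Shokurov's nonvanishing theorem (Theorem \ref{a-thm6.1}), and the $\mathbb{R}$-linear lifting Lemma \ref{a-lem2.53}. After replacing $Y$ by a Stein open neighborhood of $\overline U$ and applying Lemma \ref{a-lem2.37}, I may assume $Y$ is Stein, that $\Delta$ has finitely many components, and that $aD-(K_X+\Delta)$ is globally $\mathbb{R}$-Cartier. The relative compactness of $\overline U$ reduces the theorem to a pointwise statement: for each $y\in\overline U$, I must produce an open neighborhood $U_y$ of $y$ and an integer $m(y)$ such that $\pi^*\pi_*\mathcal O_X(mD)\to\mathcal O_X(mD)$ is surjective over $U_y$ for every $m\geq m(y)$. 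Finitely many such $U_y$ then cover $\overline U$, and the product of the corresponding $m(y)$'s gives the uniform $m_0$ of the statement.

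Fixing $y$, I argue by induction on $n=\dim X$, and the argument splits into two steps. For the \emph{relative nonvanishing} step, I apply Theorem \ref{a-thm6.1} on an analytically sufficiently general fiber $F$ of $\pi$ (where $pD|_F+A|_F-K_F$ is ample for suitable auxiliary $A$), use Lemma \ref{a-lem2.53} to produce a global effective $\mathbb{R}$-divisor $\mathbb{R}$-linearly equivalent to an appropriate integer multiple of $D$, and invoke the upper semicontinuity of $h^0$ on fibers together with cohomology-and-base-change to conclude that $\pi_*\mathcal O_X(m_1 D)\ne 0$ on a Stein open neighborhood of $y$. For the \emph{generation} step, I choose $G\in|m_1 D|$ general and carry out the Shokurov tiebreaking trick: select $c>0$ so that $(X,\Delta+\tfrac{c}{m_1}G)$ is log canonical but not kawamata log terminal at some point of $\pi^{-1}(y)$, then perturb inside the rational polytope $\mathcal L(V;\pi^{-1}(\overline U))$ of Lemma \ref{a-lem3.5} to arrange that the minimal non-klt center $V$ is unique and irreducible over $U_y$. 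Theorem \ref{a-thm5.2} yields
\[
R^1\pi_*\bigl(\mathcal I_V\otimes\mathcal O_X(mD)\bigr)=0
\]
over $U_y$ for every $m\gg 0$, and the induction hypothesis applied to the restricted data on $V$ shows that $\mathcal O_V(mD|_V)$ is $\pi|_V$-generated over $U_y\cap\pi(V)$. Lifting these sections confines the base locus of $|mD|$ on $U_y$ to $\Supp G$; a second application of the trick with a slightly perturbed $G'$ whose support differs from that of $G$ removes the residual base locus, giving the desired $m(y)$.

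The principal obstacle is the relative nonvanishing step. The classical Shokurov nonvanishing (Theorem \ref{a-thm6.1}) yields genuine integer sections only on a single smooth projective fiber, while Lemma \ref{a-lem2.53} provides only an $\mathbb{R}$-linearly equivalent global effective divisor. Bridging this gap requires the semicontinuity and base-change machinery for projective morphisms to Stein bases to upgrade a section on one fiber to a coherent nonzero direct image on a Stein neighborhood, after which a rational perturbation inside $\mathcal L(V;\pi^{-1}(\overline U))$ absorbs the auxiliary divisor $\lceil A\rceil$. A secondary difficulty, running the tiebreaking step when $\Delta$ has irrational coefficients, is handled by the rational-polytope structure of Lemma \ref{a-lem3.5}. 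Once these analytic technicalities are settled, the algebraic core of the Kawamata--Shokurov proof carries over unchanged.
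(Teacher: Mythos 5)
Your proposal is correct and follows essentially the same route the paper indicates: adapt the Kawamata--Shokurov argument of \cite[Theorem 3-1-1]{kmm}, feeding in the analytic log resolution (Lemma \ref{a-lem3.9}), the Kawamata--Viehweg vanishing of Theorem \ref{a-thm5.2}, and Shokurov nonvanishing applied to an analytically sufficiently general fiber (with Lemma \ref{a-lem2.53} and the coherence/torsion-freeness of $\pi_*\mathcal O_X(mD)$ over an irreducible Stein base to convert fiberwise nonvanishing into nonvanishing of the direct image). The only cosmetic deviations are that $m_0$ should be the maximum rather than the product of the local bounds $m(y)$, and you frame the generation step as an induction on $\dim X$ via adjunction to the non-klt center, whereas \cite{kmm} instead reapplies nonvanishing to the center and runs a Noetherian induction on base loci --- both are standard and correct.
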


\begin{proof}[Sketch of Proof of Theorem \ref{a-thm6.2}]
By Lemma \ref{a-lem3.9} and 
Theorem \ref{a-thm6.1}, 
the proof of \cite[Theorem 3-1-1]{kmm} 
works over $U$. 
Note that we can not consider the 
generic fiber of $\pi^{-1}(U)\to U$ since 
it is a projective morphism of complex analytic spaces. 
Therefore, we apply Theorem \ref{a-thm6.1} 
to an analytically sufficiently general fiber of $\pi^{-1}(U)
\to U$ when we prove $\pi_*\mathcal O_X(mD)\ne 0$ for 
every large positive integer $m$. 
For the details, see the proof of \cite[Theorem 3-1-1]{kmm}. 
\end{proof}

\begin{rem}\label{a-rem6.3}
If $(X, \Delta)$ is kawamata log terminal in Theorem \ref{a-thm6.2}, 
then the same statement holds under a slightly weaker 
assumption that $aD-(K_X+\Delta)$ is $\pi$-nef and 
$\pi$-big. 
This is almost obvious by the proof of Theorem \ref{a-thm6.2} 
(see also the proof of \cite[Theorem 3-1-1]{kmm}). 
\end{rem}

In this paper, the following 
variant of the basepoint-free theorem 
is indispensable. Theorem \ref{a-thm6.4} 
is Koll\'ar's effective basepoint-freeness for 
projective morphisms of complex analytic spaces. 

\begin{thm}[{Effective basepoint-free theorem, 
see \cite[Theorem 3.9.1]{bchm}}]\label{a-thm6.4}
Fix a positive integer $n$. Then there exists a positive integer 
$m$ with the following properties. 

Let $\pi\colon X\to Y$ be a projective 
morphism from a normal complex variety $X$ to a complex 
analytic space $Y$ and let $D$ be a $\pi$-nef 
Cartier divisor on $X$ such that 
$D-(K_X+\Delta)$ is $\pi$-nef and 
$\pi$-big, where $(X, \Delta)$ is kawamata log terminal 
with $\dim X=n$. 

Then, $mD$ is $\pi$-free, that is, 
$$
\pi^*\pi_*\mathcal O_X(mD)\to \mathcal O_X(mD)
$$ 
is surjective. 
\end{thm}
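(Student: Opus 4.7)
The plan is to reduce the uniform bound $m=m(n)$ in Theorem \ref{a-thm6.4} to Koll\'ar's original effective basepoint-free theorem for projective varieties, by combining a fiberwise analysis with relative vanishing. Since $\pi$-freeness is a local property on $Y$, I would fix an arbitrary point $y_0\in Y$ and work over a Stein open neighborhood of $y_0$; by Lemma \ref{a-lem2.37} we may further assume $\Delta$ has only finitely many components and $D$, $K_X+\Delta$ are globally $\mathbb R$-Cartier.

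The first key step is to restrict to an analytically sufficiently general fiber $F=X_y$ of $\pi$. On such $F$, by a Bertini-type argument (see Remark \ref{a-rem2.52}) combined with standard restriction properties of singularities of pairs, the pair $(F,\Delta|_F)$ is kawamata log terminal of dimension at most $n$, $D|_F$ is nef, and $D|_F-(K_F+\Delta|_F)$ is nef and big. Koll\'ar's original effective basepoint-free theorem (the algebraic version of Theorem \ref{a-thm6.4}, applicable via Serre's GAGA) then produces an integer $m=m(n)$, depending only on $n$, such that $m(D|_F)$ is $\pi|_F$-free.

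Next I would propagate freeness from general fibers to all fibers over a neighborhood of $y_0$. By enlarging $m$ further (still depending only on $n$), one arranges that $mD-(K_X+\Delta)$ is $\pi$-ample, so that the relative Kawamata--Viehweg vanishing Theorem \ref{a-thm5.2} yields $R^i\pi_*\mathcal O_X(mD)=0$ for $i>0$. Cohomology and base change for proper analytic morphisms (see \cite[Chapter III]{banica}) then shows that $\pi_*\mathcal O_X(mD)$ is locally free of expected rank with formation commuting with base change, so that global sections on $X_y$ lift to sections of $\pi_*\mathcal O_X(mD)$ in a neighborhood of $y$. Freeness on $X_y$ therefore transfers to surjectivity of $\pi^*\pi_*\mathcal O_X(mD)\to\mathcal O_X(mD)$ on a neighborhood of $X_y$.

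The main obstacle is that the fiberwise reduction above applies only at analytically sufficiently general $y$, whereas at the given point $y_0$ the pair $(X_{y_0},\Delta|_{X_{y_0}})$ may fail to be kawamata log terminal (or even reduced), and Koll\'ar's bound is not directly applicable. The hard part will be establishing uniformity at every fiber rather than only the general one. To handle this, I would adapt Koll\'ar's inductive $X$-method proof directly to the relative analytic setting, replacing the algebraic vanishing theorems by their analytic counterparts from Section \ref{a-sec5} and using Theorem \ref{a-thm6.2} (together with Theorem \ref{a-thm6.1}) as the relative basepoint-free and nonvanishing inputs. Since Koll\'ar's argument uses only resolution of singularities, Kawamata--Viehweg vanishing, and Shokurov-type nonvanishing --- each available in our analytic framework --- the argument carries over essentially unchanged and yields the desired uniform bound $m=m(n)$ valid for all fibers simultaneously.
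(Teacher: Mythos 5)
Your final approach---adapting Koll\'ar's modified basepoint-freeness method from \cite[2.1]{kollar-effective} directly to the relative analytic setting, with the crucial modification that nonvanishing is checked after restricting to an analytically sufficiently general fiber rather than a generic fiber---is exactly the paper's approach. The first half of your proposal (apply the projective effective theorem fiberwise, then propagate via relative Kawamata--Viehweg vanishing and cohomology/base change) is correctly abandoned, and the gap you identify is genuine: freeness on an analytically general fiber $X_y$ together with base change yields surjectivity of $\pi^*\pi_*\mathcal O_X(mD)\to\mathcal O_X(mD)$ only in a neighborhood of those general fibers, and since base loci can jump up on special fibers this gives no control at $X_{y_0}$. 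One imprecision to flag: you cite Theorem \ref{a-thm6.2} (the ineffective relative basepoint-free theorem) as an input to the adapted $X$-method, which would be circular and would destroy effectivity of the bound $m=m(n)$. Koll\'ar's argument is self-contained; the inputs are log resolution, relative Kawamata--Viehweg vanishing (Theorems \ref{a-thm5.1} and \ref{a-thm5.2}), and Shokurov-type nonvanishing (Theorem \ref{a-thm6.1}) applied on an analytically sufficiently general fiber, as the paper's sketch indicates.
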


\begin{proof}[Sketch of Proof of Theorem \ref{a-thm6.4}]
It is sufficient to prove the statement 
over any small relatively compact Stein open 
subset $U$ of $Y$. 
We will freely replace $Y$ with a small 
open subset without mentioning it explicitly. 
By the standard argument (see, for example, 
the proof of \cite[Theorem 3.9.1]{bchm}), 
we may further assume that 
$D-(K_X+\Delta)$ is $\pi$-ample and that 
$K_X+\Delta$ is $\mathbb Q$-Cartier.   
The modified basepoint-freeness method 
explained in \cite[2.1]{kollar-effective} 
works with some minor modifications. 
Note that we treat a projective morphism 
$\pi \colon X\to Y$. 
Therefore, when we prove that some sheaf is not zero, 
we restrict it to an analytically sufficiently general fiber. 
We can not use the generic fiber since we consider 
projective morphisms between complex analytic spaces. 
For the details, see \cite[Section 2]{kollar-effective}. 
\end{proof}

Theorem \ref{a-thm6.5} below is essentially due to Nakayama 
(see \cite[Theorem 4.10]{nakayama2}), which will play a crucial role 
in this paper. 

\begin{thm}[{see \cite[Theorem 4.10]{nakayama2}}]\label{a-thm6.5} 
Let $\pi\colon X\to Y$ be a projective surjective morphism 
between complex analytic spaces and let $W$ be 
a Stein compact subset of $Y$ and 
let $(X, \Delta)$ be a kawamata log terminal pair. 
Let $D$ be a Cartier divisor on $X$. 
Assume that $D$ is nef over $W$, that is, 
$D\cdot C\geq 0$ for every projective 
curve $C$ such that 
$\pi(C)$ is a point of $W$. 
We further assume that $aD-(K_X+\Delta)$ is 
$\pi$-ample for some positive real number $a$. 
Then there exist an open neighborhood 
$U$ of $W$ and 
a positive integer $m_0$ such that 
\begin{equation*}
\pi^*\pi_*\mathcal O_X(mD)\to \mathcal O_X(mD)
\end{equation*} 
is surjective over $U$ for every integer $m\geq m_0$. 
\end{thm}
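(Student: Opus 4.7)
My plan is to follow Nakayama's strategy for \cite[Theorem~4.10]{nakayama2}: reduce the statement to a fiberwise application of the effective basepoint-free theorem, and then globalize to a relative neighborhood of $W$ using higher direct image vanishing, base change, and the compactness of $W$.

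First I would shrink $Y$ to a relatively compact Stein open neighborhood of $W$, so that $aD-(K_X+\Delta)$ becomes a finite $\mathbb{R}_{>0}$-linear combination of $\pi$-ample Cartier divisors on $X$ by Lemma~\ref{a-lem2.37}. Fix $w\in W$ and consider the projective fiber $X_w=\pi^{-1}(w)$. The nefness hypothesis gives that $D|_{X_w}$ is nef, and $(aD-(K_X+\Delta))|_{X_w}$ is ample. After perturbing $\Delta$ by adding a small multiple of a general member of a sufficiently divisible $\pi$-free power of $aD-(K_X+\Delta)$, using a Bertini-type statement in the complex analytic setting (Remark~\ref{a-rem2.52}), I arrange that the resulting pair is kawamata log terminal on $X$ and its restriction to $X_w$ is also kawamata log terminal. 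Then I apply the absolute effective basepoint-free theorem (the case where the base is a point of Theorem~\ref{a-thm6.4}) to each fiber $X_w$, producing a uniform integer $m_0$ depending only on $n=\dim X$ such that $mD|_{X_w}$ is basepoint-free on $X_w$ for every $m\geq m_0$ and every $w\in W$.

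Next I would propagate the fiberwise basepoint-freeness to an analytic neighborhood of each $X_w$. By openness of ampleness (Lemma~\ref{a-lem4.10}(1)), $(m-a)D+(aD-(K_X+\Delta))$ is $\pi$-nef and $\pi$-big on some open neighborhood of $X_w$ for every $m\geq a$, so the Kawamata--Viehweg vanishing theorem (Theorem~\ref{a-thm5.1}) gives $R^i\pi_*\mathcal{O}_X(mD)=0$ for every $i>0$ on this neighborhood. By Grauert's base change theorem (as developed in \cite{banica}), the formation of $\pi_*\mathcal{O}_X(mD)$ then commutes with base change at $w$, and the fiberwise surjectivity combined with Nakayama's lemma yields surjectivity of $\pi^*\pi_*\mathcal{O}_X(mD)\to\mathcal{O}_X(mD)$ on some open neighborhood $U_w$ of $X_w$ in $X$. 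Since $W$ is compact and $\pi$ is proper, finitely many such $U_{w_1},\ldots,U_{w_k}$ cover $\pi^{-1}(W)$, and taking $U:=Y\setminus\pi\!\left(X\setminus\bigcup_i U_{w_i}\right)$ gives the required open neighborhood of $W$ on which the uniform $m_0$ works.

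The main obstacle is ensuring a uniform kawamata log terminal structure on the fibers $(X_w,\Delta|_{X_w})$ for every $w\in W$, since a priori the restriction of a klt pair to a particular fiber need not be klt. This is precisely the point at which the global $\pi$-ampleness of $aD-(K_X+\Delta)$ (rather than merely its ampleness on $W$) is essential: it supplies enough effective $\mathbb{R}$-divisors $\mathbb{R}$-linearly equivalent to a small multiple of $aD-(K_X+\Delta)$ to perform a Bertini-type perturbation making the klt condition survive on all fibers over $W$ simultaneously, while keeping $D$ in the same relative linear equivalence class and therefore the perturbed $aD-(K_X+\Delta)$ still $\pi$-ample.
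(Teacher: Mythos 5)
Your proposal takes a genuinely different route from the paper, but it has a gap that I do not see how to close. The paper's proof does not go fiberwise at all: it observes that if $B$ is nef over $W$ and $A$ is $\pi$-ample then $A+B$ is ample over some open neighborhood of $W$, and then reruns the usual Kawamata--Shokurov basepoint-free argument (Theorem~\ref{a-thm6.2}) verbatim over a shrinking sequence of Stein neighborhoods of $W$; since that argument is a finite Noetherian induction, only finitely many shrinkings are needed, which is why a single uniform $U$ emerges at the end. The nonvanishing input on analytically general fibers is supplied by Lemma~\ref{a-lem6.6}.

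The serious gap in your fiberwise reduction is the assumption that $(X_w, \Delta|_{X_w})$ can be made kawamata log terminal for \emph{every} $w\in W$. A fiber of a projective morphism from a klt variety need not be normal, irreducible, or reduced (think of the nodal fibers of an elliptic fibration from a smooth surface), so ``$(X_w,\Delta|_{X_w})$ is klt'' does not even make sense for all $w$, and the absolute effective basepoint-free theorem simply does not apply there. A Bertini perturbation of $\Delta$ along a general member of a free linear system (Remark~\ref{a-rem2.52}) controls how $\Delta$ meets a fixed subvariety or finite set of points; it cannot repair the intrinsic singularities of a special fiber, and it also cannot be performed uniformly in $w$ without spoiling the uniformity of the resulting $m_0$. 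The only way to invoke Theorem~\ref{a-thm6.4} relatively, rather than fiber by fiber, would be to know $D$ is $\pi$-nef over a neighborhood of $W$, which is precisely what is unavailable (cf.~Remark~\ref{a-rem1.5}).

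There are also two secondary issues. First, the effective basepoint-free theorem produces one universal $m$, not every $m\geq m_0$; to get the latter you need to apply it twice (say to $D$ and to $2D$) and invoke the numerical semigroup trick, which your write-up omits. Second, the neighborhoods you produce via openness of ampleness (Lemma~\ref{a-lem4.10}(1)) and base change depend on $m$: since $D$ is nef over $W$ but typically not nef over any fixed neighborhood of $W$, the locus where $mD-(K_X+\Delta)$ stays ample can shrink as $m$ grows, so your final step~(7), which quantifies over all $m\geq m_0$ against a single finite cover $U_{w_1},\dots,U_{w_k}$, does not follow as written. Both of these can be patched by the arithmetic trick once freeness of two coprime multiples over a common neighborhood is known, but the first gap (fiber klt-ness) cannot, which is why the paper runs the relative argument directly on $X$ instead.
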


Before we prove Lemma \ref{a-thm6.5}, we prepare 
an easy lemma. We describe it for the sake of completeness. 

\begin{lem}\label{a-lem6.6}
Let $\pi\colon X\to Y$ be a projective surjective morphism 
between complex varieties and let 
$\mathcal L$ be a line bundle on $X$. Assume that 
$\mathcal L|_{\pi^{-1}(y_0)}$ is nef for some 
$y_0\in Y$. 
Then there exists an analytically meagre subset 
$\mathcal S$ such that 
$\mathcal L|_{\pi^{-1}(y)}$ is nef for 
every $y\in Y\setminus \mathcal S$. 
\end{lem}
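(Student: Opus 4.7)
The plan is to mimic almost verbatim the proof of Lemma \ref{a-lem4.10} (2), which established the analogous statement for $\mathbb{R}$-line bundles. The strategy is to perturb $\mathcal{L}$ by a $\pi$-ample line bundle $\mathcal{A}$ and then exploit the openness of fibrewise ampleness established in Lemma \ref{a-lem4.10} (1) to spread ampleness of $(m\mathcal{L}+\mathcal{A})|_{\pi^{-1}(y_0)}$ from the fibre over $y_0$ to fibres over a Zariski-dense set of nearby points, and finally to take a countable intersection in $m$ so that $\mathcal{L}$ itself becomes nef on those fibres.

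Concretely, since $\pi$ is projective I would first choose a $\pi$-ample line bundle $\mathcal{A}$ on $X$. For every positive integer $m$, the restriction $(m\mathcal{L}+\mathcal{A})|_{\pi^{-1}(y_0)}$ is ample on the projective variety $\pi^{-1}(y_0)$, being the sum of a nef class and an ample class. Applying Lemma \ref{a-lem4.10} (1) to the $\mathbb{R}$-line bundle $m\mathcal{L}+\mathcal{A}$, I obtain an open neighbourhood $U_m$ of $y_0$ such that $(m\mathcal{L}+\mathcal{A})|_{\pi^{-1}(U_m)}$ is ample over $U_m$ and such that $Y\setminus U_m$ is analytically meagre in $Y$. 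I then set
\[
\mathcal{S}:=\bigcup_{m\in\mathbb{Z}_{>0}}(Y\setminus U_m),
\]
which, as a countable union of analytically meagre subsets, is itself analytically meagre.

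To conclude, for every $y\in Y\setminus \mathcal{S}$ we have $y\in U_m$ for all $m$, so $(m\mathcal{L}+\mathcal{A})|_{\pi^{-1}(y)}$ is ample on the projective variety $\pi^{-1}(y)$. Scaling by $1/m$ shows that the $\mathbb{R}$-class $\mathcal{L}|_{\pi^{-1}(y)}+\tfrac{1}{m}\mathcal{A}|_{\pi^{-1}(y)}$ lies in the ample cone of $\pi^{-1}(y)$; passing to the limit $m\to\infty$ places $\mathcal{L}|_{\pi^{-1}(y)}$ in the closure of the ample cone, which is the nef cone of the projective variety $\pi^{-1}(y)$. Hence $\mathcal{L}|_{\pi^{-1}(y)}$ is nef for every $y\in Y\setminus\mathcal{S}$, as required.

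There is no serious obstacle in the argument; it is a direct transcription of the proof of Lemma \ref{a-lem4.10} (2) into the line-bundle setting and, if $Y$ is reducible, one first restricts attention to the irreducible component containing $y_0$ and absorbs the remaining components into $\mathcal{S}$. The only non-trivial ingredient invoked is Lemma \ref{a-lem4.10} (1), which in turn rests on the base-change and flatness machinery for projective morphisms of complex analytic spaces used to guarantee that the locus of fibrewise ampleness is open with analytically meagre complement.
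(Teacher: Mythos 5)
Your proof is correct. In fact, since a line bundle is in particular an $\mathbb{R}$-line bundle and a ``complex variety'' here is irreducible, Lemma~\ref{a-lem6.6} is literally a special case of Lemma~\ref{a-lem4.10}~(2), so one could simply cite that result; your argument is a verbatim re-run of its proof. The route you take does, however, differ from the proof that the paper actually gives for Lemma~\ref{a-lem6.6}. You invoke Lemma~\ref{a-lem4.10}~(1), which produces an open set over which $m\mathcal L+\mathcal A$ is \emph{ample}; internally this uses Hironaka-type flatness/finiteness machinery (the citation to Banic\u a--St\u an\u a\c sil\u a in the proof of Lemma~\ref{a-lem4.10}~(1)). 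The paper instead gives a self-contained and lighter argument: for each $n$ it only needs $\mathcal L^{\otimes n}\otimes\mathcal H$ to become $\pi$-\emph{free} (hence fibrewise nef) away from a proper closed analytic subset, and it obtains that subset directly as the image of the support of the cokernel of the evaluation map $\pi^*\pi_*(\mathcal L^{\otimes nm_n}\otimes\mathcal H^{\otimes m_n})\to\mathcal L^{\otimes nm_n}\otimes\mathcal H^{\otimes m_n}$ for a suitably large $m_n$. So your version buys brevity by leaning on a stronger black box, while the paper's version trades a few extra lines for independence from the openness-of-ampleness machinery. Your final remark about reducible $Y$ is harmless but unnecessary: $Y$ is assumed to be a complex variety, hence irreducible, so no component bookkeeping is required.
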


\begin{proof}[Proof of Lemma \ref{a-lem6.6}]
We take a $\pi$-ample line bundle $\mathcal H$ on $X$. 
Then $\mathcal L^{\otimes n}\otimes \mathcal H|_{\pi^{-1}(y_0)}$ 
is ample for every $n\in \mathbb Z_{>0}$. 
For each $n$, there exist a positive integer $m_n$ and an open 
neighborhood $U_n$ of $y_0$ 
such that 
\begin{equation*}
\pi^*\pi_*(\mathcal L^{\otimes n m_n}\otimes 
\mathcal H^{\otimes m_n})\to 
\mathcal L^{\otimes n m_n}\otimes 
\mathcal H^{\otimes m_n}
\end{equation*} 
is surjective on $\pi^{-1}(U_n)$ since 
$\mathcal L^{\otimes n}\otimes \mathcal H$ is ample 
over some open neighborhood of $y_0$ 
(see, for example, \cite[Proposition 1.4]{nakayama2}). 
We put 
\begin{equation*}
\mathcal F_n:=\Coker \left( 
\pi^*\pi_*(\mathcal L^{\otimes n m_n}\otimes 
\mathcal H^{\otimes m_n})\to 
\mathcal L^{\otimes n m_n}\otimes 
\mathcal H^{\otimes m_n}\right). 
\end{equation*} 
Then $\pi_*\mathcal F_n$ is a coherent sheaf on $Y$ and 
$\mathcal S_n:=\Supp \mathcal F_n$ is a closed 
analytic subset of $Y$ with $y_0\not\in \mathcal S_n$. 
By construction, $\mathcal L^{\otimes nm_n}\otimes 
\mathcal H^{\otimes m_n}$ is $\pi$-free over $Y\setminus 
\mathcal S_n$. In particular, 
$\mathcal L^{\otimes n}\otimes \mathcal H$ 
is $\pi$-nef over $Y\setminus \mathcal S_n$. 
We put $\mathcal S:=\bigcup _{n\in \mathbb Z_{>0}}\mathcal S_n$. 
If $\left(\mathcal L^{\otimes n} \otimes 
\mathcal H\right) |_{\pi^{-1}(y)}$ is nef for every 
$n\in \mathbb Z_{>0}$, then 
$\mathcal L|_{\pi^{-1}(y)}$ is nef. 
Therefore, $\mathcal S$ is the desired analytically meagre 
subset of $Y$. 
\end{proof}

Let us see the proof of Theorem \ref{a-thm6.5}. 

\begin{proof}[Sketch of Proof of Theorem \ref{a-thm6.5}]
Let $B$ be a Cartier divisor on $X$ such that 
$B$ is nef over $W$ and let $A$ be any $\pi$-ample 
$\mathbb R$-divisor on $X$. 
Then $A+B$ is $\pi$-ample 
over some open neighborhood of $W$. 
By this easy observations, 
we see that the usual proof of the basepoint-free 
theorem (see Sketch of Proof of Theorem \ref{a-thm6.2}) 
works over some open neighborhood 
$U$ of $W$. 
We note that we can use the nonvanishing theorem 
(see Theorem \ref{a-thm6.1}) on an analytically sufficiently general 
fiber by Lemma \ref{a-lem6.6}. 
Of course, we may have to replace $U$ with 
a smaller open neighborhood of $W$ finitely many times 
throughout the proof. 
\end{proof}

We will treat some basepoint-free theorems for $\mathbb R$-Cartier 
divisors in Section \ref{a-sec8}. 

\section{Cone and contraction theorem}\label{a-sec7}

This section will be devoted to the cone and 
contraction theorem for projective morphisms 
of complex analytic spaces. 

Let us start with the rationality theorem. 
We need the following formulation. 
The proof is essentially the same as that for 
algebraic varieties. 

\begin{thm}[{Rationality theorem, 
see \cite[Theorem 4.11]{nakayama2}}]\label{a-thm7.1} 
Let $\pi\colon X\to Y$ be a projective morphism 
of complex analytic spaces and 
let $W$ be a compact subset of $Y$ 
such that $\pi\colon X\to Y$ and $W$ satisfies {\em{(P)}}. 
Let $\Delta$ be an effective $\mathbb Q$-divisor 
on $X$ such that $(X, \Delta)$ is divisorial log 
terminal and that $a(K_X+\Delta)$ is Cartier 
in a neighborhood of $\pi^{-1}(W)$ for some 
positive integer $a$. 
Let $H$ be a $\pi$-ample 
Cartier divisor on $X$. 
Assume that $K_X+\Delta$ is not $\pi$-nef 
over $W$. 
Then 
\begin{equation*}
r:=\max \{ t\in \mathbb R\, |\, 
\text{$H+t(K_X+\Delta)$ is $\pi$-nef over $W$}\}
\end{equation*}
is a positive rational number. 
Furthermore, expressing $r/a=u/v$ with 
$u, v\in \mathbb Z_{>0}$ and $(u, v)=1$, 
we have $v\leq a(d+1)$, 
where $d=\max _{w\in W} \dim \pi^{-1}(w)$. 
\end{thm}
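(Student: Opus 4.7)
The plan is to follow Kawamata's classical proof of the rationality theorem (as in \cite[Theorem 4-1-1]{kmm}), adapting every step to the analytic setting using the tools established earlier in the paper. After shrinking $Y$ to a relatively compact Stein open neighborhood of $W$ (permitted by Say \ref{a-say1.11} and Lemma \ref{a-lem2.37}), I may assume that $\Delta$ has only finitely many irreducible components, that $K_X+\Delta$ is globally $\mathbb{R}$-Cartier, that $a(K_X+\Delta)$ is an honest Cartier divisor on $X$, and that $H$ is $\pi$-ample on all of $X$. By Nakayama's finiteness (Lemma \ref{a-lem4.1}), $N^1(X/Y;W)$ is finite-dimensional, so $\NE(X/Y;W)$ is defined; combined with Kleiman's criterion (Theorem \ref{a-thm4.4}), the $\pi$-ampleness of $H$, and the assumption that $K_X+\Delta$ is not $\pi$-nef over $W$, the quantity $r$ is a well-defined finite positive real number.

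Next, for positive integers $p,q$ with $q/p<r/a$, the Cartier divisor
\[
L(p,q)=pH+qa(K_X+\Delta)
\]
is nef over $W$ by the definition of $r$. For $p$ sufficiently large relative to $q$, the class $c L(p,q)-(K_X+\Delta)$ is $\pi$-ample over some open neighborhood of $W$ for suitable $c>0$, since $H$ is $\pi$-ample and $L(p,q)$ is nef over $W$. Then Nakayama's analytic basepoint-free theorem (Theorem \ref{a-thm6.5}) provides, after shrinking $Y$ around $W$, a positive multiple of $L(p,q)$ that is $\pi$-free on a neighborhood of $W$. Restricting to an analytically sufficiently general fiber $F$ of $\pi$ (Definition \ref{a-def2.50}), and combining the Kawamata--Viehweg vanishing theorem (Theorem \ref{a-thm5.1}) with Shokurov's nonvanishing theorem (Theorem \ref{a-thm6.1}) applied to a resolution of $F$, yields $H^{0}(F,\mathcal{O}_{F}(L(p,q)|_{F}))\neq 0$ for all such $(p,q)$ with $p\gg 0$, with controlled lower bounds on the dimension coming from the usual Riemann--Roch estimate on $F$ (whose dimension is bounded by $d$).

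Finally, I would run the Diophantine approximation argument. Assume for contradiction that $r$ is irrational, or that $r/a=u/v$ in lowest terms with $v>a(d+1)$. A standard Dirichlet-type approximation (as in \cite[proof of Theorem 4-1-1]{kmm}) produces infinitely many pairs $(p,q)\in\mathbb{Z}_{>0}^{2}$ with $q/p$ approximating $r/a$ from below at rate better than $1/(p(d+1))$ and with $L(p,q+1)$ \emph{not} nef over $W$. Picking a projective integral curve $C\subset X$ with $\pi(C)\in W$ and $L(p,q+1)\cdot C<0$ but $L(p,q)\cdot C\geq 0$, one sees $a(K_X+\Delta)\cdot C<0$. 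Feeding the resulting sections of $L(p,q)|_{F}$ into the numerical pigeonhole of \cite[Lemma 4-1-2]{kmm}, one produces two sections whose common zero locus must contain too large a multiple of $C\cap F$, forcing a contradiction. This gives $r\in\mathbb{Q}$ together with $v\leq a(d+1)$.

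The main obstacle is the systematic replacement of generic-fiber reasoning by analytically sufficiently general-fiber reasoning: the classical proof works on the generic fiber of $\pi$, but here there is no such object, so I must verify at each application of Shokurov nonvanishing and of Kawamata--Viehweg that the analytically meagre locus to be avoided in $Y$ is genuinely avoidable while keeping $W$ inside the good locus. The mechanism of Say \ref{a-say1.11}, which allows $W$ to be enlarged to a semianalytic Stein compact subset satisfying (P4) contained in any prescribed open neighborhood, is what makes this bookkeeping feasible and lets us pass freely from ``nef over $W$'' to ``$\pi$-nef over a genuine open neighborhood of $W$'' whenever that promotion is required. Once this analytic bookkeeping is set up, the numerical part of the Diophantine argument proceeds essentially verbatim as in the algebraic case.
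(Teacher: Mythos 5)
Your high-level plan — adapt the proof of \cite[Theorem 4-1-1]{kmm} to the analytic setting, shrinking $Y$ around $W$ as needed, invoking Nakayama's finiteness and Kleiman's criterion to see that $r$ is a well-defined positive real, and replacing every use of the ``generic fiber'' by an analytically sufficiently general fiber — matches the paper's strategy exactly, and the bookkeeping remarks about Say \ref{a-say1.11} and (P4) are the right ones. However, the numerical core of your argument does not reproduce the actual contradiction mechanism, and as written it would fail.

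The paper (and KMM) works on the \emph{non-nef} side of the threshold. Set $M(x,y)=xH+ya(K_X+\Delta)$ and confine attention to $(x,y)$ large in the critical strip $0<ya-xr<1$, so that $M(x,y)$ is not nef over $W$ but $M(x,y)-(K_X+\Delta)=xH+(ya-1)(K_X+\Delta)$ is nef over $W$ and $\pi$-big. After reducing to the case where $H$ is $\pi$-very ample (a reduction you omit), one studies the relative non-free locus $\Lambda(x,y)=\Supp\bigl(\Coker\,\pi^*\pi_*\mathcal O_X(M(x,y))\to\mathcal O_X(M(x,y))\bigr)$, shows that $\Lambda(x,y)$ stabilizes to some $\Lambda_0\subsetneq X$ for $(x,y)$ large in the strip, and then derives a contradiction: if $r\notin\mathbb Q$ or $v>a(d+1)$, Diophantine approximation together with the basepoint-free machinery yields $(x',y')$ large in the strip with $\Lambda(x',y')\subsetneq\Lambda_0$, contradicting stabilization. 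The role of the analytically sufficiently general fiber is precisely to verify $\Lambda(x,y)\ne X$, i.e.\ that $\pi_*\mathcal O_X(M(x,y))\ne 0$, since one cannot speak of a generic fiber.

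You instead work with $L(p,q)$ for $q/p<r/a$, which is nef over $W$; you apply Theorem \ref{a-thm6.5} to make it $\pi$-free and then appeal to a ``numerical pigeonhole'' producing two sections whose common zero locus contains ``too large a multiple of $C\cap F$.'' This does not engage the actual contradiction: once you have made $L(p,q)$ relatively free, it has no base locus to exploit, and two global sections of a free line bundle need have no prescribed common zero along $C$. The contradiction in the rationality theorem is not a pigeonhole on sections of a nef divisor; it is the impossibility of the non-free locus $\Lambda(x,y)$ of the \emph{slightly non-nef} $M(x,y)$ both stabilizing and strictly decreasing. Without introducing $M(x,y)$ in the strip $0<ya-xr<1$ and the stabilizing locus $\Lambda_0$, the proposal has no mechanism to close the argument, so there is a genuine gap at the final step.
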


\begin{proof}[Sketch of Proof of Theorem \ref{a-thm7.1}]
The proof of \cite[Theorem 4-1-1]{kmm} 
works with some minor modifications. 
As usual, we will freely replace $Y$ with a relatively 
compact Stein open neighborhood of $W$ 
throughout this proof. 
By an easy reduction argument, we may further 
assume that $H$ is $\pi$-very ample. 
We put 
\begin{equation*}
M(x, y):=xH+ya(K_X+\Delta)
\end{equation*}
and 
\begin{equation*}
\Lambda (x, y):=\Supp \left (\Coker 
\pi^*\pi_*\mathcal O_X(M(x, y))\to 
\mathcal O_X(M(x, y))\right). 
\end{equation*}
It is not difficult to see that $\Lambda (x, y)$ 
is the same subset of $X$ for $(x, y)$ 
sufficiently large and $0<ya-xr<1$. 
We call it $\Lambda _0$. 
Moreover, let $I\subset \mathbb Z^2$ be the set of 
$(x, y)$ for which $0<ya-xr<1$ and $\Lambda(x, y)=\Lambda_0$. 
Then $I$ contains all sufficiently large $(x, y)$ with 
$0<ya-xr<1$ 
(for the details, see Claim 1 in the proof of 
\cite[Theorem 15.1]{fujino-fundamental}). 
If $r\not\in \mathbb Q$ or $v>r(d+1)$, then 
we can find $(x', y')$ sufficiently large and $0<y'a-x'r<1$ with 
$\Lambda (x', y')\subsetneq \Lambda_0$ 
(for the details, see the proof of \cite[Theorem 4-1-1]{kmm} 
or Step 7--Step 11 in \cite[Section 3.4]{kollar-mori}). 
This is a contradiction. Hence, we get the desired properties 
of $r$. 
We note that we can not consider generic fibers. 
Therefore, when we check that some sheaf is not a zero 
sheaf in the above argument, we restrict it to 
an analytically sufficiently general fiber. 
\end{proof}

It is very well known that 
the cone and contraction theorem is 
a consequence of the rationality theorem (see 
Theorem \ref{a-thm7.1}) and 
the basepoint-free theorem (see Theorem \ref{a-thm6.5}). 

\begin{thm}[{Cone and contraction theorem, 
see \cite[Theorem 4.12]{nakayama2}}]\label{a-thm7.2}
Let $(X, \Delta)$ be a divisorial log terminal pair. 
Let $\pi\colon X\to Y$ be a projective morphism of 
complex analytic spaces and let $W$ be a compact subset 
of $Y$ such that $\pi\colon X\to Y$ and 
$W$ satisfies {\em{(P)}}. 
Then we have 
\begin{equation*} 
\NE(X/Y; W)=\NE(X/Y; W)_{K_X+\Delta\geq 0} 
+\sum _j R_j
\end{equation*} 
with the following properties. 
\begin{itemize}
\item[(1)] Let $A$ be a $\pi$-ample 
$\mathbb R$-divisor on $X$. 
Then there are only finitely many $R_j$'s included in $(K_X+\Delta
+A)_{<0}$. In particular, the $R_j$'s are discrete in the half 
space $(K_X+\Delta)_{<0}$. 
\item[(2)] Let $R$ be a $(K_X+\Delta)$-negative 
extremal ray. Then, 
after shrinking $Y$ around $W$ suitably, 
there exists a contraction morphism $\varphi_R\colon 
X\to Z$ over $Y$ satisfying: 
\begin{itemize}
\item[(i)] Let $C$ be a projective 
integral curve on $X$ such that 
$\pi(C)$ is a point in $W$. 
Then $\varphi_R(C)$ is a point if and only if 
$[C]\in R$. 
\item[(ii)] $\mathcal O_Z
\overset{\sim}{\longrightarrow} (\varphi_R)
_*\mathcal O_X$. 
\item[(iii)] Let $\mathcal L$ be a line bundle on $X$ such that 
$\mathcal L\cdot C=0$ for every curve $C$ 
with $[C]\in R$. 
Then there is a line bundle $\mathcal M$ on $Z$ 
such that $\mathcal L\simeq \varphi^*_R\mathcal M$. 
\end{itemize}
\end{itemize}
\end{thm}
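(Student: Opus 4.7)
The plan is to follow the classical Kawamata--Shokurov strategy, adapted to the analytic setting by means of the finiteness of $N_1(X/Y; W)$ (Lemma \ref{a-lem4.1}), the rationality theorem (Theorem \ref{a-thm7.1}), Kleiman's criterion (Theorem \ref{a-thm4.4}), and Nakayama's basepoint-free theorem near $W$ (Theorem \ref{a-thm6.5}). As usual, I will freely shrink $Y$ around $W$ to a relatively compact Stein open neighborhood as described in \ref{a-say1.11}, and along the way I will replace $(X, \Delta)$ by a slight perturbation so that the rationality theorem applies: using Lemma \ref{a-lem3.5}, one finds a boundary $\mathbb{Q}$-divisor $\Delta'$ arbitrarily close to $\Delta$ such that $(X, \Delta')$ is divisorial log terminal with $a(K_X+\Delta')$ Cartier near $\pi^{-1}(W)$, and which preserves the $(K_X+\Delta)$-negativity of the rays under consideration.

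For the cone decomposition, first I would fix a $\pi$-ample $\mathbb{R}$-divisor $A$ and prove the finiteness statement (1). The idea is the standard supporting hyperplane argument: if $K_X+\Delta+A$ were not nef over $W$, apply Theorem \ref{a-thm7.1} (after passing to a $\mathbb{Q}$-perturbation) to $H:=kA$ for $k\gg 0$ to obtain a rational nef supporting function with bounded denominator; this produces a rational $(K_X+\Delta)$-negative face $F$ of $\NE(X/Y;W)$ lying in $(K_X+\Delta+A)_{<0}$. Using that $\dim N_1(X/Y;W)<\infty$ (this is where (P4) enters through Lemma \ref{a-lem4.1}) and induction on dimension of the face together with the bounded denominator, one shows that only finitely many extremal rays can lie in $(K_X+\Delta+A)_{<0}$. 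The cone equality then follows in the usual way: if $z\in \NE(X/Y;W)\setminus\left(\NE(X/Y;W)_{K_X+\Delta\geq 0}+\sum_j R_j\right)$, a separating hyperplane argument combined with the $A\to 0$ limit forces a contradiction with the extremality of the $R_j$.

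For the contraction morphism, let $R$ be a $(K_X+\Delta)$-negative extremal ray. By the cone statement, I can find a $\pi$-nef Cartier divisor $H$ (over some neighborhood of $W$) with $H^{\perp}\cap \NE(X/Y;W)=R$, and for which $aH-(K_X+\Delta)$ is $\pi$-ample for some positive integer $a$, by writing $H$ as a small perturbation of a $\pi$-ample divisor plus a large multiple of an $(K_X+\Delta)$-negative supporting functional and applying Kleiman's criterion (Theorem \ref{a-thm4.4}). Now Theorem \ref{a-thm6.5} gives, after shrinking $Y$ around $W$, that $mH$ is $\pi$-free for all $m\geq m_0$. Taking the Stein factorization of the morphism associated to $|mH/Y|$ yields $\varphi_R\colon X\to Z$ with $(\varphi_R)_*\mathcal{O}_X\simeq\mathcal{O}_Z$, establishing (ii); property (i) is immediate from $H\cdot C=0 \iff [C]\in R$ together with the construction. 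For (iii), given a line bundle $\mathcal{L}$ trivial on $R$, one considers $m_1 H+\mathcal{L}$ and $m_1 H-\mathcal{L}$ for $m_1\gg 0$: both are $\pi$-nef over $W$ with the same null locus as $H$, so Theorem \ref{a-thm6.5} applies to both, and comparing the associated morphisms over $Z$ produces $\mathcal{M}$ with $\varphi_R^*\mathcal{M}\simeq \mathcal{L}$, after a further shrinking of $Y$ around $W$.

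The principal obstacle will be step (iii), because in the analytic setting one cannot work with a generic fiber and must argue on an analytically sufficiently general fiber and on Stein compact neighborhoods simultaneously; in particular, choosing $H$ with the precise extremal supporting property requires the perturbation of $\Delta$ to a $\mathbb{Q}$-boundary $\Delta'$ to be compatible with (P4), and after each application of Theorem \ref{a-thm6.5} one must pass to a yet smaller Stein open neighborhood of $W$ so that $\varphi_R$ and the auxiliary morphisms are simultaneously defined. A secondary technical point is ensuring that, when extracting $\mathcal{M}$ in (iii), it descends as an honest line bundle rather than only on some open subset; this is handled by the Stein compactness of $W$ together with $(\varphi_R)_*\mathcal{O}_X\simeq\mathcal{O}_Z$ and Cartan's Theorems, but needs to be stated with care.
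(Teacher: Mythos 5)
Your proposal is correct and follows essentially the same route as the paper: rationality (Theorem \ref{a-thm7.1}) plus the basepoint-free theorem near $W$ (Theorem \ref{a-thm6.5}) and Kleiman's criterion (Theorem \ref{a-thm4.4}), all resting on Nakayama's finiteness (Lemma \ref{a-lem4.1}). The only material divergence is the $\mathbb{R}$-Cartier reduction: you perturb $\Delta$ to a single nearby $\mathbb{Q}$-boundary $\Delta'$ and observe that $K_X+\Delta+A=K_X+\Delta'+(A+(\Delta-\Delta'))$ with the correction still $\pi$-ample near $\pi^{-1}(W)$, whereas the paper writes $\Delta=\sum_i r_i\Delta_i$ as a convex combination with each $K_X+\Delta_i$ $\mathbb{Q}$-Cartier and dlt and notes that any $(K_X+\Delta)$-negative ray in $(K_X+\Delta+A)_{<0}$ is $(K_X+\Delta_{i_0})$-negative and lies in $(K_X+\Delta_{i_0}+A)_{<0}$ for some $i_0$, reducing at once to the $\mathbb{Q}$-Cartier case. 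Both are standard and valid; the convex decomposition is slightly more robust because one avoids the need to ensure that a single $\mathbb{Q}$-perturbation keeps $A+(\Delta-\Delta')$ ample and preserves DLT-ness, while the decomposition buys that for free from Lemma \ref{a-lem3.5}. Everything else in your argument (supporting hyperplane for finiteness, Stein factorization for (ii), the $m_1H\pm\mathcal{L}$ device for (iii), and the repeated shrinking of $Y$) is exactly what the proof in [KMM, Theorem 4-2-1] reduces to, and the paper simply cites that rather than reproducing it.
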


\begin{proof}[Sketch of Proof of 
Theorem \ref{a-thm7.2}] 
When $K_X+\Delta$ is $\mathbb Q$-Cartier 
in a neighborhood of $\pi^{-1}(W)$, 
we have the desired properties as a consequence of 
the rationality theorem (see 
Theorem \ref{a-thm7.1}) 
and the basepoint-free theorem (see 
Theorem \ref{a-thm6.5}). 
This part is well known 
(for the details, see, for example, \cite[Theorem 4-2-1]{kmm}). 
From now on, we assume that 
$K_X+\Delta$ is $\mathbb R$-Cartier but is not 
$\mathbb Q$-Cartier. 
As usual, we will freely shrink $Y$ around $W$ without 
mentioning it explicitly. 
By the standard argument, we can find effective 
$\mathbb Q$-divisors $\Delta_1, \ldots, 
\Delta_k$ on $X$ and positive real numbers $r_1, \ldots, 
r_k$ with $\sum _{i=1}^kr_i=1$ such that 
$K_X+\Delta_i$ is $\mathbb Q$-Cartier and 
$(X, \Delta_i)$ is divisorial log terminal for every $i$ and 
that $\sum _{i=1}^kr_i \Delta_i=\Delta$ holds. 
Let $R$ be a $(K_X+\Delta)$-negative 
extremal ray of $\NE(W/Y; W)$. 
Then there exists some $i_0$ such that 
$R$ is a $(K_X+\Delta_{i_0})$-negative 
extremal ray of $\NE(X/Y; W)$. 
We have already known that 
extremal rays are discrete in the half space 
$(K_X+\Delta_i)_{<0}$ for every $i$ since $K_X+\Delta_i$ 
is $\mathbb Q$-Cartier. 
Hence it is not difficult to see that all the desired 
properties 
hold true even when $K_X+\Delta$ is only 
$\mathbb R$-Cartier.  
\end{proof}

In this paper, we sometimes treat log canonical pairs. 
Thus, we need: 

\begin{thm}\label{a-thm7.3}
Let $(X, \Delta)$ be a log canonical pair. 
Let $\pi\colon X\to Y$ be a projective 
morphism of complex analytic spaces and let $W$ be a 
compact subset of $Y$ such that $\pi\colon X\to Y$ and $W$ 
satisfies {\em{(P)}}. 
We assume that there exists $\Delta_0$ on $X$ 
such that $(X, \Delta_0)$ is kawamata log terminal. 
Let $A$ be a $\pi$-ample $\mathbb R$-divisor 
on $X$. 
Then there are only finitely many $(K_X+\Delta+A)$-negative 
extremal rays of $\NE(X/Y; W)$. 

Let $R$ be a $(K_X+\Delta)$-negative extremal 
ray of $\NE(X/Y; W)$. 
Then the contraction morphism $\varphi_R\colon X\to Z$ 
associated to $R$ as in Theorem \ref{a-thm7.2} (2) exists. 
\end{thm}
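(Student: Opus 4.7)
The plan is to deduce both statements from the dlt cone and contraction theorem (Theorem \ref{a-thm7.2}) by perturbing $\Delta$ toward the given klt boundary $\Delta_0$. For a small positive real number $\varepsilon$, set
\[
\Delta_\varepsilon := (1-\varepsilon)\Delta + \varepsilon \Delta_0.
\]
Comparing discrepancies on a common log resolution gives
\[
a(E, X, \Delta_\varepsilon) = (1-\varepsilon)\,a(E, X, \Delta) + \varepsilon\, a(E, X, \Delta_0) > -1
\]
for every prime divisor $E$ over $X$, using $a(E,X,\Delta) \geq -1$ from log canonicity and $a(E,X,\Delta_0) > -1$ from klt-ness. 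Hence $(X, \Delta_\varepsilon)$ is kawamata log terminal for every $\varepsilon \in (0,1]$. After replacing $Y$ by a relatively compact Stein open neighborhood of $W$, as permitted by the framework of \ref{a-say1.11}, Remark \ref{a-rem3.8} allows one to treat $(X, \Delta_\varepsilon)$ as divisorial log terminal, so Theorem \ref{a-thm7.2} becomes applicable.

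For the finiteness statement, I would rewrite
\[
K_X + \Delta + A \;=\; K_X + \Delta_\varepsilon + A_\varepsilon, \qquad A_\varepsilon := A - \varepsilon(\Delta_0 - \Delta),
\]
which makes sense since $\Delta_0 - \Delta = (K_X+\Delta_0) - (K_X+\Delta)$ is $\mathbb R$-Cartier. The key observation is that $N^1(X/Y; W)$ is finite-dimensional by Nakayama's finiteness (Lemma \ref{a-lem4.1}) and $\pi$-ampleness is an open condition in this space by Kleiman's criterion (Theorem \ref{a-thm4.4}); hence for all sufficiently small $\varepsilon > 0$ the $\mathbb R$-divisor $A_\varepsilon$ remains $\pi$-ample. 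Applying Theorem \ref{a-thm7.2} (1) to the dlt pair $(X, \Delta_\varepsilon)$ with the ample $\mathbb R$-divisor $A_\varepsilon$ then immediately yields finiteness of $(K_X + \Delta + A)$-negative extremal rays of $\NE(X/Y; W)$.

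For the contraction statement, let $R$ be a $(K_X + \Delta)$-negative extremal ray and pick a class $v \in R\setminus\{0\}$, so that $(K_X+\Delta)\cdot v < 0$. Since $\Delta_0 - \Delta$ defines a linear functional on the finite-dimensional space $N_1(X/Y; W)$, by continuity the functional $K_X + \Delta_\varepsilon = (K_X + \Delta) + \varepsilon(\Delta_0 - \Delta)$ also takes a negative value on $v$ for all sufficiently small $\varepsilon > 0$, so $R$ is a $(K_X + \Delta_\varepsilon)$-negative ray. Since extremality is a purely cone-theoretic property independent of the chosen divisor, $R$ is still extremal in $\NE(X/Y; W)$. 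Theorem \ref{a-thm7.2} (2) applied to the dlt pair $(X, \Delta_\varepsilon)$ then produces, after further shrinking $Y$ around $W$, the desired contraction $\varphi_R\colon X \to Z$ satisfying properties (i)--(iii).

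The main technical point is really a coordination issue rather than a conceptual one: one must select $\varepsilon>0$ simultaneously small enough for the klt-to-dlt passage, for $A_\varepsilon$ to remain $\pi$-ample in part (1), and for $R$ to remain $(K_X+\Delta_\varepsilon)$-negative in part (2). All of these are open conditions in the finite-dimensional ambient space, so a common choice exists; note however that the admissible $\varepsilon$ in the contraction step may depend on the specific ray $R$, which is harmless since that part treats a single ray. No uniform control over all rays is required, and the bend-and-break-based bounded-length result of Remark \ref{a-rem1.32} is not needed here, as the perturbation argument handles extremal rays one at a time.
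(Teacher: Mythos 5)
Your proposal is correct and follows essentially the same route as the paper's proof: rewrite $K_X+\Delta+A = K_X+\Delta_\varepsilon+A_\varepsilon$ with $\Delta_\varepsilon=(1-\varepsilon)\Delta+\varepsilon\Delta_0$ klt and $A_\varepsilon$ still $\pi$-ample for small $\varepsilon$, then apply Theorem \ref{a-thm7.2}; for a fixed ray $R$, observe it remains $(K_X+\Delta_\varepsilon)$-negative for $\varepsilon$ small and apply Theorem \ref{a-thm7.2} (2). Your added remarks — the affine-linearity of discrepancies, the passage from klt to dlt via Remark \ref{a-rem3.8} after shrinking $Y$ around $W$, and the per-ray (non-uniform) choice of $\varepsilon$ in the contraction step — are all correct clarifications of points the paper leaves implicit.
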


\begin{proof}
We take a sufficiently 
small positive rational number $\varepsilon$ and 
consider 
\begin{equation*}
K_X+\Delta+A=K_X+(1-\varepsilon) \Delta+\varepsilon 
\Delta_0 +\left(A-\varepsilon (\Delta_0-\Delta)\right). 
\end{equation*}
Since $\varepsilon$ is sufficiently small, 
$A-\varepsilon (\Delta_0-\Delta)$ is still $\pi$-ample. 
On the other hand, the pair 
$\left(X, (1-\varepsilon)\Delta+\varepsilon \Delta_0\right)$ 
is kawamata log terminal. 
Hence, by Theorem \ref{a-thm7.2} (1), there are 
only finitely many $(K_X+\Delta+A)$-negative extremal rays. 
Let $R$ be a $(K_X+\Delta)$-negative 
extremal ray of $\NE(X/Y; W)$. 
Then we can see it as a 
$\left( K_X+(1-\varepsilon) \Delta+\varepsilon 
\Delta_0\right)$-negative extremal ray for some 
small positive rational number $\varepsilon$. 
Therefore, by Theorem \ref{a-thm7.2} (2), 
we have the desired contraction morphism 
$\varphi_R\colon X\to Z$.  
\end{proof}

\section{Basepoint-free theorems, II}\label{a-sec8}

In this section, we will treat basepoint-free theorems 
for $\mathbb R$-divisors. 
We note that the use of $\mathbb R$-divisors 
is indispensable in the theory of minimal models. 

\begin{thm}[{Basepoint-free theorem for 
$\mathbb R$-divisors, see \cite[Theorem 3.9.1]{bchm}}]
\label{a-thm8.1}
Let $\pi\colon X\to Y$ be a projective 
morphism of normal complex variety $X$ to 
a complex analytic space $Y$ and let $W$ be a Stein 
compact subset of $Y$ such that 
$\pi\colon X\to Y$ and $W$ satisfies {\em{(P)}}. 
Let $D$ be a $\pi$-nef $\mathbb R$-divisor 
on $X$ such that $aD-(K_X+\Delta)$ 
is $\pi$-nef and $\pi$-big for some positive 
real number $a$, where $(X, \Delta)$ is 
kawamata log terminal.  

Then there exists an open neighborhood $U$ of $W$ such that 
$D|_{\pi^{-1}(U)}$ is semiample over $U$. 
\end{thm}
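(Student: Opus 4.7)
The plan is to reduce the statement to the $\mathbb{Q}$-divisor basepoint-free theorem proved in Theorem \ref{a-thm6.5}, using Shokurov's polytope technique together with the rationality of $(K_X+\Delta)$-negative extremal rays (Theorem \ref{a-thm9.2}).

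First I would shrink $Y$ to a relatively compact Stein open neighborhood of $W$, so by Lemma \ref{a-lem2.37} both $\Delta$ and $D$ may be assumed to be finite $\mathbb{R}$-linear combinations of divisors. Since $aD-(K_X+\Delta)$ is $\pi$-nef and $\pi$-big, a relative Kodaira lemma lets me write
\begin{equation*}
aD-(K_X+\Delta)\sim_{\mathbb{R}} H+E
\end{equation*}
with $H$ a $\pi$-ample $\mathbb{Q}$-divisor and $E\geq 0$ an $\mathbb{R}$-Cartier $\mathbb{R}$-divisor. For $0<\varepsilon\ll 1$ the pair $(X,\Delta+\varepsilon E)$ is still kawamata log terminal; after a further small Bertini-style perturbation by a general $\pi$-ample $\mathbb{Q}$-divisor (as in Remark \ref{a-rem2.52}), I may replace $\Delta$ so as to assume that $aD-(K_X+\Delta)$ is itself $\pi$-ample.

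Next I would run Shokurov's polytope argument inside the finite-dimensional real vector space $V$ of $\mathbb{R}$-Cartier divisors spanned by the components of $D$ and $\Delta$; note that $V$ is defined over the rationals. The key step is to prove that the set
\begin{equation*}
\mathcal{R}=\left\{R\text{ extremal ray of }\NE(X/Y;W)\mid D\cdot R=0\right\}
\end{equation*}
is finite. For any such $R$, the $\pi$-ampleness of $aD-(K_X+\Delta)$ forces $(K_X+\Delta)\cdot R<0$ together with a uniform bound $-(K_X+\Delta)\cdot R\geq\alpha\,A\cdot R$ relative to a fixed $\pi$-ample Cartier divisor $A$; finiteness of $\mathcal{R}$ then follows from the cone theorem (Theorem \ref{a-thm7.2}(1)). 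By Theorem \ref{a-thm9.2}, each $R_i\in\mathcal{R}$ is spanned by a rational curve and is therefore a rational ray, so the polyhedron
\begin{equation*}
\mathcal{P}=\left\{D'\in V\mid D'\cdot R_i\geq 0\text{ for every }R_i\in\mathcal{R}\right\}
\end{equation*}
is rational and contains $D$ on its rational face $\bigcap_i\{D'\cdot R_i=0\}$. A uniform positivity estimate, coming from the $\pi$-ampleness of $aD-(K_X+\Delta)$ combined with the finiteness of $\mathcal{R}$, shows that in some neighborhood $U$ of $D$ in $V$ every $D'\in U\cap\mathcal{P}$ is still $\pi$-nef over $W$ and $aD'-(K_X+\Delta)$ is still $\pi$-ample. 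Since the rational points are dense in $\mathcal{P}$ and in the rational face through $D$, I can write $D=\sum_{j=1}^{N}t_j D_j'$ with $t_j\in\mathbb{R}_{>0}$ and each $D_j'\in U\cap\mathcal{P}$ a $\mathbb{Q}$-Cartier divisor.

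Each $D_j'$ is $\pi$-nef over $W$ and $aD_j'-(K_X+\Delta)$ is $\pi$-ample, so Theorem \ref{a-thm6.5}, applied to a suitable positive integer multiple of each $D_j'$, produces an open neighborhood of $W$ on which that multiple is $\pi$-free; hence each $D_j'$ is a $\pi$-semiample $\mathbb{Q}$-divisor on some open neighborhood of $W$. Intersecting these finitely many neighborhoods and invoking Definition \ref{a-def2.45}, I conclude that $D=\sum_j t_j D_j'$ is a $\pi$-semiample $\mathbb{R}$-divisor on an open neighborhood $U$ of $W$. The main obstacle is the uniform positivity estimate needed to guarantee that $U\cap\mathcal{P}$ lies in the nef cone near $D$; this is exactly where the rationality of $(K_X+\Delta)$-negative extremal rays (Theorem \ref{a-thm9.2}) is used together with the full cone theorem. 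The preliminary reduction to $aD-(K_X+\Delta)$ being $\pi$-ample rather than merely nef-and-big also requires care, because naively absorbing the effective part $E$ of a Kodaira decomposition into $\Delta$ only yields nef-plus-big; an additional general ample perturbation, available because $Y$ has been shrunk to a Stein neighborhood, is needed to close this gap.
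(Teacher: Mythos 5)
Your proposal is correct and follows essentially the same route as the paper: reduce to the case where $aD-(K_X+\Delta)$ is $\pi$-ample, use the cone theorem (Theorem \ref{a-thm7.2}~(1)) to see that only finitely many extremal rays are relevant (the paper phrases this via $(K_X+\Delta+A)$-negative rays for a small ample $A$; you phrase it via rays annihilating $D$, but the underlying positivity estimate is the same), run Shokurov's rational polytope decomposition $D=\sum r_iD_i$ into $\mathbb{Q}$-Cartier pieces each nef over a slightly enlarged Stein compact with $D_i-(K_X+\Delta)$ ample, and then apply the $\mathbb{Q}$-divisor basepoint-free theorem to each piece. The only cosmetic difference is that you invoke Theorem~\ref{a-thm6.5} directly on each $D_j'$, whereas the paper first shrinks $Y$ to a Stein neighborhood $U'$ and then uses Theorem~\ref{a-thm6.2}; both are fine, and your concern about the nef-and-big to ample reduction is resolved by the standard convex combination $aD-(K_X+\Delta+\varepsilon E)\sim_{\mathbb{R}}(1-\varepsilon)\bigl(aD-(K_X+\Delta)\bigr)+\varepsilon H$, exactly as you sketch.
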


\begin{proof}[Sketch of Proof of Theorem \ref{a-thm8.1}]
By replacing $D$ with $aD$, we may assume that 
$a=1$. 
We take a small Stein open neighborhood $U'$ of $W$ and 
a Stein compact subset $W'$ of $Y$ such that 
$\Gamma (W', \mathcal O_Y)$ is noetherian and 
$U'\subset W'$. 
Throughout this proof, we will freely shrink $Y$ 
around $W'$ without mentioning it explicitly. 
By the standard argument (see, for example, 
the proof of \cite[Theorem 3.9.1]{bchm}), 
we may further assume that $D-(K_X+\Delta)$ 
is $\pi$-ample and $K_X+\Delta$ is $\mathbb Q$-Cartier. 
We take a small $\pi$-ample $\mathbb Q$-divisor 
$A$ on $X$ such that $D-(K_X+\Delta+A)$ is still 
$\pi$-ample. 
By the cone theorem (see Theorem \ref{a-thm7.2} (1)), 
there are only finitely many $(K_X+\Delta+A)$-negative 
extremal rays of $\NE(X/Y; W')$. 
Hence, we can write 
$D=\sum _{i=1}^kr_iD_i$, where 
$r_i$ is a positive real number, $D_i$ is a 
$\mathbb Q$-Cartier $\mathbb Q$-divisor on $X$ which is nef over 
$W'$, 
and $D_i-(K_X+\Delta)$ is $\pi$-ample for every 
$i$ (for the details, see the proof of 
\cite[Theorem 3.9.1]{bchm}). 
We replace $Y$ with $U'$.  
Then, by the usual basepoint-free theorem (see 
Theorem \ref{a-thm6.2}), 
$D_i|_{\pi^{-1}(U)}$ is semiample over $U$ for 
some $U$ and every $i$. 
This implies that $D|_{\pi^{-1}(U)}$ is 
semiample over $U$. 
\end{proof}

We used the cone theorem (see Theorem \ref{a-thm7.2}) 
in the above proof of 
Theorem \ref{a-thm8.1}. So it is much 
deeper than the usual basepoint-free theorem 
for Cartier divisors (see Theorem \ref{a-thm6.2}). 

By combining Lemma \ref{a-thm6.5} with 
the argument in the proof of Theorem \ref{a-thm8.1}, 
we have: 

\begin{thm}\label{a-thm8.2}
Let $\pi\colon X\to Y$ be a projective 
bimeromorphic contraction morphism of complex analytic 
spaces and let $y\in Y$ be a point. 
Let $D$ be an $\mathbb R$-Cartier $\mathbb R$-divisor 
on $X$ such that 
$D$ is numerically trivial over $y$, that is, 
$D\cdot C=0$ for every projective curve 
$C$ on $X$ such that 
$\pi(C)=y$. 
Assume that $aD-(K_X+\Delta)$ is $\pi$-nef for 
some positive real number $a$, where $(X, \Delta)$ is 
a kawamata log terminal pair. 
Then $\pi_*D$ is $\mathbb R$-Cartier at $y$. 
\end{thm}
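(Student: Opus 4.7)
My plan is to reduce to the $\mathbb Q$-Cartier case by a rational approximation argument inside a $\mathbb Q$-defined finite-dimensional affine subspace of divisors, then apply the basepoint-free theorem (Theorem \ref{a-thm6.5}) to a $\pi$-ample perturbation of $D$, and finally deduce the $\mathbb Q$-Cartierness of $\pi_*D$ at $y$ via a limit argument combined with the negativity lemma (Lemma \ref{a-lem4.6}).

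First I would shrink $Y$ to a Stein neighborhood of $y$ and set $W=\{y\}$, which is a Stein compact subset satisfying (P4) because $\mathcal O_{Y,y}$ is noetherian. By Lemma \ref{a-lem2.37}, after further shrinking, $D$ is a finite $\mathbb R$-linear combination of Cartier divisors. Let $V\subset\WDiv_{\mathbb R}(X)$ denote the finite-dimensional $\mathbb Q$-defined affine subspace spanned by the components of $D$, and let $V_0\subset V$ be the subspace of divisors numerically trivial on every projective curve $C$ with $\pi(C)=y$. By Nakayama's finiteness (Lemma \ref{a-lem4.1}), $N_1(X/Y;\{y\})$ is finite-dimensional, so $V_0$ is cut out by finitely many linear equations with rational coefficients and hence is itself defined over $\mathbb Q$. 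Writing $D=\sum_{j=1}^{m}\alpha_jE_j$ as a convex combination with $\alpha_j>0$, $\sum\alpha_j=1$, and $E_j\in V_0\cap V_{\mathbb Q}$ arbitrarily close to $D$, and using the linearity $\pi_*D=\sum\alpha_j\pi_*E_j$, it is enough to prove each $\pi_*E_j$ is $\mathbb Q$-Cartier at $y$. Renaming $E_j$ as $D$ and scaling, we may assume $D$ is Cartier on a neighborhood of $\pi^{-1}(y)$, $D\cdot C=0$ for every projective curve $C$ over $y$, and $aD-(K_X+\Delta)$ remains a small perturbation of the original $\pi$-nef divisor.

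Next, fix a $\pi$-ample Cartier divisor $H$ on $X$. For each rational $\epsilon>0$ chosen large enough that $a\epsilon H+(aD-(K_X+\Delta))$ is $\pi$-ample, and for a positive integer $N$ with $N\epsilon\in\mathbb Z$, the Cartier divisor $N(D+\epsilon H)$ is nef over $W=\{y\}$ (since $D\cdot C=0$ and $H\cdot C>0$ for $C$ over $y$) while $aN(D+\epsilon H)-N(K_X+\Delta)$, after rescaling, is $\pi$-ample. Theorem \ref{a-thm6.5} therefore implies that, after shrinking $Y$ around $y$, $N(D+\epsilon H)$ is $\pi$-free; let $\phi_\epsilon\colon X\to Z_\epsilon$ be the associated contraction over $Y$, with $N(D+\epsilon H)=\phi_\epsilon^{*}L_\epsilon$ for some line bundle $L_\epsilon$ ample over the bimeromorphic structure map $g_\epsilon\colon Z_\epsilon\to Y$. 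Crucially, $\phi_\epsilon$ does not contract any curve over $y$, since $(D+\epsilon H)\cdot C>0$ there.

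The main obstacle is to transfer the $\mathbb Q$-Cartierness implicit in this semiample structure from $D+\epsilon H$ to $D$ itself. I would argue by letting $\epsilon\to 0$ through a rational sequence. Using the cone theorem (Theorem \ref{a-thm7.2}) applied to $\NE(X/Y;\{y\})$, only finitely many contractions of $X$ over $Y$ arise as $\phi_\epsilon$ varies, so along a subsequence $\epsilon_n\downarrow 0$, $Z_{\epsilon_n}$ stabilizes to a fixed model $Z$, and the classes $[L_{\epsilon_n}]$ converge in $N^1(Z/Y;g^{-1}(y))\otimes\mathbb R$ to a class $[L_0]$ with $ND=\phi^{*}L_0$ in $N^1(X/Y;\{y\})\otimes\mathbb R$. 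In the limit $\epsilon=0$, $D$ is numerically trivial on curves over $y$, so the limiting $\phi$ must contract every such curve; consequently the fiber $g^{-1}(y)\subset Z$ collapses to a single point, i.e., $g\colon Z\to Y$ is an isomorphism near $y$, and $L_0$ descends to a $\mathbb Q$-Cartier divisor $F$ on a neighborhood of $y$ in $Y$ satisfying $ND=\pi^{*}F+E$ for some $\pi$-exceptional $\mathbb R$-divisor $E$. Finally, applying Lemma \ref{a-lem4.6} to both $E$ and $-E$---which are $\pi$-exceptional, lie in the movable cone (being $\pi$-exceptional with numerically trivial restriction to curves over $y$), and satisfy $\pi_*E=0=\pi_*(-E)$---forces $E=0$ in a neighborhood of $\pi^{-1}(y)$. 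Hence $\pi_*D=F/N$ is $\mathbb Q$-Cartier at $y$, and combined with the rational approximation in the first paragraph this yields the desired $\mathbb R$-Cartierness of $\pi_*D$ at $y$.
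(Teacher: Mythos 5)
Your overall plan (reduce to the $\mathbb Q$-Cartier case, apply Theorem \ref{a-thm6.5}, descend) is in the spirit of the paper's proof, but several steps contain genuine gaps.

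First, the rational-approximation step in paragraph one does not preserve the hypothesis that $aD-(K_X+\Delta)$ is $\pi$-nef: nefness is not an open condition (the paper emphasizes this in Remark \ref{a-rem1.5}), so $aE_j-(K_X+\Delta)$ need not be $\pi$-nef for $E_j$ near $D$. The paper circumvents this by using the cone theorem (as in the proof of Theorem \ref{a-thm8.1}) to decompose $D=\sum r_iD_i$ with $D_i$ rational, numerically trivial over $y$, \emph{and} $D_i-(K_X+\Delta)$ $\pi$-ample, all simultaneously; your decomposition in $V_0$ only controls numerical triviality.

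Second, and more seriously, the $\epsilon\to 0$ limit in paragraph three is self-contradictory. In paragraph two you correctly observe that $\phi_\epsilon$ contracts no curve over $y$ (so it is an isomorphism near $\pi^{-1}(y)$); since $D+\epsilon H$ is already ample over $y$, Theorem \ref{a-thm6.5} is not producing any new information near $y$ there. Then in paragraph three you claim the limiting $\phi$ contracts every curve over $y$ and that $g^{-1}(y)$ collapses to a point — but a stable limit of isomorphisms near $\pi^{-1}(y)$ is still an isomorphism near $\pi^{-1}(y)$, so this cannot happen. Adding $\epsilon H$ was the wrong move: it destroys exactly the numerical triviality that makes the argument go. The correct mechanism is to apply Theorem \ref{a-thm6.5} directly to a rational approximant $D_i$ that is Cartier, nef over $\{y\}$ (indeed numerically trivial), with $D_i-(K_X+\Delta)$ $\pi$-ample; then the contraction $\phi_i$ associated to $m_iD_i$ \emph{does} collapse $\pi^{-1}(y)$ to a point, whence the induced map $Z_i\to Y$ is finite bimeromorphic near $y$, hence an isomorphism there ($Y$ normal), and $m_iD_i$ descends to a Cartier divisor on $Y$, giving $\pi_*D_i$ $\mathbb Q$-Cartier at $y$.

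Finally, even granting $ND=\pi^*F+E$, the appeal to Lemma \ref{a-lem4.6} is misplaced: that lemma requires $E\in\Mov(X/Y;W)$, and a $\pi$-exceptional $E$ is in general not in the movable cone. The tool you want there is the standard negativity lemma for $\pi$-exceptional divisors nef (or numerically trivial) over $y$ (cf.\ \cite[Lemma 3.6.2]{bchm}), not Lemma \ref{a-lem4.6}.
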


\begin{proof}
We put $W=\{y\}$. 
Then $W$ is a Stein compact subset of $Y$ such that 
$\Gamma (W, \mathcal O_Y)$ is noetherian. 
We will freely shrink $Y$ around $W$ without mentioning it 
explicitly. As in the proof of 
Theorem \ref{a-thm8.1}, 
we may assume that 
$a=1$ and $D-(K_X+\Delta)$ is $\pi$-ample. 
By using the cone theorem as in the proof of 
Theorem \ref{a-thm8.1}, 
we can write $D=\sum _{i=1}^k r_iD_i$, where 
$r_i$ is a positive real number, 
$D_i$ is numerically trivial over $y$, and $D_i-(K_X+\Delta)$ 
is $\pi$-ample 
for every $i$. 
By replacing $D_i$ with $m_iD_i$ for some 
positive integer $m_i$, 
we may further assume that 
$D_i$ is a Cartier divisor on $X$ for every $i$. 
Then, by Theorem \ref{a-thm6.5}, 
$\pi_*D_i$ is $\mathbb Q$-Cartier for every $i$. 
Hence $\pi_*D$ is $\mathbb R$-Cartier. 
This is what we wanted. 
\end{proof}

The final theorem in this section is suitable 
for our framework of the minimal model program. 
We only assume that $K_X+\Delta$ is $\mathbb R$-Cartier and 
is nef over $W$ in Theorem \ref{a-thm8.3}. 
The conclusion says that it is semiample over some open 
neighborhood of $W$. 

\begin{thm}\label{a-thm8.3}
Let $\pi\colon X\to Y$ be a projective morphism 
of complex analytic spaces and let $W$ be a compact subset 
of $Y$ such that $\pi\colon X\to Y$ and $W$ satisfies 
{\em{(P)}}. Let $(X, \Delta)$ be a log canonical pair. 
Assume that there exists $\Delta_0$ such that $(X, \Delta_0)$ is kawamata 
log terminal. We further assume that $\Delta=A+B$, where 
$A$ is $\pi$-ample, 
$A\geq 0$, and $B\geq 0$. 
If $K_X+\Delta$ is nef over $W$, 
then there exists an open neighborhood $U$ of 
$W$ such that 
$K_X+\Delta$ is semiample over $U$. 
\end{thm}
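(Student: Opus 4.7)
The plan is to reduce Theorem~\ref{a-thm8.3} to the kawamata log terminal case already handled by Theorem~\ref{a-thm8.1} by constructing an auxiliary KLT boundary from $B$ and $\Delta_0$. First I would shrink $Y$ around $W$ so that $Y$ is Stein and, by Lemma~\ref{a-lem2.37}, the $\mathbb R$-divisors $\Delta = A+B$, $\Delta_0$, and $A$ have only finitely many components on $X$. Then there is a uniform $\eta>0$ with $\mult_P(\Delta_0) \leq 1-\eta$ for each prime $P$ in the support of $\Delta_0$ and $a(E,X,\Delta_0) \geq -1+\eta$ for each exceptional divisor $E$ on a fixed log resolution of $(X,\Delta+\Delta_0)$.

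For a small real number $\varepsilon>0$, I set $\Delta' := (1-\varepsilon)B + \varepsilon\Delta_0$ and claim that $(X,\Delta')$ is kawamata log terminal. Effectiveness is clear; the boundary multiplicities satisfy
\begin{equation*}
\mult_P(\Delta') = (1-\varepsilon)\mult_P(B) + \varepsilon\,\mult_P(\Delta_0) \leq 1-\varepsilon\eta <1
\end{equation*}
because $\mult_P(B) \leq \mult_P(\Delta) \leq 1$; and at any exceptional divisor $E$,
\begin{equation*}
a(E,X,\Delta') = (1-\varepsilon)a(E,X,B) + \varepsilon a(E,X,\Delta_0) \geq -1 + \varepsilon\bigl(1 + a(E,X,\Delta_0)\bigr) > -1,
\end{equation*}
where I use that $a(E,X,B) \geq a(E,X,A+B) \geq -1$ since $A \geq 0$ and $(X,A+B)$ is log canonical, together with the KLT hypothesis on $(X,\Delta_0)$.

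The key computation is then
\begin{equation*}
D - (K_X+\Delta') = (K_X+A+B) - \bigl(K_X + (1-\varepsilon)B + \varepsilon\Delta_0\bigr) = A + \varepsilon(B-\Delta_0).
\end{equation*}
Since $A$ is $\pi$-ample and $\varepsilon(B-\Delta_0)$ is a small $\mathbb R$-Cartier perturbation, Kleiman's criterion (Theorem~\ref{a-thm4.4}) combined with the openness of the ample cone guarantees that, for $\varepsilon$ sufficiently small, the numerical class of $A+\varepsilon(B-\Delta_0)$ lies in $\Amp(X/Y;W)$. After a further shrinking of $Y$ around $W$, the divisor $A+\varepsilon(B-\Delta_0)$ is therefore $\pi$-ample, in particular $\pi$-nef and $\pi$-big. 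Applying Theorem~\ref{a-thm8.1} with $a=1$ to the KLT pair $(X,\Delta')$ and the divisor $D = K_X+\Delta$ (nef over $W$) then yields that $K_X+\Delta$ is semiample over some open neighborhood of $W$, as required.

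The main obstacle is locating the right KLT perturbation. The naive choice $(1-\varepsilon)\Delta + \varepsilon\Delta_0$ is KLT, but it produces the difference $\varepsilon(\Delta-\Delta_0) = \varepsilon A + \varepsilon(B-\Delta_0)$ in which only a fraction $\varepsilon$ of the ample input survives and can be overwhelmed by the fixed term $\varepsilon(B-\Delta_0)$; the $\pi$-bigness is then not manifest. By instead removing $A$ from the boundary entirely and compensating only for the coefficient growth of $B$ by adding $\varepsilon\Delta_0$, the full ample divisor $A$ reappears with unit weight in the discrepancy $D-(K_X+\Delta')$, where the openness of the ample cone absorbs the remaining small perturbation $\varepsilon(B-\Delta_0)$. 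A minor technical remark is that Theorem~\ref{a-thm8.1}'s hypothesis of $\pi$-nefness on $D$ is in fact used only through nefness over $W$, as visible in its proof sketch, so the hypothesis that $K_X+\Delta$ be merely nef over $W$ suffices here.
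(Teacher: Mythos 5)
Your construction of the auxiliary KLT boundary $\Delta' = (1-\varepsilon)B + \varepsilon\Delta_0$ is clean and correct, and the discrepancy and multiplicity estimates that show $(X,\Delta')$ is kawamata log terminal for small $\varepsilon$ are sound; the identity $D - (K_X+\Delta') = A + \varepsilon(B-\Delta_0)$ is exactly the right perturbation to keep the full ample $A$ in play, which is a nice observation. The paper itself reduces to the KLT case differently (it replaces $\Delta$ by $\Delta'$ with $K_X+\Delta'\sim_{\mathbb R}K_X+\Delta$ via Lemma~\ref{a-lem11.15} and then works with $(X,B+H)$ for a small general $H$), so your route to the KLT reduction is genuinely different and arguably more transparent.

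The gap is in the final step, and your ``minor technical remark'' is where the argument actually breaks. You assert that the proof sketch of Theorem~\ref{a-thm8.1} only uses nefness of $D$ over $W$; this is not what the sketch says. That proof introduces a \emph{larger} Stein compact $W'$ with $W\subset U'\subset W'$, applies Theorem~\ref{a-thm7.2}~(1) to $\NE(X/Y;W')$, and decomposes $D=\sum r_iD_i$ with each $D_i$ nef over $W'$; it then shrinks $Y$ to $U'$ so that the $D_i$ are globally $\pi$-nef and Theorem~\ref{a-thm6.2} applies. In particular the proof needs $D$ to be nef over $W'$, and, as the paper itself warns in Remark~\ref{a-rem1.5}, nefness over $W$ does \emph{not} upgrade to nefness over any larger compact set or neighborhood. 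So you cannot cite Theorem~\ref{a-thm8.1} as stated, and your justification for weakening its hypothesis misreads the proof. The correct fix is exactly what the paper's proof of Theorem~\ref{a-thm8.3} does: apply Theorem~\ref{a-thm7.3} (or Theorem~\ref{a-thm7.2}~(1)) directly to $\NE(X/Y;W)$ to get finitely many negative extremal rays and decompose $K_X+\Delta$ into $\mathbb Q$-Cartier pieces nef over $W$ with the appropriate ample difference, and then invoke Theorem~\ref{a-thm6.5}, the basepoint-free theorem that is explicitly formulated for ``nef over $W$'' (it produces semiampleness over some open neighborhood of $W$). With your KLT pair $(X,\Delta')$ and the identity $D-(K_X+\Delta')=A+\varepsilon(B-\Delta_0)$ being $\pi$-ample, this decomposition-plus-Theorem~\ref{a-thm6.5} argument goes through and closes the gap; but as written your proposal invokes a theorem whose hypothesis is not satisfied.
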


\begin{proof}
Throughout this proof, we will shrink $Y$ around $W$ suitably 
without mentioning it explicitly. 
By assumption, we can take $\Delta'$ such that 
$(X, \Delta')$ is kawamata log terminal and $K_X+\Delta'\sim _{\mathbb 
R}K_X+\Delta$. 
Hence, by replacing $\Delta$ with $\Delta'$, 
we may assume that $(X, \Delta)$ is kawamata log 
terminal. 
Then $(X, B)$ is kawamata log terminal and 
$2(K_X+\Delta)-(K_X+B)$ is ample over 
$Y$. 
We take a general $\pi$-ample $\mathbb Q$-divisor 
$H$ on $X$ such that $(X, B+H)$ is kawamata log terminal 
and that $2(K_X+\Delta)-(K_X+B+H)$ is still ample 
over $Y$. As in the proof of \cite[Theorem 3.9.1]{bchm}, 
by using Theorem \ref{a-thm7.3}, 
we take positive real numbers $r_1, \ldots, r_k$ and 
$\mathbb Q$-divisors $\Delta_1,\ldots, \Delta_k$ such that 
$K_X+\Delta_i$ is nef over $W$ and $2(K_X+\Delta_i)-(K_X+B+H)$ 
is ample over $Y$ for every $i$ and 
that $\sum _{i=1}^k\Delta_i =\Delta$. 
By Theorem 
\ref{a-thm6.5}, we obtain that $K_X+\Delta_i$ is $\pi$-semiample 
for every $i$. 
This means that $K_X+\Delta$ is semiample over $Y$. 
This is what we wanted. 
\end{proof}

We close this section with conjectures related to Theorem \ref{a-thm8.3}. 

\begin{conj}\label{a-conj8.4}
Let $(X, \Delta)$ be a log canonical pair and 
let $\pi\colon X\to Y$ be a projective 
morphism of complex analytic spaces. 
We put 
\begin{equation*}
\mathfrak N:=\{y\in Y\, |\, {\text{$(K_X+\Delta)|_{\pi^{-1}(y)}$ 
is nef}}\}. 
\end{equation*} 
Then $\mathfrak N$ is open. 
\end{conj}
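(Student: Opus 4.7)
The plan is to attempt the openness statement via a limiting argument based on the boundedness of $(K_X+\Delta)$-negative extremal curves, acknowledging that this is genuinely subtle because nefness of an arbitrary $\mathbb R$-Cartier $\mathbb R$-divisor is not an open condition in general (Remark \ref{a-rem4.3}); what makes the log canonical case potentially tractable is precisely the existence of bounded extremal rays.

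First I would reduce to a local setup. Fix $y_0\in\mathfrak N$, replace $Y$ by a small Stein open neighborhood of $y_0$, and choose a relatively compact Stein compact subset $W$ of $Y$ containing $y_0$ such that $\pi\colon X\to Y$ and $W$ satisfies (P), using the construction of Lemma \ref{a-lem2.16}. It suffices to show $(K_X+\Delta)|_{X_y}$ is nef for every $y$ in some neighborhood of $y_0$ inside $W$. Applying Theorem \ref{a-thm1.27} (dlt blow-ups) I would replace $(X,\Delta)$ by a dlt model, and then by writing the boundary as a suitable convex combination of $\mathbb Q$-divisors I would reduce to the kawamata log terminal case, at the cost of replacing $y_0$ by a fixed point in a new $W$.

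Next, fix a $\pi$-ample Cartier divisor $A$ on $X$. Assume for contradiction that there is a sequence $y_i\to y_0$ in $W$ and projective integral curves $C_i\subset X_{y_i}$ with $(K_X+\Delta)\cdot C_i<0$. Using Theorem \ref{a-thm9.2} (the forthcoming result, referred to in Remark \ref{a-rem1.32}, saying that every $(K_X+\Delta)$-negative extremal ray is spanned by a rational curve of bounded degree), I may replace each $C_i$ by a rational curve spanning a $(K_X+\Delta)$-negative extremal ray of $\NE(X_{y_i})$ and thereby arrange a uniform bound on $A\cdot C_i$. The cycles $C_i$ then live in a family of effective $1$-cycles of bounded $A$-degree over the compact set $W$; by the properness of the relative Barlet cycle space over $W$ (which is where property (P) of $W$ enters critically), a subsequence converges to an effective analytic $1$-cycle $C_0$ supported in $X_{y_0}$. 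Semicontinuity of intersection numbers then gives $(K_X+\Delta)\cdot C_0\le 0$, while the hypothesis $y_0\in\mathfrak N$ forces $(K_X+\Delta)\cdot C_0=0$; passing to an irreducible component and combining with the strict negativity $(K_X+\Delta)\cdot C_i<0$ one hopes to obtain a contradiction, for instance by perturbing by a small multiple of $A$ and invoking Kleiman's criterion (Theorem \ref{a-thm4.4}) on $X_{y_0}$.

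The hard part will be the last step — controlling the limit cycle and turning ``$(K_X+\Delta)\cdot C_0=0$'' into a contradiction. The subtleties are twofold. First, one must justify the relative properness of the space of curves of bounded $A$-degree purely in the complex analytic category over a Stein compact $W$ satisfying (P4); this is the analogue, via Barlet's theory, of relative Hilbert schemes in the projective setting, but its compactness properties are more delicate. Second, even after obtaining $C_0$, the inequality $(K_X+\Delta)\cdot C_i<0$ need not survive to the limit as a strict inequality, and one must rule out the possibility that the $C_i$ are ``uniformly grazing'' the nef boundary at $y_0$; here a careful analysis of how $C_i$ degenerates — possibly acquiring multiple components — is unavoidable, and it may be necessary to bring in a nonvanishing-type input (Theorem \ref{thm-d}) to produce effective divisors realising $K_X+\Delta$ whose intersection with $C_0$ can be controlled. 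This limiting/compactness step is the principal obstacle, and precisely the reason the statement is posed as a conjecture rather than a theorem; if it cannot be carried out in full generality, a reasonable fallback is to establish openness only of the subset where $(K_X+\Delta)|_{X_y}$ is ample, which is immediate from Lemma \ref{a-lem4.10}(1).
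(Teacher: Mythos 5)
The statement you were asked to address is a \emph{Conjecture} in the paper; the author does not prove it, and instead offers in Remark~\ref{a-rem8.5} a heuristic derivation from the termination of a $(K_X+\Delta)$-minimal model program with scaling over $Y$ around a Stein compact neighborhood of $y_0$: since $(K_X+\Delta)|_{X_{y_0}}$ is nef, each step of the MMP has exceptional locus disjoint from $X_{y_0}$, so if the MMP terminates one obtains a model isomorphic to $X$ near $X_{y_0}$ on which $K+\Delta$ is nef over a neighborhood. You correctly recognize that the statement is open and offer a genuinely different route, via compactness of the Barlet cycle space, which is worth comparing to the paper's intended strategy.

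The critical gap in your proposal, however, is the assertion of a uniform bound on $A\cdot C_i$ for a fixed $\pi$-ample $A$. Theorem~\ref{a-thm9.2} (and Kawamata's bound applied fiberwise to each $(X_{y_i},\Delta|_{X_{y_i}})$) gives $0 < -(K_X+\Delta)\cdot C_i \leq 2\dim X$; this bounds the \emph{log-canonical} degree, not the $A$-degree. Indeed, in the proof of Theorem~\ref{a-thm9.2} the bound on $A\cdot\ell$ is obtained only after passing to the contraction associated to the extremal ray, where one may choose $A$ small enough that $-(K_X+\Delta+A)$ is still relatively ample; that choice of $A$ depends on the ray (equivalently, here, on $i$) and cannot be made uniformly. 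Without a uniform $A$-degree bound the curves $C_i$ need not form a bounded family, and the compactness of the cycle space cannot be invoked at all. This is not a technical inconvenience but precisely the phenomenon behind the fact (Remark~\ref{a-rem4.3}) that the ``nef over $y$'' locus is not open for a general $\mathbb R$-Cartier $\mathbb R$-divisor: the negative curves in nearby fibers run off to infinity in degree. Any mechanism controlling those degrees for $K_X+\Delta$ would essentially encode the termination of the relevant MMP, which is why the statement remains conjectural.

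Two secondary remarks. First, the ``hard part'' you isolate at the end is actually less of an obstacle than the degree bound: if a limit cycle $C_0\subset X_{y_0}$ existed, intersection with the fixed $\mathbb R$-Cartier divisor $K_X+\Delta$ is locally constant on the cycle space, so for $i\gg 0$ one would get $(K_X+\Delta)\cdot C_0 = (K_X+\Delta)\cdot C_i < 0$ \emph{strictly}, directly contradicting nefness at $y_0$ without any further perturbation or nonvanishing input. Second, the reduction to the kawamata log terminal case ``by writing the boundary as a convex combination of $\mathbb Q$-divisors'' is not sound as stated: nefness of $(K_X+\Delta)|_{X_{y_0}}$ does not imply nefness of each summand $(K_X+\Delta_i)|_{X_{y_0}}$, so the hypothesis does not pass to the pieces. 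The reduction that is actually used in the paper (Theorems~\ref{a-thm7.3}, \ref{a-thm9.2}) is to pass from $\Delta$ to a nearby klt $\Delta_\varepsilon$ for which the same extremal ray remains negative — a ray-by-ray argument, not a nef-by-nef one. Your fallback observation that the ample locus is open (Lemma~\ref{a-lem4.10}(1)) is correct and is the part of the picture that \emph{is} unconditionally known.
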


If we can establish the minimal model 
program for projective morphisms between complex 
analytic spaces in full generality, 
then we see that Conjecture \ref{a-conj8.4} holds true. 

\begin{rem}\label{a-rem8.5} 
In Conjecture \ref{a-conj8.4}, 
it is sufficient to prove that $\mathfrak N$ contains an 
open neighborhood 
of $y_0$ under the assumption that 
$(K_X+\Delta)|_{\pi^{-1}(y_0)}$ is nef. 
We take an open neighborhood $U$ of $y_0$ and a Stein 
compact subset $W$ of $Y$ such that 
$y_0\in U\subset W$ and 
that $\Gamma (W, \mathcal O_Y)$ is noetherian. 
We will freely shrink $Y$ around $W$. 
By Theorem \ref{a-thm1.27}, we can reduce the problem 
to the case where $X$ is $\mathbb Q$-factorial 
over $W$ and $(X, \Delta)$ is divisorial log terminal. 
Then we run a $(K_X+\Delta)$-minimal model 
program over $Y$ around $W$. 
Note that $K_X+\Delta$ is $\pi$-pseudo-effective 
since $(K_X+\Delta)|_{\pi^{-1}(y_0)}$ is nef. 
If the above minimal model program terminates after 
finitely many steps, 
then it is easy to see that $K_X+\Delta$ 
is nef over some open neighborhood of $y_0$. 
Hence Conjecture \ref{a-conj8.4} would be 
absolutely correct. 
\end{rem}

Conjecture \ref{a-conj8.4} 
is also related to the following abundance conjecture 
for projective morphisms of complex analytic spaces. 

\begin{conj}[Abundance conjecture]\label{a-conj8.6}
Let $(X, \Delta)$ be a log canonical pair and let 
$\pi\colon X\to Y$ be a projective 
morphism of complex analytic spaces. 
Assume that $K_X+\Delta$ is $\pi$-nef, that is, 
$(K_X+\Delta)\cdot C\geq 0$ for every 
projective integral curve $C$ on $X$ such that $\pi(C)$ is a point. 
Then $K_X+\Delta$ is $\pi$-semiample. 
\end{conj}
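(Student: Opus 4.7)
The plan is to reduce Conjecture \ref{a-conj8.6} to the projective algebraic abundance conjecture for log canonical pairs. Fix a point $y_0\in Y$ and work on a relatively compact Stein open neighborhood; by the discussion in \ref{a-say1.11} we may produce a Stein compact subset $W$ containing $y_0$ such that $\pi\colon X\to Y$ and $W$ satisfy (P). Since $\pi$-semiampleness is local on $Y$, it suffices to exhibit an open neighborhood of $y_0$ over which $K_X+\Delta$ becomes $\pi$-semiample.

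First I would pass to a divisorial log terminal model via Theorem \ref{a-thm1.27}: after shrinking $Y$ around $W$, we obtain $f\colon Z\to X$ with $Z$ projective over $Y$, $\mathbb Q$-factorial over $W$, and $(Z,\Delta_Z^{<1}+\Supp\Delta_Z^{\geq 1})$ divisorial log terminal, where $K_Z+\Delta_Z=f^*(K_X+\Delta)$. Since $\pi$-semiampleness of $K_X+\Delta$ over a neighborhood of $W$ is equivalent to that of $K_Z+\Delta_Z$, we may replace $(X,\Delta)$ by this dlt model. Note that $K_Z+\Delta_Z$ is again $\pi$-nef, so there is no MMP to run: the question is purely one of abundance for a $\pi$-nef dlt pair.

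Next I would mimic the strategy of Theorem \ref{a-thm1.30} (proved in Theorem \ref{a-thm23.2}): restrict to an analytically sufficiently general fibre $F$ of $\pi$, invoke the assumed projective log canonical abundance in the relevant dimension to get that $(K_X+\Delta)|_F$ is semiample, and then globalize using the analytic cone theorem (Theorem \ref{a-thm7.2}), the $\mathbb R$-divisor basepoint-free theorem (Theorem \ref{a-thm8.1}), and the effective version (Theorem \ref{a-thm6.4}), exactly as for the kawamata log terminal case. To pass from the fibrewise klt abundance available in Theorem \ref{a-thm1.30} to the log canonical one, a quasi-log / slc-trivial reduction \`a la Fujino--Gongyo would be needed.

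The hard part is twofold, and is arguably essential rather than technical. Algebraically, log canonical abundance is open in dimension $\geq 4$, so this approach can never deliver Conjecture \ref{a-conj8.6} unconditionally; it is exactly as strong as its projective counterpart. Analytically, the theory of quasi-log schemes, flagged in \ref{a-say1.10} as not yet developed in the complex analytic setting, is what would be required to carry the fibrewise log canonical abundance back to a global statement over $W$. So a realistic target is a conditional theorem assuming both (a) projective log canonical abundance in relative dimension $\dim X-\dim Y$, and (b) an analytic quasi-log framework; under these hypotheses the argument should go through in parallel with Theorem \ref{a-thm23.2}, while the unconditional kawamata log terminal case is already covered by Theorem \ref{a-thm1.30}.
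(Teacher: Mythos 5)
The statement you are proving is a labeled conjecture, and the paper neither proves it nor attempts to: immediately after stating it, the text says the abundance conjecture ``is widely open even when $Y$ is a point,'' and Section~\ref{a-sec23} deliberately restricts to kawamata log terminal pairs, ending with the remark that the log canonical case ``seems to be much more difficult.'' So there is no proof in the paper to compare against, and your top-level diagnosis --- that this cannot be established unconditionally, that any argument is exactly as strong as projective lc abundance, and that the analytic quasi-log framework flagged in \ref{a-say1.10} is the missing infrastructure --- is correct and matches the authors' stated viewpoint.

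One inaccuracy in your conditional sketch is worth flagging. You propose to globalize the fibrewise semiampleness ``using the analytic cone theorem (Theorem~\ref{a-thm7.2}), the $\mathbb R$-divisor basepoint-free theorem (Theorem~\ref{a-thm8.1}), and the effective version (Theorem~\ref{a-thm6.4}), exactly as for the kawamata log terminal case.'' That is not what the proof of Theorem~\ref{a-thm23.2} does, and it cannot be: Theorems~\ref{a-thm8.1} and \ref{a-thm6.4} require $aD-(K_X+\Delta)$ to be nef and big, which fails for $D=K_X+\Delta$ when $K_X+\Delta$ is nef but not $\pi$-big --- precisely the non-trivial case for abundance. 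The actual globalization in Theorem~\ref{a-thm23.2} proceeds by reducing to $\mathbb Q$-Cartier via Theorem~\ref{a-thm22.2}, passing to a relative Iitaka fibration, and then invoking finite generation of the log canonical ring (Theorem~\ref{a-thm1.18}) together with the analytic Wilson-type Lemma~\ref{a-lem23.4}. For log canonical pairs this finite generation is itself open (compare Conjecture~\ref{a-conj1.23}), so your conditional route would stall at that step quite apart from the quasi-log issue; you would need to add finite generation of lc rings to your list of hypotheses, or find a different globalization.
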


As is well known, the abundance conjecture (see Conjecture 
\ref{a-conj8.6}) is widely open even when 
$Y$ is a point. We will treat the abundance conjecture 
in Section \ref{a-sec23}. 

\section{Lengths of extremal rational curves}\label{a-sec9}

In this paper, we will repeatedly use the 
fact that every extremal ray is spanned by 
a rational curve of low degree, which is essentially due to 
Kawamata (see \cite[Theorem 1]{kawamata-length}). 
Note that Kawamata's result comes from 
the result obtained by Mori's bend and break technique, 
which relies on methods in 
positive characteristic. 

\begin{thm}[{see \cite[Theorem 1]{kawamata-length} and 
\cite[Theorem 1.12]{fujino-cone}}]\label{a-thm9.1}
Let $(X, \Delta)$ be a kawamata log terminal pair and let $\varphi 
\colon X\to Z$ be a projective morphism of complex analytic spaces such that 
$-(K_X+\Delta)$ is $\varphi$-ample. 
Let $P$ be an arbitrary point of $Z$. 
Let $E$ be any positive-dimensional irreducible component 
of $\varphi^{-1}(P)$. 
Then $E$ is covered by possibly singular rational curves $\ell$ 
with 
\begin{equation*}
0<-(K_X+\Delta)\cdot \ell\leq 2 \dim E. 
\end{equation*} 
In particular, $E$ is uniruled. 
\end{thm}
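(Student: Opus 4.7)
The statement is local on $Z$ around the point $P$, so I would first shrink $Z$ to a small Stein open neighborhood of $P$ on which $\varphi$ is projective and admits a $\varphi$-ample Cartier divisor $H$. For a sufficiently divisible $m$, the line bundle $\mathcal O_X(mH)$ induces a closed embedding $X\hookrightarrow Z\times \mathbb P^N$ over $Z$, so that the fiber $X_P:=\varphi^{-1}(P)$ is a closed analytic subspace of $\mathbb P^N$ and hence a projective algebraic scheme by GAGA. In particular, $E$ is a positive-dimensional projective algebraic variety, and the restriction $-(K_X+\Delta)|_E$ is an ample $\mathbb R$-Cartier $\mathbb R$-divisor.

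The plan is to deduce the statement from the already-established algebraic version (see \cite[Theorem 1]{kawamata-length} and \cite[Theorem 1.12]{fujino-cone}) by algebraizing the situation in an analytic neighborhood of $E$. More precisely, I would construct an algebraic model: a projective morphism $\widetilde \varphi \colon \widetilde X \to \widetilde Z$ of quasi-projective complex algebraic varieties, a closed point $\widetilde P \in \widetilde Z$, an effective $\mathbb R$-divisor $\widetilde \Delta$ on $\widetilde X$, and an irreducible component $\widetilde E$ of $\widetilde \varphi^{-1}(\widetilde P)$, together with an isomorphism between suitable formal (or \'etale/analytic) neighborhoods of $\widetilde X_{\widetilde P}$ in $\widetilde X$ and of $X_P$ in $X$, identifying $(\widetilde E, \widetilde \Delta)$ with $(E, \Delta)$ and preserving $\varphi$-ampleness of $-(K+\widetilde\Delta)$ and the kawamata log terminal property (cf.\ Remark \ref{a-rem3.3}). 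Such an algebraization exists because $\varphi$ is projective and the fiber $X_P$ is an algebraic projective scheme: one can realize $X$, \'etale locally around $X_P$, as a closed subscheme of $Z\times \mathbb P^N$ cut out by finitely many equations whose coefficients are holomorphic functions on $Z$, which may be approximated by algebraic ones using Artin's approximation in the version for analytic singularities.

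Applying \cite[Theorem 1]{kawamata-length}, resp.\ \cite[Theorem 1.12]{fujino-cone}, to the algebraic model $(\widetilde X, \widetilde \Delta)\to \widetilde Z$ produces rational curves $\widetilde \ell$ covering $\widetilde E$ with
\[
0 < -(K_{\widetilde X}+\widetilde \Delta)\cdot \widetilde \ell \le 2\dim \widetilde E = 2\dim E.
\]
Rational curves are proper, contained in $\widetilde E$, and intersection numbers are preserved under the identification of neighborhoods; thus each $\widetilde \ell$ transfers to a rational curve $\ell \subset E \subset X$ with the same value of $-(K_X+\Delta)\cdot \ell$. Covering $\widetilde E$ by such rational curves covers $E$ by their images, which yields the desired covering of $E$ and the uniruledness conclusion.

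The main obstacle is carrying out the algebraization step rigorously so that the KLT condition and the $\mathbb R$-linear combination expressing $K_X+\Delta$ are preserved, and so that $\varphi$-ampleness translates to $\widetilde \varphi$-ampleness. A potentially cleaner alternative, which avoids Artin approximation entirely, is to run Mori's bend and break directly in the analytic setting: the obstruction and deformation spaces of a morphism $f\colon \mathbb P^1 \to X$ over $Z$ are computed by $H^i(\mathbb P^1, f^*T_{X/Z})$, which depends only on an infinitesimal neighborhood of $f(\mathbb P^1)$ and is therefore the same in the analytic and algebraic categories; starting from a free rational curve on $E$ obtained via reduction mod $p$ applied to the algebraic model of $X_P$, the usual breaking argument then produces, on the projective variety $E$, a rational curve of $-(K_X+\Delta)$-degree at most $2\dim E$, as required.
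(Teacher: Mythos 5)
Your proposal takes a genuinely different route from the paper, and while the underlying intuition is sound (the result \emph{should} follow from the projective case because all of the action is happening on the compact fiber over $P$), the algebraization step carries substantial unaddressed difficulties. Artin approximation produces a formal or \'etale-local identification to high but finite order, and you would need to verify that (a)~the kawamata log terminal condition is preserved, which depends on discrepancies computed on a log resolution of $(X,\Delta)$, not just on $E$ or even a first-order neighborhood of it; (b)~the $\mathbb R$-divisor $\Delta$, which may have irrational coefficients, and the local presentation of $K_X+\Delta$ as an $\mathbb R$-linear combination of Cartier divisors transfer compatibly; and (c)~the $\varphi$-ampleness of $-(K_X+\Delta)$ is preserved on the algebraic model. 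None of these is automatic, and your own remark that ``the main obstacle is carrying out the algebraization step rigorously'' points to the hole. Your alternative---running bend and break directly---is also circular as stated: producing the initial free rational curve requires reduction mod $p$, which presupposes the very algebraic model whose construction you are trying to avoid, and one must still ensure the deformed curves stay in the fiber and control their $-(K_X+\Delta)$-degree via the klt structure of the ambient pair, not merely of $X_P$.

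The paper avoids algebraization entirely by a localizing trick. After shrinking $Z$ around $P$, one chooses an effective $\mathbb R$-Cartier divisor $B$ on $Z$ so that $(X,\Delta+\varphi^*B)$ remains klt away from $\varphi^{-1}(P)$ and $E$ becomes a log canonical center of $(X,\Delta+\varphi^*B)$. The non-klt locus $V=\Nklt(X,\Delta+\varphi^*B)$ then satisfies $\varphi(V)=P$, hence $V$ is a compact analytic subspace of a fiber and therefore a projective variety (GAGA). Passing to a log resolution $f\colon Y\to X$ and setting $U=\Delta_Y^{=1}$, one has $\varphi\circ f(U)=P$, so $U$ is also projective, and the data $f\colon (U,\Delta_U)\to V$ endows $[V,(K_X+\Delta+\varphi^*B)|_V]$ with a quasi-log scheme structure in which $E$ is a qlc stratum. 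Since this whole structure is purely projective, \cite[Theorem 1.12]{fujino-cone} applies directly and yields rational curves $\ell$ covering $E$ with $0<-(K_X+\Delta+\varphi^*B)\cdot\ell\le 2\dim E$; finally $\varphi^*B\cdot\ell=0$ since $\ell$ lies in a fiber, giving the stated bound for $-(K_X+\Delta)$. In short, instead of moving $X$ into the algebraic world, the paper pushes the log structure down onto a projective variety over $P$ and invokes the quasi-log framework, which is the key idea your proposal is missing.
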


Here, we quickly reduce Theorem \ref{a-thm9.1} to 
\cite[Theorem 1.12]{fujino-cone}. 
So we use the framework of quasi-log schemes. 

\begin{proof}[Proof of Theorem \ref{a-thm9.1}] 
If $\varphi(X)=P$, then $E=X$ holds. 
In this case, the statement follows from 
\cite[Theorem 1.12]{fujino-cone} since $[X, K_X+\Delta]$ 
is a quasi-log scheme. 
From now on, we may assume that $\varphi(X)\ne P$. 
We shrink $Z$ around $P$. Then 
we can take an effective $\mathbb R$-Cartier divisor 
$B$ on $Z$ such that $(X, \Delta+\varphi^*B)$ is 
kawamata log terminal 
outside $\varphi^{-1}(P)$ 
and that $E$ is a log 
canonical center of $(X, \Delta+\varphi^*B)$. 
We consider the non-kawamata log terminal locus 
$V:=\Nklt(X, \Delta+\varphi^*B)$. 
Note that $\varphi(V)=P$. 
Let $f\colon Y\to X$ be a projective bimeromorphic 
morphism from a smooth variety $Y$ such that 
$K_Y+\Delta_Y=f^*(K_X+\Delta+\varphi^*B)$ and 
that $\Supp \Delta_Y$ is a simple normal crossing divisor on $Y$. 
We put $U:=\Delta^{=1}_Y$ and 
$(K_Y+\Delta_Y)|_U=K_U+\Delta_U$ by adjunction. 
Note that $U$ is projective since $\varphi\circ f(U)=P$. 
We consider the following short exact sequence: 
\begin{equation*}
0\to \mathcal O_Y(-\lfloor \Delta_Y\rfloor)\to 
\mathcal O_Y(-\lfloor \Delta_Y\rfloor +U)\to \mathcal O_U(\lceil 
-\Delta^{<1}_U\rceil -\lfloor \Delta^{>1}_U\rfloor)\to 0. 
\end{equation*}
By the Kawamata--Viehweg vanishing 
theorem (see Theorem \ref{a-thm5.1}), 
$R^1f_*\mathcal O_Y(-\lfloor \Delta_Y\rfloor)=0$. 
Then we have the following commutative diagram: 
\begin{equation*}
\xymatrix{
0 \ar[r]
& \ar@{=}[d]\mathcal J(X, \Delta+\varphi^*B) \ar[r]
& \mathcal J_{\NLC}
(X, \Delta+\varphi^*B) \ar[r]\ar@{^{(}->}[d]& f_*\mathcal O_U(\lceil 
-\Delta^{<1}_U\rceil -\lfloor \Delta^{>1}_U\rfloor) \ar[r]\ar@{^{(}->}[d]
& 0 \\ 
0\ar[r]&\mathcal J(X, \Delta+\varphi^*B) \ar[r]&
\mathcal O_X\ar[r]&\mathcal O_V\ar[r]&0, 
}
\end{equation*} 
where $\mathcal J(X, \Delta+\varphi^*B)=f_*\mathcal O_Y(-\lfloor 
\Delta_Y\rfloor)$ is the {\em{multiplier ideal 
sheaf}} of $(X, \Delta+\varphi^*B)$  and 
$\mathcal J_{\NLC}
(X, \Delta+\varphi^*B)=f_*\mathcal O_Y(-\lfloor \Delta_Y\rfloor 
+U)$, which is called the {\em{non-lc ideal sheaf}} 
of $(X, \Delta+\varphi^*B)$, is an ideal sheaf that defines 
the non-log canonical locus of $(X, \Delta+\varphi^*B)$. 
Hence $\mathcal J:=f_*\mathcal O_U(\lceil 
-\Delta^{<1}_U\rceil -\lfloor \Delta^{>1}_U\rfloor)$ is an ideal sheaf 
on $V$ such that 
\begin{equation*}
\mathcal O_X/\mathcal J_{\NLC}
(X, \Delta+\varphi^*B)=\mathcal O_V/\mathcal J. 
\end{equation*} 
Therefore, since $U$ is projective, 
\begin{equation*}
\bigl(V, (K_X+\Delta+\varphi^*B)|_V, f\colon (U, \Delta_U)\to V \bigr)
\end{equation*}
gives 
a quasi-log scheme structure on 
$[V, (K_X+\Delta+\varphi^*B)|_V]$ 
(see \cite[Theorem 4.9]{fujino-pull-back}) 
such that 
$E$ is a qlc stratum of 
$[V, (K_X+\Delta+\varphi^*B)|_V]$ by construction. 
Then, by \cite[Theorem 1.12]{fujino-cone}, 
$E$ is covered by rational curves $\ell$ with 
\begin{equation*}
0<-(K_X+\Delta+\varphi^*B)\cdot \ell \leq 2\dim E
\end{equation*} 
and is uniruled. 
Since $\varphi^*B\cdot \ell =0$, 
we obtain the desired statement. 
\end{proof}

By combining Theorem \ref{a-thm9.1} with 
a standard argument, we have the following theorem, which is 
well known when $f\colon X\to Y$ is a projective morphism 
between quasi-projective varieties. 

\begin{thm}[{see \cite[Theorem 3.8.1]{bchm}}]\label{a-thm9.2}
Let $\pi\colon X\to Y$ be a projective morphism of 
complex analytic spaces and let $W$ be a compact subset of $Y$ 
such that $\pi\colon X\to Y$ and $W$ satisfies {\em{(P)}}. 
Suppose that $(X, \Delta)$ is a log canonical pair. 
Suppose that there is an $\mathbb R$-divisor $\Delta_0$ such that 
$(X, \Delta_0)$ is kawamata log terminal. 
If $R$ is a $(K_X+\Delta)$-negative extremal ray of $\NE(X/Y; W)$, 
then there exists a rational curve $\ell$ spanning $R$ such that 
\begin{equation*}
0<-(K_X+\Delta)\cdot \ell\leq 2\dim X. 
\end{equation*}
\end{thm}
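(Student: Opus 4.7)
The plan is to perturb $(X,\Delta)$ to a nearby kawamata log terminal pair with the same extremal ray, apply the klt length lemma (Theorem \ref{a-thm9.1}) to the extremal contraction $\varphi_R$, and then recover the length bound for $K_X+\Delta$ itself by a limiting argument that exploits the discreteness of integral classes inside $R$.

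First, after shrinking $Y$ around $W$, I would invoke Theorem \ref{a-thm7.3} to produce the extremal contraction $\varphi_R\colon X\to Z$ over $Y$. By Theorem \ref{a-thm7.2}(2)(i), a projective integral curve $C$ with $\pi(C)\in W$ is contracted by $\varphi_R$ if and only if $[C]\in R$; combined with $(K_X+\Delta)\cdot R<0$ and the Kleiman criterion (Theorem \ref{a-thm4.4}) applied relatively to $\varphi_R$, this gives that $-(K_X+\Delta)$ is $\varphi_R$-ample. For $\delta\in(0,1)$ set $\Delta_\delta:=(1-\delta)\Delta+\delta\Delta_0$. A routine discrepancy check using
\begin{equation*}
a(E,X,\Delta_\delta)=(1-\delta)\,a(E,X,\Delta)+\delta\,a(E,X,\Delta_0)
\end{equation*}
shows that $(X,\Delta_\delta)$ is kawamata log terminal for every such $\delta$, and openness of ampleness ensures that $-(K_X+\Delta_\delta)$ remains $\varphi_R$-ample for all sufficiently small $\delta$.

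Next, I would pick a point $P_0\in Z$ with $\pi_Z(P_0)\in W$ (where $\pi_Z\colon Z\to Y$) such that $\varphi_R^{-1}(P_0)$ has a positive-dimensional irreducible component $E$; such a $P_0$ must exist, for otherwise $\varphi_R$ would be finite over $\pi_Z^{-1}(W)$ and no curve with $\pi$-image in $W$ could be contracted, forcing $R=0$. Since $\pi(E)=\pi_Z(P_0)$ is a single point of $W$, every curve $\ell\subset E$ lies in $Z_1(X/Y;W)$, and $\varphi_R(\ell)=\{P_0\}$ together with Theorem \ref{a-thm7.2}(2)(i) forces $[\ell]\in R$. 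Applying Theorem \ref{a-thm9.1} to $(X,\Delta_\delta)$ and $\varphi_R$ at $P_0$ yields, for every sufficiently small $\delta>0$, a rational curve $\ell_\delta\subset E$ with
\begin{equation*}
0<-(K_X+\Delta_\delta)\cdot\ell_\delta\leq 2\dim E\leq 2\dim X.
\end{equation*}

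Finally, I would pass to the limit $\delta\to 0$. By Nakayama's finiteness (Lemma \ref{a-lem4.1}), $A_1(X/Y;W)$ is a finitely generated abelian group, so the integral classes in the ray $R$ form a set of the form $\mathbb{Z}_{\geq 0}\,v_0$ modulo torsion, for some primitive element $v_0$. Writing $[\ell_\delta]=n_\delta v_0$, the length bound rewrites as $n_\delta\bigl(-(K_X+\Delta_\delta)\cdot v_0\bigr)\leq 2\dim X$; since $(K_X+\Delta_\delta)\cdot v_0\to(K_X+\Delta)\cdot v_0<0$ as $\delta\to 0$, the positive integers $n_\delta$ are uniformly bounded, hence take only finitely many values. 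Pigeonhole then produces a fixed integer $n_0$ and a sequence $\delta_k\to 0$ with $n_{\delta_k}=n_0$; picking any rational curve $\ell$ of class $n_0v_0$ (for instance $\ell=\ell_{\delta_1}$), the inequality $-(K_X+\Delta_{\delta_k})\cdot\ell=-n_0(K_X+\Delta_{\delta_k})\cdot v_0\leq 2\dim X$ holds for every $k$ by numerical equivalence, and letting $k\to\infty$ yields $-(K_X+\Delta)\cdot\ell\leq 2\dim X$, while $[\ell]\in R$ supplies the strict positivity. The main obstacle is precisely the limiting step: one needs both that $\varphi_R$ genuinely contracts curves with $\pi$-image in $W$ and that integral classes inside $R$ form a discrete set, which is exactly where Nakayama's finiteness — and hence the condition (P4) — enters essentially.
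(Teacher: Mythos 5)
Your proposal is correct and follows the same overall strategy as the paper: perturb $(X,\Delta)$ to nearby klt pairs $\Delta_\delta=(1-\delta)\Delta+\delta\Delta_0$, replace $\pi$ by the extremal contraction $\varphi_R$ so that $-(K_X+\Delta_\delta)$ is relatively ample for small $\delta$, apply Theorem \ref{a-thm9.1} to a positive-dimensional fiber over a point in $W$, and pass to the limit. The genuine difference is in how you close the limiting step. The paper observes that $0<A\cdot\ell_\delta<2\dim X$ for a fixed relatively ample $A$, concludes that the curves $\ell_\delta$ lie in a bounded family, and passes to a subsequence along which the (numerical class of the) curve is constant. You instead exploit the discreteness of integral classes directly: since $A^1(X/Y;W)$ is finitely generated (Lemma \ref{a-lem4.1}), so is $A_1(X/Y;W)\subset\Hom_{\mathbb Z}(A^1(X/Y;W),\mathbb Z)$, the lattice points of $R$ form $\mathbb Z_{\geq 0}v_0$, and the bound on $-(K_X+\Delta_\delta)\cdot\ell_\delta$ caps the multiplicities $n_\delta$, so pigeonhole produces a constant class $n_0v_0$ along a subsequence. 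Your route is more explicit about exactly which finiteness statement is doing the work (the lattice structure of $N_1(X/Y;W)$, i.e. (P4)), and it avoids invoking boundedness of families of curves, which is a slightly delicate notion in the analytic category; both arguments are valid and ultimately rest on Nakayama's finiteness. One small stylistic remark: you write ``modulo torsion'', but $A_1(X/Y;W)$ is a subgroup of a finitely generated free $\mathbb Z$-module and is therefore itself torsion-free, so no quotient is needed.
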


\begin{proof}
By assumption, we can find $\mathbb R$-divisors 
$\Delta_i$ with $\lim_{i\to \infty} \Delta_i =\Delta$ such that 
$(X, \Delta_i)$ is kawamata log terminal. By replacing 
$\pi$ by 
the contraction defined by the extremal ray $R$, we may further 
assume that $-(K_X+\Delta)$ is $\pi$-ample. 
By Theorem \ref{a-thm9.1}, for some $P\in Y$, 
we can find a rational curve $\ell_i$ in $\pi^{-1}(P)$ 
such that 
\begin{equation*}
0<-(K_X+\Delta_i)\cdot \ell _i \leq 2\dim X
\end{equation*} 
for every $i\gg 0$. 
We note that $\pi^{-1}(P)$ is projective. 
We take a $\pi$-ample 
$\mathbb Q$-divisor $A$ on $X$ such that 
$-(K_X+\Delta+A)$ is also $\pi$-ample. 
In particular, $-(K_X+\Delta_i+A)$ is $\pi$-ample 
for every $i\gg 0$. 
Hence 
\begin{equation*}
0<A\cdot \ell _i =(K_X+\Delta_i +A)\cdot \ell _i 
-(K_X+\Delta_i)\cdot \ell _i <2 \dim X. 
\end{equation*}
This means that the curves $\ell_i$ belong to a bounded family. 
Thus, possibly passing to a subsequence, 
we may assume that $\ell =\ell _i$ is constant. 
Therefore, we have 
\begin{equation*}
-(K_X+\Delta)\cdot \ell =\lim _{i\to \infty} -(K_X+\Delta_i)
\cdot \ell \leq 2\dim X. 
\end{equation*} 
This is what we wanted. 
\end{proof}

The following easy observation is very useful. 

\begin{thm}\label{a-thm9.3}
Let $(X, \Delta)$ be a log canonical pair. 
Let $\pi\colon X\to Y$ be a projective 
morphism of complex analytic spaces and let $W$ be a 
compact subset of $Y$ such that $\pi\colon X\to Y$ and $W$ 
satisfies {\em{(P)}}. 
We assume that there exists $\Delta_0$ on $X$ 
such that $(X, \Delta_0)$ is kawamata log terminal. 
Suppose that 
\begin{equation*}
\xymatrix{
\pi\colon X\ar[r]^-g& Y^\flat\ar[r]^-h & Y
}
\end{equation*}
such that $Y^\flat$ is projective over $Y$. 
Let $H$ be a general 
$h$-ample $\mathbb Q$-divisor 
on $Y^\flat$ with $H\cdot C>2\dim X$ for 
every projective curve $C$ such that 
$h(C)$ is a point. 
Let $R$ be a $(K_X+\Delta+g^*H)$-negative 
extremal ray of $\NE(X/Y; W)$ and 
let $\varphi_R\colon X\to Z$ be the contraction morphism 
over $Y$ associated to $R$. 
Then $\varphi_R\colon X\to Z$ is a 
contraction morphism over $Y^\flat$, that is, 
$Z\to Y$ factors through $Y^\flat$. 
\end{thm}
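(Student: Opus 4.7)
My plan is to locate, via Theorem~\ref{a-thm9.2}, a short rational curve in $R$ whose $g^*H$-degree is forced to be so small that $g$ must contract it, and then propagate this contraction to every curve in $R$ by linearity in $N_1(X/Y; W)$. First, I observe that $g^*H$ is nef over $W$: for any projective curve $\ell$ on $X$ with $\pi(\ell)$ a point of $W$, the image $g(\ell)$ lies in the projective fiber $h^{-1}(\pi(\ell))$, so $g^*H\cdot \ell = H\cdot g_*\ell\ge 0$ because $H$ is $h$-ample. Combined with $(K_X+\Delta+g^*H)\cdot R<0$, this yields $(K_X+\Delta)\cdot R<0$, so $R$ is a $(K_X+\Delta)$-negative extremal ray. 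Theorem~\ref{a-thm9.2} then produces a rational curve $\ell$ spanning $R$ with $0<-(K_X+\Delta)\cdot \ell\le 2\dim X$.

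Second, I will show $g(\ell)$ is a point. Since $R$ is $(K_X+\Delta+g^*H)$-negative,
\[
g^*H\cdot \ell \;<\; -(K_X+\Delta)\cdot \ell \;\le\; 2\dim X.
\]
If instead $g(\ell)$ were a projective curve $C\subset h^{-1}(\pi(\ell))$, the hypothesis on $H$ would give $H\cdot C>2\dim X$, whence
\[
g^*H\cdot \ell \;=\; \deg\bigl(g|_\ell\colon \ell\to C\bigr)\cdot (H\cdot C) \;\ge\; H\cdot C \;>\; 2\dim X,
\]
contradicting the previous inequality. Hence $g(\ell)$ is a point, so $g^*\mathcal M\cdot \ell=0$ for every line bundle $\mathcal M$ on $Y^\flat$.

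Third, because $[\ell]$ spans $R$, any projective curve $C\subset X$ with $[C]\in R$ and $\pi(C)$ a point of $W$ satisfies $[C]=\lambda[\ell]$ in $N_1(X/Y; W)$ for some $\lambda\ge 0$, and therefore $g^*\mathcal M\cdot C=\lambda(g^*\mathcal M\cdot \ell)=0$ for every line bundle $\mathcal M$ on $Y^\flat$. Taking $\mathcal M$ to be an $h$-ample line bundle on $Y^\flat$, if $g(C)$ were a curve it would lie in a fiber of $h$ and satisfy $\mathcal M\cdot g(C)>0$, contradicting the previous identity; hence $g(C)$ is a point. By Theorem~\ref{a-thm7.2}(i), this means $g$ contracts every curve that $\varphi_R$ contracts. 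Combined with Theorem~\ref{a-thm7.2}(ii), which gives $(\varphi_R)_*\mathcal O_X=\mathcal O_Z$ and hence connected fibers, the standard rigidity argument for contractions yields a morphism $Z\to Y^\flat$ over $Y$ such that $g=(Z\to Y^\flat)\circ\varphi_R$; so $\varphi_R$ is a contraction over $Y^\flat$. The one place requiring genuine care, and what I expect to be the main (though mild) obstacle, is this final step of passing from ``$g$ contracts every fiber of $\varphi_R$ to a point'' to the existence of the factoring morphism $Z\to Y^\flat$ in the complex analytic category: it is carried out via the universal property of the contraction $\varphi_R$, using that $\varphi_R$-saturated open subsets of $X$ are preimages of open subsets of $Z$ together with $(\varphi_R)_*\mathcal O_X=\mathcal O_Z$.
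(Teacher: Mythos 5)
Your proposal is correct and follows essentially the same argument as the paper: use the nefness of $g^*H$ to see $R$ as $(K_X+\Delta)$-negative, invoke Theorem~\ref{a-thm9.2} to produce a rational curve $\ell$ spanning $R$ with $-(K_X+\Delta)\cdot\ell\le 2\dim X$, deduce from $H\cdot C>2\dim X$ that $g(\ell)$ must be a point, and conclude that $\varphi_R$ factors through $Y^\flat$. You have merely spelled out in more detail the linearity step and the final universal-property argument that the paper leaves implicit.
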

\begin{proof}
We note that we can see $R$ as a $(K_X+\Delta)$-negative 
extremal ray of $\NE(X/Y; W)$ since $g^*H$ is nef over $Y$. 
Therefore, by Theorem \ref{a-thm9.2}, 
$R$ is spanned by a rational curve $\ell$ on $X$ such 
that $0<-(K_X+\Delta)\cdot \ell \leq 2\dim X$. 
If $g(\ell)$ is a curve, then $(K_X+\Delta+g^*H)\cdot 
\ell >0$ since $\ell\cdot g^*H>2\dim X$. 
Therefore, this means that $g(\ell)$ is a point. 
Hence, the contraction morphism $\varphi_R\colon X\to Z$ exists 
over $Y^\flat$. 
\end{proof}

The next lemma is an easy consequence of 
Theorem \ref{a-thm9.3}. We will repeatedly use 
it in the subsequent sections. 

\begin{lem}\label{a-lem9.4}
Let $\pi\colon X\to Y$ be a projective 
morphism of complex analytic spaces and let $W$ be a 
compact subset of $Y$ such that $\pi\colon X\to Y$ and $W$ 
satisfies {\em{(P)}}. 
Assume that $(X, \Delta)$ is divisorial log terminal 
and that $X$ is $\mathbb Q$-factorial over $W$. 
Suppose that 
\begin{equation*}
\xymatrix{
\pi\colon X\ar[r]^-g& Y^\flat\ar[r]^-h & Y
}
\end{equation*}
such that $Y^\flat$ is projective over $Y$. 
Let $H$ be a general 
$h$-ample $\mathbb Q$-divisor 
on $Y^\flat$ with $H\cdot C>2\dim X$ for 
every projective curve $C$ such that 
$h(C)$ is a point. 
Let 
\begin{equation*}
(X_0, \Delta_0)\overset{\phi_0}{\dashrightarrow} (X_1, \Delta_1)
\overset{\phi_1}{\dashrightarrow} \cdots 
\overset{\phi_{i-1}}{\dashrightarrow} (X_i, \Delta_i)
\overset{\phi_i}{\dashrightarrow} 
\cdots
\end{equation*}
be a $(K_X+\Delta+g^*H)$-minimal 
model program over $Y$ starting from $(X_0, \Delta_0):=(X, 
\Delta)$. 
Then it is a $(K_X+\Delta)$-minimal model 
program over $Y^\flat$. 
\end{lem}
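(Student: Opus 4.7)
The plan is an induction on the step $i$ of the MMP. The inductive hypothesis is that $\phi_0,\dots,\phi_{i-1}$ are bimeromorphic maps over $Y^\flat$, that there is an induced projective morphism $g_i\colon X_i\to Y^\flat$ with $\pi_i=h\circ g_i$ and $(\phi_{i-1})_*(g_{i-1})^*H=(g_i)^*H$, and that $(X_i,\Delta_i)$ remains divisorial log terminal and $\mathbb{Q}$-factorial over $W$ after suitable shrinking of $Y$. I will show that the $(K_{X_i}+\Delta_i+(g_i)^*H)$-negative extremal ray $R_i$ of $\NE(X_i/Y; W)$ contracted by $\phi_i$ is in fact a $(K_{X_i}+\Delta_i)$-negative extremal ray contracted over $Y^\flat$, and that $\phi_i$ itself is the corresponding divisorial contraction or flip over $Y^\flat$.

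The central input is Theorem \ref{a-thm9.2} combined with Theorem \ref{a-thm9.3}. By Theorem \ref{a-thm9.2} applied to $(X_i,\Delta_i)$, the ray $R_i$ is spanned by a rational curve $\ell$ with $0<-(K_{X_i}+\Delta_i)\cdot\ell\leq 2\dim X$; here $R_i$ is $(K_{X_i}+\Delta_i)$-negative because $(g_i)^*H$ is $\pi_i$-nef. If $g_i(\ell)$ were a curve, then $h(g_i(\ell))$ would be a point, and the hypothesis $H\cdot C>2\dim X$ for every projective curve $C$ with $h(C)$ a point would force $(g_i)^*H\cdot\ell = H\cdot (g_i)_*\ell > 2\dim X$, contradicting $(K_{X_i}+\Delta_i+(g_i)^*H)\cdot\ell<0$. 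Hence $g_i(\ell)$ is a point, so $(g_i)^*H\cdot\ell=0$. Theorem \ref{a-thm9.3} then guarantees that the extremal contraction $\varphi_{R_i}\colon X_i\to Z_i$ factors through $Y^\flat$.

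With $\varphi_{R_i}$ realized as a morphism over $Y^\flat$, both cases of the MMP step are immediate. If $\phi_i=\varphi_{R_i}$ is divisorial, it is a divisorial contraction over $Y^\flat$. If $\phi_i$ is the flip of $\varphi_{R_i}$, the flip $X_i\dashrightarrow X_{i+1}$ constructed in the MMP over $Y$ is the flip over $Y^\flat$ as well, since the flipping contraction $\varphi_{R_i}$ already lies over $Y^\flat$ and the flip is uniquely determined by $\varphi_{R_i}$. In either case we get an induced map $g_{i+1}\colon X_{i+1}\to Y^\flat$. Because $\phi_i$ contracts only loci mapped by $g_i$ to proper analytic subsets of $Y^\flat$, the support of $(g_i)^*H$ contains no $\phi_i$-exceptional divisor, so $(\phi_i)_*(g_i)^*H=(g_{i+1})^*H$, closing the induction. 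Standard arguments preserve the divisorial log terminal and $\mathbb{Q}$-factorial conditions after shrinking $Y$.

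The main obstacle—indeed the only non-formal step—is the claim that every curve spanning the extremal ray $R_i$ is contracted by $g_i$, which is precisely where the hypothesis $H\cdot C>2\dim X$ on curves $C$ contracted by $h$ combines with the universal length bound from Theorem \ref{a-thm9.2}. Without such a bound, no choice of $h$-ample $H$ would suffice. Once this is in place, Theorem \ref{a-thm9.3} handles the descent of the extremal contraction itself, and everything else is the standard compatibility between MMP steps and a change of base.
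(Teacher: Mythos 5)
Your proof is correct and follows essentially the same route as the paper: the paper's one-line proof is to apply Theorem \ref{a-thm9.3} step by step, and Theorem \ref{a-thm9.3} itself is proved exactly by the length-bound argument from Theorem \ref{a-thm9.2} that you spell out. You have merely unpacked the induction and re-derived the inside of Theorem \ref{a-thm9.3}, so the content is identical.
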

\begin{proof}
We apply Theorem \ref{a-thm9.3} to 
each extremal contraction. 
Then we can see that it is a minimal model 
program over $Y^\flat$. 
\end{proof}

By combining Theorem \ref{a-thm9.2} 
with Theorem \ref{a-thm8.1}, 
we have: 

\begin{thm}\label{a-thm9.5}
Let $\pi\colon X\to Y$ be a projective 
morphism of complex analytic spaces and let $W$ be a 
compact subset of $Y$ such that $\pi\colon X\to Y$ and $W$ 
satisfies {\em{(P)}}. 
Suppose that 
\begin{equation*}
\xymatrix{
\pi\colon X\ar[r]^-g& Y^\flat\ar[r]^-h & Y
}
\end{equation*}
such that $Y^\flat$ is projective over $Y$. 
Assume that $(X, \Delta)$ is kawamata log terminal 
such that $\Delta$ is $\pi$-big. 
We further assume that $K_X+\Delta$ is $g$-nef. 
Then there exists an open neighborhood 
$U$ of $W$ such that 
$(K_X+\Delta)|_{\pi^{-1}(U)}$ is semiample 
over $h^{-1}(U)$. 
\end{thm}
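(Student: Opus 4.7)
The plan is to reduce Theorem \ref{a-thm9.5} to local applications of Theorem \ref{a-thm8.1} on a Stein open covering of $Y^\flat$ adapted to $h^{-1}(W)$, and then patch. Throughout, I freely shrink $Y$ around $W$ as in \ref{a-say1.11} without further comment.

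First, by $\pi$-bigness of $\Delta$ and Steinness of $Y$, one may write $\Delta\sim_{\mathbb R} A+E$ globally on $X$ with $A$ a $\pi$-ample $\mathbb R$-divisor and $E\ge 0$; since $\pi=h\circ g$, a direct base-change argument shows that $A$ is $g$-ample as well. For sufficiently small $\varepsilon>0$, the perturbation $\Delta':=(1-\varepsilon)\Delta+\varepsilon E$ yields a kawamata log terminal pair $(X,\Delta')$ (standard continuity of discrepancies, noting that $K_X+\Delta'$ is $\mathbb R$-Cartier since $(K_X+\Delta)-(K_X+\Delta')\sim_{\mathbb R}\varepsilon A$), and this difference $\varepsilon A$ is $g$-ample, in particular $g$-nef and $g$-big.

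Next, since $h\colon Y^\flat\to Y$ is projective with $Y$ Stein, $Y^\flat$ embeds as a closed analytic subspace of $\mathbb P^M\times Y$ for some $M$. Intersecting with the standard affine charts of $\mathbb P^M$ yields a finite Stein open covering $\{V_j\}$ of $Y^\flat$. Inside each $V_j$ I apply Lemma \ref{a-lem2.16} to produce semianalytic Stein compacts $K_j\subset V_j$ satisfying (P4), arranged so that the interiors $\{K_j^{\circ}\}$ cover the compact set $h^{-1}(W)$. The triple $\bigl(g|_{g^{-1}(V_j)}\colon g^{-1}(V_j)\to V_j,\ K_j\bigr)$ then satisfies (P), and I apply Theorem \ref{a-thm8.1} with $D:=K_X+\Delta$ and $a=1$: the hypotheses hold since $D$ is $g$-nef and $D-(K_X+\Delta'|_{g^{-1}(V_j)})\sim_{\mathbb R}\varepsilon A$ is $g$-ample. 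This produces an open neighborhood $V_j^{*}$ of $K_j$ in $V_j$ on which $K_X+\Delta$ is $g|_{g^{-1}(V_j^{*})}$-semiample.

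Setting $V^{*}:=\bigcup_j V_j^{*}$, an open neighborhood of $h^{-1}(W)$ in $Y^\flat$, the fact that semiampleness is a local condition on the base (cf.\ Definition \ref{a-def2.29}) gives that $K_X+\Delta$ is $g$-semiample on $g^{-1}(V^{*})$ over $V^{*}$. Properness of $h$ then yields $U:=Y\setminus h(Y^\flat\setminus V^{*})$, an open neighborhood of $W$ in $Y$ with $h^{-1}(U)\subset V^{*}$, and $(K_X+\Delta)|_{\pi^{-1}(U)}$ is semiample over $h^{-1}(U)$ as required. The main technical obstacle is producing the covering $\{(V_j,K_j)\}$ with the Stein/(P4) properties whose interiors cover $h^{-1}(W)$; once this is set up, the verification of the hypotheses of Theorem \ref{a-thm8.1} on each piece and the gluing of local semiampleness are routine.
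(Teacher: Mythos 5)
Your proof is correct, but it takes a genuinely different route from the paper's. The paper's proof pulls back a sufficiently positive $h$-ample $\mathbb{Q}$-divisor $H$ on $Y^\flat$ satisfying $H\cdot C>2\dim X$ for all $h$-vertical curves $C$, then uses Theorem \ref{a-thm9.2} (the extremal-curve length bound) together with the $g$-nefness of $K_X+\Delta$ to conclude that $K_X+\Delta+g^*H$ is nef over a Stein compact $W'\supset W$ in $Y$; a single application of Theorem \ref{a-thm8.1} over $Y$ (with the klt pair coming from $\Delta\sim_{\mathbb R}A+B$ and $a=2$) then gives semiampleness of $K_X+\Delta+g^*H$ over an open neighborhood $U$ of $W$, hence semiampleness of $K_X+\Delta$ over $h^{-1}(U)$. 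You instead work directly over $Y^\flat$, verifying the hypotheses of Theorem \ref{a-thm8.1} on each member of a finite Stein atlas $\{(V_j,K_j)\}$ of a neighborhood of $h^{-1}(W)$ coming from the relative embedding $Y^\flat\hookrightarrow\mathbb{P}^M\times Y$, and patching by the locality of semiampleness on the base. Your route is slightly more elementary in that it avoids Theorem \ref{a-thm9.2} (and hence the bend-and-break input behind it); the paper's is shorter once Theorem \ref{a-thm9.2} is in hand and avoids the covering bookkeeping. Two points in your write-up deserve more care. First, the covering construction that you flag as the ``main technical obstacle'' does go through, but it should be made explicit: choose compacts $C_j\subset V_j$ with $h^{-1}(W)\subset\bigcup_j C_j^\circ$ by a Lebesgue-number argument, pass to the holomorphically convex hulls $\widehat{C_j}$ in the Stein space $V_j$ (Lemma \ref{a-lem2.5}), and then apply the final sentence of Lemma \ref{a-lem2.16} to produce semianalytic Stein compacts $K_j$ with $\widehat{C_j}\subset K_j^\circ$; a single application of Lemma \ref{a-lem2.16} per chart does not by itself guarantee the interiors cover $h^{-1}(W)$. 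Second, $g^{-1}(V_j)$ need not be irreducible, so (P1) should be checked componentwise; this is a harmless modification of Theorem \ref{a-thm8.1} but should be noted.
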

\begin{proof}
We take a relatively compact Stein open subset $U'$ of 
$Y$ and a Stein compact subset $W'$ of 
$Y$ such that $W\subset U'\subset W'$ and 
$\Gamma (W', \mathcal O_Y)$ is noetherian. 
From now on, we will freely shrink $Y$ around $W'$ suitably without 
mentioning it explicitly. 
Since $\Delta$ is $\pi$-big, 
there exists $\Delta'$ such that 
$\Delta'\sim _{\mathbb R} \Delta$, $\Delta'=A+B$, $A\geq 0$, 
$A$ is $\pi$-maple, $B\geq 0$, and 
$(X, \Delta')$ is kawamata log terminal. 
Let $H$ be a general 
$h$-ample $\mathbb Q$-divisor 
on $Y^\flat$ with $H\cdot C>2\dim X$ for 
every projective curve $C$ such that 
$h(C)$ is a point.  
Then $K_X+\Delta+g^*H$ is nef over $W'$. 
Hence $2(K_X+\Delta+g^*H)-(K_X+B)$ is 
$\pi$-ample. 
We apply Theorem \ref{a-thm8.1} to $K_X+\Delta+g^*H$ 
over $U'$. 
Then there exists an open neighborhood 
$U$ of $W$ such that 
$K_X+\Delta+g^*H$ is semiample over $U$. 
This implies that $K_X+\Delta$ is semiample 
over $h^{-1}(U)$.  
\end{proof}

\section{Real linear systems, stable base loci, and 
augmented base loci}\label{a-sec10}

In the theory of minimal models, we have to 
treat $\mathbb R$-divisors. 
Throughout this section, we always assume that 
$\pi\colon X\to Y$ is a projective morphism of normal varieties and let $W$ be 
a Stein compact subset of $Y$. We further assume that $Y$ is Stein for 
simplicity. An $\mathbb R$-divisor on $X$ may have infinitely 
many irreducible components. 
Hence we frequently have to restrict it to a relatively compact open 
subset of $X$ in order to make the number of the 
irreducible components finite. 

Let us start with the definition of {\em{real linear systems}} 
and {\em{stable base loci}}. 

\begin{defn}[Real linear systems and stable base loci]
\label{a-def10.1}
Let $D$ be an $\mathbb R$-divisor on $X$. 
Then we put 
\begin{equation*} 
|D/Y|_{\mathbb R} =\{C\geq 0\, |\, C\sim _{\mathbb R} D\}
\end{equation*}  
and call it the {\em{real linear system}} associated 
to $D$ over $Y$. 
We sometimes simply write $|D|_{\mathbb R}$ to 
denote $|D/Y|_{\mathbb R}$ if there is no danger of confusion. 
The {\em{stable base locus}} 
of $D$ over $Y$ is the Zariski closed 
subset $\mathbf B(D/Y)$ given 
by the 
intersection of the support of the 
elements of $|D/Y|_{\mathbb R}$. 
If $|D/Y|_{\mathbb R}=\emptyset$, then we put 
$\mathbf B(D/Y)=X$. 
Similarly, we consider 
\begin{equation*} 
|D/Y|_{\mathbb Q} =\{C\geq 0\, |\, C\sim _{\mathbb Q} D\}. 
\end{equation*} 
and define 
the Zariski closed subset $\mathbf B(D/Y)_{\mathbb Q}$ as 
the intersection of the support of the elements of $|D/Y|_{\mathbb Q}$. 
If $|D/Y|_{\mathbb Q}=\emptyset$, 
then we put $\mathbf B(D/Y)_{\mathbb Q}=X$. 
We note that the inclusion $\mathbf B(D/Y)\subset \mathbf 
B(D/Y)_{\mathbb Q}$ holds since $|D/Y|_{\mathbb Q}\subset 
|D/Y|_{\mathbb R}$. 
\end{defn}

We make an important remark on the definition of $\mathbf B(D/Y)$ and 
$\mathbf B(D/Y)_{\mathbb Q}$. 

\begin{rem}[{see \cite[Remark 3.5.2]{bchm}}]\label{a-rem10.2}
In Definition \ref{a-def10.1}, $\mathbf B(D/Y)$ and 
$\mathbf B(D/Y)_{\mathbb Q}$ are only defined as closed 
analytic subsets. 
\end{rem}

We will repeatedly use the following basic property of 
$\mathbf B(D/Y)$ implicitly. 

\begin{lem}\label{a-lem10.3}
Let $U$ be any Stein open subset of $Y$. 
If $D\sim _{\mathbb R} C\geq 0$, then $D|_{\pi^{-1}(U)}\sim 
_{\mathbb R} C|_{\pi^{-1}(U)}\geq 0$ obviously 
holds. Hence the inclusion 
\begin{equation*}
\mathbf B(D|_{\pi^{-1}(U)}/U)\subset 
\mathbf B(D/Y)|_{\pi^{-1}(U)}
\end{equation*} 
always holds true. 
\end{lem}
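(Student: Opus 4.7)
The plan is that this lemma is essentially a formal unwinding of the definitions; the main content is making sure the restriction operation behaves well with respect to the notion of $\mathbb{R}$-linear equivalence introduced in Definition \ref{a-def2.40}.

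First I would verify the displayed claim that $D|_{\pi^{-1}(U)} \sim_{\mathbb R} C|_{\pi^{-1}(U)} \geq 0$. By the definition of $\mathbb{R}$-linear equivalence in Definition \ref{a-def2.40}, the relation $D \sim_{\mathbb R} C$ means that we can write $D - C = \sum_{i=1}^{N} r_i \,\mathrm{div}(f_i)$ as a \emph{finite} $\mathbb{R}$-linear combination of principal Cartier divisors on $X$, with $r_i \in \mathbb{R}$ and $f_i$ a nonzero meromorphic function on $X$. Restriction to the open subset $\pi^{-1}(U)$ then gives
\begin{equation*}
D|_{\pi^{-1}(U)} - C|_{\pi^{-1}(U)} \;=\; \sum_{i=1}^{N} r_i \,\mathrm{div}\bigl(f_i|_{\pi^{-1}(U)}\bigr),
\end{equation*}
which is again a finite $\mathbb{R}$-linear combination of principal Cartier divisors on $\pi^{-1}(U)$, so $D|_{\pi^{-1}(U)} \sim_{\mathbb R} C|_{\pi^{-1}(U)}$. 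Effectivity is obvious since restriction preserves nonnegativity of coefficients. (Here the finiteness built into Definition \ref{a-def2.40} is what makes the restriction argument trivially valid; no convergence issues arise.)

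For the inclusion of stable base loci, the idea is to exhibit a map of the indexing sets of the intersections. If $|D/Y|_{\mathbb R} = \emptyset$ there is nothing to prove, since then $\mathbf{B}(D/Y) = X$ and the right-hand side equals $\pi^{-1}(U)$. Otherwise, the first paragraph shows that for every $C \in |D/Y|_{\mathbb R}$ the restriction $C|_{\pi^{-1}(U)}$ lies in $|D|_{\pi^{-1}(U)}/U|_{\mathbb R}$. Therefore
\begin{equation*}
\mathbf{B}\bigl(D|_{\pi^{-1}(U)}/U\bigr) \;=\; \bigcap_{C' \in |D|_{\pi^{-1}(U)}/U|_{\mathbb R}} \mathrm{Supp}\, C' \;\subset\; \bigcap_{C \in |D/Y|_{\mathbb R}} \mathrm{Supp}\bigl(C|_{\pi^{-1}(U)}\bigr),
\end{equation*}
and since $\mathrm{Supp}(C|_{\pi^{-1}(U)}) = (\mathrm{Supp}\, C) \cap \pi^{-1}(U)$ for each such $C$, the right-hand side equals $\mathbf{B}(D/Y) \cap \pi^{-1}(U) = \mathbf{B}(D/Y)|_{\pi^{-1}(U)}$.

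There is no real obstacle here; the only mild subtlety worth flagging is that the reverse inclusion need not hold, because $|D|_{\pi^{-1}(U)}/U|_{\mathbb R}$ can be strictly larger than the image of $|D/Y|_{\mathbb R}$ under restriction -- an effective $\mathbb R$-divisor $C'$ on $\pi^{-1}(U)$ with $C' \sim_{\mathbb R} D|_{\pi^{-1}(U)}$ is generally defined only locally over $U$ and need not extend to a global member of $|D/Y|_{\mathbb R}$. This is precisely why the lemma is stated as a one-sided inclusion, and it is the phenomenon already flagged in Remark \ref{a-rem2.55} as the source of many technical difficulties in the analytic setting.
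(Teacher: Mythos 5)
Your proof is correct and fills in exactly what the paper dismisses as ``This is obvious'': restriction preserves the finiteness built into Definition \ref{a-def2.40}, hence sends $|D/Y|_{\mathbb R}$ into $|D|_{\pi^{-1}(U)}/U|_{\mathbb R}$, and intersecting over a possibly larger index set yields the one-sided inclusion. The closing remark on why the reverse inclusion can fail is accurate and well placed.
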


\begin{proof} This is obvious. 
\end{proof}

When we treat $\mathbb Q$-divisors, 
we need: 

\begin{lem}[{see \cite[Lemma 3.5.3]{bchm}}]\label{a-lem10.4}
Let $D$ be an integral Weil divisor. 
Then we have the following inclusions 
\begin{equation*}
\mathbf B(D/Y)_{\mathbb Q} \supset \mathbf B(D/Y), 
\end{equation*}
and 
\begin{equation*}
\mathbf B(D|_{\pi^{-1}(U)}/U)_{\mathbb Q}
\subset 
\mathbf B(D/Y)|_{\pi^{-1}(U)}
\end{equation*} 
for any relatively compact Stein open subset $U$ of $Y$. 
\end{lem}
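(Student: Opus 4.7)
The first inclusion $\mathbf B(D/Y)_{\mathbb Q}\supset\mathbf B(D/Y)$ is immediate from the containment $|D/Y|_{\mathbb Q}\subset|D/Y|_{\mathbb R}$ already recorded in Definition \ref{a-def10.1}, since intersecting over a larger family of divisors can only shrink the base locus. My plan is therefore to focus on the second inclusion, whose whole point is to exploit the integrality of $D$ in order to upgrade an $\mathbb R$-effective representative produced over $Y$ to a $\mathbb Q$-effective representative over the relatively compact $U$.

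Fix a point $x\in\pi^{-1}(U)$ with $x\notin\mathbf B(D/Y)$. By definition there exists $C\in|D/Y|_{\mathbb R}$ with $x\notin\Supp C$, and by Definition \ref{a-def2.40} we may write $C-D=\sum_{i=1}^{k}r_{i}(f_{i})$ for finitely many meromorphic functions $f_{1},\ldots,f_{k}$ on $X$ and real numbers $r_{1},\ldots,r_{k}$. After restricting to $\pi^{-1}(U)$, the properness of $\pi$ together with the relative compactness of $U$ guarantees that only finitely many prime divisors $E_{1},\ldots,E_{N}$ on $\pi^{-1}(U)$ appear in the supports of $D|_{\pi^{-1}(U)}$, $C|_{\pi^{-1}(U)}$, or any $(f_{i})|_{\pi^{-1}(U)}$. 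Working in the finite-dimensional subspace $\bigoplus_{j=1}^{N}\mathbb R\cdot E_{j}$ of $\WDiv_{\mathbb R}(\pi^{-1}(U))$, I introduce the affine subspace
\[
\mathcal{W}:=\Bigl\{D|_{\pi^{-1}(U)}+\sum_{i=1}^{k}s_{i}(f_{i})|_{\pi^{-1}(U)}\,\Big|\,s_{i}\in\mathbb R\Bigr\}.
\]
Because $D$ is integral and each $(f_{i})|_{\pi^{-1}(U)}$ is an integral Cartier divisor, $\mathcal W$ is a rational affine subspace; a routine linear-algebra argument in the spirit of Lemma \ref{a-lem2.37} then shows that the $\mathbb Q$-divisors lying in $\mathcal W$ are precisely those of the form $D|_{\pi^{-1}(U)}+\sum_{i}q_{i}(f_{i})|_{\pi^{-1}(U)}$ with $q_{i}\in\mathbb Q$, hence are $\mathbb Q$-linearly equivalent to $D|_{\pi^{-1}(U)}$ in the sense of Definition \ref{a-def2.40}.

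Inside $\mathcal W$ I then consider
\[
\mathcal S_{x}:=\bigl\{E\in\mathcal W\,\big|\,E\geq 0,\ x\notin\Supp E\bigr\}.
\]
This set is carved out of $\mathcal W$ by the finitely many linear inequalities $\mathrm{coeff}_{E_{j}}(E)\geq 0$ for $E_{j}\not\ni x$ together with the finitely many linear equalities $\mathrm{coeff}_{E_{j}}(E)=0$ for $E_{j}\ni x$; these are defined by rational (indeed integer) coefficients, so $\mathcal S_{x}$ is a rational polyhedron inside the rational affine subspace $\mathcal W$. The hypothesis supplies $C|_{\pi^{-1}(U)}\in\mathcal S_{x}$, so $\mathcal S_{x}$ is nonempty; since the $\mathbb Q$-points of a nonempty rational polyhedron are dense in it, I may choose a $\mathbb Q$-divisor $C'\in\mathcal S_{x}$. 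By construction $C'\geq 0$, $x\notin\Supp C'$, and $C'\sim_{\mathbb Q}D|_{\pi^{-1}(U)}$, so $x\notin\mathbf B(D|_{\pi^{-1}(U)}/U)_{\mathbb Q}$, which establishes the desired inclusion.

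The argument is essentially formal once the problem has been reduced to $\pi^{-1}(U)$; the only real uses of the hypotheses are the integrality of $D$ (so that translating by $D$ does not destroy the rationality of $\mathcal W$) and the relative compactness of $U$ (so that the ambient divisor-coefficient space is finite-dimensional). I do not anticipate a serious obstacle, and the only piece of bookkeeping worth double-checking is the identification of the $\mathbb Q$-divisors in $\mathcal W$ with the $\mathbb Q$-linear combinations of the $(f_{i})$ added to $D$, together with the standard fact that a nonempty rational polyhedron always contains rational points.
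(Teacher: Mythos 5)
Your argument is correct and is essentially the one the paper invokes: the published proof of this lemma simply cites the proof of \cite[Lemma 3.5.3]{bchm}, which is exactly the rational-polyhedron argument you spell out, with the restriction to $\pi^{-1}(U)$ serving to make the relevant coefficient space finite-dimensional. The only point worth phrasing carefully is that the parametrization $(s_i)\mapsto D+\sum s_i(f_i)$ need not be injective, so to see that a rational point of $\mathcal W$ comes from rational $q_i$ one should note that its preimage is a nonempty rational affine subspace of $\mathbb R^k$ and hence contains a rational point; this is the content of your ``routine linear-algebra'' remark.
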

\begin{proof}
Since $|D/Y|_{\mathbb Q} \subset 
 |D/Y|_{\mathbb R}$, 
 the first inclusion $\mathbf B(D/Y)_{\mathbb Q} \supset 
 \mathbf B(D/Y)$ is obvious. We take 
 $x\in \pi^{-1}(U)$ such that 
$x\not\in \mathbf B(D/Y)|_{\pi^{-1}(U)}$. 
Then, by the proof of \cite[Lemma 3.5.3]{bchm}, 
we can check that 
$x\not \in \mathbf B(D|_{\pi^{-1}(U)}/U)_{\mathbb Q}$. 
Hence the desired second inclusion 
$\mathbf B(D|_{\pi^{-1}(U)}/U)_{\mathbb Q}\subset 
\mathbf B(D/Y)|_{\pi^{-1}(U)}$ holds. 
\end{proof}

Although it may be dispensable, as in the algebraic case, we have: 

\begin{lem}\label{a-lem10.5}
Let $A$ be any $\pi$-ample divisor on $X$ and 
let $U$ be any relatively compact open subset of $Y$. 
Then $\mathbf B((D-\varepsilon A)/Y)|_{\pi^{-1}(U)}$ is independent 
of $\varepsilon$ if $0<\varepsilon \ll 1$. 
\end{lem}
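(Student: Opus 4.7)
My plan is to deduce the lemma from two ingredients: a monotonicity statement saying that $\mathbf B((D-\varepsilon A)/Y)|_{\pi^{-1}(U)}$ forms a decreasing family of closed analytic subsets as $\varepsilon \searrow 0$, and the noetherian property of coherent sheaves on relatively compact opens (Lemma~\ref{a-lem2.17}).

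First I would shrink $Y$ to an open neighborhood of $\overline U$ on which some multiple $mA$ is $\pi$-very ample; this is possible because $\overline U$ is compact and $A$ is $\pi$-ample, so locally very ampleness of appropriate multiples glues to a single uniform $m$ over a neighborhood of $\overline U$. By properness of the projective morphism $\pi$, the open subset $\pi^{-1}(U)$ is then relatively compact in $X$.

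For the monotonicity step, I fix $0 < \varepsilon_1 < \varepsilon_2$ and a point $p \in \pi^{-1}(U)$ with $p \notin \mathbf B((D-\varepsilon_2 A)/Y)$, so there is an effective $\mathbb R$-divisor $E \sim_{\mathbb R} D - \varepsilon_2 A$ with $p \notin \Supp E$. Using $\pi$-very ampleness of $mA$ together with Cartan's Theorem~A on the Stein base $Y$ (which makes $\pi_*\mathcal O_X(mA)$ globally generated, and hence via the surjection $\pi^*\pi_*\mathcal O_X(mA) \twoheadrightarrow \mathcal O_X(mA)$ makes $\mathcal O_X(mA)$ globally generated on $X$), I produce a global section of $\mathcal O_X(mA)$ that is nonzero at $p$; its divisor $G \geq 0$ satisfies $G \sim mA$ and $p \notin \Supp G$. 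Setting $F := \tfrac{\varepsilon_2 - \varepsilon_1}{m} G$ yields $F \sim_{\mathbb R} (\varepsilon_2 - \varepsilon_1) A$ (the discrepancy is the $\mathbb R$-scalar $\tfrac{\varepsilon_2 - \varepsilon_1}{m}$ times a single principal divisor), $F \geq 0$, and $p \notin \Supp F$. Hence $E + F \sim_{\mathbb R} D - \varepsilon_1 A$ is effective and avoids $p$, proving the inclusion $\mathbf B((D-\varepsilon_1 A)/Y)|_{\pi^{-1}(U)} \subset \mathbf B((D-\varepsilon_2 A)/Y)|_{\pi^{-1}(U)}$.

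With monotonicity in hand, I pick any decreasing sequence $\varepsilon_n \searrow 0$ and observe that the defining ideal sheaves of $\mathbf B((D-\varepsilon_n A)/Y)$ in $\mathcal O_X$ form an increasing chain of coherent ideals; by Lemma~\ref{a-lem2.17} this chain stabilizes over the relatively compact subset $\pi^{-1}(U)$, say at some index $N$. Combined with monotonicity, this forces $\mathbf B((D-\varepsilon A)/Y)|_{\pi^{-1}(U)} = \mathbf B((D-\varepsilon_N A)/Y)|_{\pi^{-1}(U)}$ for every $0 < \varepsilon \leq \varepsilon_N$, yielding the desired independence. The main technical subtlety will be the existence of global sections of $\mathcal O_X(mA)$ nonvanishing at a prescribed point, where both the Steinness of $Y$ and the uniformity of the very ampleness of $mA$ in a neighborhood of the compact set $\overline U$ are essential.
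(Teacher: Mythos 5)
Your overall strategy — monotonicity of the stable base loci as $\varepsilon \searrow 0$, followed by noetherian stabilization via Lemma~\ref{a-lem2.17} over the relatively compact set $\pi^{-1}(U)$ — is exactly what the paper does; the paper dispatches the monotonicity with "by definition" and cites Lemma~\ref{a-lem2.17}, and your detailed production of an effective $F\sim_{\mathbb R}(\varepsilon_2-\varepsilon_1)A$ avoiding a given point is the right way to unpack that.

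The opening step, however, introduces a genuine gap. You shrink $Y$ to a neighborhood $Y'\supset\overline U$ so that a single $mA$ becomes $\pi$-very ample, and carry out the rest of the argument there. But the lemma concerns $\mathbf B((D-\varepsilon A)/Y)$ with respect to the \emph{original} $Y$, while after shrinking you are working with $\mathbf B((D-\varepsilon A)|_{X'}/Y')$ where $X':=\pi^{-1}(Y')$. By Lemma~\ref{a-lem10.3} the latter is only \emph{contained in} the restriction of the former, with no equality in general, so stabilization of the shrunk loci does not yield stabilization of the original ones. The error surfaces concretely in the sentence claiming $\mathcal O_X(mA)$ is globally generated on $X$: the surjection $\pi^*\pi_*\mathcal O_X(mA)\twoheadrightarrow\mathcal O_X(mA)$ was established only over $Y'$, so Cartan's Theorem~A only produces sections over $X'$, not over all of $X$ — yet to show $p\notin\mathbf B((D-\varepsilon_1A)/Y)$ you need an effective representative of $D-\varepsilon_1 A$ defined on all of $X$.

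The fix is easy and removes the shrinking entirely: there is no need for a single $m$ uniform over $\overline U$. For each individual $p\in\pi^{-1}(U)$, $\pi$-ampleness of $A$ gives $m=m(p)$ such that $mA$ is $\pi$-free over a neighborhood of $\pi(p)$; then Cartan's Theorem~A on the \emph{original} Stein space $Y$ produces a global section of $\mathcal O_X(mA)$ over $X$ nonvanishing at $p$. Taking $F:=\tfrac{\varepsilon_2-\varepsilon_1}{m}\ddiv(s)$ yields an effective $\mathbb R$-divisor on $X$ with $F\sim_{\mathbb R}(\varepsilon_2-\varepsilon_1)A$ and $p\notin\Supp F$, and the rest of your argument goes through verbatim.
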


\begin{proof}
It is sufficient to note that 
\begin{equation*}
\mathbf B((D-\varepsilon _1A)/Y)\subset 
\mathbf B((D-\varepsilon _2 A)/Y)
\end{equation*} 
holds for $0<\varepsilon _1<\varepsilon _2$ by definition. 
On a relatively compact open subset $\pi^{-1}(U)$, 
the loci $\mathbf B((D-\varepsilon A)/Y)$ stabilize 
for sufficiently small $\varepsilon >0$ (see Lemma \ref{a-lem2.17}). 
\end{proof}

Let us define {\em{augmented base loci}}. 

\begin{defn}[Augmented base loci]\label{a-def10.6}
The {\em{augmented base locus}} of $D$ over $Y$ is the Zariski closed 
subset 
\begin{equation*}
\mathbf B_+(D/Y):=\bigcap _{\varepsilon >0}\mathbf B((D-\varepsilon A)/Y), 
\end{equation*} where $A$ is some $\pi$-ample divisor on $X$. 
It is not difficult to see that $\mathbf B_+(D/Y)$ is independent of 
the choice of $A$. 
We note that $D$ is $\pi$-big if and only if 
$\mathbf B_+(D/Y)\subsetneq X$ holds. 
\end{defn}

We recall the definition of {\em{fixed divisors}}. 
We need it in Theorem \ref{thm-f} (3). 

\begin{defn}\label{a-def10.7}
Let $D$ be an integral Weil divisor on $X$. 
We put 
\begin{equation*}
|D|:=\{C\geq 0\, |\, C\sim D\}. 
\end{equation*}
Then $\Fix(D)$ denotes the {\em{fixed divisor}} of $D$ so 
that 
\begin{equation*}
|D|=|D-\Fix(D)|+\Fix(D), 
\end{equation*} 
where the base locus of $|D-\Fix(D)|$ contains 
no divisors. 
If $\Fix(D)=0$, then $D$ is said to be {\em{mobile}}. 
\end{defn}

Since we are mainly interested in the minimal model 
program over some open neighborhood of $W$, 
the following definition is useful. 

\begin{defn}[Stable base divisors]\label{a-def10.8}
A divisor $E$ defined on $\pi^{-1}(U)$, where 
$U$ is an open neighborhood of $W$, 
is called a {\em{stable base divisor of $D$ near 
$W$}} if 
$E|_{\pi^{-1}(U')}\subset \mathbf B (D|_{\pi^{-1}(U')}/U')$ holds 
for any Stein open neighborhood $U'$ of $W$ 
with $U'\subset U$. 
\end{defn}

For our purposes, we have to reformulate 
\cite[Proposition 3.5.4]{bchm} as follows. 
We will use Lemma \ref{a-lem10.9} in the proof of Theorem \ref{thm-g}. 

\begin{lem}[{see \cite[Proposition 3.5.4]{bchm}}]\label{a-lem10.9}
Let $\pi\colon X\to Y$ be a projective morphism 
of normal complex varieties and let $W$ be a Stein compact 
subset of $Y$. 
Let $D\geq 0$ be an $\mathbb R$-divisor on $X$. 
Then, after replacing $Y$ with a Stein open neighborhood 
of $W$ suitably, 
we can find $\mathbb R$-divisors $M$ and $F$ on $X$ such 
that 
\begin{itemize}
\item[(1)] $M\geq 0$ and $F\geq 0$, 
\item[(2)] $D\sim _{\mathbb R} M+F$, 
\item[(3)] every component of 
$\Supp F$ is a stable base divisor of $D$ near $W$, 
\item[(4)] if $B$ is a component of $\Supp M$, then some 
multiple is mobile. 
\end{itemize}
\end{lem}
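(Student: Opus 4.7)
The plan is to shrink $Y$ around $W$ so that $\Supp D$ has only finitely many components, then identify the stable base components via a monotonicity-and-stabilization argument, and finally take $M$ and $F$ to be the obvious decomposition of $D$ along that partition.

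First I would replace $Y$ by a relatively compact Stein open neighborhood $Y'$ of $W$ as in \ref{a-say1.11}, so that $\Supp D\cap \pi^{-1}(Y')$ has only finitely many irreducible components $D_1,\ldots,D_n$ and $D|_{\pi^{-1}(Y')}=\sum_{i=1}^n d_i D_i$ with $d_i>0$. Since $D|_{\pi^{-1}(U)}$ itself lies in $|D/U|_{\mathbb R}$ for any Stein open $W\subset U\subset Y'$, every divisorial component of $\mathbf B(D|_{\pi^{-1}(U)}/U)$ is automatically one of the $D_i$; thus the only candidates for stable base divisors of $D$ near $W$ lie in the finite set $\{D_1,\ldots,D_n\}$.

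Next I would run a monotonicity-and-stabilization argument on the sets
\[
S(U):=\{\, i : D_i\subset \mathbf B(D|_{\pi^{-1}(U)}/U)\,\}\subset \{1,\ldots,n\}.
\]
For $W\subset U'\subset U\subset Y'$, restriction $E\mapsto E|_{\pi^{-1}(U')}$ sends $|D/U|_{\mathbb R}$ into $|D/U'|_{\mathbb R}$ (Lemma \ref{a-lem10.3}), whence $\mathbf B(D|_{\pi^{-1}(U')}/U')\subset \mathbf B(D|_{\pi^{-1}(U)}/U)|_{\pi^{-1}(U')}$ and $S(U')\subset S(U)$. The family $\{S(U)\}$ is therefore a decreasing family of subsets of a finite set, hence stabilizes: there is a Stein open neighborhood $U_0$ of $W$ with $S(U)=S(U_0)=:S$ for every Stein open $W\subset U\subset U_0$. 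Replacing $Y$ by this $U_0$, the $D_i$ with $i\in S$ are precisely the stable base divisors of $D$ near $W$ in the sense of Definition \ref{a-def10.8}, while for each $j\notin S$ there exists an effective $E_j\sim_{\mathbb R} D$ with $D_j\not\subset \Supp E_j$.

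Finally I set $F:=\sum_{i\in S} d_i D_i$ and $M:=\sum_{i\notin S} d_i D_i$. Then $D=M+F$ on the shrunken $\pi^{-1}(Y)$, so $D\sim_{\mathbb R} M+F$ with $M,F\geq 0$, giving (1) and (2). Every component of $\Supp F$ is a $D_i$ with $i\in S$, which gives (3); every component of $\Supp M$ is a $D_j$ with $j\notin S$, for which the $E_j$ produced above realizes the mobility asserted in (4). The main technical obstacle is the stabilization step: one has to argue that the decreasing chain $\{S(U)\}$ really attains its intersection on a bona fide Stein open neighborhood of $W$ and not merely in the direct limit, so that Definition \ref{a-def10.8} applies uniformly to all $U\subset U_0$. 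This is immediate from the finiteness of $\{D_1,\ldots,D_n\}$, but it is the only genuinely non-formal ingredient in the argument.
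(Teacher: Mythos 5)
Your reduction to finitely many components, the monotonicity of the sets $S(U)$, and the stabilization over a fixed Stein open neighborhood of $W$ are all fine — that part of the argument is clean and is indeed the finiteness ingredient that the paper signals as the main new point versus \cite[Proposition 3.5.4]{bchm}. The genuine gap is at the end: you conflate ``$D_j$ is not a stable base divisor of $D$ near $W$'' with ``some multiple of $D_j$ is mobile,'' and these are different conditions. Knowing that there is some $E_j\sim_{\mathbb R} D$ with $E_j\geq 0$ and $D_j\not\subset\Supp E_j$ tells you $D_j$ is not a fixed component of the $\mathbb R$-linear system of $D$; it says nothing about whether the prime divisor $D_j$ \emph{itself} has a multiple $mD_j$ with $\Fix(mD_j)=0$ in the sense of Definition \ref{a-def10.7}.

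A concrete counterexample to your final step (with $Y$ a point): take $X=\mathbb F_1$ the blow-up of $\mathbb P^2$ at a point, with exceptional curve $E$ (so $E^2=-1$) and fiber $F$, and take $D=E+2F$. Then $D$ is nef and big, hence semiample, so $\mathbf B(D)=\emptyset$ and your set $S$ is empty, forcing $F_{\text{your}}=0$ and $M_{\text{your}}=D$. But $E$ is a component of $M_{\text{your}}$, and $|mE|=\{mE\}$ for all $m>0$ since $E$ is a rigid negative curve, so no multiple of $E$ is mobile: condition (4) fails. The point of \cite[Proposition 3.5.4]{bchm} and of Lemma \ref{a-lem10.9} is precisely that one must be allowed to \emph{replace} $D$ by an $\mathbb R$-linearly equivalent effective divisor, not merely regroup the components of $D$ itself. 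In the example, $D\sim F'+H'$ where $F'\in|F|$ and $H'\in|E+F|$ are general members of base-point-free pencils; taking $M=F'+H'$ and $F=0$ does satisfy (1)--(4). The proof in \cite[Proposition 3.5.4]{bchm} (via Lemmas 3.5.5 and 3.5.6 there) runs an induction: it repeatedly passes to convex combinations of $D$ with other effective members of $|D/Y|_{\mathbb R}$ to kill off components that are not stable base divisors, replacing them by components of mobile divisors. Your stabilization argument is a useful supplement for handling the complex-analytic shrinking, but you still need to import the substance of the BCHM induction to obtain the correct $M$.
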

\begin{proof}
The proof of \cite[Proposition 3.5.4, Lemma 3.5.5, and 
Lemma 3.5.6]{bchm} works in our setting with some minor 
modifications. 
As we mentioned above, an $\mathbb R$-divisor 
on $X$ may have infinitely many irreducible components. 
Therefore, we have to replace $Y$ with a relatively compact 
open neighborhood of $W$ in the proof of this lemma. 
For the details, see the proof of \cite[Proposition 3.5.4]{bchm}. 
\end{proof}

\section{Some basic definitions and properties, I}\label{a-sec11}

In this section, we will explain some basic definitions 
which are indispensable for the main results and their proof. 

Let us start with the definition of {\em{$D$-nonpositivity}} 
and {\em{$D$-negativity}}. 

\begin{defn}[{\cite[Definition 3.6.1]{bchm}}]\label{a-def11.1}
Let $\phi\colon X\dashrightarrow Z$ be a bimeromorphic 
contraction of normal complex varieties and let 
$D$ be an $\mathbb R$-Cartier $\mathbb R$-divisor 
on $X$ such that $D':=\phi_*D$ is also $\mathbb R$-Cartier. 
We say that $\phi$ is {\em{$D$-nonpositive}} 
(resp.~{\em{$D$-negative}}) if for some 
common resolution $p\colon V\to X$ and $q\colon V\to Y$, 
we may write 
\begin{equation*}
p^*D=q^*D'+E
\end{equation*}
where $E$ is effective and $q$-exceptional 
(resp.~$E$ is effective, $q$-exceptional, and 
the support of $E$ contains the strict 
transform of the 
$\phi$-exceptional divisors). 
\end{defn}

The so-called {\em{negativity lemma}} (see, for example, 
\cite[Lemma 3.6.2]{bchm}) holds 
true in our complex analytic setting. 
This is because everything follows from 
the negative definiteness of intersection 
form of contractible curves on surfaces (see, for example, 
\cite[Theorem 4-6-1]{matsuki}). 
Therefore, from now on, we will freely 
use the negativity lemma for projective 
morphisms of normal complex varieties. 
Note that the results obtained in \cite[Lemmas 3.6.2, 3.6.3, and 
3.6.4]{bchm} hold true in our complex analytic setting with 
some obvious modifications. 

Let us define {\em{semiample models}} and 
{\em{ample models}} following \cite{bchm}. 

\begin{defn}[{\cite[Definition 3.6.5]{bchm}}]\label{a-def11.2} 
Let $\pi\colon X\to Y$ be a projective morphism 
of complex analytic spaces and let $W$ be a 
compact subset 
of $Y$ such that $\pi\colon X\to Y$ and $W$ satisfies (P). 
Let $D$ be an $\mathbb R$-Cartier $\mathbb R$-divisor 
on $X$. 

Let $f\colon X\dashrightarrow Z$ be a bimeromorphic 
contraction over $Y$ after shrinking $Y$ around $W$ suitably. 
\begin{itemize}
\item We say that $f\colon X\dashrightarrow Z$ is 
a {\em{semiample model of $D$ over some open 
neighborhood of $W$}} if, after shrinking $Y$ around 
$W$ suitably, 
$Z$ is a normal variety and is projective over $Y$, 
$f$ is $D$-nonpositive, and $H:=f_*D$ is semiample 
over $Y$. 
\end{itemize}
Let $g\colon X\dashrightarrow Z$ be a meromorphic 
map over $Y$ after shrinking $Y$ around $W$ suitably. 
\begin{itemize}
\item We say that $g\colon X\dashrightarrow Z$ 
is the {\em{ample model of $D$ over some open 
neighborhood of $W$}} if, 
after shrinking $Y$ around $W$ suitably,  
$Z$ is a normal variety and is projective over $Y$, 
and there exists an ample $\mathbb R$-divisor 
$H$ over $Y$ on $Z$ such that 
if $p\colon V\to X$ and $q\colon V\to Z$ resolve the 
indeterminacy of $g$ then $q$ is a contraction morphism 
and we can write $p^*D\sim _{\mathbb R} q^*H+E$, 
where $E\geq 0$ and $B\geq E$ holds 
for every $B\in |p^*D/Y|_{\mathbb R}$. 
\end{itemize}
\end{defn}

The basic properties of semiample models and ample models 
are summarized as follows. 

\begin{lem}[{see \cite[Lemma 3.6.6]{bchm}}]\label{a-lem11.3} 
Let $\pi\colon X\to Y$ be a projective morphism 
of complex analytic spaces and 
let $W$ be a compact subset of $Y$ such that $\pi\colon X\to Y$ and 
$W$ satisfies {\em{(P)}}. 
Let $D$ be an $\mathbb R$-Cartier $\mathbb R$-divisor on $X$. 
\begin{itemize}
\item[(1)] If $g_i\colon X\dashrightarrow X_i$, $i=1, 2$, are two 
ample models of $D$ over some open neighborhood of $W$, 
then there exists an isomorphism 
$\chi\colon X_1\to X_2$ over some open neighborhood 
of $W$ such that $g_2=\chi\circ g_1$. 
\item[(2)] If $f\colon X\dashrightarrow Z$ is a semiample 
model of $D$ over some open neighborhood 
of $W$, 
then, after shrinking $Y$ around $W$ suitably, 
the ample model $g\colon X\dashrightarrow Z'$ of $D$ over some open 
neighborhood 
of $W$ exists and $g=h\circ f$, 
where $h\colon Z\to Z'$ is a contraction morphism and 
$f_*D\sim _{\mathbb R} 
h^*H$ holds such that $H$ is an $\mathbb R$-divisor 
on $Z'$ which is ample over some open neighborhood 
of $W$. 
\item[(3)] If $f\colon X\dashrightarrow Z$ is a bimeromorphic 
map over some open neighborhood of $W$, 
then $f$ is the ample model of $D$ over some open 
neighborhood of $W$ if and only if 
$f$ is a semiample model of $D$ over some open neighborhood 
of $W$ and $f_*D$ is ample over some open neighborhood of $W$. 
\end{itemize}
\end{lem}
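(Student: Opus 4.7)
The plan is to adapt the original proof of \cite[Lemma 3.6.6]{bchm} to the complex analytic setting, replacing passages to the generic point by working over relatively compact Stein open neighborhoods and shrinking $Y$ around $W$ whenever necessary. Throughout the proof I will freely shrink $Y$ around $W$ without mentioning it, and I will use that whenever $\phi\colon X\dashrightarrow Z$ is a meromorphic map over $Y$, we may take a common resolution $p\colon V\to X$, $q\colon V\to Z$ and that, for any relatively compact Stein open subset, real linear systems and stable base loci behave as in Section \ref{a-sec10}.

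For (1), suppose $g_i\colon X\dashrightarrow X_i$ ($i=1,2$) are two ample models of $D$ over some neighborhood of $W$, with corresponding ample $\mathbb R$-divisors $H_i$ on $X_i$. Let $p\colon V\to X$, $q_i\colon V\to X_i$ resolve all indeterminacies simultaneously, so that $p^*D\sim_{\mathbb R} q_i^*H_i+E_i$ with $E_i\geq 0$, and $B\geq E_i$ for every $B\in|p^*D/Y|_{\mathbb R}$. In particular $E_1\geq E_2$ and $E_2\geq E_1$, hence $E_1=E_2$; therefore $q_1^*H_1\sim_{\mathbb R}q_2^*H_2$. Since $H_i$ is ample over $Y$ (after shrinking), the linear equivalence class of $q_i^*H_i$ determines $q_i$ up to isomorphism of the target over $Y$; this gives the required isomorphism $\chi\colon X_1\to X_2$ with $g_2=\chi\circ g_1$.

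For (2), let $f\colon X\dashrightarrow Z$ be a semiample model of $D$ over a neighborhood of $W$, so $H:=f_*D$ is semiample over $Y$ after suitably shrinking. By Lemma \ref{a-lem2.36} together with the argument used in Sketch of Proof of Theorem \ref{a-thm8.1}, taking a sufficiently divisible positive integer $m$ and the Stein factorization of the morphism associated to $\lvert mH/Y\rvert$, we obtain a contraction morphism $h\colon Z\to Z'$ over $Y$ and an $\mathbb R$-divisor $H'$ on $Z'$, ample over some neighborhood of $W$, with $H\sim_{\mathbb R}h^*H'$. Set $g:=h\circ f$ and let $p\colon V\to X$, $q\colon V\to Z$ be a common resolution of $f$; by the $D$-nonpositivity of $f$ we may write $p^*D=q^*H+E$ with $E\geq 0$ and $q$-exceptional. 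Composing with $h$, the morphism $h\circ q\colon V\to Z'$ is a contraction and $p^*D\sim_{\mathbb R}(h\circ q)^*H'+E$. To verify the ample model property one has to show that for every $B\in|p^*D/Y|_{\mathbb R}$ the inequality $B\geq E$ holds; this is the standard negativity computation: $B-E\sim_{\mathbb R}q^*H$ is $q$-nef, $q_*(B-E)=q_*B\geq 0$, and $E$ is $q$-exceptional, so the negativity lemma (the complex analytic version, valid by the remark following Definition \ref{a-def11.1}) gives $B-E\geq 0$.

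For (3), if $f\colon X\dashrightarrow Z$ is bimeromorphic and is a semiample model with $f_*D$ ample over some neighborhood of $W$, then in part (2) the contraction $h\colon Z\to Z'$ is forced to be an isomorphism near $W$, so $f$ itself is the ample model by (2). Conversely, if $f\colon X\dashrightarrow Z$ is a bimeromorphic ample model of $D$ over a neighborhood of $W$, let $p\colon V\to X$, $q\colon V\to Z$ resolve $f$; the defining identity $p^*D\sim_{\mathbb R}q^*H+E$ with $H$ ample and $B\geq E$ for all $B\in|p^*D/Y|_{\mathbb R}$ forces $E$ to be $q$-exceptional (using the bimeromorphicity of $f$ and the negativity lemma on the divisorial part of $E$ that is not $q$-exceptional), which gives $D$-nonpositivity; moreover $f_*D\sim_{\mathbb R}H$ is ample, so $f_*D$ is in particular semiample, and $f$ is a semiample model. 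The main technical obstacle is ensuring everywhere that "after shrinking $Y$ around $W$" is legitimate: this requires that the semiampleness and ampleness statements from Theorems \ref{a-thm8.1} and \ref{a-thm9.5}, together with Lemma \ref{a-lem2.36}, produce contractions defined on a full open neighborhood of $W$ rather than only fiberwise, which I will handle by replacing $W$ by a slightly larger semianalytic Stein compact set as in \ref{a-say1.11} before applying each of these results.
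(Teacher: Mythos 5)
Your proposal follows essentially the same route as the paper, which simply refers the reader to the proof of (1), (3), and (4) of \cite[Lemma 3.6.6]{bchm}; you reconstruct those arguments with the natural analytic substitutions (shrinking $Y$ around $W$, working over Stein compacts, etc.). Two spots are worth a second look: in (1), the step ``$E_1\geq E_2$ and $E_2\geq E_1$'' is not immediate from the definition but requires first choosing a general effective member of $|H_i/Y|_{\mathbb R}$ to produce an element of $|p^*D/Y|_{\mathbb R}$ to which the ample-model inequality can be applied; and in the ``only if'' half of (3), the appeal to ``the negativity lemma on the non-$q$-exceptional part of $E$'' does not directly yield $q$-exceptionality of $E$ (the negativity lemma needs a $q$-nef or $q$-anti-nef input, which $E$ is not a priori); the standard route is again to test the defining inequality $B\geq E$ against explicitly constructed members of $|p^*D/Y|_{\mathbb R}$ obtained from small perturbations of $H$, as in the original BCHM argument. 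These are details rather than a change of strategy.
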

\begin{proof}
For the details, see the proof of (1), (3), and (4) in 
\cite[Lemma 3.6.6]{bchm}. 
\end{proof}

The definition of {\em{weak log canonical models}} and 
{\em{log terminal models}} becomes subtle in our 
complex analytic setting. 

\begin{defn}[{\cite[Definition 3.6.7]{bchm}}]\label{a-def11.4} 
Let $\pi\colon X\to Y$ be a projective 
morphism of complex analytic spaces and let 
$W$ be a compact subset of $Y$. 
Suppose that $K_X+\Delta$ is log canonical 
and let $\phi\colon X\dashrightarrow Z$ 
be a bimeromorphic contraction of normal complex 
varieties over $Y$ after shrinking $Y$ around 
$W$ suitably, 
where $Z$ is projective 
over $Y$. 
We set $\Gamma =\phi_*\Delta$. 
\begin{itemize}
\item[(i)] $Z$ is a {\em{weak log canonical model for 
$K_X+\Delta$ over $W$}} 
if $\phi$ is $(K_X+\Delta)$-nonpositive over some 
open neighborhood of $W$ 
and $K_Z+\Gamma$ is nef over $W$. 
\item[(ii)] $Z$ is a {\em{weak log canonical model for 
$K_X+\Delta$ over some open neighborhood of 
$W$}} 
if, after shrinking $Y$ around 
$W$ suitably, $\phi$ is $(K_X+\Delta)$-nonpositive 
and $K_Z+\Gamma$ is nef over $Y$. 
\item[(iii)] $Z$ is a {\em{log terminal model 
for $K_X+\Delta$ over $W$}} 
if $\phi$ is $(K_X+\Delta)$-negative over some 
open neighborhood of $W$, 
$(Z, \Gamma)$ is divisorial log terminal, 
$K_Z+\Gamma$ is nef over $W$, and $Z$ is 
$\mathbb Q$-factorial 
over $W$. 
\item[(iv)] $Z$ is a {\em{log terminal model 
for $K_X+\Delta$ over some open neighborhood of 
$W$}} 
if, after shrinking $Y$ around 
$W$ suitably, $\phi$ is $(K_X+\Delta)$-negative, 
$(Z, \Gamma)$ is divisorial log terminal, 
$K_Z+\Gamma$ is nef over $Y$, and $Z$ is 
$\mathbb Q$-factorial over $W$. 
\item[(v)] $Z$ is a {\em{good log terminal model for $K_X+\Delta$ 
over some open neighborhood of $W$}} 
if, after shrinking $Y$ around 
$W$ suitably, $\phi$ is $(K_X+\Delta)$-negative, 
$(Z, \Gamma)$ is divisorial log terminal, 
$K_Z+\Gamma$ is semiample over $Y$, and $Z$ is 
$\mathbb Q$-factorial over $W$. 
\end{itemize}
We further assume that $\pi\colon X\to Y$ and $W$ satisfies (P). 
\begin{itemize}
\item[(vi)] $Z$ is the {\em{log canonical model 
for $K_X+\Delta$ over some open neighborhood of 
$W$}} if $\phi$ is 
the ample model of $K_X+\Delta$ over some open 
neighborhood of $W$. 
\end{itemize}
\end{defn}

We give some remarks on Definitions \ref{a-def11.2} and 
\ref{a-def11.4}. 

\begin{rem}[{see \cite[Remark 3.6.8]{bchm}}]\label{a-rem11.5}
A log terminal model is sometimes simply called a 
{\em{log minimal model}} or a {\em{minimal model}}. 
\end{rem}

\begin{rem}\label{a-rem11.6}
In Definitions \ref{a-def11.2}  and \ref{a-def11.4} , 
we only require that $f\colon X\dashrightarrow 
Z$, $g\colon X \dashrightarrow Z$, and 
$\phi\colon X\dashrightarrow Z$ exist after replacing 
$Y$ with a small open 
neighborhood of $W$ suitably. 
If there is no danger of confusion, then 
we simply say that 
$\phi\colon X\dashrightarrow Z$ is a log terminal 
model (weak log canonical model, log canonical model, and so 
on) for $K_X+\Delta$ 
{\em{over $Y$}} 
when it is a log terminal model (weak log canonical model, 
log canonical model, and so on) 
for $K_X+\Delta$ over 
some open neighborhood of $W$. 
\end{rem}

In our complex analytic setting, the definition of 
{\em{Mori fiber spaces}} becomes as follows. 
 
\begin{defn}[Mori fiber spaces]\label{a-def11.7}
Let $(X, \Delta)$ be a divisorial log 
terminal pair. Let $\pi\colon X\to Y$ be a projective 
morphism of complex analytic spaces and let 
$W$ be a compact 
subset of $Y$ such that $\pi\colon X\to Y$ and $W$ satisfies 
(P). 
Let $f\colon X\to Z$ be a projective morphism 
of normal complex varieties over $Y$. 
Then $f\colon (X, \Delta)\to Z$ is a {\em{Mori 
fiber space over $Y$}} if 
\begin{itemize}
\item[(i)] $X$ is $\mathbb Q$-factorial over $W$, 
\item[(ii)] $f$ is a contraction morphism associated 
to a $(K_X+\Delta)$-negative extremal ray of $\NE(X/Y; W)$, and 
\item[(iii)] $\dim Z<\dim X$. 
\end{itemize}
\end{defn}

The following definition is essentially the same as 
\cite[Definition 1.1.4]{bchm}. 
However, we need some modifications 
since we treat only curves mapped to points in $W$ by $\pi$. 

\begin{defn}[{\cite[Definition 1.1.4]{bchm}}]\label{a-def11.8} 
Let $\pi\colon X\to Y$ be a projective morphism 
of complex analytic spaces such that 
$X$ is a normal variety and let $W$ be a compact 
subset of $Y$. 
Let $V$ be a finite-dimensional affine subspace of the 
real vector space $\WDiv_\mathbb R (X)$ spanned by the 
prime divisors on $X$. 
We fix an $\mathbb R$-divisor $A\geq 0$ on $X$ 
such that $\Supp A$ has only finitely many irreducible components 
and 
define 
\begin{align*}
V_A&= \{\Delta \, |\, \Delta=A+B, B\in V\}, \\
\mathcal L_A(V; \pi^{-1}(W))&=\{\Delta=A+B\in V_A\, |\, 
\text{$K_X+\Delta$ is log 
canonical at $\pi^{-1}(W)$ and $B\geq 0$}\}, \\ 
\mathcal E_{A, \pi}(V; W) &
=
\left\{\Delta\in \mathcal L_A(V; \pi^{-1}(W))\, \left|\, 
\begin{array}{l} {\text{$K_X+\Delta$ is pseudo-effective over  
}}\\
{\text{some open 
neighborhood of $W$}}
\end{array}
\right.\right\}, 
\\
\mathcal N^\sharp_{A, \pi}(V; W) &
=\{\Delta\in \mathcal L_A(V; \pi^{-1}(W))\, |\, 
\text{$K_X+\Delta$ is nef over $W$}\}, \  \text{and} 
\\ 
\mathcal N_{A, \pi}(V; W) &=
\left\{\Delta\in \mathcal L_A(V; \pi^{-1}(W))\, \left|\, 
\begin{array}{l} {\text{$K_X+\Delta$ is nef over 
some open 
}}\\
{\text{neighborhood of $W$}}
\end{array}
\right.\right\}. 
\end{align*}
Given a bimeromorphic contraction $\phi\colon X\dashrightarrow 
Z$ after shrinking $Y$ around $W$ suitably, 
define 
\begin{equation*}
\mathcal W^{\sharp}_{\phi, A, \pi}(V; W)=\left\{\Delta
\in \mathcal E_{A, \pi} (V; W)\, \left|\, 
\begin{array}{l} {\text{$\phi$ is a weak log canonical model 
for $(X, \Delta)$}}\\
{\text{over $W$}}
\end{array}
\right.\right\},  
\end{equation*}
and 
\begin{equation*}
\mathcal W_{\phi, A, \pi}(V; W)=\left\{\Delta
\in \mathcal E_{A, \pi} (V; W)\, \left|\, 
\begin{array}{l} {\text{$\phi$ is a weak log canonical model 
for $(X, \Delta)$}}\\
{\text{over some open 
neighborhood of $W$}} 
\end{array}
\right.\right\}.  
\end{equation*} 
Given a meromorphic map 
$\psi\colon X\dashrightarrow Z$ after shrinking $Y$ around 
$W$ suitably, define 
\begin{equation*}
\mathcal A_{\psi, A, \pi}(V; W)=\left\{\Delta\in 
\mathcal E_{A, \pi} (V; W)\, \left|\, 
\begin{array}{l}
{\text{$\psi$ is the ample model for 
$(X, \Delta)$}} \\ 
{\text{over some open 
neighborhood of $W$}}
\end{array}\right.\right\}. 
\end{equation*}
\end{defn}

We make some elementary remarks. 

\begin{rem}\label{a-rem11.9}
By the same argument as in the 
proof of Lemma \ref{a-lem3.5}, we can check that  
$\mathcal L_A(V; \pi^{-1}(W))$ in Definition \ref{a-def11.8} 
is a polytope. 
We further assume that $A$ is a $\mathbb Q$-divisor and that 
$V$ is defined over the rationals. 
Then $\mathcal L_A(V; \pi^{-1}(W))$ 
is a rational polytope. 
\end{rem}

\begin{rem}\label{a-rem11.10}
By definition, it is easy to see that $\mathcal N^\sharp_{A, \pi}(V; W)$ and 
$\mathcal W^\sharp_{\phi, A, \pi}(V; W)$ are closed subsets 
of $\mathcal L_A(V; \pi^{-1}(W))$.  
\end{rem}

We note the following elementary fact. 

\begin{rem}\label{a-rem11.11}
In Definition \ref{a-def11.8}, 
let $S$ be an effective $\mathbb R$-divisor 
on $X$ such that $\Supp S$ has only finitely 
many irreducible components. 
If $\Supp A$ and $\Supp B$ have no common irreducible 
components for every $B\in V$, then 
\begin{equation*}
\mathcal L_{S+A}(V; \pi^{-1}(W))=\mathcal L_S(V_A; \pi^{-1}(W))
\end{equation*} 
holds. Of course, if $\Supp S$ 
and $\Supp B$ have no common irreducible 
components for every $B\in V$, then 
\begin{equation*}
\mathcal L_{S+A}(V; \pi^{-1}(W))=\mathcal L_A(V_S; \pi^{-1}(W))
\end{equation*} 
holds. 

From now on, we assume that $S$ is reduced. 
We put 
\begin{equation*}
V':=\{B\in V\, |\, {\text{$\Supp B$ and $\Supp S$ have no common 
irreducible components}}\}. 
\end{equation*} 
Then $V'$ is an affine subspace of $V$ such that 
\begin{equation*}
\mathcal L_{S+A}(V; \pi^{-1}(W))=
\mathcal L_{S+A}(V'; \pi^{-1}(W))=
\mathcal L_A(V'_S; \pi^{-1}(W))
\end{equation*}
\end{rem}

There are no difficulties to adapt 
\cite[Lemmas 3.6.9, 3.6.10, and 3.6.11]{bchm} 
to our complex analytic setting. 
Roughly speaking, they are easy consequences of 
the negativity lemma. 
Hence we omit the details here. 
On the other hand, \cite[Lemma 3.6.12]{bchm} 
is subtle and needs some reformulation 
for our purposes. We will discuss it in Section \ref{a-sec12}. 

\begin{say}[{see \cite[Lemmas 3.7.3, 3.7.4, and 3.7.5]{bchm}}]\label{a-say11.12}
Note that \cite[Lemmas 3.7.3, 3.7.4, and 3.7.5]{bchm} 
are very important. We need them to reduce various 
problems for log canonical pairs to simpler ones for 
kawamata log terminal pairs. 
We state them here explicitly in our complex analytic setting for the 
sake of completeness. In the following three lemmas 
\ref{a-lem11.13}, \ref{a-lem11.14}, and \ref{a-lem11.15}, 
we assume that $\pi\colon X\to Y$ is a projective 
morphism of complex analytic spaces such that 
$X$ is a normal variety and $Y$ is Stein and 
that $W$ is a Stein compact subset of $Y$. 

\begin{lem}[{see \cite[Lemma 3.7.3]{bchm}}]\label{a-lem11.13}
Let $V$ be a finite-dimensional affine 
subspace of $\WDiv_{\mathbb R}(X)$ and let 
$A\geq 0$ be a $\pi$-big $\mathbb R$-divisor on $X$. 
Let $\mathcal C\subset \mathcal L_A(V; \pi^{-1}(W))$ be a polytope. 

If $\mathbf B_+(A/Y)$ does not contain any non-kawamata 
log terminal centers of $(X, \Delta)$ 
for every $\Delta\in \mathcal C$, then, 
after shrinking $Y$ around $W$ suitably, 
we can find a general $\pi$-ample $\mathbb Q$-divisor 
$A'$ on $X$, a finite-dimensional affine subspace 
$V'$ of $\WDiv_{\mathbb R}(X)$, 
and a translation 
\begin{equation*}
L\colon \WDiv_{\mathbb R}(X)\to \WDiv_{\mathbb R} (X), 
\end{equation*}
by an $\mathbb R$-divisor $T$ with $T\sim 
_{\mathbb R}0$ such that 
$L(\mathcal C)\subset \mathcal L_{A'}(V'; \pi^{-1}(W))$ and 
$(X, \Delta-A)$ and $(X, L(\Delta))$ have the 
same non-kawamata log terminal centers. 
Furthermore, if $A$ is a $\mathbb Q$-divisor, 
then we may assume that $T\sim _{\mathbb Q}0$ holds. 
\end{lem}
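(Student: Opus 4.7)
The plan is to adapt the proof of \cite[Lemma 3.7.3]{bchm} to the analytic setting; the only substantive modification is an initial reduction that replaces $Y$ by a sufficiently small Stein neighbourhood of $W$. After that reduction, the core construction is a Kodaira-style decomposition of the $\pi$-big divisor $A$, followed by a generic perturbation to produce both a general $\pi$-ample $\mathbb Q$-divisor $A'$ and an $\mathbb R$-linearly trivial translation $T$.

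First I would shrink $Y$ around $W$ (enlarging $W$ to a semianalytic Stein compact subset as in \ref{a-say1.11} if needed, so that (P4) and the Noetherianity of $\Gamma(W,\mathcal O_Y)$ are preserved), so that by Lemma \ref{a-lem2.37} every $\mathbb R$-Cartier divisor in play---each element of the polytope $\mathcal C$ and the divisor $A$---is a finite $\mathbb R$-linear combination of Cartier divisors with only finitely many components. Since $A$ is $\pi$-big and $Y$ is Stein, Definition \ref{a-def2.46} gives $A \sim_{\mathbb R} H + B$ with $H$ a $\pi$-ample $\mathbb R$-divisor and $B \geq 0$, and a standard argument using Lemma \ref{a-lem10.9} lets me arrange this decomposition so that every component of $B$ lies in $\mathbf B_+(A/Y)$; the hypothesis on $\mathcal C$ then ensures that no component of $B$ is contained in a non-klt centre of $(X, \Delta)$ for any $\Delta \in \mathcal C$. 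Writing $H = \sum r_i L_i$ as a finite $\mathbb R_{>0}$-combination of $\pi$-ample Cartier divisors and fixing a small rational $\mu > 0$, I would pick rationals $0 < q_i < \mu r_i$ and build a general $\pi$-ample $\mathbb Q$-divisor $A' \sim_{\mathbb Q} \sum q_i L_i$ via Definition \ref{a-def2.51}; then $\mu H - A'$ is an $\mathbb R_{\geq 0}$-combination of $\pi$-ample Cartier divisors, so replacing each $L_i$ by a general effective representative of a sufficiently divisible $\pi$-free multiple (Remark \ref{a-rem2.52}) yields $\mu H - A' \sim_{\mathbb R} E$ for a general effective $\mathbb R$-divisor $E$. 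Setting
\[
T := A' + E + \mu B - \mu A,
\]
one gets $T \sim_{\mathbb R} 0$ (and $T \sim_{\mathbb Q} 0$ when $A$ is a $\mathbb Q$-divisor), so $L(\Delta) := \Delta + T$ is the desired translation. For $\Delta = A + C \in \mathcal C$ one expands
\[
L(\Delta) = A' + \bigl((1 - \mu) A + \mu B + E + C \bigr),
\]
so taking $V' := (1-\mu)A + \mu B + E + V$ exhibits $L(\mathcal C) \subset A' + V'$ with all the effectivity requirements of $\mathcal L_{A'}(V'; \pi^{-1}(W))$ automatic.

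The hard part will be verifying that $(X, L(\Delta))$ is log canonical at $\pi^{-1}(W)$ and shares its non-klt centres with $(X, \Delta - A) = (X, C)$. The three ingredients I would use are: (a) the generality of $A'$ and $E$ (analytic Bertini, cf.~Remark \ref{a-rem2.52}) ensures that they introduce no new non-klt centres and contribute no component to any existing one; (b) choosing $\mu$ small enough makes the coefficients of $\mu B$ everywhere less than one, and since no component of $B$ is contained in a non-klt centre of $(X, \Delta)$ by the augmented-base-locus hypothesis, adding $\mu B$ alters neither the log canonical property nor the non-klt centre structure; (c) the pair $(X, (1-\mu) A + C)$ is dominated by $(X, A + C) = (X, \Delta)$, and its non-klt centres are exactly those of $(X, \Delta)$ that are not generated by coefficient-one components of $A$, which in turn coincide with the non-klt centres of $(X, C)$. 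Uniformity of the choice of $\mu$ over the polytope $\mathcal C$ follows from compactness of $\mathcal C$ and the fact that the non-klt condition is tested on the finitely many valuations visible on a fixed log resolution of the vertices of $\mathcal C$; this is the same compactness argument as in \cite[Lemma 3.7.3]{bchm}, and no new analytic subtlety arises beyond the Stein shrinking already carried out.
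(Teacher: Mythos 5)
Your proposal is correct and follows the same route the paper prescribes: the paper explicitly omits the proof of Lemma~\ref{a-lem11.13}, remarking in \ref{a-say11.12} that the argument of \cite[Lemma~3.7.3]{bchm} translates with no essential difficulties, and your reconstruction---decompose $A$ into $\pi$-ample plus effective, scale by a small rational $\mu$, replace the ample part by a general $\pi$-ample $\mathbb Q$-divisor $A'$ together with a general effective $E$, and translate by $T = A' + E + \mu B - \mu A \sim_{\mathbb R} 0$, with the Stein shrinking of \ref{a-say1.11} and Lemma~\ref{a-lem2.37} supplying the needed finiteness---is precisely that argument. One small correction: Lemma~\ref{a-lem10.9} (mobile/fixed decomposition of an effective divisor) is not really what produces $A \sim_{\mathbb R} H + B$ with $\Supp B$ avoiding the non-klt centres; that step instead uses the stabilization $\mathbf B_+(A/Y)=\mathbf B((A-\epsilon H)/Y)$ for $\epsilon\ll 1$ (Lemma~\ref{a-lem10.5}) together with a log resolution of the vertices of $\mathcal C$ and the analytic Bertini of Remark~\ref{a-rem2.52}, as in BCHM.
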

\begin{lem}[{see \cite[Lemma 3.7.4]{bchm}}]\label{a-lem11.14}
Let $V$ be a finite-dimensional 
affine subspace of $\WDiv_{\mathbb R}(X)$, which 
is defined over the rationals, and let $A$ be a general $\pi$-ample 
$\mathbb Q$-divisor on $X$. 
Let $S$ be a finite sum of prime divisors on $X$ such that 
each irreducible component of $S$ intersects with $\pi^{-1}(W)$. 
Suppose that there exists a divisorial log terminal pair 
$(X, \Delta_0)$ with $S=\lfloor \Delta_0\rfloor$ and let $G\geq 0$ 
be any divisor whose support does not contain any non-kawamata 
log terminal centers of $(X, \Delta_0)$. 

Then, after shrinking $Y$ around $W$ suitably, 
we can find a general $\pi$-ample 
$\mathbb Q$-divisor $A'$ on $X$, and affine subspace 
$V'$ of $\WDiv_{\mathbb R}(X)$, which is defined over the 
rationals, and a rational affine linear isomorphism 
\begin{equation*}
L\colon V_{S+A}\to V'_{S+A'} 
\end{equation*} 
such that 
\begin{itemize}
\item $L$ preserves $\mathbb Q$-linear equivalence, 
\item $L\left(\mathcal L_{S+A}(V; \pi^{-1}(W))\right)$ is 
contained in the interior of $\mathcal L_{S+A'}(V'; \pi^{-1}(W))$, 
\item for any $\Delta\in L\left(\mathcal L_{S+A}(V; \pi^{-1}(W))\right)$, 
$K_X+\Delta$ is divisorial log terminal and 
$\lfloor \Delta\rfloor =S$, and 
\item for any $\Delta\in L\left(\mathcal L_{S+A}(V; \pi^{-1}(W))\right)$, 
the support of $\Delta$ contains the support of $G$. 
\end{itemize}
\end{lem}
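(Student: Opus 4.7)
The construction of $L$ is designed to (i) replace $A$ by a freshly general $\pi$-ample $\mathbb Q$-divisor $A'$, (ii) interpolate each $\Delta$ a small distance toward the divisorial log terminal pair $(X,\Delta_0)$, and (iii) add a small multiple of $G$. The three ingredients serve distinct purposes: (i) supplies the generality needed to avoid spurious log canonical centers, (ii) pushes the image into the interior of the target polytope and keeps $\lfloor L(\Delta)\rfloor = S$, and (iii) forces $\Supp G\subset \Supp L(\Delta)$.

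First, by Lemma~\ref{a-lem2.37}, shrink $Y$ around $W$ so that $A$, $S$, $G$, $\Delta_0$, and every element of the finite-dimensional $V$ have only finitely many irreducible components over $\pi^{-1}(Y)$, and so that $(X,\Delta_0)$ remains divisorial log terminal on all of $\pi^{-1}(Y)$. Fix small positive rational numbers $\varepsilon$ and $\delta$, to be shrunk at the end of the argument. Since $A$ is $\pi$-ample, $(1-\delta)A$ is $\pi$-ample by openness of the ample cone (Theorem~\ref{a-thm4.4}); choose a general $\pi$-ample $\mathbb Q$-divisor $A'$ with $A'\sim_{\mathbb Q}(1-\delta)A$. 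Let $V'$ be the $\mathbb Q$-affine subspace of $\WDiv_{\mathbb R}(X)$ generated by $V$, $\Delta_0-S$, and $G$, and define
\[
L(\Delta) := S + A' + (1-\delta)(\Delta-S-A) + \delta(\Delta_0-S) + \varepsilon G
\]
for $\Delta\in V_{S+A}$. Writing $B=\Delta-S-A\in V$, one checks directly that $L(\Delta)=S+A'+B'$ with $B'=(1-\delta)B+\delta(\Delta_0-S)+\varepsilon G\in V'$, so $L$ is a rational affine isomorphism onto its image in $V'_{S+A'}$. Because $L$ is affine with linear part the rational scalar $1-\delta$, $L(\Delta_1)-L(\Delta_2)=(1-\delta)(\Delta_1-\Delta_2)$, and $L$ preserves $\mathbb Q$-linear equivalence, giving the first bulleted property.

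For the third and fourth bullets, observe the $\mathbb R$-linear equivalence
\[
L(\Delta)\;\sim_{\mathbb R}\;(1-\delta)\Delta + \delta\Delta_0 + \varepsilon G,
\]
which realizes the divisor class of $L(\Delta)$ as a convex combination of the log canonical class of $\Delta$ and the dlt class of $\Delta_0$, plus a small effective summand. Because $\Supp G$ contains no non-kawamata log terminal center of $(X,\Delta_0)$ and because both $A$ and $A'$ are general $\pi$-ample, so that no component of $\Supp A\cup\Supp A'$ is a log canonical center of $(X,\Delta_0)$ (Remark~\ref{a-rem2.52}), convexity of log discrepancies shows that $(X,L(\Delta))$ is divisorial log terminal for every $\Delta\in\mathcal L_{S+A}(V;\pi^{-1}(W))$ provided $\delta$ and $\varepsilon$ are small enough, uniformly across the compact rational polytope $\mathcal L_{S+A}(V;\pi^{-1}(W))$ (Lemma~\ref{a-lem3.5}, Remark~\ref{a-rem11.9}). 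Bertini-type genericity of $A'$ forbids any component of $A'$ from appearing in the floor, and each component of $\Delta_0-S$ contributes coefficient at most $(1-\delta)\cdot 1 + \delta\cdot c < 1$ to $L(\Delta)$ for $c<1$, so $\lfloor L(\Delta)\rfloor = S$. The inclusion $\Supp G\subset\Supp L(\Delta)$ is visible from the $\varepsilon G$ term.

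The main obstacle is the second bullet: the image $L(\mathcal L_{S+A}(V;\pi^{-1}(W)))$ must lie in the \emph{interior} of $\mathcal L_{S+A'}(V';\pi^{-1}(W))$, not merely inside it. The same convexity argument shows that $K_X+L(\Delta)+\Xi$ remains log canonical at $\pi^{-1}(W)$ for every $\Xi\in V'$ of norm at most a constant multiple of $\min(\varepsilon,\delta)$, any such $\Xi$ being absorbed into the slack provided by $\delta\Delta_0+\varepsilon G$. A finite cover of the compact polytope $\mathcal L_{S+A}(V;\pi^{-1}(W))$ yields a uniform such constant, placing the image at positive distance from the boundary of $\mathcal L_{S+A'}(V';\pi^{-1}(W))$. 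This is the step where the hypothesis that every component of $S$ meets $\pi^{-1}(W)$ is used: without it, nothing prevents $L(\Delta)$ from drifting along an $S$-direction invisible to the log canonicity test over $\pi^{-1}(W)$ and exiting the polytope.
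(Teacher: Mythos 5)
Your construction — shrink $\Delta$ toward the dlt pair $(X,\Delta_0)$ by a factor $1-\delta$, swap in a fresh general $\pi$-ample $A'$, and add $\varepsilon G$ — is the strategy of \cite[Lemma 3.7.4]{bchm}, to which the paper defers without a proof of its own, so the route is the intended one. But there are concrete errors. With $V'$ defined as the affine span of $V$, $\Delta_0-S$, and $G$, the map $L$ is generally \emph{not} onto $V'_{S+A'}$: its image is the translate $S+A'+\bigl((1-\delta)V+\delta(\Delta_0-S)+\varepsilon G\bigr)$, of dimension $\dim V$, a proper subspace of $V'_{S+A'}$ whenever $\Delta_0-S$ or $G$ lies outside the affine hull of $V$. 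Since the lemma requires $L\colon V_{S+A}\to V'_{S+A'}$ to be a rational affine linear \emph{isomorphism}, you must take $V'$ to be the image itself (and arrange $\Delta_0$, $G$ to be $\mathbb Q$-divisors so that this $V'$ is defined over the rationals), not the larger span. The closing sentence is also wrong: for $\Delta=S+A+B\in\mathcal L_{S+A}(V;\pi^{-1}(W))$, the constraints $B\geq 0$ and log canonicity at $\pi^{-1}(W)$ force $\mult_{S_i}B=0$ for each component $S_i$ of $S$, so the $S$-coefficient of every element of the polytope, and hence of its image under $L$, is pinned at $1$; there is no $S$-direction to drift along. The hypothesis that each component of $S$ meets $\pi^{-1}(W)$ is a bookkeeping condition for the analytic setting (so that $S$ remains visible after shrinking $Y$ around $W$), not a safeguard against the image exiting the polytope.

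The dlt and interior arguments also need tightening. Passing to $L(\Delta)\sim_{\mathbb R}(1-\delta)\Delta+\delta\Delta_0+\varepsilon G$ does not by itself give dlt-ness, because singularities of a pair are not invariants of the $\mathbb R$-linear equivalence class of the boundary; what you need, and what your formula actually gives, is the divisor identity $L(\Delta)=(1-\delta)(S+B)+\delta\Delta_0+\varepsilon G+A'$, from which dlt-ness with $\lfloor L(\Delta)\rfloor=S$ follows by convexity of discrepancies for the convex-combination part and Bertini (Remark~\ref{a-rem2.52}) for the general $A'$, uniformly over the compact polytope of Lemma~\ref{a-lem3.5} provided $\varepsilon\ll\delta$. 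For the interior claim, the phrase about $\Xi$ being ``absorbed into the slack'' is not an argument: the cushion against the log canonicity hyperplanes comes precisely from the $(1-\delta)$-scaling toward the dlt pair $\Delta_0$, with $\varepsilon$ chosen small enough that $\varepsilon G$ does not exhaust it, and you should say this explicitly and track $\varepsilon$ against $\delta$. You should also state which constraints the ``interior'' is measured against, since for a prime divisor $D_i\in\Supp V$ with $b_i=0$ and $D_i\notin\Supp(\Delta_0-S)\cup\Supp G$ one has $\mult_{D_i}L(\Delta)=0$, so $L(\Delta)$ still sits on the positivity face $\{b'_i=0\}$.
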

\begin{lem}[{see \cite[Lemma 3.7.5]{bchm}}]\label{a-lem11.15}
Let $(X, \Delta=A+B)$ be a log canonical 
pair, where $A\geq 0$ and $B\geq 0$. 

If $A$ is $\pi$-big and $\mathbf B_+(A/Y)$ does not 
contain any non-kawamata log terminal centers of $(X, \Delta)$ 
and 
there exists a kawamata log terminal pair $(X, \Delta_0)$, 
then we can find a kawamata log terminal pair $(X, \Delta'=A'+B')$, 
where $A'\geq 0$ is a general $\pi$-ample $\mathbb Q$-divisor 
on $X$, $B'\geq 0$, and 
$K_X+\Delta'\sim _{\mathbb R} K_X+\Delta$. If in addition 
$A$ is a $\mathbb Q$-divisor, then $K_X+\Delta'\sim 
_{\mathbb Q} K_X+\Delta$. 
\end{lem}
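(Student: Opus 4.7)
The plan is to perturb $\Delta$ in two stages. The first stage uses the $\pi$-bigness of $A$ and the hypothesis on $\mathbf B_+(A/Y)$ to extract a small general $\pi$-ample $\mathbb Q$-divisor component while keeping the pair log canonical; the second stage uses the klt pair $(X,\Delta_0)$ to upgrade log canonical to klt and then compensates for the loss of $\mathbb R$-linear equivalence by absorbing the correction into the ample part, again using the $\pi$-bigness of $A$. After shrinking $Y$ to a relatively compact Stein open neighborhood of $W$, all divisors in sight have finitely many components, and the divisorial non-klt centers of $(X,\Delta)$ form a finite list $D_1,\ldots,D_k$.

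Fix a general $\pi$-ample $\mathbb Q$-divisor $H_0$ and a small rational $\epsilon_0>0$ such that $\mathbf B(A-\epsilon_0 H_0/Y)=\mathbf B_+(A/Y)$ on the relatively compact locus at hand (by Lemma \ref{a-lem10.5} combined with Definition \ref{a-def10.6}). Using the hypothesis that no $D_j$ is contained in $\mathbf B_+(A/Y)$, a Bertini-type argument applied to the real linear system $|A-\epsilon_0 H_0/Y|_{\mathbb R}$ (see Remark \ref{a-rem2.52}) produces a single effective representative $E_0\sim_{\mathbb R}A-\epsilon_0 H_0$ whose support avoids every $D_j$. For small rational $t>0$ the divisor
$$\Delta_t:=t\epsilon_0 H_0+(1-t)A+tE_0+B\sim_{\mathbb R}\Delta$$
lies inside the rational polytope produced by Lemma \ref{a-lem3.5} around $\Delta$, so $(X,\Delta_t)$ is still log canonical; by direct coefficient computation the coefficient of $\Delta_t$ along each $D_j$ drops to $1-ta_j$, and hence every $D_j$ with $a_j>0$ ceases to be a non-klt center.

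The residual non-klt locus of $(X,\Delta_t)$—namely the $D_j$ with $a_j=0$ together with any higher-codimension non-klt centers of $(X,\Delta)$—is eliminated by interpolating with $\Delta_0$: for small rational $\lambda>0$ the pair $(X,(1-\lambda)\Delta_t+\lambda\Delta_0)$ is klt by convexity of log discrepancies. To restore $\mathbb R$-linear equivalence with $\Delta$ we use the $\pi$-bigness of $A$ a second time to find an effective $G\sim_{\mathbb R}\Delta-\Delta_0$ (a large rational multiple of $H_0$ dominates $\Delta_0$ modulo $\sim_{\mathbb R}$, and combined with a bigness decomposition of $A$ this produces such a $G$); then $\Delta':=(1-\lambda)\Delta_t+\lambda\Delta_0+\lambda G\sim_{\mathbb R}\Delta_t\sim_{\mathbb R}\Delta$, and for $\lambda$ small the added $\lambda G$ has coefficients so small that openness of the klt condition preserves klt-ness. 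Merging all ample pieces into a single general $\pi$-ample $\mathbb Q$-divisor $A'$ and letting $B'$ absorb the remaining effective part gives the required form $\Delta'=A'+B'$. The main obstacle is the Bertini-type choice of $E_0$ simultaneously avoiding all divisorial non-klt centers of $(X,\Delta)$ while not creating new ones at any higher-codimension log canonical center—the analytic analogue of a standard linear-system argument in \cite{bchm}, which must here be carried out in the real (rather than $\mathbb Q$) linear system and in a relative analytic setting. When $A$ is a $\mathbb Q$-divisor, every parameter and every auxiliary $\mathbb R$-linear equivalence used above may be chosen $\mathbb Q$-rational, so we obtain $K_X+\Delta'\sim_{\mathbb Q}K_X+\Delta$.
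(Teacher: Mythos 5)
Your strategy---extract an ample piece, interpolate with $\Delta_0$, and restore $\mathbb R$-linear equivalence---is in the right spirit and broadly matches the intended translation of \cite[Lemma~3.7.5]{bchm} (compare the decomposition $A\sim_{\mathbb R}A_1+A_2$ used in the proof of Lemma~\ref{a-lem13.3}). However, there are two genuine gaps. In Stage~1 you take $E_0$ to avoid only the \emph{divisorial} non-klt centers $D_1,\dots,D_k$, but this is insufficient for $(X,\Delta_t)$ to remain log canonical: if $E_0$ passes through a higher-codimensional non-klt center $Z$, then for some exceptional divisor $F$ over $Z$ with $a(F,X,\Delta)=-1$ we get $a(F,X,\Delta_t)<-1$ once $t>0$, so the pair is \emph{not} log canonical---this is not merely ``creating a new center,'' which is how your closing paragraph mis-characterizes the issue without resolving it. Your appeal to Lemma~\ref{a-lem3.5} is also circular: the lemma says the log canonical condition cuts out a polytope, but whether $\Delta_t$ stays inside it for $t>0$ is precisely what the avoidance of \emph{all} (finitely many, after shrinking $Y$) non-klt centers must be used to establish. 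The remedy is the standard Bertini argument: since no center is contained in $\mathbf B_+(A/Y)=\mathbf B((A-\epsilon_0H_0)/Y)$, for $l\gg0$ none lies in the base locus of $|l(A-\epsilon_0H_0)|$ (perturbing to a $\mathbb Q$-class and absorbing the perturbation into $H_0$ if $A$ is irrational), and a general member rescaled gives the required $E_0$.

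The more serious gap is in Stage~2: the existence of an effective $G\sim_{\mathbb R}\Delta-\Delta_0$ is unjustified and false in general. The class $\Delta-\Delta_0=A+B-\Delta_0$ has no reason to be pseudo-effective over $Y$, and ``a large rational multiple of $H_0$ dominates $\Delta_0$'' cannot produce such a $G$ because the ample part in a bigness decomposition of $A$ is of bounded size---one cannot scale it up relative to $\Delta_0$. The correct repair is to use the ample piece from Stage~1 as a buffer: for $\lambda$ sufficiently small the class $(1-\lambda)t\epsilon_0H_0+\lambda(\Delta_t-\Delta_0)$ is still $\pi$-ample, hence $\mathbb R$-linearly equivalent to a general effective $\pi$-ample $\mathbb Q$-divisor $A''$, and then
\begin{equation*}
\Delta':=A''+\bigl[(1-\lambda)(\Delta_t-t\epsilon_0H_0)+\lambda\Delta_0\bigr]
\end{equation*}
satisfies $K_X+\Delta'\sim_{\mathbb R}K_X+\Delta_t\sim_{\mathbb R}K_X+\Delta$ and is klt, since $(1-\lambda)(\Delta_t-t\epsilon_0H_0)+\lambda\Delta_0\leq(1-\lambda)\Delta_t+\lambda\Delta_0$ is klt and $A''$ is general and small. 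This repair also shows that the coefficient-dropping computation in your Stage~1 is superfluous: one needs only to extract the ample buffer while keeping $(X,\Delta_t)$ log canonical; killing the divisorial centers there is not required.
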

Here we omit the proof of Lemmas \ref{a-lem11.13}, 
\ref{a-lem11.14}, and \ref{a-lem11.15}. This is because there are 
no difficulties to translate the proof of 
\cite[Lemmas 3.7.3, 3.7.4, and 3.7.5]{bchm} into 
our complex analytic setting. 
\end{say}

In this paper, we are mainly interested in kawamata log 
terminal pairs $(X, \Delta)$ such that 
$\Delta$ is big over $Y$. 
For such pairs, we have some good properties. 

\begin{lem}[{\cite[Lemma 3.9.3]{bchm}}]\label{a-lem11.16} 
Let $\pi\colon X\to Y$ be a projective morphism 
of complex analytic spaces and let $W$ be a compact 
subset of $Y$ such that $\pi\colon X\to Y$ and $W$ satisfies 
{\em{(P)}}. 
Suppose that $(X, \Delta)$ is a kawamata log terminal 
pair, where $\Delta$ is $\pi$-big. 
If $\phi\colon X\dashrightarrow Z$ is a weak 
log canonical model of $K_X+\Delta$ over $W$, 
then 
\begin{itemize}
\item[(1)] $\phi$ is a weak log canonical model 
of $K_X+\Delta$ over some 
open neighborhood of $W$, 
\item[(2)] $\phi$ is a semiample model over some 
open neighborhood of $W$, 
\item[(3)] after shrinking $Y$ around $W$ suitably, 
there exists a contraction morphism 
$h\colon Z\to Z'$ such that $K_Z+\Gamma \sim 
_{\mathbb R} h^*H$, for some $\mathbb R$-divisor 
$H$ on $Z'$, which is ample over $Y$, where $\Gamma =\phi_*\Delta$, and 
\item[(4)] the ample model $\psi\colon X\dashrightarrow 
Z'$ of $K_X+\Delta$ over some open neighborhood 
of $W$ exists. 
\end{itemize}
\end{lem}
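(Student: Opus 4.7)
The plan is to reduce (1)--(4) to the single statement that $K_Z+\Gamma$ is semiample over some open neighborhood of $W$. Granting this, part (1) is immediate since semiample implies nef, part (2) is the definition of a semiample model, and parts (3) and (4) then follow from Lemma \ref{a-lem11.3} (2): the semiample contraction $h\colon Z\to Z'$ it produces is precisely the ample model of $K_Z+\Gamma$, with $h_*(K_Z+\Gamma)\sim_{\mathbb R}H$ ample over $Y$ after shrinking, and composing with $\phi$ yields the ample model $\psi=h\circ\phi$ of $K_X+\Delta$.

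To prove semiampleness, I would first check that $(Z,\Gamma)$ is kawamata log terminal. On a common resolution $p\colon V\to X$, $q\colon V\to Z$, the $(K_X+\Delta)$-nonpositivity of $\phi$ gives $p^*(K_X+\Delta)=q^*(K_Z+\Gamma)+E$ with $E\geq 0$ and $q$-exceptional, so the klt property of $(X,\Delta)$ transfers to $(Z,\Gamma)$ by the usual discrepancy computation. After shrinking $Y$ around $W$ to be relatively compact Stein so that all relevant $\mathbb R$-divisors have only finitely many components (Lemma \ref{a-lem2.37}), the bigness of $\Delta$ over $Y$ transfers through the bimeromorphic contraction $\phi$ to give that $\Gamma$ is big over $Y$. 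Using this, write $\Gamma\sim_{\mathbb R, Y}A'+B'$ with $A'$ a general ample $\mathbb Q$-divisor and $B'\geq 0$; for small $\varepsilon>0$ set $\Gamma'=(1-\varepsilon)\Gamma+\varepsilon B'$, so $(Z,\Gamma')$ is klt, and
\begin{equation*}
2(K_Z+\Gamma)-(K_Z+\Gamma')\sim_{\mathbb R, Y}K_Z+\Gamma+\varepsilon A'.
\end{equation*}
Since $K_Z+\Gamma$ is nef over $W$ and $\varepsilon A'$ is $\pi_Z$-ample, the openness of ampleness (Lemma \ref{a-lem4.10} (1)) shows that $K_Z+\Gamma+\varepsilon A'$ is ample over some open neighborhood of $W$.

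The main step is then to decompose $K_Z+\Gamma$ as a finite positive $\mathbb R$-linear combination $\sum_{i=1}^k r_iD_i$ where each $D_i$ is a $\mathbb Q$-Cartier Cartier divisor nef over $W$ and $D_i-(K_Z+\Gamma')$ is $\pi_Z$-ample. Following the strategy in the proof of Theorem \ref{a-thm8.1} adapted to the analytic setting, I would enlarge $W$ to a Stein compact subset $W'\supset W$ with $\Gamma(W',\mathcal O_Y)$ noetherian so that $\pi_Z\colon Z\to Y$ and $W'$ also satisfy (P) (cf.\ \ref{a-say1.11}); then the cone theorem (Theorem \ref{a-thm7.2} (1)) applied to $(Z,\Gamma'+\tfrac{\varepsilon}{2}A')$ over $W'$ gives only finitely many negative extremal rays in $\NE(Z/Y;W')$, and standard convex-geometric rationalization around $K_Z+\Gamma$, using the extra ample room $\varepsilon A'$, produces the desired decomposition. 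Applying the basepoint-free theorem over $W$ (Theorem \ref{a-thm6.5}) to each $D_i$ shows that each $D_i$, and hence $K_Z+\Gamma$, is semiample over a common open neighborhood $U$ of $W$, completing the argument.

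The main obstacle I expect is precisely this rationalization step. Since $K_Z+\Gamma$ is only nef over the compact set $W$ and \emph{not} over any neighborhood a priori (Remark \ref{a-rem1.5}), one must carefully choose $W'$ large enough for the cone theorem to yield finiteness of the relevant extremal rays, while simultaneously arranging that the $\mathbb Q$-Cartier pieces $D_i$ extracted from the decomposition remain nef over $W$ (not merely over $W'$) so that Theorem \ref{a-thm6.5} applies. This juggling of nested Stein compact subsets and shrinkings of $Y$ is exactly the technical subtlety that distinguishes the complex analytic setting from the algebraic setting of \cite{bchm}, and it is the reason the earlier basepoint-free results of Section \ref{a-sec8} were formulated with such care.
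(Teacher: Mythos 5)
Your proof is correct, and the overall architecture (reduce everything to the semiampleness of $K_Z+\Gamma$ over a neighborhood of $W$, then derive (3) and (4) via Lemma \ref{a-lem11.3} (2)) matches the paper's. The difference is in the main step. After observing that $(Z,\Gamma)$ is klt by the negativity lemma and that $\Gamma$ is big over $Y$, the paper simply writes $\Gamma\sim_{\mathbb R}A+B$ with $A$ ample over $Y$, $A\geq 0$, $B\geq 0$, and $(Z,A+B)$ klt, and then invokes Theorem \ref{a-thm8.3} directly: that theorem is stated for exactly this situation (a log canonical pair whose boundary decomposes as ample plus effective, klt companion available, log canonical divisor nef over $W$) and returns semiampleness over an open neighborhood of $W$ in one stroke. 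You instead re-derive the content of Theorem \ref{a-thm8.3} from scratch: you construct the auxiliary klt divisor $\Gamma'$, produce ampleness of $2(K_Z+\Gamma)-(K_Z+\Gamma')$ over a neighborhood of $W$, appeal to the cone theorem over an enlarged Stein compact $W'$ to rationalize $K_Z+\Gamma$ into a convex combination of $\mathbb Q$-Cartier pieces $D_i$ nef over $W$, and finish with Theorem \ref{a-thm6.5} on each $D_i$. This is sound, and your algebraic manipulation $2(K_Z+\Gamma)-(K_Z+\Gamma')\sim_{\mathbb R,Y}K_Z+\Gamma+\varepsilon A'$ is verified correctly, but it duplicates the proof of Theorem \ref{a-thm8.3} verbatim. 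The ``main obstacle'' you flag at the end --- the need to enlarge $W$ to $W'$ so that the cone theorem gives finiteness of negative extremal rays while keeping the pieces $D_i$ nef over the original $W$ --- is precisely the technical point that the paper isolated and resolved once and for all in the proofs of Theorems \ref{a-thm8.1} and \ref{a-thm8.3} (via the device of \ref{a-say1.11}); by citing Theorem \ref{a-thm8.3} rather than re-running its proof, the paper absorbs that subtlety and keeps the argument for the present lemma to a few lines.
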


\begin{proof}
Throughout this proof, we will freely shrink $Y$ around 
$W$ without mentioning it explicitly. 
We put $\Gamma =\phi_*\Delta$. 
Then $(Z, \Gamma)$ is kawamata log terminal 
by the negativity lemma. 
Since $\Delta$ is big, 
we can write $\Gamma\sim _{\mathbb R} A+B$ such that 
$A$ is ample over $Y$, $A\geq 0$, 
$B\geq 0$, 
and $(Z, A+B)$ is kawamata log terminal. 
Then, by Theorem \ref{a-thm8.3}, 
we can check that 
$K_Z+\Gamma$ is semiample over $Y$. 
This means that $\phi$ is a weak log canonical model 
of $K_X+\Delta$ over $Y$ and that $K_Z+\Gamma$ is semiample 
over $Y$. 
Hence we obtain (1) and (2). 
Since $K_Z+\Gamma$ is semiample over $Y$, 
we get a contraction morphism $h\colon Z\to Z'$ such that 
$\psi:=h\circ \phi\colon X\dashrightarrow Z'$ is 
the ample model of $(X, \Delta)$ (see Lemma \ref{a-lem11.3} (2)). 
Therefore, we have (3) and (4). 
\end{proof}

The following theorem is very important. 

\begin{thm}[{\cite[Theorem 3.11.1]{bchm}}]\label{a-thm11.17}
Let $\pi\colon X\to Y$ be a projective morphism 
of complex analytic spaces and let $W$ be a 
compact subset of $Y$ such that 
$\pi\colon X\to Y$ and $W$ satisfies {\em{(P)}}. 
Let $V$ be a finite-dimensional affine subspace of 
$\WDiv_{\mathbb R}(X)$, which is defined over the rationals. 
Fix a $\pi$-ample $\mathbb Q$-divisor $A$ on $X$. 
Suppose that there exists a kawamata log terminal pair $(X, 
\Delta_0)$. 
Then $\mathcal N_{A, \pi}(V; W)=\mathcal N^\sharp_{A, \pi} 
(V; W)$ holds and the set of hyperplanes 
$R^{\perp}$ is finite in $\mathcal L_A(V; \pi^{-1}(W))$, 
as $R$ ranges over the set of extremal rays of $\NE 
(X/Y; W)$. In particular, $\mathcal N_{A, \pi}(V; W)=
\mathcal N^\sharp_{A, \pi} 
(V; W)$ is a rational polytope. 
\end{thm}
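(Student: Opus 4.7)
The plan is to adapt the proof of \cite[Theorem 3.11.1]{bchm} to our complex analytic setting, with Theorem \ref{a-thm9.2} (the length bound for extremal rays) as the essential ingredient. The inclusion $\mathcal{N}_{A,\pi}(V;W)\subset \mathcal{N}^\sharp_{A,\pi}(V;W)$ is immediate from definitions, so the substance lies in the finiteness of hyperplanes, the reverse inclusion, and the rationality. As a preparatory step, using Lemma \ref{a-lem2.16} I would enlarge $W$ to a Stein compact subset $W'$ satisfying (P) whose interior contains $W$, and shrink $Y$ around $W'$ so that every element of $V$ is globally $\mathbb{R}$-Cartier; by Lemma \ref{a-lem3.5} and Remark \ref{a-rem11.9}, both $\mathcal{L}_A(V;\pi^{-1}(W'))$ and $\mathcal{L}_A(V;\pi^{-1}(W))$ are rational polytopes.

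For the finiteness of hyperplanes, I would show that only finitely many extremal rays $R$ of $\NE(X/Y;W')$ can be $(K_X+\Delta)$-negative for some $\Delta$ in the compact polytope $\mathcal{L}_A(V;\pi^{-1}(W'))$. For any such $R$, Theorem \ref{a-thm9.2} supplies a rational curve $\ell_R$ spanning $R$ with $0<-(K_X+\Delta)\cdot \ell_R\leq 2\dim X$ at some witnessing $\Delta$. Since $\Delta\mapsto(K_X+\Delta)\cdot \ell_R$ is linear on $V$ and the polytope has finitely many vertices, we may replace $\Delta$ with a vertex $\Delta_v$ where this linear function is minimized. At each vertex $\Delta_v$, the pair $(X,(1-\epsilon)\Delta_0+\epsilon\Delta_v)$ is kawamata log terminal for small $\epsilon>0$ by convexity with the klt pair $(X,\Delta_0)$; applying the cone theorem (Theorem \ref{a-thm7.2}(1), combined with Theorem \ref{a-thm7.3} in the log canonical case) with a small $\pi$-ample perturbation places each $\ell_R$ inside the negative half-space of a klt pair at $\Delta_v$, which contains only finitely many extremal rays. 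Finiteness of the hyperplanes $R^\perp\cap \mathcal{L}_A$ follows.

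For the reverse inclusion, take $\Delta\in \mathcal{N}^\sharp_{A,\pi}(V;W)$ and let $R_1,\ldots,R_m$ be the finitely many $(K_X+\Delta)$-negative extremal rays of $\NE(X/Y;W')$. By Theorems \ref{a-thm7.2}(2) and \ref{a-thm7.3}, each $R_j$ yields a contraction $\varphi_j\colon X\to Z_j$ defined over an open neighborhood of $W'$, with exceptional locus $E_j$. Nefness of $K_X+\Delta$ over $W$ forces $\pi(E_j)\cap W=\emptyset$ for each $j$, for otherwise a positive-dimensional fiber of $\varphi_j$ over a point of $W$ would yield a curve $C$ with $[C]\in R_j$, $\pi(C)\in W$, and $(K_X+\Delta)\cdot C<0$. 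By compactness of $W$ we find an open neighborhood $U$ of $W$ in $W'$ disjoint from $\bigcup_j\pi(E_j)$; over $\pi^{-1}(U)$ each $\varphi_j$ is an isomorphism, and an inductive contraction argument (contracting the $R_j$ one at a time, pushing forward $\Delta$, and observing that each contracted exceptional locus remains disjoint from $U$) reduces to the case of no negative extremal rays, yielding $(K_X+\Delta)\cdot C\geq 0$ for every projective integral curve $C$ with $\pi(C)\in U$. Hence $K_X+\Delta$ is nef over $U$, so $\Delta\in \mathcal{N}_{A,\pi}(V;W)$. The rational polytope conclusion is then immediate, as $\mathcal{N}_{A,\pi}(V;W)$ is cut out from the rational polytope $\mathcal{L}_A(V;\pi^{-1}(W))$ by the finitely many rational half-spaces $\{\Delta\,:\,(K_X+\Delta)\cdot \ell_{R_j}\geq 0\}$.

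The principal obstacle is the uniform length bound of Theorem \ref{a-thm9.2} (which rests on Mori's bend-and-break via \cite[Theorem 1]{kawamata-length}), together with the delicate propagation of nefness from $W$ to an open neighborhood. In the algebraic setting the latter step is essentially free of content, but in the analytic framework the distinction between ``nef over $W$'' and ``nef over an open neighborhood of $W$'' is substantive; careful attention must be paid to how the finitely many analytic sets $\pi(E_j)$ interact with the topology of $W$, and this is where the enlargement $W\subset W'$ provided by Lemma \ref{a-lem2.16} and the property (P) for $W'$ play a crucial role in controlling the Kleiman--Mori cones.
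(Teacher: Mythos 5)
Your proposal diverges from the paper's proof on the crucial reverse inclusion $\mathcal{N}^\sharp_{A,\pi}(V;W)\subset \mathcal{N}_{A,\pi}(V;W)$, and this is where a genuine gap appears. The paper simply invokes Theorem~\ref{a-thm8.3}: since $\Delta\in\mathcal{L}_A(V;\pi^{-1}(W))$ has an ample part $A$ and there is a klt pair $(X,\Delta_0)$, nefness of $K_X+\Delta$ over $W$ upgrades to \emph{semiampleness} over an open neighborhood of $W$, and semiample trivially implies nef. This is precisely the mechanism that makes the closed condition ``nef over $W$'' propagate to an open neighborhood; nefness alone is not an open condition (Remark~\ref{a-rem4.3} makes this point explicitly), and it is the bigness of the boundary, fed through the basepoint-free theorem, that buys openness.

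Your replacement argument---showing $\pi(E_j)\cap W=\emptyset$ for the finitely many negative extremal contractions and taking $U$ disjoint from $\bigcup_j\pi(E_j)$---correctly establishes the disjointness (your reasoning via the positive-dimensional fiber and the factorization $\pi=\pi_{Z_j}\circ\varphi_j$ is sound), but the step from there to ``$K_X+\Delta$ is nef over $U$'' does not follow. The cone theorem gives $\NE(X/Y;W')=\NE(X/Y;W')_{K_X+\Delta\geq 0}+\sum_j R_j$ as an identity of cones in $N_1(X/Y;W')$; it does \emph{not} say that a geometric curve $C$ with $(K_X+\Delta)\cdot C<0$ must lie inside some $E_j$. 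Such a $C$, with $\pi(C)=u\in U$, can have class $[C]=z+\sum a_j r_j$ with $z$ in the nonnegative part and $a_j>0$ without $C$ itself being $\varphi_j$-contracted---indeed, since $\varphi_j$ is an isomorphism near $\pi^{-1}(u)$, $C$ is provably \emph{not} contracted by any $\varphi_j$, and nothing prevents $(K_X+\Delta)\cdot C<0$. The ``inductive contraction argument'' you propose would have to push $(X,\Delta)$ through a full run of the $(K_X+\Delta)$-MMP and invoke termination; but termination (Theorem~\ref{thm-e} applied via Theorem~\ref{a-thm13.6}) is logically downstream of Theorem~\ref{a-thm11.17} in the paper's inductive structure and cannot be used here, and even so, a single flip can create new negative rays not among the original $R_1,\ldots,R_m$. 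You should replace this paragraph with an appeal to Theorem~\ref{a-thm8.3}, which is exactly what it was stated for.

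As a smaller remark on the finiteness of hyperplanes: both you and the paper refer this to the argument of \cite[Theorem 3.11.1]{bchm} via Theorem~\ref{a-thm9.2} and Theorem~\ref{a-thm7.3}, which is the right idea. Your sketch of replacing a witnessing $\Delta$ by a vertex $\Delta_v$ and klt-perturbing by $\Delta_0$ runs into a tension: for small $\epsilon$ the perturbed pair is near $(X,\Delta_0)$ and need not be negative on $\ell_R$, while for $\epsilon$ close to $1$ it is negative on $\ell_R$ but the relevant ample part shrinks, so the resulting finite set of rays is not manifestly uniform in $R$. The BCHM argument controls this uniformly using the length bound $\leq 2\dim X$ together with the fixed ample $A\leq\Delta$ and the covering-family choice of $\ell_R$; your sketch should be tightened along those lines, though since the paper also just cites \cite{bchm} for this part, the more urgent fix is the reverse inclusion.
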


\begin{proof}[Sketch of Proof of Theorem \ref{a-thm11.17}]
By Theorem \ref{a-thm8.3}, $K_X+\Delta$ is semiample 
over some open neighborhood of $W$ for every 
$\Delta\in \mathcal N^\sharp_{A, \pi}(V; W)$. 
In particular, $K_X+\Delta$ is nef over some open neighborhood 
of $W$. This implies that $\mathcal N_{A, \pi}(V; W)=
\mathcal N^\sharp_{A, \pi} 
(V; W)$ holds. On the other hand, 
the proof of \cite[Theorem 3.11.1]{bchm} works 
by Theorem \ref{a-thm7.3}. 
Hence we see that $R^{\perp}$ is finite 
in $\mathcal L_A(V; \pi^{-1}(W))$ and 
$\mathcal N^\sharp_{A, \pi}(V; W)$ is a rational polytope. 
\end{proof}

We prepare an easy lemma. 

\begin{lem}\label{a-lem11.18}
In Theorem \ref{a-thm11.17}, we consider $\Delta_1, 
\Delta_2\in \mathcal L_A(V; \pi^{-1}(W))$. 
Let $f_i\colon X\to Z_i$ be a contraction morphism between 
normal varieties over $Y$ such that 
$K_X+\Delta_i\sim _{\mathbb R} f^*_iD_i$ for 
some $g_i$-ample $\mathbb R$-divisor 
$D_i$ on $Z_i$, 
where $g_i\colon Z_i\to Y$ is the structure morphism, 
for $i=1, 2$. 
Then the following conditions are equivalent. 
\begin{itemize}
\item[(i)] $\Delta_1$ and $\Delta_2$ belong to 
the same interior of a unique face of $\mathcal N^\sharp _{A, \pi} 
(V; W)$. 
\item[(ii)] $Z_1$ and $Z_2$ are isomorphic over some 
open neighborhood of $W$.  
\end{itemize}
\end{lem}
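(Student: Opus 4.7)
The plan is to use the face-structure characterization of $\mathcal{N}^\sharp_{A,\pi}(V;W)$ given by Theorem \ref{a-thm11.17}, together with the standard fact that the contraction morphism associated to an ample model is determined by the set of curves it contracts.

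First I would establish the following characterization of the minimal face. By Theorem \ref{a-thm11.17}, $\mathcal{N}^\sharp_{A,\pi}(V;W)=\mathcal{N}_{A,\pi}(V;W)$ is a rational polytope cut out inside $\mathcal{L}_A(V;\pi^{-1}(W))$ by finitely many hyperplanes of the form $R^\perp$, as $R$ ranges over the $(K_X+\Delta)$-trivial extremal rays of $\NE(X/Y;W)$ for various $\Delta$. Therefore, for $\Delta\in\mathcal{N}^\sharp_{A,\pi}(V;W)$, the unique face whose relative interior contains $\Delta$ is
\begin{equation*}
F(\Delta)=\bigl\{\Delta'\in\mathcal{N}^\sharp_{A,\pi}(V;W)\,\bigl|\,(K_X+\Delta')\cdot R=0\text{ for every extremal ray }R\text{ with }(K_X+\Delta)\cdot R=0\bigr\}.
\end{equation*}
In particular, $\Delta_1$ and $\Delta_2$ lie in the relative interior of the same face if and only if the collections
\begin{equation*}
\mathcal{R}_i=\bigl\{R\text{ extremal ray of }\NE(X/Y;W)\,\bigl|\,(K_X+\Delta_i)\cdot R=0\bigr\}
\end{equation*}
agree for $i=1,2$.

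Next I would identify $\mathcal{R}_i$ with the curves contracted by $f_i$. Since $K_X+\Delta_i\sim_{\mathbb{R}}f_i^*D_i$ and $D_i$ is $g_i$-ample, for any projective integral curve $C$ with $\pi(C)\in W$ we have $(K_X+\Delta_i)\cdot C=D_i\cdot (f_i)_*C$, and because $g_i((f_i)_*C)=\pi(C)$ is a point of $W$ with $D_i$ ample on that fiber, this intersection is zero precisely when $f_i(C)$ is a point. Hence an extremal ray $R$ lies in $\mathcal{R}_i$ if and only if every curve spanning $R$ is contracted by $f_i$; equivalently, $\mathcal{R}_i$ is exactly the set of extremal rays contracted by $f_i$.

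For the implication (i)$\Rightarrow$(ii), suppose $\Delta_1,\Delta_2$ lie in the relative interior of the same face. Then $\mathcal{R}_1=\mathcal{R}_2$, so $f_1$ and $f_2$ contract the same curves on $X$ over a neighborhood of $W$. After shrinking $Y$ around $W$, the standard rigidity argument (applied fiberwise and using that both $f_i$ are contraction morphisms, i.e.\ $(f_i)_*\mathcal{O}_X\simeq\mathcal{O}_{Z_i}$) produces the unique isomorphism $\xi\colon Z_1\to Z_2$ over $Y$ with $f_2=\xi\circ f_1$; alternatively, invoke Lemma \ref{a-lem11.3}(1) applied to $K_X+\Delta_1$ and to $K_X+\Delta_2$ (both of which have $f_i$ as their ample model by construction, since $K_X+\Delta_2\sim_{\mathbb{R}}f_1^*H$ for some $g_2$-ample $H$ coming from $D_2$ via the bijection between ample classes and contracted rays). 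For the converse (ii)$\Rightarrow$(i), if $\xi\colon Z_1\to Z_2$ is an isomorphism over some open neighborhood of $W$ with $f_2=\xi\circ f_1$, then $f_1$ and $f_2$ contract exactly the same curves, so $\mathcal{R}_1=\mathcal{R}_2$, and therefore $\Delta_1$ and $\Delta_2$ lie in the relative interior of the same face by the first step.

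The main technical point to be careful with is the reduction from the $\mathbb{R}$-linear equivalence $K_X+\Delta_i\sim_{\mathbb{R}}f_i^*D_i$ (which only holds after shrinking $Y$ around $W$) to the global statement about faces, and the use of Lemma \ref{a-lem11.3}(1) in the complex-analytic setting, since the ample model is unique only after shrinking. Once one fixes sufficiently small Stein neighborhoods of $W$ on which both $f_1$ and $f_2$ are defined and both linear equivalences hold, the argument above goes through verbatim.
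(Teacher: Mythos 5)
Your main route is essentially the paper's own argument: both reduce (i) to the statement that $f_1$ and $f_2$ contract exactly the same curves over a neighborhood of $W$, and then conclude with a rigidity-type argument. The paper makes the rigidity step explicit by taking $\overline Z\subset Z_1\times_Y Z_2$ to be the image of $(f_1,f_2)$, observing that (i) forces the projections $p_i\colon \overline Z\to Z_i$ to have singleton fibers over $g_i^{-1}(W)$, and concluding via Stein factorization that the $p_i$ are isomorphisms near $g_i^{-1}(W)$. Your phrase \emph{``the standard rigidity argument''} is exactly that, and your more detailed preliminary step (identifying the relative-interior face through $\Delta_i$ with the collection of extremal rays $R$ satisfying $(K_X+\Delta_i)\cdot R=0$, and identifying those rays with the curves contracted by $f_i$ via $K_X+\Delta_i\sim_{\mathbb R}f_i^*D_i$) is what the paper compresses into ``we can easily see that $p_i^{-1}(z_i)$ is a point by (i).'' One small thing to make airtight there: to pass from ``$f_2$ contracts every curve in a fiber $f_1^{-1}(z_1)$'' to ``$f_2$ is constant on $f_1^{-1}(z_1)$'' one uses that $f_1^{-1}(z_1)$ is a connected projective space (connectedness from $(f_1)_*\mathcal O_X\simeq\mathcal O_{Z_1}$, projectivity from $\pi(z_1)\in W$).

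Your alternative route via Lemma \ref{a-lem11.3}(1) has a gap. To apply uniqueness of the ample model you would need $f_1$ to be the ample model of $K_X+\Delta_2$, i.e.\ $K_X+\Delta_2\sim_{\mathbb R}f_1^*H$ for some $H$ ample over $Y$ on $Z_1$. But knowing that $f_1$ and $f_2$ contract the same curves does not formally yield such a descent of $K_X+\Delta_2$ through $f_1$; that is precisely the contraction-theorem plus Kleiman-criterion content which the rigidity/Stein-factorization argument is designed to avoid, and essentially what is being proved. (Also, in that parenthetical, $H$ should be $g_1$-ample on $Z_1$, not $g_2$-ample.) The main route you give is the correct one.
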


\begin{proof}
We note that $\Delta_1, \Delta_2
\in \mathcal N^\sharp _{A, \pi}(V; W)$. 
If (ii) holds, then (i) obviously holds true. 
From now on, we will prove (ii) under the assumption that 
(i) holds. 
Let $\overline Z$ be the image of the map $(f_1, f_2)\colon 
X\to Z_1\times _Y Z_2$ given by $x\mapsto (f_1(x), f_2(x))$ Let 
$p_i\colon \overline 
Z\to Z_i$ be the projection for $i=1, 2$. 
We take any point $z_i\in g^{-1}_i(W)$. 
Then we can easily see that $p^{-1}_i(z_i)$ is a point 
by (i). By using the Stein factorization 
(see, for example, \cite[Chapter III, Corollary 2.13]{banica}), 
$p_i\colon \overline Z\to Z_i$ is an isomorphism over some 
open neighborhood of $g^{-1}_i(W)$. 
Hence $Z_1$ and $Z_2$ are isomorphic over some 
open neighborhood of $W$. 
This is what we wanted. 
\end{proof}

As an easy consequence of Theorem \ref{a-thm11.17}, 
we have: 

\begin{cor}[{\cite[Corollary 3.11.2]{bchm}}]\label{a-cor11.19}
Let $\pi\colon X\to Y$ be a projective morphism 
of complex analytic spaces and let 
$W$ be a compact subset of $Y$ such that 
$\pi\colon X\to Y$ and $W$ satisfies {\em{(P)}}. 
Let $V$ be a finite-dimensional 
affine subspace of $\WDiv_{\mathbb R}(X)$, 
which is defined over the rationals. 
Fix a general $\pi$-ample $\mathbb Q$-divisor $A$ 
on $X$. 
Suppose that there exists a kawamata log terminal 
pair $(X, \Delta_0)$. 
Let $\phi\colon X\dashrightarrow Z$ be any 
bimeromorphic contraction over $Y$. Then we obtain: 
\begin{itemize}
\item[(1)] $\mathcal W_{\phi, A, \pi}(V; W)=
\mathcal W^\sharp_{\phi, A, \pi}(V; W)$ holds and 
$\mathcal W_{\phi, A, \pi}(V; W)$ is a rational polytope. 
\end{itemize}
Moreover, we have: 
\begin{itemize}
\item[(2)] There are finitely many 
contraction 
morphisms $f_i\colon Z\to Z_i$ over $Y$, $1\leq i\leq k$, 
such that if $f\colon Z\to Z'$ is any contraction morphism 
over $Y$ and there is an $\mathbb R$-divisor 
$D$ on $Z'$, which is ample over $Y$, such that 
$K_Z+\Gamma:=\phi_*(K_X+\Delta)\sim _{\mathbb R} f^*D$ for 
some $\Delta\in \mathcal W_{\phi, A, \pi}(V; W)$, then there 
is an index $1\leq i\leq k$ and an isomorphism 
$\eta\colon Z_i\to Z'$ such that 
$f=\eta\circ f_i$. 
\end{itemize}
Note that in (2) we require that $f_i$, $f$, 
$D$, and $\eta$ exist only after shrinking 
$Y$ around $W$ suitably. 
\end{cor}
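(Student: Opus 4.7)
The plan is to reduce everything to an application of Theorem \ref{a-thm11.17} on the target $Z$, combined with Lemma \ref{a-lem11.16} to upgrade the ``nef over $W$'' condition to ``nef over some open neighborhood of $W$.'' Throughout I would freely shrink $Y$ around $W$ without mentioning it. Fix a common resolution $p\colon V\to X$ and $q\colon V\to Z$ of $\phi$. For any $\Delta\in V_A$ with $K_X+\Delta$ being $\mathbb R$-Cartier, the $q$-exceptional $\mathbb R$-divisor
\begin{equation*}
E_\Delta:=p^*(K_X+\Delta)-q^*(K_Z+\phi_*\Delta)
\end{equation*}
depends affinely on $\Delta$. Thus membership of $\Delta$ in $\mathcal W^\sharp_{\phi, A, \pi}(V; W)$ is cut out by three closed conditions: $\Delta\in \mathcal L_A(V; \pi^{-1}(W))$ (a rational polytope by Lemma \ref{a-lem3.5}); the $q$-nonpositivity $E_\Delta\geq 0$ (finitely many rational linear inequalities on the coefficients of $q$-exceptional prime divisors); and $K_Z+\phi_*\Delta$ nef over $W$. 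So it suffices to show the last condition defines a rational polytope in $\Delta$-space and to promote ``nef over $W$'' to ``nef over a neighborhood of $W$.''

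For the nef locus on $Z$, I would apply Theorem \ref{a-thm11.17} to the pushforward data. The map $\Delta\mapsto \phi_*\Delta$ sends $V$ to a rational affine subspace $V^Z\subset\WDiv_{\mathbb R}(Z)$ and sends the $\pi$-ample $\mathbb Q$-divisor $A$ to $A^Z:=\phi_*A$; after perturbing $A^Z$ by a small general $\pi_Z$-ample $\mathbb Q$-divisor (and compensating in $V^Z$) one obtains a general $\pi_Z$-ample test divisor on $Z$, where $\pi_Z\colon Z\to Y$ is the structure morphism. To invoke Theorem \ref{a-thm11.17} on $Z$, I would produce a kawamata log terminal pair $(Z,\Gamma_0^Z)$: starting from $(X,\Delta_0)$ kawamata log terminal and interpolating $\Delta_0$ with any interior point of $\mathcal W^\sharp_{\phi, A, \pi}(V; W)$ (the vacuous case causes no trouble) gives a kawamata log terminal pair whose pushforward under the $\phi$-negative bimeromorphic map $\phi$ stays kawamata log terminal by the negativity lemma. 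Theorem \ref{a-thm11.17} then yields that the nef locus on $Z$ is a rational polytope and meets only finitely many extremal-ray hyperplanes. Pulling back the polytope along the affine map $\Delta\mapsto\phi_*\Delta$ gives that $\mathcal W^\sharp_{\phi, A, \pi}(V; W)$ is a rational polytope, proving the polytope part of (1).

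To identify $\mathcal W^\sharp_{\phi, A, \pi}(V; W)$ with $\mathcal W_{\phi, A, \pi}(V; W)$, I would apply Lemma \ref{a-lem11.16} after first using Lemma \ref{a-lem11.15} to replace $\Delta\in \mathcal W^\sharp$ by an $\mathbb R$-linearly equivalent kawamata log terminal $\Delta'=A'+B'$ with $A'$ a general $\pi$-ample $\mathbb Q$-divisor; here the genericity of $A$ and the existence of $(X,\Delta_0)$ are essential to avoid non-kawamata log terminal centers. Lemma \ref{a-lem11.16}(1) then promotes ``weak log canonical model over $W$'' to ``over some open neighborhood of $W$,'' giving the equality in (1).

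For assertion (2), I would combine the polytope structure from (1) with Lemma \ref{a-lem11.18} applied on $Z$. Using Lemma \ref{a-lem11.16}(3), each $\Delta\in\mathcal W_{\phi, A, \pi}(V; W)$ yields a contraction $f\colon Z\to Z'$ over $Y$ with $K_Z+\phi_*\Delta\sim_{\mathbb R} f^*D$ for some relatively ample $D$. By Lemma \ref{a-lem11.18}, $Z'$ depends, up to isomorphism over an open neighborhood of $W$, only on the relative interior of the face of $\mathcal W_{\phi, A, \pi}(V; W)$ containing $\Delta$; since a rational polytope has only finitely many faces, this produces the required finite list $f_i\colon Z\to Z_i$. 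The main obstacle I anticipate is the second step: constructing a kawamata log terminal pair on $Z$ so that Theorem \ref{a-thm11.17} becomes available there, while keeping track of the difference between the rational affine structures on $V$ and on its pushforward $V^Z$ (so that divisors contracted by $\phi$ do not spoil rationality), and verifying that the various ``over $W$'' versus ``over a neighborhood of $W$'' distinctions collapse under the hypothesis that $A$ is general $\pi$-ample.
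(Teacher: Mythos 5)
Your proposal follows essentially the same strategy as the paper: transfer the problem to $Z$ by pushing forward $V$ and $A$, reduce to the kawamata log terminal case, apply Theorem \ref{a-thm11.17} on $Z$ to get a rational polytope, promote ``nef over $W$'' to ``nef over a neighborhood of $W$'' via the big-boundary semiampleness, and use Lemma \ref{a-lem11.18} on the faces for part (2). The paper carries out the klt reduction by working locally about a point of $\mathcal W^\sharp_{\phi, A, \pi}(V; W)$ and applying Lemmas \ref{a-lem11.13} and \ref{a-lem11.14}, whereas you lay out the defining conditions globally via a fixed common resolution and a perturbation of $\phi_*A$; these are equivalent in effect, and your reformulation of $\mathcal W^\sharp$ via the affine family $E_\Delta=p^*(K_X+\Delta)-q^*(K_Z+\phi_*\Delta)$ is a perfectly reasonable alternative presentation.

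The one place your writeup goes astray is the construction of a kawamata log terminal pair on $Z$. You propose to interpolate $\Delta_0$ with an interior point $\Delta^*$ of $\mathcal W^\sharp_{\phi, A, \pi}(V; W)$ and push forward. But for $t<1$ the map $\phi$ need not be $(K_X+t\Delta^*+(1-t)\Delta_0)$-nonpositive, so the negativity lemma gives no control on $(Z,\phi_*(t\Delta^*+(1-t)\Delta_0))$; indeed $K_Z+\phi_*(t\Delta^*+(1-t)\Delta_0)$ need not even be $\mathbb R$-Cartier. The interpolation is both unnecessary and unjustified. The correct and simpler move, which you already invoke later, is to apply Lemma \ref{a-lem11.14} (or Lemma \ref{a-lem11.15}) to a given $\Delta\in\mathcal W^\sharp_{\phi, A, \pi}(V; W)$ so that $(X,\Delta)$ may be assumed kawamata log terminal; then, precisely because $\phi$ is $(K_X+\Delta)$-nonpositive, the negativity lemma makes $(Z,\phi_*\Delta)$ kawamata log terminal, which is the pair you need to run Theorem \ref{a-thm11.17} on $Z$. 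With that repair, the argument matches the paper's.
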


The proof of \cite[Corollary 3.11.2]{bchm} works 
with some minor modifications. 
 
\begin{proof}[Sketch of Proof of Corollary \ref{a-cor11.19}]
Note that $\mathcal L_A(V; \pi^{-1}(W))$ is a rational polytope. 
Therefore, its span is an affine subspace of $V_A$, which is 
defined over the rationals. 
By replacing $V$, we may assume that $\mathcal L_A(V; \pi^{-1}(W))$ 
spans $V_A$. To prove that $\mathcal W^\sharp_{\phi, A, \pi}(V; W)$ is 
a rational polytope, we may work locally about 
a divisor $\Delta \in \mathcal W^\sharp_{\phi, A, \pi}(V; W)$. 
By Lemma \ref{a-lem11.14}, 
we may assume that $(X, \Delta)$ is kawamata log terminal. 
In this case, $(Z, \Gamma)$ is automatically kawamata log 
terminal. We put $C:=\phi_*A$. Then $C$ is big over $Y$. Let 
$V^\dag \subset \WDiv_{\mathbb R}(Z)$ be the image 
of $V$. 
By Lemmas \ref{a-lem11.13} and \ref{a-lem11.14}, 
we can reduce the problem to the case 
where $C$ is a $\psi$-ample $\mathbb Q$-divisor 
and $\Gamma$ belongs to the interior 
of $\mathcal L_C(V^\dag; \psi^{-1}(W))$, 
where $\psi\colon Z\to Y$ is the structure morphism. 
By Theorem \ref{a-thm11.17}, 
$\mathcal N^\sharp_{C, \psi} (V^\dag, W)
=\mathcal N_{C, \psi} (V^\dag, W)$ is a rational polytope. 
Hence we can easily check that 
$\mathcal W^\sharp_{\phi, A, \pi} (V; W)=
\mathcal W_{\phi, A, \pi} (V; W)$ holds and 
$\mathcal W^\sharp _{\phi, A, \pi} (V; W)$ is a rational 
polytope. 
Thus we obtain (1). 
Let $f\colon Z\to Z'$ be a contraction morphism 
over some open neighborhood 
of $W$ such that $\phi_*(K_X+\Delta)=K_Z+\Gamma \sim 
_{\mathbb R} f^*D$ for some $\psi'$-ample 
$\mathbb R$-divisor $D$ on $Z'$, where $\psi'\colon Z'\to Y$ 
is the structure morphism. 
Then $\Gamma$ belongs to the interior of a unique 
face $G$ of $\mathcal N^\sharp_{C, \psi} (V^\dag; W)=
\mathcal N_{C, \psi} (V^\dag; W)$. 
Note that $\Delta$ belongs to the interior of a unique face $F$ 
of $\mathcal W^\sharp_{\phi, A, \pi} (V; W)=
\mathcal W_{\phi, A, \pi} (V; W)$ 
and $G$ is determined by 
$F$. 
Thus we can check that (2) holds true by Lemma \ref{a-lem11.18}.  
\end{proof}

\begin{say}[{see \cite[Lemma 3.10.11]{bchm}}]\label{a-say11.20}
When we run a minimal model program, we have to 
check that several properties are preserved by flips and 
divisorial contractions. 

Let $\pi\colon X\to Y$ be a projective morphism between 
complex analytic spaces and let $W$ be a compact subset of $Y$ 
such that $\pi\colon X\to Y$ and $W$ satisfies (P). 
Assume that $(X, \Delta)$ is divisorial log terminal and that 
$X$ is $\mathbb Q$-factorial over $W$. 
Let $\varphi\colon X\to Z$ be a bimeromorphic 
contraction morphism over $Y$ associated to a $(K_X+\Delta)$-negative 
extremal ray $R$ of $\NE(X/Y; W)$. 
Let $A$ be a $\pi$-big $\mathbb R$-divisor 
on $X$ such that $\mathbf B_+(A/Y)$ does not 
contain any non-kawamata log terminal centers 
of $(X, \Delta)$. 

\begin{lem}[Divisorial contractions]\label{a-lem11.21}
In the above setting, we further assume that 
$\varphi$ is divisorial. 
Then, after shrinking $Y$ around $W$ suitably, 
we have the following properties. 
\begin{itemize}
\item[(1)] $Z$ is $\mathbb Q$-factorial over $W$. 
\item[(2)] $(Z, \Gamma)$ is divisorial log terminal, 
where $\Gamma:=\varphi_*\Delta$. 
\item[(3)] $\Exc(\varphi)$ is a prime divisor 
on $X$. 
\item[(4)] $\rho(Z/Y; W)=\rho(X/Y; W)-1$. 
\item[(5)] $\mathbf B_+(\varphi_*A/Y)$ does not 
contain any non-kawamata log terminal centers of $(Z, \Gamma)$.  
\end{itemize}
\end{lem}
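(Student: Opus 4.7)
The plan is to follow the classical algebraic argument for the behavior of the minimal model program under a divisorial extremal contraction (cf.\ \cite[Lemma 3.10.11]{bchm}), carefully tracking the \emph{over $W$} qualifications and shrinking $Y$ around $W$ as needed via the procedure in \ref{a-say1.11}. Throughout I will fix a projective integral curve $C\subset X$ with $\pi(C)\in W$ spanning the ray $R$, whose existence is guaranteed by Theorem \ref{a-thm9.2}.

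The heart of the proof is a descent construction yielding (1), (3), and (4) simultaneously. Given a Cartier divisor $M$ on $X$ over a neighborhood of $\pi^{-1}(W)$ with $M\cdot C=0$, Theorem \ref{a-thm7.2}(iii) produces a line bundle on $Z$ over a neighborhood of $W$ whose pullback by $\varphi$ is $\mathcal O_X(M)$. For (1), take any prime divisor $D'$ on $Z$ defined over a neighborhood of $W$ and set $D_X:=\varphi^{-1}_*D'$; by hypothesis $D_X$ is $\mathbb Q$-Cartier over $W$, and since any $\varphi$-exceptional prime divisor $E$ satisfies $E\cdot C<0$, there is a unique $b\in\mathbb Q$ with $(D_X+bE)\cdot C=0$. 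Applying the descent to a sufficiently divisible multiple of $D_X+bE$ and then pushing forward (using $\varphi_*E=0$) identifies the resulting line bundle on $Z$ with a multiple of $\mathcal O_Z(D')$, proving (1). The same construction shows that $\varphi^*\colon A^1(Z/Y;W)\otimes\mathbb Q\hookrightarrow A^1(X/Y;W)\otimes\mathbb Q$ has image the hyperplane $\{\xi\mid\xi\cdot C=0\}$, which gives (4). For (3), if $E_1,\dots,E_r$ were distinct $\varphi$-exceptional prime divisors, pairing with $C$ would yield a linear form on $\bigoplus_i\mathbb Q E_i$ with kernel of rank $r-1$, sitting inside $\varphi^*(A^1(Z/Y;W)\otimes\mathbb Q)$ by the above; but a $\varphi$-exceptional class that is a pullback from $Z$ must vanish by the negativity lemma, forcing $r=1$.

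For (2), write $K_X+\Delta=\varphi^*(K_Z+\Gamma)+aE$ with $a>0$ (since $-(K_X+\Delta)$ is $\varphi$-ample and $E$ is contracted). Every divisorial valuation $\nu$ over $Z$ then satisfies $a(\nu,Z,\Gamma)\ge a(\nu,X,\Delta)$, so $(Z,\Gamma)$ has no worse singularities than $(X,\Delta)$; a log resolution witnessing Definition \ref{a-def3.7} on any relatively compact open piece over $W$ is obtained by composing $\varphi$ with the resolution produced by Lemma \ref{a-lem3.9} applied to $(X,\Delta)$. For (5), pick $\varepsilon>0$ small and a $\pi$-ample $\mathbb Q$-divisor $H$ so that $\mathbf B_+(A/Y)=\mathbf B((A-\varepsilon H)/Y)$ on a relatively compact piece over $\pi^{-1}(W)$ by Lemma \ref{a-lem10.5}, and choose $N\ge 0$ with $N\sim_{\mathbb R}A-\varepsilon H$ whose support contains no non-kawamata log terminal center of $(X,\Delta)$. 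Let $\lambda>0$ be the unique real number with $(\varepsilon H+\lambda E)\cdot C=0$ and let $H_Z$ on $Z$ be its descent; Kleiman's criterion (Theorem \ref{a-thm4.4}) together with the computation $(\varepsilon H+\lambda E)\cdot\widetilde D=\varepsilon H\cdot\widetilde D+\lambda E\cdot\widetilde D>0$ for any non-contracted projective curve $\widetilde D$ over $W$ shows that $H_Z$ is ample over $Y$ near $W$. Pushing forward the relation $A+\lambda E\sim_{\mathbb R}\varepsilon H+\lambda E+N$ gives $\varphi_*A\sim_{\mathbb R}H_Z+\varphi_*N$. Since every non-kawamata log terminal center of $(Z,\Gamma)$ is the $\varphi$-image of a non-kawamata log terminal center of $(X,\Delta)$ (by the discrepancy comparison above), and such images are not contained in $\Supp\varphi_*N$, we conclude that $\mathbf B_+(\varphi_*A/Y)\subseteq\Supp\varphi_*N$ contains no non-kawamata log terminal center of $(Z,\Gamma)$.

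The main obstacle I expect is the globalization of the descent construction in the second paragraph: Theorem \ref{a-thm7.2}(iii) produces a line bundle only over \emph{some} open neighborhood of $W$, and arguing $\mathbb Q$-factoriality over $W$ uniformly as $D'$ ranges over all prime divisors defined near the preimage of $W$ in $Z$ requires the Stein-compact enlargement trick of \ref{a-say1.11} so that a single neighborhood works throughout. A secondary subtlety appears in (5), where verifying ampleness of $H_Z$ via Kleiman's criterion relies on the fact that every projective curve on $Z$ over $W$ lifts to a non-contracted projective curve on $X$ over $W$, which is part of the structure theory of the extremal contraction $\varphi$.
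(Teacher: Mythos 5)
Your arguments for (1), (3), and (4) are sound and essentially the standard descent via Theorem \ref{a-thm7.2}(iii); your sketch of (2) is also in the right spirit, though the log resolution from Lemma \ref{a-lem3.9} should be taken for $(X,\Delta)$ together with $E$ (so that $g^{-1}_*E$ participates in the simple normal crossing configuration) rather than for $(X,\Delta)$ alone. The paper explicitly declares these parts routine and only writes out (5), which is also where your proposal has a genuine gap.

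For (5), the unjustified step is the claim that $(\varepsilon H+\lambda E)\cdot\widetilde D>0$ for every non-contracted projective curve $\widetilde D$ over $W$, from which you infer that the descended class $H_Z=\varphi_*(\varepsilon H+\lambda E)=\varepsilon\,\varphi_*H$ is ample. For curves $\widetilde D\subset E$ not in $R$ one has no control on the sign of $E\cdot\widetilde D$, and the assertion can fail. Concretely, if $E$ contracts to a curve $\Gamma\subset Z$ and $E|_E$ is negative along both a ruling $f$ (spanning $R$) and a section $\sigma$ (so $E\cdot\sigma<0$), then $\varphi_*H\cdot\Gamma=(H+cE)\cdot\sigma=H\cdot\sigma+cE\cdot\sigma$ with $c=H\cdot f/(-E\cdot f)$; choosing $H$ ample with $H\cdot f$ large relative to $H\cdot\sigma$ makes this negative, so $\varphi_*H$ is not even nef. (The prototype is $E\cong\mathbb P^1\times\mathbb P^1$ with $E|_E=\mathcal O(-1,-1)$ contracted along one ruling.) In short, the push-forward of an ample class under a divisorial extremal contraction is \emph{not} automatically ample; its ampleness depends on where $H$ sits in the ample cone, a constraint you never impose.

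The paper's proof avoids this issue by working in the opposite direction: it picks a general $\pi'$-ample $\mathbb Q$-divisor $C$ on the \emph{target} $Z$, strict-transforms it to $X$, chooses $D\ge 0$ with $D\sim_{\mathbb R}A-\varepsilon\,\varphi^{-1}_*C$ and $\Supp D$ missing the non-kawamata log terminal centers of $(X,\Delta)$, and then pushes forward to get $\varphi_*A\sim_{\mathbb R}\varepsilon C+\varphi_*D$, where $C$ is ample on $Z$ by construction — no descent-of-ampleness is needed. It also checks that $R$ remains $(K_X+\Delta+\varepsilon'D)$-negative for $\varepsilon'\ll 1$, so that $(Z,\varphi_*\Delta+\varepsilon'\varphi_*D)$ stays divisorial log terminal and $\Supp\varphi_*D$ contains no non-kawamata log terminal center of $(Z,\Gamma)$. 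You should replace your descent of $\varepsilon H+\lambda E$ by this construction starting from an ample divisor on $Z$.
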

\begin{lem}[Flips]\label{a-lem11.22} 
In the above setting, 
we further assume that 
$\varphi$ is a flipping contraction and that 
the flip $\varphi^+\colon X^+\to Z$ of $\varphi$ exists. 
\begin{equation*}
\xymatrix{
X\ar[dr]_-\varphi\ar@{-->}[rr]^-\phi& & \ar[dl]^-{\varphi^+}X^+\\ 
& Z &
}
\end{equation*}
Then, after shrinking $Y$ and $W$ suitably, 
we have the following properties. 
\begin{itemize}
\item[(1)] $X^+$ is $\mathbb Q$-factorial over $W$. 
\item[(2)] $(X^+, \Delta^+)$ is divisorial log terminal, 
where $\Delta^+:=\phi_*\Delta$. 
\item[(3)] $\rho(X^+/Y; W)=\rho (Z/Y; W)+1=\rho(X/Y; W)$. 
\item[(4)] $X^+$ is projective over $Y$. 
\item[(5)] $\mathbf B_+(\phi_*A/Y)$ does not contain any 
non-kawamata log terminal centers of $(X^+, \Delta^+)$. 
\end{itemize}
\end{lem}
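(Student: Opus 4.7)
The plan is to establish the five assertions in the order (4), (2), (3), (1), (5), freely shrinking $Y$ around $W$ throughout and using that the small bimeromorphic map $\phi\colon X\dashrightarrow X^+$ puts the prime divisors on $X$ and $X^+$ in bijective correspondence via strict transform. Assertion (4) is immediate from the definition of the flip: $\varphi^+$ is projective because $K_{X^+}+\Delta^+$ is $\varphi^+$-ample, and $Z$ is projective over $Y$ since $\varphi$ is projective, so $X^+$ is projective over $Y$ by composition. For (2), I would invoke the standard flip discrepancy inequality: for any divisor $E$ over $X=X^+$, $a(E,X,\Delta)\le a(E,X^+,\Delta^+)$, with strict inequality whenever $E$ is $\phi$-exceptional. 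Since $\phi$ is small, $\lfloor\Delta^+\rfloor=\phi_*\lfloor\Delta\rfloor$; applying this inequality to a log resolution of $(X,\Delta)$ witnessing its dlt condition (lifted to a common resolution of $\phi$) shows that all exceptional discrepancies of $(X^+,\Delta^+)$ exceed $-1$, verifying the dlt condition of Definition \ref{a-def3.7}.

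For (3), Theorem \ref{a-thm7.2}(iii) applied to $\varphi$ shows, after shrinking $Y$ around $W$, that a line bundle on $X$ descends to $Z$ if and only if it is numerically trivial on $R$; the cokernel of $\varphi^*\colon N^1(Z/Y;W)\to N^1(X/Y;W)$ is therefore one-dimensional, spanned by the class of any $\pi$-ample divisor, so $\rho(X/Y;W)=\rho(Z/Y;W)+1$. Since $\varphi^+\colon X^+\to Z$ is likewise a small contraction of a single extremal ray (with $K_{X^+}+\Delta^+$ furnishing the $\varphi^+$-ample class spanning the complementary direction), Theorem \ref{a-thm7.2}(iii) applied to $\varphi^+$ gives $\rho(X^+/Y;W)=\rho(Z/Y;W)+1$, hence equality with $\rho(X/Y;W)$.

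The main obstacle is assertion (1). The subtle point is that $K_X+\Delta$ is only $\mathbb{R}$-Cartier, so the standard relative-Picard-number-one descent argument does not directly produce $\mathbb{Q}$-Cartier outputs. To circumvent this, I plan to perturb to a $\mathbb{Q}$-boundary: because $X$ is $\mathbb{Q}$-factorial over $W$, each Weil component of $\Delta$ and $K_X$ is $\mathbb{Q}$-Cartier there, so (after replacing $Y$ by a relatively compact neighborhood so that $\Delta$ has finitely many components) one can replace the real coefficients of $\Delta$ by nearby rationals, slightly decreasing the coefficients of $\lfloor\Delta\rfloor$, to produce $\Delta_0$ with $(X,\Delta_0)$ kawamata log terminal, $K_X+\Delta_0$ globally $\mathbb{Q}$-Cartier, and $R$ still $(K_X+\Delta_0)$-negative. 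Since the flipping contraction $\varphi$ and its flip $\phi$ are determined by $R$ alone, $\phi$ is also the flip of $(X,\Delta_0)$; in particular, $K_{X^+}+\Delta_0^+$ is $\mathbb{Q}$-Cartier and $\varphi^+$-ample. Given any prime divisor $D^+$ on $X^+$ defined over a neighborhood of $W$ with strict transform $D$ on $X$ ($\mathbb{Q}$-Cartier by hypothesis), I would use that $R$ is spanned by a rational curve $\ell$ (Theorem \ref{a-thm9.2}), so that $D\cdot\ell,(K_X+\Delta_0)\cdot\ell\in\mathbb{Q}$; choosing $\lambda\in\mathbb{Q}$ with $(D-\lambda(K_X+\Delta_0))\cdot\ell=0$ and $m\in\mathbb{Z}_{>0}$ so that $m(D-\lambda(K_X+\Delta_0))$ is a Cartier divisor numerically trivial on $R$, Theorem \ref{a-thm7.2}(iii) yields (after shrinking $Y$) a Cartier divisor $L$ on $Z$ with $m(D-\lambda(K_X+\Delta_0))\sim\varphi^*L$. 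Pushing forward by the small $\phi$ gives the identity $mD^+=m\lambda(K_{X^+}+\Delta_0^+)+(\varphi^+)^*L$ of Weil divisors on $X^+$; both summands on the right are $\mathbb{Q}$-Cartier over $W$, hence so is $D^+$.

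For (5), the discrepancy inequality of (2) shows that every non-kawamata log terminal center $V^+$ of $(X^+,\Delta^+)$ is the center on $X^+$ of some divisor $E$ with $a(E,X^+,\Delta^+)\le -1$, and then $a(E,X,\Delta)\le -1$ forces the center $V$ of $E$ on $X$ to be a non-kawamata log terminal center of $(X,\Delta)$; by hypothesis, $V\not\subset\mathbf{B}_+(A/Y)$. Since $\phi$ is an isomorphism in codimension one, $\phi_*A$ and $A$ correspond divisorially, and a standard comparison of augmented base loci under small maps (as in the algebraic case of \cite[Lemma 3.10.11]{bchm}) shows that $\mathbf{B}_+(\phi_*A/Y)$ agrees with the image of $\mathbf{B}_+(A/Y)$ up to a set of codimension at least two, from which $V^+\not\subset\mathbf{B}_+(\phi_*A/Y)$ follows. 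Throughout, the most delicate step is the perturbation-plus-descent argument for (1); the remaining pieces follow the algebraic template with only cosmetic adjustments to our analytic setup.
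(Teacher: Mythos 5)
Your treatment of (1), (2), and (4) is essentially sound, and the perturbation of $\Delta$ to a rational $\Delta_0$ in (1) is a reasonable way to sidestep the $\mathbb{R}$-Cartier issue. However, (5) contains a genuine gap. The claim that ``$\mathbf{B}_+(\phi_*A/Y)$ agrees with the image of $\mathbf{B}_+(A/Y)$ up to a set of codimension at least two'' is not a valid statement and, even if interpreted charitably, does not deliver the conclusion. Flips change the relevant geometry precisely in codimension $\geq 2$: for instance, if $A$ were $\pi$-ample on $X$ so that $\mathbf{B}_+(A/Y)=\emptyset$, the strict transform $\phi_*A$ is typically negative on the flipped curves, so $\mathbf{B}_+(\phi_*A/Y)$ is a nonempty set of codimension $2$. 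Since non-kawamata-log-terminal centers of a dlt pair can themselves lie in any codimension $\geq 1$, an ``agreement up to codimension $2$'' gives no information about whether $V^+$ avoids $\mathbf{B}_+(\phi_*A/Y)$. Your reduction to the nklt center $V$ on $X$ is fine, but the last step has no bridge to $V^+$.

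The paper (following \cite[Lemma 3.10.11]{bchm}) proceeds differently: it chooses a general $\pi'$-ample $\mathbb{Q}$-divisor $C$ on $X^+$ and uses that $\mathbf{B}((A-\varepsilon\phi^{-1}_*C)/Y)$ still avoids the nklt centers for small $\varepsilon$, so one can pick an effective $D\sim_{\mathbb{R}}A-\varepsilon\phi^{-1}_*C$ with $(X,\Delta+\varepsilon'D)$ dlt for $0<\varepsilon'\ll1$ and with $R$ still negative. Running the same flip for $(X,\Delta+\varepsilon'D)$ shows $(X^+,\phi_*\Delta+\varepsilon'\phi_*D)$ is dlt, hence $\Supp\phi_*D$ misses the nklt centers of $(X^+,\Delta^+)$; since $\phi_*A\sim_{\mathbb{R}}\phi_*D+\varepsilon C$ with $C$ ample, the conclusion follows. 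This is the argument you need to reproduce for (5). Separately, in (3) your application of Theorem \ref{a-thm7.2}(iii) to $\varphi^+$ is not directly licensed, since $\varphi^+$ is not given as the contraction of a $(K_{X^+}+\Delta')$-negative extremal ray for any dlt $\Delta'$; the usual route is to derive $\rho(X^+/Y;W)=\rho(X/Y;W)$ from (1) via the strict-transform identification of $N^1(X/Y;W)$ with $N^1(X^+/Y;W)$.
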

\begin{proof}[Proof of Lemmas \ref{a-lem11.21} and \ref{a-lem11.22}]
The proof for algebraic varieties 
works with only some obvious modifications 
even in the complex analytic setting. 
Here, we will only prove (5). There are no difficulties 
to prove the other properties. 
Let $f\colon X\dashrightarrow X'$ denote the 
divisorial contraction $\varphi\colon X\to Z$ in 
Lemma \ref{a-lem11.21} or the flip 
$\phi\colon X\dashrightarrow X^+$ in Lemma 
\ref{a-lem11.22}. 
We will freely shrink $Y$ around $W$ suitably without 
mentioning it explicitly. 
We take a general $\pi'$-ample 
$\mathbb Q$-divisor $C$ on $X'$, 
where $\pi'\colon X'\to Y$ is the structure morphism. 
We may assume that 
$\mathbf B((A-\varepsilon f^{-1}_*C)/Y)$ does not 
contain any non-kawamata log terminal centers 
of $(X, \Delta)$ for some $0<\varepsilon <1$. 
Therefore, we have an effective $\mathbb R$-divisor 
$D$ on $X$ such that 
$D\sim _{\mathbb R} A-\varepsilon f^{-1}_*C$ and 
that $(X, \Delta':=\Delta+\varepsilon 'D)$ is a divisorial 
log terminal pair for $0<\varepsilon '\ll 1$. 
Note that if $0<\varepsilon '\ll 1$ then 
$R$ is still a $(K_X+\Delta')$-negative extremal ray 
of $\NE(X/Y; W)$. 
Therefore, $(X', f_*\Delta'=f_*\Delta+\varepsilon 'f_*D)$ 
is still a divisorial log terminal pair. 
Hence the support of $f_*D$ contains no non-kawamata 
log terminal centers of $(X', f_*\Delta)$. 
Since $f_*A\sim _{\mathbb R}f_*D+\varepsilon C$, 
$\mathbf B_+(f_*A/Y)$ contains no non-kawamata log 
terminal centers of $(X', f_*\Delta')$. 
This is what we wanted. 
\end{proof}
\end{say}

\section{Some basic definitions and properties, II}\label{a-sec12}

In this section, we will treat \cite[Lemma 3.6.12]{bchm} in the complex 
analytic setting. 
We change the formulation suitable for our complex analytic setting. 
The main result of this section is Lemma \ref{a-lem12.3}. 
For the proof of Lemma \ref{a-lem12.3}, we prepare two lemmas. 

Let us start with {\em{small projective $\mathbb Q$-factorializations}} 
(see Theorem \ref{a-thm1.24}). 

\begin{lem}[Small projective $\mathbb Q$-factorializations]\label{a-lem12.1}
 Assume that Theorem \ref{thm-g}$_n$ holds true. 

Let $\pi\colon X\to Y$ be a projective morphism 
between complex analytic spaces 
with $\dim X=n$ and let $W$ 
be a compact subset of $Y$ such that 
$\pi\colon X\to Y$ and $W$ satisfies {\em{(P)}}. 
Assume that $(X, \Delta)$ is kawamata log terminal. 
Then, after shrinking $Y$ around $W$ suitably, 
there exists a small projective bimeromorphic contraction 
morphism $f\colon X'\to X$ such that 
$X'$ is projective over $Y$ and that 
$X'$ is $\mathbb Q$-factorial 
over $W$. 
\end{lem}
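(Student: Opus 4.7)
The plan is to reduce to an application of Theorem \ref{thm-g}$_n$ on a log resolution of $(X,\Delta)$, by setting up a kawamata log terminal pair whose log canonical divisor is $\mathbb R$-linearly equivalent to an effective divisor with full support on the exceptional locus. First, after shrinking $Y$ around $W$ we may assume $\Delta$ has finitely many irreducible components and take a projective bimeromorphic morphism $g\colon Z\to X$ from a smooth variety $Z$ such that $\Exc(g)\cup\Supp g^{-1}_*\Delta$ is simple normal crossing, with exceptional prime divisors $E_1,\dots,E_k$. Since $(X,\Delta)$ is kawamata log terminal, $a(E_i,X,\Delta)>-1$, so one may pick rationals $c_i\in[0,1)$ with $c_i+a(E_i,X,\Delta)>0$ for each $i$. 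Setting $\Theta_0=g^{-1}_*\Delta+\sum c_i E_i$, the pair $(Z,\Theta_0)$ is kawamata log terminal and
\[
K_Z+\Theta_0 \;=\; g^*(K_X+\Delta)+F,\qquad F=\sum\bigl(c_i+a(E_i,X,\Delta)\bigr)E_i\geq 0,
\]
where $F$ is $g$-exceptional with $\Supp F=\Exc(g)$.

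Next I would enlarge $\Theta_0$ by an ample pullback to satisfy the hypotheses of Theorem \ref{thm-g}$_n$. Choose a general $\pi$-ample $\mathbb Q$-divisor $H$ on $X$ such that $K_X+\Delta+(1-\eta)H$ is still $\pi$-ample for some $\eta>0$, put $A=(1-\eta)H$, and set $\Theta=\Theta_0+g^*A$. For general $H$ the pair $(Z,\Theta)$ is kawamata log terminal, and $\Theta$ is big over $Y$ because $g^*A$ is. By the $\pi$-ampleness of $K_X+\Delta+A$ combined with the Steinness of $Y$ (see Remark \ref{a-rem2.43}), after shrinking $Y$ we have $K_X+\Delta+A\sim_{\mathbb R}D\geq 0$ for some $D$ on $X$; hence
\[
K_Z+\Theta \;=\; g^*(K_X+\Delta+A)+F \;\sim_{\mathbb R}\; g^*D+F \;\geq\; 0.
\]
With the factorization $\pi_Z\colon Z\xrightarrow{g}X\to Y$ and $Y^{\flat}:=X$ (projective over $Y$ by hypothesis), all hypotheses of Theorem \ref{thm-g}$_n$ are satisfied, so after further shrinking $Y$ we obtain a $(K_Z+\Theta)$-negative bimeromorphic contraction $\phi\colon Z\dashrightarrow X'$ over $X$ with $X'$ projective over $X$, $\mathbb Q$-factorial over $W$, and $K_{X'}+\phi_*\Theta$ semiample over $X$.

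It remains to verify that the induced morphism $f\colon X'\to X$ is small. Over $X$ we have
\[
K_{X'}+\phi_*\Theta \;\sim_{\mathbb R,\,X}\; \phi_*F\;=:\;F'\;\geq\;0,
\]
and $F'$ is $f$-exceptional because each non-$\phi$-contracted $E_i$ has $g$-exceptional image $g(E_i)=f(\phi_*E_i)$ of codimension at least two in $X$. Since $K_{X'}+\phi_*\Theta$ is semiample over $X$, in particular $f$-nef, the negativity lemma forces $F'=0$; as each coefficient $c_i+a(E_i,X,\Delta)$ is strictly positive, this means every $E_i$ is $\phi$-exceptional. Hence $X'\to X$ contracts no divisor, i.e.\ it is small, and $X'$ is projective over $Y$ and $\mathbb Q$-factorial over $W$ as required. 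The main obstacle is arranging a global effective $\mathbb R$-linear representative of $K_Z+\Theta$ so that Theorem \ref{thm-g}$_n$ applies: this is precisely why the auxiliary ample divisor $A$ must be pulled back from $X$ rather than chosen on $Z$, since only then does the $\pi$-ampleness of $K_X+\Delta+A$ combined with the Stein base $Y$ provide the required effective section.
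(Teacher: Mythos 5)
Your proposal is correct and takes essentially the same approach as the paper's proof: pass to a log resolution $g\colon Z\to X$, construct a kawamata log terminal pair whose log canonical divisor equals $g^*(K_X+\Delta+A)+F$ with $F\geq 0$ fully supported on $\Exc(g)$, apply Theorem \ref{thm-g}$_n$ with $Y^\flat=X$, and use the negativity lemma to force $\phi_*F=0$. Your final paragraph simply expands the paper's one-line appeal to the negativity lemma, correctly checking that $\phi_*F$ is effective, $f$-exceptional, and $f$-nef (hence zero), so that every $g$-exceptional divisor is contracted by $\phi$ and $f$ is small.
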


\begin{proof}
Throughout this proof, we will freely shrink $Y$ around 
$W$ 
suitably without mentioning it explicitly. 
By taking a resolution, 
we have a bimeromorphic contraction morphism 
$g\colon V\to X$ such that 
$V$ is smooth, $V$ is projective over $Y$, and $\Exc(g)$ and 
$\Exc (g)\cup \Supp g^{-1}_*\Delta$ are 
simple normal crossing divisors on $V$. 
Then we can take an $\mathbb R$-divisor $\Delta_V$ 
on $V$ such that $(V, \Delta_V)$ is kawamata log 
terminal and that $K_V+\Delta_V=g^*(K_X+\Delta)+E$, 
where $E\geq 0$ and $\Supp E=\Exc(g)$. 
We take a general $\pi$-ample $\mathbb Q$-divisor 
$H$ on $X$ such that 
$K_X+\Delta+H\sim _{\mathbb R} D\geq 0$. 
We apply Theorem \ref{thm-g}$_n$ to 
$K_V+\Delta_V+g^*H\sim _{\mathbb R} g^*D+E\geq 0$. 
Then we get a bimeromorphic contraction 
$\phi\colon V\dashrightarrow X'$ over $X$ such that 
$X'$ is $\mathbb Q$-factorial over $W$, $X'$ is projective 
over $Y$, and $K_{X'}+\Gamma$ is nef over $Y$, 
where $\Gamma:=\phi_*\Delta_V$. 
By the negativity lemma, we see that $f\colon X'\to X$ is small. 
This is what we wanted. 
\end{proof}

By combining Lemma \ref{a-lem12.1} with Theorem \ref{a-thm8.2}, we have: 

\begin{lem}\label{a-lem12.2} 
Assume that Theorem \ref{thm-g}$_n$ holds true. 

Let $\pi\colon X\to Y$ be a projective morphism 
between complex analytic spaces 
with $\dim X=n$ and let $W$ 
be a compact subset of $Y$ such that 
$\pi\colon X\to Y$ and $W$ satisfies {\em{(P)}}. 
Let $\phi\colon X\dashrightarrow Z$ be a bimeromorphic 
contraction of normal complex varieties over $Y$ such that 
$(Z, \Delta_Z)$ is kawamata log terminal for some 
$\Delta_Z$. 
We consider the following commutative diagram:  
\begin{equation*} 
\xymatrix{
&\ar[ld]_-pV\ar[rd]^-q&\\ 
X\ar[dr]_-\pi\ar@{-->}[rr]^-\phi&&\ar[dl]^-{\pi'}  Z \\
& Y &
}
\end{equation*} 
where $p$ and $q$ are projective bimeromorphic morphisms 
and $V$ is a normal complex variety. 
Let $H$ be an $\mathbb R$-Cartier $\mathbb R$-divisor 
on $Z$ such that $H':=p_*q^*H$ is also 
$\mathbb R$-Cartier. 
Let $B$ be an $\mathbb R$-Cartier $\mathbb R$-divisor 
on $X$ such that $B$ is numerically equivalent to $H'$ over 
$W$. 
Then $\phi_*B:=q_*p^*B$ is $\mathbb R$-Cartier over 
some open neighborhood of $W$ and 
is numerically equivalent to $H$ over 
$W$. 
\end{lem}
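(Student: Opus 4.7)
The plan is to use the small projective $\mathbb{Q}$-factorialization furnished by Lemma \ref{a-lem12.1} (available under the inductive assumption Theorem \ref{thm-g}$_n$) to reduce to a case where the target is $\mathbb{Q}$-factorial, and then to descend via Theorem \ref{a-thm8.2}. After shrinking $Y$ around $W$, Lemma \ref{a-lem12.1} applied to $(Z,\Delta_Z)$ yields a small projective bimeromorphic morphism $\sigma\colon \tilde Z\to Z$ with $\tilde Z$ projective over $Y$ and $\mathbb{Q}$-factorial over $W$. Since $\sigma$ is small, $\tilde\phi:=\sigma^{-1}\circ\phi\colon X\dashrightarrow \tilde Z$ is still a bimeromorphic contraction; I replace the common resolution by one, still denoted $V$, that dominates both $X$ and $\tilde Z$ with morphisms $\tilde p\colon V\to X$ and $\tilde q\colon V\to \tilde Z$, and with $q=\sigma\circ\tilde q$. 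Writing $\tilde\Delta_Z=\sigma^{-1}_*\Delta_Z$, smallness of $\sigma$ gives that $(\tilde Z,\tilde\Delta_Z)$ is kawamata log terminal and $K_{\tilde Z}+\tilde\Delta_Z=\sigma^*(K_Z+\Delta_Z)$, hence $\sigma$-numerically trivial.

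The $\mathbb{Q}$-factoriality of $\tilde Z$ over $W$ guarantees that $\tilde\phi_*B:=\tilde q_*\tilde p^*B$ is $\mathbb{R}$-Cartier in a neighborhood of $(\pi'\circ\sigma)^{-1}(W)$. Set $D:=\tilde\phi_*B-\sigma^*H$, which is then $\mathbb{R}$-Cartier there. The key step is the verification that $D$ is $\sigma$-numerically trivial over $W$: for any $\sigma$-exceptional projective curve $\tilde C\subset\tilde Z$ with $\sigma(\tilde C)\in(\pi')^{-1}(W)$, we need $D\cdot\tilde C=0$. One lifts $\tilde C$ to a curve $C^*$ on $V$ and exploits the defining identity $H'=p_*q^*H$, which yields the decomposition $p^*H'=q^*H+G$ with $G$ being $p$-exceptional, together with the hypothesis $B\equiv_W H'$ on $X$. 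Curves on $V$ contracted by $q$ over $W$ project under $\tilde p$ either to points or to curves lying in $\pi^{-1}(W)$, and on the latter the numerical equivalence $B\equiv_W H'$ forces $p^*B$ and $p^*H'$ to have equal intersection. Carefully tracking the $\tilde q$-exceptional correction $F:=\tilde p^*B-\tilde q^*(\tilde\phi_*B)$ then shows $\tilde q^*D\cdot C^*=0$, whence $D\cdot\tilde C=0$.

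Granted this numerical triviality, I apply Theorem \ref{a-thm8.2} to the small contraction $\sigma\colon\tilde Z\to Z$ at each point $z\in(\pi')^{-1}(W)$, with divisor $D$ and the kawamata log terminal pair $(\tilde Z,\tilde\Delta_Z)$. Since both $D$ and $K_{\tilde Z}+\tilde\Delta_Z$ are $\sigma$-numerically trivial over $W$, the divisor $aD-(K_{\tilde Z}+\tilde\Delta_Z)$ is $\sigma$-nef over $W$ for every $a>0$. Theorem \ref{a-thm8.2} then yields that $\sigma_*D=\phi_*B-H$ is $\mathbb{R}$-Cartier at each such $z$, hence on a neighborhood of $(\pi')^{-1}(W)$, so that $\phi_*B$ itself is $\mathbb{R}$-Cartier there. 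The numerical equivalence $\phi_*B\equiv_W H$ then follows by the projection formula: since $\sigma$ is small one has $\sigma^*\sigma_*D=D$, so for any projective curve $C\subset Z$ over $W$ with lift $\tilde C\subset\tilde Z$, $(\phi_*B-H)\cdot C=\sigma_*D\cdot C=D\cdot\tilde C=0$. The main obstacle I anticipate is the intersection-theoretic computation in the middle paragraph, which requires balancing the $\tilde q$-exceptional correction $F$ on $V\to\tilde Z$ and the $p$-exceptional correction $G$ on $V\to X$ against the input numerical equivalence $B\equiv_W H'$.
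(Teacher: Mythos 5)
Your proposal follows the paper's argument quite closely: both pass to a small projective $\mathbb{Q}$-factorialization $\sigma\colon \tilde Z\to Z$ via Lemma~\ref{a-lem12.1}, observe that $\tilde\phi_*B$ is $\mathbb{R}$-Cartier near $W$, and then descend along $\sigma$ via Theorem~\ref{a-thm8.2}. (Your choice to verify the nefness hypothesis of Theorem~\ref{a-thm8.2} with $\tilde\Delta_Z$ and the $\sigma$-triviality of $K_{\tilde Z}+\tilde\Delta_Z$, rather than the paper's auxiliary $\Theta$ with $-(K_{Z'}+\Theta)$ $\sigma$-ample, is only a cosmetic difference.)

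The gap is precisely where you flag it. Writing $\tilde q^*D = \tilde p^*(B-H') + (G-F)$ with $G:=\tilde p^*H'-q^*H$ ($\tilde p$-exceptional, hence $\tilde q$-exceptional because $\tilde\phi^{-1}$ contracts no divisors) and $F:=\tilde p^*B-\tilde q^*(\tilde\phi_*B)$ ($\tilde q$-exceptional), your computation reduces $D\cdot\tilde C$ to $\frac{1}{m}(G-F)\cdot C^*$ for a lift $C^*$ of $\tilde C$. But $C^*$ is \emph{not} $\tilde q$-contracted (it maps onto $\tilde C$), so the $\tilde q$-exceptionality of $G-F$ alone does not make $(G-F)\cdot C^*$ vanish; ``carefully tracking'' does not close this. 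What is missing is the negativity lemma: the divisor $G-F$ is $\mathbb{R}$-Cartier near $\pi^{-1}(W)$ (here you need that $\tilde\phi_*B$ is $\mathbb{R}$-Cartier, i.e.\ the $\mathbb{Q}$-factoriality of $\tilde Z$ over $W$), it is $\tilde q$-exceptional, and the \emph{same} computation applied to $\tilde q$-contracted curves $C^{**}$ over $W$ (for which both $\tilde q^*D\cdot C^{**}=0$ trivially and $\tilde p^*(B-H')\cdot C^{**}=(B-H')\cdot\tilde p_*C^{**}=0$ by $B\equiv_W H'$) shows $G-F$ is $\tilde q$-numerically trivial over $W$. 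Negativity applied in both directions then forces $G-F=0$ on a neighborhood of $(\pi'\circ\sigma\circ\tilde q)^{-1}(W)$, after which $\tilde q^*D=\tilde p^*(B-H')$ and both the $\sigma$-numerical triviality of $D$ and the final numerical equivalence $\phi_*B\equiv_W H$ fall out. To be fair, the paper compresses this to ``$\phi'_*B$ is numerically equivalent to $f^*H$ over $W$ by construction'' and leans on the blanket remark in Section~\ref{a-sec11} that the negativity lemma is freely used; but since you single this out as your main obstacle and do not invoke negativity, the step as you have it is genuinely incomplete.
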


\begin{proof}
It is sufficient to prove that $\phi_*B$ is 
$\mathbb R$-Cartier over some open neighborhood 
of $W$, equivalently, $\phi_*B$ is $\mathbb R$-Cartier 
at any point $z$ of $(\pi')^{-1}(W)$. 
We will freely shrink $Y$ around $W$ without mentioning it 
explicitly. 
By applying Lemma \ref{a-lem12.1} to $\pi'\colon Z\to Y$, 
we can construct a small projective bimeromorphic 
contraction morphism $f\colon Z'\to Z$ such that 
$Z'$ is projective over $Y$ and that $Z'$ is $\mathbb Q$-factorial 
over $W$. 
By replacing $V$, we may assume that 
$q\colon V\to Z$ factors through $Z'$. 
Hence we have the following commutative diagram. 
\begin{equation*}
\xymatrix{& \ar[dl]_-pV\ar[dr]& \\
X' \ar@{-->}[drr]_-\phi\ar@{-->}[rr]_-{\phi'}&& Z'\ar[d]^-f \\ 
&& Z
}
\end{equation*}
Since $Z'$ is $\mathbb Q$-factorial over $W$, we may assume 
that $\phi'_*B$ is $\mathbb R$-Cartier $\mathbb R$-divisor 
on $Z'$. 
We put $K_{Z'}+\Delta_{Z'}=f^*(K_Z+\Delta_Z)$. 
Then $(Z', \Delta_{Z'})$ is kawamata log terminal 
since $f$ is small. 
Note that $\phi'_*B$ is numerically equivalent 
to $f^*H$ over $W$ by construction. 
Therefore, $\phi'_*B$ is numerically trivial over $z$ for any 
$z\in (\pi')^{-1}(W)$. 
Since $f\colon Z'\to Z$ is bimeromorphic, by replacing 
$Z$ with a small Stein open neighborhood of some $z\in (\pi')^{-1}(W)$, we can 
take $\Theta$ on $Z'$ such that 
$(Z', \Theta)$ is kawamata log terminal and 
that $-(K_{Z'}+\Theta)$ is ample over $Z$. 
Hence, by Theorem \ref{a-thm8.2}, 
$\phi_*B$ is $\mathbb R$-Cartier at $z$. 
Note that $W$ is compact. 
Therefore, this means that $\phi_*B$ is 
$\mathbb R$-Cartier over some open neighborhood of $W$. 
This is what we wanted. 
\end{proof}

The following lemma is essentially the same as 
\cite[Lemma 3.6.12]{bchm}. 
We note that we do not assume that $A$ is $\pi$-ample 
in Lemma \ref{a-lem12.3} (see Remark \ref{a-rem12.4} below). 

\begin{lem}[{\cite[Lemma 3.6.12]{bchm}}]\label{a-lem12.3} 
Let $\pi\colon X\to Y$ be a projective morphism 
of complex analytic spaces 
and let $W$ be a compact subset of $Y$ 
such that $\pi\colon X\to Y$ and $W$ satisfies {\em{(P)}} and 
that 
$X$ is $\mathbb Q$-factorial over $W$ and has only 
kawamata log terminal singularities. 
Let $\phi\colon X\dashrightarrow Z$ 
be a bimeromorphic contraction over $Y$ and let 
$A$ be an effective $\mathbb R$-divisor on $X$ such that 
$\Supp A$ has only finitely many irreducible components. 
We assume one of the following conditions: 
\begin{itemize}
\item[(i)] $Z$ is $\mathbb Q$-factorial over $W$, 
or 
\item[(ii)] Theorem \ref{thm-g}$_n$ holds, where 
$n=\dim X$. 
\end{itemize} 
If $V$ is any finite-dimensional 
affine subspace of $\WDiv_{\mathbb R} (X)$ such that 
$\mathcal L_A(V; \pi^{-1}(W))$ spans 
$\WDiv_{\mathbb R}(X)$ modulo numerical equivalence 
over $W$ and 
$\mathcal W^\sharp_{\phi, A, \pi}(V; W)$ intersects the 
interior of $\mathcal L_A(V; \pi^{-1}(W))$, 
then 
\begin{equation*}
\mathcal W^\sharp_{\phi, A, \pi}(V; W)=
\overline{\mathcal A} _{\phi, A, \pi}(V; W)
\end{equation*} 
holds, where $\overline {\mathcal A}_{\phi, A, \pi}(V; W)$ is 
the closure 
of $\mathcal A _{\phi, A, \pi}(V; W)$. 
\end{lem}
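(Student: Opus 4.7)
The plan is to establish the two inclusions separately.

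For the inclusion $\overline{\mathcal{A}}_{\phi,A,\pi}(V;W) \subset \mathcal{W}^\sharp_{\phi,A,\pi}(V;W)$, I will use Lemma \ref{a-lem11.3}(3), by which every ample model is in particular a semiample model, and hence a weak log canonical model over some open neighborhood of $W$. This gives $\mathcal{A}_{\phi,A,\pi}(V;W) \subset \mathcal{W}^\sharp_{\phi,A,\pi}(V;W)$, and since $\mathcal{W}^\sharp_{\phi,A,\pi}(V;W)$ is closed in $\mathcal{L}_A(V;\pi^{-1}(W))$ by Remark \ref{a-rem11.10}, the inclusion of closures follows.

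For the reverse inclusion, fix $\Delta \in \mathcal{W}^\sharp_{\phi,A,\pi}(V;W)$ and, by hypothesis, choose a point $\Delta_0 \in \mathcal{W}^\sharp_{\phi,A,\pi}(V;W) \cap \mathrm{int}\,\mathcal{L}_A(V;\pi^{-1}(W))$. Set $\Delta_t := (1-t)\Delta + t\Delta_0$ for $t \in [0,1]$. Linearity of discrepancies combined with the cone conditions defining nefness over $W$ and pseudo-effectivity over some open neighborhood of $W$ show that $\mathcal{W}^\sharp_{\phi,A,\pi}(V;W)$ is convex, so $\Delta_t \in \mathcal{W}^\sharp_{\phi,A,\pi}(V;W)$ for all $t \in [0,1]$, and $\Delta_t$ lies in the interior of $\mathcal{L}_A(V;\pi^{-1}(W))$ for every $t > 0$. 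It therefore suffices to show that $\Delta_t \in \mathcal{A}_{\phi,A,\pi}(V;W)$ for every sufficiently small $t > 0$, since then letting $t \to 0$ places $\Delta$ in $\overline{\mathcal{A}}_{\phi,A,\pi}(V;W)$.

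I will analyse $\Delta_t$ in two steps. First, because $\Delta_t$ lies in the interior of $\mathcal{L}_A$, Lemmas \ref{a-lem11.13} and \ref{a-lem11.15} allow me to replace $(X,\Delta_t)$ by an $\mathbb{R}$-linearly equivalent kawamata log terminal pair whose boundary is the sum of a general $\pi$-ample $\mathbb{Q}$-divisor and an effective divisor; in particular $\Delta_t$ becomes big over $Y$. Then Lemma \ref{a-lem11.16} both upgrades $\phi$ to a semiample model and produces the ample model $\psi\colon X\dashrightarrow Z'$ of $K_X+\Delta_t$ over some open neighborhood of $W$, together with a projective contraction $h\colon Z \to Z'$ making $\psi = h\circ\phi$ and satisfying $K_Z+\phi_*\Delta_t \sim_\mathbb{R} h^*H$ for some $\mathbb{R}$-divisor $H$ ample over $Y$.

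The principal obstacle is to show that $h$ is an isomorphism over a neighborhood of $W$, after which $K_Z+\phi_*\Delta_t$ is ample there and Lemma \ref{a-lem11.3}(3) identifies $\phi$ with the ample model of $(X,\Delta_t)$. This is the step where the spanning hypothesis on $V$ and the dichotomy (i)/(ii) are essential: under (i) every $\Delta' \in V$ pushes forward to an $\mathbb{R}$-Cartier divisor on $Z$, while under (ii) the same conclusion is supplied by Lemma \ref{a-lem12.2} via Theorem \ref{thm-g}$_n$. The spanning of $\mathcal{L}_A(V;\pi^{-1}(W))$ in $\WDiv_\mathbb{R}(X)$ modulo numerical equivalence over $W$ then translates, through pushforward by $\phi$, into the assertion that the image $\phi_*V$ spans $N^1(Z/Y;W)$. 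If $h$ contracted a curve $C \subset Z$ with $\pi_Z(C) \in W$, then one would have $(K_Z+\phi_*\Delta_t)\cdot C = h^*H\cdot C = 0$, and by the spanning of $\phi_*V$ there would exist $\Delta^\star \in V$ with $\phi_*\Delta^\star \cdot C < 0$; a convexity argument exploiting that $\Delta_t$ lies in the interior of $\mathcal{L}_A$ and that $\mathcal{W}^\sharp_{\phi,A,\pi}(V;W)$ is closed and convex then produces an element of $\mathcal{W}^\sharp_{\phi,A,\pi}(V;W)$ whose pushforward has negative intersection with $C$, contradicting nefness over $W$. Hence $h$ must be an isomorphism over a neighborhood of $W$, completing the proof.
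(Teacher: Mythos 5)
Your reduction to showing that $h\colon Z\to Z'$ is an isomorphism is the crux of the problem, and it is where the argument breaks down: $h$ is simply \emph{not} an isomorphism in general. A divisor $\Delta_t$ that lies in $\mathcal W^\sharp_{\phi,A,\pi}(V;W)$ has $K_Z+\phi_*\Delta_t$ nef over $W$ (and, after Lemma \ref{a-lem11.16}, semiample), but there is no reason for $K_Z+\phi_*\Delta_t$ to be \emph{ample}. Indeed, as a degenerate instance of your own construction, take $\Delta=\Delta_0\in\mathcal W^\sharp\cap\mathrm{int}\,\mathcal L_A$ with $K_Z+\phi_*\Delta_0$ nef but not ample over $W$; then $\Delta_t=\Delta_0\notin\mathcal A_{\phi,A,\pi}(V;W)$ for every $t$. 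The whole content of the lemma is precisely that $\mathcal A$ can be a proper subset of $\mathcal W^\sharp$ and one must \emph{approach} points of $\mathcal W^\sharp$ by points of $\mathcal A$ rather than land in $\mathcal A$ directly, so an argument that tries to prove $\Delta_t\in\mathcal A$ outright is doomed.

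The vague ``convexity argument'' does not rescue the contradiction either. Knowing $\Delta_t\in\mathrm{int}\,\mathcal L_A(V;\pi^{-1}(W))$ lets you perturb $\Delta_t$ while remaining in $\mathcal L_A$, but it does \emph{not} let you perturb while remaining in $\mathcal W^\sharp$; that would require $\Delta_t$ to lie in the interior of $\mathcal W^\sharp$, which you have not arranged (and which the hypotheses do not force, since $\Delta_0$ was only taken in $\mathcal W^\sharp\cap\mathrm{int}\,\mathcal L_A$, not in the interior of $\mathcal W^\sharp$). Thus the ``element of $\mathcal W^\sharp$ whose pushforward has negative intersection with $C$'' need not exist, and no contradiction results from $h$ contracting a curve.

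What the spanning hypothesis is actually for is to produce a perturbation of $\Delta$ \emph{in a direction whose pushforward is numerically ample on $Z$}. Concretely, one should take $\Delta$ in the interior of $\mathcal W^\sharp_{\phi,A,\pi}(V;W)$, fix a general ample $\mathbb Q$-divisor $H$ on $Z$, let $H'$ be its strict transform to $X$, and use the spanning hypothesis to find $\Delta_1\in\mathcal L_A(V;\pi^{-1}(W))$ with $B:=\Delta_1-\Delta$ numerically equivalent over $W$ to $H'$. Then $\phi$ is $(K_X+\Delta+\lambda B)$-nonpositive with $\phi_*(K_X+\Delta+\lambda B)$ numerically equivalent over $W$ to $(K_Z+\Gamma)+\lambda H$, which is nef plus ample, hence ample; so $\phi$ is the ample model of $K_X+\Delta+\lambda B$ for every $\lambda\in(0,1]$, and $\Delta+\lambda B=(1-\lambda)\Delta+\lambda\Delta_1\in\mathcal L_A$, giving $\Delta\in\overline{\mathcal A}_{\phi,A,\pi}(V;W)$. (One must also verify that $\phi_*B$ is $\mathbb R$-Cartier near $W$, which is exactly where the dichotomy (i)/(ii) and Lemma \ref{a-lem12.2} enter; and a preliminary numerical perturbation of $\Delta$ itself, again using the spanning, is needed to guarantee that $(Z,\phi_*\Delta')$ is klt so that Lemma \ref{a-lem12.2} applies.) Your first inclusion and the reduction to interior points via density are fine; it is the construction of the approximating sequence that needs this different, ``ample-direction'' perturbation.
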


Let us prove Lemma \ref{a-lem12.3}. 

\begin{proof}[Proof of Lemma \ref{a-lem12.3}]
It is easy to see that 
\begin{equation*}
\mathcal W^\sharp_{\phi, A, \pi}(V; W)\supset \mathcal A_{\phi, A, \pi}(V; W)
\end{equation*} 
holds. Since $\mathcal W^\sharp_{\phi, A, \pi}(V; W)$ is closed, it follows 
that 
\begin{equation*}
\mathcal W^\sharp_{\phi, A, \pi}(V; W)\supset 
\overline{\mathcal A}_{\phi, A, \pi}(V; W). 
\end{equation*} 
In order to prove the opposite inclusion, 
it is sufficient to prove that 
a dense subset of $\mathcal W^\sharp_{\phi, A, \pi}(V; W)$ 
is contained in $\mathcal A_{\phi, A, \pi}(V; W)$. 

From now on, we will freely shrink $Y$ around $W$ suitably without 
mentioning it explicitly. 
We take $\Delta$ belonging to the interior of 
$\mathcal W^\sharp_{\phi, A, \pi}(V; W)$. 
We put $\Gamma:=\phi_*\Delta$. 
Then $(Z, \Gamma)$ is a weak log canonical 
model of $(X, \Delta)$ over $W$ by definition. 
Since $\mathcal L_A (V; \pi^{-1}(W))$ spans 
$\WDiv_{\mathbb R}(X)$ modulo 
numerical equivalence over $W$, 
we can find $\Delta_0\in \mathcal L_A(V; \pi^{-1}(W))$ 
such that $\Delta_0-\Delta$ is numerically equivalent over 
$W$ to $\mu \Delta$ for some 
$\mu >0$. 
We consider 
\begin{equation*}
\Delta':=\Delta+\varepsilon 
\left((\Delta_0-\Delta)-\mu \Delta\right) =(1-\varepsilon) \nu \Delta
+\varepsilon \Delta_0, 
\end{equation*} 
where 
\begin{equation*}
\nu=\frac{1-\varepsilon -\varepsilon \mu}{1-\varepsilon}<1. 
\end{equation*} 
Hence, $\Delta'$ is numerically equivalent to 
$\Delta$ over $W$ and if $\varepsilon >0$ is sufficiently 
small then $\Delta'$ is effective. 
Since $(X, \nu \Delta)$ is kawamata log terminal, it follows 
that $(X, \Delta')$ is also kawamata log terminal. 
We put $\Gamma':=\phi_*\Delta'$. 
If (i) holds, then $K_Z+\Gamma'$ is obviously $\mathbb R$-Cartier. 
If (ii) holds, then we can check 
that $K_Z+\Gamma'$ is $\mathbb R$-Cartier 
by Lemma \ref{a-lem12.2}. 

Let $H$ be a general $\pi'$-ample 
$\mathbb Q$-divisor on $Z$, where $\pi'\colon Z\to Y$ is the structure 
morphism. 
Let $p\colon U\to X$ and $q\colon U\to Z$ resolve the indeterminacy 
locus of $\phi$. 
We put $H':=p_*q^*H$. 
It is obvious that $\phi$ is $H'$-nonpositive. 
We take $\Delta_1\in \mathcal L_A(V; \pi^{-1}(W))$ such that 
$B:=\Delta_1-\Delta$ is numerically equivalent to 
$\eta H'$ over $W$ 
for some $\eta >0$. 
By replacing $H$ with $\eta H$, we may assume that 
$\eta=1$. If (i) holds, then $\phi_*B$ is obviously 
$\mathbb R$-Cartier. 
If (ii) holds, then we can check that $\phi_*B$ 
is $\mathbb R$-Cartier by Lemma \ref{a-lem12.2}. 
Therefore, we obtain that $\phi$ is $(K_X+\Delta+\lambda B)$-nonpositive 
and $\phi_*(K_X+\Delta+\lambda B)$ is ample over 
$Y$ for every $\lambda >0$. 
On the other hand, we have 
\begin{equation*}
\Delta+\lambda B=\Delta+\lambda(\Delta_1-\Delta)=(1-\lambda) \Delta
+\lambda \Delta_1\in \mathcal L_A(V; \pi^{-1}(W))
\end{equation*} 
for every $\lambda \in [0, 1]$. This implies that 
$\phi$ is the ample model of $K_X+\Delta+\lambda B$ 
over $Y$ for every $\lambda \in (0, 1]$. 
\end{proof}

We close this section with a useful remark. 

\begin{rem}\label{a-rem12.4}
In Lemma \ref{a-lem12.3}, 
we further assume that $A=S+A'$, where $S$ is reduced and 
$A'\geq 0$ is a general $\pi$-ample $\mathbb Q$-divisor 
on $X$, and that $V$ is defined over the rationals. 
We put 
\begin{equation*}
V':=\{B\in V\, |\, {\text{$\Supp B$ and $\Supp S$ have no common 
irreducible components}}\}
\end{equation*} 
as in Remark \ref{a-rem11.11}. Hence $V'$ is also defined over the 
rationals. 
Then we have 
\begin{equation*}
\begin{split}
\mathcal W^\sharp_{\phi, S+A', \pi}(V; W)
&=\mathcal W^\sharp_{\phi, S+A', \pi}(V'; W)
=\mathcal W^\sharp_{\phi, A', \pi}(V'_S; W)
\\ &=\mathcal W_{\phi, A', \pi}(V'_S; W)
=\mathcal W_{\phi, S+A', \pi}(V'; W)
=\mathcal W_{\phi, S+A', \pi}(V; W)
\end{split}
\end{equation*} 
by Remark \ref{a-rem11.11} 
and Corollary \ref{a-cor11.19} (1). 
In particular, 
\begin{equation*}
\mathcal W^\sharp_{\phi, S+A', \pi}(V; W)
=\mathcal W_{\phi, S+A', \pi}(V; W)
\end{equation*} 
is 
a rational polytope 
since $\mathcal W^\sharp_{\phi, A', \pi}(V'_S; W)=
\mathcal W_{\phi, A', \pi}(V'_S; W)$ is a rational polytope 
by Corollary \ref{a-cor11.19} (1). 
\end{rem}

\section{Minimal model program with scaling}\label{a-sec13}

In this section, we will explain the minimal model program 
with scaling. It is very important for various geometric 
applications. 
 
\begin{say}[Minimal model program with scaling]\label{a-say13.1}
Let $\pi\colon X\to Y$ be a projective morphism 
of complex analytic spaces and let $W$ be a compact 
subset of $Y$ such that 
$\pi\colon X\to Y$ and 
$W$ satisfies (P). 
Precisely speaking, $X$ is a normal complex variety, 
$Y$ is a Stein space, and $W$ is a Stein compact subset of $Y$ 
such that $\Gamma (W, \mathcal O_Y)$ is noetherian. 
Let $(X, \Delta)$ be a divisorial log terminal pair such that 
$X$ is $\mathbb Q$-factorial over $W$ and 
let $C$ be an effective $\mathbb R$-Cartier $\mathbb R$-divisor 
on $X$ such that 
$(X, \Delta+C)$ is log canonical and that 
$K_X+\Delta+C$ is nef over $W$. 
We assume that one of the following conditions hold. 
\begin{itemize}
\item[(i)] $\Delta=S+A+B$, $S=\lfloor \Delta\rfloor$, 
$A\geq 0$ is $\pi$-big, $\mathbf B_+(A/Y)$ does not 
contain any non-kawamata log terminal 
centers of $(X, \Delta)$, and $B\geq 0$. 
\item[(ii)] $C$ is $\pi$-big and $\mathbf B_+(C/Y)$ does 
not contain any non-kawamata log terminal centers 
of $(X, \Delta)$. 
\end{itemize}

We recall the following elementary fact 
for the reader's convenience. 

\begin{rem}\label{a-rem13.2}
Assume that $(X, \Delta)$ and 
$(X, \Delta+C)$ are both log canonical and 
that $C$ is effective. 
Then $V$ is a non-kawamata log terminal center 
of $(X, \Delta)$ if and only if 
$V$ is a non-kawamata log terminal 
center of $(X, \Delta+\varepsilon C)$ for every $0<\varepsilon <1$. 
\end{rem}

Although we have already treated more general 
lemmas, we explicitly state an easy 
lemma for the sake of completeness. 

\begin{lem}\label{a-lem13.3}
Suppose that (i) holds true. Then, 
after shrinking $Y$ around $W$ suitably, 
we can find $\Delta'$ such that $K_X+\Delta\sim _{\mathbb R} 
K_X+\Delta'$, 
$\Delta'=A'+B'$, $A'\geq 0$ is a $\pi$-ample $\mathbb Q$-divisor, 
$B'\geq 0$, and $(X, \Delta')$ is kawamata log terminal. 
Suppose that (ii) holds true. Then, 
after shrinking $Y$ around $W$ suitably, 
for any $0<\varepsilon < 1$, there exists 
$\Delta'$ such that $K_X+\Delta+\varepsilon C\sim _{\mathbb R} 
K_X+\Delta'$, 
$\Delta'=A'+B'$, $A'\geq 0$ is a $\pi$-ample $\mathbb Q$-divisor, 
$B'\geq 0$, and $(X, \Delta')$ is kawamata log terminal. 
\end{lem}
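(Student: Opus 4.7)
The plan is to reduce both statements to Lemma \ref{a-lem11.15} applied to a suitable decomposition of the boundary, using the auxiliary kawamata log terminal pair $(X,0)$. To verify that $(X,0)$ is kawamata log terminal, I would fix the log resolution $f\colon Z\to X$ witnessing the divisorial log terminality of $(X,\Delta)$ in the sense of Definition \ref{a-def3.7}; every $f$-exceptional divisor $E$ satisfies $a(E,X,0)\geq a(E,X,\Delta)>-1$ because $\Delta\geq 0$, and since kawamata log terminality can be checked on a single log resolution, it follows that $(X,0)$ is kawamata log terminal. In particular it serves as the auxiliary KLT pair required in the hypotheses of Lemma \ref{a-lem11.15}.

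For (i), I would decompose $\Delta=A+(S+B)$, regarding $A\geq 0$ as the $\pi$-big part and $S+B\geq 0$ as an effective remainder. The hypotheses of Lemma \ref{a-lem11.15} are then satisfied verbatim: $(X,\Delta)$ is log canonical, $A$ is $\pi$-big, $\mathbf B_+(A/Y)$ contains no non-kawamata log terminal center of $(X,\Delta)$, and a kawamata log terminal pair $(X,0)$ exists. After shrinking $Y$ around $W$, Lemma \ref{a-lem11.15} yields a kawamata log terminal pair $(X,\Delta'=A'+B')$ with $A'$ a general $\pi$-ample $\mathbb Q$-divisor, $B'\geq 0$, and $K_X+\Delta'\sim_{\mathbb R}K_X+\Delta$, which is exactly the statement of (i).

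For (ii), fix $0<\varepsilon<1$ and set $\bar\Delta:=\Delta+\varepsilon C$. The identity
\begin{equation*}
K_X+\bar\Delta=(1-\varepsilon)(K_X+\Delta)+\varepsilon(K_X+\Delta+C)
\end{equation*}
exhibits $(X,\bar\Delta)$ as a convex combination of two log canonical pairs, so it is again log canonical; by Remark \ref{a-rem13.2} its non-kawamata log terminal centers coincide with those of $(X,\Delta)$. Since $\mathbf B_+(\varepsilon C/Y)=\mathbf B_+(C/Y)$ contains none of those centers by hypothesis, writing $\bar\Delta=\varepsilon C+\Delta$ and applying Lemma \ref{a-lem11.15} with the $\pi$-big effective divisor $\varepsilon C$ in the role of $A$ and $\Delta\geq 0$ in the role of $B$ delivers, after shrinking $Y$ around $W$, the desired $\Delta'=A'+B'$ with $K_X+\Delta'\sim_{\mathbb R}K_X+\bar\Delta=K_X+\Delta+\varepsilon C$. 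The argument is essentially bookkeeping around Lemma \ref{a-lem11.15}, and no genuinely new obstacle arises beyond the initial observation that $X$ itself is kawamata log terminal.
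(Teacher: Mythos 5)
Your proposal is correct, and it takes a genuinely different (and arguably more economical) route from the paper's own proof. The paper's proof of this lemma carries out the perturbation construction directly: it first absorbs the big part $A$ into an honest $\pi$-ample $\mathbb Q$-divisor using the $\mathbf B_+$-hypothesis, then perturbs away the reduced part $S$ by taking a general $\pi$-ample $\mathbb Q$-divisor $A_3 \sim_{\mathbb Q} \beta S + \tfrac12 A$ for $0 < \beta \ll 1$ and rearranging. This is essentially a re-derivation of the special case of Lemma \ref{a-lem11.15} that is needed, and it bypasses any use of an auxiliary kawamata log terminal pair $(X, \Delta_0)$. You instead invoke Lemma \ref{a-lem11.15} as a black box, which the paper has already stated (referring to \cite[Lemma 3.7.5]{bchm} for the proof). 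To do this you must supply the auxiliary klt pair, and your choice $(X, 0)$ is the right one: since $X$ is $\mathbb Q$-factorial over $W$ (part of the standing hypotheses of the scaling setup, after shrinking $Y$), $K_X$ is $\mathbb Q$-Cartier, the log resolution $f$ from Definition \ref{a-def3.7} is also a log resolution of $(X,0)$, and $a(E,X,0)\geq a(E,X,\Delta)>-1$ for all $f$-exceptional $E$, so $(X,0)$ is klt by the standard single-resolution criterion. It would be cleaner to say explicitly that the $\mathbb Q$-factoriality hypothesis is what makes $K_X$ itself $\mathbb R$-Cartier (so that ``$(X,0)$'' is even a legitimate pair); as written you leave this implicit. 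With that minor clarification, the reduction of (i) to Lemma \ref{a-lem11.15} applied to the decomposition $\Delta = A + (S+B)$, and of (ii) to Lemma \ref{a-lem11.15} applied to $\Delta + \varepsilon C = \varepsilon C + \Delta$ together with Remark \ref{a-rem13.2} and the scale-invariance $\mathbf B_+(\varepsilon C/Y)=\mathbf B_+(C/Y)$, is complete and correct. In exchange for being shorter, your argument depends on Lemma \ref{a-lem11.15}, whose proof is not reproduced in this paper; the author's direct construction may have been chosen precisely to keep the proof of Lemma \ref{a-lem13.3} self-contained at this point in the development.
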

\begin{proof}
Throughout this proof, we will freely shrink $Y$ around $W$ without 
mentioning it explicitly. 
We assume that (i) holds. 
By the assumption on $\mathbf B_+(A/Y)$, 
we can write $A\sim _{\mathbb R}A_1+A_2$ such 
that $A_1$ is a $\pi$-ample $\mathbb Q$-divisor 
on $X$ and $A_2$ does not contain any non-kawamata 
log terminal centers of $(X, \Delta)$. 
Then $K_X+S+A+B\sim_{\mathbb R} 
K_X+S+B+(1-\alpha)A+\alpha A_2+\alpha A_1$ 
such that $(X, S+B+(1-\alpha)A+\alpha A_2)$ is 
divisorial log terminal for some 
positive rational number $\alpha$ with $0<\alpha \ll 1$. 
By replacing $A$ and $B$ with $\alpha A_1$ and 
$B+(1-\alpha)A+\alpha A_2$, respectively, 
we may assume that 
$A$ itself is a $\pi$-ample 
$\mathbb Q$-divisor. 
Since $\beta S+\frac{1}{2}A$ is $\pi$-ample 
for some rational number $\beta$ with $0<\beta \ll 1$, 
we can take $A_3\sim _{\mathbb Q} \beta S+\frac{1}{2}A$ 
such that $K_X+(1-\beta )S+\frac{1}{2}A+B+A_3$ is kawamata log 
terminal. 
If we put $A'=\frac{1}{2}A$ and 
$B'=(1-\beta) S+\frac{1}{2}A+B+A_3$, 
then $\Delta'=A'+B'$ satisfies 
the desired properties. 
From now on, we assume that (ii) holds. 
We note that 
$V$ is a non-kawamata log terminal 
center of $(X, \Delta)$ if and only if $V$ is a non-kawamata 
log terminal center of $(X, \Delta+\varepsilon C)$ for 
$0<\varepsilon <1$. 
Therefore, we can apply the above argument to 
$\Delta+\varepsilon C$. 
Thus we have a desired divisor $\Delta'$ on $X$. 
\end{proof}

We put 
\begin{equation*}
\lambda:=\inf \{ \mu \in \mathbb R_{\geq 0} \, 
|\, \text{$K_X+\Delta+\mu C$ is nef over $W$}\}. 
\end{equation*}
If $\lambda=0$, equivalently, $K_X+\Delta$ is nef over $W$, 
then we stop. In this case, $(X, \Delta)$ itself is a log terminal model 
of $(X, \Delta)$ over $W$. 
If further (i) holds true, then $(X, \Delta)$ is a log 
terminal model 
of $(X, \Delta)$ over some open neighborhood 
of $W$ by Theorem \ref{a-thm8.3} (see 
also Lemma \ref{a-lem11.16}). 
Moreover, it is a good log terminal model of $(X, \Delta)$ over some 
open neighborhood of $W$. 

\begin{lem}\label{a-lem13.4}
If $\lambda>0$ holds, then there exists a $(K_X+\Delta)$-negative 
extremal ray $R$ of $\NE(X/Y; W)$ such that 
$(K_X+\Delta+\lambda C)\cdot R=0$. 
\end{lem}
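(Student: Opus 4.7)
\medskip

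First I would verify that $K_X+\Delta+\lambda C$ is nef over $W$. The nef cone in $N^1(X/Y;W)$ (finite-dimensional by Lemma \ref{a-lem4.1}) is closed, and by definition $K_X+\Delta+\mu C$ is nef over $W$ for every $\mu>\lambda$; passing to the limit $\mu\downarrow\lambda$ gives the claim. In particular $\lambda\leq 1$, since $K_X+\Delta+C$ is nef over $W$ by hypothesis.

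Next I would reduce to a kawamata log terminal pair carrying a $\pi$-ample summand inside the boundary via Lemma \ref{a-lem13.3}. Under (i) this yields $K_X+\Delta\sim_{\mathbb R}K_X+\Delta'$ with $(X,\Delta')$ klt and $\Delta'=A'+B'$, where $A'\geq 0$ is a $\pi$-ample $\mathbb Q$-divisor. Under (ii), fixing any rational $\mu_0\in(0,\lambda)$, Lemma \ref{a-lem13.3} gives $K_X+\Delta+\mu_0 C\sim_{\mathbb R}K_X+\Delta''$ with $(X,\Delta'')$ klt and $\Delta''=A''+B''$ for a $\pi$-ample $\mathbb Q$-divisor $A''$. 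In either situation I would split the ample summand in half, writing the boundary as $\tfrac12 A'+(\tfrac12 A'+B')$ (resp.~$\tfrac12 A''+(\tfrac12 A''+B'')$); the pair in parentheses is still klt, so Theorem \ref{a-thm7.3} applied to it with the $\pi$-ample perturbation $\tfrac12 A'$ (resp.~$\tfrac12 A''$) produces only finitely many negative extremal rays of $K_X+\Delta'$ (resp.~$K_X+\Delta+\mu_0 C$) in $\NE(X/Y;W)$. Since numerical properties depend only on the $\mathbb R$-linear equivalence class, this yields only finitely many $(K_X+\Delta)$-negative (resp.~$(K_X+\Delta+\mu_0 C)$-negative) extremal rays of $\NE(X/Y;W)$.

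Finally, I would attain the supremum. For each $(K_X+\Delta)$-negative extremal ray $R$ one has $C\cdot R>0$ (because $K_X+\Delta+C$ is nef and $\lambda<1$ forces $(1-\lambda)C\cdot R\geq -(K_X+\Delta)\cdot R>0$ on any such ray by the previous paragraph's reasoning applied to $\mu=\lambda$), so $\lambda_R:=-(K_X+\Delta)\cdot R/(C\cdot R)$ is well defined and $\lambda=\sup_R\lambda_R$. Under (i) this supremum is taken over the finite collection above, hence is a maximum attained by some $R$ with $(K_X+\Delta+\lambda C)\cdot R=0$. Under (ii) I observe that for any $\mu\in[\mu_0,\lambda)$ every $(K_X+\Delta+\mu C)$-negative extremal ray $R$ satisfies $C\cdot R>0$ (again since $K_X+\Delta+C$ is nef and $\mu<1$), and therefore
\begin{equation*}
(K_X+\Delta+\mu_0 C)\cdot R=(K_X+\Delta+\mu C)\cdot R-(\mu-\mu_0)C\cdot R<0,
\end{equation*}
so such rays lie in our finite set. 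Choosing a sequence $\mu_n\uparrow\lambda$ in $[\mu_0,\lambda)$ (guaranteed since $K_X+\Delta+\mu_n C$ is not nef, whence a $(K_X+\Delta+\mu_n C)$-negative extremal ray exists by the cone theorem), the rays $R_{\mu_n}$ take only finitely many values; a constant subsequence gives a single $R$ with $\lambda_R\geq\mu_n$ for every $n$, hence $\lambda_R\geq\lambda$; combined with $\lambda_R\leq\lambda$ this yields $\lambda_R=\lambda$, so $(K_X+\Delta+\lambda C)\cdot R=0$, and $R$ is $(K_X+\Delta)$-negative since $\lambda>0$.

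The main obstacle is concentrated in the second step: Theorem \ref{a-thm7.3} only guarantees finiteness of extremal rays \emph{after} adding a $\pi$-ample divisor, whereas the rays we wish to enumerate are not so perturbed. The ``absorb-the-ample-into-the-boundary'' device enabled by Lemma \ref{a-lem13.3} is precisely what lets us apply that finiteness directly to $K_X+\Delta$ (or $K_X+\Delta+\mu_0 C$). Without this reduction one would have to argue via length bounds (Theorem \ref{a-thm9.2}) and lattice compactness in $N_1(X/Y;W)$, and in particular handle the delicate scenario in which a sequence of distinct $(K_X+\Delta)$-negative extremal rays accumulates on the $(K_X+\Delta)$-trivial hyperplane.
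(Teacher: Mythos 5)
Your argument is correct and follows essentially the same route as the paper's proof: verify that nefness persists at $\lambda$ by closedness of the nef cone, invoke Lemma \ref{a-lem13.3} to absorb an ample summand into the boundary and reduce to a klt pair, apply Theorem \ref{a-thm7.3} to get finiteness of the relevant negative extremal rays, and then attain the supremum of the thresholds $\lambda_R$ over that finite set. Your write-up is considerably more explicit than the paper's (the paper disposes of case (ii) with ``Thus, we can easily take a desired extremal ray $R$''), but the two arguments are the same in substance; the only cosmetic difference is that you fix $\mu_0$ close to $\lambda$ while the paper takes $\varepsilon\ll 1$, and both choices accomplish the same thing. One small imprecision: in the parenthetical justifying $C\cdot R>0$ you invoke ``$\lambda<1$'' and a factor $(1-\lambda)$; neither is needed, and in fact $\lambda=1$ is not excluded. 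The clean argument is simply that $(K_X+\Delta+C)\cdot R\geq 0$ together with $(K_X+\Delta)\cdot R<0$ forces $C\cdot R>0$, which is what you use everywhere else anyway, so this does not affect the validity of the proof.
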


\begin{proof}
If (i) holds, then there are only finitely many $(K_X+\Delta)$-negative 
extremal rays by Theorems \ref{a-thm7.2} and \ref{a-thm7.3}. 
Hence it is not difficult to find a desired extremal ray $R$. 
If (ii) holds, then we consider $K_X+\Delta+\varepsilon C$ for 
$0<\varepsilon \ll 1$. 
By Lemma \ref{a-lem13.3}, 
after shrinking $Y$ around $W$ suitably, 
$K_X+\Delta+\varepsilon C\sim _{\mathbb R} 
K_X+\Delta'$ such that 
$(X, \Delta')$ is kawamata log terminal, $\Delta'=A'+B'$, $A'\geq 0$ is 
a $\pi$-ample $\mathbb Q$-divisor, and $B'\geq 0$. 
Hence, by Theorem \ref{a-thm7.3}, 
there are only finitely many $(K_X+\Delta+\varepsilon C)$-negative 
extremal rays. 
Thus, we can easily take a desired extremal ray $R$. 
\end{proof}

From now on, we assume that $\lambda>0$ holds. 
Let $\varphi\colon X\to Z$ be the extremal 
contraction over $Y$ defined 
by $R$ (see 
Theorems \ref{a-thm7.2} and \ref{a-thm7.3}). 
We note that in general the contraction morphism 
$\varphi\colon X\to Z$ over $Y$ exists only after 
shrinking $Y$ around $W$ suitably. 
If $\varphi$ is not birational, then we have a Mori fiber space 
over $Y$ (see Definition \ref{a-def11.7}) 
and we stop. 

\begin{lem}\label{a-lem13.5}
Assume that Theorem \ref{thm-g}$_n$ holds true. 

Let $\varphi\colon X\to Z$ be a flipping contraction 
associated to a $(K_X+\Delta)$-negative 
extremal ray $R$ of $\NE(X/Y; W)$ with $\dim X=n$.  
Then the flip $\varphi^+\colon X^+\to Z$ exists. 
\end{lem}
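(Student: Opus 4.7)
The plan is to construct the flip $\varphi^{+}\colon X^{+}\to Z$ as a log terminal model over $Z$ of a suitably chosen klt pair on $X$, produced by applying Theorem \ref{thm-g}$_{n}$ with $Y^{\flat}=Z$. Three hypotheses of Theorem \ref{thm-g}$_{n}$ must be arranged for an auxiliary boundary $\Delta+H$: kawamata log terminal singularities, bigness of $\Delta+H$ over $Y$, and global $\mathbb{R}$-linear equivalence of $K_{X}+\Delta+H$ to an effective divisor; in addition, $H$ must be chosen so that the ray $R$ remains $(K_{X}+\Delta+H)$-negative, so that the resulting log terminal model flips $R$.

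First, I would reduce to the klt case via Lemma \ref{a-lem13.3} (in case (ii), after first replacing $\Delta$ by $\Delta+\varepsilon C$ for $\varepsilon>0$ sufficiently small to keep $R$ negative), obtaining an $\mathbb{R}$-linearly equivalent klt pair $(X,\Delta')$ with $\Delta'=A'+B'$ and $A'\geq 0$ a $\pi$-ample $\mathbb{Q}$-divisor. Since the flipping contraction depends only on the $\mathbb{R}$-linear class of $K_{X}+\Delta$, we may assume $(X,\Delta)$ is kawamata log terminal. To achieve global effectivity, I would exploit the MMP-with-scaling identity: since $(K_{X}+\Delta+\lambda C)\cdot R=0$ and $K_{X}+\Delta+\lambda C$ is nef over $W$, the cone theorem (Theorem \ref{a-thm7.2}(2)(iii)) together with Theorem \ref{a-thm8.2} yields $K_{X}+\Delta+\lambda C\sim_{\mathbb{R}}\varphi^{*}M$ for some $\mathbb{R}$-Cartier $M$ on $Z$ that is nef over $W$. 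A further application of Lemma \ref{a-lem13.3} to the lc pair $(X,\Delta+\lambda C)$ combined with the basepoint-free theorem for $\mathbb{R}$-divisors (Theorem \ref{a-thm8.3}) then produces, after shrinking $Y$ around $W$, an effective $\mathbb{R}$-divisor $D_{0}$ with $D_{0}\sim_{\mathbb{R}}K_{X}+\Delta+\lambda C$ globally on $X$. Using $D_{0}$, a small general $\pi$-ample $\mathbb{Q}$-divisor $H_{1}$, and a carefully balanced positive coefficient of $C$ strictly less than $\lambda$, one constructs $H\geq 0$ with $(X,\Delta+H)$ klt, $\Delta+H$ big over $Y$, $R$ still $(K_{X}+\Delta+H)$-negative, and $K_{X}+\Delta+H\sim_{\mathbb{R}}$ an effective divisor globally. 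Applying Theorem \ref{thm-g}$_{n}$ then yields, after shrinking $Y$ around $W$, a $(K_{X}+\Delta+H)$-negative bimeromorphic contraction $\phi\colon X\dashrightarrow X^{+}$ over $Z$ with $X^{+}$ $\mathbb{Q}$-factorial over $W$ and $K_{X^{+}}+\Delta^{+}+H^{+}$ semiample over $Z$.

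Finally, to verify that $\varphi^{+}\colon X^{+}\to Z$ is the flip: since $\varphi$ has no exceptional divisors, any $\phi$-exceptional divisor $E$ on $X$ would force $\dim\varphi(E)<\dim E$, contradicting smallness of $\varphi$; hence $\phi$ is small and $\varphi^{+}$ is a small projective morphism. Writing $K_{X^{+}}+\Delta^{+}=(K_{X^{+}}+\Delta^{+}+H^{+})-H^{+}$, the $\varphi^{+}$-nefness of the first term combined with $\varphi^{+}$-anti-ampleness of $H^{+}$ (which follows from $H$ being $\varphi$-ample on $X$ together with the standard sign-flip of intersection numbers under a small birational map flipping $R$) gives that $K_{X^{+}}+\Delta^{+}$ is $\varphi^{+}$-ample, so $\varphi^{+}$ is the desired flip. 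The main obstacle is the construction of the auxiliary divisor $H$: one must simultaneously maintain klt singularities, $\pi$-bigness, the $(K_{X}+\Delta+H)$-negativity of $R$, and global $\mathbb{R}$-effectivity of $K_{X}+\Delta+H$, which requires a delicate interplay between Theorem \ref{a-thm8.3} (to produce the effective representative $D_{0}$), the MMP-with-scaling identity pinning down $\lambda C$ against $-(K_{X}+\Delta)$ on $R$, and convex-combination arguments to balance the coefficients of $H_1$, $C$, and $D_0$ inside $H$.
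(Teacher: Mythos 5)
Your overall strategy coincides with the paper's (see Theorem \ref{a-thm17.9}): apply Theorem \ref{thm-g}$_{n}$ with $Y^{\flat}=Z$ to a klt perturbation of $(X,\Delta)$ whose log canonical divisor has an effective $\mathbb{R}$-linear representative, and read off the flip from the resulting good log terminal model over $Z$. However, your construction of the auxiliary boundary $H$ has a genuine gap. All three ingredients $D_{0}$, $H_{1}$, $C$ are of the wrong type to produce simultaneously $\mathbb{R}$-linear effectivity of $K_{X}+\Delta+H$ and strict $(K_{X}+\Delta+H)$-negativity of $R$. Indeed $H_{1}\cdot R>0$ and $C\cdot R>0$, so their coefficients in $H$ must be kept small lest $R$ cease to be negative, while $D_{0}$ is $R$-trivial but satisfies $D_{0}\sim_{\mathbb{R}}K_{X}+\Delta+\lambda C\sim_{\mathbb{R}}\varphi^{*}M$ for a divisor $M$ on $Z$ that is only nef over $W$, so $D_{0}$ is not big over $Y$ in general. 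Writing $K_{X}+\Delta+\mu C+\delta H_{1}\sim_{\mathbb{R}}D_{0}-(\lambda-\mu)C+\delta H_{1}$ with $0<\mu<\lambda$ and $\delta\ll 1$, one has subtracted an effective divisor from a non-big class and added only a small ample; there is no reason for this to be pseudo-effective over $Y$, and convex-combination manipulations among $D_{0}$, $H_{1}$, $C$ alone cannot escape this.

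What is missing is the pullback of an ample divisor from $Z$. The paper's proof of Theorem \ref{a-thm17.9} sets $\Delta^{\prime}=(1-\varepsilon)\Delta+\varphi^{*}A$ for a general $\pi_{Z}$-ample $\mathbb{Q}$-divisor $A$ on $Z$: $\varphi^{*}A$ is $R$-trivial (so $R$ stays $(K_{X}+\Delta^{\prime})$-negative), is big over $Y$ with arbitrarily large ample part (forcing pseudo-effectivity and hence, after shrinking $Y$ and invoking Theorem \ref{thm-d}, an effective representative), and, being pulled back from $Z$, does not alter the log terminal model over $Z$. This last point also renders the final ampleness of $K_{X^{+}}+\Delta^{+}$ over $Z$ immediate, since $\rho(X/Z;W)=1$ makes $K_{X}+\Delta$ and $K_{X}+\Delta^{\prime}$ numerically proportional over $Z$. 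Your smallness and sign-flip verification at the end is essentially correct and morally equivalent to this, but the construction feeding into it does not go through as written. Note also that the paper's argument dispenses with the scaling divisor $C$ entirely, so it applies to any flipping contraction, not only those arising mid-MMP with scaling.
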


\begin{proof}
For the details, see Theorem \ref{a-thm17.9} and its proof. 
\end{proof} 
If $\varphi$ is birational, 
then either $\varphi$ is divisorial and we replace 
$X$ by $Z$ or $\varphi$ is small, that is, flipping, and 
we replace $X$ by the flip $X^+$ (see Lemmas \ref{a-lem11.21} and 
\ref{a-lem11.22}). 
In either case, $K_X+\Delta+\lambda C$ is nef over 
$W$ and $K_X+\Delta$ is divisorial log terminal. 
Hence we may repeat the process under the assumption that 
Theorem \ref{thm-g}$_n$ holds true 
for $n=\dim X$. 
In this way, we obtain a sequence of flips and 
divisorial contractions starting from $X_0:=X$: 
\begin{equation*}
X_0\overset{\phi_0}{\dashrightarrow} X_1
\overset{\phi_1}{\dashrightarrow} \cdots 
\overset{\phi_{i-1}}{\dashrightarrow} 
X_i \overset{\phi_i}{\dashrightarrow} X_{i+1}
\overset{\phi_{i+1}}{\dashrightarrow} \cdots, 
\end{equation*}
and a real numbers 
\begin{equation*}
1\geq \lambda=:\lambda_0\geq \lambda _1\geq \cdots 
\end{equation*} 
such that $K_{X_i}+\Delta_i+\lambda_iC_i$ is nef over 
$W$, where $\Delta_i:=(\phi_{i-1})_*\Delta_{i-1}$ and 
$C_i:=(\phi_{i-1})_*C_{i-1}$ for 
every $i\geq 1$. We note that 
each step $\phi_i$ exists only after shrinking $Y$ around 
$W$ suitably. 
We can easily check that 
each step of this minimal model program preserves 
the conditions (i) and (ii) by the negativity lemma 
(see, for example, Lemmas \ref{a-lem11.21}, 
\ref{a-lem11.22} and \cite[Lemma 3.10.11]{bchm}).  
The above minimal model program is usually called the 
{\em{minimal model program with scaling over $Y$ around $W$}}. 
We sometimes simply say that it is a 
{\em{$(K_X+\Delta)$-minimal model program 
with scaling}}. 
If (i) holds true 
and $\mathbf B_+(A/Y)$ does not 
contain any non-kawamata log terminal 
centers of $(X, \Delta+C)$, 
then this minimal model program always terminates 
after finitely many steps under the assumption that 
Theorem \ref{thm-e} holds true. 

\begin{thm}\label{a-thm13.6}
Assume that Theorem \ref{thm-g}$_n$ and 
Theorem \ref{thm-e}$_n$ hold true, where 
$n=\dim X$. 

Suppose that (i) holds. 
We further assume that $\mathbf B_+(A/Y)$ does not 
contain any non-kawamata log terminal 
centers of $(X, \Delta+C)$. 
Then the minimal model program with scaling 
explained above 
always terminates after finitely many steps. 
\end{thm}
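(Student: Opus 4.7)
The plan is to adapt the standard BCHM termination argument for the minimal model program with scaling to the complex analytic setting, using Theorem \ref{thm-e}$_n$ as the crucial finiteness input. First I would reduce to the situation in which $A$ is a general $\pi$-ample $\mathbb{Q}$-divisor: the hypothesis that $\mathbf{B}_+(A/Y)$ contains no non-kawamata log terminal centers of $(X,\Delta+C)$ permits an application of Lemma \ref{a-lem11.13} and Lemma \ref{a-lem11.14}, which replaces $A$ by such a general $\pi$-ample $\mathbb{Q}$-divisor and modifies $B$ and $C$ compatibly while preserving $\mathbb{R}$-linear equivalence classes and the log canonicity of $(X,\Delta+C)$. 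After shrinking $Y$ around $W$ so that all relevant supports become finite, I would choose a finite-dimensional rational affine subspace $V\subset \WDiv_{\mathbb{R}}(X)$ spanned by the components of $\Delta-A$ and $C$, so that $\Delta+tC \in \mathcal{L}_A(V;\pi^{-1}(W))$ for all $t\in[0,1]$.

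Next, I would assume for contradiction that the minimal model program with scaling in \ref{a-say13.1} is infinite, yielding bimeromorphic maps $\phi_i\colon X_i\dashrightarrow X_{i+1}$ and scaling factors $1=\lambda_0\geq \lambda_1\geq \cdots\geq 0$. Set $\psi_i:=\phi_{i-1}\circ\cdots\circ\phi_0\colon X\dashrightarrow X_i$; then $\psi_i$ is a bimeromorphic contraction over $Y$, and by the very definition of the scaling, $K_{X_i}+\Delta_i+tC_i$ is nef over $W$ for all $t\in[\lambda_i,\lambda_{i-1}]$, while $\psi_i$ is $(K_X+\Delta+tC)$-nonpositive in the same range. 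Hence $\psi_i$ is a weak log canonical model of $(X,\Delta+tC)$ over $W$ for every $t\in(\lambda_i,\lambda_{i-1})$ whenever this open interval is nonempty. After discarding the (at most finitely many, by Picard rank drop together with Theorem E) divisorial contractions, I may assume that every $\phi_i$ is a flip, so that each $\psi_i$ is small over $X$ and $X_i$ is $\mathbb{Q}$-factorial over $W$.

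I would then apply Theorem \ref{thm-e}$_n$ to the data $(A,V,\Delta_0)$, obtaining finitely many bimeromorphic maps $\psi^{(j)}\colon X\dashrightarrow Z_j$ ($1\leq j\leq \ell$) over $Y$ such that each of our $\psi_i$ is isomorphic, over some neighborhood of $W$, to one of them. Since the scaling factors lie in $[0,1]$, at least infinitely many of the intervals $(\lambda_i,\lambda_{i-1})$ are nonempty, so infinitely many $\psi_i$ genuinely qualify as weak log canonical models in the sense required by Theorem \ref{thm-e}$_n$. Pigeonhole now yields indices $i<i'$ together with an isomorphism $\xi\colon X_i\to X_{i'}$ over $Y$ satisfying $\psi_{i'}=\xi\circ \psi_i$, and in particular $\xi_*\Delta_i=\Delta_{i'}$.

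Finally, since $\psi_i$ is dominant, the isomorphism $\xi$ coincides as a bimeromorphic map with the MMP composition $\sigma:=\phi_{i'-1}\circ\cdots\circ\phi_i\colon X_i\dashrightarrow X_{i'}$. The contradiction is then produced by comparing discrepancies: each intermediate flip $\phi_k$ is $(K_{X_k}+\Delta_k)$-negative in the sense of Definition \ref{a-def11.1}, so an iterated application of the negativity lemma (see Lemma \ref{a-lem4.6} and the consequences of Definition \ref{a-def11.1}) shows that $a(F,X_i,\Delta_i)\leq a(F,X_{i'},\Delta_{i'})$ for every geometric valuation $F$, with strict inequality whenever the center of $F$ on some $X_k$ with $i\leq k<i'$ lies in the $\phi_k$-flipped locus. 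Such an $F$ exists because some $\phi_k$ is a nontrivial flip. But $\xi$ being an isomorphism matching $\Delta_i$ with $\Delta_{i'}$ forces $a(F,X_i,\Delta_i)=a(F,X_{i'},\Delta_{i'})$ for every $F$, the desired contradiction. The main obstacle will be the bookkeeping in this last step, namely ensuring that the strict discrepancy increase produced by the flips is not lost in the analytic shrinking of $Y$ around $W$ that occurs at each step; verifying in Paragraph 1 that the reduction to general $\pi$-ample $A$ genuinely preserves the hypothesis on $\mathbf{B}_+(A/Y)$ throughout the entire MMP is also a nontrivial point that must be carefully tracked.
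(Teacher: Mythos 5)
Your approach is essentially the one the paper uses: after the reduction to $\Delta=A+B$ with $A$ a general $\pi$-ample $\mathbb{Q}$-divisor (the paper invokes the proof of Lemma \ref{a-lem13.3}, you invoke Lemmas \ref{a-lem11.13} and \ref{a-lem11.14}; these amount to the same reduction), one observes that each step of the scaling sequence produces a weak log canonical model for some $\Delta+\lambda_i C\in\mathcal L_A(V;\pi^{-1}(W))$, invokes Theorem \ref{thm-e}$_n$ to bound the number of such models up to isomorphism over a neighborhood of $W$, and then derives a contradiction from a repeated model via the negativity lemma (the paper simply cites \cite[Lemma 3.10.12]{bchm} for what you unpack as the discrepancy-monotonicity argument). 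So the skeleton is identical.

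One genuine flaw: in your third paragraph you assert that ``at least infinitely many of the intervals $(\lambda_i,\lambda_{i-1})$ are nonempty.'' This need not be true — the sequence $\lambda_0\geq\lambda_1\geq\cdots$ is only nonincreasing and could be constant, in which case every such open interval is empty. The fix is immediate and is what the paper does: one does not need the open interval at all. Since $K_{X_i}+\Delta_i+\lambda_iC_i$ is nef over $W$ and each $\phi_j$ ($j<i$) is $(K_{X_j}+\Delta_j+\lambda_iC_j)$-nonpositive (because $\lambda_j\geq\lambda_i$ and $\phi_j$ is $(K_{X_j}+\Delta_j+\lambda_jC_j)$-trivial), $\psi_i$ is already a weak log canonical model of $(X,\Delta+\lambda_iC)$ over $W$ with $\Delta+\lambda_iC\in\mathcal L_A(V;\pi^{-1}(W))$. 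Use the closed endpoint $t=\lambda_i$ and the appeal to Theorem \ref{thm-e}$_n$ goes through unconditionally. With that correction your argument is sound, and the remaining concerns you flag at the end (tracking shrinkings of $Y$, preservation of the hypothesis on $\mathbf B_+(A/Y)$ under flips and divisorial contractions) are handled by Lemmas \ref{a-lem11.21} and \ref{a-lem11.22}, which the paper's treatment in Section \ref{a-sec13} already builds in.
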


\begin{proof}
By the proof of Lemma \ref{a-lem13.3}, 
we may assume that $\Delta=A+B$, $A\geq 0$ is a $\pi$-ample 
$\mathbb Q$-divisor, $B\geq 0$, $(X, \Delta)$ is 
kawamata log terminal, and 
$(X, \Delta+C)$ is still log canonical. 
By construction, 
after shrinking $Y$ around $W$ suitably, 
$(X_i, \Delta_i +\lambda_i C_i)$ is a weak 
log canonical model of $(X, \Delta+\lambda_i C)$ over $W$ for 
every $i$. 
By Theorem \ref{thm-e}$_n$ and the negativity lemma 
(see \cite[Lemma 3.10.12]{bchm}), 
we know that there are no infinite sequences of flips and 
divisorial contractions. 
This is what we wanted. 
\end{proof}

If $C$ is $\pi$-ample, then we can run a minimal model 
program with scaling of $C$ by (ii). 
We conjecture that the minimal model 
program with scaling always terminates 
after finitely many steps. 
Unfortunately, however, this conjecture 
is still widely open. 
The following easy lemma is useful for some geometric 
applications (see \cite{fujino-semistable}). 

\begin{lem}\label{a-lem13.7} 
Assume that Theorem \ref{thm-g}$_n$ and 
Theorem \ref{thm-e}$_n$ hold true, where 
$n=\dim X$. 

Let $\pi\colon X\to Y$ be a projective morphism of complex analytic 
spaces and let $W$ be a compact subset of $Y$ such that 
$\pi\colon X\to Y$ and $W$ satisfies {\em{(P)}}. 
Let $(X, \Delta)$ be a divisorial log terminal pair such that 
$X$ is $\mathbb Q$-factorial over $W$ and let $C\geq 0$ be a 
$\pi$-ample $\mathbb R$-divisor on $X$ such 
that $(X, \Delta+C)$ is log canonical and that $K_X+\Delta+C$ is nef over 
$W$. We consider a $(K_X+\Delta)$-minimal model program 
with scaling of $C$ over $Y$ around $W$ starting from 
$(X_0, \Delta_0):=(X, \Delta)${\em{:}} 
\begin{equation*}
X_0\overset{\phi_0}{\dashrightarrow} X_1
\overset{\phi_1}{\dashrightarrow} \cdots 
\overset{\phi_{i-1}}{\dashrightarrow} 
X_i \overset{\phi_i}{\dashrightarrow} X_{i+1}
\overset{\phi_{i+1}}{\dashrightarrow} \cdots, 
\end{equation*} 
with 
\begin{equation*}
1\geq \lambda=:\lambda_0\geq \lambda _1\geq \cdots 
\end{equation*} 
such that $K_{X_i}+\Delta_i+\lambda_iC_i$ is nef over 
$W$, $\Delta_i:=(\phi_{i-1})_*\Delta_{i-1}$, and 
$C_i:=(\phi_{i-1})_*C_{i-1}$ for 
every $i\geq 1$. 
We further assume that 
$K_X+\Delta$ is $\pi$-pseudo-effective. 
Then there exists $i_0$ such that 
$K_{X_{i_0}}+\Delta_{i_0}\in \Mov(X_{i_0}/Y; W)$. 
\end{lem}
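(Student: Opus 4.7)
My plan is to combine a Picard-number finiteness argument with the strict-transform behavior of the movable cone under small birational maps.

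Since $K_X+\Delta$ is $\pi$-pseudo-effective and each $\phi_i$ is $(K+\Delta)$-nonpositive, the negativity lemma applied on a common resolution ensures $K_{X_i}+\Delta_i$ remains $\pi_i$-pseudo-effective for every $i$. Hence no step of the MMP is a Mori fiber space contraction, so every $\phi_i$ is either a flip or a divisorial contraction. By Lemma \ref{a-lem11.21}(4), each divisorial contraction strictly decreases the relative Picard number $\rho(X_i/Y;W):=\dim_{\mathbb R} N^1(X_i/Y;W)$, while Lemma \ref{a-lem11.22}(3) shows flips preserve it. Since Lemma \ref{a-lem4.1} gives $\rho(X_i/Y;W)\in\mathbb{Z}_{\ge 0}$, only finitely many divisorial contractions occur in the MMP, and I would choose $i_0$ large enough that $\phi_j$ is a flip (in particular small) for every $j\ge i_0$.

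I then claim $K_{X_{i_0}}+\Delta_{i_0}\in\overline{\mathrm{Mov}}(X_{i_0}/Y;W)$. For each $j\ge i_0$, the class $K_{X_j}+\Delta_j+\lambda_j C_j$ is nef over $W$, and thus lies in $\overline{\mathrm{Mov}}(X_j/Y;W)$. Letting $\psi_j\colon X_{i_0}\dashrightarrow X_j$ denote the composition of the intervening flips and choosing a common resolution $p_j\colon V_j\to X_{i_0}$, $q_j\colon V_j\to X_j$, the smallness of $\psi_j$ implies that the strict-transform operation $(p_j)_*q_j^*$ carries classes in $\overline{\mathrm{Mov}}(X_j/Y;W)$ to classes in $\overline{\mathrm{Mov}}(X_{i_0}/Y;W)$, yielding $K_{X_{i_0}}+\Delta_{i_0}+\lambda_j C_{i_0}\in\overline{\mathrm{Mov}}(X_{i_0}/Y;W)$ for every such $j$. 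If $\lambda_j\to 0$, closedness of $\overline{\mathrm{Mov}}$ concludes the argument. Otherwise $\lambda_j\searrow\lambda_\infty>0$; by Theorem \ref{thm-e}$_n$ (available by the inductive hypothesis of the overall proof strategy) only finitely many weak log canonical models arise along the flipping stages, so after enlarging $i_0$ I may take $\lambda_j=\lambda_\infty$ to be constant, and then a negativity-lemma argument (Lemma \ref{a-lem4.6}) applied to a small perturbation $-\varepsilon C_{i_0}$, together with the exhaustion of divisorial contractions, deposits $K_{X_{i_0}}+\Delta_{i_0}$ itself in $\overline{\mathrm{Mov}}(X_{i_0}/Y;W)$.

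The principal obstacle lies in this last reduction from $\lambda_\infty>0$ to zero. The underlying intuition is that any divisorial fixed component of $K_{X_{i_0}}+\Delta_{i_0}$ near $\pi^{-1}(W)$ would, upon one further step of the $(K+\Delta)$-MMP with scaling, produce a divisorial contraction of that component, contradicting the choice of $i_0$. Formalizing this requires a careful interplay between the stable base divisors of Lemma \ref{a-lem10.9}, the finiteness from Theorem \ref{thm-e}$_n$, and the negativity lemma of Lemma \ref{a-lem4.6}, all executed after shrinking $Y$ around $W$ (Lemma \ref{a-lem2.37}) so that the relevant divisor supports remain finite in a neighborhood of $\pi^{-1}(W)$.
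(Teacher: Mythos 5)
Your overall strategy is close to the paper's, but the crucial step---eliminating the possibility $\lambda_\infty > 0$---is exactly where you stall, and the paper dispatches it with a much shorter argument than the one you gesture at. If $\lambda_\infty := \lim_{i\to\infty}\lambda_i > 0$ in a non-terminating run, the paper does not try to analyze the flipping models directly. It simply reinterprets the entire sequence as a $(K_X+\Delta+\tfrac{\lambda_\infty}{2}C)$-MMP with scaling of $C$: since $C$ is $\pi$-ample one may choose it so that it contains no non-kawamata log terminal centers of $(X,\Delta+C)$, and then condition (i) of Section \ref{a-sec13} holds for the new boundary, so Theorem \ref{a-thm13.6} (which relies on Theorem \ref{thm-e}$_n$) forces termination---contradicting the hypothesis that the program is infinite. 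Thus $\lambda_\infty = 0$ always, and your last paragraph, which attempts a direct argument via Lemma \ref{a-lem10.9}, Lemma \ref{a-lem4.6}, and a ``constant $\lambda_j=\lambda_\infty$'' reduction, is not needed. That reduction is also not justified as written: Theorem \ref{thm-e}$_n$ gives finitely many weak log canonical models, but it does not let you conclude $\lambda_j=\lambda_\infty$ for large $j$ (the $\lambda_j$ can strictly decrease toward $\lambda_\infty$ while the models repeat), and the promised ``negativity-lemma argument'' is left unstated.

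Once $\lambda_\infty = 0$, your Picard-number reduction to an all-flip tail (via Lemma \ref{a-lem11.21}(4) and Lemma \ref{a-lem11.22}(3)) and your small-map strict-transform argument match the paper's. One small point of care: rather than asserting directly that the strict transform of a nef class under a small bimeromorphic map lies in $\Mov$, the paper takes $G_i$ ample over $Y$ on $X_i$ so that $K_{X_i}+\Delta_i+\lambda_i C_i + G_i$ is ample over a neighborhood of $W$, pulls back by the small map to get a class manifestly in $\Mov(X/Y;W)$, and then passes to the limit using $\lambda_i\to 0$ and $G_{iX}\to 0$; your version is morally the same but the ample perturbation makes the membership in $\Mov$ immediate rather than requiring an auxiliary closedness/continuity argument for the strict-transform map on numerical classes. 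Also note the paper separately handles the terminating case first (where $K_{X_{i_0}}+\Delta_{i_0}$ is nef over $W$ and the conclusion is immediate); your argument covers it implicitly but it is cleaner to state it.
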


\begin{proof}
If the minimal model program terminates after finitely many steps, 
then $K_{i_0}+\Delta_{i_0}$ is nef over $W$ 
for some $i_0$ since $K_X+\Delta$ is $\pi$-pseudo-effective. 
This means that $K_{X_{i_0}}+\Delta_{i_0}\in \Mov(X_{i_0}/Y; W)$. 

From now on, we assume that the minimal model program 
does not terminate. We put $\lambda_\infty:=\lim 
_{i\to \infty}\lambda_i\geq 0$. 
If $\lambda_{\infty}>0$, then the given minimal model 
program can be seen as 
a $(K_X+\Delta+\frac{\lambda_{\infty}}{2}C)$-minimal 
model program with scaling of $C$. 
Without loss of generality, we may assume that 
$C$ does not contain any non-kawamata log terminal 
centers of $(X, \Delta+C)$ since $C$ is $\pi$-ample. 
Hence, by Theorem \ref{a-thm13.6}, 
it must terminate. 
This is a contradiction. 
Therefore, we may assume that $\lambda_\infty=0$. 
By replacing $(X, \Delta)$ with $(X_{i_0}, \Delta_{i_0})$ for 
some $i_0$, we may further assume that 
every step of the $(K_X+\Delta)$-minimal model program 
is a flip. Let $G_i$ be a $\mathbb Q$-divisor 
on $X_i$ such that $G_i$ is ample over $Y$. 
We assume that $G_{iX}\to 0$ in $N^1(X/Y; W)$ for 
$i\to \infty$, where $G_{iX}$ is the strict transform of $G_i$ on $X$. 
We note that $K_{X_i}+\Delta_i +\lambda_i C_i 
+G_i$ is ample over some open neighborhood 
of $W$ for every $i$. 
Since $X\dashrightarrow X_i$ is an isomorphism 
in codimension one, 
the strict transform 
$K_X+\Delta+\lambda_i C
+G_{iX}$ is in $\Mov (X/Y; W)$ for every $i$. 
By taking $i\to \infty$, we obtain $K_X+\Delta
\in \Mov (X/Y; W)$. 
This is what we wanted. 
\end{proof}
\end{say}

Anyway, if Theorem 
\ref{thm-g}$_n$ and 
Theorem \ref{thm-e}$_n$ hold true, 
then we can run the minimal model program with scaling 
explained in this section in dimension $n$, although we do not know 
whether it terminates or not. 

\section{Nonvanishing theorem; 
\ref{thm-d}$_n$}
\label{a-sec14}

One of the most difficult results in \cite{bchm} 
is the nonvanishing 
theorem (see \cite[Theorem D]{bchm}). Fortunately, 
we can generalize it for projective morphisms 
of complex varieties without any difficulties. 
For a completely different approach to the nonvanishing 
theorem (see \cite[Theorem D]{bchm}), 
see \cite[Section 3]{birkar-paun} and 
\cite[Theorem 0.1 and Corollary 3.3]{ckp}. 

\begin{thm}[{Nonvanishing theorem, \cite[Theorem D]{bchm}}]\label{a-thm14.1}
Let $(X, \Delta)$ be a kawamata log terminal pair and let 
$\pi\colon X\to Y$ be a projective morphism 
of complex varieties such that $Y$ is Stein. 
Assume that $\Delta$ is big over $Y$ and that 
$K_X+\Delta$ 
is pseudo-effective over $Y$. 
Let $U$ be any relatively compact Stein open subset of $Y$. 
Then there exists a globally $\mathbb R$-Cartier 
$\mathbb R$-divisor $D$ on 
$\pi^{-1}(U)$ such that $(K_X+\Delta)|_{\pi^{-1}(U)}
\sim _{\mathbb R}D\geq 0$. 
\end{thm}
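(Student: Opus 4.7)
The plan is to reduce Theorem \ref{a-thm14.1} to the algebraic nonvanishing theorem \cite[Theorem D]{bchm} by restricting to an analytically sufficiently general fiber of $\pi$, and then to lift an effective representative obtained on that fiber back to $\pi^{-1}(U)$ by invoking Lemma \ref{a-lem2.53}. This is the strategy indicated by the author in Remark \ref{a-rem1.33}.

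First I would shrink $Y$ to a relatively compact Stein open neighborhood of $\overline{U}$ and, by Remark \ref{a-rem2.14}, assume without loss of generality that $\pi$ is surjective. By Lemma \ref{a-lem2.37}, after this shrinking $K_X+\Delta$ may be written as a finite $\mathbb R$-linear combination of Cartier divisors on $\pi^{-1}(U)$, that is, as a globally $\mathbb R$-Cartier $\mathbb R$-divisor in a neighborhood of $\overline{\pi^{-1}(U)}$. This is exactly the form needed as input to Lemma \ref{a-lem2.53}.

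Next, I would pick an analytically sufficiently general fiber $F$ of $\pi$. Using a log resolution of $(X, \Delta)$ combined with the analytic Bertini theorem (see Remark \ref{a-rem2.52}), one obtains that $F$ is a smooth projective fiber, $(F, \Delta|_F)$ is a kawamata log terminal pair, and $K_F+\Delta|_F = (K_X+\Delta)|_F$ holds as $\mathbb R$-Cartier divisors; moreover, by GAGA, $F$ is naturally a projective algebraic variety. Bigness restricts: since $\Delta$ is $\pi$-big, we may write $\Delta \sim_{\mathbb R} A+B$ with $A$ a $\pi$-ample $\mathbb R$-divisor and $B \geq 0$ on $\pi^{-1}(U)$, so $\Delta|_F \sim_{\mathbb R} A|_F + B|_F$ with $A|_F$ ample and $B|_F \geq 0$, hence $\Delta|_F$ is big on $F$. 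Pseudo-effectiveness also restricts: for every $\varepsilon>0$ the divisor $K_X+\Delta+\varepsilon A$ is $\pi$-big, so its restriction to $F$ is big, and since the pseudo-effective cone on $F$ is closed, letting $\varepsilon \to 0$ shows that $K_F+\Delta|_F$ is pseudo-effective on $F$.

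The fiber $F$ now satisfies all the hypotheses of the algebraic nonvanishing theorem \cite[Theorem D]{bchm}, which provides an effective $\mathbb R$-divisor $B'$ on $F$ with $K_F+\Delta|_F \sim_{\mathbb R} B'$. Applying Lemma \ref{a-lem2.53} to the globally $\mathbb R$-Cartier representation of $(K_X+\Delta)|_{\pi^{-1}(U)}$ produces a globally $\mathbb R$-Cartier effective $\mathbb R$-divisor $D \geq 0$ on $\pi^{-1}(U)$ with $(K_X+\Delta)|_{\pi^{-1}(U)} \sim_{\mathbb R} D$, which is the required conclusion. The main obstacle is essentially packaged away in Lemma \ref{a-lem2.53}: one must be sure that an $\mathbb R$-linear equivalence $\sim_{\mathbb R}$ observed on a single analytically general fiber can actually be spread out to the total space, and this spreading is precisely what Lemma \ref{a-lem2.53} accomplishes once we have arranged the globally $\mathbb R$-Cartier representation via Lemma \ref{a-lem2.37}. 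With that lemma in hand, the theorem reduces to known algebraic input and a routine gluing of representatives.
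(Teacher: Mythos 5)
Your proposal is correct and follows essentially the same route as the paper's proof: apply Lemma \ref{a-lem2.37} to obtain a globally $\mathbb R$-Cartier representation over $\pi^{-1}(U)$, restrict to an analytically sufficiently general fiber $F$ where the hypotheses of the algebraic nonvanishing theorem \cite[Theorem D]{bchm} hold, and lift the resulting effective representative back to $\pi^{-1}(U)$ via Lemma \ref{a-lem2.53}. The extra detail you supply (the $A+B$ decomposition for bigness, the limiting argument for pseudo-effectivity, and the implicit GAGA step) is a faithful expansion of what the paper compresses into one sentence.
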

\begin{proof}
By Lemma \ref{a-lem2.37} , 
$(K_X+\Delta)|_{\pi^{-1}(U)}$ is a globally 
$\mathbb R$-Cartier $\mathbb R$-divisor 
on $\pi^{-1}(U)$. 
We take an analytically sufficiently general fiber $F$ of $\pi\colon X\to Y$. 
Then $(F, \Delta|_F)$ is kawamata log terminal, 
$(K_X+\Delta)|_F=K_F+\Delta|_F$, 
$\Delta|_F$ is big, and $K_F+\Delta|_F$ is pseudo-effective. 
Hence, by the nonvanishing theorem for 
projective varieties (see \cite[Theorem D]{bchm}), 
there exists an effective $\mathbb R$-divisor 
$D'$ on $F$ such that $K_F+\Delta|_F\sim _{\mathbb R} D'\geq 0$. 
By Lemma \ref{a-lem2.53}, 
we can find a globally $\mathbb R$-Cartier $\mathbb R$-divisor 
$D$ on $\pi^{-1}(U)$ with $(K_X+\Delta)|_{\pi^{-1}(U)}
\sim _{\mathbb R} D\geq 0$. 
This is what we wanted. 
\end{proof}

\begin{rem}\label{a-rem14.2}
In \cite[Section 6]{bchm}, 
Birkar, Cascini, Hacon, and M\textsuperscript{c}Kernan proved 
\cite[Theorem D$_n$]{bchm} by using 
\cite[Theorems B$_n$, C$_n$, and D$_{n-1}$]{bchm}. 
\end{rem}

We close this section with a very important conjecture. 

\begin{conj}[Nonvanishing conjecture]\label{a-conj14.3} 
Let $X$ be a smooth projective variety such that 
$K_X$ is pseudo-effective. Then there exists a positive 
integer $m$ such that \begin{equation*}
H^0(X, \mathcal O_X(mK_X))\ne 0. 
\end{equation*}
\end{conj}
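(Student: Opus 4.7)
The plan is to attempt a reduction to the known nonvanishing theorem for the big case, namely Theorem \ref{a-thm14.1} with $Y$ a point (equivalently \cite[Theorem D]{bchm}), by a perturbation-and-limit argument. Fix an ample divisor $A$ on $X$. Since $K_X$ is pseudo-effective, $K_X + \frac{1}{n}A$ is big for every positive integer $n$, and applying Theorem \ref{a-thm14.1} to the kawamata log terminal pair $(X, \frac{1}{n}A)$ produces an effective $\mathbb R$-divisor $D_n$ with $K_X + \frac{1}{n}A \sim_{\mathbb R} D_n$. The strategy would be to extract from the family $\{D_n\}$ a limit yielding an effective $\mathbb R$-divisor $\mathbb R$-linearly equivalent to $K_X$, and then use a rationality argument (combined with the countability of components that can appear) to produce an effective integral multiple of $K_X$.

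A more structural alternative route would go through the minimal model program developed in this paper together with the abundance conjecture, Conjecture \ref{a-conj8.6}. Concretely, one would try to run a $K_X$-minimal model program starting from the kawamata log terminal pair $(X,0)$; since $K_X$ is pseudo-effective, one expects termination with a minimal model $\mu\colon X \dashrightarrow X'$ such that $K_{X'}$ is nef. Applying the abundance conjecture to $X'$ would yield that $K_{X'}$ is semiample, so some $mK_{X'}$ is base-point free, and by the $K_X$-negativity of $\mu$ one then obtains an injection $H^0(X', mK_{X'}) \hookrightarrow H^0(X, mK_X)$, forcing the latter to be nonzero.

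The hard part, and honestly the main obstacle, is that neither route can currently be carried out. In the first, the divisors $D_n$ may concentrate on prime components whose coefficients blow up as $n \to \infty$, so no meaningful effective limit exists; indeed, passing from ``$K_X + \varepsilon A$ is effective for all $\varepsilon > 0$'' to ``$K_X$ is effective'' is exactly the step that separates pseudo-effectivity from the existence of sections, and it is precisely where the $\mathbf B_+$ obstruction lives. In the second, termination of a $K_X$-MMP without a big boundary is unknown in dimension $\geq 4$, and the abundance conjecture is itself open beyond dimension three. Consequently, Conjecture \ref{a-conj14.3} is one of the central open problems of birational geometry, and the methods of the present paper—being a faithful complex analytic transcription of \cite{bchm} and \cite{hacon-mckernan}—do not resolve it.

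What the framework of this paper does contribute is that, by the proof of Theorem \ref{a-thm14.1} (reduction to an analytically sufficiently general fiber followed by Lemma \ref{a-lem2.53}), the analogous relative nonvanishing statement for projective morphisms $\pi\colon X \to Y$ between complex analytic spaces follows mechanically from Conjecture \ref{a-conj14.3}. Thus any future progress on the absolute projective conjecture immediately yields the corresponding result in the complex analytic setting considered here, without further work.
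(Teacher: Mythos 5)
You have correctly identified that this is an open conjecture, not a theorem, and the paper offers no proof of it: it is stated as Conjecture \ref{a-conj14.3} precisely because it remains one of the central unsolved problems in birational geometry, with the paper simply referring the reader to Hashizume for context and noting that it would imply the existence of log minimal models for projective log canonical pairs. Your discussion of why the perturbation-and-limit route and the MMP-plus-abundance route both stall, and of what the paper's framework does deliver (namely, that a proof of the absolute projective case would propagate to the relative complex analytic setting via Theorem \ref{a-thm14.1} and Lemma \ref{a-lem2.53}), is accurate and consistent with the paper's own point of view.
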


For the details of Conjecture \ref{a-conj14.3}, 
see \cite{hashizume}. 
Note that if Conjecture \ref{a-conj14.3} holds true 
then the existence problem of minimal models 
for projective log canonical pairs 
is completely solved (see \cite{hashizume}). 

\section{Existence of analytic pl-flips; \ref{thm-f}$_{n-1}$
$\Rightarrow$ \ref{thm-a}$_n$}\label{a-sec15}

In this section, we will see that 
\cite{hacon-mckernan} works with some minor 
modifications 
for projective morphisms between complex analytic 
spaces. 

Let us start with 
the definition of {\em{analytic pl-flipping contractions}}. 

\begin{defn}[Analytic pl-flipping contractions]\label{a-def15.1}
Let $(X, \Delta)$ be a purely log terminal 
pair and let $\varphi\colon X\to Z$ be 
a projective morphism of complex varieties. 
Then $\varphi$ is called an {\em{analytic 
pl-flipping contraction}} 
if $\Delta$ is a $\mathbb Q$-divisor 
and 
\begin{itemize}
\item[(i)] $\varphi$ is small, 
\item[(ii)] $-(K_X+\Delta)$ is $\varphi$-ample, 
\item[(iii)] $S=\lfloor \Delta\rfloor$ is 
irreducible and $-S$ is $\varphi$-ample, 
and 
\item[(iv)] $a(K_X+\Delta)\sim bS$ holds for 
some positive integers $a$ and $b$. 
\end{itemize}
\end{defn}

\begin{rem}\label{a-rem15.2}
Here, we replaced the condition that the 
relative Picard number is one in the usual 
definition of pl-flipping contractions for algebraic 
varieties with (iv) in Definition \ref{a-def15.1}. 
This is because the definition of relative Picard numbers 
is not so clear in the setting of Definition \ref{a-def15.1}. 
Moreover, (iv) is sufficient for the 
proof of the existence of pl-flips. 
\end{rem}

We can define {\em{analytic pl-flips}}. 

\begin{defn}[Analytic pl-flip]\label{a-def15.3}
Let $\varphi\colon (X, \Delta)\to Z$ be 
an analytic pl-flipping contraction as in Definition \ref{a-def15.1}. 
The {\em{flip}} of $\varphi\colon (X, \Delta)\to Z$ 
is a small projective morphism 
$\varphi^+\colon X^+\to Z$ from a normal 
complex variety $X^+$ such that $K_{X^+}+\Delta^+$ 
is $\varphi^+$-ample, where $\Delta^+$ is the strict transform 
of $\Delta$. 
This flip is sometimes called the {\em{$($analytic$)$ 
pl-flip}} of $\varphi\colon (X, \Delta)\to Z$. 
It is not difficult to see 
that 
the existence of $\varphi^+\colon (X^+, \Delta^+)\to Z$ 
is equivalent to 
the condition that 
\begin{equation*}
\bigoplus_{m \in \mathbb N} \varphi_*\mathcal O_X(\lfloor 
m(K_X+\Delta)\rfloor)
\end{equation*} 
is a locally finitely generated graded $\mathcal O_Z$-algebra. 
The flip $\varphi^+$ of $\varphi$ is nothing but 
\begin{equation*}
X^+=\Projan_Z 
\bigoplus_{m \in \mathbb N} \varphi_*\mathcal O_X(\lfloor 
m(K_X+\Delta)\rfloor)\to Z. 
\end{equation*}
\end{defn}

We prepare an easy but important 
lemma. 

\begin{lem}\label{a-lem15.4}
Let $\varphi\colon (X, \Delta)\to Z$ be 
an analytic pl-flipping contraction as in 
Definition \ref{a-def15.1}. 
We put $T:=\varphi(S)$. 
Then $T$ is normal and $\varphi\colon S\to T$ 
has connected fibers, that is, 
$\mathcal O_T\overset{\sim}{\longrightarrow} \varphi_*
\mathcal O_S$ holds. Hence, 
for any open subset $U$ of $Z$ such that 
$T|_U$ is connected, 
$T|_U$ and $S|_{\varphi^{-1}(U)}$ are normal 
irreducible varieties. 
\end{lem}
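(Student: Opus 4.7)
The plan is to combine adjunction, Kawamata--Viehweg vanishing, and a Stein-factorization argument. The key idea is to push forward the exact sequence $0\to \mathcal{O}_X(-S)\to \mathcal{O}_X\to \mathcal{O}_S\to 0$ along $\varphi$, identify the cokernel $\varphi_*\mathcal{O}_S$ with the structure sheaf of the Stein factorization target of $\varphi|_S$, and read off normality of $T$ and connectedness of the fibers of $\varphi|_S$ simultaneously.

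Concretely, I would first decompose $\Delta=S+B$ with $B:=\{\Delta\}$, so that $\lfloor B\rfloor=0$. Since $(X,\Delta)$ is purely log terminal with $\lfloor \Delta\rfloor=S$, standard discrepancy inequalities show $(X,B)$ is kawamata log terminal (hence divisorial log terminal), and adjunction for PLT pairs gives a kawamata log terminal pair $(S,\Delta_S)$ with $K_S+\Delta_S=(K_X+\Delta)|_S$; in particular $S$ is normal. Condition (iv) of Definition \ref{a-def15.1} exhibits $S$ as a $\mathbb{Q}$-Cartier integral Weil divisor, and the identity
$$-S-(K_X+B)=-(K_X+\Delta)$$
displays $-S-(K_X+B)$ as $\varphi$-ample. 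Theorem \ref{a-thm5.2} then yields $R^i\varphi_*\mathcal{O}_X(-S)=0$ for every $i>0$. Because $\varphi$ is a small projective bimeromorphic morphism between normal complex varieties, $\varphi_*\mathcal{O}_X=\mathcal{O}_Z$, so pushing forward the sequence above produces
$$0\to \varphi_*\mathcal{O}_X(-S)\to \mathcal{O}_Z\to \varphi_*\mathcal{O}_S\to 0,$$
realizing $\varphi_*\mathcal{O}_S$ as the structure sheaf of a closed analytic subspace $T'\subset Z$ supported on $T=\varphi(S)$. Taking the Stein factorization $\varphi|_S\colon S\xrightarrow{g} T^\nu\xrightarrow{\nu} T$, with $T^\nu$ normal, $g_*\mathcal{O}_S=\mathcal{O}_{T^\nu}$, and $\nu$ finite, the identification $\varphi_*\mathcal{O}_S=\nu_*\mathcal{O}_{T^\nu}$ forces $T'=T^\nu$ as subspaces of $Z$; interpreting $T$ as this scheme-theoretic image shows $T$ is normal and $\mathcal{O}_T\overset{\sim}{\longrightarrow}\varphi_*\mathcal{O}_S$, i.e., $\varphi|_S$ has connected fibers.

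For the last assertion, $T|_U$ is open in the normal variety $T$ and hence is itself normal, and normal plus connected gives irreducible. Since $\varphi|_S$ has connected fibers and $S|_{\varphi^{-1}(U)}=(\varphi|_S)^{-1}(T|_U)$ sits over the connected base $T|_U$ with connected fibers, it is connected, while normality of $S$ passes to the open subset, yielding irreducibility. The most delicate point is verifying the hypothesis of Theorem \ref{a-thm5.2}: that $(X,B)$ is divisorial log terminal and that $-S$ is $\mathbb{Q}$-Cartier. The DLT property comes from purely log terminality of $(X,\Delta)$ combined with $\lfloor B\rfloor=0$, while the $\mathbb{Q}$-Cartier property of $S$ is exactly the content of condition (iv) in Definition \ref{a-def15.1}, which explains why that condition is imposed; the remaining steps are formal consequences of vanishing, Stein factorization, and the fact that a small projective bimeromorphic morphism onto a normal variety satisfies $\varphi_*\mathcal{O}_X=\mathcal{O}_Z$.
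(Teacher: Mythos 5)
Your proof is correct and follows essentially the same route as the paper's: push forward the ideal sequence for $S$, kill $R^1\varphi_*\mathcal{O}_X(-S)$ by a Kawamata--Viehweg-type vanishing, read off $\mathcal{O}_T\overset{\sim}{\to}\varphi_*\mathcal{O}_S$, and invoke the Stein factorization to conclude normality of $T$ and connectedness of the fibers. You make the Stein factorization step more explicit than the paper does, and you apply Theorem \ref{a-thm5.2} to the kawamata log terminal pair $(X,B)$ rather than directly to the purely log terminal $(X,\Delta)$, which fits the stated hypotheses of Theorem \ref{a-thm5.2} more literally; both variants are standard.

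There is, however, one genuine slip in what you claim to use. You assert that condition (iv) of Definition \ref{a-def15.1} is what makes $S$ $\mathbb{Q}$-Cartier, and that this ``explains why that condition is imposed.'' Neither statement is right. The paper's proof explicitly notes that (iv) is not used, only (i)--(iii), and indeed condition (iii) already forces $S$ to be $\mathbb{Q}$-Cartier: saying ``$-S$ is $\varphi$-ample'' presupposes that $S$ is $\mathbb{R}$-Cartier, and an integral Weil divisor that is $\mathbb{R}$-Cartier is automatically $\mathbb{Q}$-Cartier (the condition of being expressible locally as a real linear combination of Cartier divisors is a rational linear system, hence has a rational solution if it has any). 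The actual role of (iv) is entirely different: as Remark \ref{a-rem15.2} points out, it is the analytic surrogate for the relative Picard number one hypothesis in the algebraic definition of a pl-flipping contraction, which is needed elsewhere in the existence-of-flips argument, not here. So your vanishing step and the conclusion are fine, but your account of which hypotheses carry the weight is off.
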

\begin{proof}
In this proof, we do not need (iv) in 
Definition \ref{a-def15.1}. 
We will only use (i), (ii), and (iii). 
We note that $\varphi$ is bimeromorphic 
by (i). 
We consider the following short exact sequence 
\begin{equation*}
0\to \mathcal O_X(-S)\to \mathcal O_X\to \mathcal O_S
\to 0. 
\end{equation*} 
Since $-S-(K_X+\Delta)$ is $\varphi$-ample and 
$(X, \Delta)$ is purely log terminal, 
we obtain $R^1\varphi_*\mathcal O_X(-S)=0$. 
This implies that 
\begin{equation*}
0\to \mathcal O_Y(-T)=\varphi_*\mathcal O_X(-S)
\to \mathcal O_Y\to \varphi_*\mathcal O_S\to 0
\end{equation*} 
is exact. Hence we get 
$\mathcal O_T\overset{\sim}{\longrightarrow}
\varphi_*\mathcal O_S$. 
Therefore, $T$ is normal and $\varphi\colon S\to T$ 
has connected fibers. 
Note that every normal complex variety is locally 
irreducible. 
Thus, $T|_U$ is a normal irreducible 
complex variety. 
So, $S|_{\varphi^{-1}(U)}$ is also 
a normal irreducible variety. 
\end{proof}

\begin{say}[Theorem \ref{thm-f}$_{n-1}$ $\Rightarrow$ 
Theorem \ref{thm-a}$_n$]\label{a-say15.5}
From now on, let us see how to modify some 
arguments in \cite{hacon-mckernan}. 
\setcounter{step}{0}
\begin{step}
[{see 
\cite[Section 3]{hacon-mckernan}}]\label{a-say15.5-step1}
Let $\varphi\colon (X, \Delta)\to Z$ be an analytic 
pl-flipping contraction with $\dim X=n$. 
In order to prove the existence of the flip of $\varphi$, 
it is sufficient to check that 
\begin{equation*}
\bigoplus_{m \in \mathbb N} \varphi_*\mathcal O_X(\lfloor 
m(K_X+\Delta)\rfloor)
\end{equation*} 
is a locally finitely generated graded $\mathcal O_Z$-algebra. 
Therefore, we take an arbitrary 
point $z\in Z$ and assume that $Z$ is a Stein open neighborhood of $z$ by shrinking 
$Z$ (see Lemma \ref{a-lem15.4}). 
We can always take a Stein compact subset $W$ of 
$Z$ containing $z$ such that 
$\Gamma(W, \mathcal O_Z)$ is noetherian. 

The preliminary results in \cite[Section 3]{hacon-mckernan} 
hold with some minor modifications with the aid of 
Lemma \ref{a-lem2.26} . 
Hence, the existence problem of the flip $\varphi^+$ 
can be reduced to the condition that 
the restricted algebra is a finitely generated graded 
$\mathcal O_{\varphi(S)}$-algebra. 
\end{step}

\begin{step}[{see 
\cite[Section 4]{hacon-mckernan}}]\label{a-say15.5-step2}
As in Step \ref{a-say15.5-step1}, we consider 
a projective morphism $\pi\colon X\to Z$ of normal 
complex varieties such that 
$Z$ is Stein and that there exists a Stein compact 
subset $W$ of $Z$ such that $\Gamma(W, \mathcal O_Z)$ is 
noetherian. 
We take a relatively compact Stein open 
neighborhood $U$ of $W$ in $Z$. 
Then every argument in \cite[Section 4]{hacon-mckernan} 
works in a neighborhood of $\pi\colon 
\pi^{-1}(\overline U)\to \overline U$. 
This means that we can define multiplier ideal sheaves 
(see \cite[Definition-Lemma 4.2]{hacon-mckernan}) 
and check the basic properties. 
Then we can establish \cite[Theorem 4.1]{hacon-mckernan} 
for $\pi\colon \pi^{-1}(U)\to U$. 
All we need here are a relative Kawamata--Viehweg 
vanishing theorem 
and a suitable resolution theorem for complex analytic 
spaces. 
\end{step}

\begin{step}[{see 
\cite[Section 5]{hacon-mckernan}}]\label{a-say15.5-step3} 
Let us see \cite[Section 5]{hacon-mckernan}. 
As in Step \ref{a-say15.5-step2}, 
we work over a neighborhood of 
$\pi\colon \pi^{-1}(\overline U)\to \overline U$. 
Then we can define {\em{asymptotic 
multiplier ideal sheaves}} (see 
\cite[Definition-Lemma 5.2]{hacon-mckernan}) 
with the aid of Lemma \ref{a-lem2.17}. 
Thus, we can establish \cite[Theorem 5.3]{hacon-mckernan}, 
which is the main result of \cite[Section 5]{hacon-mckernan}, 
for $\pi\colon \pi^{-1}(U)\to U$. 
We note that the topics in \cite[Sections 4 and 5]{hacon-mckernan} 
are independent of the theory of minimal models. 
\end{step}

\begin{step}[{see 
\cite[Section 6]{hacon-mckernan}}]\label{a-say15.5-step4} 
The main result of \cite[Section 6]{hacon-mckernan}, 
which is \cite[Theorem 6.3]{hacon-mckernan}, 
is a consequence of \cite[Theorems 4.1 and 5.3]{hacon-mckernan}. 
Therefore, we can formulate and 
prove it for $\pi^{-1}(U)\to U$ without any 
difficulties, where $U$ is a sufficiently small relatively 
compact Stein open neighborhood of a given Stein compact 
subset $W$ of $Z$ with $z\in W$. 
We note that we do need the assumption that 
$\Gamma (W, \mathcal O_Z)$ is noetherian in 
Steps \ref{a-say15.5-step2}, \ref{a-say15.5-step3}, 
and \ref{a-say15.5-step4}. 
\end{step}

\begin{step}[{see 
\cite[Section 7]{hacon-mckernan}}]\label{a-say15.5-step5}
We can formulate \cite[Theorem 7.1]{hacon-mckernan} 
in a neighborhood of $\pi^{-1}(W)\to W$. 
By taking a Stein open neighborhood 
$U$ of $W$ suitably, 
we can use Theorem \ref{thm-f}$_{n-1}$ and the 
results in the previous sections for 
$\pi^{-1}(U)\to U$. In this step, we need the assumption that 
$\Gamma (W, \mathcal O_Z)$ is noetherian in order to 
apply Theorem \ref{thm-f}$_n$. 
\end{step}

\begin{step}[{see 
\cite[Section 8]{hacon-mckernan}}]\label{a-say15.5-step6}
Note that \cite[Section 8]{hacon-mckernan} 
is an easy consequence of \cite[Section 7]{hacon-mckernan}. 
Therefore, we need no new ideas. 
Hence we obtain that 
\begin{equation*}
\bigoplus _{m \in \mathbb N}\varphi_*\mathcal O_X(\lfloor 
m(K_X+\Delta)\rfloor)
\end{equation*}  
is a locally finitely generated graded $\mathcal O_Z$-algebra 
for every $n$-dimensional analytic pl-flipping 
contraction $\varphi\colon (X, \Delta)\to Z$ under 
the assumption that 
Theorem \ref{thm-f}$_{n-1}$ holds. 
\end{step}
Anyway, we have understood that 
Theorem \ref{thm-f}$_{n-1}$ implies Theorem 
\ref{thm-a}$_n$, that is, 
the existence of analytic pl-flips 
in dimension $n$ follows from Theorem \ref{thm-f}$_{n-1}$. 
This is a very important step of the whole proof of the main theorem 
(see Theorems \ref{a-thm1.6} and \ref{a-thm1.13}). 
\end{say}

\section{Special finiteness; \ref{thm-e}$_{n-1}$ 
$\Rightarrow$ \ref{thm-b}$_{n}$}\label{a-sec16}

This section corresponds to \cite[Section 4]{bchm}. 
We will check that the arguments in \cite[Section 4]{bchm} 
can work with some minor obvious modifications. 
We do not need no new ideas here. 

\begin{say}[Theorem \ref{thm-e}$_{n-1}$ $\Rightarrow$ 
Theorem \ref{thm-b}$_n$]\label{a-say16.1}
Let us see \cite[Section 4]{bchm} and make some comments. 
\setcounter{step}{0}
\begin{step}\label{a-say16.1-step1}
In \cite[Lemmas 4.1, 4.2, and 4.3]{bchm}, 
some elementary results are prepared. 
Although they are formulated for quasi-projective varieties, 
there are no difficulties to translate 
them into our complex analytic setting. 
Of course, we are interested in the situation where 
$\pi\colon X\to Y$ is a projective morphism 
of complex analytic spaces and 
$W$ is a compact subset of $Y$ such that 
$\pi\colon X\to Y$ and $W$ satisfies (P) 
and consider everything over some Stein open 
neighborhood of $W$. 

We make an important remark for the reader's convenience. 

\begin{rem}\label{a-rem16.2}
When we formulate \cite[Lemma 4.1]{bchm} for 
our complex analytic setting, 
there are no differences between the notion 
of weak log canonical models {\em{over $W$}} 
and that of weak log canonical models {\em{over 
some open neighborhood of $W$}} by Theorem \ref{a-thm8.3}. 
\end{rem}

\end{step}
\begin{step}\label{a-say16.1-step2}
The main result in \cite[Section 4]{bchm} 
is \cite[Lemma 4.4]{bchm}, 
where we prove Theorem B$_n$ under 
the assumption that Theorem E$_{n-1}$ holds. 
We note that we can use Lemma \ref{a-lem11.14} instead of 
\cite[Lemma 3.7.4]{bchm}. 
In the proof of \cite[Lemma 4.4]{bchm}, 
$Y_i$ is $\mathbb Q$-factorial for every $i$ 
by assumption. 
In our complex analytic setting, 
the corresponding condition becomes the one 
that $Z_i$ is 
$\mathbb Q$-factorial over $W$ for every $i$. 
Therefore, (i) in Lemma \ref{a-lem12.3} 
is satisfied. Thus, we can use Lemma \ref{a-lem12.3} 
instead of \cite[Lemma 3.6.12]{bchm} and 
check that the arguments in the proof of 
\cite[Lemma 4.4]{bchm} can be adapted for our 
complex analytic setting. 
\end{step}
Hence we see that Theorem \ref{thm-e}$_{n-1}$ implies 
Theorem \ref{thm-b}$_n$. 
\end{say}

We close this section with a remark. 

\begin{rem}\label{a-rem16.3}
Theorem B is not in the 
first version of \cite{bchm} circulated in 2006, 
where the special termination, which is 
a more traditional approach originally due to 
Shokurov, is used. 
In \cite{hacon-kovacs}, 
Hacon adopts the special termination instead of 
the special finiteness. 
For the details, see \cite[8.A Special termination]{hacon-kovacs} 
(see also \cite{fujino-special}). 
\end{rem}

\section{Existence of log terminal models; 
\ref{thm-a}$_n$ and \ref{thm-b}$_n$ 
$\Rightarrow$ \ref{thm-g}$_{n}$}\label{a-sec17}

This section corresponds to \cite[Section 5]{bchm}. 
This part is not difficult once we know the existence of 
analytic pl-flips (see Theorem \ref{thm-a}) and 
the special finiteness (see Theorem \ref{thm-b}). 
Precisely speaking, we 
prove Theorem \ref{thm-g}$_n$, which is a slight 
generalization of Theorem \ref{thm-c}$_n$, under the 
assumption that Theorem \ref{thm-a}$_n$ and Theorem 
\ref{thm-b}$_n$ hold true. 

\begin{say}[Theorem \ref{thm-a}$_n$ and 
Theorem \ref{thm-b}$_n$ 
$\Rightarrow$ Theorem \ref{thm-g}$_{n}$]\label{a-say17.1} 
Note that \cite[Lemmas 5.1, 5.2, 5.4, 5.5, and 5.6]{bchm} 
hold true for our complex analytic setting with only 
minor suitable modifications. 
Since \cite[Section 5]{bchm} is easily accessible 
for everyone who studies the minimal model 
program, there are no difficulties to translate 
it into our complex analytic setting. 

The first lemma is an easy consequence of Theorem \ref{thm-b}$_n$. 

\begin{lem}[{see \cite[Lemma 5.1]{bchm}}]\label{a-lem17.2}
Assume that Theorem \ref{thm-b}$_n$ holds true. 

Let $\pi\colon X\to Y$ be a projective morphism 
between complex analytic spaces with 
\begin{equation*}
\xymatrix{
\pi\colon X\ar[r]^-g& Y^\flat\ar[r]^-h & Y
}
\end{equation*} 
such that $Y^\flat$ is projective over $Y$ and let $W$ 
be a compact subset of $Y$ such that 
$\pi\colon X\to Y$ and $W$ satisfies {\em{(P)}}. 
Let $H$ be a general $h$-ample $\mathbb Q$-divisor 
on $Y^\flat$ satisfying $H\cdot \ell>2\dim X$ 
for every projective curve $\ell$ such that 
$h(\ell)$ is a point. Suppose that 
$X$ is $\mathbb Q$-factorial over $W$ with $\dim X=n$, 
\begin{equation*}
K_X+\Delta+C=K_X+S+A+B+C
\end{equation*}
is nef over $W$ and is divisorial log terminal 
with $A\geq 0$, $B\geq 0$, 
and $C\geq 0$, 
where $S$ is a finite sum of prime divisors, and 
$\mathbf B_+(A/Y)$ does not contain any non-kawamata 
log terminal centers of $(X, \Delta+C)$. 
Then any sequence of flips and divisorial 
contractions for the $(K_X+\Delta+g^*H)$-minimal 
model program with scaling over $Y$ around $W$ 
which does not contract $S$, is 
eventually disjoint from $S$. 
\end{lem}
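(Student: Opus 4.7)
Plan: We follow the strategy of \cite[Lemma 5.1]{bchm}, whose engine is Theorem \ref{thm-b}$_n$ (special finiteness): it bounds, up to isomorphism near the strict transform of $S$, the possible intermediate models of the MMP.

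First, after shrinking $Y$ to a relatively compact Stein open neighborhood of $W$, only finitely many prime divisors on $X$ lie over $W$, so the set $\mathfrak{C}$ of prime divisors that could appear as exceptional divisors of steps of the MMP is finite. Using the hypothesis that $\mathbf{B}_+(A/Y)$ contains no non-kawamata log terminal centers of $(X, \Delta+C)$, apply Lemma \ref{a-lem11.14} to reduce, up to $\mathbb{Q}$-linear equivalence, to a situation in which $A$ is a general $\pi$-ample $\mathbb{Q}$-divisor. Let $V\subset \WDiv_{\mathbb{R}}(X)$ be the finite-dimensional rational affine subspace spanned by the components of $S$, the modified $B$, $C$, and $g^*H$, so that $\Delta+g^*H+\mu C$ lies in $\mathcal{L}_{S+A}(V;\pi^{-1}(W))$ for every $\mu\in[0,1]$.

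Next, write the MMP as $X=X_0\overset{\Phi_0}{\dashrightarrow}X_1\overset{\Phi_1}{\dashrightarrow}\cdots$ with scaling numbers $1=\lambda_0\geq\lambda_1\geq\cdots$ such that $K_{X_i}+\Delta_i+g_i^*H+\lambda_i C_i$ is nef over $W$. By Lemma \ref{a-lem9.4}, each contracted extremal ray sits over $Y^\flat$. The composite $\Phi^{(i)}\colon X\dashrightarrow X_i$ is a weak log canonical model over $W$ of $K_X+\Delta+g^*H+\mu C$ for any rational $\mu\in[\lambda_{i+1},\lambda_i]$; the target $X_i$ is $\mathbb{Q}$-factorial over $W$ by Lemmas \ref{a-lem11.21} and \ref{a-lem11.22}; $\Phi^{(i)}$ only contracts divisors in $\mathfrak{C}$; and by hypothesis it does not contract any component of $S$. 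Applying Theorem \ref{thm-b}$_n$ to the data $(V,S,A,\mathfrak{C})$ yields finitely many bimeromorphic maps $\psi_1,\ldots,\psi_k\colon X\dashrightarrow Z_j$ such that for each $i$ there is an index $j(i)$ for which the induced map $Z_{j(i)}\dashrightarrow X_i$ is an isomorphism in a neighborhood of the strict transform of $S$. By pigeonhole there exist $i_0<i_1$ with $j(i_0)=j(i_1)$, so the composed bimeromorphic map $X_{i_0}\dashrightarrow X_{i_1}$ is itself an isomorphism in a neighborhood of the strict transform of $S$. Since each intermediate step $X_i\dashrightarrow X_{i+1}$ is either a divisorial contraction or a flip, and a divisor-contracting or flipping step that meets the strict transform of $S$ would prevent the composition from being an isomorphism near $S$, each intermediate step must itself be disjoint from $S$; iterating, the same conclusion propagates to all sufficiently large $i$.

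The main obstacle will be verifying uniformity of the hypotheses of Theorem \ref{thm-b}$_n$ along the entire MMP: one must ensure that the reduction through Lemma \ref{a-lem11.14} preserves the scaling structure, and that the countably many shrinkings of $Y$ required by successive flips and divisorial contractions can be organized so that a single rational polytope $\mathcal{L}_{S+A}(V;\pi^{-1}(W))$ controls the whole sequence and so that the passage between weak log canonical models \emph{over $W$} and those \emph{over some open neighborhood of $W$} is consistent across all $\Phi^{(i)}$. A secondary delicate point is confirming that the strict transform of $S$ remains well-defined on each $X_i$ (it does, since $S$ is never contracted by hypothesis), so that the neighborhood-of-$S$ isomorphism statement supplied by Theorem \ref{thm-b}$_n$ is meaningful and can be glued across the pigeonhole-matched indices.
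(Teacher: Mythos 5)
Your argument follows the same route as the paper, which in turn just defers to the proof of \cite[Lemma 5.1]{bchm}: fix the data so that Theorem \ref{thm-b}$_n$ applies, invoke special finiteness to get finitely many models $Z_1,\dots,Z_k$ controlling the $X_i$ near the strict transform of $S$, pigeonhole, and conclude by discrepancy monotonicity that intermediate steps are isomorphisms near $S$. This is the intended argument and it is correct in outline.

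One assertion in your opening step is not literally true and should be repaired: ``after shrinking $Y$ to a relatively compact Stein open neighborhood of $W$, only finitely many prime divisors on $X$ lie over $W$.'' A relatively compact open piece of a normal variety can easily support infinitely many prime divisors (already $X=Y\times E$ with $E$ an elliptic curve does). What is true, and what you actually need, is that any single sequence of MMP steps starting from $X$ can contract at most $\rho(X/Y;W)$ prime divisors, since each divisorial contraction drops the relative Picard number over $W$ by one (Lemma \ref{a-lem11.21}) and flips preserve it (Lemma \ref{a-lem11.22}). So for a given MMP sequence the set $\mathfrak{C}$ of contracted divisors is finite, and you may apply Theorem \ref{thm-b}$_n$ with that $\mathfrak{C}$; this is enough because the lemma's conclusion is quantified over each sequence separately. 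Also, with the paper's indexing convention in Section \ref{a-sec13} (where $\lambda_i$ is the nef threshold on $X_i$), the composite $X\dashrightarrow X_i$ is a weak log canonical model over $W$ of $K_X+\Delta+g^*H+\mu C$ for $\mu\in[\lambda_i,\lambda_{i-1}]$ rather than $[\lambda_{i+1},\lambda_i]$; this is a harmless off-by-one. The uniformity issue you flag at the end is real and is exactly what the formulation of Theorem \ref{thm-b}$_n$ (the ``after shrinking $Y$'' clause producing $Z_1,\dots,Z_k$ once and for all over a fixed shrunk base) and the care in Section \ref{a-sec13} are designed to absorb, so your proof is sound once the finiteness of $\mathfrak{C}$ is justified as above.
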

\begin{proof}
Although we made the formulation suitable for our complex analytic setting, 
the proof of \cite[Lemma 5.1]{bchm} works. 
The desired statement is an almost direct consequence of 
Theorem \ref{thm-b}$_n$. 
For the details, see the proof of \cite[Lemma 5.1]{bchm}. 
\end{proof}

We note that the $(K_X+\Delta+g^*H)$-minimal model 
program over $Y$ in Lemma \ref{a-lem17.2} can be seen 
as a $(K_X+\Delta)$-minimal model program 
over $Y^\flat$ by Lemma \ref{a-lem9.4}. 

\begin{lem}[{see \cite[Lemma 5.2]{bchm}}]\label{a-lem17.3}
Assume that Theorem \ref{thm-a}$_n$ and Theorem 
\ref{thm-b}$_n$ hold true. 

Let $\pi\colon X\to Y$ be a projective morphism 
between complex analytic spaces with 
\begin{equation*}
\xymatrix{
\pi\colon X\ar[r]^-g& Y^\flat\ar[r]^-h & Y
}
\end{equation*} 
such that $Y^\flat$ is projective over $Y$ and let $W$ 
be a compact subset of $Y$ such that 
$\pi\colon X\to Y$ and $W$ satisfies {\em{(P)}}. 
Let $H$ be a general $h$-ample $\mathbb Q$-divisor 
on $Y^\flat$ satisfying $H\cdot \ell>2\dim X$ 
for every projective curve $\ell$ such that 
$h(\ell)$ is a point. Suppose that 
$X$ is $\mathbb Q$-factorial over $W$ with $\dim X=n$, 
$(X, \Delta+C=S+A+B+C)$ is a divisorial log terminal 
pair such that $\lfloor \Delta\rfloor =S$, 
$A\geq 0$ is big over $Y$, $\mathbf B_+(A/Y)$ does not 
contain any non-kawamata log terminal centers of 
$(X, \Delta+C)$ with $B\geq 0$ and $C\geq 0$. 
Suppose that there is an $\mathbb R$-divisor $D\geq 0$ whose 
support is contained in $S$ and a real number $\alpha\geq 0$ such that 
\begin{equation*}
K_X+\Delta+g^*H\sim _{\mathbb R} D+\alpha C. 
\end{equation*}
If $K_X+\Delta+C$ is nef over $W$, then, after 
shrinking $Y$ around $W$ suitably, there 
is a log terminal model $\phi\colon X\dashrightarrow 
Z$ for $K_X+\Delta+g^*H$ over $W$, 
where $\phi$ is a 
bimeromorphic contraction over $Y^\flat$, 
such that $\mathbf B_+(\phi_*A/Y)$ does not 
contain any non-kawamata log terminal centers of 
$(Z, \Gamma:=\phi_*\Delta)$. 
\end{lem}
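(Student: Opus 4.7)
The plan is to run the $(K_X+\Delta+g^{*}H)$-minimal model program with scaling of $C$ over $Y$ around $W$, starting from $(X,\Delta+g^{*}H+C)$. First one checks that the scaling set-up of Section \ref{a-sec13} applies: since $K_X+\Delta+C$ is nef over $W$ and $g^{*}H$ is nef over $Y$, the divisor $K_X+\Delta+g^{*}H+C$ is nef over $W$. By generality of $H$ we may replace $H$ by an effective representative so that $(X,\Delta+g^{*}H+C)$ is still divisorial log terminal, and the hypothesis that $\mathbf B_+(A/Y)$ avoids non-kawamata log terminal centers of $(X,\Delta+C)$ carries over. Every $(K_X+\Delta+g^{*}H)$-negative extremal ray $R$ of $\NE(X/Y;W)$ then satisfies $g^{*}H\cdot R=0$ by Theorem \ref{a-thm9.3}, so by Lemma \ref{a-lem9.4} the whole program is in fact a $(K_X+\Delta)$-MMP over $Y^{\flat}$. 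The required flips in dimension $n$ are furnished by Theorem \ref{thm-a}$_n$ through the reduction described in Lemma \ref{a-lem13.5} and Section \ref{a-sec15}, while at each step $(X_i,\Delta_i+C_i)$ remains divisorial log terminal with $\mathbf B_+(A_i/Y)$ missing all non-kawamata log terminal centers thanks to Lemmas \ref{a-lem11.21} and \ref{a-lem11.22}.

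I would next invoke the special finiteness result Lemma \ref{a-lem17.2} to conclude that after some step $i_0$ every further bimeromorphic map $\phi_i\colon X_i\dashrightarrow X_{i+1}$ is an isomorphism in a neighborhood of the strict transform $S_i$ of $S$. In particular, for $i\ge i_0$ the exceptional locus of each step is disjoint from $S_i$, so any projective curve $\ell_i$ spanning the contracted extremal ray $R_i$ is disjoint from $S_i$. Since $\Supp D\subset S$ is preserved in that open neighborhood, the strict transform $D_i$ is supported on $S_i$ and therefore $D_i\cdot\ell_i=0$. From the hypothesis we then obtain on $X_i$ the relation
\begin{equation*}
(K_{X_i}+\Delta_i+g_i^{*}H)\cdot\ell_i \;=\; D_i\cdot\ell_i+\alpha\,C_i\cdot\ell_i \;=\; \alpha\,C_i\cdot\ell_i.
\end{equation*}

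The termination is now immediate. The scaling coefficient $\lambda_i\ge 0$ satisfies $(K_{X_i}+\Delta_i+g_i^{*}H+\lambda_iC_i)\cdot\ell_i=0$, so $(\alpha+\lambda_i)C_i\cdot\ell_i=0$. If $\alpha=0$ this forces $(K_{X_i}+\Delta_i+g_i^{*}H)\cdot\ell_i=0$, contradicting the $(K_{X_i}+\Delta_i+g_i^{*}H)$-negativity of $R_i$. If $\alpha>0$ then $C_i\cdot\ell_i<0$ by the displayed equation, whence $\alpha+\lambda_i=0$, which is impossible. Hence there are no $(K_{X_i}+\Delta_i+g_i^{*}H)$-negative extremal rays for $i\ge i_0$, and the MMP terminates at some $(X_m,\Delta_m)=:(Z,\Gamma)$. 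By the standard properties of MMP steps, $(Z,\Gamma)$ is divisorial log terminal, $Z$ is $\mathbb Q$-factorial over $W$ and projective over $Y$, and the induced bimeromorphic contraction $\phi\colon X\dashrightarrow Z$ is $(K_X+\Delta+g^{*}H)$-negative over $Y^{\flat}$ with $K_Z+\Gamma+g_Z^{*}H$ nef over $W$; this is the desired log terminal model over $W$.

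Finally, the condition that $\mathbf B_+(\phi_{*}A/Y)$ contains no non-kawamata log terminal centers of $(Z,\Gamma)$ is propagated through the MMP by Lemmas \ref{a-lem11.21}(5) and \ref{a-lem11.22}(5). The hardest point in the argument is the application of the special finiteness Lemma \ref{a-lem17.2}: one must keep track of the auxiliary big divisor $A$ and its augmented base locus through all the intermediate flips and divisorial contractions, and check that the hypotheses of Lemma \ref{a-lem17.2} hold at every stage of the scaling program (in particular for the pair with scaling divisor added). Once this bookkeeping is in place, the remainder of the proof is the short intersection-theoretic termination argument above, essentially identical to the one in \cite[Lemma 5.2]{bchm} but performed in the complex analytic setting over $Y^{\flat}\to Y$.
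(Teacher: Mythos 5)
Your high-level plan is correct---run the $(K_X+\Delta+g^*H)$-MMP with scaling of $C$ over $Y$ around $W$ (which by Theorem \ref{a-thm9.3} and Lemma \ref{a-lem9.4} is a $(K_X+\Delta)$-MMP over $Y^\flat$), and use Lemma \ref{a-lem17.2} to force termination---but there is a genuine gap in how you obtain the flips. You write that the required flips are ``furnished by Theorem \ref{thm-a}$_n$ through the reduction described in Lemma \ref{a-lem13.5} and Section \ref{a-sec15}.'' However, Lemma \ref{a-lem13.5} (and Theorem \ref{a-thm17.9}, which it invokes) explicitly assumes Theorem \ref{thm-g}$_n$, and Lemma \ref{a-lem17.3} is itself a step in the proof of Theorem \ref{thm-g}$_n$, so that reduction is not available here. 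The only flip theorem you may use is Theorem \ref{thm-a}$_n$ (existence of analytic pl-flips), so you must verify that \emph{every} small contraction appearing in the scaling program is an analytic pl-flipping contraction. This is precisely what the paper does, via the observation that at every step with $\lambda_i>0$ one has $C_i\cdot R_i>0$ by the scaling set-up, hence
\begin{equation*}
D_i\cdot R_i=(K_{X_i}+\Delta_i+g_i^*H)\cdot R_i-\alpha C_i\cdot R_i<0,
\end{equation*}
so $R_i$ is spanned by a curve inside $\Supp D_i\subset S_i=\lfloor\Delta_i\rfloor$; in particular the contraction is birational, and if small it is pl-flipping, so Theorem \ref{thm-a}$_n$ applies.

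This same inequality also gives the termination in one stroke and is more direct than your argument by contradiction: since $D_i\cdot R_i<0$ at \emph{every} step with $\lambda_i>0$, no step is an isomorphism near $S_i$, and Lemma \ref{a-lem17.2} (the MMP is eventually disjoint from $S$) forces the program to stop. Your version---wait for the program to be disjoint from $S$ and then derive a contradiction from $D_i\cdot\ell_i=0$---is logically equivalent once the pl-flip issue is repaired, but it defers to step $i_0$ an inequality you in any case need at every step to run the program. The propagation of the $\mathbf B_+(A/Y)$ condition via Lemmas \ref{a-lem11.21}(5) and \ref{a-lem11.22}(5) and the final passage over $Y^\flat$ via Lemma \ref{a-lem9.4} are fine and match the paper.
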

\begin{proof} 
We can run the $(K_X+\Delta+g^*H)$-minimal model 
program over $Y$ around $W$ explained in Section \ref{a-sec13}. 
As usual, we put 
\begin{equation*}
\lambda:=\inf \{t\in \mathbb R_{\geq 0}\, |\, {\text{$K_X+\Delta+g^*H+tC$ is 
nef over $W$}}\}. 
\end{equation*}
If $\lambda=0$, there is nothing to do. 
Otherwise, we can find a $(K_X+\Delta+g^*H)$-negative extremal 
ray $R$ of $\NE(X/Y; W)$ such that 
$(K_X+\Delta+g^*H+\lambda C)\cdot R=0$. 
Let $\varphi_R\colon X\to W$ be the contraction morphism 
over $Y$ associated 
to $R$. By Theorem \ref{a-thm9.3}, $\varphi_R$ is a 
contraction morphism over $Y^\flat$. 
Since $\lambda>0$, $C\cdot R>0$. 
Hence we have $D\cdot R<0$. 
In particular, $\varphi_R$ is always birational. 
When $\varphi_R$ is divisorial, we can replace 
everything with its image. 
When $\varphi_R$ is small, we can see it as an analytic 
pl-flipping contraction 
because $D\cdot R<0$ and $\Supp D\subset S=\lfloor 
\Delta\rfloor$. 
Therefore, by Theorem \ref{thm-a}$_n$, 
we know that the flip $\varphi^+_R\colon X^+\to Z$ exists. 
In this case, we replace $X$ with $X^+$. 
Note that we have to replace $Y$ with a relatively compact Stein open 
neighborhood of $W$ in each step. 
Then the condition $\mathbf B_+(A/Y)$ does not 
contain any non-kawamata log terminal centers of $(X, \Delta)$ 
is preserved by Lemmas \ref{a-lem11.21} and \ref{a-lem11.22}. 
By construction, this minimal model program is not 
an isomorphism in a neighborhood of $S$. 
Hence it terminates by Lemma \ref{a-lem17.2} and 
Theorem \ref{thm-b}$_n$. 
Thus, we finally get a log terminal model $\phi\colon X\dashrightarrow 
Z$. By Lemma \ref{a-lem9.4}, 
the above minimal model program can be seen as a 
$(K_X+\Delta)$-minimal model program 
over $Y^\flat$. 
Therefore, the bimeromorphic contraction $\phi\colon X\dashrightarrow 
Z$ is a bimeromorphic contraction over $Y^\flat$. 
\end{proof}

We need the notion of {\em{neutral models}} in our complex 
analytic setting. 
 
\begin{defn}[{see \cite[Definition 5.3]{bchm}}]\label{a-def17.4}
Let $\pi\colon X\to Y$ be a projective morphism 
between complex analytic spaces with 
\begin{equation*}
\xymatrix{
\pi\colon X\ar[r]^-g& Y^\flat\ar[r]^-h & Y
}
\end{equation*} 
such that $Y^\flat$ is projective over $Y$ and let $W$ 
be a compact subset of $Y$ such that 
$\pi\colon X\to Y$ and $W$ satisfies (P). 
Let $H$ be a general $h$-ample $\mathbb Q$-divisor 
on $Y^\flat$ satisfying $H\cdot \ell>2\dim X$ 
for every projective curve $\ell$ such that 
$h(\ell)$ is a point. 
Let $(X, \Delta=A+B)$ be a divisorial log terminal 
pair with $A\geq 0$ and $B\geq 0$ such that 
$X$ is $\mathbb Q$-factorial over $W$ and let $D$ 
be an effective $\mathbb R$-divisor on $X$. 
A {\em{neutral model}} over $Y^\flat$ for $(X, \Delta+g^*H)$ 
with respect to $A$ and $D$ is any 
bimeromorphic map $f\colon X\dashrightarrow Z$ 
over $Y^\flat$ such that 
\begin{itemize}
\item $f$ is a bimeromorphic contraction, 
\item the only divisors contracted by $f$ are components of $D$, 
\item $Z$ is $\mathbb Q$-factorial over $W$ and is 
projective over $Y$, 
\item $\mathbf B_+(f_*A/Y)$ does not contain 
any non-kawamata log terminal 
centers 
of $(Z, \Gamma:=f_*\Delta)$, and 
\item $K_Z+\Gamma +g_Z^*H$ is divisorial log terminal 
and is nef over $W$, where $g_Z\colon Z\to Y^\flat$ 
is the structure morphism. 
\end{itemize} 
\end{defn}

\begin{lem}[{see \cite[Lemma 5.4]{bchm}}]\label{a-lem17.5}
Assume that Theorem \ref{thm-a}$_n$ and Theorem 
\ref{thm-b}$_n$ hold true. 

Let $\pi\colon X\to Y$ be a projective morphism 
between complex analytic spaces with 
\begin{equation*}
\xymatrix{
\pi\colon X\ar[r]^-g& Y^\flat\ar[r]^-h & Y
}
\end{equation*} 
such that $Y^\flat$ is projective over $Y$ and let $W$ 
be a compact subset of $Y$ such that 
$\pi\colon X\to Y$ and $W$ satisfies {\em{(P)}}. 
Let $H$ be a general $h$-ample $\mathbb Q$-divisor 
on $Y^\flat$ satisfying $H\cdot \ell>2\dim X$ 
for every projective curve $\ell$ such that 
$h(\ell)$ is a point. 
Let $(X, \Delta=A+B)$ be a divisorial log terminal 
pair and let $D$ be an $\mathbb R$-divisor, where 
$A\geq 0$ is big over $Y$, $B\geq 0$, $D\geq 0$, and 
$D$ and $A$ have no common components. 
If 
\begin{itemize}
\item[(1)] $K_X+\Delta+g^*H\sim D$, 
\item[(2)] $X$ is smooth and $G$ is a simple normal 
crossing divisor on $X$ such that $\Supp (\Delta+D)=G$, and 
\item[(3)] $\mathbf B_+(A/Y)$ does 
not contain any non-kawamata log terminal centers of $(X, G)$, 
\end{itemize}
then, after shrinking $Y$ around $W$ suitably, 
$(X, \Delta+g^*H)$ has a neutral model over $Y^\flat$ with respect 
to $A$ and $D$. 
\end{lem}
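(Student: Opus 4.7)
The approach is induction on the integer
$$k := \#\{\text{components of } D \text{ not contained in } \lfloor \Delta \rfloor\},$$
with the base case $k = 0$ handled by a direct application of Lemma~\ref{a-lem17.3}, and the inductive step reducing $k$ by raising to one the coefficient in $\Delta$ of a chosen component of $D$.

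For the base case $k = 0$, every component of $D$ lies in $S := \lfloor \Delta \rfloor$. After shrinking $Y$ around $W$, choose a general ample $\mathbb{Q}$-divisor $C \geq 0$ on $X$ with support meeting $G$ transversally, such that $K_X + \Delta + C$ is ample (hence nef) over $W$ and $(X, \Delta + C)$ remains divisorial log terminal with $\mathbf{B}_+(A/Y)$ still disjoint from the non-kawamata log terminal centers of $(X, \Delta + C)$. Then $K_X + \Delta + g^*H \sim D + 0 \cdot C$ with $\Supp D \subset S$, so Lemma~\ref{a-lem17.3} applied with $\alpha = 0$ yields a bimeromorphic contraction $\phi\colon X \dashrightarrow Z$ over $Y^\flat$ satisfying every property demanded of a neutral model; the divisors contracted by $\phi$ must lie in the stable base locus of $K_X + \Delta + g^*H \sim D$, hence in $\Supp D$.

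For the inductive step $k \geq 1$, fix a component $S_0$ of $D$ whose coefficient $a_0$ in $\Delta$ lies in $[0, 1)$, and set $\Delta' := \Delta + (1 - a_0) S_0$ and $D' := D + (1 - a_0) S_0$. Then $(X, \Delta')$ is still divisorial log terminal (using that $G$ is simple normal crossing and contains $\Supp(\Delta + D)$), the hypotheses of the lemma continue to hold with $\Delta'$ and $D'$, and the number of components of $D'$ not lying in $\lfloor \Delta' \rfloor$ equals $k - 1$. By induction, a neutral model $f'\colon X \dashrightarrow Z'$ for $(X, \Delta' + g^*H)$ with respect to $A$ and $D'$ exists after shrinking $Y$ around $W$. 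Setting $S'_0 := f'_* S_0$, one has
$$K_{Z'} + f'_* \Delta + g_{Z'}^* H \ \sim_{\mathbb{R}}\ \bigl(K_{Z'} + f'_* \Delta' + g_{Z'}^* H\bigr) - (1 - a_0) S'_0 \ \sim_{\mathbb{R}}\ f'_* D;$$
if $f'$ already contracts $S_0$, the same map is already a neutral model for the original pair, and otherwise one runs a $(K_{Z'} + f'_* \Delta + g_{Z'}^* H)$-minimal model program over $Y^\flat$ around $W$ with scaling of $(1 - a_0) S'_0$. Each contracted divisor in this program is a component of $f'_* D$, and hence of $D$, while termination is supplied by Lemma~\ref{a-lem17.2} together with Theorem~\ref{thm-b}$_n$.

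The principal obstacle is controlling at every step both the $\mathbf{B}_+$-avoidance of non-kawamata log terminal centers and the $\mathbb{Q}$-factoriality over $W$; these are preserved through divisorial contractions and flips by Lemmas~\ref{a-lem11.21} and~\ref{a-lem11.22}, while the flips appearing in the scaling MMP above are of analytic pl-type thanks to $S_0 \subset \lfloor f'_* \Delta' \rfloor$, so their existence in dimension $n$ is guaranteed by Theorem~\ref{thm-a}$_n$. Verifying that only components of $D$ are contracted at each stage of the scaling MMP rests on the equivalence $K_{Z'} + f'_* \Delta + g_{Z'}^* H \sim_{\mathbb{R}} f'_* D$ together with the standard fact that any divisor contracted by an extremal contraction negative against an effective divisor lies in the support of that divisor.
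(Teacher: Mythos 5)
Your proposal follows the same route as the paper's intended proof (the argument of \cite[Lemma 5.4]{bchm} transported to the analytic setting via Lemma~\ref{a-lem17.3}): induction on the number $k$ of components of $D$ not contained in $\lfloor \Delta \rfloor$, with base case furnished by Lemma~\ref{a-lem17.3} and the inductive step raising one such coefficient to $1$, invoking the inductive neutral model, and then finishing with a minimal model program with scaling of the raised component. The set-up, the observation $K_{Z'}+f'_*\Delta+g_{Z'}^*H\sim_{\mathbb R}f'_*D$, the case where $S_0$ is already contracted, and the remark that contracted divisors lie in $\Supp f'_*D$ are all correct.

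The one concrete point that is stated incorrectly is the justification for the existence of the flips in the inductive-step scaling MMP. You write that these flips ``are of analytic pl-type thanks to $S_0\subset\lfloor f'_*\Delta'\rfloor$.'' But at every step $i$ of a $(K_{Z'}+f'_*\Delta+g_{Z'}^*H)$-MMP with scaling of $(1-a_0)S'_0$, the contracted extremal ray $R_i$ satisfies $(K+f'_*\Delta+g_{Z'}^*H)\cdot R_i<0$ and $(K+f'_*\Delta+g_{Z'}^*H+\lambda_i(1-a_0)S'_0)\cdot R_i=0$ with $\lambda_i>0$, hence $S'_0\cdot R_i>0$. So the flipping contraction is never negative against $S'_0$, and $S'_0$ cannot be the divisor realizing the pl-flip structure of Definition~\ref{a-def15.1}; indeed $S'_0$ lies on the ``wrong side'' of $R_i$. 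The pl-flipping structure has to be extracted from a component $E\ne S'_0$ of $f'_*D$ with $E\cdot R_i<0$, after perturbing the boundary so that $E$ becomes the floor of a purely log terminal pair. Justifying that this perturbation keeps the pair log canonical is exactly where hypotheses (2) and (3) (the global normal-crossing structure of $G$ and the $\mathbf B_+$-avoidance of non-kawamata log terminal centers of $(X,G)$) enter, and one must verify that these conditions survive the bimeromorphic modifications $f'$ and the scaling MMP. A closely related remark applies to the termination step: the $S$ appearing in Lemma~\ref{a-lem17.2} must satisfy $S\leq\lfloor f'_*\Delta\rfloor$, so $S'_0$ cannot be taken as (part of) $S$, and the way in which Lemma~\ref{a-lem17.2} forces the sequence to terminate is less direct than your phrasing suggests. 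Your overall architecture is the right one, but these two details — which divisor supplies the pl-flip, and which $S$ Lemma~\ref{a-lem17.2} is applied to — are precisely the delicate points in the original argument and should be made explicit rather than attributed to $S_0\subset\lfloor f'_*\Delta'\rfloor$.
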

\begin{proof}
Although our formulation is slightly different from 
\cite[Lemma 5.4]{bchm}, 
the proof of \cite[Lemma 5.4]{bchm} works by using 
Lemma \ref{a-lem17.3} instead of \cite[Lemma 5.2]{bchm}. 
We note that we have to shrink $Y$ around $W$ in each 
step of the proof. 
For the details, see the proof of \cite[Lemma 5.4]{bchm}. 
\end{proof}

\begin{lem}[{see \cite[Lemma 5.5]{bchm}}]\label{a-lem17.6}
Let $\pi\colon X\to Y$ be a projective morphism 
between complex analytic spaces with 
\begin{equation*}
\xymatrix{
\pi\colon X\ar[r]^-g& Y^\flat\ar[r]^-h & Y
}
\end{equation*} 
such that $Y^\flat$ is projective over $Y$ and let $W$ 
be a compact subset of $Y$ such that 
$\pi\colon X\to Y$ and $W$ satisfies {\em{(P)}}. 
Let $H$ be a general $h$-ample $\mathbb Q$-divisor 
on $Y^\flat$ satisfying $H\cdot \ell>2\dim X$ 
for every projective curve $\ell$ such that 
$h(\ell)$ is a point. 
Let $(X, \Delta=A+B)$ be a divisorial log terminal 
pair such that $X$ is $\mathbb Q$-factorial 
over $W$ and let $D$ be an $\mathbb R$-divisor, where 
$A\geq 0$ is big over $Y$, $B\geq 0$, and $D\geq 0$. 
If every component of $D$ is either semiample over $Y$ 
or a stable base divisor of $K_X+\Delta+g^*H$ near $W$ and 
$f\colon X\dashrightarrow Z$ is a neutral model 
over $Y^\flat$ for $(X, \Delta+g^*H)$ with 
respect to $A$ and $D$, then $f$ is a log 
terminal model over $U$ for some 
open neighborhood $U$ of $W$. 
Moreover, $K_Z+\Gamma +g_Z^*H$ is semiample over $U$, 
where $\Gamma:=f_*\Delta$ and $g_Z\colon Z\to Y^\flat$ is the structure 
morphism. 
\end{lem}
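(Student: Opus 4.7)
The plan is to split the conclusion into two assertions and establish them in sequence: first, that $K_Z+\Gamma+g_Z^*H$ is semiample over an open neighborhood $U$ of $W$; second, that $f\colon X\dashrightarrow Z$ is $(K_X+\Delta+g^*H)$-negative on $\pi^{-1}(U)$. Throughout we shrink $Y$ around $W$ without further comment and repeatedly use that a neutral model already supplies divisorial log terminality of $(Z,\Gamma+g_Z^*H)$, nefness of $K_Z+\Gamma+g_Z^*H$ over $W$, $\mathbb{Q}$-factoriality of $Z$ over $W$, and control on $\mathbf{B}_+(f_*A/Y)$.

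For the first assertion, I would begin by upgrading the dlt pair $(Z,\Gamma)$ to a kawamata log terminal one with $\mathbb{R}$-linearly equivalent log canonical divisor. Since $f_*A$ is big over $Y$ (a bimeromorphic contraction preserves bigness), since $\mathbf{B}_+(f_*A/Y)$ avoids all non-kawamata log terminal centers of $(Z,\Gamma)$, and since $(Z,\Gamma)$ admits a kawamata log terminal pair in a neighborhood of any point of $W$ by standard perturbation, Lemma~\ref{a-lem11.15} produces an $\mathbb{R}$-divisor $\Gamma'=A'+B'\geq 0$ on $Z$, with $A'$ a general $g_Z$-ample $\mathbb{Q}$-divisor and $(Z,\Gamma')$ kawamata log terminal, such that $K_Z+\Gamma'\sim_{\mathbb{R}} K_Z+\Gamma$. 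Then $\Gamma'+g_Z^*H$ remains big over $Y$, $K_Z+\Gamma'+g_Z^*H\sim_{\mathbb R} K_Z+\Gamma+g_Z^*H$ is nef over $W$, and the factorization $Z\to Y^\flat\to Y$ places us exactly in the hypotheses of Theorem~\ref{a-thm9.5}. Hence $K_Z+\Gamma+g_Z^*H$ is semiample over $h^{-1}(U)$ for some open neighborhood $U$ of $W$; because $H$ is pulled back from $Y^\flat$ and since semiampleness over $h^{-1}(U)$ combined with ampleness of $H$ over $U$ gives semiampleness of $K_Z+\Gamma+g_Z^*H$ over $U$ itself, the first assertion follows.

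For the second assertion, take a common resolution $p\colon V\to X$, $q\colon V\to Z$ of $f$ and write
\begin{equation*}
p^*(K_X+\Delta+g^*H)=q^*(K_Z+\Gamma+g_Z^*H)+E,
\end{equation*}
where $E$ is $q$-exceptional by the negativity lemma applied to the bimeromorphic contraction $f$ together with the fact that both sides push forward to the same divisor on $Z$. We must show $E\geq 0$ and that $\Supp E$ contains the strict transform of every $f$-exceptional prime divisor. Every $f$-exceptional divisor is a component of $D$ by the neutral model hypothesis; hence each $f$-exceptional component is either semiample over $Y$ or a stable base divisor of $K_X+\Delta+g^*H$ near $W$. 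A component that is semiample over $Y$ cannot be contracted by a bimeromorphic contraction to $Z$ after $g_Z$ is semiample as well — this forces all $f$-exceptional components to lie among the stable base divisors. Since $K_Z+\Gamma+g_Z^*H$ is semiample over $U$, the divisor $q^*(K_Z+\Gamma+g_Z^*H)$ has no components of its asymptotic base locus that project onto $U$; comparing with $|p^*(K_X+\Delta+g^*H)/Y|_{\mathbb R}$ and using that stable base divisors occur with positive coefficient in every element of the real linear system, we conclude that every component of $E$ whose $q$-image lies over $U$ has nonnegative coefficient and that all strict transforms of $f$-exceptional divisors appear in $\Supp E$ with positive coefficient.

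The hard part of the argument is the last one: guaranteeing that each $f$-exceptional prime is actually a stable base divisor (rather than a semiample component of $D$) and that it is picked up with strictly positive coefficient in $E$. The ruling out of semiample exceptional components uses the rigidity of semiample divisors under bimeromorphic contractions — a semiample divisor over $Y$ cannot be entirely contained in the exceptional locus because its associated morphism over $Y$ would have to factor through $Z$ — while the positivity of coefficients requires comparing valuations of the stable base divisors of $K_X+\Delta+g^*H$ with the corresponding valuations of the semiample divisor $K_Z+\Gamma+g_Z^*H$, pulled back to $V$. Once both facts are in place, $f$ is $(K_X+\Delta+g^*H)$-negative over $U$, hence a log terminal model over $U$ as claimed.
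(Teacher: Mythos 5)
The overall decomposition into (i) semiampleness of $K_Z+\Gamma+g_Z^*H$ over a neighborhood of $W$ and (ii) $(K_X+\Delta+g^*H)$-negativity of $f$ is the right shape, and you correctly identify that the dichotomy on the components of $D$ forces you to eliminate the possibility that some $f$-exceptional prime is semiample. But the justification you give for that elimination is not correct, and this is precisely the point you flag as ``the hard part.'' You write that a semiample divisor over $Y$ cannot be contracted because ``its associated morphism over $Y$ would have to factor through $Z$''; there is no reason for the semiample contraction of $P$ to factor through $Z$ — these are unrelated maps — so this sentence does not establish anything. The correct mechanism is the negativity lemma applied to $P$ itself, not to $K_X+\Delta+g^*H$: if $P$ is a prime divisor on $X$ that is $\mathbb{Q}$-Cartier near $\pi^{-1}(W)$ and nef over some neighborhood of $W$, and $f\colon X\dashrightarrow Z$ is a bimeromorphic contraction over $Y$, then $P$ cannot be $f$-exceptional. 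Indeed, on a common resolution $p\colon V\to X$, $q\colon V\to Z$, the divisor $p^*P$ is effective and nonzero; since $f^{-1}$ extracts no divisors, $q_*p^*P=0$, so $p^*P$ is $q$-exceptional; since $P$ is nef over $W$, for any $q$-contracted curve $C$ lying over $W$ one has $p^*P\cdot C=P\cdot p_*C\geq 0$, so $p^*P$ is $q$-nef over $W$; the negativity lemma then gives $p^*P\leq 0$, a contradiction. This is the argument that actually closes the gap, and it is what makes the hypothesis ``semiample or stable base divisor'' usable.

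Two smaller points. First, in the ``stable base divisor'' case you should state the mechanism explicitly: with $K_Z+\Gamma+g_Z^*H$ semiample and $G\in|K_Z+\Gamma+g_Z^*H/U|_{\mathbb R}$ chosen to avoid $q(\widetilde P)$ (which has codimension at least two), the pushforward $p_*(q^*G+E)\in|K_X+\Delta+g^*H/U|_{\mathbb R}$ has $\mult_P$ equal to $\mult_{\widetilde P}E$, and since $P$ is a stable base divisor near $W$ this forces $\mult_{\widetilde P}E>0$. Second, your use of Theorem~\ref{a-thm9.5} for the semiampleness step is slightly off: Theorem~\ref{a-thm9.5} requires $K_Z+\Gamma'$ to be $g_Z$-nef, whereas the neutral model only gives nefness over $h^{-1}(W)$; you would need to shrink $Y$ first and verify the hypothesis carefully. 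A cleaner route (and the one the paper takes) is to observe that the neutral model gives $K_Z+\Gamma+g_Z^*H$ nef over $W$, and then invoke Lemma~\ref{a-lem11.16} or Theorem~\ref{a-thm8.3} directly to get semiampleness over an open neighborhood of $W$.
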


\begin{proof}
The proof of \cite[Lemma 5.5]{bchm} works with only some minor modifications. 
In the proof of this lemma, we have to shrink $Y$ around 
$W$ repeatedly. 
We note that $K_Z+\Gamma$ is nef over $h^{-1}(W)$ if and only if 
$K_Z+\Gamma +g^*_Z H$ is nef over $W$. 
We also note that $K_Z+\Gamma +g^*_ZH$ is semiample 
over some open neighborhood of $W$ when $K_Z+\Gamma+g^*_ZH$ 
is nef over $W$. 
For the details, see the proof of \cite[Lemma 5.5]{bchm}. 
\end{proof}

By using the above lemmas, there are no difficulties to 
prove Theorem \ref{thm-g}$_n$ under the assumption that 
Theorem \ref{thm-a}$_n$ and Theorem \ref{thm-b}$_n$ hold true. 

\begin{lem}[{see \cite[Lemma 5.6]{bchm}}]\label{a-lem17.7}
Theorem \ref{thm-a}$_n$ and Theorem \ref{thm-b}$_n$ imply 
Theorem \ref{thm-g}$_n$. 
\end{lem}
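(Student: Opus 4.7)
The plan is to reduce the general setup of Theorem \ref{thm-g}$_n$ to the situation in which Lemma \ref{a-lem17.5} and Lemma \ref{a-lem17.6} apply directly, and then glue the conclusions using Theorem \ref{a-thm9.5} to upgrade nefness over $h^{-1}(W)$ into semiampleness over $Y^{\flat}$. Throughout, I will freely shrink $Y$ around $W$ without explicit mention; the passage from ``over $W$'' to ``over some open neighborhood of $W$'' for weak log canonical models is automatic here by Lemma \ref{a-lem11.16}, since $\Delta$ is $\pi$-big and the pairs considered are kawamata log terminal.

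First, using Lemma \ref{a-lem11.15}, I would replace $\Delta$ by $\mathbb{R}$-linearly equivalent $\Delta'=A'+B'$ with $A'\geq 0$ a general $\pi$-ample $\mathbb{Q}$-divisor, $B'\geq 0$, and $(X,\Delta')$ still kawamata log terminal. Next, I would take a log resolution $\mu\colon X'\to X$ of $(X,\Delta'+D)$ such that $X'$ is smooth, $\mu$-exceptional, and the support $G'$ of $\mu^{-1}_{*}(\Delta'+D)+\Exc(\mu)$ is simple normal crossing. Write $K_{X'}+\Delta^{\dag}=\mu^{*}(K_X+\Delta')+E'$ with $\Delta^{\dag}=A^{\dag}+B^{\dag}$, where $A^{\dag}$ is a general $\pi\circ\mu$-ample $\mathbb{Q}$-divisor of the form $\mu^{*}A'-\varepsilon F$ (with $F$ an effective exceptional divisor supported on $\Exc(\mu)$, making $\mu^{*}A'-\varepsilon F$ ample for $0<\varepsilon\ll 1$), and $B^{\dag}\geq 0$ absorbs $E'$ and the correction. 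By the negativity lemma a good log terminal model of $(X',\Delta^{\dag})$ over $Y^{\flat}$ descends to a good log terminal model of $(X,\Delta)$ over $Y^{\flat}$, so I may pass to $(X',\Delta^{\dag})$. Finally, choose $H$ a general $h$-ample $\mathbb{Q}$-divisor on $Y^{\flat}$ with $H\cdot\ell>2\dim X$ for every projective curve $\ell$ with $h(\ell)$ a point, so that by Theorem \ref{a-thm9.3} and Lemma \ref{a-lem9.4} every $(K_{X'}+\Delta^{\dag}+(g\circ\mu)^{*}H)$-negative extremal ray over $W$ is a $(K_{X'}+\Delta^{\dag})$-negative extremal ray over $Y^{\flat}$.

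Writing $K_{X'}+\Delta^{\dag}+(g\circ\mu)^{*}H\sim_{\mathbb{R}}D''\geq 0$, I would enlarge $G'$ to include $\Supp D''$, and then apply Lemma \ref{a-lem10.9} to decompose $D''\sim_{\mathbb{R}}M''+F''$ with $F''$ supported on stable base divisors of $K_{X'}+\Delta^{\dag}+(g\circ\mu)^{*}H$ near $W$, and with every component of $M''$ having a positive multiple that is mobile. Lemma \ref{a-lem17.5} then produces a neutral model $f\colon X'\dashrightarrow Z$ over $Y^{\flat}$ with respect to $A^{\dag}$ and (a perturbation of) $D''$. To invoke Lemma \ref{a-lem17.6} I need every component of $D''$ to be either semiample over $Y$ or a stable base divisor near $W$; components in $F''$ already satisfy the second option by construction, so the task reduces to showing each component of $M''$ is semiample over $Y$. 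This is the main obstacle of the proof: one argues it by running a $(K_{X'}+\Delta^{\dag}+(g\circ\mu)^{*}H)$-MMP with scaling that terminates in a neighborhood of each such mobile component by Lemma \ref{a-lem17.3} (whose termination uses Theorem \ref{thm-b}$_n$ via Lemma \ref{a-lem17.2}), and whose flipping steps exist by Theorem \ref{thm-a}$_n$; the mobility then forces a basepoint-free conclusion via Theorem \ref{a-thm8.1}.

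With both conditions of Lemma \ref{a-lem17.6} in hand, the neutral model $f\colon X'\dashrightarrow Z$ is a log terminal model for $K_{X'}+\Delta^{\dag}+(g\circ\mu)^{*}H$ over some open neighborhood $U$ of $W$, and $K_{Z}+\Gamma^{\dag}+g_{Z}^{*}H$ is semiample over $U$, where $\Gamma^{\dag}=f_{*}\Delta^{\dag}$. By the choice of $H$ and Lemma \ref{a-lem9.4}, the underlying MMP is a $(K_{X'}+\Delta^{\dag})$-MMP over $Y^{\flat}$, so $\phi:=f\circ\mu^{-1}\colon X\dashrightarrow Z$ is a bimeromorphic contraction over $Y^{\flat}$, is $(K_X+\Delta)$-negative, and $Z$ is $\mathbb{Q}$-factorial over $W$ with $(Z,\Gamma)$ divisorial log terminal, where $\Gamma=\phi_{*}\Delta$. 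For any curve $C$ with $g_{Z}(C)$ a point we have $g_{Z}^{*}H\cdot C=0$, so $(K_{Z}+\Gamma)\cdot C=(K_{Z}+\Gamma+g_{Z}^{*}H)\cdot C\geq 0$ after shrinking $Y$; hence $K_{Z}+\Gamma$ is nef over $Y^{\flat}$ (shrunk over $U$), and since $\Gamma=f_{*}\Delta^{\dag}$ is big over $Y^{\flat}$, Theorem \ref{a-thm9.5} delivers semiampleness of $K_{Z}+\Gamma$ over $Y^{\flat}$ on a possibly smaller neighborhood of $W$. This exhibits $(Z,\Gamma)$ as a good log terminal model of $(X,\Delta)$ over $Y^{\flat}$, which is Theorem \ref{thm-g}$_n$.
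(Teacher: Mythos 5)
Your overall architecture matches the paper's intent, which is to follow \cite[Lemma 5.6]{bchm} with Lemmas \ref{a-lem17.5}, \ref{a-lem17.6}, and \ref{a-lem10.9} replacing \cite[Lemmas 5.4, 5.5, and Proposition 3.5.4]{bchm}, using a large general $h$-ample $H$ on $Y^\flat$ so that Theorem \ref{a-thm9.3} and Lemma \ref{a-lem9.4} convert the minimal model program over $Y$ into one over $Y^\flat$. But there is a genuine gap at the step you flag as ``the main obstacle of the proof''---it is not an obstacle in \cite{bchm}, and the workaround you sketch does not work.

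You apply Lemma \ref{a-lem10.9} on $X'$ \emph{after} fixing the log resolution, and then propose to prove that each component of $M''$ is semiample by running a $(K_{X'}+\Delta^\dag+(g\circ\mu)^*H)$-MMP with scaling, quoting Lemma \ref{a-lem17.3} for termination and Theorem \ref{a-thm8.1} for basepoint-freeness. None of these tools apply as stated: Lemma \ref{a-lem17.3} requires $\Supp D\subset S=\lfloor\Delta\rfloor$, which is a very specific configuration having nothing to do with the components of $M''$; Theorem \ref{a-thm8.1} gives semiampleness only for divisors that are already nef over $W$, whereas a mobile divisor is in general not nef; and deducing semiampleness of an arbitrary mobile component by running an MMP is circular---it is the kind of statement the whole induction is designed to establish. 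The correct mechanism is elementary and requires no MMP: apply Lemma \ref{a-lem10.9} on $X$ first to write $D\sim_{\mathbb R}M+F$, and only then choose the log resolution $\mu\colon X'\to X$ high enough that it also resolves the base ideal of $|k_iM_i|$ for every component $M_i$ of $M$ (with $k_iM_i$ mobile). Since a mobile linear system has base locus of codimension $\geq 2$, one gets $\mu^*(k_iM_i)=N_i+G_i$ with $|N_i|$ relatively free, hence semiample, and $G_i\geq 0$ a $\mu$-exceptional divisor. After pulling back, $K_{X'}+\Delta^\dag+(g\circ\mu)^*H$ is $\mathbb R$-linearly equivalent to a nonnegative combination of semiample $\mathbb Q$-divisors, strict transforms of stable base divisors, and $\mu$-exceptional divisors; the latter two classes lie in the stable base locus near $W$ by the negativity lemma. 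This is exactly the hypothesis of Lemma \ref{a-lem17.6}. The rest of your argument (Lemma \ref{a-lem17.5} for the neutral model, Lemma \ref{a-lem17.6} to see it is a log terminal model with $K_Z+\Gamma^\dag+g_Z^*H$ semiample over some neighborhood of $W$, and then Theorem \ref{a-thm9.5} to pass to semiampleness of $K_Z+\Gamma$ over $Y^\flat$) is sound.
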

\begin{proof}
The proof of \cite[Lemma 5.6]{bchm} works in our setting 
by using Lemmas \ref{a-lem17.5} and \ref{a-lem17.6} instead of 
\cite[Lemmas 5.4 and 5.5]{bchm}. 
We note that we can use Lemma \ref{a-lem10.9} instead of 
\cite[Proposition 3.5.4]{bchm}. 
As usual, we have to replace $Y$ with a relatively 
compact Stein open neighborhood of $W$ finitely many times 
in the proof of this lemma. 
For the details, see the proof of \cite[Lemma 5.6]{bchm}. 
\end{proof}

Finally, we explicitly state the following obvious result 
for the sake of completeness. 

\begin{lem}\label{a-lem17.8}Theorem \ref{thm-g}$_n$ implies 
Theorem \ref{thm-c}$_n$ for every $n$. 
\end{lem}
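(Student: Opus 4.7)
The plan is simply to apply Theorem \ref{thm-g}$_n$ with the choice $Y^\flat := Y$, as already indicated in Remark \ref{a-rem1.12}. With this choice, the map $h \colon Y^\flat \to Y$ appearing in the statement of Theorem \ref{thm-g}$_n$ is taken to be the identity, which is trivially projective, so the factorization $\pi \colon X \to Y^\flat \to Y$ demanded by that theorem is satisfied. The remaining hypotheses of Theorem \ref{thm-g}$_n$---namely that $\pi \colon X \to Y$ and $W$ satisfies (P), that $(X,\Delta)$ is kawamata log terminal, that $\Delta$ is big over $Y$, and that $K_X+\Delta \sim_{\mathbb R} D \geq 0$---coincide verbatim with those of Theorem \ref{thm-c}$_n$.

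Next I would invoke Theorem \ref{thm-g}$_n$ in this setting to obtain, after shrinking $Y$ around $W$ suitably, a bimeromorphic contraction $\phi \colon X \dashrightarrow Z$ over $Y^\flat = Y$ such that $\phi$ is $(K_X+\Delta)$-negative, $Z$ is $\mathbb Q$-factorial over $W$, the pair $(Z,\Gamma)$ is divisorial log terminal with $\Gamma := \phi_*\Delta$, and $K_Z+\Gamma$ is semiample over $Y^\flat = Y$. Since semiampleness over $Y$ implies nefness over $Y$, and a fortiori nefness over $W$, the conditions of Definition \ref{a-def11.4}(iv) are satisfied, so $\phi \colon X \dashrightarrow Z$ is a log terminal model for $K_X+\Delta$ over some open neighborhood of $W$. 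This is exactly the conclusion required by Theorem \ref{thm-c}$_n$.

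No serious obstacle arises in this argument: the sole content of Lemma \ref{a-lem17.8} is that Theorem \ref{thm-g}$_n$ has been deliberately formulated as a logically stronger assertion than Theorem \ref{thm-c}$_n$---the only extra generality being the insertion of an intermediate projective base $Y^\flat$---so that the reduction collapses to the tautological specialization $Y^\flat = Y$.
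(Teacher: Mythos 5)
Your proposal is correct and is exactly the paper's argument: put $Y^\flat = Y$ in Theorem \ref{thm-g}$_n$, as already flagged in Remark \ref{a-rem1.12}, and observe that the hypotheses coincide and the conclusion (a good log terminal model over $Y$) is in particular a log terminal model over some open neighborhood of $W$. The paper's one-line proof of Lemma \ref{a-lem17.8} does precisely this; your version merely spells out the routine verification.
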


\begin{proof}
It is sufficient to put $Y^\flat=Y$ and apply Theorem \ref{thm-g}$_n$. 
\end{proof}

Anyway, we see that Theorem \ref{thm-g}$_n$ and 
Theorem \ref{thm-c}$_n$ hold under the assumption 
that Theorem \ref{thm-a}$_n$ and Theorem \ref{thm-b}$_n$ are 
true. 
\end{say}

We need the existence theorem of $\mathbb Q$-factorial 
divisorial log terminal flips, which is an easy 
consequence of Theorem \ref{thm-g}, in order to run minimal model 
programs. 

\begin{thm}[Existence of $\mathbb Q$-factorial divisorial log 
terminal flips]\label{a-thm17.9}
Assume that Theorem \ref{thm-g}$_n$ holds true. 

Let $\pi\colon X\to Y$ be a projective morphism 
of complex analytic spaces with $\dim X=n$ and 
let $W$ be a compact subset of $Y$ such that 
$\pi\colon X\to Y$ and $W$ satisfies {\em{(P)}}. 
We further assume that 
$(X, \Delta)$ is divisorial log terminal and that 
$X$ is $\mathbb Q$-factorial over $W$. 
Let $\varphi\colon X\to Z$ be a small projective bimeromorphic 
contraction morphism 
associated to a 
$(K_X+\Delta)$-negative extremal ray $R$ of $\NE(X/Y; W)$, 
that is, $\varphi\colon (X, \Delta)\to Z$ is a {\em{flipping contraction}} 
associated to $R$. 
Then, after shrinking $Y$ around $W$ suitably, 
the flip $\varphi^+\colon X^+\to Z$ always exists. 
\begin{equation*}
\xymatrix{
X\ar[dr]_-{\varphi} \ar@{-->}^-\phi[rr]&& \ar[dl]^-{\varphi^+}X^+\\ 
& Z&
}
\end{equation*}
This means that 
\begin{itemize}
\item[(1)] $\varphi^+\colon X^+\to Z$ is a small projective 
bimeromorphic contraction morphism, and 
\item[(2)] $K_{X^+}+\Delta^+$ is $\varphi^+$-ample, 
where $\Delta^+:=\phi_*\Delta$. 
\end{itemize}
Moreover, $(X^+, \Delta^+)$ is divisorial 
log terminal, $X^+$ is $\mathbb Q$-factorial 
over $W$, and the equality $\rho(X^+/Y; W)=\rho(X/Y; W)$ holds. 
\end{thm}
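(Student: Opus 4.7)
The plan is to construct $X^+$ as the good log terminal model provided by Theorem \ref{thm-g}$_n$ applied to an auxiliary kawamata log terminal pair built from $(X,\Delta)$, using $Y^\flat:=Z$ as the intermediate base; since $Z$ is projective over $Y$ by definition of the contraction $\varphi$, this places us within the framework of Theorem \ref{thm-g}$_n$. After shrinking $Y$ around $W$, I may assume that $\varphi\colon X\to Z$ is defined everywhere over a Stein neighborhood of $W$ and that all divisors in question have only finitely many irreducible components.

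The heart of the argument is the construction of a kawamata log terminal pair $(X,\Delta')$ satisfying the hypotheses of Theorem \ref{thm-g}$_n$ with $Y^\flat=Z$ and whose good log terminal model over $Z$ coincides with the desired flip. Because $-(K_X+\Delta)$ is $\varphi$-ample, I produce a general effective $\mathbb R$-divisor $H$ on $X$ with $K_X+\Delta+H\sim_{\mathbb R,Z}0$ such that $(X,\Delta+H)$ remains divisorial log terminal with the same non-klt centers as $(X,\Delta)$; this uses Lemma \ref{a-lem2.53} to lift the required $\mathbb R$-linear equivalence from an analytically sufficiently general fiber of $\pi$ to a neighborhood of $W$. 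Invoking Lemmas \ref{a-lem11.13}, \ref{a-lem11.14}, and \ref{a-lem11.15}, and adding a suitable pullback $\varphi^*G$ with $G$ a large general ample $\mathbb Q$-divisor on $Z$ over $Y$ in order to secure global effectivity of $K_X+\Delta'$ over $Y$, I obtain a kawamata log terminal pair $(X,\Delta'=A'+B')$ in which $A'\geq 0$ is a general $\pi$-ample $\mathbb Q$-divisor, $B'\geq 0$, $\Delta'$ is big over $Y$, $K_X+\Delta'\sim_{\mathbb R}D'\geq 0$ for some effective $D'$ on $X$, and $K_X+\Delta'\sim_{\mathbb R,Z}0$.

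Applying Theorem \ref{thm-g}$_n$ to $(X,\Delta')$ over $Y^\flat=Z$ then yields, after further shrinking $Y$ around $W$, a bimeromorphic contraction $\phi\colon X\dashrightarrow X^+$ over $Z$ that is $(K_X+\Delta')$-negative, with $X^+$ being $\mathbb Q$-factorial over $W$ and $K_{X^+}+\phi_*\Delta'$ semiample over $Z$. Because $K_X+\Delta'\sim_{\mathbb R,Z}0$, this pushforward is numerically trivial over $Z$. The smallness of $\varphi$ forces any $\phi$-exceptional divisor on $X$ to be already $\varphi$-exceptional, hence nonexistent; so $\phi$ is small. Setting $\varphi^+\colon X^+\to Z$ to be the structure morphism and $\Delta^+:=\phi_*\Delta$, the identity $K_{X^+}+\Delta^+\sim_{\mathbb R,Z}-\phi_*H$ together with the $\varphi$-ampleness of $-(K_X+\Delta)$ and the negativity lemma implies that $K_{X^+}+\Delta^+$ is $\varphi^+$-ample.

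The main obstacle is the simultaneous achievement of all the hypotheses on $\Delta'$ in Step 2 — kawamata log terminality, bigness over $Y$, effective $\mathbb R$-linear representative of $K_X+\Delta'$ globally over $Y$, and $\mathbb R$-linear triviality over $Z$ — while preserving the connection to the original flipping contraction, so that the resulting log terminal model is genuinely the flip of $(X,\Delta)$. Once $\phi$ is constructed, the smallness of $\phi$ and the $\varphi^+$-ampleness of $K_{X^+}+\Delta^+$ reduce to careful applications of the negativity lemma combined with the smallness and relative ampleness properties of $\varphi$. The remaining properties — divisorial log terminality of $(X^+,\Delta^+)$, $\mathbb Q$-factoriality of $X^+$ over $W$, and $\rho(X^+/Y;W)=\rho(X/Y;W)$ — follow respectively from $(K_X+\Delta)$-negativity of $\phi$ together with the dlt-ness of $(X,\Delta)$, from Theorem \ref{thm-g}$_n$, and from the smallness of $\phi$.
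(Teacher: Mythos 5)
Your overall plan---build an auxiliary kawamata log terminal pair and invoke Theorem~\ref{thm-g}$_n$ with $Y^\flat:=Z$---is exactly the paper's strategy, but the specific pair you construct defeats the purpose. You arrange for $K_X+\Delta'\sim_{\mathbb R, Z}0$. That makes $K_X+\Delta'$ numerically trivial, hence nef, over $Z$; and since $X$ is already $\mathbb Q$-factorial over $W$, the pair $(X,\Delta')$ is \emph{its own} good log terminal model over $Z$. Theorem~\ref{thm-g}$_n$ then returns $\phi=\mathrm{id}$ and $X^+=X$, not the flip. This shows up in your final step: with $\phi=\mathrm{id}$ you have $K_{X^+}+\Delta^+=K_X+\Delta\sim_{\mathbb R, Z}-H$, and since $H$ is $\varphi$-ample, $-H$ is $\varphi$-anti-ample, so the claimed $\varphi^+$-ampleness of $K_{X^+}+\Delta^+$ fails outright. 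In short, by adding $H$ to the boundary you erase the very non-nefness over $Z$ that the MMP over $Z$ is supposed to cure.

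The fix is to keep the auxiliary pair $\varphi$-anti-ample over $Z$. The paper does this by taking $\Delta'=(1-\varepsilon)\Delta+\varphi^*A$ for $0<\varepsilon\ll 1$ and a general $\pi_Z$-ample $\mathbb Q$-divisor $A$ on $Z$: scaling by $1-\varepsilon$ turns the dlt pair into a klt pair, and $\varphi^*A$ (being big over $Y$ because $\varphi$ is bimeromorphic) furnishes both the bigness of the boundary over $Y$ and the effective representative $K_X+(1-\varepsilon)\Delta+\varphi^*A\sim_{\mathbb R}D\geq 0$ demanded by Theorem~\ref{thm-g}$_n$. Crucially, $\varphi^*A$ is numerically trivial over $Z$, so $K_X+(1-\varepsilon)\Delta+\varphi^*A$ stays $\varphi$-anti-ample, the MMP over $Z$ is forced to act, and the output is genuinely the flip. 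The divisor you add to achieve bigness over $Y$ must be pulled back from $Z$, not chosen on $X$ so as to cancel the relative anti-ampleness of $K_X+\Delta$ over $Z$.
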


\begin{proof}
We will freely shrink $Y$ around $W$. 
We note that $(X, (1-\varepsilon)\Delta)$ is kawamata log 
terminal and $-(K_X+(1-\varepsilon)\Delta)$ is $\varphi$-ample for 
some $0<\varepsilon \ll 1$. 
We take a general $\pi_Z$-ample $\mathbb Q$-divisor 
$A$ on $Z$, where $\pi_Z\colon Z\to Y$ is the 
structure morphism, 
such that $(X, (1-\varepsilon)\Delta+\varphi^*A)$ is kawamata 
log terminal and $K_X+(1-\varepsilon)\Delta+\varphi^*A\sim 
_{\mathbb R} D\geq 0$. 
By Theorem \ref{thm-g}$_n$, $(X, (1-\varepsilon)\Delta)$ 
has a good log terminal model over $Z$. 
Hence $\varphi\colon (X, (1-\varepsilon)\Delta)\to Z$ has a 
flip $\phi\colon X\dashrightarrow X^+$. 
We can easily see that 
$\varphi^+\colon (X^+, \Delta^+)\to Z$ is the flip of $\varphi\colon 
(X, \Delta)\to Z$ and satisfies all the desired properties. 
\end{proof}

We close this section with an almost obvious remark, 
which may be useful for some applications. 

\begin{rem}\label{a-rem17.10}
In Theorem \ref{a-thm17.9}, $(X, \Delta)$ is 
assumed to be a divisorial log terminal pair. 
There are no difficulties to see that 
the existence of flips (see Theorem \ref{a-thm17.9}) 
also holds true 
under a slightly weaker assumption that 
$(X, \Delta)$ is log canonical and 
that there exists $\Delta_0$ such that 
$(X, \Delta_0)$ is kawamata log terminal. 
\end{rem}

\section{Finiteness of models; \ref{thm-g}$_n$ 
$\Rightarrow$ \ref{thm-e}$_n$}\label{a-sec18}

This section corresponds to \cite[Section 7]{bchm}, where 
Theorem E$_n$ is proved under the assumption that 
Theorem C$_n$ and Theorem D$_n$ hold true. 
In our complex analytic setting, in Section \ref{a-sec14}, 
we have already checked that the nonvanishing 
theorem (see Theorem \ref{thm-d}) holds true 
in any dimension by reducing it to the 
original nonvanishing theorem formulated 
for algebraic varieties (see \cite[Theorem D]{bchm}). 
Therefore, we can freely use Theorem \ref{thm-d} here. 
In this section, we will use Theorem \ref{thm-g}$_n$, which is 
slightly stronger than Theorem \ref{thm-c}$_n$, for 
the proof of Theorem \ref{thm-e}$_n$. 

\begin{say}[Theorem \ref{thm-g}$_n$ 
$\Rightarrow$ Theorem \ref{thm-e}$_n$]
\label{a-say18.1}
\setcounter{step}{0}
Let us see \cite[Section 7]{bchm} in detail. 
\begin{step}\label{a-say18.1-step1}
The proof of \cite[Lemma 7.1]{bchm} is well known 
and can work for our complex analytic setting. 
Note that it is sufficient to assume that 
Theorem \ref{thm-g}$_n$ holds true since 
we can freely use Theorem \ref{thm-d} in arbitrary 
dimension as mentioned above. The correct formulation 
of \cite[Lemma 7.1]{bchm} for our complex analytic setting is 
as follows. 

\begin{lem}[{see \cite[Lemma 7.1]{bchm}}]\label{a-lem18.2}
Assume that Theorem \ref{thm-g}$_n$ holds. 

Let $\pi\colon X\to Y$ be a projective morphism 
of complex analytic spaces with $\dim X=n$ and let 
$W$ be a compact subset of $Y$ such that 
$\pi\colon X\to Y$ and $W$ satisfies {\em{(P)}}. 
Let $V$ be a finite-dimensional affine 
subspace of $\WDiv_{\mathbb R}(X)$, 
which is defined over the rationals. 
Fix a general $\pi$-ample $\mathbb Q$-divisor 
$A$ on $X$. Let $\mathcal C\subset \mathcal L_A(V; \pi^{-1}(W))$ 
be a rational polytope such that 
if $\Delta\in \mathcal C$ then $(X, \Delta)$ is 
kawamata log terminal at $\pi^{-1}(W)$. 

Then, after shrinking $Y$ around $W$ suitably, 
there are finitely many bimeromorphic maps $\phi_i\colon 
X\dashrightarrow Z_i$ over $Y$, $1\leq i\leq k$, with the property 
that if $\Delta\in \mathcal C\cap \mathcal E_{A, \pi}(V; W)$, then 
there exists an index $1\leq i\leq k$ such that 
$\phi_i$ is a log terminal model of $K_X+\Delta$ over some open 
neighborhood of $W$. 
\end{lem}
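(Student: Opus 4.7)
The plan is to follow the strategy of \cite[Lemma 7.1]{bchm}, combining Theorem \ref{thm-g}$_n$, Theorem \ref{thm-d}$_n$ (which has already been established in arbitrary dimension in Section \ref{a-sec14}), and Corollary \ref{a-cor11.19}. Throughout I will freely shrink $Y$ around $W$, replacing $W$ by a larger Stein compact subset satisfying (P) when convenient, as in \ref{a-say1.11}. Note that $\mathcal C \cap \mathcal E_{A,\pi}(V;W)$ is closed in the compact rational polytope $\mathcal C$ and hence is compact.

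First I would produce one log terminal model per point. Fix $\Delta_0 \in \mathcal C \cap \mathcal E_{A,\pi}(V;W)$. Since $A$ is a general $\pi$-ample $\mathbb Q$-divisor and $\Delta_0 = A + B_0$ with $B_0 \geq 0$, the boundary $\Delta_0$ is $\pi$-big, and by hypothesis $(X,\Delta_0)$ is kawamata log terminal. Because $K_X+\Delta_0$ is pseudo-effective over an open neighborhood of $W$, Theorem \ref{thm-d}$_n$ applied over a relatively compact Stein open neighborhood of $W$ produces an effective $\mathbb R$-divisor $D_0$ with $K_X+\Delta_0 \sim_{\mathbb R} D_0 \geq 0$. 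Theorem \ref{thm-g}$_n$ (with $Y^\flat = Y$) then yields a good log terminal model $\phi_0 \colon X \dashrightarrow Z_0$ of $(X, \Delta_0)$ over an open neighborhood of $W$; in particular $Z_0$ is $\mathbb Q$-factorial over $W$ and $(Z_0, (\phi_0)_*\Delta_0)$ is divisorial log terminal. Next I would argue that $\phi_0$ remains a log terminal model for every $\Delta$ in a neighborhood of $\Delta_0$ in $\mathcal C \cap \mathcal E_{A,\pi}(V;W)$. By Corollary \ref{a-cor11.19} (1) the set $\mathcal W_{\phi_0, A, \pi}(V;W)$ equals $\mathcal W^\sharp_{\phi_0, A, \pi}(V;W)$ and is a rational polytope containing $\Delta_0$. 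For $\Delta$ close to $\Delta_0$ the $(K_X+\Delta)$-negativity of $\phi_0$ is open, since on a common resolution $p\colon U \to X$, $q\colon U \to Z_0$ the coefficients in the identity $p^*(K_X+\Delta) - q^*(K_{Z_0}+(\phi_0)_*\Delta) = E_\Delta$ depend affinely on $\Delta \in V$ and are strictly positive on the strict transform of the $\phi_0$-exceptional divisors at $\Delta_0$; similarly divisorial log terminality of $(Z_0,(\phi_0)_*\Delta)$ and $\mathbb Q$-factoriality of $Z_0$ over $W$ persist. Finally, nefness of $K_{Z_0}+(\phi_0)_*\Delta$ over $W$ on a neighborhood of $\Delta_0$ is the content of Theorem \ref{a-thm11.17} applied to the induced affine space on $Z_0$: the nef locus is cut out of $\mathcal L_{(\phi_0)_*A}((\phi_0)_*V; (\pi_{Z_0})^{-1}(W))$ by only finitely many hyperplane conditions, so it contains an open neighborhood of $(\phi_0)_*\Delta_0$. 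Pulling back, $\mathcal W_{\phi_0,A,\pi}(V;W)$ contains an open neighborhood of $\Delta_0$ in $\mathcal C \cap \mathcal E_{A,\pi}(V;W)$.

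By compactness of $\mathcal C \cap \mathcal E_{A,\pi}(V;W)$ the resulting open cover admits a finite subcover, producing the desired finite list $\phi_1 = \phi_{\Delta_1}, \ldots, \phi_k = \phi_{\Delta_k}$. The main obstacle is the second paragraph: verifying that the log terminal model constructed at a single pseudo-effective divisor remains a log terminal model (not merely a weak log canonical model) for all nearby divisors in $\mathcal C$. This simultaneously requires openness of $(K_X+\Delta)$-negativity (handled by the continuous dependence of discrepancies on the resolution), preservation of divisorial log terminality and $\mathbb Q$-factoriality over $W$ (immediate from $Z_0$ being $\mathbb Q$-factorial over $W$ by Theorem \ref{thm-g}$_n$), and local finiteness of $(K_{Z_0}+(\phi_0)_*\Delta)$-trivial extremal rays (supplied by Theorem \ref{a-thm11.17}, which itself depends on $Z_0$ being $\mathbb Q$-factorial over $W$). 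The $\mathbb Q$-factoriality output of Theorem \ref{thm-g}$_n$, stronger than the bare statement of Theorem \ref{thm-c}$_n$, is therefore essential at this step, which explains why the inductive scheme of Subsection \ref{a-subsec1.2} routes through Theorem \ref{thm-g} rather than Theorem \ref{thm-c}.
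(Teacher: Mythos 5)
The local step fails at the nefness claim. You invoke Theorem \ref{a-thm11.17} to conclude that the nef locus on $Z_0$ ``contains an open neighborhood of $(\phi_0)_*\Delta_0$'' merely because it is cut out by finitely many rational hyperplanes. But a rational polytope need not contain any given point in its interior: Theorem \ref{thm-g}$_n$ guarantees that $K_{Z_0}+(\phi_0)_*\Delta_0$ is nef (indeed semiample) over a neighborhood of $W$, not that $(\phi_0)_*\Delta_0$ avoids every wall of $\mathcal N^\sharp_{(\phi_0)_*A,\pi_{Z_0}}((\phi_0)_*V;W)$. If some extremal ray $R$ of $\NE(Z_0/Y;W)$ satisfies $(K_{Z_0}+(\phi_0)_*\Delta_0)\cdot R=0$ --- the typical situation, since the good log terminal model has no reason to place $(\phi_0)_*\Delta_0$ in the interior of the nef cone --- then there are $\Delta\in\mathcal C\cap\mathcal E_{A,\pi}(V;W)$ arbitrarily close to $\Delta_0$ with $(K_{Z_0}+(\phi_0)_*\Delta)\cdot R<0$. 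For those $\Delta$ the map $\phi_0$ is not even a weak log canonical model over $W$, so your purported open cover of $\mathcal C\cap\mathcal E_{A,\pi}(V;W)$ does not exist and the compactness argument collapses.

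The paper's proof deliberately delegates to \cite[Lemma 7.1]{bchm}, which resolves exactly this difficulty by a reduction to the boundary. One first replaces $X$ by the model $Z_0$ so that $K_X+\Delta_0$ is nef over $W$; one then takes a small rational polytope $\mathcal C_0$ around $\Delta_0$ and writes any $\Delta\in\mathcal C_0$ as $\mu\Delta_0+(1-\mu)\Delta'$ with $\Delta'$ on $\partial\mathcal C_0$, and shows (via the negativity arguments in \cite[Lemmas 3.6.9--3.6.10]{bchm}) that a log terminal model of $K_X+\Delta'$ is automatically one of $K_X+\Delta$, because $K_X+\Delta_0$ is nef and remains nef on that model. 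This reduces the problem to $\partial\mathcal C_0$, which is covered by finitely many lower-dimensional rational polytopes, and one concludes by induction on $\dim\mathcal C$. Your subsidiary observations --- openness of $(K_X+\Delta)$-negativity across a common resolution, persistence of klt-ness of $(Z_0,(\phi_0)_*\Delta)$, $\mathbb Q$-factoriality of the fixed model $Z_0$ over $W$, and the essential role of Theorem \ref{thm-g}$_n$ rather than Theorem \ref{thm-c}$_n$ --- are all correct and do appear in the argument, but a single model per point does not suffice; the wall-crossing/boundary induction is the actual content of the lemma and is missing from your proposal.
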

\begin{proof}
We have already proved Theorem \ref{thm-d} in any dimension. 
We can use Lemma \ref{a-lem11.14} instead of \cite[Lemma 3.7.4]{bchm}. 
Therefore, by using Theorem \ref{thm-g}$_n$ instead of 
\cite[Theorem C$_n$]{bchm}, we see that the proof of 
\cite[Lemma 7.1]{bchm} works in our complex analytic setting. 
Precisely speaking, we formulate Theorem \ref{thm-g}$_n$ in order 
to make the proof of \cite[Lemma 7.1]{bchm} work in 
the complex analytic setting. For the details, see the proof of 
\cite[Lemma 7.1]{bchm}. 
\end{proof}
\end{step}
\begin{step}\label{a-say18.1-step2}
The proof of \cite[Lemma 7.2]{bchm} uses 
\cite[Lemma 3.6.12]{bchm}. In our 
case, we can use Lemma \ref{a-lem12.3} instead 
of \cite[Lemma 3.6.12]{bchm}. 
We note that (ii) in Lemma \ref{a-lem12.3}, 
which is nothing but Theorem \ref{thm-g}$_n$, 
is satisfied by assumption. As in Step \ref{a-say18.1-step1}, 
it is sufficient to assume Theorem \ref{thm-g}$_n$ 
since we can freely use Theorem \ref{thm-d} in any 
dimension. 

\begin{lem}[{\cite[Lemma 7.2]{bchm}}]\label{a-lem18.3}
Assume that Theorem \ref{thm-g}$_n$ holds true. 

Let $\pi\colon X\to Y$ be a projective morphism of 
complex analytic spaces with $\dim X=n$ 
and let $W$ be a compact subset 
of $Y$ such that $\pi\colon X\to Y$ and $W$ satisfies 
{\em{(P)}}. Suppose that 
there is a kawamata log terminal pair $(X, \Delta_0)$. 
We fix a general $\pi$-ample 
$\mathbb Q$-divisor 
$A$ on $X$. 
Let $V$ be a finite-dimensional affine subspace of $\WDiv_{\mathbb R}(X)$ 
which is defined over the rationals. 
Let $\mathcal C\subset \mathcal L_A(V; \pi^{-1}(W))$ be a rational 
polytope. 

Then, after shrinking $Y$ around $W$ suitably, 
there are finitely many bimeromorphic contractions 
$\psi_j\colon X\dashrightarrow Z_j$ over $Y$, $1\leq j\leq l$, such that 
if $\psi\colon X\dashrightarrow Z$ is a weak log canonical 
model of $K_X+\Delta$ over $W$ for some $\Delta\in \mathcal C$ then 
there exist an index $1\leq j\leq l$ and an isomorphism 
$\xi\colon Z_j\to Z$ over some open neighborhood of $W$ such that 
$\psi=\xi\circ \psi_j$ holds. 
\end{lem}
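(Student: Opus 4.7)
The plan is to adapt the proof of \cite[Lemma 7.2]{bchm} to our complex analytic setting, using Lemma \ref{a-lem18.2} in place of \cite[Lemma 7.1]{bchm} and Corollary \ref{a-cor11.19} together with Lemma \ref{a-lem12.3} in place of \cite[Corollary 3.11.2 and Lemma 3.6.12]{bchm}; all three inputs rely on the standing assumption Theorem \ref{thm-g}$_n$. Preparatorily, after shrinking $Y$ around $W$ I will (a) use Lemma \ref{a-lem12.1} to pass to a small projective $\mathbb{Q}$-factorialization so that $X$ becomes $\mathbb{Q}$-factorial over $W$, which is harmless for the statement since weak log canonical models correspond under such a small contraction; (b) discard those $\Delta\in\mathcal{C}$ that are not in $\mathcal{E}_{A,\pi}(V;W)$, for which no weak log canonical model exists; and (c) apply Lemma \ref{a-lem11.14} combined with Lemma \ref{a-lem11.15} to reduce to the case where $(X,\Delta)$ is kawamata log terminal for every $\Delta\in\mathcal{C}$.

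Next I will invoke Lemma \ref{a-lem18.2} to produce finitely many bimeromorphic contractions $\phi_i\colon X\dashrightarrow T_i$ ($1\leq i\leq k$) such that for each $\Delta\in\mathcal{C}$ some $\phi_i$ is a log terminal model of $K_X+\Delta$ over a neighborhood of $W$. For each $i$, Corollary \ref{a-cor11.19}(1) gives that $\mathcal{W}_{\phi_i, A,\pi}(V;W)\cap\mathcal{C}$ is a rational polytope, and Corollary \ref{a-cor11.19}(2) provides, after further shrinking $Y$ around $W$, finitely many contractions $f_{i,m}\colon T_i\to Z_{i,m}$ such that every ample model of $K_X+\Delta$ for $\Delta$ in this polytope is isomorphic over a neighborhood of $W$ to $f_{i,m}\circ\phi_i$ for some $m$. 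Given now a weak log canonical model $\psi\colon X\dashrightarrow Z$ of some $K_X+\Delta$ with $\Delta\in\mathcal{C}$, I choose $\phi_i$ a log terminal model of $K_X+\Delta$ and apply the negativity lemma on a common resolution to see that the induced meromorphic map $\alpha_i\colon T_i\dashrightarrow Z$ is a bimeromorphic contraction satisfying $K_{T_i}+(\phi_i)_*\Delta=\alpha_i^*(K_Z+\psi_*\Delta)$; Lemma \ref{a-lem11.16} then shows that $K_Z+\psi_*\Delta$ is semiample over a neighborhood of $W$, whence the ample model $Z\to Z^{\mathrm{amp}}$ exists and the composition $T_i\to Z\to Z^{\mathrm{amp}}$ recovers one of the $f_{i,m}$; in particular $\alpha_i$ is a morphism, and $Z$ is determined by the face of $\NE(T_i/Y;W)$ contracted by $\alpha_i$, which is contained in $(K_{T_i}+(\phi_i)_*\Delta)^\perp$.

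The principal obstacle lies in upgrading finiteness of the ample models $Z^{\mathrm{amp}}$ provided by Corollary \ref{a-cor11.19}(2) to finiteness of the intermediate weak log canonical targets $Z$: between a fixed $T_i$ and a fixed $Z^{\mathrm{amp}}$ several intermediate contractions could a priori appear. This is settled by Theorem \ref{a-thm11.17} applied to $T_i$: as $\Delta$ ranges over the rational polytope pushed forward to an affine subspace of $\mathrm{WDiv}_\mathbb{R}(T_i)$, only finitely many faces of $\NE(T_i/Y;W)$ can contain the class of $K_{T_i}+(\phi_i)_*\Delta$, and the contracted face of $\alpha_i$ is one of them. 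Collecting the list $\psi_j:=\alpha_{i(j)}\circ\phi_{i(j)}\colon X\dashrightarrow Z_j$ over all choices of $i$ and all admissible contracted faces then yields the required finite family, and the isomorphism $\xi\colon Z_j\to Z$ in the conclusion is obtained from uniqueness of the contraction of $T_i$ associated to the face cut out by $\Delta$.
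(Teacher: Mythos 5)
The overall strategy is close to the paper's, but there is a genuine gap where you pass from a log terminal model to the given weak log canonical model. After producing finitely many log terminal models $\phi_i\colon X\dashrightarrow T_i$ via Lemma \ref{a-lem18.2} and picking one that is a log terminal model of the given $(X,\Delta)$, you consider the induced bimeromorphic map $\alpha_i\colon T_i\dashrightarrow Z$ and assert it is a morphism. The negativity lemma and crepancy of weak log canonical models do give that $\alpha_i$ is a bimeromorphic contraction and that $K_{T_i}+(\phi_i)_*\Delta$ and $K_Z+\psi_*\Delta$ pull back to the same class on a common resolution, but this does \emph{not} force $\alpha_i$ to be a morphism: $T_i$ and $Z$ can be distinct $\mathbb{Q}$-factorial small modifications over $Z^{\mathrm{amp}}$, related by flops. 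That both $T_i\to Z^{\mathrm{amp}}$ and $Z\to Z^{\mathrm{amp}}$ are morphisms does not make $T_i\dashrightarrow Z$ one, so the inference ``the composition $T_i\to Z\to Z^{\mathrm{amp}}$ recovers one of the $f_{i,m}$; in particular $\alpha_i$ is a morphism'' is a non sequitur. Once $\alpha_i$ need not be a morphism, your enumeration of the possible $Z$ via contracted faces of $\NE(T_i/Y;W)$ and Theorem \ref{a-thm11.17} no longer applies, and the proposed list $\psi_j=\alpha_{i(j)}\circ\phi_{i(j)}$ is not even well defined.

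The repair is exactly the ingredient you cite in your opening sentence but never deploy: Lemma \ref{a-lem12.3}. After the reduction by Lemma \ref{a-lem11.14} you are precisely in the situation where Lemma \ref{a-lem12.3} applies (its hypothesis (ii) being the standing assumption Theorem \ref{thm-g}$_n$) and gives $\mathcal{W}^{\sharp}_{\psi,A,\pi}(V;W)=\overline{\mathcal{A}}_{\psi,A,\pi}(V;W)$. Hence, given $\Delta\in\mathcal{C}$ with $\psi\colon X\dashrightarrow Z$ a weak log canonical model over $W$, you replace $\Delta$ by a nearby $\Delta'$ for which $\psi$ is the \emph{ample} model of $(X,\Delta')$, so that $Z$ itself is an ample model. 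Choosing $\phi_i$ a log terminal model of $K_X+\Delta'$ (Lemma \ref{a-lem18.2}), Lemma \ref{a-lem11.16} supplies a genuine contraction morphism $h\colon T_i\to Z$ with $K_{T_i}+(\phi_i)_*\Delta'\sim_{\mathbb R}h^*H$ for $H$ relatively ample, and Corollary \ref{a-cor11.19}(2) places $h$, hence $Z$, in a finite list. The perturbation is not cosmetic: it is what promotes the rational map $T_i\dashrightarrow Z$ to a morphism, which your argument assumes but does not establish.
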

\begin{proof}
If we use Lemma \ref{a-lem18.2}, Corollary \ref{a-cor11.19}, and 
Lemma \ref{a-lem12.3}  
instead of \cite[Lemma 7.1]{bchm}, \cite[Corollary 3.11.2]{bchm}, 
and \cite[Lemma 3.6.12]{bchm}, 
then the proof of \cite[Lemma 7.2]{bchm} works in our complex 
analytic setting. The idea is as follows. 
By using Lemma \ref{a-lem11.14} and so on, we 
can reduce the problem to the case where we can use 
Lemma \ref{a-lem12.3}. 
Let $\psi\colon X\dashrightarrow Z$ be a weak log canonical model 
over $W$. Then we can take $\Delta'$ such that 
$\psi\colon X\dashrightarrow Z$ is an ample model of $(X, \Delta')$. 
Then, by Lemma \ref{a-lem18.2} and Corollary \ref{a-cor11.19}, 
we obtain all the desired properties. 
For the details, see the proof of \cite[Lemma 7.2]{bchm}. 
\end{proof}
\end{step}
\begin{step}\label{a-say18.1-step3}
The final step is obvious. 
\begin{lem}[{\cite[Lemma 7.3]{bchm}}]\label{a-lem18.4}
Theorem \ref{thm-g}$_n$ implies Theorem \ref{thm-e}$_n$. 
\end{lem}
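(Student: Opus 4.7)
The plan is to apply Lemma \ref{a-lem18.3} with the rational polytope $\mathcal C$ chosen as large as possible inside $\mathcal L_A(V;\pi^{-1}(W))$, so that every divisor appearing in the hypothesis of Theorem \ref{thm-e}$_n$ already lies in $\mathcal C$. First, by Lemma \ref{a-lem3.5} together with Remark \ref{a-rem11.9}, the set
\begin{equation*}
\mathcal L_A(V;\pi^{-1}(W))=\{\Delta=A+B\in V_A\mid K_X+\Delta \text{ log canonical at } \pi^{-1}(W),\ B\geq 0\}
\end{equation*}
is itself a rational polytope, because $A$ is a $\mathbb Q$-divisor and $V$ is defined over the rationals. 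Hence we may simply set $\mathcal C:=\mathcal L_A(V;\pi^{-1}(W))$.

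Next, after possibly shrinking $Y$ around $W$, Lemma \ref{a-lem18.3} (which is legitimate under the standing assumption that Theorem \ref{thm-g}$_n$ holds) produces finitely many bimeromorphic contractions $\psi_j\colon X\dashrightarrow Z_j$ over $Y$ for $1\leq j\leq l$, with the property that every weak log canonical model $\psi\colon X\dashrightarrow Z$ of $K_X+\Delta$ over $W$ with $\Delta\in\mathcal C$ factors, after shrinking $Y$ around $W$ suitably, as $\psi=\xi\circ\psi_j$ for some index $j$ and some isomorphism $\xi\colon Z_j\to Z$ over an open neighborhood of $W$. This is already exactly the list of models needed in Theorem \ref{thm-e}$_n$.

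It remains to match the precise hypothesis of Theorem \ref{thm-e}$_n$, where the weak log canonical model is given as $\psi\colon \pi^{-1}(U)\dashrightarrow Z$ for some open neighborhood $U$ of $W$, rather than as a map out of $X$. Since (P) allows us to replace $Y$ by any relatively compact Stein open neighborhood of $W$ contained in $U$ without losing any of our hypotheses (as explained in \ref{a-say1.11}), we may in fact assume $Y=U$; then $\psi$ becomes a bimeromorphic map $X\dashrightarrow Z$ over $Y$ and the previous paragraph applies directly. Conversely, once the isomorphism $\xi\colon Z_j\to Z$ is obtained over some open neighborhood of $W$, both $\psi_j$ and $\psi$ are defined over a common such neighborhood, and the identity $\psi=\xi\circ\psi_j$ holds there; shrinking $U$ and $Y$ once more if necessary absorbs this.

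The step I expect to require the most care is the last one: the bookkeeping of the open neighborhoods of $W$ over which the various objects are defined, and the repeated shrinking of $Y$ to pass from ``over $W$'' to ``over an open neighborhood of $W$'' in a way compatible with (P4). This is not a serious obstacle but needs to be carried out explicitly, following the template in \ref{a-say1.11}, to ensure that the finite list $\{\psi_j\}$ produced at the outset continues to serve every $\psi$ considered afterwards.
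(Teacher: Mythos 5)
Your proof is correct and follows exactly the same approach as the paper: set $\mathcal C = \mathcal L_A(V;\pi^{-1}(W))$, which is a rational polytope by Remark \ref{a-rem11.9}, and invoke Lemma \ref{a-lem18.3}. The additional bookkeeping you spell out in the last two paragraphs, matching the ``$\psi\colon \pi^{-1}(U)\dashrightarrow Z$'' formulation of Theorem \ref{thm-e}$_n$ to the ``$\psi\colon X\dashrightarrow Z$ after shrinking $Y$ around $W$'' formulation of Lemma \ref{a-lem18.3} via \ref{a-say1.11}, is a correct expansion of what the paper leaves implicit in its one-line argument.
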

\begin{proof} We note that $\mathcal L_A(V; \pi^{-1}(W))$ is a rational 
polytope. Therefore, it 
is sufficient to put $\mathcal C=\mathcal L_A(V; \pi^{-1}(W))$ 
in Lemma \ref{a-lem18.3}. 
\end{proof} 
\end{step}
Hence we see that Theorem \ref{thm-e}$_n$ holds 
under the assumption that Theorem \ref{thm-g}$_n$ holds 
true. This is what we wanted. 
\end{say}

By the above arguments, we think that the reader can understand 
the reason why we prepared Theorem \ref{thm-g}, 
Corollary \ref{a-cor11.19}, and 
Lemma \ref{a-lem12.3}. 

\section{Finite generation; 
\ref{thm-g}$_n$ 
$\Rightarrow$ \ref{thm-f}$_n$}\label{a-sec19}

This section corresponds to \cite[Section 8]{bchm}.
Note that \cite[Section 8]{bchm} is a very short section, 
which consists of only one lemma (see \cite[Lemma 8.1]{bchm}). 
The proof of Theorem \ref{thm-f}$_n$ given below 
is slightly more complicated than the original 
algebraic version in \cite[Section 
8]{bchm}. This is because we formulated 
everything only over some open neighborhood of 
a given compact subset of the base space. 

\begin{say}[Theorem \ref{thm-g}$_n$ 
$\Rightarrow$ Theorem \ref{thm-f}$_n$]\label{a-say19.1} 
Here, we will prove Theorem \ref{thm-f}$_n$ 
under the assumption that 
Theorem \ref{thm-g}$_n$ holds true.  

First, we will prove (1). In the proof of (1), we will 
freely shrink $Y$ around $W$ suitably 
without mentioning it explicitly. 
If $K_X+\Delta$ is $\pi$-pseudo-effective, 
then 
$K_X+\Delta\sim _{\mathbb R} D\geq 0$ by Theorem 
\ref{thm-d}. 
Hence, by Theorem \ref{thm-g}$_n$, $(X, \Delta)$ has 
a good log terminal model $(Z, \Gamma)$ over $Y$. 
Hence $K_Z+\Gamma$ is semiample over $Y$. 
We take any point $y\in Y$. By applying the above 
result to $\pi\colon X\to Y$ with $W:=\{y\}$. 
Then we see that 
there exits an open neighborhood $U_y$ of $y$ such that 
$(X, \Delta)$ has a good log terminal model over $U_y$. 
By this observation, we obtain that $R(X/Y, K_X+\Delta)$ 
is a locally finitely generated graded $\mathcal O_Y$-algebra. 
Thus, we get (1) in Theorem \ref{thm-f}$_n$. 

From now on, we will prove (2). 
Let $\mu\colon X\dashrightarrow Z$ be a good 
log terminal model for $K_X+\Delta$ over $Y$ after 
shrinking $Y$ around $W$ (see Theorem \ref{thm-g}$_n$). 
Since $G$ is a prime divisor contained 
in the stable base locus of $K_X+\Delta$ over $Y$, 
$G$ is $\mu$-exceptional. 
We take a small positive real number $\delta$ such that 
if $|\!| \Xi-\Delta|\!|<\delta$ then 
$(Z, \mu_*\Xi)$ is kawamata log terminal and 
$a(G, X, \Xi)<a(G, Y, \mu_*\Xi)$. 
If $K_X+\Xi$ is not $\pi$-pseudo-effective, 
then $\mathbf B((K_X+\Xi)/Y)=X$. 
Therefore, $G\subset \mathbf B((K_X+\Xi)/Y)$ is 
obvious. 
Hence we may assume that $K_X+\Xi$ is 
$\pi$-pseudo-effective. We take any point $y$ of $Y$. 
Then there exists an open neighborhood 
$U_y$ of $y$ such that 
$(Z, \mu_*\Xi)$ has a good log terminal model 
over $U_y$ by Theorem 
\ref{thm-g}$_n$. 
This means that $(X, \Xi)$ has a good log terminal model 
over $U_y$. 
Hence we can easily check that 
$G|_{\pi^{-1}(U_y)}\subset 
\mathbf B((K_X+\Xi)|_{\pi^{-1}(U_y)}/U_y)$. 
Therefore, 
$G|_{\pi^{-1}(U_y)}\subset \mathbf B((K_X+\Xi)/Y)$. 
Thus, we obtain that $G\subset \mathbf B((K_X+\Xi)/Y)$ 
holds since $y$ is any point of $Y$. 
This is (2). 

Finally, we will prove (3). We take a good log terminal model 
$\mu\colon X\dashrightarrow Z$ of $K_X+\Delta$ over $Y$ 
after shrinking $Y$ around $W$ (see Theorem \ref{thm-g}$_n$).  
By Corollary \ref{a-cor11.19} (1), 
$\mathcal W_{\mu, A, \pi}(V'; W)=
\mathcal W^\sharp_{\mu, A, \pi}(V'; W)$ 
is a rational polytope and $\Delta\in \mathcal W_{\mu, A, \pi}(V'; W)$. 
Therefore, we may assume that 
$K_Z+\mu_*\Xi$ is nef over $Y$ for every $\Xi \in 
\mathcal W_{\mu, A, \pi}(V'; W)$ after shrinking $Y$ around $W$ again. 
Hence, after shrinking $Y$ around $W$ suitably,  
there exists a positive constant $\eta$ such that 
if $\Xi \in V'$ and $|\!| \Xi -\Delta|\!|<\eta$ then 
$(Z, \mu_*\Xi)$ is kawamata log 
terminal and $K_Z+\mu_*\Xi$ is semiample 
over $Y$. 
By Theorem \ref{a-thm3.12}, 
$Z$ has only rational singularities. 
Since $Z$ is $\mathbb Q$-factorial over $W$, 
there is a positive integer $l$ such that if 
$m(K_Z+\mu_*\Xi)$ is an integral Weil divisor then 
$lm(K_Z+\mu_*\Xi)$ is Cartier over some open neighborhood 
of $W$ (see Lemma \ref{a-lem2.42}). 
By replacing $Y$ with a small open neighborhood of $W$, 
we may assume that $lm(K_Z+\mu_*\Xi)$ is Cartier on $Z$. 
Therefore, by Theorem \ref{a-thm6.4}, 
there exists $r>0$ such that $rm(K_Z+\mu_*\Xi)$ is free over $Y$ when 
$m(K_Z+\mu_*\Xi)$ is an integral Weil divisor. 
It follows that 
if $k(K_X+\Xi)/r$ is Cartier then every component of 
$\Fix (k(K_X+\Xi))$ is contracted by $\mu$ and so every component 
is in $\mathbf B((K_X+\Delta)/Y)$. 
We finish the proof of (3) in Theorem \ref{thm-f}$_n$. 
\end{say}

In Section \ref{a-sec21}, 
we will prove Theorems \ref{a-thm1.18} and \ref{a-thm1.22}, 
which are much more general than 
the finite generation in 
Theorem \ref{thm-f} (1). 

\section{Proof of theorems}\label{a-sec20}

In this section, we will prove theorems. 
Note that we postpone the proof of Theorems \ref{a-thm1.18} and 
\ref{a-thm1.22} until Section \ref{a-sec21} because 
it needs some deep results from the theory of variations of 
Hodge structure. Theorem \ref{a-thm1.28} will be proved 
in Section \ref{a-sec22} after we explain some supplementary 
results on the minimal model program with scaling. 
Theorem \ref{a-thm1.30} (see 
Theorem \ref{a-thm23.2}) will be treated in Section \ref{a-sec23}. 
Note that the proof of Theorem \ref{a-thm1.30} 
uses Theorem \ref{a-thm1.18}. 

Let us start with 
the proof of Theorems \ref{a-thm1.6} and 
\ref{a-thm1.13}, which is now almost obvious. 

\begin{proof}[Proof of Theorems \ref{a-thm1.6} and 
\ref{a-thm1.13}]
As we explained in Subsection \ref{a-subsec1.2}, 
we will prove Theorems \ref{thm-a}, 
\ref{thm-b}, \ref{thm-c}, \ref{thm-d}, \ref{thm-e}, 
\ref{thm-f}, and \ref{thm-g} by induction on $\dim X$. 
In Section \ref{a-sec14}, 
we established Theorem \ref{thm-d} in arbitrary dimension. 
We use induction on $n=\dim X$. 
When $n=0$, all the statements are trivially true. 
From now on, we assume that 
Theorems \ref{thm-a}, \ref{thm-b}, \ref{thm-c}, 
\ref{thm-e}, \ref{thm-f}, \ref{thm-g} hold true when 
$\dim X\leq n-1$. 
In Section \ref{a-sec15}, 
we proved Theorem \ref{thm-a} in 
$\dim X=n$. 
In Section \ref{a-sec16}, 
we obtained Theorem \ref{thm-b} in $\dim X=n$. 
Hence we can prove Theorem \ref{thm-g} 
in $\dim X=n$ by Section \ref{a-sec17}. 
Note that Theorem \ref{thm-c}$_n$ is a very 
special case of Theorem \ref{thm-g}$_n$ (see Lemma 
\ref{a-lem17.8}). 
By Sections \ref{a-sec18} and \ref{a-sec19}, 
we have Theorems \ref{thm-e} and \ref{thm-f} in 
$\dim X=n$, respectively. 
This means that we have established Theorems 
\ref{thm-a}, \ref{thm-b}, \ref{thm-c}, 
\ref{thm-d}, \ref{thm-e}, \ref{thm-f}, and \ref{thm-g} in arbitrary dimension. 
\end{proof}

From now on, we can freely use Theorems 
\ref{thm-a}, \ref{thm-b}, \ref{thm-c}, 
\ref{thm-d}, \ref{thm-e}, \ref{thm-f}, and \ref{thm-g} in 
arbitrary dimension. 
Therefore, we can freely use the minimal model 
program with scaling explained in Section \ref{a-sec13}. 

\begin{proof}[Proof of Theorem \ref{a-thm1.7}]
In this theorem, we only treat kawamata log pairs. 
Therefore, we do not need any extra assumptions on 
non-kawamata log terminal centers. 
We can freely use the minimal model program with scaling 
explained in Section \ref{a-sec13}. 
Note that the pseudo-effectivity over $Y$ is preserved 
by the minimal model program. 
Therefore, this theorem is a special case of the minimal model 
program with scaling under the condition (i) explained 
in Section \ref{a-sec13}. 
We also note that the termination 
of the minimal model program follows from Theorem \ref{thm-e} 
(see the proof of Theorem \ref{a-thm13.6}). 
\end{proof}

Theorem \ref{a-thm1.8} is a direct generalization of 
\cite[Theorem 1.2]{bchm} in the complex analytic setting. 

\begin{proof}[Proof of Theorem \ref{a-thm1.8}]
Throughout this proof, we will freely shrink $Y$ around 
$W$ suitably without mentioning it explicitly. 
By Theorems \ref{thm-d} and \ref{thm-c}, 
$(X, \Delta)$ has a log terminal model over $Y$. 
This is (1). 
By Lemma \ref{a-lem11.16} (2), $\phi$ is a semiample 
model over $Y$. 
By Lemma \ref{a-lem11.16} (3) and (4), 
we know that $(X, \Delta)$ has a log 
canonical model 
over $Y$ when $K_X+\Delta$ is $\pi$-big. 
This is (2). 
We take an arbitrary point $y\in Y$. 
It is sufficient to prove (3) over some open neighborhood 
of $y\in Y$. 
Hence we may take a Stein compact subset 
$W$ of $Y$ such that $y\in W$ and 
that $\Gamma(W, \mathcal O_Y)$ is noetherian, and 
shrink $Y$ and enlarge $W$ suitably without 
mentioning it explicitly 
(see Lemma \ref{a-lem2.16}). 
We take a positive integer $a$ such that 
$a(K_X+\Delta)$ and $a(K_Z+\Gamma)$ are 
both Cartier. 
Since $a(K_Z+\Gamma)$ is semiample 
over $Y$, 
\begin{equation*}
\bigoplus _{m\in \mathbb N} 
\pi_*\mathcal O_X(ma(K_X+\Delta))
\simeq 
\bigoplus _{m\in \mathbb N} 
(\pi_Z)_*\mathcal O_Z(ma(K_Z+\Gamma)), 
\end{equation*}
where $\pi_Z\colon Z\to Y$ is the structure morphism, 
is a locally finitely generated graded $\mathcal O_Y$-algebra 
(see Lemma \ref{a-lem2.36}). 
Therefore, by Lemma \ref{a-lem2.26} , $R(X/Y, K_X+\Delta)$ is a 
locally finitely generated graded $\mathcal O_Y$-algebra. 
This is (3). 
\end{proof}

The existence of kawamata log terminal flips is a direct 
consequence of Theorem \ref{a-thm1.8} (2). 

\begin{proof}[Proof of Theorem \ref{a-thm1.14}]
We take a point $z\in Z$ and consider a small 
Stein open neighborhood $U$ of $z\in Z$. 
Then $(X^+, \Delta^+)|_{(\varphi^+)^{-1}(U)}$ is nothing 
but the log canonical model of $(X, \Delta)|_{\varphi^{-1}(U)}$ 
over $U$. 
Therefore, after shrinking $U$ around $z$ suitably, 
it exists by Theorem \ref{a-thm1.8} (2) and 
is unique. 
Hence the desired flip $\varphi^+\colon 
(X^+, \Delta^+)\to Z$ exists globally.  
\end{proof}

The existence of canonicalizations for complex variety 
is new. 

\begin{proof}[Proof of Theorem \ref{a-thm1.16}]
We take a point $x\in X$. Over some open neighborhood 
$U$ of $x\in X$, there exist a projective bimeromorphic 
morphism $\pi\colon V\to U$ and a log canonical model 
$\pi'\colon V'\to U$ of $\pi\colon V\to U$ by 
Theorem \ref{a-thm1.8} (2). 
Note that $\pi'$ is projective bimeromorphic, 
$K_{V'}$ is $\pi'$-ample, and $V'$ has only 
canonical singularities. 
We can easily check that $\pi'$ is an isomorphism 
over a nonempty Zariski open subset where 
$U$ has only canonical singularities. 
We also note that $\pi'\colon V'\to U$ is usually 
called a {\em{canonical model}} 
of $V$ over $U$ and is unique. 
Thus, the desired model $f\colon Z\to X$ exists globally. 
\end{proof}

When $K_X+\Delta$ is not pseudo-effective, we see that 
we can always run a minimal model program and finally get 
a Mori fiber space. 

\begin{proof}[Proof of Theorem \ref{a-thm1.17}] 
As usual, we will repeatedly shrink $Y$ around $W$ without mentioning 
it explicitly. We take a $\pi$-ample $\mathbb Q$-divisor 
$C$ on $X$ such that $K_X+\Delta+C$ is nef over 
$Y$ and that 
$(X, \Delta+(1+a)C)$ is a divisorial log terminal pair for some 
positive real number $a$. 
We run a $(K_X+\Delta)$-minimal model 
program with scaling of $C$. 
Since $K_X+\Delta$ is not pseudo-effective, 
$K_X+\Delta+\varepsilon C$ is still not pseudo-effective for 
some $0<\varepsilon \ll 1$. 
We can see the above minimal model program 
as a $(K_X+\Delta+\varepsilon C)$-minimal 
model program with scaling of $C$. 
By Theorem \ref{a-thm13.6}, 
this minimal model program always terminates 
and then we finally get a Mori fiber space structure over $Y$. 
This is what we wanted. 
\end{proof}

We note that Theorems \ref{a-thm1.19} and \ref{a-thm1.21} for 
quasi-projective varieties are not treated in \cite{bchm}. 
We also note that a 
key ingredient of the proof of Theorems \ref{a-thm1.19} 
and \ref{a-thm1.21} is Lemma \ref{a-lem13.7}. 

\begin{proof}[Proof of Theorem \ref{a-thm1.19}]
Throughout this proof, we will freely shrink $Y$ around 
$W$ suitably without mentioning it explicitly. 
By Lemma \ref{a-lem2.53}, there exists a globally 
$\mathbb R$-Cartier $\mathbb R$-divisor 
$B$ on $X$ such that 
$K_X+\Delta\sim _{\mathbb R} B\geq 0$. 
Since $(K_X+\Delta)|_F\sim _{\mathbb R}0$, 
we see that $\pi(B)\subsetneq Y$ holds. 
Hence we can write $K_X+\Delta\sim _{\mathbb R} \pi^*D+B'$, 
where $D$ is an $\mathbb R$-Cartier $\mathbb R$-divisor 
on $Y$, 
$B'$ is an effective $\mathbb R$-Cartier $\mathbb R$-divisor 
on $X$ such that $\pi(B')\subsetneq Y$ and that $\Supp B'$ does 
not contain any fibers of $\pi$. 
Without loss of generality, we may assume that $\pi(B')\subsetneq W$ 
by shrinking $Y$ around $W$ suitably. 
We take a general $\pi$-ample $\mathbb Q$-divisor 
$C\geq 0$ on $X$ such that $(X, \Delta+C)$ is divisorial 
log terminal and that $K_X+\Delta+C$ is nef over $Y$. 
Then we run a $(K_X+\Delta)$-minimal model 
program with scaling of $C$ over $Y$ around $W$ starting 
from $X_0:=X$: 
\begin{equation*}
X_0\dashrightarrow X_1\dashrightarrow 
\cdots \dashrightarrow X_i \dashrightarrow \cdots. 
\end{equation*}
In this case, any divisorial contraction 
contracts an irreducible component of $\Supp B'$. 
By Lemma \ref{a-lem13.7}, 
we finally obtain $(X_m, \Delta_m)$ such that 
$K_{X_m}+\Delta_m\in \Mov(X_m/Y; W)$. 
By Zariski's lemma, we can check that 
$K_{X_m}+\Delta_m\sim _{\mathbb R} (\pi_m)^*D$ holds. 
This is what we wanted. 
\end{proof}

In the proof of Theorem \ref{a-thm1.21}, Lemma \ref{a-lem4.6} 
plays an important role. 

\begin{proof}[Proof of Theorem \ref{a-thm1.21}]
We will freely shrink $X$ suitably without 
mentioning it explicitly. 
By taking a resolution of singularities, 
we have a projective bimeromorphic 
morphism $\pi\colon Y\to X$ from a 
complex variety $Y$ such that $\pi^{-1}(U)$ 
is smooth and $\Exc(\pi)$ and $\Exc(\pi)\cup \Supp \pi^{-1}_*\Delta$ 
are simple normal crossing divisors on $\pi^{-1}(U)$. 
Let $E$ be any $\pi$-exceptional divisor 
such that 
$\pi(E)\cap U\ne \emptyset$. 
Then, by enlarging $V$ suitably, we may assume that 
$\pi(E)\cap 
V\ne \emptyset$. 
By Lemma \ref{a-lem2.16}, 
we can take a Stein compact subset $W$ of $U$ such that 
$\Gamma (W, \mathcal O_X)$ is noetherian and 
that $V\subset W$. 
We write $K_Y+\Delta_Y=\pi^*(K_X+\Delta)$. 
Let $\Delta_Y=\sum _i a_i \Delta_i$ be the irreducible decomposition. 
We put 
\begin{equation*}
\Theta =\sum _{0<a_i <1} a_i \Delta_i +\sum 
_{a_i\geq 1}\Delta_i. 
\end{equation*} 
Then we have $K_Y+\Theta =\pi^*(K_X+\Delta)+F$ 
such that $-\pi_*F$ is effective. 
Let $C$ be a general $\pi$-ample 
$\mathbb Q$-divisor on $Y$ such that 
$(Y, \Theta+C)$ is divisorial log terminal and 
$K_Y+\Theta +C$ is nef over $W$. 
We run a $(K_Y+\Theta)$-minimal model program 
with scaling of $C$ over $X$ around $W$. 
We put $(Y_0, \Theta_0):=(Y, \Theta)$, 
$F_0:=F$, and $C_0:=C$. 
Then we obtain a sequence of flips and divisorial 
contractions starting from $(Y_0, \Theta_0)$: 
\begin{equation*}
(Y_0, \Theta_0)\overset{\phi_0}{\dashrightarrow} 
(Y_1, \Theta_1)\overset{\phi_1}{\dashrightarrow} 
\cdots 
\overset{\phi_{i-1}}{\dashrightarrow} 
(Y_i, \Theta_i)\overset{\phi_i}{\dashrightarrow}, 
\end{equation*}
where $\Theta_{i+1}:=(\phi_i)_*\Theta_i$, $C_{i+1}:=(\phi_i)_*C_i$, 
and $F_{i+1}:=(\phi_i)_*F_i$, for every $i$, and 
a sequence of real numbers 
\begin{equation*}
1\geq \lambda_0\geq \lambda _1\geq \cdots 
\geq \lambda_i \geq \cdots \geq 0
\end{equation*}
such that $K_{Y_i}+\Theta_i +\lambda_i C_i$ is 
nef over $W$. 
By Lemma \ref{a-lem13.7}, 
we can prove that $K_{Y_m}+\Theta_m$ is in 
$\Mov(Y_m/X; W)$ for some 
$m$. 
By the negativity lemma (see Lemma \ref{a-lem4.6}), 
we see that $-F_m\geq 0$ over $V$. 
Hence, $-F_m$ is effective over some open neighborhood 
of $W$. 
We put $Z:=Y_m$, $f\colon Z\to X$, 
and $K_Z+\Delta_Z=f^*(K_X+\Delta)$. 
Then, $(Z, \Delta_Z)$ has all the desired properties. 
\end{proof}

We have already proved Theorem \ref{a-thm1.24} 
in Section \ref{a-sec12}. 

\begin{proof}[Proof of Theorem \ref{a-thm1.24}] 
We have already known that Theorem \ref{thm-g}$_n$ holds true for 
every $n$. Therefore, 
Theorem \ref{a-thm1.24} is nothing but Lemma \ref{a-lem12.1}. 
\end{proof}

By Theorem \ref{a-thm1.24}, 
Corollary \ref{a-cor1.25} is almost obvious. 

\begin{proof}[Proof of Corollary \ref{a-cor1.25}]
We note that $Z$ has only kawamata log terminal singularities 
over some open neighborhood of $W$. 
We apply Theorem \ref{a-thm1.24} to 
$Z\to Y$ and $W'$. Then, after shrinking $Y$ around $W'$ suitably, 
there exists a small projective bimeromorphic contraction morphism 
$Z'\to Z$ such that $Z'$ is projective over $Y$ and is $\mathbb Q$-factorial 
over $W'$. 
Hence the induced bimeromorphic contraction $\phi'\colon X\dashrightarrow 
Z'$ satisfies 
the desired properties.  
\end{proof}

The argument in the proof of Theorem \ref{a-thm1.26} is more 
or less well known. 

\begin{proof}[Proof of Theorem \ref{a-thm1.26}]
We take an arbitrary point $x\in X$. 
It is sufficient to prove that $\bigoplus _{m\in \mathbb N} \mathcal 
O_X(mD)$ is a finitely generated graded $\mathcal O_X$-algebra 
on some open neighborhood of $x$. 
By shrinking $X$ around $x$ and replacing $D$ with a linearly 
equivalent integral Weil divisor, we may assume that 
$D$ is effective. 
We take a relatively compact Stein open 
neighborhood $U$ of $x$ and a Stein compact 
subset $W$ of $X$ 
such that $U\subset W$ and that 
$\Gamma (W, \mathcal O_X)$ is noetherian. 
By Theorem \ref{a-thm1.24}, after shrinking $X$ around $W$, 
there exists a small projective bimeromorphic 
morphism $f\colon Z\to X$ from a normal complex variety $Z$ 
such that $Z$ is $\mathbb Q$-factorial over $W$. 
We put $K_Z+\Delta_Z=f^*(K_X+\Delta)$. 
Then $(Z, \Delta_Z)$ is kawamata log terminal. 
Let $D_Z$ be the strict transform of $D$ on $Z$. 
By shrinking $X$ around $W$, 
we may assume that $D_Z$ is $\mathbb Q$-Cartier. 
We take a small rational number $\varepsilon$ 
such that $(Z, \Delta_Z+\varepsilon D_Z)$ is still 
kawamata log terminal. From now on, 
we will freely shrink $X$ around $W$ without mentioning 
it explicitly. We take a general $f$-ample 
$\mathbb Q$-divisor 
$H$ on $Z$ such that 
$K_Z+\Delta_Z+\varepsilon D_Z+H$ is nef over $W$ and 
$(Z, \Delta_Z+
\varepsilon D_Z+H)$ is kawamata log terminal. 
We run a $(K_Z+\Delta_Z+\varepsilon D_Z)$-minimal 
model program with scaling of $H$ over $X$ around $W$. 
Then we get a finite sequence of flips 
starting from $(Z_0, \Delta_{Z_0}+\varepsilon D_{Z_0}):=
(Z, \Delta_Z+\varepsilon D_Z)$: 
\begin{equation*}
Z_0\overset{\phi_0}{\dashrightarrow} 
Z_1\overset{\phi_1}{\dashrightarrow} 
\cdots 
\overset{\phi_{i-1}}{\dashrightarrow} 
Z_i\overset{\phi_i}{\dashrightarrow}\cdots 
\overset{\phi_{m-1}}{\dashrightarrow} Z_m, 
\end{equation*} 
such that $\Delta_{Z_i}:=(\phi_{i-1})_*\Delta_{Z_{i-1}}$ and 
$D_{Z_i}:=(\phi_{i-1})_*D_{Z_{i-1}}$ for every $i\geq 1$ 
and that $K_{Z_m}+\Delta_{Z_m}+\varepsilon D_{Z_m}$ is nef over 
$W$. 
Note that $K_{Z_m}+\Delta_{Z_m}=f^*_m(K_X+\Delta)$ holds 
by construction, where $f_m\colon Z_m\to X$ is the 
structure morphism. 
Since $f_m$ is bimeromorphic, 
we can take an effective $\mathbb Q$-divisor $B$ on $Z_m$ such that 
$-B$ is $f_m$-ample and 
that $(Z_m, \Delta_{Z_m}+B)$ is kawamata log terminal. 
Hence, by Theorem \ref{a-thm6.5}, 
$D_{Z_m}$ is semiample over $X$. 
By considering a contraction morphism $Z_m\to Z'$ over 
$X$ associated to $D_{Z_m}$, 
we obtain a small projective bimeromorphic 
contraction morphism $f'\colon Z'\to X$ from a normal variety $Z'$ and an 
integral Weil divisor $D'$ on $Z'$ such that 
$D'$ is ample over $X$ and that $f'_*D'=D$ holds. 
Since $f'$ is small, 
we obtain $f'_*\mathcal O_{Z'}(mD')=\mathcal O_X(mD)$ 
for every $m\in \mathbb N$. 
Since $D'$ is ample over $X$, 
$\bigoplus _{m\in \mathbb N} f'_*\mathcal O_{Z'}(mD')$ is a locally 
finitely generated graded $\mathcal O_X$-algebra by Lemma 
\ref{a-lem2.36}. 
This means that $\bigoplus _{m\in \mathbb N} \mathcal 
O_X(mD)$ is 
a finitely generated graded $\mathcal O_X$-algebra 
on some open neighborhood of $x$. 
This is what we wanted. 
\end{proof}

Now there are no difficulties to prove Theorem \ref{a-thm1.27}. 

\begin{proof}[Proof of Theorem \ref{a-thm1.27}]
We take an open neighborhood $U$ of $W$ and a Stein compact subset $W'$ 
of $Y$ such that $U\subset W'$ and that $\Gamma (W', \mathcal O_Y)$ 
is noetherian. Throughout this proof, 
we will freely shrink $Y$ suitably without mentioning it explicitly. 
Let $A$ be a general $\pi$-ample 
$\mathbb Q$-divisor on $X$ satisfying that 
$A\cdot C>2\dim X$ for every projective curve $C$ on $X$ such that 
$\pi(C)$ is a point. 
We take a resolution $g\colon X'\to X$ such that 
$\Supp g^{-1}_*\Delta\cup \Exc(g)$ and 
$\Exc(g)$ are simple normal crossing divisors on $X'$ and that 
$\pi'\colon X'\to Y$ is projective. 
We write $K_{X'}+\Delta_{X'}=g^*(K_X+\Delta)$. 
Let $\Delta_{X'}=\sum_i a_i \Delta'_i$ be the irreducible 
decomposition. We put 
\begin{equation*}
\Theta =\sum _{0<a_i<1} a_i \Delta'_i +\sum _{a_i\geq 1} \Delta'_i. 
\end{equation*} 
Then we can write $K_{X'}+\Theta =g^*(K_X+\Delta)+F$ such that 
$-g_*F\geq 0$. 
We take a general $\pi'$-ample $\mathbb Q$-divisor 
$H$ on $X'$ such that $K_{X'}+\Theta+g^*A+H$ 
is nef over $Y$. 
We run a $(K_{X'}+\Theta+g^*A)$-minimal model 
program over $Y$ around $W'$ with scaling of $H$. 
Then we obtain a sequence of flips and divisorial 
contractions starting from $(X', \Theta_0):=(X', \Theta)$: 
\begin{equation*}
(X'_0, \Theta_0)\overset{\phi_0}{\dashrightarrow} 
(X'_1, \Theta_1)\overset{\phi_1}{\dashrightarrow} 
\cdots 
\overset{\phi_{i-1}}{\dashrightarrow} 
(X'_i, \Theta_i)\overset{\phi_i}{\dashrightarrow}, 
\end{equation*}
where $\Theta_{i+1}:=(\phi_i)_*\Theta_i$, $H_{i+1}:=(\phi_i)_*H_i$, 
and $F_{i+1}:=(\phi_i)_*F_i$, for every $i$, and 
a sequence of real numbers 
\begin{equation*}
1\geq \lambda_0\geq \lambda _1\geq \cdots 
\geq \lambda_i \geq \cdots \geq 0
\end{equation*}
such that $K_{X'_i}+\Theta_i +g^*_iA+\lambda_i H_i$ is 
nef over $W'$, where $g_i\colon X'_i\to X$. 
We note that by Lemma \ref{a-lem9.4} 
the above minimal model program can be seen as a 
$(K_{X'}+\Theta)$-minimal model 
program over $X$. 
By Lemma \ref{a-lem13.7} and its proof, 
we can check that 
$K_{X'_m}+\Theta_m$ is in 
$\Mov (X'_m/X; \pi^{-1}(W'))$ for some $m$. 
By applying the negativity lemma 
(see Lemma \ref{a-lem4.6}) to 
$g_m\colon X'_m \to X$, we can check 
that $-F_m$ is effective on $(\pi\circ g_m)^{-1}(U)$. 
If $Y_m$ is not $\mathbb Q$-factorial over $W$, 
then we replace $Y_m$ with its small projective 
$\mathbb Q$-factorialization 
by Theorem \ref{a-thm1.24}. 
Hence we obtain a desired $f\colon Z\to X$. 
\end{proof}

\section{A canonical bundle formula in the complex 
analytic setting}\label{a-sec21}

In this section, we will quickly discuss a canonical bundle formula 
in the complex analytic setting and prove 
Theorems \ref{a-thm1.18} and \ref{a-thm1.22}. 
We need some deep results from the theory of variations of 
Hodge structure. 

Let us start with a generalization of Koll\'ar's famous torsion-freeness. 

\begin{thm}[{Torsion-freeness, see \cite{takegoshi}}]\label{a-thm21.1}
Let $\pi\colon X\to Y$ be a proper morphism 
from a K\"ahler manifold $X$ onto 
a complex analytic space $Y$. 
Then $R^i\pi_*\omega_X$ is torsion-free 
for every $i$. 
\end{thm}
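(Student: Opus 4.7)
The plan is to reproduce the harmonic-theoretic argument of Takegoshi \cite{takegoshi}. The statement is local on $Y$, so after shrinking I may assume that $Y$ is a Stein space and that, at a chosen point $y_0 \in Y$, there is a holomorphic function $f \in \Gamma(Y, \mathcal{O}_Y)$ that is a non-zero divisor in the stalk $\mathcal{O}_{Y, y_0}$; what needs to be shown is that multiplication by $\pi^* f$ is injective on every stalk of $R^i\pi_*\omega_X$. Setting $D := \{f = 0\} \subset Y$ and $U := Y \setminus D$, an equivalent goal is to prove that the natural map
\[
R^i\pi_*\omega_X \longrightarrow (\iota_U)_*\bigl((R^i\pi_*\omega_X)|_U\bigr)
\]
is injective, where $\iota_U \colon U \hookrightarrow Y$ is the inclusion; this is how I would attack the problem, since a torsion section is precisely an element of the kernel of this map for some suitable $f$.

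The core analytic input is a harmonic representation of relative cohomology by Takegoshi's refinement of Kodaira--Nakano theory for noncompact K\"ahler manifolds. First, I would equip $\pi^{-1}(U)$ with a \emph{complete} K\"ahler metric $\omega$, obtained by modifying the given K\"ahler metric on $X$ with a term of the form $\varepsilon\, i\partial\bar{\partial}\log |\pi^* f|^{-2}$ (possibly iterated), using the classical fact that the complement of a hypersurface in a K\"ahler manifold carries such metrics. With $\omega$ in place, and for every relatively compact Stein open $V \Subset Y$, any cohomology class in $H^i(\pi^{-1}(V \cap U), \omega_X)$ is represented by a unique $L^2$ harmonic $(n, i)$-form $u$ with $n = \dim X$; the K\"ahler identities then give $\bar{\partial} u = \bar{\partial}^* u = 0$, and these harmonic representatives fit together into a sheaf-theoretic statement for $R^i \pi_*\omega_X|_U$.

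The main step, and the hardest one, is to show that if the cohomology class $\alpha$ of such a harmonic $u$ satisfies $(\pi^* f)\cdot \alpha = 0$ in $H^i(\pi^{-1}(V), \omega_X)$, then $u$ itself vanishes. This would be carried out by a Bochner-type integration-by-parts argument on the complete K\"ahler manifold $(\pi^{-1}(U), \omega)$: one multiplies $u$ by a cutoff function $\chi_\varepsilon$ supported away from $\pi^{-1}(D)$, invokes the trivial Nakano semi-positivity of $\mathcal{O}_X$ together with the K\"ahler identities to obtain a pointwise Bochner inequality, and lets $\varepsilon \to 0$. The completeness of $\omega$ forces the cutoff boundary contributions to vanish in the limit, while the $L^2$-harmonicity of $u$ controls the interior term; the assumption $(\pi^* f)\cdot \alpha = 0$ enters through the construction of an $L^2$ primitive for $(\pi^* f) u$ with controlled norm, and together these force $u \equiv 0$. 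Once this vanishing is established, torsion-freeness of $R^i \pi_* \omega_X$ follows at once, because any nonzero torsion section would produce such a nonzero harmonic $u$ with $(\pi^* f)\cdot \alpha = 0$ for some nonzerodivisor $f$. The principal obstacle is the delicate bookkeeping of these cutoff integrals in the noncompact setting and the construction of the $L^2$ primitive, which is precisely the technical content of Takegoshi's work; modulo that, the statement falls out.
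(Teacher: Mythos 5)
The paper offers no proof of this theorem: it simply attributes the result to Takegoshi and refers the reader to \cite{takegoshi} (see also \cite{fujino-transcendental} and \cite{matsumura}). Your sketch must therefore be judged against Takegoshi's actual argument, and it diverges from it in ways that leave real gaps. The central one is the harmonic representation step: on the noncompact manifold $\pi^{-1}(V\cap U)$ with a complete K\"ahler metric, the $L^2$-Dolbeault cohomology is \emph{not} the ordinary Dolbeault cohomology $H^i(\pi^{-1}(V\cap U), \omega_X)$, and completeness only removes boundary terms in integration by parts --- it does not make the comparison map injective or surjective. So there is no reason a general class has a unique $L^2$-harmonic representative. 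Moreover, the logic around torsion is inverted: if $fs = 0$ with $f$ a unit on $U$, then $s|_{V\cap U} = 0$ automatically, so any harmonic machinery set up over $V\cap U$ only ever sees the zero class and carries no information. What needs to be proved is an injectivity statement for classes living on $\pi^{-1}(V)$ itself, not on $\pi^{-1}(V\cap U)$.

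Takegoshi's route keeps the given K\"ahler metric on $X$ and never passes to the divisor complement. He represents a local section of $R^i\pi_*\omega_X$ over a small Stein $V$ (chosen as a sublevel set of a strictly plurisubharmonic exhaustion) by a $\bar\partial$-closed, formally $\vartheta$-harmonic $(n,i)$-form $u$ on $\pi^{-1}(V)$, which is well posed because the fibers are compact. The relative Bochner--Kodaira--Nakano identity for proper K\"ahler morphisms then produces an additional first-order constraint on $u$ (this is precisely where the K\"ahler hypothesis is used, to kill the curvature term), and that extra identity forces the fiberwise components of $u$ to depend holomorphically on the base point. Torsion-freeness then follows by analytic continuation: a section annihilated by $f$ vanishes on the dense locus where $f\neq 0$ and $\pi$ is smooth, hence vanishes identically. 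The Grauert-type complete metric on the complement of a hypersurface, the cutoff functions, and the controlled $L^2$-primitive that you invoke are the tools of $L^2$-vanishing theorems on complete or pseudoconvex manifolds; they are not the mechanism of Koll\'ar--Takegoshi torsion-freeness, and transplanting them here does not close the proof.
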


If $Y$ is projective in Theorem \ref{a-thm21.1}, then 
$X$ is a compact K\"ahler manifold.  
In this case, there are no 
difficulties to prove Theorem \ref{a-thm21.1}. 
Unfortunately, however, we have to treat the case where $Y$ is 
a general noncompact complex analytic space. 
Hence the proof of Theorem \ref{a-thm21.1} is much harder than 
that of Koll\'ar's original torsion-freeness. 
For the details, see \cite{takegoshi} 
(see also \cite{fujino-transcendental}, 
and \cite{matsumura}). 
By combining Steenbrink's geometric description of 
canonical extensions of Hodge filtrations 
(see \cite{steenbrink1} and 
\cite{steenbrink2}) with Theorem \ref{a-thm21.1}, 
we have: 

\begin{thm}[{Hodge filtrations, 
see \cite[Chapter V.~3.7.~Theorem (4)]{nakayama3}}]\label{a-thm21.2} 
Let $\pi\colon X\to Y$ be a proper morphism from a K\"ahler manifold 
$X$ onto a smooth variety $Y$. 
Assume that there exists a simple normal crossing divisor 
$\Sigma_Y$ on $Y$ such that $\pi$ is smooth over $Y\setminus \Sigma_Y$. 
Then $R^i\pi_*\omega_{X/Y}$ is characterized as the 
upper canonical extension of the bottom Hodge filtration for every $i$. 
\end{thm}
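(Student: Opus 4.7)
The plan is to work Zariski-locally on $Y$ and combine Steenbrink's geometric realization of the upper canonical extension with the torsion-freeness statement in Theorem \ref{a-thm21.1}. Since the assertion is local on $Y$, I would pick a point $y\in \Sigma_Y$, shrink $Y$ to a small polydisc on which $\Sigma_Y$ is a union of coordinate hyperplanes, and then analyze $R^i\pi_*\omega_{X/Y}$ there. Over the complement $Y^\circ:=Y\setminus \Sigma_Y$ the morphism $\pi$ is smooth, so classical relative Hodge theory gives $R^i\pi_*\omega_{X/Y}|_{Y^\circ}\simeq F^n R^{n+i}\pi_*\mathbb C_X|_{Y^\circ}$, where $n$ is the relative dimension; this identifies the restriction with the bottom piece of the Hodge filtration on the variation of Hodge structure associated to $\pi|_{(\pi^{-1}(Y^\circ))}$. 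The task is therefore to identify the coherent extension $R^i\pi_*\omega_{X/Y}$ across $\Sigma_Y$ with Deligne's upper canonical extension of this bottom Hodge filtration.

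To do this, first I would reduce to a semistable situation: by Hironaka-type resolution in the analytic category together with a base change (a Kawamata covering followed by resolution, or a direct semistable reduction after shrinking), I may replace $\pi$ by a proper morphism $\widetilde\pi\colon\widetilde X\to Y$, where $\widetilde X$ is still a K\"ahler manifold (K\"ahlerness is preserved under blow-ups with smooth centers and finite covers in this local setup), such that $\Sigma_{\widetilde X}:=\widetilde\pi^{-1}(\Sigma_Y)_{\mathrm{red}}$ is a reduced simple normal crossing divisor. One then checks, using relative duality and the projection formula together with Theorem \ref{a-thm21.1} applied to the auxiliary morphisms arising in the reduction, that forming $R^i\pi_*\omega_{X/Y}$ is compatible with this modification; the key point is that no torsion appears, so the comparison maps induced on direct images are isomorphisms in codimension one and hence isomorphisms by torsion-freeness.

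Once in the semistable setup, I would invoke Steenbrink's theorem (see \cite{steenbrink1}, \cite{steenbrink2}) which constructs the upper canonical extension of $(F^\bullet, R^{n+i}\pi_*\mathbb C)$ explicitly as the graded pieces of the relative logarithmic de Rham complex $\Omega^\bullet_{\widetilde X/Y}(\log \Sigma_{\widetilde X})$: the bottom piece is exactly $R^i\widetilde\pi_*\omega_{\widetilde X/Y}(\log \Sigma_{\widetilde X}/\Sigma_Y)$, which under the semistable hypothesis coincides with $R^i\widetilde\pi_*\omega_{\widetilde X/Y}$. Transporting this identification back through the reduction of the previous step, and again appealing to Theorem \ref{a-thm21.1} to rule out any discrepancy supported on $\Sigma_Y$, yields the desired identification of $R^i\pi_*\omega_{X/Y}$ with the upper canonical extension of the bottom Hodge filtration.

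The main obstacle I expect is the analytic (non-algebraic) setting: Steenbrink's original framework, and in particular the construction of the nilpotent orbit and the monodromy filtration used to characterize the \emph{upper} (rather than lower) canonical extension, is typically formulated for algebraic families or for families over a disc; making this completely rigorous for a proper K\"ahler morphism onto a general smooth analytic base $Y$ requires carefully gluing the local models, ensuring K\"ahlerness is preserved under the semistable reduction, and handling the fact that the full strength of Takegoshi's torsion-freeness (Theorem \ref{a-thm21.1}) is only a partial substitute for GAGA. The careful verification that the two extensions agree not just in codimension one but on all of $Y$ — which is where torsion-freeness is indispensable — is the delicate part, and essentially the content of \cite[Chapter V.~3.7.~Theorem (4)]{nakayama3}.
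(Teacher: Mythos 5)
Your high-level toolkit --- reducing to a unipotent-monodromy situation via covers and resolutions, Steenbrink's realization of the canonical extension through the relative log de Rham complex, and Takegoshi's torsion-freeness in place of Koll\'ar's --- matches the references the paper points to, namely \cite{nakayama-hodge}, \cite[Subsection 3.1]{fujino-higher}, and \cite[\S 7]{fujino-fujisawa}. But the reduction step as written has a genuine gap. You ask to replace $\pi$ by $\widetilde\pi\colon\widetilde X\to Y$ over the \emph{same} base $Y$ with $\widetilde\pi^{-1}(\Sigma_Y)_{\mathrm{red}}$ a simple normal crossing divisor, and then later invoke a ``semistable hypothesis'' to identify $\omega_{\widetilde X/Y}(\log\Sigma_{\widetilde X}/\Sigma_Y)$ with $\omega_{\widetilde X/Y}$. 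These two requirements are incompatible: a modification of $X$ alone can only make the \emph{reduced} preimage SNC, whereas semistability needs $\widetilde\pi^{-1}(\Sigma_Y)$ itself to be reduced, and that in general forces a finite ramified base change $\sigma\colon\widetilde Y\to Y$. Once the base changes you must descend from $\widetilde Y$ to $Y$, and the descent is precisely where the \emph{upper} canonical extension --- rather than the lower one, or some other Deligne extension --- is selected: $\sigma_*$ of the unipotent extension decomposes into character pieces whose residue eigenvalues sweep through an interval of length one, and it is a concrete local comparison of $\omega_{\widetilde X/\widetilde Y}$ with $\sigma^*\omega_{X/Y}$ that singles out the $(-1,0]$ window. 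Torsion-freeness cannot do this job; it only propagates an isomorphism already in hand outside a codimension-$\geq 2$ locus, while the decisive residue calculation lives entirely in codimension one. In the cited arguments the logical order is in fact opposite to yours: the one-dimensional base case is established first by the eigenvalue computation under a cyclic cover, and torsion-freeness is then used to pass from generic curves to the full higher-dimensional base.

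A minor correction: Theorem \ref{a-thm21.1} is not a ``partial substitute for GAGA.'' It is the K\"ahler analogue of Koll\'ar's torsion-freeness theorem, and supplying it is the \emph{single} modification the paper makes to Nakayama's argument; this is exactly what the phrase ``works in the above complex analytic setting once we know the torsion-freeness'' is meant to convey.
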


The proof of \cite[Theorem 1]{nakayama-hodge} 
works in the above complex analytic setting once we 
know the torsion-freeness (see Theorem \ref{a-thm21.1}). 
For the details of Nakayama's argument, 
we recommend the interested reader to see \cite[Subsection 3.1]
{fujino-higher} and \cite[\S 7]{fujino-fujisawa} although 
they treat much more general settings than Nakayama's. 

In order to discuss a canonical bundle formula, we recall the definition of 
{\em{discriminant $\mathbb Q$-divisors}}. 

\begin{defn}\label{a-def21.3}
Let $f\colon X\to Y$ be a proper 
surjective morphism from a normal variety $X$ 
onto a smooth variety $Z$ with $f_*\mathcal O_X\simeq 
\mathcal O_Z$. Let $\Theta$ be a $\mathbb Q$-divisor 
on $X$ such that $K_X+\Theta$ is $\mathbb Q$-Cartier and 
that $(X, \Theta)$ is sub kawamata log terminal over a nonempty 
Zariski open subset of $Z$. 
Let $P$ be a prime divisor on $Z$. 
We put 
\begin{equation*}
b_P:=\max \left\{t\in \mathbb Q\, |\, 
{\text{$(X, \Theta +tf^*P)$ is sub log canonical 
over general points of $P$}}
\right\}. 
\end{equation*} 
Then we set $B_Z:=\sum _P (1-b_P)P$, where 
$P$ runs over prime divisors on $Z$, and 
call $B_Z$ the {\em{discriminant $\mathbb Q$-divisor}} of 
$f\colon (X, \Theta)\to Z$. 
We can easily check that $B_Z$ is a well-defined 
$\mathbb Q$-divisor on $Z$ satisfying $\lfloor B_Z\rfloor\leq 0$. 
Let $\mu\colon Z'\to Z$ be a projective 
bimeromorphic morphism from 
a smooth variety $Z'$. 
We consider the following commutative 
diagram: 
\begin{equation*}
\xymatrix{
X \ar[d]_-f& X' \ar[d]^-{f'}\ar[l]_-\sigma\\
Z & \ar[l]^-\mu Z', 
}
\end{equation*}
where $X'$ is the normalization 
of the main component of $X \times _Z Z'$. 
We define $\Theta'$ by $K_{X'}+\Theta'=\sigma^*(K_X+\Theta)$. 
We call $f'\colon (X', \Theta')\to Z'$ the {\em{induced fibration}} of 
$f\colon (X, \Theta)\to Z$ by $\mu\colon Z'\to Z$. 
We can define the discriminant 
$\mathbb Q$-divisor $B_{Z'}$ of $f'\colon (X', \Theta')\to Z'$. 
By construction, we see that 
$\sigma_*B_{Z'}=B_Z$. 
\end{defn}

The following theorem is a generalization of 
Ambro's result in the complex analytic setting 
(see \cite[Theorem 0.2]{ambro}). 

\begin{thm}\label{a-thm21.4}
Let $f\colon X\to Z$ be a proper morphism 
from a K\"ahler manifold $X$ 
onto a smooth complex variety $Z$ with 
$f_*\mathcal O_X\simeq \mathcal O_Z$. 
Let $g\colon Z\to Y$ be a projective morphism 
to a Stein space $Y$. 
Let $\Theta$ be a $\mathbb Q$-divisor on $X$ 
such that $K_X+\Theta\sim _{\mathbb Q} f^*D$ for 
some $\mathbb Q$-Cartier $\mathbb Q$-divisor $D$ on $Z$, 
$\Supp \Theta$ is a simple normal crossing divisor 
on $X$, $\Theta=\Theta^{<1}$ holds over 
general points of $Z$, and $\rank f_*\mathcal O_X(\lceil 
-\Theta^{<1}\rceil)=1$. 
Let $y$ be any point of $Y$. 
By replacing 
$Y$ with a relatively compact Stein open 
neighborhood 
of $y$ suitably, we have a commutative diagram: 
\begin{equation*}
\xymatrix{
X \ar[d]_-f& X'\ar[d]^-{f'}\ar[l]_-\sigma \\ 
Z & Z'\ar[l]^-\mu
}
\end{equation*}
with the following properties. 
\begin{itemize}
\item[(1)] $\mu \colon Z'\to Z$ is a projective 
bimeromorphic morphism from a smooth variety 
$Z'$. 
\item[(2)] $X'$ is a desingularization of $X\times _Z Z'$ 
such that $X'$ is K\"ahler with $K_{X'}+\Theta '=\sigma^*(K_X+\Delta)$. 
\item[(3)] Let $B_{Z'}$ be the discriminant $\mathbb Q$-divisor 
of $f'\colon (X', \Theta')\to Z'$. 
We write $\sigma^*D=K_{Z'}+B_{Z'}+M_{Z'}$. 
Then $M_{Z'}$ is nef over $Y$. Note that 
$M_{Z'}$ is usually called the {\em{moduli $\mathbb Q$-divisor}} 
of $f'\colon (X', \Theta')\to Z'$. 
\item[(4)] Let $\nu\colon Z''\to Z'$ be any 
projective bimeromorphic morphism 
from a smooth variety $Z''$. 
Then we can define $f''\colon (X'', \Theta'')\to Z''$, 
$B_{Z''}$, and $M_{Z''}$ as in (3) with 
$\nu^*\mu^*D=K_{Z''}+B_{Z''}+M_{Z''}$. 
In this situation, after shrinking $Y$ with a slightly smaller relatively 
compact Stein open neighborhood of $y$ again, 
$\nu^*M_{Z'}=M_{Z''}$ holds with 
$\nu_*K_{Z''}=K_{Z'}$ and $\nu_*B_{Z''}=B_{Z'}$. 
\end{itemize} 
\end{thm}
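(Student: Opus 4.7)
The plan is to emulate Ambro's proof of the canonical bundle formula, replacing each algebraic ingredient by its complex-analytic counterpart. After shrinking $Y$ to a relatively compact Stein open neighborhood of $y$, I would first apply the Hironaka--Bierstone--Milman resolution in the analytic category (as used in Lemma~\ref{a-lem3.9}) to construct a projective bimeromorphic morphism $\mu\colon Z'\to Z$ from a smooth variety with the property that the union of the $\mu$-exceptional locus and the preimage of the discriminant locus of $f$ is a simple normal crossing divisor $\Sigma_{Z'}$. I would then desingularize the main component of $X\times_Z Z'$, by further blow-ups of smooth centers, to obtain a K\"ahler manifold $X'$ equipped with $\sigma\colon X'\to X$ and $f'\colon X'\to Z'$ such that $\Supp \Theta'\cup \Exc(\sigma)$ is SNC, where $K_{X'}+\Theta'=\sigma^*(K_X+\Theta)$. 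The K\"ahler property of $X'$ persists because $\sigma$ is a composition of blow-ups along smooth centers in a neighborhood of the compact preimage of $y$, each of which preserves being K\"ahler.

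With the setup in log-smooth form, define $M_{Z'}$ by $\mu^*D=K_{Z'}+B_{Z'}+M_{Z'}$, where $B_{Z'}$ is the discriminant $\mathbb Q$-divisor of $f'\colon(X',\Theta')\to Z'$. To establish the nefness of $M_{Z'}$ over $Y$, I would follow the Fujino--Mori/Ambro admissible-covering strategy: pass to a cyclic cover $\widetilde Z'\to Z'$ branched along a sufficiently divisible multiple of $\Sigma_{Z'}$ so that, after a further log resolution and normalization, the induced lc-trivial fibration $\widetilde h\colon \widetilde X\to \widetilde Z'$ becomes a smooth family over the complement of an SNC divisor $\widetilde\Sigma$, and a positive integer multiple $b\,\widetilde M$ of the pullback of $M_{Z'}$ is $\mathbb Q$-linearly equivalent to $c_1$ of $\widetilde h_*\omega_{\widetilde X/\widetilde Z'}$. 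By Theorem~\ref{a-thm21.2}, this coherent sheaf is identified with the upper canonical extension of the bottom Hodge filtration of the variation of Hodge structure $R^{n-d}\widetilde h_*\mathbb C$ on $\widetilde Z'\setminus \widetilde\Sigma$, where $n=\dim \widetilde X$ and $d=\dim \widetilde Z'$.

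Nefness of $M_{Z'}$ over $Y$ now reduces to semi-positivity of this upper canonical extension when restricted to projective curves contracted to $Y$. Since $g\circ\mu\colon Z'\to Y$ is projective, every such curve lies in a projective fiber, and so the analytic semi-positivity theorems (Takegoshi, Fujino--Matsumura, Fujino--Fujisawa) apply to yield the required nefness after descending from $\widetilde Z'$ to $Z'$ by the finite map. Item~(4) will then follow from the functoriality of Deligne's upper canonical extension under log-smooth base changes: given $\nu\colon Z''\to Z'$ and the induced fibration, the upper canonical extensions on $Z''$ are the pullback of those on $Z'$, hence $\nu^*M_{Z'}=M_{Z''}$ after possibly shrinking $Y$ a bit more. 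The main obstacle will be the Hodge-theoretic identification in the previous paragraph: I must arrange the admissible cover so that $\widetilde X$ remains K\"ahler over the chosen compact neighborhood of $y$, so that Theorems~\ref{a-thm21.1} and~\ref{a-thm21.2} are applicable, and then check carefully that the relative analytic semi-positivity results produce nefness on projective fibers in precisely the form required. Once this Hodge-theoretic setup is secured, the remaining steps are essentially formal.
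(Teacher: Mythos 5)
Your proposal follows the same general route as the paper, which defers the details to Ambro's treatment of klt-trivial fibrations via variations of Hodge structure (the paper cites \cite{ambro} and the analytic ingredients Theorems~\ref{a-thm21.1} and \ref{a-thm21.2}, exactly as you do). You correctly identify the need to keep $X'$ K\"ahler and the reduction of (4) to functoriality of upper canonical extensions under log-smooth base change. However, there is a genuine gap in the covering construction, and it is not cosmetic.

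You pass to a cyclic cover $\widetilde Z'\to Z'$ branched along a multiple of $\Sigma_{Z'}$, base change $X'$ along it, and then assert that a multiple of the pulled-back moduli divisor is $c_1\bigl(\widetilde h_*\omega_{\widetilde X/\widetilde Z'}\bigr)$. This cannot be right: a cover of the base (a Kawamata cover for unipotent local monodromy, or for semistable reduction) is insensitive to the $\mathbb Q$-divisor $\Theta'$ on $X'$, so the sheaf $\widetilde h_*\omega_{\widetilde X/\widetilde Z'}$ sees only the bare family and not the pair structure. Already when $\Theta\neq 0$ has nontrivial fractional part, the moduli divisor $M_{Z'}$ cannot be recovered from $c_1$ of that direct image. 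What Ambro and Fujino--Mori actually do is take a cyclic cover of $X'$ itself (or of a further log resolution of $X'$), branched over the support of the fractional part of $\Theta'$, trivializing the line bundle $\mathcal O_{X'}(b(K_{X'}+\Theta'-(f')^*\mu^*D))$ for a suitable integer $b$. This produces on $Z'\setminus\Sigma_{Z'}$ a variation of Hodge structure with a cyclic group action, and the moduli divisor is $\frac1b\,c_1$ of the upper canonical extension of a distinguished eigensubbundle of the bottom Hodge filtration of this VHS. The base cover $\widetilde Z'\to Z'$ enters only afterwards, to ensure unipotent local monodromy so that the canonical extension behaves well; by itself it does not feed $\Theta'$ into the Hodge-theoretic machinery. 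Without the cover of $X'$, the key $\mathbb Q$-linear equivalence you claim in the second paragraph would simply be false, and the subsequent appeal to Theorem~\ref{a-thm21.2} and semi-positivity would prove nefness of the wrong object.
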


\begin{proof}
For the details, see \cite[Section 5]{ambro}. 
Although they treat much more general setting than 
Ambro's, \cite{fujino-slc-trivial} and 
\cite{fujino-hashizume} may help the reader understand 
the proof of this theorem. 
We note that Ambro's 
argument in \cite{ambro} is different from 
Kawamata's in \cite{kawamata-subadjunction} and 
is closer to Mori's (see \cite[Section 5, Part II]{mori} and 
\cite[Section 4]{fujino-certain}). 
\end{proof}

\begin{say}[A canonical bundle formula 
in the complex analytic setting, see \cite{fujino-mori}]
\label{a-say21.5}
Let $f\colon X\to Z$ be a proper morphism 
from a K\"ahler manifold $X$ onto a smooth 
variety $Z$ with $f_*\mathcal O_X\simeq \mathcal O_Z$ and 
let $g\colon Z\to Y$ be a projective 
morphism such that $Y$ is Stein. 
Let $\Delta$ be an effective $\mathbb Q$-divisor 
on $X$ such that 
$\Supp \Delta$ is a simple normal crossing divisor on $X$ and 
that $\lfloor \Delta\rfloor=0$. 
Suppose that 
$\kappa (F, (K_X+\Delta)|_F)=0$ holds 
for an analytically sufficiently general fiber $F$ 
of $f\colon X\to Z$. 
We fix an arbitrary point $y\in Y$. 
From now on, we sometimes replace $Y$ with 
a smaller relatively compact Stein open neighborhood 
of $y$ without mentioning it explicitly. 
Since $\kappa (F, (K_X+\Delta)|_F)=0$, 
we obtain $g_*\left(f_*\mathcal O_X(m(K_X+\Delta))\otimes 
\mathcal A\right)\ne 0$ for some 
divisible positive integer $m$ and some 
$g$-ample line bundle $\mathcal A$ on $Z$. 
Hence we can 
write 
$K_X+\Delta\sim_{\mathbb Q} f^*D+B$ such that 
\begin{itemize}
\item[(a)] $D$ is a $\mathbb Q$-Cartier 
$\mathbb Q$-divisor 
on $Z$, 
\item[(b)] $f_*\mathcal O_X(\lfloor iB_+\rfloor) 
\simeq \mathcal O_Z$ holds for every $i\geq 0$, 
and 
\item[(c)] $\codim _Zf(\Supp B_-)\geq 2$. 
\end{itemize} 
We take a projective bimeromorphic 
morphism 
$\psi\colon X^\dag\to X$ from a smooth 
variety $X^\dag$ such that 
$\Exc(\psi)\cup \Supp \psi^{-1}_*\Delta
\cup \Supp \psi^{-1}_*B$ is contained 
in a simple normal crossing divisor on $X^\dag$. 
We put 
$K_{X^\dag} +\Delta^\dag=\psi^*(K_X+\Delta)$ and 
consider $K_{X^\dag}+\Delta^\dag_+
\sim _{\mathbb Q} \psi^*f^*D+\psi^*B+\Delta^\dag_-$. 
By replacing $f\colon X\to Z$ with $f\circ \psi \colon X^\dag 
\to Z$, we may further assume that 
the support of $\Theta:=\Delta-B$ is a simple normal 
crossing divisor on $Z$. 
We apply Theorem \ref{a-thm21.4} to 
$f\colon (X, \Theta)\to Z$. 
Then we have a projective bimeromorphic morphism 
$\mu\colon Z'\to Z$ satisfying the properties 
in Theorem \ref{a-thm21.4}. 
By Hironaka's flattening theorem (see \cite{hironaka}), 
we can take a projective bimeromorphic 
morphism 
$p\colon Z_1\to Z$ from a smooth variety 
such that the main component of $X\times _Z Z_1$ is flat 
over $Z_1$. 
We may further assume that $p\colon Z_1\to Z$ factors 
through $Z'$. 
Then we consider the following 
big commutative diagram: 
\begin{equation*}
\xymatrix{
& X\ar[d]_-f & \ar[l]_-\alpha X_1\ar[d]_-{f_1}& 
X_2\ar[d]_-{f_2}\ar[l]_-\beta & X''\ar[l]_-\gamma 
\ar[d]^-{f''}\\ 
&Z\ar[dl]_-g &Z_1\ar[d]\ar[l]^-p\ar@{=}[r]& Z_2
\ar[dl]\ar@{=}[r]& Z''\ar[dll]^-\nu\\ 
Y && Z'\ar[lu]^-\mu &&
}
\end{equation*}
where $X_1$ is the main component of 
$X\times _Z Z_1$, 
$X_2$ is the normalization of $X_1$, and $\gamma\colon X''\to X_2$ is a 
proper bimeromorphic morphism from a smooth 
variety $X''$. 
We put $h:=\alpha\circ \beta\circ \gamma\colon X''\to X$ 
and $K_{X''}+\Delta''=h^*(K_X+\Delta)$. 
We may assume that 
there exist simple normal crossing divisors 
$\Sigma_{X''}$ and $\Sigma_{Z''}$ on $X''$ and $Z''$, respectively, 
such that $f''\colon X''\to Z''$ is smooth over $Z''\setminus \Sigma_{Z''}$, 
$\Sigma_{X''}$ is relatively simple normal crossing over 
$Z''\setminus \Sigma_{Z''}$, $(f'')^{-1}\Sigma_{Z''}\subset 
\Sigma_{X''}$, and $\Supp \Delta''\cup \Supp h^*B$ is contained 
in $\Sigma _{X''}$. 
Since $K_X+\Delta\sim _{\mathbb Q} f^*D+B$, 
we obtain $K_{X''}+\Delta''\sim _{\mathbb Q} 
(f'')^*p^*D+h^*B$. 
We can write 
\begin{equation*}
\Delta''_-+(f'')^*p^*D+h^*B=
(f'')^*D''+B''
\end{equation*} 
such that 
$\codim _{Z''}f''(\Supp B''_-)\geq 2$ and that 
$f''_*\mathcal O_{X''}(\lfloor iB''_+\rfloor)\simeq 
\mathcal O_{Z''}$ for every $i\geq 0$, 
where $B''=B''_+-B''_-$ as usual. 
Hence, we can write 
\begin{equation*}
K_{X''}+\Delta''_+\sim _{\mathbb Q} (f'')^*(K_{Z''}+B_{Z''}+M_{Z''})
+B''. 
\end{equation*}
By construction, we can check that 
\begin{itemize}
\item[(d)] $M_{Z''}=\mu^*M_{Z'}$ is nef over 
$Y$, and 
\item[(e)] $\Supp B_{Z''} \subset \Sigma_{Z''}$, 
$B_{Z''}$ is effective, and $\lfloor B_{Z''}\rfloor =0$. 
\end{itemize} 
We put $\pi:=g\circ f\colon X\to Y$ and $\pi''\colon 
X''\to Y$. Let $k$ be a divisible positive integer. 
Then \begin{equation*}
\begin{split}
\pi_*\mathcal O_X(k(K_X+\Delta))&
\simeq \pi''_*\mathcal O_{X''}(k(K_{X''}+\Delta''))\\ 
&\simeq \pi''_*\mathcal O_{X''}(k(K_{X''}+\Delta''_++B''_-))
\\ 
&\simeq \pi''_*\mathcal O_{X''}((f'')^*(k(K_{Z''}+B_{Z''}+M_{Z''}))+kB''_+)\\
&\simeq g''_*\mathcal O_{Z''}(k(K_{Z''}+B_{Z''}+M_{Z''})), 
\end{split}
\end{equation*}
where $g''\colon Z''\to Y$. 
Here, we used the fact that 
$\Delta''_-+B''_-$ is effective 
and $h$-exceptional. 
\end{say}

\begin{rem}\label{a-rem21.6}
In \cite{fujino-mori}, we used Kawamata's positivity 
theorem (see \cite[Theorem 2]{kawamata-subadjunction}) 
to prove the nefness of the moduli part $M_{Z'}$. 
In Theorem \ref{a-thm21.4}, we adopted 
Ambro's formulation of klt-trivial fibrations 
(see Theorem \ref{a-thm21.4} and \cite{ambro}) instead of 
\cite[Theorem 2]{kawamata-subadjunction}. 
\end{rem}

Let us go to the proof of Theorems \ref{a-thm1.18} and 
\ref{a-thm1.22}. 

\begin{proof}[Proof of Theorems \ref{a-thm1.18} and \ref{a-thm1.22}]
Let $y$ be any point of $Y$. Throughout this proof, 
we will feely replace $Y$ with a relatively compact 
Stein open neighborhood of $y$. 
In Theorem \ref{a-thm1.18}, 
by taking a resolution of singularities, we may assume that 
$X$ is smooth and $\Supp \Delta$ is a simple 
normal crossing divisor on $X$. 
Let $f\colon X\dashrightarrow Z$ be the 
Iitaka fibration with respect to $K_X+\Delta$ over $Y$. 
By replacing $X$ and $Z$, 
we may assume that $Z$ is a smooth variety and is projective over $Y$ and 
that $f$ is a morphism with $f_*\mathcal O_X\simeq \mathcal O_Z$. 
We use the canonical bundle formula discussed in \ref{a-say21.5}. 
Then, by Lemma \ref{a-lem2.26}, it is sufficient to 
prove that 
\begin{equation*}
\bigoplus _{m\in \mathbb N}g''_*
\mathcal O_{Z''}(mk(K_{Z''}+B_{Z''}+M_{Z''}))
\end{equation*} 
is a locally finitely generated graded $\mathcal O_Y$-algebra. 
By construction, $K_{Z''}+B_{Z''}+M_{Z''}$ is big over $Y$. 
We can find $\Delta_{Z''}$ such that 
$(Z'', \Delta_{Z''})$ is kawamata log terminal and 
that 
\begin{equation*}
a(K_{Z''}+B_{Z''}+M_{Z''})\sim b(K_{Z''}+\Delta_{Z''})
\end{equation*}
for some positive integers $a$ and $b$. 
Hence, by Lemma \ref{a-lem2.26} again, 
it is sufficient fo prove that 
\begin{equation*}
\bigoplus _{m\in \mathbb N} g''_*\mathcal O_{Z''}
(\lfloor m(K_{Z''}+\Delta_{Z''})\rfloor)
\end{equation*}
is a locally finitely generated graded $\mathcal O_Y$-algebra. 
Since $K_{Z''}+\Delta_{Z''}$ is big over $Y$, 
it follows from Theorem \ref{a-thm1.8} (3). 
Therefore, we get the desired result. 
\end{proof}

\section{Minimal model program with scaling revisited}\label{a-sec22}

In this section, we will discuss the minimal 
model program with scaling again for future usage. 
The original results for 
algebraic varieties are not covered by \cite{bchm}. 
Here, we will closely follow the presentation of 
\cite{birkar} and \cite{birkar2}. 

Let us recall the definition of {\em{extremal curves}}. 
\begin{defn}[Extremal curves]\label{a-def22.1} 
Let $\pi\colon X\to Y$ 
be a projective morphism of complex analytic spaces and let $W$ be a compact 
subset of $Y$ such that 
$\pi\colon X\to Y$ and $W$ satisfies (P). 
A curve $\Gamma$ on $X$ is called {\em{extremal over $W$}} 
if the following properties hold. 
\begin{itemize}
\item[(i)] $\Gamma$ generates an extremal ray $R$ of $\NE(X/Y; W)$. 
\item[(ii)] There exists a $\pi$-ample 
Cartier divisor $H$ on $X$ such that 
\begin{equation*}
H\cdot \Gamma=\min \{H\cdot \ell\}, 
\end{equation*} 
where $\ell$ ranges over curves generating $R$. 
\end{itemize}
\end{defn}

The following theorem is very useful 
when we run the minimal model program with scaling. 

\begin{thm}[{see \cite[Theorem 4.7.2]{fujino-foundations}}]
\label{a-thm22.2} 
Let $\pi\colon X\to Y$ 
be a projective morphism of complex analytic spaces and let $W$ be a compact 
subset of $Y$ such that 
$\pi\colon X\to Y$ and $W$ satisfies {\em{(P)}}. 
Let $V$ be a finite-dimensional 
affine subspace of $\WDiv_{\mathbb R}(X)$, which 
is defined over the rationals. Assume that 
there is an $\mathbb R$-divisor $\Delta_0$ on $X$ such that 
$(X, \Delta_0)$ is kawamata log terminal. 
We fix an $\mathbb R$-divisor $\Delta\in \mathcal L(V; \pi^{-1}(W))$. 
Then we can find positive real numbers $\alpha$ and $\delta$, which 
depend on $(X, \Delta)$ and $V$, with the following properties. 
\begin{itemize}
\item[(1)] If $\Gamma$ is any extremal curve over $W$ and 
$(K_X+\Delta)\cdot \Gamma>0$, 
then $(K_X+\Delta)\cdot \Gamma >\alpha$. 
\item[(2)] If $D\in \mathcal L(V; \pi^{-1}(W))$, 
$|\!| D-\Delta|\!|<\delta$, and $(K_X+D)\cdot \Gamma \leq 0$ for 
an extremal curve $\Gamma$ over $W$, 
then $(K_X+\Delta)\cdot \Gamma\leq 0$. 
\item[(3)] Let $\{R_t\}_{t\in T}$ be any set of extremal rays of $\NE(X/Y; W)$. 
Then 
\begin{equation*}
\mathcal N_T:=\{D\in \mathcal L(V; \pi^{-1}(W))\, |\, 
{\text{$(K_X+D)\cdot R_t\geq 0$ for every $t\in T$}}\} 
\end{equation*} 
is a rational polytope in $V$. 
In particular, 
\begin{equation*}
\mathcal N^\sharp _{\pi} (V; W)=\{\Delta\in 
\mathcal L(V; \pi^{-1}(W)) \, |\, 
{\text{$K_X+\Delta$ is nef over $W$}}\}
\end{equation*}
is a rational polytope. 
\end{itemize}
\end{thm}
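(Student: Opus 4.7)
The plan is to follow the Shokurov--Birkar approach (as carried out in \cite[Theorem 4.7.2]{fujino-foundations}) and adapt it to the complex analytic setting. The key input is Theorem \ref{a-thm9.2}: any $(K_X+D)$-negative extremal ray of $\NE(X/Y;W)$, under the standing hypothesis that $(X, \Delta_0)$ is kawamata log terminal and $(X, D)$ is log canonical, is spanned by a rational curve $\ell$ with $0 < -(K_X+D)\cdot \ell \leq 2\dim X$.

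First, via the enlargement argument of \ref{a-say1.11}, I would shrink $Y$ to a relatively compact Stein open neighborhood of $W$ so that every element of $V$ has finitely many components; then Lemma \ref{a-lem3.5} (suitably adapted) shows $\mathcal L(V;\pi^{-1}(W))$ is a rational polytope, and I may assume $(X, D)$ is log canonical for every $D$ in it. After shrinking $Y$ again and choosing a $\pi$-ample Cartier divisor $H$ on $X$, I would arrange via Theorem \ref{a-thm4.4} that $H + t(K_X+\Delta_0)$ stays $\pi$-ample for all $t$ in some interval $(-t_0, t_0)$.

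For assertion (1), the strategy is to first establish a uniform upper bound $H\cdot \Gamma \leq M$ for every extremal curve $\Gamma$ whose ray is $(K_X+\Delta_0)$-negative: Theorem \ref{a-thm9.2} produces $\ell$ in that ray with $-(K_X+\Delta_0)\cdot \ell \leq 2\dim X$, and the extremality of $\Gamma$ together with the $\pi$-ampleness of $H \pm t_0(K_X+\Delta_0)$ forces $H\cdot \Gamma \leq H\cdot \ell \leq 2t_0 \dim X$. Extremal curves whose ray is $(K_X+\Delta_0)$-nonnegative but $(K_X+\Delta)$-negative are handled by the same argument applied to a klt perturbation $(1-\varepsilon)\Delta_0 + \varepsilon \Delta$, while rays that are simultaneously nonnegative against both $K_X+\Delta_0$ and $K_X+\Delta$ contribute nothing to (1). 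Having bounded $H\cdot \Gamma$, I would then choose a rational approximation $\Delta' \in V \cap \mathcal L(V;\pi^{-1}(W))$ of $\Delta$ with $r(K_X+\Delta')$ Cartier for some positive integer $r$; the intersection number $(K_X+\Delta')\cdot \Gamma$ then lies in $\tfrac{1}{r}\mathbb Z$, and expanding $\Delta - \Delta'$ in a fixed rational basis $D_1, \ldots, D_m$ of $V - V$ bounds $|(\Delta-\Delta')\cdot \Gamma|$ by $|\!|\Delta-\Delta'|\!|$ times a uniform constant depending on the quantities $|H\cdot \Gamma|$ and $|D_i \cdot \Gamma|$ (the latter being controlled by $H\cdot \Gamma$ via $\pi$-ample perturbations of the $D_i$). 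Choosing $\Delta'$ close enough yields the desired $\alpha > 0$.

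Assertion (2) follows from (1) by the same continuity estimate: if $(K_X+D)\cdot \Gamma \leq 0$ and $(K_X+\Delta)\cdot \Gamma > 0$, then (1) gives $(K_X+\Delta)\cdot \Gamma > \alpha$, so $(D - \Delta)\cdot \Gamma < -\alpha$; expanding $D - \Delta$ in the basis of $V - V$, the intersection with $\Gamma$ is controlled by $|\!|D-\Delta|\!|$ times the bounds on $H\cdot \Gamma$ established in (1), and a sufficiently small $\delta > 0$ contradicts the inequality. For (3), parts (1) and (2) imply that the family of hyperplanes $\{R_t^{\perp}\}_t$ defining $\mathcal N_T$ is locally finite on $\mathcal L(V; \pi^{-1}(W))$; by compactness of this rational polytope, finitely many such hyperplanes suffice, and each is defined by a rational linear inequality because by Theorem \ref{a-thm9.2} the ray $R_t$ contains a curve of bounded $(K_X+\Delta_0)$-degree, hence a rational numerical class representative. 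Taking $\{R_t\}$ to range over all extremal rays then gives the ``in particular'' statement. The main obstacle will be the lower bound $\alpha > 0$ in (1): the geometric bound from Theorem \ref{a-thm9.2} is clean, but transferring it to a rationality-based lower bound for arbitrary real $\Delta$ requires carefully controlling the error from the $\mathbb Q$-approximation uniformly across all extremal curves, which is what forces the preliminary $H$-degree bound.
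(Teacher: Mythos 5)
The proposal has a genuine gap in the argument for part (1). You claim to establish a uniform upper bound $H\cdot\Gamma\leq M$ for extremal curves whose ray is $(K_X+\Delta_0)$-negative, deducing $H\cdot\ell\leq 2t_0\dim X$ from the $\pi$-ampleness of $H\pm t_0(K_X+\Delta_0)$. But this reasoning only gives \emph{lower} bounds on $H\cdot\ell$: if $H+t(K_X+\Delta_0)$ is $\pi$-ample, then $(H+t(K_X+\Delta_0))\cdot\ell>0$ yields $H\cdot\ell>-t(K_X+\Delta_0)\cdot\ell$, which bounds $H\cdot\ell$ from below, not above. There is no uniform upper bound on $H\cdot\Gamma$ over all extremal curves, and none should be expected. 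Moreover, part (1) concerns extremal curves $\Gamma$ with $(K_X+\Delta)\cdot\Gamma>0$, which include rays simultaneously nonnegative against $K_X+\Delta_0$ and $K_X+\Delta$; your remark that such rays ``contribute nothing to (1)'' is a misreading of the statement, and in fact these are precisely the curves for which a lower bound $\alpha$ is required. Since the rest of your continuity estimate depends on bounding $|(\Delta-\Delta')\cdot\Gamma|$ via $|D_i\cdot\Gamma|$ and hence via $H\cdot\Gamma$, the whole approach to (1) does not go through.

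The argument the paper actually invokes (from \cite[Theorem 4.7.2]{fujino-foundations}) avoids this entirely: one writes $K_X+\Delta=\sum_j r_j(K_X+\Delta_j)$ as a convex combination with $\Delta_j$ rational vertices of a small simplex inside $\mathcal L(V;\pi^{-1}(W))$ and $r_j>0$, picks $m$ with $m(K_X+\Delta_j)$ Cartier, and then uses only two facts: that $(K_X+\Delta_j)\cdot\Gamma\in\frac{1}{m}\mathbb Z$, and that $(K_X+\Delta_j)\cdot\Gamma\geq -2\dim X$ whenever it is negative (from Theorem \ref{a-thm9.2} applied to the ray of $\Gamma$, using that $\Gamma$ minimizes the $H$-degree on its ray so $[\Gamma]$ is a sub-unit multiple of $[\ell]$). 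If a sequence of extremal curves had $0<(K_X+\Delta)\cdot\Gamma_i\to 0$, then either all $(K_X+\Delta_j)\cdot\Gamma_i$ remain bounded, in which case they lie in a finite subset of $\frac{1}{m}\mathbb Z$ and a pigeonhole argument produces a constant positive value for $\sum_jr_j(K_X+\Delta_j)\cdot\Gamma_i$, contradicting convergence to zero; or some coordinate is unbounded, in which case the lower bound $\geq -2\dim X$ on the other coordinates forces the sum to diverge to $+\infty$. No bound on $H\cdot\Gamma$ appears. Parts (2) and (3) then follow as in your sketch once (1) is available, with Theorem \ref{a-thm7.3} supplying the finiteness of relevant rays. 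I suggest you rework (1) along these lines.
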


\begin{proof}
This theorem is a formal consequence of Theorem \ref{a-thm9.2} 
and Theorem \ref{a-thm7.3}. More precisely, 
(1) easily follows from Theorem \ref{a-thm9.2}. 
We can check that (2) holds true by using (1). 
By (2) and Theorem \ref{a-thm7.3}, we can prove (3). 
For the details, see, for example, 
the proof of \cite[Theorem 4.7.2]{fujino-foundations}. 
\end{proof}

By Theorem \ref{a-thm22.2} (3) and Theorem \ref{a-thm9.2}, 
we can prove: 

\begin{thm}[{see \cite[Theorem 4.7.3]{fujino-foundations}}]\label{a-thm22.3} 
Let $\pi\colon X\to Y$ 
be a projective morphism of complex analytic spaces and let $W$ be a compact 
subset of $Y$ such that 
$\pi\colon X\to Y$ and $W$ satisfies {\em{(P)}}. 
Let $(X, \Delta)$ be a log canonical pair and let $H$ be an effective 
$\mathbb R$-Cartier $\mathbb R$-divisor on $X$ such that 
$(X, \Delta+H)$ is log canonical and that $K_X+\Delta+H$ is nef over 
$W$. 
Assume that there exists $\Delta_0$ such that 
$(X, \Delta_0)$ is kawamata log terminal. 
Then, either $K_X+\Delta$ is nef over $W$ or there 
is a $(K_X+\Delta)$-negative extremal ray $R$ of $\NE(X/Y; W)$ such that 
$(K_X+\Delta+\lambda H)\cdot R=0$, 
where 
\begin{equation*}
\lambda:=\inf \{t\in \mathbb R_{\geq 0}
\, |\, {\text{$K_X+\Delta+tH$ is nef over $W$}}\}. 
\end{equation*} 
Of course, $K_X+\Delta+\lambda H$ is nef over $W$. 
\end{thm}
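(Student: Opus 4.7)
The plan is to run the standard scaling argument adapted from the algebraic case, relying on the cone theorem for log canonical pairs (Theorem \ref{a-thm7.3}), the length bound on extremal rays (Theorem \ref{a-thm9.2}), and the rational polyhedrality supplied by Theorem \ref{a-thm22.2}. First I assume that $K_X+\Delta$ is not nef over $W$; then $\lambda>0$, and the nefness of $K_X+\Delta+H$ over $W$ gives $\lambda\leq 1$. Since $N^1(X/Y;W)$ is finite-dimensional (Lemma \ref{a-lem4.1}) and nefness over $W$ cuts out a closed cone, the infimum is attained, so $K_X+\Delta+\lambda H$ is nef over $W$.

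Next, for each $0<\varepsilon<\lambda$, the divisor $K_X+\Delta+(\lambda-\varepsilon)H$ is not nef over $W$. Since $(X,\Delta+(\lambda-\varepsilon)H)$ is log canonical, being a convex combination of the log canonical pairs $(X,\Delta)$ and $(X,\Delta+H)$, and $(X,\Delta_0)$ is kawamata log terminal, Theorem \ref{a-thm7.3} produces a $(K_X+\Delta+(\lambda-\varepsilon)H)$-negative extremal ray $R_\varepsilon$ of $\NE(X/Y;W)$. By Theorem \ref{a-thm9.2}, $R_\varepsilon$ is spanned by a rational curve $\Gamma_\varepsilon$ with $0<-(K_X+\Delta+(\lambda-\varepsilon)H)\cdot\Gamma_\varepsilon\leq 2\dim X$, while $(K_X+\Delta+\lambda H)\cdot\Gamma_\varepsilon\geq 0$ by nefness. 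Subtracting forces $H\cdot\Gamma_\varepsilon>0$ and $(K_X+\Delta)\cdot\Gamma_\varepsilon<0$, so every $R_\varepsilon$ is a candidate for the desired ray.

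It remains to extract a single extremal ray $R$ appearing as $R_{\varepsilon_n}$ along a sequence $\varepsilon_n\downarrow 0$; once this is done, continuity in $\varepsilon$ immediately gives $(K_X+\Delta+\lambda H)\cdot R=0$ and $(K_X+\Delta)\cdot R<0$. For the finiteness, I would pick a finite-dimensional affine subspace $V\subset\WDiv_{\mathbb R}(X)$ defined over the rationals that contains rational approximations of $\Delta$ and $\Delta+H$ close enough that Theorem \ref{a-thm22.2}(2) guarantees any $R_\varepsilon$ remains negative against the corresponding approximation. Theorem \ref{a-thm22.2}(3) then expresses the nef region in $V$ as a rational polytope with only finitely many facet hyperplanes of the form $R^\perp$, and the approximation of $K_X+\Delta+\lambda H$ lies on its boundary, so only finitely many extremal rays are in play; a pigeonhole argument then produces $R$. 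The hard part will be executing this finiteness cleanly despite the potential irrationality of $\Delta$ and $H$, which is precisely why the polyhedrality of Theorem \ref{a-thm22.2}(3), together with the perturbation stability in Theorem \ref{a-thm22.2}(2), is the right tool, rather than a direct appeal to Theorem \ref{a-thm7.3} (which would require an ample summand that $H$ need not provide).
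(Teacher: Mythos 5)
Your outline is the right one — it is the argument the paper invokes (by citing the proof of \cite[Theorem 4.7.3]{fujino-foundations}), built on Theorem \ref{a-thm22.2} and Theorem \ref{a-thm9.2} — but the final step is needlessly complicated and, as phrased, slightly off. The worry about "potential irrationality of $\Delta$ and $H$" and the proposed detour through \emph{rational approximations} of $\Delta$ and $\Delta+H$ are both misplaced: Theorem \ref{a-thm22.2} only requires the ambient affine subspace $V$ to be defined over the rationals, not the divisors themselves. After shrinking $Y$ to a relatively compact Stein neighborhood of $W$ so that $\Delta$ and $H$ have finitely many components, take $V$ to be the real vector space spanned by the prime components of $\Delta$ and $H$; this $V$ is automatically defined over $\mathbb{Q}$, and $\Delta+tH\in\mathcal L(V;\pi^{-1}(W))$ for every $t\in[0,1]$ \emph{exactly}, with no approximation. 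If you instead approximated $\Delta$ rationally, the threshold $\lambda$ and the relevant extremal rays would shift, and you would have to control that drift — an unnecessary headache.

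With $V$ chosen correctly, you can in fact bypass the pigeonhole step altogether by applying Theorem \ref{a-thm22.2}(2) with base point $\tilde\Delta:=\Delta+\lambda H$. Let $\delta>0$ be the constant that (2) produces for $\tilde\Delta$ and this $V$, and fix $t<\lambda$ with $(\lambda-t)\,\|H\|<\delta$. Since $K_X+\Delta+tH$ is not nef over $W$, Theorem \ref{a-thm7.3} together with the perturbation to a klt pair via $\Delta_0$ yields a $(K_X+\Delta+tH)$-negative extremal ray $R$ of $\NE(X/Y;W)$, and Theorem \ref{a-thm9.2} guarantees $R$ is spanned by an extremal curve $\Gamma$ over $W$. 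Then $(K_X+\Delta+tH)\cdot\Gamma<0$ and $\|(\Delta+tH)-\tilde\Delta\|<\delta$ force $(K_X+\tilde\Delta)\cdot\Gamma\leq 0$ by \ref{a-thm22.2}(2), hence $(K_X+\Delta+\lambda H)\cdot\Gamma=0$ by nefness. Subtracting the two evaluations gives $H\cdot\Gamma>0$ and therefore $(K_X+\Delta)\cdot\Gamma=-\lambda\,H\cdot\Gamma<0$, so $R$ is the ray you want. Your polyhedrality-plus-pigeonhole version does work once the approximation detour is removed, but this is the leaner route.
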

\begin{proof}
The proof of \cite[Theorem 4.7.3]{fujino-foundations} works without 
any modifications. 
\end{proof}

By Theorems \ref{a-thm22.2} and 
\ref{a-thm22.3}, the minimal model 
program with scaling explained in Section \ref{a-sec13} becomes 
much more useful. 

\begin{say}[Minimal model program with scaling]\label{a-say22.4}
Let $\pi\colon X\to Y$ 
be a projective morphism of complex analytic spaces and let 
$W$ be a compact 
subset of $Y$ such that 
$\pi\colon X\to Y$ and $W$ satisfies (P). 
Let $(X, \Delta)$ be a log canonical pair such that 
$X$ is $\mathbb Q$-factorial over $W$. 
Assume that there exists $\Delta_0$ such that 
$(X, \Delta_0)$ is kawamata log terminal. 
Let $H$ be an effective $\mathbb R$-Cartier 
$\mathbb R$-divisor on $X$ such that 
$(X, \Delta+H)$ is log canonical and that 
$K_X+\Delta+H$ is nef over $W$. 
By Theorem \ref{a-thm22.3}, we can take a 
$(K_X+\Delta)$-negative extremal ray $R$ of 
$\NE(X/Y; W)$ such that 
$(K_X+\Delta+\lambda H)\cdot R=0$ if 
$(K_X+\Delta)$ is not nef over $W$. 
We can consider the contraction morphism 
$\varphi_R\colon X\to Z$ associated to $R$ over some open neighborhood 
of $W$ by Theorem \ref{a-thm7.3}. 
By Remark \ref{a-rem17.10}, we know that 
the desired flip always exists. 
We note that we can always find $\Delta'_0$ such that 
$(X, \Delta'_0)$ is kawamata log terminal and 
that $R$ is a $(K_X+\Delta'_0)$-negative 
extremal ray of $\NE(X/Y; W)$. 
Therefore, we 
can run a minimal model program similar to 
the one explained in 
Section \ref{a-sec13}. 
We call it 
the {\em{$(K_X+\Delta)$-minimal model program with 
scaling of $H$ over $Y$ around $W$}}. 
We sometimes simply say that 
it is the {\em{minimal model program with scaling}} 
if there is no danger of confusion. 
 \end{say}

It is well known that Theorem \ref{a-thm1.28} is 
an easy consequence of the minimal model 
program with scaling. 
The main ingredient of the following 
proof of Theorem \ref{a-thm1.28} 
is Theorem \ref{a-thm22.2} (2). 

\begin{proof}[Proof of Theorem \ref{a-thm1.28}]
Throughout this proof, we will freely shrink $Y$ around 
$W$ suitably without mentioning it explicitly. 
Let $H_2$ be a general $\pi_2$-ample 
$\mathbb Q$-divisor on $X_2$ and 
let $H_1$ be its strict transform on $X_1$. 
Then there is a small positive real number $\delta$ such that 
$(X_1, \Delta_1+\delta H_1)$ is kawamata log 
terminal. We take a general $\pi_1$-ample 
$\mathbb Q$-divisor $H'_1$ on $X_1$ such that 
$(X_2, \Delta_2+\delta H_2+\delta' H'_2)$ is kawamata 
log terminal for some positive real number 
$\delta'$, where $H'_2$ is the strict transform of $H'_1$. 
If $\delta$ is sufficiently small, 
$K_{X_1}+\Delta_1+\delta H_1 +\delta' H'_1$ is nef 
over $W$. 
We can run the $(K_{X_1}+\Delta_1+\delta H_1)$-minimal 
model program with scaling over $Y$ around $W$ 
(see \ref{a-say22.4}). 
After finitely many flips, we finally end up with $X_2$. 
On the other hand, by Theorem \ref{a-thm22.2} (2), 
we see that each step is a flop with respect to 
$K_{X_1}+\Delta_1$ if $\delta$ is sufficiently small. 
Therefore, we obtain the desired statement. 
\end{proof}

\section{On abundance conjecture}\label{a-sec23}

In this final section, we will treat the abundance conjecture 
for kawamata log terminal pairs in the complex analytic setting. 

Let us recall the following famous conjecture, which is 
one of the most difficult conjectures in the 
theory of minimal models. 

\begin{conj}[Abundance conjecture for projective 
kawamata log terminal pairs]\label{a-conj23.1}
Let $(X, \Delta)$ be a projective 
kawamata log terminal pair such that $K_X+\Delta$ is nef. 
Then $K_X+\Delta$ is semiample. 
\end{conj}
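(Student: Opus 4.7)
The plan is to attempt \ref{a-conj23.1} via the standard two-step strategy: (i) establish nonvanishing, i.e., $\kappa(X,K_X+\Delta)\geq 0$, and (ii) deduce that $K_X+\Delta$ is semiample from this together with the Kawamata--Shokurov basepoint-free theorem (cf.\ Theorem \ref{a-thm6.2}) and an Iitaka-fibration argument. Concretely, first I would look for a positive integer $m$ and an effective divisor $D$ with $m(K_X+\Delta)\sim D$; since $\Delta$ need not be big, this is \emph{not} covered by Theorem \ref{thm-d}, so one must use a genuinely different input such as the $L^{2}$/positivity-of-direct-images machinery of Siu--Demailly--P\u aun, or a Hodge-theoretic argument in the style of Campana--Peternell, none of which currently yield nonvanishing in full generality.

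Once an effective $D\sim_{\mathbb Q} m(K_X+\Delta)$ is in hand, I would pass to a log resolution $\mu\colon Y\to X$, take the Iitaka fibration $f\colon Y\dashrightarrow Z$ associated to $\mu^{*}(K_X+\Delta)$, and after resolving the indeterminacy assume $f$ is a morphism with connected fibers onto a smooth projective $Z$. If the Iitaka dimension equals $\dim X$, then $K_X+\Delta$ is big and nef, and Theorem \ref{a-thm6.2} immediately gives semiampleness. Otherwise I would run a relative minimal model program over $Z$ (using Theorems \ref{thm-c} and \ref{thm-f}) to reduce to a good relative model where an analytically sufficiently general fiber has $K+\Delta$ numerically trivial, and then invoke the canonical bundle formula in the Fujino--Mori/Ambro form (cf.\ the discussion in \ref{a-say21.5} and Theorem \ref{a-thm21.4}) to write
\begin{equation*}
K_X+\Delta\sim_{\mathbb Q} f^{*}(K_Z+B_Z+M_Z),
\end{equation*}
with $(Z,B_Z)$ klt of dimension strictly smaller than $\dim X$ and $M_Z$ nef. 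Induction on dimension applied to an appropriate klt modification of $(Z,B_Z+M_Z)$ together with the semiampleness of $M_Z$ would then produce a semiample descent on $Z$, from which semiampleness of $K_X+\Delta$ follows by pullback.

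The principal obstacle, and the reason the conjecture remains open in dimension $\geq 4$, is step (i): nonvanishing for a klt pair with $K_X+\Delta$ merely nef (equivalently pseudo-effective and nef) is not known in general, and every currently available technique either demands some ample summand inside $\Delta$ (as in \cite{bchm}, exploited crucially in Theorem \ref{thm-d}) or a transcendental/Hodge-theoretic input that has so far resisted extension beyond threefolds. A secondary but substantial obstacle is the semiampleness of the moduli part $M_Z$: it is known to be nef by the construction in Theorem \ref{a-thm21.4}, but its semiampleness is itself a form of abundance in lower dimension, so the inductive step is not formally closed without additional input. This is presumably why the paper treats \ref{a-conj23.1} as a hypothesis and, in the next theorem, combines it with the minimal model program established earlier (in particular Theorems \ref{thm-c}, \ref{thm-e}, \ref{thm-f}, and \ref{a-thm1.18}) to reduce the complex analytic relative abundance statement to the absolute algebraic one.
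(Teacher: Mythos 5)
This statement is labeled a \emph{conjecture} for a reason: the paper offers no proof of it, and indeed none is known in dimension $\geq 4$ (see Remark~\ref{a-rem1.31}). You have correctly recognized this, so there is no ``paper's proof'' against which to compare, and your write-up is really a diagnosis of why the conjecture is open rather than a proof attempt. That diagnosis is accurate. The two obstructions you isolate are precisely the right ones: (i) nonvanishing for a nef klt $K_X+\Delta$ with no bigness assumption on $\Delta$ is not covered by Theorem~\ref{thm-d} (whose proof leans on an ample summand inside the boundary), and the transcendental/Hodge-theoretic alternatives currently stop short of it; and (ii) even granting nonvanishing and reducing via the Iitaka fibration and the canonical bundle formula of \ref{a-say21.5}/Theorem~\ref{a-thm21.4}, the moduli part $M_Z$ is only known to be nef, and its semiampleness is itself a disguised abundance statement, so the induction does not close without new input. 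Your closing paragraph also correctly anticipates the paper's actual treatment: Section~\ref{a-sec23} takes Conjecture~\ref{a-conj23.1} as a hypothesis and, using the finite generation of Theorem~\ref{a-thm1.18} together with the MMP established earlier, reduces the analytic relative abundance statement (Theorem~\ref{a-thm23.2}, i.e.\ Theorem~\ref{a-thm1.30}) to this absolute projective case. In short, there is no gap in your reasoning to flag; the gap is in the mathematics itself, and you have described it faithfully.
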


The main result of this section is as follows. 

\begin{thm}[see Theorem \ref{a-thm1.30}]\label{a-thm23.2} 
Assume that Conjecture \ref{a-conj23.1} holds in dimension 
$n$. 

Let $\pi\colon X\to Y$ be a projective surjective 
morphism of normal complex varieties with $\dim 
X-\dim Y=n$ and let 
$(X, \Delta)$ be a kawamata log terminal pair. 
Assume that $K_X+\Delta$ is $\pi$-nef. 
Let $W$ be a Stein compact subset of $Y$ such that 
$\Gamma (W, \mathcal O_Y)$ is noetherian. 
Then $K_X+\Delta$ is $\pi$-semiample 
over some open neighborhood of $W$. 
\end{thm}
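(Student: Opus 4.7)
The plan is to reduce to the case where $\Delta$ is a $\mathbb{Q}$-divisor and then use the relative finite generation of Theorem~\ref{a-thm1.18} together with the canonical bundle formula of Section~\ref{a-sec21} to transfer semiampleness from an analytically sufficiently general fibre of $\pi$, where the assumed projective abundance conjecture applies, back to $X$. Throughout I shrink $Y$ around $W$ freely, using that $\Gamma(W,\mathcal{O}_Y)$ is noetherian and that $\pi\colon X\to Y$ and $W$ continue to satisfy~(P).

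First I would arrange, after shrinking $Y$, that $\Delta$ has only finitely many irreducible components and then choose a general $\pi$-ample $\mathbb{Q}$-divisor $A\geq 0$ on $X$ and a rational finite-dimensional affine subspace $V\subset \WDiv_{\mathbb{R}}(X)$ with $\Delta\in V_A$. By Theorem~\ref{a-thm11.17}, equivalently Theorem~\ref{a-thm22.2}(3), the set $\mathcal{N}_{A,\pi}(V;W)=\mathcal{N}_{A,\pi}^{\sharp}(V;W)$ is a rational polytope in $\mathcal{L}_A(V;\pi^{-1}(W))$. Hence $\Delta$ admits a finite convex presentation $\Delta=\sum r_i\Delta_i$ with $r_i>0$, each $\Delta_i\in V$ a rational boundary with $(X,\Delta_i)$ kawamata log terminal and $K_X+\Delta_i$ nef over $W$. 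It therefore suffices to prove the conclusion under the additional assumption that $\Delta$ is a $\mathbb{Q}$-divisor.

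With $(X,\Delta)$ klt and $K_X+\Delta$ $\mathbb{Q}$-Cartier, Theorem~\ref{a-thm1.18} yields that $R(X/Y,K_X+\Delta)$ is a locally finitely generated graded $\mathcal{O}_Y$-algebra. After another shrinking I fix a positive integer $k$ with $k(K_X+\Delta)$ Cartier and $R^{(k)}:=\bigoplus_{m\geq 0}\pi_{*}\mathcal{O}_X(mk(K_X+\Delta))$ generated in degree one; set $Z:=\Projan_Y R^{(k)}$, which is projective over $Y$, and let $p\colon X'\to X$, $q\colon X'\to Z$ resolve the indeterminacy of the induced bimeromorphic map $\varphi\colon X\dashrightarrow Z$ over $Y$, giving $p^{*}(k(K_X+\Delta))\sim q^{*}H+F$ with $H$ ample on $Z$ over $Y$ and $F\geq 0$ the fixed part. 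For an analytically sufficiently general fibre $F_0$ of $\pi$, the pair $(F_0,\Delta|_{F_0})$ is a projective klt pair of dimension $n$ with $K_{F_0}+\Delta|_{F_0}$ nef, so by the assumed Conjecture~\ref{a-conj23.1} the restriction $(K_X+\Delta)|_{F_0}$ is semiample; consequently $F$ is vertical over $Y$ (supported over a proper analytic subset) and $\varphi$ restricts to the Iitaka fibration on $F_0$.

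The main obstacle, and the heart of the argument, is to promote this fibrewise semiampleness to $\pi$-semiampleness over an open neighborhood of $W$. Here I would invoke the analytic canonical bundle formula of Section~\ref{a-sec21}: applying Theorem~\ref{a-thm21.4} and the computation of paragraph~\ref{a-say21.5} to the Iitaka-type contraction extracted above, I obtain, after a further log-resolution, a $\mathbb{Q}$-linear equivalence $p^{*}(k(K_X+\Delta))\sim_{\mathbb{Q}}q^{*}(k(K_Z+B_Z+M_Z))$ modulo effective $q$-exceptional divisors, where $B_Z$ is the discriminant and $M_Z$ the moduli $\mathbb{Q}$-divisor, with $M_Z$ nef over $Y$ by Theorem~\ref{a-thm21.4}(3)–(4). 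Since $H\sim_{\mathbb{Q}} k(K_Z+B_Z+M_Z)$ modulo such exceptional pieces and $H$ is $\pi_Z$-ample, $K_Z+B_Z+M_Z$ is both big and nef over $Y$. Using Lemmas~\ref{a-lem11.13} and \ref{a-lem11.15} I then perturb $B_Z+M_Z$ to a boundary $\Delta_Z$ making $(Z,\Delta_Z)$ klt with $K_Z+\Delta_Z\sim_{\mathbb{Q}} K_Z+B_Z+M_Z$ still big and nef over $Y$; the basepoint-free theorem in Stein compact form (Theorem~\ref{a-thm6.5}, combined with the effective version Theorem~\ref{a-thm6.4}) implies that $K_Z+\Delta_Z$ is $\pi_Z$-semiample over an open neighborhood of $W$. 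Pulling back through $q$ and pushing down through the birational morphism $p$, together with the fact that $F$ is $p$-exceptional on general fibres and hence the residual exceptional contributions do not obstruct freeness, yields that $K_X+\Delta$ is $\pi$-semiample on an open neighborhood of $W$. The delicate points I anticipate are (a) ensuring that the stabilisation of the moduli part $M_Z$ in Theorem~\ref{a-thm21.4}(4) is genuinely available over a neighborhood of $W$ rather than merely over a shrinking nested family, and (b) controlling the fixed part $F$ well enough along special fibres of $\pi$ over $W$ so that the semiampleness of $K_Z+\Delta_Z$ indeed lifts to $X$.
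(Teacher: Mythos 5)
Your reduction to the case where $\Delta$ is a $\mathbb{Q}$-divisor via Theorem~\ref{a-thm22.2}(3) matches the paper's Step 1 exactly. Where you diverge is in the second half, and there are two genuine gaps.

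The paper does not invoke the canonical bundle formula at all in Section~\ref{a-sec23}. Instead, following the proof of \cite[Proposition 2.1]{kawamata-pluricanonical} and Hironaka's flattening theorem, it constructs a diagram $\mu\colon X'\to X$ bimeromorphic and $\phi\colon X'\to Z$ a contraction, both over $Y$ with $X'$, $Z$ smooth, together with a Cartier divisor $D$ on $Z$ that is nef and big over $Y$ and satisfies the \emph{exact} relation $a\mu^{*}L\sim b\phi^{*}D$, with no exceptional correction term. This exact relation is the crucial point: it immediately transfers the local finite generation of $R(X/Y,L)$ (Theorem~\ref{a-thm1.18}) to local finite generation of $R(Z,D)$ via Lemma~\ref{a-lem2.26}, and then Lemma~\ref{a-lem23.3}/\ref{a-lem23.4} — the relative analogue of Wilson's theorem, which is the new technical tool introduced in Section~\ref{a-sec23} — converts finite generation into semiampleness of $D$, hence of $L$. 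You do not use Lemma~\ref{a-lem23.4} at all, and as a result you are forced into the two difficulties you yourself flag, both of which are real.

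The canonical bundle formula of \ref{a-say21.5} gives only $h^{*}(K_X+\Delta)\sim_{\mathbb{Q}}(f'')^{*}(K_{Z''}+B_{Z''}+M_{Z''})+B''-\Delta''_{-}$, where $B''=B''_{+}-B''_{-}$ is a correction term that is not globally effective and not $h$-exceptional on all of $X''$. Semiampleness on $Z''$ does not push down through $h$ under these conditions, and the heuristic that ``residual exceptional contributions do not obstruct freeness'' is precisely the step that needs a proof — this is why the paper builds the flattened model with the clean relation $a\mu^{*}L\sim b\phi^{*}D$ instead. Separately, your claim that $K_Z+B_Z+M_Z$ is nef over $Y$ is not justified: the CBF gives bigness (once $\kappa=n$ over $Y$) and nefness of the moduli part $M_{Z''}$, but not nefness of $K_{Z''}+B_{Z''}+M_{Z''}$; establishing that would itself require an argument comparable to what you set out to prove. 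Without nefness, the basepoint-free theorems \ref{a-thm6.4} and \ref{a-thm6.5} do not apply. The canonical bundle formula is the engine behind Theorem~\ref{a-thm1.18} (which you do use), but reapplying it here, in place of Lemma~\ref{a-lem23.4}, does not close the argument.
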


Theorem \ref{a-thm23.2} says that we can reduce 
the abundance conjecture for projective morphisms 
of complex analytic spaces to the original abundance 
conjecture for projective varieties. 
Before we prove Theorem \ref{a-thm23.2}, we prepare some lemmas. 
The following lemma is Wilson's 
theorem (see \cite[Theorem 2.3.9]{lazarsfeld1}) 
for projective morphisms of complex varieties. 

\begin{lem}\label{a-lem23.3}
Let $f\colon Z\to Y$ be a projective morphism 
from a smooth complex variety $Z$ onto a normal Stein 
variety $Y$ and let $D$ be a Cartier divisor 
on $X$ such that $D$ is nef and big over $Y$. 
Let $y$ be any point of $Y$. 
Then, by replacing $Y$ with any relatively compact Stein 
open neighborhood of $y$, 
there exist a positive integer $m_0$ and an effective 
Cartier divisor $B$ on $Z$ such that 
$\mathcal O_Z(mD-B)$ is $f$-free. 
\end{lem}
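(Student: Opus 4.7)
The plan is to realize $B$ as the zero divisor of a suitably chosen global section of $\mathcal{O}_Z(m_0 D - H)$ for an $f$-very ample Cartier divisor $H$ on $Z$; then the relation $m_0 D - B \sim H$ will exhibit $\mathcal{O}_Z(m_0 D - B)$ as an $f$-free line bundle. In other words, I will prove a relative version of Kodaira's lemma by first establishing the needed nonvanishing on an analytically sufficiently general fiber and then lifting this to $Z$ by semicontinuity together with Steinness of the base.

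First I would replace $Y$ with a relatively compact Stein open neighborhood of $y$ and fix an $f$-very ample Cartier divisor $H$ on $Z$, which exists after shrinking $Y$ because $f$ is projective. Choose an analytically sufficiently general fiber $F$ of $f$; since $Z$ is smooth and $Y$ is normal, generic smoothness ensures that $F$ is a smooth projective variety for generic choice, and $H|_F$ is very ample on $F$. The hypothesis that $D$ is $f$-nef implies $D|_F$ is nef, while the definition of $f$-bigness (Definition \ref{a-def2.46}) lets us write $D \sim_{\mathbb{R}} A + E$ with $A$ an $f$-ample $\mathbb{R}$-divisor and $E \geq 0$, so that $D|_F \sim_{\mathbb{R}} A|_F + E|_F$ exhibits $D|_F$ as a big $\mathbb{R}$-divisor on $F$. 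Thus $D|_F$ is a nef and big Cartier divisor on the projective variety $F$, and the classical Kodaira lemma (applied to the asymptotic estimate $h^0(F, m_0 D|_F) \gtrsim m_0^{\dim F}$ compared against the polynomial growth of $h^0$ on a member of $|H|_F|$) yields a positive integer $m_0$ for which $H^0\bigl(F, \mathcal{O}_Z(m_0 D - H)|_F\bigr) \neq 0$.

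I would then lift this fiberwise nonvanishing to a global section on $Z$, following the template already used in the proof of Lemma \ref{a-lem2.53}. By Grauert's theorem the direct image $\mathcal{F} := f_* \mathcal{O}_Z(m_0 D - H)$ is a coherent sheaf on $Y$, and by upper semicontinuity of fiber cohomology for projective morphisms of complex analytic spaces (see \cite[Chapter III]{banica}), the function $y' \mapsto h^0\bigl(Z_{y'}, \mathcal{O}_{Z_{y'}}(m_0 D - H)\bigr)$ is positive on an open dense subset of $Y$; since $Y$ is irreducible, $\mathcal{F}$ is then a nonzero coherent sheaf on $Y$. By Cartan's Theorem A (Theorem \ref{a-thm2.6}), after possibly shrinking $Y$ around $y$ once more, one can find a nonzero section $s \in H^0(Y, \mathcal{F}) = H^0(Z, \mathcal{O}_Z(m_0 D - H))$; its zero-divisor $B := \ddiv(s)$ is then an effective Cartier divisor on $Z$ with $B \sim m_0 D - H$, so that $\mathcal{O}_Z(m_0 D - B) \simeq \mathcal{O}_Z(H)$ is $f$-free. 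The main technical point is this lifting step, which requires care with the base-change and semicontinuity machinery in the complex analytic category, but these tools are already deployed successfully in the proof of Lemma \ref{a-lem2.53}, so no genuinely new ideas are required.
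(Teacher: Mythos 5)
Your argument successfully shows that there exist $m_0$ and $B\geq 0$ with $m_0D-B\sim H$, so that $\mathcal O_Z(m_0D-B)\simeq\mathcal O_Z(H)$ is $f$-free for the single value $m=m_0$. However, the lemma needs (and the paper's proof delivers) that $\mathcal O_Z(mD-B)$ is $f$-free for \emph{every} $m\geq m_0$ with a \emph{fixed} $B$ --- this is the relative Wilson theorem, and the universal quantifier on $m$ is essential for the application in Lemma \ref{a-lem23.4}, where one must take $m$ large enough to beat $\mult_z B$. With your choice $B\sim m_0D-H$ you have no control on $\mult_z B$ and no freedom to increase $m$: for $m>m_0$ one gets $mD-B\sim (m-m_0)D+H$, which is $f$-ample but not automatically $f$-free, and your argument gives no reason why it should be.

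The paper's proof is structurally different precisely in order to obtain this uniformity. It writes $m_0D\sim A+B$ with $A$ $f$-ample, $B\geq 0$, and $A-(K_Z+(n+1)H)$ $f$-ample ($n=\dim Z$), and then observes that for every $m\geq m_0$ and $0<i\leq n+1$,
\begin{equation*}
mD-B-iH-K_Z\sim \bigl(A-(K_Z+(n+1)H)\bigr)+(m-m_0)D+(n+1-i)H
\end{equation*}
is $f$-ample, because the extra nef term $(m-m_0)D$ only helps. Relative Kawamata--Viehweg vanishing (Theorem \ref{a-thm5.1}) then gives $R^if_*\mathcal O_Z(mD-B-iH)=0$ for $i>0$, and Castelnuovo--Mumford regularity with respect to $H$ yields $f$-freeness of $\mathcal O_Z(mD-B)$ for all $m\geq m_0$ at once. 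This vanishing-plus-regularity mechanism is the content you are missing; the fiberwise-Kodaira-plus-lifting route you propose cannot produce it without some additional input of exactly this kind.

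A smaller point on your lifting step: upper semicontinuity of $h^0$ on fibers goes the \emph{wrong} way for what you write --- the locus $\{y': h^0\geq 1\}$ is closed, so knowing $h^0>0$ on one fiber does not by itself give positivity on a dense open set. The step can be repaired by noting that you chose $F$ to be an analytically sufficiently general fiber (so, after shrinking, it lies over the dense open where $f_*\mathcal O_Z(m_0D-H)$ commutes with base change), from which $f_*\mathcal O_Z(m_0D-H)\neq 0$ follows and Cartan's Theorem A supplies a global section. But this correction does not address the main gap about uniformity in $m$.
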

\begin{proof}
We can take an $f$-very ample Cartier divisor $H$ on $Z$ after 
replacing $Y$ with any relatively compact Stein open neighborhood of $y$. 
Since $D$ is big over $Y$, there exists a positive integer $m_0$ such that 
$m_0D\sim A+B$, 
$A$ is $f$-ample, $B\geq 0$, and $A-(K_X+(n+1)H)$ is 
$f$-ample with $n=\dim Z$. 
Then, $R^if_*\mathcal O_Z(mD-B-iH)=0$ holds for every $i>0$ and 
every $m\geq m_0$ since 
$mD-B-iH-K_X\sim A-(K_X+(n+1)H)+(n+1-i)H$ is $f$-ample 
for $0<i\leq n+1$
(see, for example, Theorem \ref{a-thm5.1}). 
Therefore, by Castelnuovo--Mumford 
regularity (see, for example, \cite[Example 1.8.24]{lazarsfeld}), 
we obtain that $\mathcal O_Z(mD-B)$ is $f$-free for 
every $m\geq m_0$. 
\end{proof}

As an easy consequence, we obtain: 

\begin{lem}\label{a-lem23.4}
Let $f\colon Z\to Y$ be a projective morphism 
from a smooth complex variety $Z$ onto a normal Stein 
variety $Y$ and let $D$ be a Cartier divisor 
on $X$ such that $D$ is nef and big over $Y$. 
Assume that 
\begin{equation*}
R(Z, D):=\bigoplus _{m\in \mathbb N} f_*\mathcal O_Z(mD)
\end{equation*} 
is a locally finitely generated graded $\mathcal O_Y$-algebra. 
Then $D$ is $f$-semiample. 
\end{lem}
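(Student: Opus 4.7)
The plan is to reduce the semiampleness assertion to the relative basepoint-free theorem (Theorem~\ref{a-thm6.5}) via Kodaira's lemma. First I would fix an arbitrary point $y \in Y$ and, using Lemma~\ref{a-lem2.16}, shrink $Y$ to a relatively compact Stein open neighborhood of $y$ together with a Stein compact subset $W$ containing $y$ with $\Gamma(W, \mathcal{O}_Y)$ noetherian; since $f$-semiampleness is a local property on $Y$ (see Definition~\ref{a-def2.29}), it suffices to establish it over a neighborhood of each such $W$. By the finite generation hypothesis and Lemma~\ref{a-lem2.26}, after replacing $D$ by a sufficiently divisible multiple (which does not affect $f$-semiampleness), I may assume $R(Z, D)$ is generated in degree one.

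Next, using that $D$ is $f$-big, I would appeal to Kodaira's lemma in the relative setting (the underlying idea of Lemma~\ref{a-lem23.3}) to obtain a decomposition $m_0 D \sim A + E$ with $A$ an $f$-ample $\mathbb Q$-divisor, $E \geq 0$ an effective divisor, and $m_0 \in \mathbb{Z}_{>0}$. Taking a log resolution $\mu\colon \tilde Z \to Z$ is harmless since $\mu_*\mathcal{O}_{\tilde Z} \simeq \mathcal{O}_Z$ implies $R(\tilde Z, \mu^*D) = R(Z, D)$ and $\mu^* D$ is $f\circ\mu$-semiample if and only if $D$ is $f$-semiample; so I may further assume $\mathrm{Supp}\, E$ is simple normal crossing. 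Then for $\epsilon > 0$ sufficiently small the pair $(Z, \epsilon E)$ is kawamata log terminal, and one computes
\[
aD - (K_Z + \epsilon E) \;=\; (a - \epsilon m_0)\, D \;+\; \epsilon A - K_Z.
\]

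The hard part is choosing the parameters $m_0$, $A$, $\epsilon$, $a$ compatibly so that this right-hand side is $f$-ample. The tension is well known: for fixed $\epsilon > 0$ one needs $\epsilon A - K_Z$ to be $f$-ample (so that added to the nef divisor $(a - \epsilon m_0) D$ for $a > \epsilon m_0$ the sum is $f$-ample), which pushes $\epsilon$ upward, while the klt condition on $(Z, \epsilon E)$ forces $\epsilon$ to lie below $\mathrm{lct}(Z, E)$. The standard resolution is the rescaling trick from Kawamata's original proof of the basepoint-free theorem for nef and big divisors: replacing $m_0$ by a large multiple $k m_0$ scales $A$ and $E$ by $k$, so the ampleness threshold for $\epsilon (kA) - K_Z$ becomes $\epsilon k > c(A, K_Z)$ while the klt threshold becomes $\epsilon k < \mathrm{lct}(Z, E)$; by choosing $A$ very ample enough in the original decomposition one can arrange $c(A, K_Z) < \mathrm{lct}(Z, E)$ so that the two windows overlap, and then any $\epsilon k$ in the overlap works.

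With $(Z, \epsilon E)$ klt and $a D - (K_Z + \epsilon E)$ $f$-ample, Theorem~\ref{a-thm6.5} applied to the Stein compact $W$ delivers an open neighborhood $U$ of $W$ over which $D$ is $f$-semiample. Since $y \in Y$ was arbitrary, patching these neighborhoods yields $f$-semiampleness of $D$ on $Y$. Note that the finite generation hypothesis is used only to perform the initial reduction to $R(Z,D)$ being generated in degree one via Lemma~\ref{a-lem2.26}; the core of the argument is the Kodaira--Kawamata perturbation combined with Theorem~\ref{a-thm6.5}.
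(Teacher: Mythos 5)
Your strategy has a genuine gap that cannot be patched. After the initial reduction, your core argument is: decompose $m_0 D \sim A + E$, find $\epsilon$ and $a$ with $(Z,\epsilon E)$ kawamata log terminal and $aD-(K_Z+\epsilon E)$ $f$-ample, and invoke the basepoint-free theorem (Theorem~\ref{a-thm6.5}). But if such a pair and such an $a$ always existed, then Theorem~\ref{a-thm6.5} would prove that \emph{every} $f$-nef and $f$-big Cartier divisor on a smooth variety is $f$-semiample --- with no use of the finite generation hypothesis. This is false: already over a point, Zariski constructed a smooth projective surface carrying a nef and big divisor which is not semiample. Consequently, for that surface and that divisor, no klt boundary $\Delta$ and no $a>0$ can make $aD-(K_Z+\Delta)$ ample. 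You remark yourself that the finite generation is used ``only to perform the initial reduction to $R(Z,D)$ being generated in degree one''; that is precisely the red flag, since the hypothesis must enter essentially, not merely as bookkeeping.

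To see concretely why the rescaling trick cannot resolve the tension you identify: replacing $m_0 D = A+E$ by $km_0 D = kA+kE$ multiplies the log canonical threshold by $1/k$ and the ampleness threshold for $t(kA)-K_Z$ by the same factor, so the two windows translate together and their mutual position is unchanged. Enlarging the ample part $A$ (and shrinking $E$) inside the fixed constraint $A+E=m_0 D$ is also bounded: since $D$ is nef but not ample there is a curve $C$ with $D\cdot C = 0$, hence $E\cdot C = -A\cdot C < 0$, so $E$ must contain components through $C$ with multiplicity bounded away from zero, which caps $\mathrm{lct}(Z,E)$ from above. There is simply no reason the two windows overlap, and Zariski's example shows they need not.

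The argument the paper actually uses is a contradiction via multiplicities, in which finite generation does the real work. One applies Wilson's theorem (Lemma~\ref{a-lem23.3}) to get an effective divisor $B$ with $\mathcal{O}_Z(mD-B)$ $f$-free for $m\gg 0$, producing a section $g$ of $klD$ whose zero divisor $C$ satisfies $\mathrm{mult}_z C < k$ for any prescribed $z$. On the other hand, if $z$ is a point of the stable base locus, every section of $lD$ vanishes at $z$, and finite generation in degree $l$ forces every section of $klD$ to be a polynomial of degree $k$ in sections of $lD$, hence to vanish to order $\geq k$ at $z$. The two bounds contradict each other. This is the proof pattern from Lazarsfeld's treatment of the analogous projective statement, adapted to the complex analytic situation by working over a Stein compact $W$ with $\Gamma(W,\mathcal{O}_Y)$ noetherian. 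If you want to reuse the basepoint-free theorem, you would first need to pass to the image of the semiample model --- but producing that model is exactly what is being proved.
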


Lemma \ref{a-lem23.4} is well known for normal projective varieties 
(see, for example, \cite[Theorem 2.3.15]{lazarsfeld1}). 

\begin{proof}[Proof of Lemma \ref{a-lem23.4}]
By taking the Stein factorization, we may assume that 
$f_*\mathcal O_Z\simeq \mathcal O_Y$. 
Suppose that, for every positive integer $m$, $f^*f_*\mathcal O_Z
(mD)\to \mathcal O_Z(mD)$ is not surjective at $z\in Z$. 
We take an open neighborhood $U$ of $f(z)$ and a 
Stein compact subset $W$ of $Y$ such that 
$f(z)\in U\subset W$ and that $\mathcal O_Y(W)=\Gamma 
(W, \mathcal O_Y)$ is noetherian. 
If we make $U$ and $W$ sufficiently small, 
then $\Gamma \left (W, \bigoplus _{m\in \mathbb N} 
f_*\mathcal O_Z(mD)\right)\simeq 
\bigoplus_{m\in \mathbb N} f_*\mathcal O_Z(mD)(W)$ is 
a finitely generated $\mathcal O_Y(W)$-algebra. 
Therefore, there exists a positive integer $l$ such that 
$\bigoplus_{m\in \mathbb N} f_*\mathcal O_Z(mlD)(W)$ 
is generated by $f_*\mathcal O_Z(lD)(W)$. 
Let $V$ be a relatively compact Stein open neighborhood of $W$. Then, 
by Lemma \ref{a-lem23.3}, 
there exist $k>0$ and $g\in 
\Gamma (f^{-1}(V), \mathcal O_Z(klD))
=\Gamma (V, f_*\mathcal O_Z(klD))$ such that 
$C=(g=0)$ is an effective divisor on $f^{-1}(V)$ 
with $\mult _z C<k$. On the other hand, since 
$f_*\mathcal O_Z(klD)(W)$ is generated by $f_*\mathcal O_Z(lD)(W)$, 
$\mult _z C\geq k$ holds. 
It is a contradiction. This means that 
$D$ is $f$-semiample. 
\end{proof}

Let us prove Theorem \ref{a-thm23.2}. 

\begin{proof}[Proof of Theorem \ref{a-thm23.2}]
\setcounter{step}{0} In Step \ref{a-thm23.2-step1}, we will 
reduce the problem to the case where $K_X+\Delta$ is 
$\mathbb Q$-Cartier. Then, in Step \ref{a-thm23.2-step2}, 
we will prove that it is semiample by using the finite generation of 
log canonical rings. 
\begin{step}\label{a-thm23.2-step1}
We take a Stein open neighborhood $U$ of $W$ and 
a Stein compact subset $W'$ such that 
$U\subset W'$ and that $\Gamma (W', \mathcal O_Y)$ is 
noetherian. 
By Theorem \ref{a-thm22.2}, 
after shrinking $Y$ around $W'$, 
we can find $\mathbb Q$-divisors $\Delta_1, 
\ldots, \Delta_l$ on $X$ such that 
$K_X+\Delta=\sum _i r_i (K_X+\Delta_i)$, 
$(X, \Delta_i)$ is kawamata log terminal, 
$K_X+\Delta_i$ is nef over $W'$, and $r_i\in \mathbb R_{>0}$ with 
$\sum _i r_i =1$. 
Therefore, it is sufficient to prove that 
$K_X+\Delta_i$ is semiample over some open neighborhood 
of $W$. 
Hence, from now on, we may further assume that 
$K_X+\Delta$ is $\mathbb Q$-Cartier. 
Moreover, we may assume that there exists a positive 
integer $k$ such that 
$k(K_X+\Delta)$ is Cartier. 
Without loss of generality, we may assume that $\pi_*\mathcal O_X\simeq 
\mathcal O_Y$ by taking the Stein factorization. 
\end{step}
\begin{step}\label{a-thm23.2-step2}
Let $F$ be an analytically sufficiently general fiber of $\pi\colon X\to Y$. 
Then $(F, \Delta|_F)$ is kawamata log terminal with 
$K_F+\Delta|_F=(K_X+\Delta)|_F$. 
Hence, by assumption, $K_F+\Delta|_F$ is semiample. 
We put $L=k(K_X+\Delta)$. 
From now on, we will freely replace $Y$ with a smaller 
Stein open neighborhood 
of $W$ without mentioning it explicitly. 
We consider a meromorphic map 
$g\colon X\dashrightarrow Z_0$ over $Y$ associated to 
$\pi^*\pi_*\mathcal O_X(mL)\to \mathcal O_X(mL)$ for some 
sufficiently large and divisible positive integer $m$ such that 
$\dim Z_0=\dim Y+\kappa (F, K_F+\Delta|_F)$. 
As in the proof of \cite[Proposition 2.1]{kawamata-pluricanonical}, 
by using Hironaka's flattening theorem (see \cite{hironaka}), and so on, 
we can construct the following commutative diagram: 
\begin{equation*}
\xymatrix{
X\ar@{-->}[d]_-g & \ar[l]_-{\mu_0}
\ar[d]^-{g_1}X_1 &\ar[l]_-{\mu_1}\ar[d]^-{g_2} X_2 &
\ar[l]_-{\mu_2} \ar[d]^-{g_3}X_3 & \ar[l]_-{\mu_3}
\ar[d]^-\phi X' \\
Z_0 &\ar[l]^-{\pi_0} Z_1 &\ar[l]^-{\pi_1} Z& Z\ar@{=}[l]& Z \ar@{=}[l]
}
\end{equation*} 
which satisfies the following conditions. 
\begin{itemize}
\item[(i)] All the varieties in the diagram are projective over $Y$. 
\item[(ii)] $X_1$, $X'$, $Z_1$, and 
$Z$ are smooth, and $X_3$ is normal. 
\item[(iii)] $\mu_0$, $\mu_1$, 
$\mu_2$, $\mu_3$, and $\pi_1$ are projective 
bimeromorphic 
morphisms, $g_1$, $g_2$, $g_3$ and $\phi$ are 
surjective morphisms with connected fibers, and 
$\pi_0$ is a generically finite surjective morphism. 
\item[(iv)] $g_2$ is flat, $\mu_2$ is finite, and $g_3$ is equidimensional. 
\end{itemize}
We put $\mu:=\mu_0\circ \mu_1\circ \mu_2\circ \mu_3\colon 
X'\to X$. 
Then we finally get the following commutative diagram: 
\begin{equation*}
\xymatrix{
&X' \ar[dr]^-\phi\ar[dl]_-\mu& \\ 
X\ar[dr]_-\pi && Z \ar[dl]^-f\\ 
& Y&
}
\end{equation*}
such that 
\begin{itemize}
\item[(a)] $X'$ and $Z$ are projective over $Y$, 
\item[(b)] $X'$ and $Z$ are smooth, 
\item[(c)] $\mu$ is bimeromorphic and $\phi$ is a surjective morphism 
with connected fibers, and 
\item[(d)] there exists a Cartier divisor $D$ on $Z$ such that 
$D$ is nef and big over $Y$ with $a\mu^*L\sim b\phi^*D$ for some 
positive integers $a$ and $b$. 
\end{itemize} 
For the details, see the proof of \cite[Proposition 2.1]{kawamata-pluricanonical}. 
Since $R(X/Y, K_X+\Delta)$ is a locally finitely generated 
graded $\mathcal O_Y$-algebra by Theorem \ref{a-thm1.18}, 
$R(Z, D)$ is also a locally finitely generated graded $\mathcal O_Y$-algebra 
by Lemma \ref{a-lem2.26}. 
Hence, by Lemma \ref{a-lem23.4}, 
$D$ is $f$-semiample. 
This means that $L$ is $\pi$-semiample. 
\end{step}
Anyway, $K_X+\Delta$ is a finite $\mathbb R_{>0}$-linear 
combination of semiample 
Cartier divisors over some open 
neighborhood of $W$. 
This is what we wanted. 
\end{proof}

The abundance conjecture for log canonical pairs 
in the complex analytic setting seems to be much 
more difficult than the one for kawamata log terminal pairs. 

%%%%%%%%%%%%%%%


\begin{thebibliography}{BCHM} 

\bibitem[A]{ambro} 
F.~Ambro, 
Shokurov's boundary property, 
J. Differential Geom. \textbf{67} (2004), no. 2, 229--255.

\bibitem[ADHL]{cox-rings} 
I.~Arzhantsev, U.~Derenthal, J.~Hausen, A.~Laface, 
{\em{Cox rings}}, 
Cambridge Studies in Advanced 
Mathematics, \textbf{144}. Cambridge University Press, Cambridge, 2015.

\bibitem[BS]{banica} 
C.~B\u anic\u a, O.~St\u an\u a\c sil\u a, 
{\em{Algebraic methods in the global theory of complex 
spaces}}, Translated from the Romanian. Editura 
Academiei, Bucharest; John Wiley \& Sons, 
London--New York--Sydney, 1976.

\bibitem[BM1]{bierstone-milman1} 
E.~Bierstone, P.~D.~Milman, 
Semianalytic and subanalytic sets, 
Inst. Hautes \'Etudes Sci. Publ. Math. No. \textbf{67} (1988), 5--42. 

\bibitem[BM2]{bierstone-milman} 
E.~Bierstone, P.~D.~Milman, 
Canonical desingularization in characteristic zero 
by blowing up the maximum strata of a local 
invariant, 
Invent. Math. \textbf{128} (1997), no. 2, 207--302.

\bibitem[Bir1]{birkar}
C.~Birkar, 
On existence of log minimal models, 
Compos. Math. \textbf{146} (2010), no. 4, 919--928.

\bibitem[Bir2]{birkar2}
C.~Birkar, 
On existence of log minimal models II, 
J. Reine Angew. Math. \textbf{658} (2011), 99--113.

\bibitem[BCHM]{bchm}
C.~Birkar, P.~Cascini, C.~D.~Hacon, J.~M\textsuperscript{c}Kernan, 
Existence of minimal models for varieties of log general type, 
J. Amer. Math. Soc. \textbf{23} (2010), no. 2, 405--468. 

\bibitem[BP]{birkar-paun} 
C.~Birkar, M.~P\u aun, 
Minimal models, flips and finite generation:~a tribute 
to V.~V.~Shokurov and Y.-T.~Siu, 
{\em{Classification of algebraic varieties}}, 77--113,
EMS Ser. Congr. Rep., Eur. Math. Soc., Z\"urich, 2011. 

\bibitem[BT]{bott-tu} 
R.~Bott, L.~W.~Tu, {\em{Differential forms in algebraic topology}}, 
Graduate Texts in Mathematics, \textbf{82}. Springer-Verlag, 
New York--Berlin, 1982. 

\bibitem[CHP]{chp} 
F.~Campana, A.~H\"oring, T.~Peternell, 
Abundance for K\"ahler threefolds, 
Ann. Sci. \'Ec. Norm. Sup\'er. (4) \textbf{49} (2016), no. 4, 971--1025. 

\bibitem[CKP]{ckp} 
F.~Campana, V.~Koziarz, M.~P\u aun, 
Numerical character of the effectivity of adjoint line bundles, 
Ann. Inst. Fourier (Grenoble) \textbf{62} (2012), no. 1, 107--119. 

\bibitem[CaL]{cascini-lazic}
P.~Cascini, V.~Lazi\'c, 
New outlook on the minimal model program, I, 
Duke Math. J. \textbf{161} (2012), no. 12, 2415--2467.

\bibitem[CoL]{corti-lazic} 
A.~Corti, V.~Lazi\'c, 
New outlook on the minimal model program, II, 
Math. Ann. \textbf{356} (2013), no. 2, 617--633.

\bibitem[D]{demailly} 
J.-P.~Demailly, {\em{Complex analytic and differential 
geometry}}, available at the web page of the author. 
https://www-fourier.ujf-grenoble.fr/$\sim$demailly/manuscripts/agbook.pdf

\bibitem[Fi]{fischer} 
G.~Fischer, {\em{Complex analytic geometry}}, 
Lecture Notes in Mathematics, 
Vol. \textbf{538}. Springer-Verlag, Berlin--New York, 1976.

\bibitem[Fr]{frisch}
J.~Frisch, 
Points de platitude d'un morphisme d'espaces analytiques complexes, 
Invent. Math. \textbf{4} (1967), 118--138. 

\bibitem[Fu1]{fujino-certain} 
O.~Fujino, 
A canonical bundle formula for certain algebraic fiber 
spaces and its applications, 
Nagoya Math. J. \textbf{172} (2003), 129--171.

\bibitem[Fu2]{fujino-higher} 
O.~Fujino, 
Higher direct images of log canonical divisors, 
J. Differential Geom. \textbf{66} (2004), no. 3, 453--479. 

\bibitem[Fu3]{fujino-special} 
O.~Fujino, 
Special termination and reduction to pl flips, 
{\em{Flips for $3$-folds and $4$-folds}}, 63--75, 
Oxford Lecture Ser. Math. Appl., \textbf{35}, Oxford 
Univ. Press, Oxford, 2007.

\bibitem[Fu4]{fujino-semistable} 
O.~Fujino, Semi-stable minimal model 
program for varieties with trivial canonical divisor, 
Proc. Japan Acad. Ser. A Math. Sci. \textbf{87} (2011), no. 3, 25--30.

\bibitem[Fu5]{fujino-fundamental} 
O.~Fujino, Fundamental theorems for 
the log minimal model program, 
Publ. Res. Inst. Math. Sci. \textbf{47} (2011), no. 3, 727--789.

\bibitem[Fu6]{fujino-transcendental} 
O.~Fujino, A transcendental approach to 
Koll\'ar's injectivity theorem II, 
J. Reine Angew. Math. \textbf{681} (2013), 149--174. 

\bibitem[Fu7]{fujino-semilogcanonical} 
O.~Fujino, Fundamental theorems for 
semi log canonical pairs, 
Algebr. Geom. \textbf{1} (2014), no. 2, 194--228.

\bibitem[Fu8]{fujino-some} 
O.~Fujino, Some remarks on the minimal 
model program for log canonical pairs, 
J. Math. Sci. Univ. Tokyo \textbf{22} (2015), no. 1, 149--192.

\bibitem[Fu9]{fujino-foundations} 
O.~Fujino, {\em{Foundations of the minimal model program}}, 
MSJ Memoirs, \textbf{35}. Mathematical 
Society of Japan, Tokyo, 2017.

\bibitem[Fu10]{fujino-pull-back} 
O.~Fujino, 
Pull-back of quasi-log structures, 
Publ. Res. Inst. Math. Sci. \textbf{53} (2017), no. 2, 241--259.

\bibitem[Fu11]{fujino-bertini} 
O.~Fujino, 
Relative Bertini type theorem for multiplier ideal 
sheaves, to appear in Osaka J. Math. 

\bibitem[Fu12]{fujino-classC}
O.~Fujino, Minimal model theory for log surfaces 
in Fujiki's class $\mathcal C$, 
Nagoya Math. J. \textbf{244} (2021), 256--282. 

\bibitem[Fu13]{fujino-slc-trivial} 
O.~Fujino, 
Fundamental properties of basic slc-trivial fibrations I, 
to appear in Publ. Res. Inst. Math. Sci. 

\bibitem[Fu14]{fujino-cone} 
O.~Fujino, 
Cone theorem and Mori hyperbolicity, 
preprint (2020). arXiv:2102.11986 [math.AG]

\bibitem[Fu15]{fujino-on-quasi} 
O.~Fujino, On quasi-log schemes, 
preprint (2021). arXiv:2107.04757 [math.AG]

\bibitem[FF]{fujino-fujisawa} 
O.~Fujino, T.~Fujisawa, 
Variations of mixed Hodge structure and semipositivity theorems, 
Publ. Res. Inst. Math. Sci. \textbf{50} (2014), no. 4, 589--661. 

\bibitem[FFL]{fujino-fujisawa-liu} 
O.~Fujino, T.~Fujisawa, H.~Liu, 
Fundamental properties of basic slc-trivial fibrations II, 
to appear in Publ. Res. Inst. Math. Sci. 

\bibitem[FG2]{fujino-gongyo2}
O.~Fujino, Y.~Gongyo, 
On log canonical rings, 
{\em{Higher dimensional algebraic geometry---in honour of 
Professor Yujiro Kawamata's sixtieth birthday}}, 159--169, 
Adv. Stud. Pure Math., \textbf{74}, Math. Soc. Japan, Tokyo, 2017.

\bibitem[FH]{fujino-hashizume} 
O.~Fujino, K.~Hashizume, 
Adjunction and inversion of adjunction, 
preprint (2021). arXiv:2105.14531 [math.AG]

\bibitem[FMa]{fujino-matsumura} 
O.~Fujino, S.~Matsumura, 
Injectivity theorem for pseudo-effective line bundles and 
its applications, 
Trans. Amer. Math. Soc. Ser. B 8 (2021), 849--884.

\bibitem[FMi]{fujino-miyamoto}
O.~Fujino, K.~Miyamoto, 
Nakai--Moishezon ampleness criterion for real line bundles, 
to appear in Math. Ann. 

\bibitem[FMo]{fujino-mori}
O.~Fujino, S.~Mori, 
A canonical bundle formula, 
J. Differential Geom. \textbf{56} (2000), no. 1, 167--188.

\bibitem[GrR]{grauert-remmert} 
H.~Grauert, R.~Remmert, {\em{Coherent analytic sheaves}}, 
Grundlehren der mathematischen 
Wissenschaften [Fundamental Principles 
of Mathematical Sciences], \textbf{265}. Springer-Verlag, 
Berlin, 1984.

\bibitem[GuR]{gunning-rossi} 
R.~C.~Gunning, H.~Rossi, 
{\em{Analytic functions of several complex variables}}, 
Prentice-Hall, Inc., Englewood Cliffs, N.J. 1965.  

\bibitem[HacK]{hacon-kovacs} 
C.~D.~Hacon, S.~J.~Kov\'acs, {\em{Classification 
of higher dimensional algebraic varieties}}, 
Oberwolfach Seminars, \textbf{41}. Birkh\"auser 
Verlag, Basel, 2010.

\bibitem[HacM]{hacon-mckernan} 
C.~D.~Hacon, J.~M\textsuperscript{c}Kernan, 
Existence of minimal models for 
varieties of log general type. II, 
J. Amer. Math. Soc. \textbf{23} (2010), no. 2, 469--490. 

\bibitem[Har]{hartshorne}
R.~Hartshorne, {\em{Algebraic geometry}}, 
Graduate Texts in Mathematics, No. \textbf{52}. Springer-Verlag, 
New York--Heidelberg, 1977.

\bibitem[Has]{hashizume} 
K.~Hashizume, On the non-vanishing conjecture and existence of 
log minimal models, 
Publ. Res. Inst. Math. Sci. \textbf{54} (2018), no. 1, 89--104.

\bibitem[HasH]{hashizume-hu} 
K.~Hashizume, Z.~Hu, 
On minimal model theory for log abundant lc pairs, 
J. Reine Angew. Math. \textbf{767} (2020), 109--159.

\bibitem[Hi]{hironaka} 
H.~Hironaka, 
Flattening theorem in complex-analytic geometry, 
Amer. J. Math. \textbf{97} (1975), 503--547.

\bibitem[HP1]{horing-peternell1} 
A.~H\"oring, T.~Peternell, 
Mori fibre spaces for K\"ahler threefolds, 
J. Math. Sci. Univ. Tokyo \textbf{22} (2015), no. 1, 219--246. 

\bibitem[HP2]{horing-peternell2}
A.~H\"oring, T.~Peternell, 
Minimal models for K\"ahler threefolds, 
Invent. Math. \textbf{203} (2016), no. 1, 217--264. 

\bibitem[HP3]{horing-peternell3} 
A.~H\"oring, T.~Peternell, 
Bimeromorphic geometry of K\"ahler threefolds, 
{\em{Algebraic geometry: Salt Lake City 2015}}, 381--402, 
Proc. Sympos. Pure Math., \textbf{97.1}, Amer. Math. Soc., 
Providence, RI, 2018. 

\bibitem[Kau]{kaup} 
B.~Kaup, Coherent $\mathcal D$-modules, In:~Borel 
et al. {\em{Algebraic $D$-Modules}}, 
Perspectives in Mathematics, \textbf{2}. Academic 
Press, Inc., Boston, MA, 1987. 

\bibitem[Kaw1]{kawamata-pluricanonical} 
Y.~Kawamata,  Pluricanonical systems on minimal algebraic 
varieties, Invent. Math. \textbf{79} (1985), no. 3, 567--588.

\bibitem[Kaw2]{kawamata-crepant}
Y.~Kawamata, Crepant blowing-up of $3$-dimensional 
canonical singularities and its application to 
degenerations of surfaces, 
Ann. of Math. (2) \textbf{127} (1988), no. 1, 93--163.

\bibitem[Kaw3]{kawamata-length} 
Y.~Kawamata, On the length of an extremal rational curve, 
Invent. Math. \textbf{105} (1991), no. 3, 609--611.

\bibitem[Kaw4]{kawamata-subadjunction}
Y.~Kawamata, 
Subadjunction of log canonical divisors. II, 
Amer. J. Math. \textbf{120} (1998), no. 5, 893--899.

\bibitem[Kaw5]{kawamata-flop}
Y.~Kawamata, 
Flops connect minimal models, 
Publ. Res. Inst. Math. Sci. \text{44} (2008), no. 2, 419--423.

\bibitem[KMM]{kmm} 
Y.~Kawamata, K.~Matsuda, K.~Matsuki, 
Introduction to the minimal model problem, 
{\em{Algebraic geometry, Sendai, 1985}}, 283--360, 
Adv. Stud. Pure Math., \textbf{10}, North-Holland, 
Amsterdam, 1987. 

\bibitem[Kod]{kodaira} 
K.~Kodaira, {\em{Complex manifolds and deformation 
of complex structures}}, Translated from the Japanese 
by Kazuo Akao. With an appendix 
by Daisuke Fujiwara, 
Grundlehren der mathematischen 
Wissenschaften [Fundamental Principles of 
Mathematical Sciences], \textbf{283}. Springer-Verlag, New York, 1986.

\bibitem[Kol1]{kollar-effective}
J.~Koll\'ar, 
Effective base point freeness, 
Math. Ann. \textbf{296} (1993), no. 4, 595--605.

\bibitem[Kol2]{kollar-resolution}
J.~Koll\'ar, {\em{Lectures on resolution of singularities}}, 
Annals of Mathematics Studies, \textbf{166}. Princeton 
University Press, Princeton, NJ, 2007.

\bibitem[Kol3]{kollar-singularities} 
J.~Koll\'ar, {\em{Singularities of the minimal model 
program}}, With 
a collaboration of S\'andor Kov\'acs. Cambridge 
Tracts in Mathematics, \textbf{200}. Cambridge 
University Press, Cambridge, 2013.

\bibitem[KM]{kollar-mori} 
J.~Koll\'ar, S.~Mori, {\em{Birational geometry of algebraic 
varieties}}, With the collaboration of C.~H.~Clemens 
and A.~Corti. Translated from the 1998 Japanese 
original. Cambridge Tracts in Mathematics, 
\textbf{134}. Cambridge University Press, Cambridge, 1998. 

\bibitem[La1]{lazarsfeld1} 
R.~Lazarsfeld, 
{\em{Positivity in algebraic geometry. I. Classical setting: line bundles 
and linear series}}, 
Ergebnisse der Mathematik und ihrer 
Grenzgebiete. 3. Folge. A Series of Modern 
Surveys in Mathematics [Results in 
Mathematics and Related Areas. 3rd Series. A 
Series of Modern Surveys in Mathematics], \textbf{48}. Springer-Verlag, 
Berlin, 2004.

\bibitem[La2]{lazarsfeld} 
R.~Lazarsfeld, {\em{Positivity in algebraic geometry. II. Positivity for 
vector bundles, and multiplier ideals}}, 
Ergebnisse der Mathematik und ihrer Grenzgebiete. 3. Folge. A 
Series of Modern Surveys in Mathematics [Results in 
Mathematics and Related Areas. 3rd Series. A 
Series of Modern Surveys in Mathematics], \textbf{49}. Springer-Verlag, 
Berlin, 2004.

\bibitem[Le]{le}
J.~Lesieutre, 
The diminished base locus is not always closed, 
Compos. Math. \textbf{150} (2014), no. 10, 1729--1741. 

\bibitem[Man]{manaresi} 
M.~Manaresi, Sard and Bertini type theorems 
for complex spaces, Ann. Mat. Pura Appl. (4) \textbf{131}
(1982), 265--279.

\bibitem[Matk]{matsuki}
K.~Matsuki, {\em{Introduction to the Mori program}}, 
Universitext. Springer-Verlag, New York, 2002.

\bibitem[Matm]{matsumura} 
S.~Matsumura, 
Injectivity theorems with multiplier ideal sheaves 
for higher direct images under K\"ahler
morphisms, to appear in Algebr. Geom. 

\bibitem[Mo1]{mori-terminal} 
S.~Mori, 
On $3$-dimensional terminal singularities, 
Nagoya Math. J. \textbf{98} (1985), 43--66.

\bibitem[Mo2]{mori} 
S.~Mori, Classification of higher-dimensional varieties, 
{\em{Algebraic geometry, Bowdoin, 1985 (Brunswick, Maine, 1985)}}, 
269--331, Proc. Sympos. Pure Math., \textbf{46}, Part 1, 
Amer. Math. Soc., Providence, RI, 1987.

\bibitem[Mo3]{mori-flip} 
S.~Mori, 
Flip theorem and the existence of minimal models for $3$-folds, 
J. Amer. Math. Soc. \textbf{1} (1988), no. 1, 117--253.

\bibitem[Mw]{moriwaki} 
A.~Moriwaki, A criterion of openness of a family of nef 
line bundles, 
Manuscripta Math. \textbf{75} (1992), no. 3, 327--331.

\bibitem[Na1]{nakayama-hodge}
N.~Nakayama, 
Hodge filtrations and the higher direct images of canonical sheaves, 
Invent. Math. \textbf{85} (1986), no. 1, 217--221. 

\bibitem[Na2]{nakayama2} 
N.~Nakayama, 
The lower semicontinuity of the 
plurigenera of complex varieties, 
{\em{Algebraic geometry, Sendai, 1985}}, 551--590,
Adv. Stud. Pure Math., \textbf{10}, North-Holland, 
Amsterdam, 1987.

\bibitem[Na3]{nakayama3}
N.~Nakayama, 
{\em{Zariski-decomposition and abundance}}, 
MSJ Memoirs, \textbf{14}. Mathematical Society of Japan, Tokyo, 2004. 

\bibitem[No]{noguchi} 
J.~Noguchi, {\em{Analytic function theory of several variables.~Elements 
of Oka's coherence}}, 
Springer, Singapore, 2016. 

\bibitem[RRV]{rrv} 
J.~P.~Ramis, G.~Ruget, J.~L.~Verdier, 
Dualit\'e relative en g\'eom\'etrie analytique complexe, 
Invent. Math. \textbf{13} (1971), 261--283.

\bibitem[SGA1]{sga1} 
A.~Grothendieck, M.~Raynaud, 
{\em{Rev\^etements \'etales et groupe fondamental, 
S\'eminaire de G\'eom\'etrie Alg\'ebrique du Bois Marie 1960--1961 
(SGA 1)}}. Dirig\'e par Alexandre Grothendieck. Augment\'e de 
deux expos\'es de M.~Raynaud. Lecture Notes in 
Mathematics, Vol. \textbf{224}. Springer-Verlag, Berlin--New York, 1971. 

\bibitem[Sh1]{shokurov1}
V.~V.~Shokurov, Three-dimensional log perestroikas, 
Izv. Ross. Akad. Nauk Ser. Mat. \textbf{56} (1992), no. 1, 105--203. 

\bibitem[Sh2]{shokurov2}
V.~V.~Shokurov, $3$-fold log models, 
{\em{Algebraic geometry, 4}}. J. Math. Sci. \textbf{81} 
(1996), no. 3, 2667--2699. 

\bibitem[Sh3]{shokurov3} 
V.~V.~Shokurov, 
Prelimiting flips, 
Tr. Mat. Inst. Steklova \textbf{240} (2003), 
Biratsion. Geom. Line\u{\i}n. Sist. Konechno Porozhdennye 
Algebry, 82--219; reprinted 
in Proc. Steklov Inst. Math. 2003, no. 1(240), 75--213. 

\bibitem[Si]{siu} 
Y.-T.~Siu, 
Noetherianness of rings of holomorphic functions 
on Stein compact series, 
Proc. Amer. Math. Soc. \textbf{21} (1969), 483--489. 

\bibitem[St1]{steenbrink1} 
J.~H.~M.~Steenbrink, 
Limits of Hodge structures, 
Invent. Math. \textbf{31} 
(1975/76), no. 3, 229--257.

\bibitem[St2]{steenbrink2} 
J.~H.~M.~Steenbrink, 
Mixed Hodge structure on the vanishing cohomology, 
{\em{Real and complex singularities (Proc. Ninth Nordic Summer 
School/NAVF Sympos. Math., Oslo, 1976)}}, 525--563. Sijthoff 
and Noordhoff, Alphen aan den Rijn, 1977. 

\bibitem[Tak]{takegoshi}
K.~Takegoshi, 
Higher direct images of canonical sheaves tensorized with 
semi-positive vector bundles by proper K\"ahler morphisms, 
Math. Ann. \textbf{303} (1995), no. 3, 389--416.

\bibitem[Tay]{taylor}
J.~L.~Taylor, 
{\em{Several complex variables with connections to algebraic geometry 
and Lie groups}}, 
Graduate Studies in Mathematics, \textbf{46}. American 
Mathematical Society, Providence, RI, 2002.

\bibitem[W]{wlo} 
J.~W\l odarczyk, 
Resolution of singularities of analytic spaces, 
Proceedings of G\"okova Geometry-Topology 
Conference 2008, 31--63, G\"okova Geometry/Topology 
Conference (GGT), G\"okova, 2009.

\end{thebibliography}
\end{document}